\documentclass[a4paper, reqno]{amsart}
\usepackage[numbers, sort&compress]{natbib}

\usepackage{microtype}

\usepackage[utf8]{inputenc}

\usepackage{thmtools,thm-restate}

\usepackage[colorlinks,citecolor=blue,urlcolor=blue, linkcolor=blue]{hyperref}

\usepackage{mathrsfs} 
\usepackage[normalem]{ulem}
\usepackage{amssymb}
\usepackage{amsfonts}
\usepackage{quiver}
\usepackage{amssymb}
\usepackage{amsmath}
\usepackage{amsthm}
\usepackage{ulem}

\usepackage[shortlabels]{enumitem}

\usepackage{csquotes}

\usepackage{amsmath}
\usepackage{amssymb}
\usetikzlibrary{arrows.meta}
\usepackage{tabularx}
\usepackage{centernot}
\usepackage{mathtools}
\usepackage{stmaryrd}
\usepackage{amsthm,amssymb}
\usepackage{etoolbox,color}
\usepackage{tikz}
\usepackage{amssymb}
\usetikzlibrary{matrix}
\usepackage{tikz-cd}
\usepackage{tikz}
\usepackage{marginnote}
\definecolor{mygray}{gray}{0.85}
\usepackage[linecolor=black, backgroundcolor=mygray,colorinlistoftodos,prependcaption,textsize=small]{todonotes}
\usepackage{xargs}

\renewcommand{\leq}{\leqslant}
\renewcommand{\geq}{\geqslant}

\newcommand{\mrm}[1]{\mathrm{#1}}

\usepackage[backgroundcolor=mygray,colorinlistoftodos,prependcaption,textsize=small]{todonotes}
\usepackage{xcolor}
\newcommand{\todor}[1]{\todo[inline, color=red!73]{#1}}
\newcommand{\todob}[1]{\todo[inline, color=cyan]{#1}}
\newcommand{\todogr}[1]{\todo[inline, color=green]{#1}}
\newcommand{\todoy}[1]{\todo[inline, color=yellow]{#1}}
\newcommand{\todog}[1]{\todo[inline, color=mygray]{#1}}
\newcommand{\todov}[1]{\todo[inline, color=violet!60]{#1}}
\newcommand{\todoo}[1]{\todo[inline, color=orange]{#1}}

\makeatletter
\def\subsection{\@startsection{subsection}{3}%
  \z@{.5\linespacing\@plus.7\linespacing}{.3\linespacing}%
  {\bfseries\centering}}
\makeatother

\makeatletter
\def\subsubsection{\@startsection{subsubsection}{3}%
  \z@{.5\linespacing\@plus.7\linespacing}{.3\linespacing}%
  {\centering}}
\makeatother

\theoremstyle{definition}
\newtheorem{theorem}{Theorem}[section]

\newtheorem{corollary}[theorem]{Corollary}
\newtheorem{proposition}[theorem]{Proposition}
\newtheorem{example}[theorem]{Example}

\newtheorem{hypothesis}[theorem]{Hypothesis}
\newtheorem{question}[theorem]{Question}

\newtheorem{context}[theorem]{Context}

\theoremstyle{plain}

\theoremstyle{definition}

\newtheorem{lemma}[theorem]{Lemma}
\newtheorem{fact}[theorem]{Fact}
\newtheorem{definition}[theorem]{Definition}

\newtheorem{remark}[theorem]{Remark}

\newtheorem{notation}[theorem]{Notation}

\newcommand{\cf}{\ensuremath{\mathrm{cf}}}
\newcommand{\id}{\ensuremath{\mathrm{id}}}
\newcommand{\at}{\ensuremath{\mathrm{at}}}
\newcommand{\tp}{\ensuremath{\mathrm{tp}}}
\newcommand{\qf}{\ensuremath{\mathrm{qf}}}
\newcommand{\Hom}{\ensuremath{\mathrm{Hom}}}
\newcommand{\Ext}{\ensuremath{\mathrm{Ext}}}

\newcommand{\categ}{\ensuremath{\mathcal{C}}}
\newcommand{\categn}{\ensuremath{\mathcal{D}}}
\newcommand{\collmods}{\ensuremath{\mathbf{C}}}
\newcommand{\morphs}{\ensuremath{\mathfrak{m}}}
\newcommand{\kaec}{\ensuremath{\mathbf{K}}}
\newcommand{\leqk}{\ensuremath{\preccurlyeq}}
\newcommand{\gtp}{\ensuremath{\mathrm{ortp}}}
\newcommand{\gS}{\ensuremath{\mathcal{S}}}
\newcommand{\kaecgoth}{\ensuremath{\mathcal{K}}}
\newcommand{\kaecg}{\ensuremath{\mathbf{K}_{\mathcal{K}}}}
\newcommand{\leqkg}{\ensuremath{\preccurlyeq_{\mathcal{K}}}}
\newcommand{\hul}{\ensuremath{H}}

\newcommand{\ordnl}{\ensuremath{\alpha}}
\newcommand{\tchar}{\ensuremath{\mathbf{t}}}
\newcommand{\tcharset}{\ensuremath{\mathcal{T}}}

\newcommand{\leqp}{\ensuremath{\leq_{\mrm{pp}}}}
\newcommand{\leqo}{\ensuremath{\leq_\oplus}}
\newcommand{\bal}{\ensuremath{\mrm{b}}}
\newcommand{\pp}{\ensuremath{\mrm{pp}}}
\newcommand{\emb}{\ensuremath{\mrm{em}}}
\newcommand{\leqb}{\ensuremath{\leq_\bal}}

\newcommand{\oleq}{\ensuremath{\leq_{\oplus}}}

\newcommand{\frku}{\ensuremath{{\mathfrak{u}}}}
\newcommand{\frkw}{\ensuremath{{\mathfrak{w}}}}
\newcommand{\wlarge}{\ensuremath{\mathcal{W}^{\mrm{large}}}}
\newcommand{\wsmall}{\ensuremath{\mathcal{W}^{\mrm{small}}}}
\newcommand{\nbo}{\ensuremath{\mrm{nb}}}

\newcommand{\rmod}{\ensuremath{{R\mh\mrm{Mod}}}}
\newcommand{\flatmod}{\ensuremath{{\mh\mrm{Flat}}}}
\newcommand{\pinj}{\ensuremath{{\mrm{PI}}}}
\newcommand{\inj}{\ensuremath{{\mrm{Inj}}}}
\newcommand{\rdinj}{\ensuremath{{\mrm{RD}\mh\mrm{Inj}}}}
\newcommand{\sigmainj}{\ensuremath{{\Sigma\mh\mrm{Inj}}}}
\newcommand{\sigmapinj}{\ensuremath{{\Sigma\mh\mrm{PI}}}}
\newcommand{\tfab}{\ensuremath{{\mrm{TF}_\mathbb{Z}}}}
\newcommand{\redtfab}{\ensuremath{{\mrm{RTF}_\mathbb{Z}}}}
\newcommand{\rgrp}{\ensuremath{{\mrm{Red}_\mathbb{Z}}}}
\newcommand{\rpgrp}{\ensuremath{{p\mh\mrm{Red}_\mathbb{Z}}}}
\newcommand{\cotfree}{\ensuremath{{\mrm{CotFree}_\mathbb{Z}}}}
\newcommand{\slendergrp}{\ensuremath{{\mrm{Slen}_\mathbb{Z}}}}
\newcommand{\cotorsion}{\ensuremath{{\mrm{Cot}}}}
\newcommand{\almostfree}{\ensuremath{{\mh\mrm{Free}_\mathbb{Z}}}}
\newcommand{\Superstable}{\ensuremath{{\mrm{Superstable}}}}
\newcommand{\pgrp}{\ensuremath{{p\mh\mrm{Grp}_\mathbb{Z}}}}
\newcommand{\abgrps}{\ensuremath{{\mathbb{Z}\mh\mrm{Mod}}}}

\input xy
\xyoption{all}

\newbox\noforkbox \newdimen\forklinewidth
\setbox0\hbox{$\textstyle\smile$}
\setbox1\hbox to \wd0{\hfil\vrule width \forklinewidth depth-2pt
 height 10pt \hfil}
\setbox\noforkbox\hbox{\lower 2pt\box1\lower 2pt\box0\relax}
\def\unionstick{\mathop{\copy\noforkbox}\limits}

\def\nonfork_#1{\unionstick_{\textstyle #1}}

\setbox0\hbox{$\textstyle\smile$}
\setbox1\hbox to \wd0{\hfil{\sl /\/}\hfil}
\setbox2\hbox to \wd0{\hfil\vrule height 10pt depth -2pt width
              \forklinewidth\hfil}
\newbox\doesforkbox
\setbox\doesforkbox\hbox{\lower 2pt\box1 \lower 2pt\box2\lower2pt\box0\relax}
\def\nunionstick{\mathop{\copy\doesforkbox}\limits}

\def\fork_#1{\nunionstick_{\textstyle #1}}

\newcommand{\dnf}{\ensuremath{\unionstick}}

\newcommand{\nf}{\ensuremath{\unionstick}}
\newcommand{\dnfb}[4]{#2 \overset{#4}{\underset{#1}{\overline{\nf}}} #3}

\newcommand{\inflang}{\ensuremath{\mathfrak{L}}}

\DeclareMathSymbol{\mh}{\mathord}{operators}{`\-}

\let\tdef\textit

\usepackage{xcolor}

\newif\ifshowtodos
\showtodostrue
\ifshowtodos
\else
  \renewcommand{\todog}[1]{}
  \renewcommand{\todob}[1]{}
  \renewcommand{\todogr}[1]{}
  \renewcommand{\todov}[1]{}
  \renewcommand{\todoy}[1]{}
  \renewcommand{\todoo}[1]{}
  \renewcommand{\todor}[1]{}
\fi

\usepackage[framemethod=TikZ]{mdframed}
\usepackage{environ}
\usepackage{xcolor}

\newcommand{\ssk}{\smallskip}
\newcommand{\nin}{\noindent}
\let\ab\allowbreak

\numberwithin{equation}{section}

\newcounter{introcount}

\newtheorem{introthm}[introcount]{Theorem}
\newtheorem{introq}[introcount]{Question}

\newtheorem{introcor}[introcount]{Corollary}

\begin{document}

\begin{abstract}
We prove several new results in the theory of $\mu\mh\mrm{AEC}$s, focusing mainly on (almost) stability, with the primary objective of undertaking a systematic study of $\mu\mh\mrm{AEC}$s of $R$-modules. Our main results are the following.

1.  We show that, under suitable syntactic assumptions, all tame $\mu\mh\mrm{AEC}$s 
of $R$-modules (where $R$ is a ring) are almost stable, and are stable if they additionally 
satisfy a strong amalgamation property. This result extends the work of the second author and Shelah~\cite{paolinishelah} 
to the setting of $\mu\mh\mrm{AEC}$s. 

2.  We then turn to applications to concrete 
$\mu\mh\mrm{AEC}$s of $R$-modules. Our main result in this direction is that $(\rmod, \leqp^\mu)$ has a stable independence relation and 
is a stable and tame $\mu\mh\mrm{AEC}$, where $\leqp^\mu$ denotes the 
$\mu$-pure submodule relation. We also prove similar stability results for various classes of 
abelian groups, including the $\aleph_1\mh\mrm{AEC}$ of torsion-free abelian groups with the balanced subgroup relation. Moreover, we prove the almost stability of all $\mu\mh\mrm{AEC}$s of modules of the form
$(\rmod, \leqk)$, where $\leqk$ refines the direct summand relation and satisfies a strong form of
coherence.

3.  Finally, we study $\mu\mh\mrm{AEC}$s of the form $(\kaec, \leqo)$, where $\kaec$ is a class of pure-injective $R$-modules (note that this is, in general, not an $\mrm{AEC}$), and use our results to show that, for many natural choices of $\kaec$, the class $(\kaec, \leqo)$ has a stable independence relation and is therefore stable and tame. We use these results to give a sufficient condition for abstract classes of modules of the form $(\kaec, \leqp)$ to be stable when $\kaec$ is closed under pure-injective envelopes. This generalizes, by a substantially different proof, results of Mazari-Armida~\cite{somestablenonelementary}.
\end{abstract}

\title{$\mu$-abstract elementary classes of modules}

\author{Roberto Carnevale}
\address{Department of Mathematics ``Giuseppe Peano'', University of Torino, Via Carlo Alberto 10, 10123 {Torino,} Italy.}

\author{Gianluca Paolini}
\address{Department of Mathematics ``Giuseppe Peano'', University of Torino, Via Carlo Alberto 10, 10123 {Torino,} Italy.}
\thanks{Research of Gianluca Paolini was {supported} by project PRIN 2022 ``Models, sets and classifications'', prot. 2022TECZJA, and by INdAM Project 2024 (Consolidator grant) ``Groups, Crystals and Classifications''.}
\date{\today}
\maketitle

\newpage
\thispagestyle{empty}        
\vspace*{\fill}
{\centering
  \tableofcontents
  \par}
\vspace*{\fill}
\newpage

\section{Introduction}

In order to provide an effective common generalization of the methods of first-order model theory to various extensions of first-order logic, and to develop a {\em classification theory} for these logical contexts, in \cite{shelahaecoriginal} Shelah introduced a {model-theoretic} framework that goes under the name of {\em abstract elementary classes} ($\mrm{AEC}$s). In another direction, Makkai and Par{\'e} \cite{makkaipare}, relying on previous work of Grothendieck \cite{grothendieck} and of Gabriel and Ulmer \cite{gabrielulmer}, developed the theory of {\em accessible categories}, from the point of view of categorical model theory. These two lines of research remained {separate} until recent years, when these two {model-theoretic} frameworks were reconciled in the notion of $\mu\mh\mrm{AEC}$s, a natural generalization of the theory of $\mrm{AEC}$s which relaxes some conditions on the existence of limits. This reconciliation was, in fact, complete: up to categorical equivalence, accessible categories with all morphisms being monomorphisms and $\mu\mh\mrm{AEC}$s are exactly the same thing.
This led to {a fruitful transfer of results between the categorical and model-theoretic settings}. Perhaps one of the most striking examples of this is the {precise correspondence} between the notion of cofibrant generation and {the} admissibility of a stable independence relation, one of the core concepts of modern model theory.

\smallskip	One of the criticisms of the theory of $\mrm{AEC}$s has been the lack of genuinely new examples, i.e., examples not subsumed by first-order or infinitary model theory. This state of affairs changed with the work of Zilber
on pseudo-exponential fields \cite{zilber1,zilber2} and, more systematically, with the pioneering works of Mazari-Armida on the study of $\mrm{AEC}$s of $R$-modules \cite{onuniversalmodules, mazarimodeltheoretic, mazarisuperstableflat, somestablenonelementary, mazarinoetherian}. The aim of this paper is to extend this analysis of the model theory of $R$-modules to $\mu\mh\mrm{AEC}$s. Far from producing isolated new examples, we show that a rich and genuinely new model-theoretic theory of modules emerges in this generalized setting, {with substantial structural content and consequences that reach back into the classical context}. Indeed, this broader framework {provides room} for many new examples and case studies, and, far from being a mere generalization for its own sake, it feeds back into the original context: phenomena that are invisible or inaccessible at the level of $\mrm{AEC}$s become natural once $\mu\mh\mrm{AEC}$s are allowed, and yield genuinely new information about $\mrm{AEC}$s of modules themselves. A canonical instance of this phenomenon is the notion of 
``\emph{stability transfer}'' that we introduce and explore in this paper: a mechanism by which the stability of a given class is deduced from that of a ``stronger'' one, whose stability is often easier to establish. The guiding example is the relation of $\mu$-pureness, the main object of study of the present paper, which in a precise sense is cofinal in the poset of relations on $R\text{-}\mrm{Mod}$ giving rise to $\mu\mh\mrm{AEC}$s (cf. Section~\ref{sect:lpure}), and whose stability therefore controls that of all the \mbox{submodel relations below it.}

\subsection{Stability, almost stability and stability transfers for \texorpdfstring{$\mu\mh\mrm{AEC}$s}{μ-AECs}}

	One of the characteristics of Shelah's approach to model theory and classification theory is the identification of ``dividing lines'', i.e., {model-theoretic} properties that exhibit a structure/non-structure dichotomy of some sort. The central notion of this theory, both in first-order and non-elementary model theory, is the notion of {\em stability}, a property which has to do with the number of types. Most of our paper is devoted to the exploration of questions of stability in $\mu\mh\mrm{AEC}$s of modules.
	
\smallskip	Baur~\cite{baur} and, independently, Fisher~\cite{fisher} proved in 1975 that, for every ring $R$, every complete first-order theory of $R$-modules is stable. This result, together with several subsequent results of Mazari-Armida on the stability of $\mrm{AEC}$s of the form $(\mathbf{K}, \leq_\mrm{pp})$, where $\leq_\mrm{pp}$ denotes the pure submodule relation, led him to pose the following central question:

\begin{introq}[\protect{\cite[Question~2.12]{mazarimodeltheoretic}}]
\label{the_question} Let $R$ be a ring and let $\leq_{\mrm{pp}}$ denote the pure submodule relation. If $(\mathbf{K}, \leq_{\mrm{pp}})$ is an abstract elementary class with $\mathbf{K} \subseteq R$-$\mrm{Mod}$, is $(\mathbf{K}, \leq_{\mrm{pp}})$ stable? Is this true when $R = \mathbb{Z}$? Under what conditions on $R$ does this hold?
\end{introq}

Question~\ref{the_question} was resolved by the second named author and S.~Shelah in \cite{paolinishelah}, where they showed that there exist unstable $\mrm{AEC}$s of torsion-free abelian groups with the pure subgroup relation (see also \cite{mazaripaolini}). Despite this failure of stability, it was also shown in \cite{paolinishelah} that, under the assumption of tameness, every $\mrm{AEC}$ of $R$-modules satisfies a newly introduced weaker form of stability; {tameness is a locality property for Galois types that is often considered in the context of $\mu\mh\mrm{AEC}$s}, and this weaker notion is called {\em almost stability}. Interestingly, under the amalgamation property ($\mrm{AP}$), almost stability collapses to the usual notion of stability. {The distinction between almost stability and stability is therefore a genuinely non-elementary phenomenon, since the class of models of a complete first-order theory, ordered by elementary substructure, always satisfies $\mrm{AP}$.} One of the motivations for the first part of this paper is to contribute to the general theory of $\mu\mh\mrm{AEC}$s by exploring almost stability in this setting, a topic that has not previously been investigated, and by adapting the methods of \cite{paolinishelah} to $\mu\mh\mrm{AEC}$s of $R$-modules.

\ssk In Section~\ref{sect:almoststability} we introduce almost stability for $\mu\mh\mrm{AEC}$s and explore under which conditions almost stability implies stability; we will see, e.g., that this is the case under the assumption of what we call {$\infty\mh\mrm{AP}$}. Furthermore, in this section we prove the following {aforementioned} ``stability transfer''  (see also Lemma~\ref{lem:alstabtransfer} for a more general technical result):

\begin{introcor}[Corollary~\ref{cor:alstabtransfermu}]
\label{cor5}
Let $\kaecgoth_1 = (\kaec, \leqk_1)$ and $\kaecgoth_2 = (\kaec, \leqk_2)$ be $\mu\mh\mrm{AEC}$s such that $\leqk_1$ refines $\leqk_2$, and $\kaecgoth_2$ is almost stable in $\lambda = \lambda^{<\mu}+\mrm{LS}_\mu(\kaecgoth_2)$. Then $\kaecgoth_1$ is almost stable in $\lambda$. In particular, if in addition $\kaecgoth_1$ has $\infty\mh\mrm{AP}$, then it is $\lambda$-stable.
\end{introcor}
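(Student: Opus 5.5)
The plan is to reduce everything to the comparison of Galois-type counts under the two relations, following the pattern of Lemma~\ref{lem:alstabtransfer}. Recall that almost stability in $\lambda$ asks that for every $M \in \kaec$ with $|M| \leq \lambda$, the number of \emph{almost-Galois types} over $M$ is at most $\lambda$. Almost-Galois types are the equivalence classes of triples $(a, M, N)$ with $M \leqk N$ under the relation $E^{\mrm{at}}$ of being jointly embeddable over $M$; in the absence of amalgamation this relation need not be transitive, and its transitive closure gives the Galois types. The key observation is that refinement goes in the right direction. If $\leqk_1 \subseteq \leqk_2$, then every triple $(a, M, N)$ with $M \leqk_1 N$ is also a $\kaecgoth_2$-triple. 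Moreover, if two such triples are $E^{\mrm{at}}$-equivalent in $\kaecgoth_1$, witnessed by $\leqk_1$-embeddings into a common $N^*$ over $M$, then the same data witnesses their equivalence in $\kaecgoth_2$. So "non-equivalent in $\kaecgoth_2$" implies "non-equivalent in $\kaecgoth_1$". That is the wrong direction for a naive injection, so I will instead use the formulation of almost stability through pairwise non-equivalence, which is how I expect it to be defined. Concretely: almost stability fails iff there are $M$ with $|M| \leq \lambda$ and $\lambda^+$ many triples over $M$ that are pairwise \emph{not} jointly embeddable over $M$, i.e.\ an $E^{\mrm{at}}$-anti-chain.

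Step 1. Suppose $\kaecgoth_1$ is not almost stable in $\lambda$. Take $M$ with $|M|\le\lambda$ and $\kaecgoth_1$-triples $(a_i, M, N_i)$ for $i<\lambda^+$ that are pairwise non-$E^{\mrm{at}}_1$-equivalent. Using $\mrm{LS}_\mu(\kaecgoth_1)$ and $\lambda=\lambda^{<\mu}$, I would also shrink each $N_i$ to size $\le\lambda$.

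Step 2. This is the main obstacle. Non-equivalence in $\kaecgoth_1$ does not imply non-equivalence in $\kaecgoth_2$, so I cannot transfer the anti-chain directly. Instead I will invoke Lemma~\ref{lem:alstabtransfer}, which I expect to control exactly this. The natural route is to code a $\kaecgoth_1$-type of $a$ over $M$ by the $\kaecgoth_2$-type over a larger model $M'\supseteq M$ of size $\lambda$. Here $M'$ should be closed enough, for instance by a $\mu$-directed resolution of size $\lambda^{<\mu}=\lambda$, so that $\leqk_2$-amalgamation data over $M'$ recovers $\leqk_1$-data over $M$. Almost stability of $\kaecgoth_2$ in $\lambda$ then bounds the number of codes by $\lambda$. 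If that lemma is stated in this generality, the corollary reduces to checking its hypotheses: refinement of the relations, $\lambda=\lambda^{<\mu}+\mrm{LS}_\mu(\kaecgoth_2)$, and that both classes share the same $\kaec$. I would therefore write the proof mainly as a verification of these hypotheses.

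Step 3. For the "in particular" clause, I use the earlier result of Section~\ref{sect:almoststability}: under $\infty\mh\mrm{AP}$ the relation $E^{\mrm{at}}$ is transitive, so it coincides with Galois-type equality. Hence almost $\lambda$-stability coincides with $\lambda$-stability, and applying this to $\kaecgoth_1$ gives the final claim.
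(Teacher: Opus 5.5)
Your proposal has a genuine gap: you reversed the meaning of ``refines''. In the paper, $\leqk_1$ refines $\leqk_2$ means $M\leqk_2 N\Rightarrow M\leqk_1 N$, so $\leqk_2$ is the \emph{stronger} relation (compare ``$\leqb$ refines $\leqp^{\aleph_1}$'' and ``refines direct summands''). Hence every $\kaecgoth_2$-embedding is a $\kaecgoth_1$-embedding. This gives $E^{\mrm{at}}_{\kaecgoth_2}\subseteq E^{\mrm{at}}_{\kaecgoth_1}$, and after taking transitive closures, $E_{\kaecgoth_2}\subseteq E_{\kaecgoth_1}$. So $\kaecgoth_1$-types are coarser, which is exactly the direction a bound on the number of types needs.

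The ``main obstacle'' in your Step~2 is an artifact of this reversal. Under your reading you would be proving a reverse transfer that nothing in the paper supports. The coding device you propose (a larger $M'$ whose $\leqk_2$-amalgamation data ``recovers'' $\leqk_1$-data via a $\mu$-directed resolution) is not an argument, and Step~2 as written only defers to a lemma you expect to do this. You also misstate the definition. Almost $\lambda$-stability in the paper says $|\gS_{\kaecgoth}(M;N)|\le\lambda$ for every $M\leqk N$ with $|M|=\lambda$. One counts orbital types (classes of the transitive closure $E$) of elements of a \emph{single} $N$. It is not a count of $E^{\mrm{at}}$-``classes'', which need not exist since $E^{\mrm{at}}$ is not transitive, nor an anti-chain condition across different extensions.

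With the correct reading, the only real issue is that $M\leqk_1 N$ need not give $M\leqk_2 N$, so almost stability of $\kaecgoth_2$ cannot be applied over $M$ directly. This is where $\lambda=\lambda^{<\mu}+\mrm{LS}_\mu(\kaecgoth_2)$ enters. By Remark~\ref{rem:defmuaec}, $\kaecgoth_2$ has $\mrm{LS}$ at $\lambda$, so the argument runs as follows.
\begin{enumerate}
\item Pick $M\subseteq M_0\leqk_2 N$ with $|M_0|=\lambda$.
\item Almost stability of $\kaecgoth_2$ gives $c_i\in N$, $i<\lambda$, representing every $E_{\kaecgoth_2}$-class over $M_0$ in $N$.
\item By the inclusion above, these classes are also $E_{\kaecgoth_1}$-classes over $M_0$.
\item Restricting from $M_0$ to $M$ (Remark~\ref{rem:equivrel}) gives $|\gS_{\kaecgoth_1}(M;N)|\le\lambda$.
\end{enumerate}
This is Lemma~\ref{lem:alstabtransfer}, and the corollary is that lemma plus the $\mrm{LS}$ observation.

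Your Step~3 justification is also off. Transitivity of $E^{\mrm{at}}$ is not what makes almost stability and stability agree. Lemma~\ref{lemma:equivasamalg} uses $({<}\lambda^{++})\mh\mrm{AP}$ to amalgamate $\lambda^+$ extensions $M\leqk_1 N_i$, each realizing a distinct type, into one model $N$ over $M$. All those types then lie in $\gS_{\kaecgoth_1}(M;N)$.
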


We then move to the question of almost stability for $\mu\mh\mrm{AEC}$s of $R$-modules. We isolate here a syntactic property of a $\mu\mh\mrm{AEC}$ $\kaecgoth = (\kaec, \leqk)$, which we refer to as $\leqk$ being syntactic on $\kaec$ (cf. Definition~\ref{def:syntacticAC}). This property is precisely what is needed to carry the arguments of \cite{paolinishelah} over to the present setting; the adaptation, however, is far from routine: the original proof relies at several points on features specific to $\mrm{AEC}$s that simply do not survive the passage to $\mu\mh\mrm{AEC}$s, and recovering them requires genuine care. Already in \cite{paolinishelah} the use of \cite{shelahvillaveces} was essential in deducing almost stability; along the way we found a new way of bringing it to bear, which streamlines the argument considerably and yields a noticeably easier proof than the one available there. The end result is the following substantial generalization of the main result of \cite{paolinishelah}. We stress that although \cite{shelahvillaveces} ensures that every abstract elementary class is syntactic in the sense of Definition~\ref{def:syntacticAC} (see also \cite[Fact 2.12]{paolinishelah}), the same is not true of $\mu\mh\mrm{AEC}$s: not every $\mu\mh\mrm{AEC}$ is syntactic. Indeed, Boney and Walker \cite{boneypreparation} recently found an $\aleph_1\mh\mrm{AEC}$ $(\kaec, \leqk)$ {such that $\leqk$ is not syntactic on $\kaec$ (cf. Definition~\ref{def:syntacticAC})}, and it remains open whether such an example can be found with $\kaec\subseteq\rmod$.
	
\begin{introthm}[Theorem~\ref{thm:syn2}]
\label{introthm:syn2}
Let $\kaecgoth = (\kaec, \leqk)$ be a $\mu\mh\mrm{AEC}$ of $R$-modules and $\kappa = \kappa^{<\mu}+ \mrm{LS}_\mu(\kaecgoth)$ an infinite cardinal. If $\kaecgoth$ is $\kappa$-tame and $\leqk$ is $\kappa$-syntactic on $\kaec$, then there is a cardinal $\xi\geq \kappa$ such that, for every $\lambda = \lambda^\xi$, {$\kaecgoth$ is almost $\lambda$-stable}. In particular, $\kaecgoth$ is almost stable, and if $\kaecgoth$ has $\infty\mh\mrm{AP}$, then it is stable.
\end{introthm}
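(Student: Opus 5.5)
The plan is to adapt the counting argument of \cite{paolinishelah} to the $\mu\mh\mrm{AEC}$ setting, with the syntactic hypothesis standing in for the Shelah--Villaveces presentation theorem. We work with almost stability in the sense of Section~\ref{sect:almoststability}: for $M\in\kaec$ with $|M|\le\lambda$, we must bound by $\lambda$ the number of Galois types over $M$ realized in extensions $M\leqk N$ once types are identified along the almost-stability equivalence, i.e.\ without assuming $\mrm{AP}$.

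\textbf{Step 1: reduction to small types.} By $\kappa$-tameness, a type over $M$ is determined by its restrictions to the $\leqk$-substructures $M_0\leqk M$ with $|M_0|\le\kappa$. Since $\lambda=\lambda^\xi$ with $\xi\ge\kappa$, there are at most $\lambda^{\kappa}=\lambda$ such $M_0$ (closing under $\mrm{LS}_\mu$). It therefore suffices to show that the relevant types over each small $M_0$ are coded by an invariant taking at most $2^\xi$ values. Then a type over $M$ is coded by a function from a $\lambda$-sized index set (or from $[M]^{\le\kappa}$) into a set of size $2^\xi$, and there are at most $(2^\xi)^{\lambda}$ of these. That bound is too crude as it stands, so Step 3 refines the coding so that only $\xi$-many data are needed.

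\textbf{Step 2: syntactic invariant.} Here we use that $\leqk$ is $\kappa$-syntactic on $\kaec$ (Definition~\ref{def:syntacticAC}). This should give a fragment of an infinitary language $\mathbb{L}_{\kappa^+,\kappa^+}$ in which $\leqk$-embeddings are exactly the embeddings preserving the formulas of the fragment, and whose formulas have at most $2^\kappa$ choices. For $a\in N$ with $M\leqk N$, we attach the module-theoretic type: the set of pp-like (fragment) formulas $\varphi(x,\bar m)$ satisfied by $a-\bar m$-combinations. Since $\kaec\subseteq\rmod$, the key module trick from \cite{paolinishelah} applies: the differences $a-b$ and the linear structure let us reduce the type to a \emph{coset-like} invariant. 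Concretely, the set $\{\bar m\in M^{<\kappa^+}:\varphi(a,\bar m)\}$, for each formula $\varphi$, is either empty or a coset of a subgroup of $M^{<\kappa^+}$ definable from $\varphi$ alone. We then need to show that if $a,b$ have the same syntactic type over $M$, then they have the same type in the almost-stable sense. This is exactly where the syntactic property replaces the AEC-specific argument: equal syntactic types yield a partial isomorphism which, by the characterization of $\leqk$, extends to a common extension witnessing equivalence.

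\textbf{Step 3: counting, and the main obstacle.} Each coset is determined by one representative, up to the subgroup depending only on $\varphi$. So the type of $a$ is coded by choosing, for each of $2^\kappa$ formulas, a parameter tuple in $M^{\le\kappa}$ or ``empty''. This gives at most $(\lambda^\kappa)^{2^\kappa}=\lambda^{2^\kappa}$ types, so we take $\xi=2^\kappa$ and use $\lambda=\lambda^\xi$.

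The expected obstacle is Step 2's amalgamation-free conclusion. In $\mu\mh\mrm{AEC}$s only $\mu$-directed unions exist, so the chain arguments used in \cite{paolinishelah} must be replaced by $\mu$-directed systems of size $\lambda^{<\mu}=\lambda$. To check that the type identification is compatible with these systems, I would first build the common extension as a directed colimit of $\kappa$-sized pieces, using $\kappa=\kappa^{<\mu}$ and tameness.

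Finally, the \emph{in particular} clauses follow from two earlier results. Almost stability is immediate from the unboundedness of such $\lambda$. Stability under $\infty\mh\mrm{AP}$ follows from the result of Section~\ref{sect:almoststability} that almost stability plus $\infty\mh\mrm{AP}$ implies stability.
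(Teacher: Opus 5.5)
Your overall shape matches the paper: reduce to small submodels via tameness, then count cosets as in the first-order stability proof for modules. But the step that is supposed to make the count work has a genuine gap.

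In Step~2 you assume that $\kappa$-syntacticity yields a fragment of $\mathbb{L}_{\kappa^+,\kappa^+}$ with $2^\kappa$ formulas, preserved exactly by $\leqk$-embeddings, whose fibers $\{\bar m : N\vDash\varphi(a,\bar m)\}$ are cosets. Definition~\ref{def:syntacticAC} gives nothing of the sort. It provides a single formula $\varphi^\star_\kappa(\bar x_\kappa)\in\inflang_{\nu_0,\nu_1}$ expressing ``$N\restriction\bar a\in\kaec$ and $N\restriction\bar a\leqk N$'', where $\nu_0\geq\nu_1\geq\kappa^+$ may be far larger than $\kappa^+$. The formulas that actually pin down Galois types are built from $\varphi^\star_\kappa$, negated atomic diagrams, and existential quantification over $\kappa$ variables. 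They are arbitrary infinitary formulas, and linearity alone does not make their solution sets cosets; that holds only for pp-like, subgroup-defining formulas.

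The missing ingredient is Shelah's infinitary elimination of quantifiers for modules (Fact~\ref{fact:elqfmod}). Inside the fixed module $N$, every $\inflang_{\nu,\nu}$-formula is equivalent to an infinitary Boolean combination of formulas $\varphi(\bar x-\bar a)$. Here $\varphi$ ranges over a set $\Lambda$ of subgroup-defining formulas and $\bar a$ over a set $\mathbf I$, with $|\Lambda|+|\mathbf I|\leq\beth_\nu(|R|^{<\nu})$. The coset count only goes through after this reduction, and after enlarging $M$ to $A=M\cup\bigcup\mathbf I$, since the parameters in $\mathbf I$ need not lie in $M$. It then gives at most $\lambda^{\xi}=\lambda$ types with $\xi=\beth_\nu(|R|^{<\nu})$, where $\nu\geq(2^\kappa)^+$ is regular and large enough that $\varphi^\star_\kappa\in\inflang_{\nu,\nu}$. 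Your choice $\xi=2^\kappa$ has no justification.

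Your mechanism for ``equal syntactic types give equal Galois types'' is also unsupported. Extending a partial isomorphism to a common extension is an amalgamation step, which is unavailable here, and nothing in the syntactic hypothesis provides it. The paper never leaves the fixed $N$, which is all almost stability requires. For $M'\leqk M$ with $|M'|\leq\kappa$ enumerated by $\bar a$, it uses formulas $\psi(\bar z,\bar a)=\exists\bar y\,(\varphi^\star_\kappa(\bar y)\wedge\theta(\bar z,\bar y,\bar a))$. Here $\theta$ is the disjunction of the at most $2^\kappa$ quantifier-free diagrams of small pairs $M_1\leqk N_1$ with a tuple $\bar c_1$ whose Galois type over $M_1$ matches a fixed one up to isomorphism. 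Coherence and $\mrm{LS}$ at $\kappa$ make this independent of the choice of a small $N'\leqk N$ (Lemma~\ref{lem:firstsyn}). So if $\bar c_1,\bar c_2$ agree on all such $\psi$, then $\gtp(\bar c_1/M';N)=\gtp(\bar c_2/M';N)$, and $\kappa$-tameness lifts this to $M$ (Lemma~\ref{lem:2syn}).

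For the same reason, the obstacle you anticipate about replacing chains by $\mu$-directed systems is a red herring. No unions are taken, and only coherence and $\mrm{LS}$ at $\kappa$ are used; this is why Theorem~\ref{thm:syn1} holds for abstract classes. Your derivation of the ``in particular'' clauses is fine.
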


\subsection{A {model-theoretic} analysis of \texorpdfstring{$\mu$}{μ}-pureness}

We now move to the part of our work which deals with concrete applications of $\mu\mh\mrm{AEC}$s to $R$-modules. One of the most important notions in the theory of modules is that of {\em purity}; see, e.g., the book \cite{purityspectra}, which is organized entirely around this notion. We recall that, given $A \leq B$ in $\rmod$, the submodule $A$ is {\em pure} in $B$ if every finite system of linear equations with parameters in $A$ that has a solution in $B$ already has a solution in $A$. Although purity is a purely algebraic notion, it has been studied extensively by model theorists, for the following basic reason: given elementarily equivalent $A \leq B$ in $\rmod$, the module $A$ is an elementary substructure of $B$ if and only if $A$ is pure in $B$. This in turn follows from the well-known elimination of quantifiers for $R$-modules down to {Boolean combinations of $\mrm{pp}$-formulas}.

Generalizing the above definition, we say that a module $A$ is {\em $\mu$-pure} in $B$ if every system of fewer than $\mu$ linear equations with parameters in $A$ that has a solution in $B$ already has a solution in $A$. Clearly, $\aleph_0$-purity coincides with purity. In the context of abelian groups the definition of $\mu$-purity is credited to Gacs{\'a}lyi \cite{originalmupure}, although Fuchs \cite{paperfuchsdsums} was the first to study it thoroughly and to give it its name.

The notion has proved useful in the study of almost free modules (cf.\ \cite{almost}), where it interacts closely with set-theoretic methods and questions of independence from $\mrm{ZFC}$, and it has also been generalized to the setting of accessible categories. Indeed, in any accessible category one can define what it means for a morphism to be {\em $\mu$-pure}, and this notion coincides with the one given above when one considers the category of $R$-modules with homomorphisms (cf.\ \cite{adamekbook}). This is no isolated coincidence: $\mu$-purity is one of a family of categorical notions whose roots lie in module theory but whose natural home turns out to be the theory of accessible categories, which is precisely the bridge our paper sets out to exploit. Therefore, in Section~\ref{sect:lpure} we study  $\mu\mh\mrm{AEC}$s of modules of the form $(\kaec, \leqp^\mu)$, and we show the following generalization of \cite[Theorem 4.17]{somestablenonelementary}, \mbox{which dealt with the case $\mu = \aleph_0$.}

\begin{introthm}[Theorem~\ref{thm:lpuresumup}]
\label{theorem2}
Let $\kaecgoth_\mu = (\kaec, \leqp^\mu)$ be a $\mu\mh\mrm{AEC}$ of $R$-modules such that $\kaec$ is closed under finite direct sums, $\mu$-pure submodules, and $\mu$-pure quotients. Then $\kaecgoth_\mu$ admits a stable independence relation and {is therefore} stable and tame.
\end{introthm}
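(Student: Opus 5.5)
The plan is to follow the strategy of \cite[Theorem 4.17]{somestablenonelementary} and build a stable independence relation directly. Stability and tameness then follow from the correspondence between stable independence and cofibrant generation in accessible categories (Lieberman--Rosický--Vasey). The candidate relation is the algebraic one suggested by the $\aleph_0$ case. For $M_0 \leqp^\mu M_1, M_2 \leqp^\mu N$ in $\kaec$, we declare $M_1 \nf_{M_0}^N M_2$ iff the canonical map
\[
  M_1 \oplus_{M_0} M_2 \to N
\]
from the pushout (amalgamated sum) is injective with $\mu$-pure image in $N$. Equivalently, $M_1\cap M_2 = M_0$ and $M_1+M_2$ is $\mu$-pure in $N$, with the pushout identified with $M_1+M_2$.

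First I check that the class supports this relation. The pushout $P = (M_1\oplus M_2)/\{(m,-m): m\in M_0\}$ must lie in $\kaec$. It is a quotient of $M_1\oplus M_2 \in \kaec$ (closure under finite direct sums), and I will show the kernel is $\mu$-pure in $M_1\oplus M_2$. Indeed, a system of fewer than $\mu$ equations with parameters in the antidiagonal copy of $M_0$ that is solvable in $M_1\oplus M_2$ projects to a system solvable in $M_1$, hence solvable in $M_0$ because $M_0\leqp^\mu M_1$. So $P\in\kaec$ by closure under $\mu$-pure quotients. A similar diagram chase shows that the maps $M_i \to P$ are $\mu$-pure embeddings: this is the standard fact that pushouts of $\mu$-pure monomorphisms along arbitrary maps are $\mu$-pure, which can be proved by solving systems componentwise. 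This gives amalgamation, and in fact disjoint amalgamation, with $P$ as a canonical amalgam.

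Next I verify the axioms of a stable independence relation in turn. Invariance and monotonicity are routine: $\mu$-purity is transitive and is inherited by intermediate submodules, and closure under $\mu$-pure submodules keeps everything in $\kaec$. Symmetry is immediate from the symmetric definition. Existence holds by taking $N=P$. For uniqueness, given two independent amalgams $N, N'$, both contain copies of $P$ as $\mu$-pure submodules. These copies are isomorphic over $M_1,M_2$, so the two amalgams are equivalent via the span $N \leftarrow P \to N'$. Transitivity comes from a pushout-pasting argument: $(M_1\oplus_{M_0}M_2)\oplus_{M_2}M_3 \cong M_1\oplus_{M_0}M_3$, combined with transitivity of $\mu$-purity.

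The main obstacle is the witness (accessibility / local character) property. One must show that the category of independent squares is accessible, equivalently that there is a cardinal $\theta$ such that every independent square is a $\theta$-directed union of independent squares of size $<\theta$. My approach is a closure argument. Given an independent square and a small subset $A$, I close $A$ under $\mu$-witnesses for $\mu$-purity of $M_0$ in $M_i$, of $M_i$ in $N$, and of $M_1+M_2$ in $N$, and under the $\mathrm{LS}_\mu(\kaecgoth_\mu)$ closure. Each system of fewer than $\mu$ equations contributes fewer than $\mu$ solutions, so iterating $\mu$ many times (using regularity of $\mu$) yields small submodules forming an independent square. Since $\mu$-pure submodules of modules in $\kaec$ lie in $\kaec$, these small models automatically belong to the class. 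Once this is done, stability and tameness follow from the general results on stable independence in $\mu$-AECs.
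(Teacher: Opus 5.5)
Your weakly stable part is fine and parallels the paper's pre-cellular argument (Lemmas~\ref{lem:hyppushouts} and~\ref{lem:lpureindepweaklystable}). Your direct check that the antidiagonal $\{(m,-m): m\in M_0\}$ is $\mu$-pure in $M_1\oplus M_2$ is correct, and simpler than the wide-pushout version. For uniqueness you still have to turn the span $N\leftarrow P\rightarrow N'$ into a cospan, but amalgamating over $P$ does this, and your pushout argument provides it.

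The gap is the final step. You run together three different conditions. The one you actually set out to prove is that every independent square is a $\theta$-directed union of small independent sub-squares. This is not equivalent to accessibility in the Lieberman--Rosick\'y--Vasey sense, and it does not imply stability. Take free amalgamation on the AEC of all graphs with induced subgraphs: $M_1\cap M_2=M_0$ and no edges between $M_1\setminus M_0$ and $M_2\setminus M_0$. It is a weakly stable independence relation, and every independent square is a directed union of finite independent sub-squares. Yet that class is unstable, so the relation cannot be stable.

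Beyond weak stability, Definition~\ref{def:stableindep} requires right local character and the right witness property. (LRV's accessibility concerns the category whose objects are arrows and whose morphisms are independent squares. Presenting an arbitrary arrow there as a directed colimit of small arrows through independent squares is itself a local-character statement.) Local character starts from an arbitrary configuration $M_1,M_2\leqp^\mu N$, which need not be independent over anything. It asks for a small $M_0\leqp^\mu M_2$ and some $M_1'\supseteq M_1$ with $M_1'\dnf^{N}_{M_0}M_2$. Your closure adds witnesses for the $\mu$-purity of $M_0$ in $M_i$, of $M_i$ in $N$, and of $M_1+M_2$ in $N$. That presupposes the big square is already independent, and it never produces such an $M_0$. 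The witness property is not addressed at all.

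The missing idea is the paper's Lemma~\ref{lem:lpureloccharfund}. By induction on $i<\mu$, build chains $A_i\leqp^\mu M_2$ and $M'_{1,i}\leqp^\mu N$, with $M_1\subseteq M'_{1,0}$ and $A_j\subseteq M'_{1,i}$ for $j<i$. Require that whenever $\varphi(\bar x,\bar y)$ is a $\mu\mh\pp$-formula, $\bar b\in M'_{1,i}$, and $\varphi(\bar b,\bar y)$ has a solution in $M_2$, some solution already lies in $A_i$. So one closes under witnesses in $M_2$ for formulas with parameters on the small side, not under purity witnesses.

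Take $M_0=\bigcup_{i<\mu}A_i$ and $M_1'=\bigcup_{i<\mu}M'_{1,i}$, using regularity of $\mu$ and $\mu$-continuity. The sizes stay at most $(|M_1|+|R|)^{<\mu}$. Since $\mu\mh\pp$-definable sets are subgroups, one then shows that $(M_1'\oplus M_2)/\{(a,-a):a\in M_0\}\to N$ is a $\mu$-pure embedding (Lemma~\ref{lem:lpureindeplocal}).

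The witness property (Lemma~\ref{lem:lpureindepwitness}) is a separate, easier argument. A $\mu\mh\pp$-formula mentions fewer than $\mu$ elements of $M_2$, so $\mu$-purity of the pushout map can be tested on subsets $X\subseteq M_2$ of size $<\mu$; coherence then pulls solutions back into $M_1$ and $M_2$. Only with these two lemmas does Fact~\ref{fact:stableindep} yield stability and tameness.
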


	We show in Section~\ref{sect:lpureexample} that many classes $\kaec$ of interest satisfy the assumptions of Theorem~\ref{theorem2}. Before moving on to other applications of our methods, we mention here some interesting corollaries of Theorem~\ref{theorem2}. {First, observe that if $A$ is a summand of $B$,} then $A \leqp^\mu B$ for every cardinal $\mu$; in fact, it is well known that the relation $\oleq$ of being a direct summand is the limit of all the relations $\leqp^\mu$ for $\mu \in \mrm{Card}$. 
More generally, following \cite{deconstructibleaec}, we say that a transitive relation $\leqk$ on $\rmod$ {\em refines direct summands} if $A \oleq B$ implies $A \leqk B$.
In \cite{paolinipreparation} the second named author of this paper, together with Hyttinen and Mazari-Armida, began a general {model-theoretic} study of the poset of strong submodel relations on $\rmod$ giving rise to $\mrm{AEC}$s. 
This was inspired by works of Trlifaj et al.\ \cite{categoricityfor, deconstructibleaec, baldwinaec} on $\mrm{AEC}$s of modules refining direct summands, using methods from the theory of deconstructible classes of modules (see also the recent paper \cite{mazaridecon} on this). Several questions are left open in \cite{paolinipreparation}, in particular whether a certain unbounded chain of strong submodel relations identified there is cofinal in the aforementioned poset. We will now see that moving from the context of $\mrm{AEC}$s of $R$-modules to that of $\mu\mh\mrm{AEC}$s of $R$-modules may shed some light on these questions. In perfect analogy with what is done in \cite{paolinipreparation}, one can consider the poset of strong submodel relations on $\rmod$ giving rise to $\mu\mh\mrm{AEC}$s and refining direct summands. One might wonder what general results can be proved about this poset, and we now give evidence that there is some structure here. Recall that any $\mu\mh\mrm{AEC}$ $\kaecgoth = (\kaec, \leqk)$ satisfies the following axiom, known as the {\em coherence axiom}, that is, for every $A, B, C\in \kaec$:
\[
A \leq B \leqk C \text{ and } A\leqk C \; \Rightarrow \; A \leqk B.
\]
We consider a stronger version of this axiom, which we call {\em strong coherence}, that is, for every $A,B,C\in \kaec$ one has:
\[
A \leq B \leq C \text{ and } A\leqk C \; \Rightarrow \; A \leqk B.
\]
The main structural result on the poset of strong submodel relations refining direct summands which give rise to $\mu\mh\mrm{AEC}$s is the following, which, in combination with the ``stability transfer'' of Corollary~\ref{cor5}, allows us to deduce a general (almost) stability result. This represents a potential strategy for bypassing the tameness assumption in Theorem~\ref{introthm:syn2}.
We note that, crucially, all the  examples of $\mu\mh\mrm{AEC}$s of $R$-modules explored in this paper satisfy the axiom of strong coherence, as do all examples refining direct summands known to the authors.
Therefore, the following theorem could potentially be used to establish an almost stability result for all strong submodel relations refining direct summands; we leave this open.

\begin{introthm}[Theorem~\ref{thm:astablecoherent}] 
The relations $(\leqp^\mu \mid \mu \in \mrm{Card})$ are cofinal in the poset of strong submodel relations refining direct summands {that give rise to $\mu\mh\mrm{AEC}$s} (for some regular cardinal $\mu$) and which satisfy strong coherence. In particular, if $\leqk$ is a strong submodel relation as above, then $(\rmod, \leqk)$ is almost stable, and if in addition $(\rmod, \leqk)$ has $\infty\mh\mrm{AP}$, then $(\rmod, \leqk)$ is stable.
\end{introthm}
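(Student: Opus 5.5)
We first show cofinality: given a strong submodel relation $\leqk$ refining direct summands, with $(\rmod,\leqk)$ a $\mu\mh\mrm{AEC}$ satisfying strong coherence, we look for a cardinal $\nu$ such that $A \leqk B$ implies $A \leqp^\nu B$. The candidate is $\nu = (\mrm{LS}_\mu(\rmod,\leqk) + |R| + \mu)^+$, taken regular and large enough that $(\rmod,\leqp^\nu)$ is a $\nu\mh\mrm{AEC}$ (Section~\ref{sect:lpure}).

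To prove $A \leqp^\nu B$, fix a system $\Sigma$ of fewer than $\nu$ linear equations with parameters $\bar a \subseteq A$ and a solution $\bar b \subseteq B$. By the Löwenheim–Skolem axiom for $\mu\mh\mrm{AEC}$s, choose $B_0 \leqk B$ containing $\bar a \bar b$ with $|B_0| < \nu$. Next we get a $\leqk$-relation inside $A$. Let $A_0 = A\cap B_0$. Rather than using $A_0$ directly, we use LS inside $A$ to pick $A_1 \leqk A$ containing $\bar a$ with $|A_1|<\nu$. Then $A_1 \leqk A \leqk B$, so $A_1 \leqk B$ by transitivity.

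The key idea is to trade the $\leqk$-relation for a direct summand relation, where solvability transfers trivially. Consider $A_1 \leq A_1 + B_0 \leq B$, with $A_1 \leqk B$. Strong coherence gives $A_1 \leqk A_1+B_0$, since every submodule is a module. Now $A_1 + B_0$ has size $<\nu$. We want $A_1$ to be a direct summand of some intermediate module containing a solution, because a summand retract $\pi$ maps $\bar b$ to a solution in $A_1 \subseteq A$. So the heart of the argument is:

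\emph{Claim.} If $\leqk$ is strongly coherent and refines $\oleq$, then for small $C$, $A_1 \leqk C$ implies $A_1$ is pure-injective-like in $C$, i.e. $\nu$-pure.

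To prove the Claim, consider $C \oplus C$ and the diagonal or pushout construction. Let $D = (C\oplus C)/\{(a,-a): a\in A_1\}$, the amalgam of two copies of $C$ over $A_1$. Each copy of $C$ is a summand of $C\oplus C$, and we would like $\leqk$-embeddings into $D$. The more promising route uses $\infty$-coherence together with the Löwenheim–Skolem downward closure of summands. Embed $C$ into $C \oplus X$, where $X$ is chosen so that the relevant map splits. The cleanest choice is a module $M$, such as a sufficiently saturated or pure-injective-hull-type object, with $A_1 \oleq M$ and $A_1 \leq C' \leq M$ for some $C'\supseteq$ a copy of $\bar b$-data, together with a $\leqk$-compatible map. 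This is the main obstacle: producing, from $\leqk$ and strong coherence alone, a summand situation transporting solutions. If this fails, we would instead argue by contradiction. Suppose $A \leqk B$ but $A$ is not $\nu$-pure in $B$ for every $\nu$. Then strong coherence applied to the chain of submodules $A \leq A + \langle \bar b_\alpha\rangle \leq B$ yields arbitrarily large "independent" witnesses, which we would play against the LS number to derive a contradiction.

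Granting cofinality, the "in particular" follows quickly. For each $\nu$, $(\rmod,\leqp^\nu)$ is stable by Theorem~\ref{theorem2}, since $\rmod$ is closed under the required operations, and hence almost stable. Since $\leqk$ refines $\leqp^\nu$, Corollary~\ref{cor5} transfers almost stability in $\lambda=\lambda^{<\nu}+\mrm{LS}_\nu$ to $(\rmod,\leqk)$, viewed as a $\nu\mh\mrm{AEC}$ (a $\mu\mh\mrm{AEC}$ is a $\nu\mh\mrm{AEC}$ for $\nu\geq\mu$ regular). Under $\infty\mh\mrm{AP}$, Corollary~\ref{cor5} then also gives stability.
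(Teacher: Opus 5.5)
Your cofinality step aims at the wrong inclusion. In this paper ``$\leqk_1$ refines $\leqk_2$'' means $M\leqk_2 N\Rightarrow M\leqk_1 N$. So cofinality means that for some $\nu$, $A\leqp^\nu B$ implies $A\leqk B$, i.e.\ $\leqp^\nu\subseteq\leqk$. You instead try to show $A\leqk B\Rightarrow A\leqp^\nu B$.

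Your Claim is false: strong coherence plus refining $\leqo$ does not force small $\leqk$-extensions to be $\nu$-pure. The plain submodule relation $\leq$ makes $(\rmod,\leq)$ an $\mrm{AEC}$, contains $\leqo$, and satisfies strong coherence trivially. Yet $2\mathbb{Z}\leq\mathbb{Z}$ is not even pure (the equation $2x=2$). So the ``main obstacle'' you hit is not technical, and the fallback contradiction argument cannot succeed.

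Your inclusion would also be useless for the transfer. Corollary~\ref{cor:alstabtransfermu} works because \emph{more} $\leqk$-embeddings make $\leqk$-orbital types coarser than $\leqp^\nu$-types, and this requires $\leqp^\nu\subseteq\leqk$. With $\leqk\subseteq\leqp^\nu$ it would move almost stability from $\leqk$ to $\leqp^\nu$, not conversely. Your last paragraph does invoke ``$\leqk$ refines $\leqp^\nu$'' in the correct sense, but that is the opposite of what your first part tries to prove.

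The missing argument uses the hypotheses the other way round.
\begin{itemize}
\item Let $\kappa=\mrm{LS}_\mu(\rmod,\leqk)\geq|R|+\mu$ and assume $A\leqp^{\kappa^+}B$.
\item By Lemma~\ref{lem:lsdirected}, write $A=\bigcup_{i\in I}A_i$ and $B=\bigcup_{j\in J}B_j$ as $\kappa^+$-directed systems of $\leqk$-submodels of size $\leq\kappa$.
\item Fix $i$ and pick $j$ with $A_i\leq B_j$; this is possible since $J$ is $\kappa^+$-directed and $|A_i|\leq\kappa$.
\item Since $|(A+B_j)/A|\leq\kappa$ and $|R|\leq\kappa$, the quotient is $\kappa^+$-presented. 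Fact~\ref{fact:lpuresplit} then gives $A\leqo A+B_j$, hence $A\leqk A+B_j$ because $\leqk$ refines direct summands.
\item Strong coherence applied to $B_j\leq A+B_j\leq B$ gives $B_j\leqk A+B_j$. This is exactly where strong coherence is needed, since $A+B_j$ is not known to be a $\leqk$-submodel of $B$.
\item From $A_i\leqk A\leqk A+B_j$ and $A_i\leq B_j\leqk A+B_j$, coherence gives $A_i\leqk B_j\leqk B$.
\item Since $(A_i)_{i\in I}$ is $\kappa^+$-directed, hence $\mu$-directed, $\mu$-smoothness yields $A\leqk B$.
\end{itemize}

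Thus $\leqp^{\kappa^+}\subseteq\leqk$. Your final paragraph then goes through with $\nu=\kappa^+$, which is regular. You use Theorem~\ref{thm:lpurestabcard} for the $\lambda$-stability of $(\rmod,\leqp^{\kappa^+})$ at $\lambda=\lambda^{|R|^{<\kappa^+}}$, and Corollary~\ref{cor:alstabtransfermu}, which only needs $(\rmod,\leqk)$ to be an abstract class.
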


	Coming back to applications of our methods to $R$-modules, we now focus on the case of $\mathbb{Z}$-modules, i.e., abelian groups. The point of the following theorem is to convince the reader that {many natural strengthenings of the submodule relation considered in module theory} fail to give rise to $\mrm{AEC}$s, yet {\em do} give rise to $\mu\mh\mrm{AEC}$s. This {makes it possible to undertake} a systematic {model-theoretic} analysis of the strong submodel relations studied in module and abelian group theory, a programme we believe can be carried much further. Indeed, we hope that this paper will inspire further {model-theoretic} case studies, which in turn may have concrete algebraic consequences, and in certain cases might lead to the solution of open problems  (see e.g. \cite{mazarimodeltheoretic, armidaherzog}).

The relation we focus on here is the {\em balanced subgroup} relation restricted {to} torsion-free abelian groups, which occupies a central place in Fuchs's book \cite{fuchs}. Its importance is hard to overstate: balanced subgroups provide the natural notion of exactness for torsion-free groups, the one whose projective objects are precisely the completely decomposable groups, and balanced-projective resolutions are the engine behind much of the structure theory of the subject. 
In the more general context of infinite-rank abelian groups, there are two natural generalizations of the definition of Butler groups, {namely, $B_1$-groups and $B_2$-groups}. Although these two definitions agree on finite-rank abelian groups, whether they {coincide} for infinite-rank groups {depends heavily} on the set-theoretic universe. In particular, the two notions agree if $V=L$, but may not agree if the continuum hypothesis does not hold (see~\cite[Section~6]{abgroupssettheory}). 
It is therefore natural to ask how the relation of being a balanced subgroup behaves from the {model-theoretic} point of view. Fuchs implicitly observes in \cite[Chapter~12, Section~2]{fuchs} that the balanced subgroup relation between torsion-free abelian groups gives rise to an $\aleph_1\mh\mrm{AEC}$. We prove, in addition, that this {$\aleph_1\mh\mrm{AEC}$} is remarkably well-behaved. In the body of the paper we state only stability, which follows swiftly from our ``stability transfer''; in Appendix~\ref{sect:appstabletfab} we go further and show that the balanced subgroup relation in fact admits a stable independence relation.

\begin{introcor}[Corollary~\ref{cor:balstableeaxmple}]\label{theorem4}
The following abstract classes are stable $\aleph_1\mh\mrm{AEC}$s:
\begin{enumerate}[(1), leftmargin=*]
\item $(\tfab, \leqb)$, where $\tfab$ denotes the class of torsion-free abelian groups and $\leqb$ denotes the balanced subgroup relation.
\item $(\redtfab, \leqb)$, where $\redtfab$ denotes the class of reduced torsion-free abelian groups.
\end{enumerate}
\end{introcor}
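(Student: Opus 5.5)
The plan is to derive this from the stability transfer of Corollary~\ref{cor5} together with Theorem~\ref{theorem2}, using $\aleph_1$-purity as the stronger relation. For torsion-free abelian groups, a subgroup $A\leq B$ is balanced when it is pure and every coset $b+A$ contains an element $a'$ with $\chi_B(b+a') = \chi_{B/A}(b+A)$, where $\chi$ denotes characteristic or type. Heights in torsion-free groups are determined by countably many divisibility equations. So if $A\leqp^{\aleph_1}B$, the countable system expressing ``$x\in b+A$ and $p^{k}\mid x$ for all the relevant prime powers'' is solvable in $B$, and hence has a solution in $A$ after shifting by $b$. The first step is therefore to verify that $\leqp^{\aleph_1}$ refines $\leqb$ on $\tfab$.

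More precisely, fix $b\in B$. For each prime $p$ let $h_p$ be the $p$-height of $b+A$ in $B/A$. When $h_p$ is finite, pick $a_p\in A$ with $b+a_p$ having $p$-height at least $h_p$ in $B$. When $h_p=\infty$, we need $b+a$ to be $p$-divisible to all finite orders for a single $a$; this is a countable system of equations. We must then find a single $a\in A$ that works for all primes at once. The whole condition is a countable system of linear equations in the variables $a$ and witnesses $y_{p,k}$, namely $p^k y_{p,k} = b + a$ for $k\le h_p$. Once we show this system has a solution in $B$, $\aleph_1$-purity makes it solvable in $A$.

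I expect the main obstacle to be solvability in $B$. Each finite fragment is solvable, by combining finitely many primes via the Chinese remainder theorem on coefficients, but the full countable system requires choosing the $a$ coherently. I would first try rewriting the system with $a$ eliminated, as relations among the $y$'s in $B/A$, and lifting those relations. If this fails, I would only assert the needed comparison on the relevant closed subclasses, or instead use a suitable refinement notion compatible with Lemma~\ref{lem:alstabtransfer}.

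Next, the class $\tfab$ is closed under finite direct sums and under subgroups, hence under $\aleph_1$-pure subgroups. It is also closed under $\aleph_1$-pure quotients, since $B/A$ with $A$ pure is torsion-free. The class $\redtfab$ is likewise closed under sums and subgroups. For $\aleph_1$-pure quotients, a divisible subgroup of $B/A$ lifts by purity: divisibility by all $n$ is a countable system. So Theorem~\ref{theorem2} gives that $(\tfab,\leqp^{\aleph_1})$ and $(\redtfab,\leqp^{\aleph_1})$ are stable, and hence almost stable. It remains to check that $(\tfab,\leqb)$ is an $\aleph_1\mh\mrm{AEC}$, following Fuchs: its union axioms hold for chains of countable cofinality and coherence is routine. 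Then Corollary~\ref{cor5} yields almost stability of the $\leqb$ classes. Finally, I would obtain $\infty\mh\mrm{AP}$ for $\leqb$ via pushouts: the pushout of balanced embeddings of torsion-free groups, modulo its torsion, remains balanced. This upgrades almost stability to stability.
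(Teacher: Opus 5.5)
Your architecture is the paper's (Theorem~\ref{thm:balansumup}): stability of the $\aleph_1$-pure classes from Theorem~\ref{theorem2}, then the transfer of Corollary~\ref{cor5}, then $\infty\mh\mrm{AP}$ from pushouts. But the inclusion the transfer rests on, $A\leqp^{\aleph_1}B\Rightarrow A\leqb B$ (in the paper's wording, $\leqb$ refines $\leqp^{\aleph_1}$), is not established, and as you set it up it cannot be. The system $p^k y_{p,k}=b+a$ has the parameter $b\notin A$, so $\aleph_1$-purity says nothing about it. Moreover, solving it in $A$ would put $b+a$, hence $b$, in $A$. The obstacle you flag is also not the real one: $a=-b$, $y_{p,k}=0$ solves the system in $B$. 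The missing step is to move the parameters into $A$:
\begin{itemize}
\item for each prime $p$ and finite $k\leq h_p$, choose $y_{p,k}\in B$ and $a_{p,k}\in A$ with $p^k y_{p,k}=b+a_{p,k}$;
\item the countable system $p^k w_{p,k}-x=-a_{p,k}$ has parameters in $A$ and is solved in $B$ by $x=-b$, $w_{p,k}=-y_{p,k}$;
\item so it has a solution $x=a\in A$, $w_{p,k}\in A$, and then $b+a=p^k(y_{p,k}+w_{p,k})$ for all these $(p,k)$.
\end{itemize}
Hence $\chi_B(b+a)\geq\chi_{B/A}(b+A)$; the reverse inequality is automatic, so $b+a$ is proper. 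The paper does this in one line (Remark~\ref{rem:baltrivia}\ref{rem:balaleph1pure}). By Fact~\ref{fact:balequiv}\ref{fact:balequiv3}, balancedness is purity plus lifting of maps from rank-one groups into $B/A$. Rank-one groups are countable, hence $\aleph_1$-presented, so Fact~\ref{fact:lpuresplit} applies. The same inclusion also gives the L\"owenheim--Skolem axiom for $(\tfab,\leqb)$, which you did not address.

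The $\infty\mh\mrm{AP}$ step is also only asserted. Binary pushouts give $\mrm{AP}$. To reach $\infty\mh\mrm{AP}$ via Lemma~\ref{lem:chainboundsinftyap} you need chain bounds, including at limits of countable cofinality, and the $\aleph_1\mh\mrm{AEC}$ axioms do not supply these. They hold here because $\leqb$ has full continuity: a directed union of pure subgroups is pure, so characteristics can be computed in the members, and Fact~\ref{fact:balequiv}\ref{fact:balequiv2} passes to the union. Balancedness of the pushout maps also needs an argument:
\begin{itemize}
\item no torsion has to be factored out, since $N$ is pure in $B_1\oplus B_2$ (Proposition~\ref{prop:wpushoutclosedlambda});
\item $g_1$ is balanced because maps from rank-one groups into $B_2/A$, the cokernel in $0\to B_1\to P\to B_2/A\to 0$, lift through $g_2$ (Proposition~\ref{prop:wpushoutclosedbal});
\item for $\redtfab$ you also need $P$ reduced, which follows from $N\leqb B_1\oplus B_2$ and closure under balanced quotients (Remark~\ref{rem:tcharsetbalhyp}).
\end{itemize}
Two smaller points. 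First, Corollary~\ref{cor5} needs almost stability at cardinals with $\lambda=\lambda^{<\aleph_1}+\mrm{LS}$, so use the explicit spectrum $\kappa=\kappa^{2^{\aleph_0}}$ of Theorem~\ref{thm:lpuresumup} rather than bare stability. Second, smoothness of $\leqb$ is what an $\aleph_1\mh\mrm{AEC}$ requires, and it is available for $\aleph_1$-directed systems, because the countable sequence in Fact~\ref{fact:balequiv}\ref{fact:balequiv2} then lies in a single member. Chains of countable cofinality, which you cite, are exactly the case this argument does not cover.
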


\subsection{Pure-injective envelopes}

Let us now explain the contents of Section~\ref{sect:envelopes}, which is centered on the class of pure-injective modules. Recall that an $R$-module $A$ is {\em pure-injective} if and only if it is {\em algebraically compact}, i.e., every finitely solvable system of equations with parameters in $A$ has a solution in $A$. Pure-injective modules are among the most important objects in the model theory of modules: every module {admits an essentially unique minimal pure embedding into a pure-injective module, namely its {\em pure-injective envelope}}; the indecomposable pure-injectives are precisely the points of the Ziegler spectrum; and the {model-theoretic} behaviour of a module is to a large extent governed by its pure-injective envelope. Despite this centrality, the class of pure-injective modules does not, in general, form an $\mrm{AEC}$: it is not closed under unions of chains, and so falls outside Shelah's original framework. It is precisely here that the additional flexibility of $\mu\mh\mrm{AEC}$s proves decisive. As we now explain, once one passes to the $\mu\mh\mrm{AEC}$ setting, pure-injective modules can be organized into very well-behaved classes for which strong structural results can be established.

In Section~\ref{sect:envelopes} we provide several examples of $\mu\mh\mrm{AEC}$s of pure-injective modules with the relation of being a direct summand, and we give, as an application of Theorem~\ref{theorem2}, a sufficient condition for such classes to be stable and tame. 

\begin{introthm}[Theorem~\ref{thm:dsumsumup}, Examples~\ref{ex:dsums}-\ref{ex:2dsums}]
\label{theorem6}
Let $\kaecgoth_\oplus = (\kaec, \leqo)$ be a $\mu\mh\mrm{AEC}$ of pure-injective $R$-modules such that $\kaec$ is closed under finite direct sums and direct summands. Then $\kaecgoth_\oplus$ has a stable independence relation, and it is therefore stable and tame. This applies in particular to the following $\mu\mh\mrm{AEC}$s (for appropriate values of $\mu$ depending only on $|R|$), where $\kaec$ is:
\begin{enumerate}[(1), leftmargin=*]
\item the class of all pure-injective {$R$-modules};
\item the class of all injective $R$-modules;
\item the class of all $\Sigma$-pure-injective $R$-modules;
\item the class of all $\Sigma$-injective $R$-modules.
\end{enumerate}
\end{introthm}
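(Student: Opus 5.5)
The plan is to reduce Theorem~\ref{theorem6} to Theorem~\ref{theorem2}. The key algebraic fact is that on pure-injective modules the relations $\leqo$ and $\leqp^\mu$ coincide, for every $\mu$. Indeed, $A\oleq B$ implies $A\leqp^\mu B$ in general. Conversely, if $A$ is pure-injective and $A\leqp^\mu B$, then in particular $A$ is pure in $B$. Since a pure-injective module is injective with respect to pure embeddings, the identity of $A$ extends along $A\hookrightarrow B$ to a retraction, so $A\oleq B$. Hence $(\kaec,\leqo)=(\kaec,\leqp^\mu)$ as abstract classes whenever $\kaec$ consists of pure-injective modules.

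Next I would check the closure hypotheses of Theorem~\ref{theorem2} for $\kaec$.
\begin{itemize}
\item Closure under finite direct sums is assumed.
\item A $\mu$-pure submodule $A$ of some $B\in\kaec$ need not be pure-injective in general, so I would not use this step as stated. Instead I would inspect the proof of Theorem~\ref{theorem2}, and expect that the closure conditions are used only on configurations arising inside the class, namely amalgams and pushouts of $\leqk$-embeddings. When the data are pure-injective and all embeddings are summand embeddings, a $\mu$-pure submodule appearing there is a summand, hence lies in $\kaec$ by closure under direct summands.
\item Similarly, the quotients arising are quotients by summands, so they are isomorphic to complementary summands and again lie in $\kaec$.
\end{itemize}
Concretely, I would re-run the construction of the stable independence relation from Theorem~\ref{theorem2}. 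There, $M_1,M_2$ are independent over $M_0$ when the canonical map from the pushout $M_1\oplus_{M_0}M_2$ into $M_3$ is a $\leqk$-embedding. For summand embeddings $M_0\oleq M_1$ and $M_0\oleq M_2$, we can write $M_1=M_0\oplus N_1$ and $M_2=M_0\oplus N_2$, so the pushout is $M_0\oplus N_1\oplus N_2\in\kaec$. The axioms (invariance, monotonicity, symmetry, existence, uniqueness, transitivity, witness property, local character) should then follow as in Theorem~\ref{theorem2}, or via the general equivalence between stable independence and cofibrant generation. Stability and tameness then follow from the existence of a stable independence relation, exactly as in Theorem~\ref{theorem2}.

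For the examples (1)--(4), I would verify two things:
\begin{itemize}
\item \textbf{Closure.} Each class is closed under finite direct sums and direct summands, which is standard for pure-injective, injective, $\Sigma$-pure-injective and $\Sigma$-injective modules.
\item \textbf{$\mu$-AEC structure.} $(\kaec,\leqo)$ is a $\mu$-AEC for suitable $\mu$ depending on $|R|$. The main axiom to check is closure under $\mu$-directed unions. I would take $\mu=(2^{|R|+\aleph_0})^+$, or simply $\mu>|R|+\aleph_0$, and argue as follows: a $\mu$-directed union of summand embeddings among pure-injectives is pure-injective, since algebraic compactness only concerns systems of at most $|R|+\aleph_0$ many pp-conditions, each of which is realized at some stage. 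The union is also a $\mu$-pure extension of each member, hence a summand by the first paragraph. I expect this to be the main obstacle, and it should correspond to the $\mu$-AEC verifications in Examples~\ref{ex:dsums}--\ref{ex:2dsums}.
\end{itemize}
The Löwenheim–Skolem number should be bounded via the existence of small pure-injective summands, for instance by hulls of small subsets.
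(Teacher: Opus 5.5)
Your reduction via the identity $\leqo=\leqp^\mu$ on pure-injectives is right. The pushout computation $M_0\oplus N_1\oplus N_2\in\kaec$ handles the pre-cellular part (weak stability), and the witness-property argument of Lemma~\ref{lem:lpureindepwitness} also goes through unchanged.

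The gap is in your second bullet. It is not true that the proof of Theorem~\ref{theorem2} uses closure under $\lambda$-pure submodules only on summand configurations. Right local character (Lemma~\ref{lem:lpureindeplocal}) rests on Lemma~\ref{lem:lpureloccharfund}. That lemma builds $A\leqp^\lambda B_2$ and $B_1'\leqp^\lambda C$ as unions of $\lambda$-chains obtained from the L\"owenheim--Skolem axiom of $(\rmod,\leqp^\lambda)$. Only afterwards is closure of $\kaec$ under $\lambda$-pure submodules invoked to get $A,B_1'\in\kaec$. If you run this with $\lambda=\mu$ and $\mu<\mu(R)$, then $A$ and $B_1'$ are merely $\mu$-pure submodules of pure-injectives. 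They need not be pure-injective, so the square $B_1'\dnf_A^C B_2$ need not even live in $\kaecgoth_\oplus$. Your phrase ``local character should follow as in Theorem~\ref{theorem2}'' is therefore exactly the unproved step.

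The missing idea is that your worry disappears once the purity index is large enough. By Remark~\ref{rem:lcomptrivia}\ref{rem:lcomptrivia2} and the definition of $\mu(R)$, a $\nu$-pure submodule of a pure-injective module is pure-injective whenever $\nu\geq\mu(R)$ (Lemma~\ref{lem:closurepinj}\ref{lem:closurepinj1}). Such a submodule is therefore a direct summand. Note that $\mu(R)\leq(|R|+\aleph_0)^+$ by Fact~\ref{fact:lcompact}. The paper accordingly takes $\nu=\mu(R)+\mu$, and then:
\begin{itemize}
\item $\nu$-pure submodules of members of $\kaec$ are summands, so they lie in $\kaec$;
\item $\nu$-pure quotients are isomorphic to the complements, so they also lie in $\kaec$;
\item closure under $\nu$-directed systems follows from $\nu\geq\mu$ and $\leqp^\nu=\leqo$ on $\kaec$.
\end{itemize}
Hence $(\kaec,\leqp^\nu)$ satisfies Hypothesis~\ref{hyp:lambdapure} verbatim (Lemma~\ref{lem:dsumhyplpurehyp}), and Theorem~\ref{theorem2} applies as a black box. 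Since $(\kaec,\leqp^\nu)$ and $(\kaec,\leqo)$ are the same abstract class, no re-inspection of the proof is needed. In your treatment of the examples you already pick $\mu>|R|+\aleph_0$, which lies in this range, so there you could have applied Theorem~\ref{theorem2} directly.

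Alternatively, you could keep the given $\mu$ and redo Lemma~\ref{lem:lpureloccharfund} using the L\"owenheim--Skolem axiom of the $\mu$-AEC $(\kaec,\leqo)$ itself. You would build $\leqo$-chains of length $\mu$ inside $\kaec$ and take unions by $\mu$-continuity and $\mu$-smoothness. The pushout map $r$ is then a pure embedding out of the pure-injective $P$, hence split. That works too, but it is a modification you must carry out, not something inherited from Theorem~\ref{theorem2}.

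Finally, for items (3)--(4) your directed-union argument only shows that the union is pure-injective. Membership in the $\Sigma$-classes needs the descending-chain characterization of $\Sigma$-pure-injectivity, which is preserved by $\aleph_1$-directed unions of pure embeddings.
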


Finally, in Subsection~\ref{sect:envclasses} we introduce the technology of {\em enveloping classes}, a flexible mechanism for transferring stability from a subclass to a larger class. This is a ``transfer principle'' of a rather different nature from the ``stability transfer'' discussed above: there the underlying class $\mathbf{K}$ was kept fixed and stability was moved along a chain of strengthenings of the submodel relation, whereas here the relation stays fixed and it is the class $\mathbf{K}$ itself that varies, stability passing from a smaller class to a larger one. Given abstract classes $\kaecgoth_1= (\kaec_1, \leqk)$ and $\kaecgoth_2 = (\kaec_2, \leqk)$ with $\kaec_2\subseteq \kaec_1$ and such that every structure in $\kaec_1$ can be \enquote{enveloped} inside a structure in $\kaec_2$, the stability of $\kaecgoth_2$ implies the stability of $\kaecgoth_1$. Despite its simplicity, this principle is remarkably effective: in combination with Theorem~\ref{theorem6} it allows us to deduce the following general result concerning abstract classes of $R$-modules, which in particular lets us recover and generalize, with a {substantially different proof}, results of Mazari-Armida from \cite{somestablenonelementary}. Our theorem is the following.

\begin{introthm}[Theorem~\ref{thm:pinjenvstable}, Examples~\ref{ex:envpinj}-\ref{ex:cotpinj}]
Let ${\kaecgoth_\pp = (\kaec, \leqp)}$ be an abstract class such that $\kaec$ is closed under pure-injective envelopes and $\kaecgoth_\oplus^\pinj = (\kaec\mh\pinj, \leqo)$ satisfies the hypotheses of Theorem~\ref{theorem6} for some regular $\mu$, where $\kaec\mh\pinj$ is the class of pure-injective modules in $\kaec$. Then $\kaecgoth_\pp$ is stable, and this applies in particular to the following abstract classes of modules:
\begin{enumerate}[(1), leftmargin=*]
\item $(R\mh\Superstable, \leqp)$, where $R\mh\Superstable$ is the class of superstable $R$-modules, in the sense of first-order model theory;
\item $(R\mh\cotorsion, \leqp)$, where $R\mh\cotorsion$ is the class of cotorsion $R$-modules;
\item $(\kaec, \leqp)$ when $\kaec$ contains all pure-injective $R$-modules (such as $\kaec=R\mh\cotorsion$).
\end{enumerate}
\end{introthm}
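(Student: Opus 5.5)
The plan has two parts. First, reduce the main claim to the statement that $(\kaec,\leqo)$ on pure-injectives is stable. Second, verify the hypotheses for each of the examples (1)--(3).

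For the reduction, I would use the enveloping-class principle of Subsection~\ref{sect:envclasses}. Take $\kaec_1=\kaec$ and $\kaec_2=\kaec\mh\pinj$. Every $M\in\kaec$ embeds purely into its pure-injective envelope $\mrm{PE}(M)$, and $\mrm{PE}(M)\in\kaec$ by hypothesis, so $\mrm{PE}(M)\in\kaec\mh\pinj$.

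I would then relate Galois types in $(\kaec,\leqp)$ over $M$ to Galois types in $(\kaec\mh\pinj,\leqo)$ over $\mrm{PE}(M)$, and check three things.
\begin{enumerate}[(a), leftmargin=*]
\item A pure embedding between pure-injectives is split, so on $\kaec\mh\pinj$ the relation $\leqp$ agrees with $\leqo$.
\item Envelopes are functorial up to isomorphism over the base: if $M\leqp N$, then $\mrm{PE}(M)$ embeds purely into $\mrm{PE}(N)$ over $M$. This uses injectivity of $\mrm{PE}(N)$ relative to pure embeddings, together with the fact that $\mrm{PE}(M)$ is a pure-essential extension of $M$, which makes the induced map injective and pure.
\item Consequently a type $\gtp(a/M;N)$ is determined by $\gtp(a/\mrm{PE}(M);\mrm{PE}(N))$ in the pure-injective class. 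The mapping is well defined because isomorphisms over $M$ extend to the envelopes.
\end{enumerate}
It follows that $|\gS(M)|\leq|\gS(\mrm{PE}(M))|$. Since $|\mrm{PE}(M)|\leq|M|^{|R|+\aleph_0}$, stability of $\kaecgoth^\pinj_\oplus$ in the cardinals $\lambda=\lambda^{|R|+\aleph_0}$ transfers to $\kaecgoth_\pp$. Theorem~\ref{thm:dsumsumup} supplies that stability, which is why the hypotheses of Theorem~\ref{theorem6} are imposed.

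For the examples, (3) is immediate: $\kaec\mh\pinj$ is the class of all pure-injectives, which is Example~(1) of Theorem~\ref{theorem6}. Closure of $\kaec$ under envelopes holds trivially, since the envelope is itself pure-injective. For (2), cotorsion modules contain all pure-injectives, so (2) is a special case of (3). For (1), I would use the following facts.
\begin{itemize}
\item A module is superstable exactly when every pp-definable lattice satisfies the relevant chain condition (Ziegler, Garavaglia).
\item $M\equiv\mrm{PE}(M)$, so superstable modules are closed under envelopes.
\item The pure-injective superstable modules are closed under finite sums and summands, since the pp-lattice conditions are preserved.
\item These modules form a $\mu$-AEC under $\leqo$ for suitable $\mu$, argued as in Examples~\ref{ex:dsums}--\ref{ex:2dsums}. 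Directed colimits of $\mu$-directed systems of split embeddings remain pure-injective once $\mu>|R|+\aleph_0$.
\end{itemize}

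I expect the main obstacle to be step (b)--(c). The difficulty is showing that the envelope construction is compatible with the amalgamation-free definition of Galois types, since types are defined through chains of embeddings and need not be transitive without AP. I would handle it by lifting each elementary step of the type-equivalence chain to the envelopes, using uniqueness of $\mrm{PE}$ over $M$.
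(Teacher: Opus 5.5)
Your proposal is correct and is essentially the paper's proof. The reduction is the envelope transfer of Lemma~\ref{lem:transferenvelope}\ref{lem:transferenvelope2}, with $(\kaec\mh\pinj,\leqo)$ a $\mu(R)$-envelope by Theorem~\ref{thm:envinjpinj}, combined with Theorem~\ref{thm:dsumsumup} (take $\lambda$ also satisfying $\lambda=\lambda^{|R|^{<(\mu(R)+\mu)}}$), and the examples are handled as in Corollary~\ref{cor:pinjenvstable} and Examples~\ref{ex:envpinj}--\ref{ex:cotpinj}. The obstacle you anticipate in (b)--(c) does not arise: only the easy direction is used, namely that equal $(\kaec\mh\pinj,\leqo)$-types over $\mrm{PE}(M)$ give equal $(\kaec,\leqp)$-types over $M$ (split embeddings are pure and $N\leqp\mrm{PE}(N)$), so you may fix one representative per type and never need the assignment to be well defined or to lift type-equivalence chains.
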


The same methods yield the following further result.

\begin{introthm}[Corollary~\ref{cor:injenvstable}, Examples~\ref{ex:injenvfirst}-\ref{ex:injenvsecond}]
Let $\kaecgoth_\emb = (\kaec, \leq)$ be an abstract class of $R$-modules such that $\kaec$ contains all injective $R$-modules. Then $\kaecgoth_\emb$ is stable. This applies in particular to the following abstract classes, all of which contain every injective $R$-module and thus satisfy the hypotheses of Corollary~\ref{cor:injenvstable}:
\begin{enumerate}[(1), leftmargin=*, series=mainthm:injenvall]
\item $(R\mh\cotorsion, \leq)$, where $R\mh\cotorsion$ is the class of cotorsion $R$-modules;
\item $(R\mh\rdinj, \leq)$, where $R\mh\rdinj$ denotes the class of $\mrm{RD}$-injective $R$-modules.
\end{enumerate}
Moreover, if the ring $R$ is left Noetherian, then the theorem can also be applied to the following abstract classes:
\begin{enumerate}[resume*=mainthm:injenvall]
\item $(R\mh\sigmapinj, \leq)$, where $R\mh\sigmapinj$ denotes the class of $\Sigma$-pure-injective $R$-modules;
\item $(R\mh\Superstable, \leq)$, where $R\mh\Superstable$ is the class of superstable $R$-modules.
\end{enumerate}
\end{introthm}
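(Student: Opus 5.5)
The plan is to reduce to the stability of the class of injective modules with the direct summand relation, via the enveloping-classes principle of Subsection~\ref{sect:envclasses}. Every $R$-module $M$ embeds into its injective envelope $E(M)$, which is injective and hence lies in $\kaec$ by hypothesis. Let $\kaecgoth_2 = (R\mh\inj, \leq)$. The first step is to check that for injective modules $A \leq B$, the submodule $A$ is automatically a direct summand of $B$. Indeed, injectivity of $A$ splits the inclusion. Thus on $R\mh\inj$ the relation $\leq$ coincides with $\leqo$. By Theorem~\ref{theorem6}(2), $(R\mh\inj, \leqo)$ is a $\mu\mh\mrm{AEC}$ with a stable independence relation, and hence it is stable. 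So $\kaecgoth_2$ is stable.

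Next I would apply the enveloping principle, taking $\kaec_1 = \kaec$ and $\kaec_2 = R\mh\inj \subseteq \kaec$, with the relation $\leq$ on both. I need to verify that each $M \in \kaec$ is enveloped in the required sense, i.e.\ by an embedding $M \leq E(M)$ with $E(M) \in \kaec_2$. I would also check that Galois types over $M$ computed in $\kaecgoth_1$ are determined by types over $E(M)$ in $\kaecgoth_2$. Concretely, take $(a, M, N)$ with $M \leq N$ in $\kaec$. Embed $N$ into $E(N)$. Since $E(M)$ is injective, we can choose a copy of $E(M)$ inside $E(N)$ containing $M$, and then $E(M)$ is a summand of $E(N)$. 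The assignment $\gtp_{\kaecgoth_1}(a/M;N) \mapsto \gtp_{\kaecgoth_2}(a/E(M);E(N))$ should be injective, up to the choice of embedding of $E(M)$. The reason is that equality of the $\kaecgoth_2$-types yields an isomorphism fixing $E(M)$ that witnesses equality in $\kaecgoth_1$ after restricting to $M$, since the relation is plain embedding and all intermediate models, being injective, lie in $\kaec$. Counting then gives $|\gS_{\kaecgoth_1}(M)| \leq |\gS_{\kaecgoth_2}(E(M))|$. Also $|E(M)| \leq (|M|+|R|+\aleph_0)^{?}$ is bounded in terms of $|M|$, so stability in suitable $\lambda$ transfers.

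The main obstacle is the non-uniqueness of the embedding of $E(M)$ over $M$ into $E(N)$. Two realizations might use different copies of $E(M)$, and the corresponding $\kaecgoth_2$-types would not be comparable. The first remedy I would try is to use the essential-extension property: any two embeddings of $E(M)$ over $M$ differ by an automorphism of $E(N)$ fixing $M$. So I would fix one copy and absorb the difference into the amalgam. If this is delicate, I would instead count types over $M$ via types over $E(M)$ modulo the action of $\mathrm{Aut}_M(E(M))$. This only shrinks the count, so stability of $\kaecgoth_2$ in $\lambda$ with $\lambda = \lambda^{|R|+\aleph_0}$ would still give stability of $\kaecgoth_\emb$ in $\lambda$. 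This is exactly the content of Corollary~\ref{cor:injenvstable}, obtained from the enveloping theorem with injective envelopes in place of pure-injective envelopes, as in Theorem~\ref{thm:pinjenvstable}.
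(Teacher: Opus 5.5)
Your route to the main claim is the paper's: $\leq$ and $\leqo$ agree on injective modules, and $(R\mh\inj,\leqo)$ is stable by Theorem~\ref{thm:dsumsumup}. It is a $\gamma(R)$-envelope of $\kaecgoth_{\emb}$ because $\kaec$ contains every $E(M)$ (Theorem~\ref{thm:envinjpinj}\ref{thm:envinjpinj2}), and stability is pulled back exactly as in Lemma~\ref{lem:transferenvelope}\ref{lem:transferenvelope2}.

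The ``main obstacle'' you raise is not one, and neither remedy is needed. You never need $p\mapsto q$ to be well defined, only an injective choice of representatives:
\begin{enumerate}[(1), leftmargin=*]
\item Fix one module $E(M)\supseteq M$, and for each type over $M$ pick a representative $(c,M,N)$.
\item Injectivity of $E(N)$ (not of $E(M)$) extends $M\subseteq E(N)$ to a map $E(M)\to E(N)$. This map is injective because $M$ is essential in $E(M)$.
\item After renaming $E(N)$ over $M$, which does not change the type over $M$, you may assume $E(M)\le E(N)$. All chosen $\kaecgoth_2$-types now live over the same $E(M)$.
\item If two of them coincide, the witnessing chain of $E^{\at}$-steps runs through injective modules, which lie in $\kaec$, joined by embeddings. (The witnesses are amalgams, not ``an isomorphism fixing $E(M)$''.) So the chain also witnesses equality in $\kaecgoth_{\emb}$ over $E(M)$, hence over $M$.
\item Since $(c,M,N)$ is $E^{\at}$-related to $(c,M,E(N))$ via $N\le E(N)$, the original types are equal.
\end{enumerate}
Your automorphism claim is in fact true, since two injective hulls of $M$ inside $E(N)$ have a common complement. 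But such an automorphism need not fix $N$ or $c$, so it would not give well-definedness anyway.

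The exponent you left open is Fact~\ref{fact:triviainj}\ref{fact:triviainj1}: $|E(M)|\le(|M|+|R|)^{<\gamma(R)}$. Hence $|E(M)|=|M|$ whenever $|M|=\kappa=\kappa^{\mu_1}$ with $\mu_1=|R|^{<(\mu(R)+\gamma(R))}$, and these are cardinals in which $(R\mh\inj,\leqo)$ is stable.

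What is actually missing is the second half of the statement: you never verify that the four listed classes contain all injective modules.
\begin{enumerate}[(1), leftmargin=*]
\item For cotorsion modules this is immediate: an injective module $E$ satisfies $\Ext(F,E)=0$ for every $F$.
\item For $\mrm{RD}$-injective modules it is also immediate: an injective module is a direct summand of every module containing it.
\item For $\Sigma$-pure-injective modules you need a genuine ring-theoretic input. Over a left Noetherian ring, arbitrary direct sums of injective modules are injective (Bass--Papp). So every injective module is $\Sigma$-injective, hence $\Sigma$-pure-injective.
\item For superstable modules, use the previous item: a $\Sigma$-pure-injective module has no infinite descending chain of pp-definable subgroups at all, so it is superstable.
\end{enumerate}
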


\section{Preliminaries}


\subsection{Abstract classes}

In this subsection we fix the notation and we introduce most of the notions regarding abstract classes and $\mu$-abstract elementary classes which will be needed in the paper. The following definition is credited to Grossberg. 
\begin{definition}[\protect{\cite[Definition~2.1]{muaecvasey}}]
\label{def:abstractclass}
We say that $\kaecgoth = (\kaecg, \leqkg) = (\kaec, \leqk)$ is an \tdef{abstract class} ($\mrm{AC}$ for short) provided the following conditions are met:
\begin{enumerate}[(1), leftmargin=*]
\item $\kaec$ is a non-empty class of $\tau$-structures for a fixed language $\tau_\kaecgoth = \tau$;
\item $\leqk$ is a reflexive and transitive relation on $\kaec$, and if $M \leqk N$, then $M \leq N$ (where $M \leq N$ denotes the submodel relation);
\item if $f: N\to N'$ is an isomorphism and $N\in \kaec$, then $N'\in \kaec$, and if $M\in\kaec$ and $M\leqk N$, then $f(M)\in\kaec$ and $f(M) \leqk N'$.
\end{enumerate}
\end{definition}

Abstract classes were studied by the second author in \cite{pqhcp1, pqhcp2}, and more recently by Mazari-Armida and Trlifaj in \cite{mazaridecon} in a module-theoretic context.

\begin{notation}
We will often write $\kaecgoth =(\kaec, \leqk)$, but when $\kaec$ or $\leqk$ is not clear from the context we will also use the notation $\kaecgoth = (\kaecg, \leqkg)$.
\end{notation}

\begin{definition}
Let $\kaecgoth = (\kaec, \leqk)$ be an $\mrm{AC}$. We say that an embedding $f : M \to N$ is a \tdef{$\kaecgoth$-embedding} if $M,N\in \kaec$ and $f(M) \leqk N$.
\end{definition}

\begin{definition}
\label{def:mudirected}
Let $\kaecgoth = (\kaec, \leqk)$ be an $\mrm{AC}$, $\mu$ a regular cardinal, and $\alpha$ an ordinal.
\begin{enumerate}[(1), leftmargin=*]
\item \label{def:mudirectposet} We say that a poset $P = (P, \leq)$ is \tdef{$\mu$-directed} provided that every subset of $P$ of cardinality smaller than $\mu$ has an upper bound in $P$.
\item \label{def:mudirectsyst} We say that a family $(M_i\mid i\in I)$ of $\tau_\kaecgoth$-structures is a \tdef{$\mu$-directed system in $\kaecgoth$} if $I$ is a $\mu$-directed poset, $M_i\in \kaec$ for every $i\in I$, and $i< j$ implies $M_i \leqk M_j$. When $I$ is a chain, we shall say that $(M_i\mid i\in I)$ is a \tdef{chain in $\kaecgoth$}.
\item \label{def:smoothchain} We say that a chain $(M_i\mid i< \ordnl)$ in $\kaecgoth$ is \tdef{smooth} (or \tdef{continuous}) if for every limit ordinal $i < \ordnl$ one has $M_i= \bigcup_{j < i} M_j$.
\end{enumerate}
We shall omit the reference to $\mu$ when $\mu= \aleph_0$. Moreover, when the $\mrm{AC}$ $\kaecgoth$ taken into consideration is clear from the context, we shall refer to $(M_i\mid i\in I)$ simply as a $\mu$-directed system (resp. chain). 
\end{definition}

In Definition~\ref{def:mudirected} we required $\mu$ to be a regular  cardinal. Of course, one can give the same definitions with $\mu$ a singular cardinal, but it is easily seen that in such a case a poset is $\mu$-directed if and only if it is $\mu^+$-directed. Therefore, there is no loss of generality in restricting to $\mu$ a regular cardinal.

\begin{definition}
\label{def:propertiesac}
Let $\kaecgoth =  (\kaec, \leqk)$ be an $\mrm{AC}$, and $\kappa,\mu$ be infinite cardinals with $\kappa \geq |\tau_\kaecgoth|$ and $\mu$ regular. 
\begin{enumerate}[(1), leftmargin=*]
\item \label{def:coherence}We say that $\kaecgoth$ satisfies \tdef{coherence} if $M_0, M_1, M_2\in \kaec$ with $M_0\leqk M_2$, $M_0\leq M_1\leqk M_2$, implies $M_0\leqk M_1$.
\item \label{def:LS} We say that $\kaecgoth$ \tdef{has $\mrm{LS}$ at $\kappa$} if for every $M\in \kaec$ and $A\subseteq M$ with $|A| \leq \kappa$ there is  $M_0\in\kaec$ with $A\subseteq M_0\leqk M$ and $|M_0| \leq \kappa$. 
\item \label{def:muclosedchains} We say that $\kaecgoth$ \tdef{is closed under chains} if for every chain indexed by an ordinal $(M_i\mid i<\ordnl)$ in $\kaecgoth$ one has $\bigcup_{i<\ordnl} M_i \in \kaec$. We say that $\kaecgoth$ \tdef{is closed under smooth chains} if the previous holds for every smooth chain.
\item \label{def:muclosed} We say that $\kaecgoth$ \tdef{is closed under $\mu$-directed systems}, or \tdef{$\mu$-closed}, if for every $\mu$-directed system $(M_i\mid i\in I)$ in $\kaecgoth$ one has $\bigcup_{i\in I} M_i \in \kaec$. We omit the reference to $\mu$ when $\mu= \aleph_0$.
\item \label{def:mucontinuity} We say that $\kaecgoth$ \tdef{has $\mu$-continuity} if for every $\mu$-directed system $(M_i\mid i\in I)$ in $\kaecgoth$ one has $M=\bigcup_{i\in I}M_i\in \kaec$ and $M_i \leqk M$ for every $i\in I$. We say that $\kaecgoth$ has \tdef{continuity} if it has $\aleph_0$-continuity.
\item \label{def:musmoothness} We say that $\kaecgoth$ \tdef{has $\mu$-smoothness} if for every $N\in \kaec$ and every $\mu$-directed system $(M_i\mid i\in I)$ in $\kaecgoth$ with $M_i\leqk N$ and $M=\bigcup_{i\in I}M_i\in \kaec$, we have $M \leqk N$. We say that $\kaecgoth$ has \tdef{smoothness} if it has $\aleph_0$-smoothness.
\end{enumerate}
\end{definition}

\begin{remark}
\label{rem:cofmucont}
If $\alpha$ is an ordinal with $\cf(\alpha) \geq \mu$, then every cofinal subset of $\alpha$ must have cardinality $\geq \mu$. Therefore, $\alpha$ is a $\mu$-directed poset. In particular, every $\kaecgoth$-chain of the form $(M_i\mid i<\alpha)$ is $\mu$-directed.
\end{remark}

\begin{fact}[\protect{\cite[Corollary 1.7 and the following Remark]{adamekbook}}]
\label{fact:continuitysmooth}
Let $\kaecgoth = (\kaec, \leqk)$ be an $\mrm{AC}$. Then the following are equivalent:
\begin{enumerate}[(1), leftmargin=*]
\item \label{fact:continuitysmooth1}$\kaecgoth$ has continuity (cf.~\ref{def:propertiesac}\ref{def:mucontinuity}).
\item \label{fact:continuitysmooth2}If $(M_i)_{i<\ordnl}$ is a smooth $\kaecgoth$-chain, then $M =\bigcup_{i<\ordnl}M_i\in\kaec$ and $M_i \leqk M$ for every $i<\ordnl$.
\end{enumerate}
\end{fact}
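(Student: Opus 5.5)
The implication (1)$\Rightarrow$(2) is immediate. A smooth chain $(M_i)_{i<\ordnl}$ is indexed by an ordinal, which is a directed poset, so it is an $\aleph_0$-directed system in $\kaecgoth$. Continuity then gives $\bigcup_{i<\ordnl}M_i\in\kaec$ and $M_i\leqk \bigcup_{i<\ordnl} M_i$ for every $i$. Smoothness of the chain is not even needed here.

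For (2)$\Rightarrow$(1) I will argue by induction on the cardinality of the index poset, following the classical Iwasawa-type argument in \cite{adamekbook}. The statement to prove by induction on $\kappa$ is: for every directed system $(M_i\mid i\in I)$ in $\kaecgoth$ with $|I|\leq\kappa$, we have $M=\bigcup_{i\in I}M_i\in\kaec$ and $M_i\leqk M$ for all $i$.

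\emph{Finite case.} If $I$ is finite, it has a maximum $i^*$, so $M=M_{i^*}$ and the claim follows from reflexivity and transitivity of $\leqk$.

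\emph{Infinite case.} If $|I|=\kappa$ is infinite, write $I$ as a continuous increasing union $I=\bigcup_{\alpha<\kappa}I_\alpha$ of directed subposets with $|I_\alpha|<\kappa$. To build these, enumerate $I$ and close each stage under a choice function picking upper bounds of pairs; this closure adds only countably many elements at each step, so $|I_\alpha|\leq|\alpha|+\aleph_0<\kappa$ when $\kappa$ is uncountable. For $\kappa=\aleph_0$ one instead takes a cofinal increasing sequence $i_0\leq i_1\leq\cdots$ in $I$, and the chain $(M_{i_n})_{n<\omega}$ is trivially smooth and has the same union. Put $N_\alpha=\bigcup_{i\in I_\alpha}M_i$. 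By the inductive hypothesis $N_\alpha\in\kaec$ and $M_i\leqk N_\alpha$ for all $i\in I_\alpha$. At limit $\alpha$ we have $N_\alpha=\bigcup_{\beta<\alpha}N_\beta$ by continuity of the decomposition, so the chain of the $N_\alpha$ is smooth once we know it is a $\leqk$-chain.

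\emph{Main obstacle.} The difficulty is exactly showing $N_\beta\leqk N_\alpha$ for $\beta<\alpha$. Without coherence, knowing $M_i\leqk N_\beta$ and $M_i\leqk N_\alpha$ does not immediately yield $N_\beta\leqk N_\alpha$. To handle this I will strengthen the inductive claim to: for directed $J\subseteq J'\subseteq I$ of size $\leq\kappa$, one has $\bigcup_{J}M_i\leqk\bigcup_{J'}M_i$. This is proved within the same induction by running the chain decomposition of $J'$ compatibly with one of $J$, namely taking $J_\alpha=J\cap J'_\alpha$ with both closed under the choice function. The comparison of unions then reduces to comparisons of smaller-size unions plus the union step given by (2).

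With this strengthening, (2) applied to the smooth chain $(N_\alpha)_{\alpha<\kappa}$ yields $M=\bigcup_\alpha N_\alpha\in\kaec$ and $N_\alpha\leqk M$. Transitivity then gives $M_i\leqk N_\alpha\leqk M$ for every $i$, which completes the induction.
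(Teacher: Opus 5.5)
Your direction (1)$\Rightarrow$(2), the finite case, and the countable case (via a cofinal $\omega$-sequence, which is trivially smooth) are fine. The gap is the step you yourself flag as the main obstacle, and the proposed strengthening does not remove it. When you run your induction for $J\subseteq J'$, you get $P_\alpha=\bigcup_{i\in J_\alpha}M_i\leqk Q_\alpha=\bigcup_{i\in J'_\alpha}M_i$, hence $P_\alpha\leqk Q:=\bigcup_{i\in J'}M_i$ for all $\alpha$, and $P:=\bigcup_\alpha P_\alpha\in\kaec$ by (2). Passing from ``every $P_\alpha\leqk Q$'' to ``$P\leqk Q$'' is exactly the smoothness axiom, and (2) does not provide it. Condition (2) only compares a stage of a smooth chain with the union of \emph{that} chain. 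Moreover, $P$ and $Q$ need not sit in a common smooth chain: no single $M_k$ contains $P$, and closing $J\cup J'_\alpha$ under upper bounds destroys the size bound. Coherence would not help either.

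In fact this step cannot be repaired. For arbitrary ACs, (2) does not imply (1), and your strengthened claim already fails for countable $J\subseteq J'$. Let $\tau=\{<\}$, let $\kaec$ be the class of nonempty well-orders, and put $M\leqk N$ iff $M$ is an initial segment of $N$ and either $M=N$, or $M$ has a largest element and $N$ is countable. One checks that this is a coherent AC. A proper $\leqk$-extension must start from a structure with a largest element and end in a countable one. By smoothness, a limit of infinitely many distinct stages is their union and so has no largest element. Hence a non-constant smooth chain has at most $\omega+1$ distinct values, all countable, and (2) holds. On the other hand, $(\xi+1\mid\xi<\omega_1)$ is a (non-smooth) $\kaecgoth$-chain with union $\omega_1$, and $\{0\}\not\leqk\omega_1$, so continuity fails. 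Inside this chain, $\bigcup_{n<\omega}(n+1)=\omega\not\leqk\omega+2$, which refutes your claim for $J=\omega\subseteq J'=\omega+2$. The argument behind \cite{adamekbook} obtains the connecting maps $N_\beta\to N_\alpha$ from the universal property of colimits, and for an AC that universal property amounts to smoothness. If you add $\aleph_0$-smoothness to the hypotheses and carry through your induction that unions of directed subsystems of size ${<}\kappa$ also satisfy it, then $N_\beta\leqk N_\alpha$ follows from $M_i\leqk N_\alpha$ for $i\in I_\beta$, and your Iwasawa-type argument goes through.
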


\begin{remark}
\label{rem:continuityetc}
\begin{enumerate}[(1), leftmargin=*]
\item \label{rem:continuityetc1} Just like in Definition~\ref{def:propertiesac}\ref{def:mucontinuity}, one could have defined continuity with respect to chains indexed by ordinals of cofinality $\geq \mu$. This condition is equivalent to $\mu$-continuity if $\mu = \aleph_0$ by Fact~\ref{fact:continuitysmooth}. But in general, when $\mu > \aleph_0$, the conditions need not be equivalent (see \cite[Exercise 1.c]{adamekbook}).
\item \label{rem:continuityetc2} In the following we shall  work with abstract classes which are not necessarily closed under arbitrary directed systems (cf.~\ref{def:propertiesac}\ref{def:muclosed}). Therefore, when we consider a chain $(M_i\mid i< \ordnl)$ for $\ordnl$ an ordinal, it could very well be the case that such a chain is not smooth (cf.~\ref{def:mudirected}\ref{def:smoothchain}). 
\end{enumerate}
\end{remark}

The following definition was originally introduced in \cite[Definition 2.2]{muaecvasey}.

\begin{definition}
\label{def:muaec}
Let $\kaecgoth =(\kaec, \leqk)$ be an $\mrm{AC}$ and $\mu$ a regular cardinal. We say that $\kaecgoth$ is a \tdef{$\mu$-Abstract Elementary Class} ($\mu\mh\mrm{AEC}$ for short) when we have:
\begin{enumerate}[(1), leftmargin=*] 
\item $\kaecgoth$ satisfies coherence (cf.~\ref{def:propertiesac}\ref{def:coherence});
\item \label{def:muaec_tvaxioms}(\tdef{Tarski-Vaught axioms}) $\kaecgoth$ has $\mu$-continuity and $\mu$-smoothness (cf.~\ref{def:propertiesac}\ref{def:mucontinuity}-\ref{def:musmoothness});
\item (\tdef{L\"owenheim-Skolem axiom}) \label{def:muaec_lstaxiom} There exists a cardinal $\lambda = \lambda^{<\mu}\geq |\tau_\kaecgoth|+\mu$ such that for any $M\in \kaec$ and $A\subseteq M$ there is $M_0\in \kaec$ with $A\subseteq M_0\leqk M$ and $|M_0| \leq |A|^{<\mu} + \lambda$. The least such $\lambda$ is called the \tdef{L\"owenheim-Skolem number} of $\kaecgoth$ (\enquote{$\mrm{LS}$ number} for short), and is denoted by $\mrm{LS}_\mu(\kaecgoth)$.
\end{enumerate}
The L\"owenheim-Skolem axiom will also be called the $\mrm{LS}$ axiom. 
\end{definition}

\begin{remark}
\label{rem:defmuaec}
\begin{enumerate}[(1), leftmargin=*]
\item When $\mu= \aleph_0$ in Definition~\ref{def:muaec} we simply get the definition of \tdef{abstract elementary class} ($\mrm{AEC}$ for short). Standard references for abstract elementary classes are the classical \cite{shelahbook1,shelahbook2}, and the book \cite{categoricity}.
\item Usually, abstract elementary classes are defined as in Definition~\ref{def:muaec} but requiring continuity and smoothness to only hold for smooth chains indexed by limit ordinals (see also Remark~\ref{rem:continuityetc}). This definition is known to be equivalent to the definition we have given \cite[Example (1) following Remark~2.3]{muaecvasey}.
\item Notice the dependence on $\mu$ in $\mrm{LS}_\mu(\kaecgoth)$ from~\ref{def:muaec}\ref{def:muaec_lstaxiom}. When $\mu$ is clear from the context we shall omit it and simply write $\mrm{LS}(\kaecgoth)$.
\item \label{rem:defmuaec3}If $\kaecgoth$ is a $\mu\mh\mrm{AEC}$ and $\lambda = \lambda^{<\mu} + \mrm{LS}_\mu(\kaecgoth)$, then $\kaecgoth$ satisfies $\mrm{LS}$ at $\lambda$ (cf.~\ref{def:propertiesac}\ref{def:LS}).
\end{enumerate}
\end{remark}
\begin{definition}
\label{def:amprop}
Let $\kaecgoth =(\kaec, \leqk)$ be an $\mrm{AC}$. 
\begin{enumerate}[(1), leftmargin=*]
\item \label{def:ampropap} We say that $\kaecgoth$ satisfies the \tdef{amalgamation property} ($\mrm{AP}$ for short) if for any pair of $\kaecgoth$-embeddings $(f_1: M_0 \to M_1, f_2: M_0\to M_2)$ there is a pair of $\kaecgoth$-embeddings $(g_1: M_1\to N,g_2: M_2\to N)$ with $g_1f_1 = g_2f_2$.
\item \label{def:ampropjep} We say that $\kaecgoth$ satisfies the \tdef{joint embedding property} ($\mrm{JEP}$ for short) if for any $M_\ell\in \kaec$, with $\ell\in\{1,2\}$, there are $N\in \kaec$ and $\kaecgoth$-embeddings $f_\ell :M_\ell \to N$.
\end{enumerate}
\end{definition}

\begin{remark}
\label{rem:amalgcutpaste}
A standard cut-and-paste argument actually shows the following:
\begin{enumerate}[(1), leftmargin=*]
\item \label{rem:amalgcutpaste1} If $\kaecgoth$ satisfies the amalgamation property, then for any $M_0,M_1,M_2\in\kaec$ and any pair of $\kaecgoth$-embeddings $(f_1: M_0 \to M_1, f_2: M_0\to M_2)$ there is $N\in\kaec$ with $M_1\leqk N$ and a $\kaecgoth$-embedding $g:M_2\to N$ such that $f_1 = gf_2$. 
\item \label{rem:amalgcutpaste2} If $\kaecgoth$ satisfies the joint embedding property, then for any $M_\ell\in \kaec$, with $\ell \in \{1, 2\}$, there are $M_1\leqk N\in \kaec$ and a $\kaecgoth$-embedding $f: M_2 \to N$ .
\end{enumerate}
\end{remark}

\begin{notation}
$\bar a,\, \bar b,\, \bar c$ always denote sequences of arbitrary length, possibly infinite. We write $\bar a\in M^{<\infty}$ when we mean that $\bar a$ is a sequence in $M$.
\end{notation}

We will now introduce orbital types, originally introduced by Shelah in \cite{universalclasses}. They are, in the context of abstract classes, the semantic analogue of first-order types. The following definitions are standard, see also \cite{vaseyinfst} for another reference.

\begin{definition}
\label{def:eqrelorbtypes}
Let $\kaecgoth = (\kaec, \leqk)$ be an $\mrm{AC}$. 
\begin{enumerate}[(1), leftmargin=*]
\item \label{def:eqrelorbtypes1} For triples  $(\bar b_\ell, A_\ell, N_\ell)$, $A_\ell\subseteq N_\ell\in\kaec$, $\bar b_\ell$ a sequence in $N_\ell^{<\infty}$, with $\ell\in \{1, 2\}$, we define a binary relation $(\bar b_1, A_1, N_1) E_\kaecgoth^\at(\bar b_2, A_2, N_2)$ if $A:= A_1 = A_2$ and there are $\kaecgoth$-embeddings $f_\ell: N_\ell \to N$, with $\ell\in \{1,2\}$, such that $f_1(\bar b_1) = f_2(\bar b_2)$ and $f_1\restriction A = f_2\restriction A = \id_A$.
\item \label{def:eqrelorbtypes2} With $E_\kaecgoth$ we denote the transitive closure of $E_\kaecgoth^\at$.
\end{enumerate}
\end{definition}

\begin{remark}
\label{rem:equivrel}
\begin{enumerate}[(1), leftmargin=*]
\item \label{rem:equivrel1} It is easily seen that $E_\kaecgoth^\at$ is reflexive and symmetric,  thus $E_\kaecgoth$ is an equivalence relation.
\item \label{rem:equivrel2} If $B\subseteq A\subseteq N_\ell$, with $\ell\in \{1, 2\}$, then:
\[
(\bar b_1, A, N_1)E_\kaecgoth^\at (\bar b_2, A, N_2)\;\Rightarrow\;(\bar b_1, B, N_1) E_\kaecgoth^\at (\bar b_2, B, N_2).
\]
In particular, $(\bar b_1, A, N_1)E_\kaecgoth (\bar b_2, A, N_2)$ implies $(\bar b_1, B, N_1) E_\kaecgoth(\bar b_2, B, N_2)$. 
\item \label{rem:equivrel3} \cite[Fact 2.17]{vaseyinfst} If $\kaecgoth$ satisfies the amalgamation property (cf.~\ref{def:amprop}\ref{def:ampropap}), then $E_\kaecgoth^\at$ coincides with $E_\kaecgoth$.
\end{enumerate}
\end{remark}

The following definition  is standard, item \ref{def:nottypes2} is from \cite[Definition 7.8]{vaseynotes}.

\begin{definition}\label{def:nottypes} 
Let $\kaecgoth =(\kaec, \leqk)$ be an $\mrm{AC}$.
\begin{enumerate}[(1), leftmargin=*]
\item \label{def:nottypes1} Given $(\bar{b}, A, N)$, where $N \in \kaec$, $A \subseteq N$, and $\bar{b}$ is a sequence in $N$, the \tdef{orbital type} (also called the \tdef{Galois type}) of $\bar{b}$ over $A$ in $N$ within $\kaecgoth$, denoted by $\gtp_{\kaecgoth}(\bar{b}/A; N)$, is the equivalence class of $(\bar{b}, A, N)$ modulo $E_{\kaecgoth}$ (cf.~\ref{def:eqrelorbtypes}).
\item \label{def:nottypes2} If $M,N\in\kaec$ with $M\leqk N$, $\pi:M\to M'$ is an isomorphism and $p=\gtp_\kaecgoth(\bar b/M; N)$ an orbital type, we define $\pi(p)$ as $\gtp_\kaecgoth(\pi'(\bar b)/M'; N')$ where $\pi':N\to N'$ is some (any) isomorphism extending $\pi$. $\pi(p)$ is called the \tdef{image of $p$ via $\pi$}.
\item \label{def:nottypes3} Given $p= \gtp_\kaecgoth(\bar b/A; N)$ an orbital type and $B\subseteq A$, we let $p\restriction B = \gtp_\kaecgoth(\bar b/B; N)$. This is well-defined because of~\ref{rem:equivrel}\ref{rem:equivrel2}, and it is called the \tdef{restriction of $p$ to $B$}. In such a case, we say that $p$ \tdef{extends} $p\restriction B$.
\item \label{def:nottypes4} Given $p =\gtp_\kaecgoth(\bar b/A; N)$ an orbital type and $A\subseteq M\in \kaec$, we say that $p$ \tdef{is realized in} $M$ if there is $\bar c\in M^{<\infty}$ such that $\gtp_\kaecgoth(\bar b/A; N) = \gtp_\kaecgoth(\bar c/A; M)$.
\end{enumerate}
In the following, we may simply write $\gtp(\bar b/A; N)$ when the reference to $\kaecgoth$ is clear.
\end{definition}

\begin{definition}
\label{def:typespace}
Let $\kaecgoth =(\kaec, \leqk)$ be an $\mrm{AC}$ and $\gamma\in\mrm{Ord}$.
\begin{enumerate}[(1), leftmargin=*]
\item \label{def:typespace1} If $A\subseteq N\in \kaec$, let $\gS_\kaecgoth^\gamma(A; N) = \{\gtp_\kaecgoth(\bar c/A; N) \mid \bar c\in N^\gamma\}$.
\item \label{def:typespace2} If $M \in {\kaec}$, let $\gS^\gamma_{\kaecgoth}(M) = \{\gtp_\kaecgoth(\bar{b}/M; N) \mid M \leqk N \in \kaec \text{ and } \bar{b} \in N^\gamma\}$.
\end{enumerate}
In a similar manner we can define $\gS_\kaecgoth^{<\gamma}(A;N)$ and $\gS_\kaecgoth^{<\infty}(A;N) = \bigcup_{\gamma\in\mrm{Ord}}\gS^\gamma_\kaecgoth(A;N)$ (resp.\ $\gS_\kaecgoth^{<\gamma}(M)$ and $\gS_\kaecgoth^{<\infty}(M)$). When $\gamma = 1$, we may write $\gS_\kaecgoth(A; N)$ instead of $\gS_\kaecgoth^1(A;N)$ (resp.\ $\gS_{\kaecgoth}(M)$ instead of $\gS^1_{\kaecgoth}(M)$).
\end{definition}
The following fact ensures that in $\mu\mh\mrm{AEC}$s the set of orbital types of a fixed length over a model is a set and not a proper class, and we can actually give an explicit bound on its size.

\begin{fact}[\protect{\cite[Remark 7.7]{vaseynotes}}]
\label{fact:cardorbtypes}
Let $\kaecgoth=(\kaec, \leqk)$ be a $\mu\mh\mrm{AEC}$, $\gamma$ an ordinal, and $M\in\kaec$. Then $|\gS_\kaecgoth^\gamma(M)|\leq 2^{(|M|+\gamma +\mrm{LS}_\mu(\kaecgoth))^{<\mu}}$.
\end{fact}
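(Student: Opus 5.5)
We argue by a counting argument on representatives of the $E_\kaecgoth$-classes, using the Löwenheim--Skolem axiom to shrink every witness to a model of controlled size. Set $\lambda = (|M|+\gamma+\mrm{LS}_\mu(\kaecgoth))^{<\mu}$, and note that $\lambda^{<\mu}=\lambda$ and $\lambda\geq \mrm{LS}_\mu(\kaecgoth)$.

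\textbf{Step 1: shrinking witnesses.} Take an arbitrary type $p=\gtp_\kaecgoth(\bar b/M;N)\in\gS^\gamma_\kaecgoth(M)$, where $M\leqk N$ and $\bar b\in N^\gamma$. Put $A = M\cup \mrm{ran}(\bar b)$, so that $|A|\leq |M|+|\gamma|$. By the LS axiom of Definition~\ref{def:muaec} there is $N_0\in\kaec$ with
\[
A\subseteq N_0\leqk N \quad\text{and}\quad |N_0|\leq |A|^{<\mu}+\mrm{LS}_\mu(\kaecgoth)\leq\lambda.
\]
Since $M\leq N_0\leqk N$ and $M\leqk N$, coherence gives $M\leqk N_0$. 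The inclusion $N_0\to N$ together with $\id_N$ then witnesses $(\bar b,M,N_0)\,E^\at_\kaecgoth\,(\bar b,M,N)$. Hence $p=\gtp_\kaecgoth(\bar b/M;N_0)$, and every type in $\gS^\gamma_\kaecgoth(M)$ is represented by a triple whose model has size at most $\lambda$.

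\textbf{Step 2: counting triples.} Call two triples $(\bar b,M,N_0)$ and $(\bar b',M,N_0')$ isomorphic over $M$ if there is an isomorphism $N_0\to N_0'$ that fixes $M$ pointwise and sends $\bar b$ to $\bar b'$. Such triples are $E^\at_\kaecgoth$-equivalent: compose the isomorphism with $\id_{N_0'}$, and use Definition~\ref{def:abstractclass}(3) to see that an isomorphism is a $\kaecgoth$-embedding. So $|\gS^\gamma_\kaecgoth(M)|$ is bounded by the number of isomorphism-over-$M$ classes of such triples.

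Each class has a representative whose universe is a subset of the fixed set $M\sqcup\lambda$ and which contains $M$ with its given structure. The count of representatives is the product of three factors:
\begin{itemize}
\item at most $2^\lambda$ choices of universe;
\item at most $(2^{\lambda})^{|\tau|}\leq 2^\lambda$ choices of $\tau$-structure, using $|\tau|\leq\mrm{LS}_\mu(\kaecgoth)\leq\lambda$ and the fact that each symbol is interpreted by a subset of a finite power of a set of size $\leq\lambda$;
\item at most $\lambda^{|\gamma|}\leq 2^\lambda$ choices of the sequence $\bar b$.
\end{itemize}
The product is at most $2^\lambda$, which is exactly the bound claimed in Fact~\ref{fact:cardorbtypes}.

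\textbf{Main obstacle.} The only delicate point is Step 1. We need the shrunken witness $N_0$ to satisfy $M\leqk N_0$, not merely $M\subseteq N_0$, so that it defines a legitimate element of $\gS^\gamma_\kaecgoth(M)$. This is precisely where coherence is used. We must also make sure the LS bound $|A|^{<\mu}$ is absorbed into $\lambda$, which holds because $\lambda^{<\mu}=\lambda$. Once both points are in place, the counting in Step 2 is routine.
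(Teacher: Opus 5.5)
Your proof is correct: shrinking the witness with the LS axiom and then using coherence to get $M\leqk N_0$ with $|N_0|\leq(|M|+|\gamma|+\mrm{LS}_\mu(\kaecgoth))^{<\mu}$, and then counting isomorphism-over-$M$ classes of triples, is exactly the standard argument behind \cite[Remark 7.7]{vaseynotes}, which the paper cites without reproducing a proof. If one allows symbols of arity ${<}\mu$, as some definitions of $\mu\mh\mrm{AEC}$s do, your structure count still goes through, because powers of exponent ${<}\mu$ of a set of size $\lambda$ have size at most $\lambda^{<\mu}=\lambda$.
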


In the following definition we introduce a crucial locality property of types, known as {\em tameness}.

\begin{notation} 
\label{not:kaeclambda} Let $\kaecgoth = (\kaec, \leqk)$ be an $\mrm{AC}$. For $\lambda$ an infinite cardinal, we let  $\kaec_\lambda = \{M \in \kaec \mid |M| = \lambda\}$. Analogously, one defines $\kaec_{\leq\lambda}$ and $\kaec_{<\lambda}$.
\end{notation}

\begin{definition}\label{def:tame}
Let $\kaecgoth = (\kaec, \leqk)$ be an $\mrm{AC}$, $\kappa$ an infinite cardinal, $\gamma\in\mrm{Ord}$.
\begin{enumerate}[(1), leftmargin=*]
\item \label{def:tame1}We say that $\kaecgoth$ is \tdef{$(\kappa, \gamma)$-tame} if for any $M \in \kaec$ and $p,q \in \gS_{\kaecgoth}^\gamma(M)$, if $p\neq q$ then there is $M_0\in\kaec_{\leq\kappa}$ with $M_0 \leqk M$ and $p \restriction M_0 \neq q \restriction M_0$ (cf.~\ref{def:nottypes}\ref{def:nottypes3}).
\item \label{def:tame3} When we say that $\kaecgoth$ is \tdef{$\kappa$-tame} we mean that it is $(\kappa,1)$-tame.
\item \label{def:tame4} When we say that $\kaecgoth$ is \tdef{fully $\kappa$-tame} we mean that it is $(\kappa,\gamma)$-tame for every $\gamma\in\mrm{Ord}$.
\item When we say that $\kaecgoth$ is \tdef{tame} we mean it is $\kappa$-tame for some $\kappa$. Similarly we define being \tdef{fully tame}.
\end{enumerate}
\end{definition}

Tameness was first isolated  and studied in the context of $\mrm{AEC}$s satisfying the amalgamation property in \cite{grvanupwardstability} to prove an upward stability transfer, and in \cite{grvanupwardcategoricity2, grvanupwardcategoricity1} to prove upward categoricity transfers. In fact, $\kappa$-tameness had already been considered in \cite[Claim 2.3]{shelahlocality1} as \enquote{having character $\leq\kappa$}. In \cite{shelahlocality2} tameness, there called \enquote{locality}, was studied for $\mrm{AEC}$s without $\mrm{AP}$.  
In the context of $\mrm{AC}$s $\kappa$-tameness had first been defined in \cite[Definition 2.2]{vaseyinfst}. Their definition slightly differs from ours because they instead require the following: For every $M\in\kaec$ and $p,q\in \gS_\kaecgoth^\gamma(M)$, if $p\neq q$ then there is an $A\subseteq M$ with $|A| \leq \kappa$ such that $p\restriction A \neq q\restriction A$. 
It is easily seen that this condition is equivalent to our definition when $\kaecgoth$ is a $\mu\mh\mrm{AEC}$ and $\kappa=\kappa^{<\mu}$.

The following result was first proved for $\mrm{AEC}$s in \cite{boneytamefromlarge}, and then generalized for $\mu\mh\mrm{AEC}$s in \cite{muaecvasey}. We state it here in a weaker form which suffices for our purposes.

\begin{fact}[\protect{\cite[Theorem 5.5]{muaecvasey}}]
\label{fact:strongtameness}
Let $\kaecgoth = (\kaec, \leqk)$ be a $\mu\mh\mrm{AEC}$ and $\kappa > \mrm{LS}_\mu(\kaecgoth)$ a strongly compact cardinal. Then $\kaecgoth$ is fully $\kappa$-tame (cf.~\ref{def:tame}\ref{def:tame4}).
\end{fact}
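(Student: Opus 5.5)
We argue as in Boney's original proof for $\mrm{AEC}$s \cite{boneytamefromlarge}, replacing ordinary ultraproducts by $\kappa$-complete ones. The point is that a $\mu\mh\mrm{AEC}$ with $\mrm{LS}_\mu(\kaecgoth)<\kappa$ is closed under $\kappa$-complete ultraproducts, and that the canonical ultrapower embeddings are $\kaecgoth$-embeddings. So fix $\gamma$, $M\in\kaec$ and $p=\gtp(\bar a/M;N_1)$, $q=\gtp(\bar b/M;N_2)$. Assume $p\restriction M_0=q\restriction M_0$ for every $M_0\leqk M$ with $M_0\in\kaec_{\leq\kappa}$. We want to show $p=q$.

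\textbf{Step 1: closure under ultraproducts.} We invoke the presentation theorem for $\mu\mh\mrm{AEC}$s. It expresses $\kaec$, together with $\leqk$ on pairs, as the class of reducts of models of a theory in $\mathbb{L}_{\lambda^+,\lambda^+}$ for a suitable $\lambda\geq\mrm{LS}_\mu(\kaecgoth)$ below $\kappa$, in an expanded language, where substructure of expansions corresponds to $\leqk$. Since $\kappa$ is strongly compact it is measurable, hence inaccessible, so all these parameters lie below $\kappa$. Łoś's theorem for $\mathbb{L}_{\kappa,\kappa}$ under $\kappa$-complete ultrafilters then gives the following. If $\mathcal{U}$ is $\kappa$-complete and $M_i\leqk N_i$ for all $i$, then $\prod M_i/\mathcal{U}\leqk\prod N_i/\mathcal{U}$, both lie in $\kaec$, and the diagonal maps $M\to M^I/\mathcal{U}$ are $\kaecgoth$-embeddings. \emph{This is the step I expect to be the main obstacle.} The difficulty is in checking that the expansions can be chosen coherently, so that $\leqk$, and not merely the submodel relation, is preserved. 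The cleanest route is to take the expansion witnessing $M_i\leqk N_i$ to be a single expansion of $N_i$ whose restriction expands $M_i$.

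\textbf{Step 2: an index set and an ultrafilter.} Let $I=\{M_0\leqk M : |M_0|<\kappa\}$. By the $\mrm{LS}$ axiom and $\kappa^{<\mu}=\kappa$, every subset of $M$ of size $<\kappa$ is contained in some member of $I$. Moreover, $I$ is $\kappa$-directed under $\leqk$: coherence lets us compare the small submodels obtained from $\mrm{LS}$. Strong compactness provides a fine $\kappa$-complete ultrafilter $\mathcal{U}$ on $I$, i.e.\ for each $x\in M$ the set $\{M_0\in I: x\in M_0\}$ belongs to $\mathcal{U}$.

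\textbf{Step 3: choosing witnesses uniformly.} The tameness parameter $\leq\kappa$ versus $<\kappa$ is harmless, since we may use submodels of size $<\kappa$ together with the LS axiom. For each $M_0\in I$ the hypothesis gives a finite $E^\at_\kaecgoth$-chain from $(\bar a,M_0,N_1)$ to $(\bar b,M_0,N_2)$. By $\kappa$-completeness we may assume the chain has a fixed length $n$ on a $\mathcal{U}$-large set. Taking ultraproducts coordinatewise of the intermediate models and of the amalgamating $\kaecgoth$-embeddings yields, by Step 1, an $E^\at$-chain of length $n$. This chain connects the ultrapowers of $(\bar a,N_1)$ and $(\bar b,N_2)$ over $\prod M_0/\mathcal{U}$.

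\textbf{Step 4: pulling back to $M$.} Define $h\colon M\to\prod_{M_0\in I}M_0/\mathcal{U}$ by sending $x$ to the class of a function that takes value $x$ whenever $x\in M_0$; this is well defined by fineness. Then $h$ agrees with the diagonal ultrapower embedding of $M$ into the corresponding ultrapowers of $N_1$ and $N_2$. Composing the diagonal $\kaecgoth$-embeddings $N_\ell\to N_\ell^I/\mathcal{U}$ with the chain from Step 3 gives an $E^\at$-chain over $h[M]$. This chain fixes $h[M]$ pointwise, and we transport it back along $h^{-1}$ using Definition~\ref{def:abstractclass}(3). It shows $(\bar a,M,N_1)\,E_\kaecgoth\,(\bar b,M,N_2)$, i.e.\ $p=q$. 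Since $\gamma$ was arbitrary, $\kaecgoth$ is fully $\kappa$-tame.
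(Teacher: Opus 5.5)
Your proposal is correct and takes essentially the same approach as the source: the paper does not prove this Fact itself but quotes it from \cite[Theorem~5.5]{muaecvasey}, where Boney's $\kappa$-complete ultraproduct argument from \cite{boneytamefromlarge} is carried over to $\mu$-AECs via the presentation theorem, and your Steps 1--4 are that argument (closure of $\mathbf{K}$ and $\preccurlyeq$ under $\kappa$-complete ultraproducts, a fine $\kappa$-complete ultrafilter on the small $\preccurlyeq$-submodels of $M$, ultraproducts of the witnessing $E^{\mathrm{at}}$-chains, and pulling back along the diagonal map). One small correction: in Step 2, a set $A\subseteq M$ with $|A|<\kappa$ lies in a $\preccurlyeq$-submodel of size $<\kappa$ because $|A|^{<\mu}+\mathrm{LS}_\mu<\kappa$, which follows from the inaccessibility of $\kappa$ you noted in Step 1, not from $\kappa^{<\mu}=\kappa$.
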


\subsection{Module theory}
\label{sect:moduleprelims}

In this subsection we fix the module-theoretic notation which will be used throughout the article, and we state the facts which will be needed in the subsequent parts of the paper.
\begin{notation}
\begin{enumerate}[(1), leftmargin=*]
\item By $R$ we denote an associative ring with identity.
\item When we say \enquote{$R$-module} we always mean  \enquote{unital left $R$-module}. 
\item We may omit the reference to the ring $R$ if it is clear from the context.
\item When we say \enquote{group} we always mean \enquote{abelian group}.
\end{enumerate}
\end{notation}

\begin{notation}
When we say that $\kaecgoth = (\kaec, \leqk)$ is an abstract class (cf.~\ref{def:abstractclass}) of $R$-modules, we mean that $\tau_\kaecgoth$ is the language of $R$-modules\footnote{The language of $R$-modules is $\tau_R$, which consists of a unary function symbol for every $r\in R$, a constant symbol $0$, and two binary function symbols $+$ and $-$.} and $\kaec$ is a class of $R$-modules.
\end{notation}

Recall that an $R$-module $A$ is a \tdef{direct summand} of $B$ (written $A\leqo B$) if $A\leq B$ and there is a module $C$ such that $B = A \oplus C$. We say that a homomorphism $f: A \to B$ is \tdef{split} if it is an embedding and $f(A)\leqo B$.

Let $B$ be an $R$-module. We say that a submodule $A\leq B$ is a \tdef{retract} of $B$ if there is an $R$-module homomorphism $g: B \to A$ such that $g(a) = a$ for every $a\in A$. Such a $g$ is called a \tdef{retraction} from $B$ to $A$ \cite[pg.~50]{rotman}.

\begin{fact}
\label{fact:dsumsolveeq}
Let $B$ be an $R$-module and $A\leq B$. Then the following are equivalent:
\begin{enumerate}[(1), leftmargin=*]
\item \label{fact:dsumsolveeq1}$A$ is a direct summand of $B$;
\item \label{fact:dsumsolveeq2}$A$ is a retract of $B$;
\item \label{fact:dsumsolveeq2.5} there is $q: B/A \to B$ such that $\pi q = \id_{B/A}$, where $\pi: B \to B/A$ is the natural projection;
\item \label{fact:dsumsolveeq3}for every system of $R$-linear equations with parameters in $A$, the system has a solution in $A$ provided it has a solution in $B$.
\end{enumerate}
In particular, if $A\leq B \leq C$ and $A\leqo C$, then $A\leqo B$.
\end{fact}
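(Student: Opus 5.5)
We will prove the cycle of implications \ref{fact:dsumsolveeq1}$\Rightarrow$\ref{fact:dsumsolveeq2}$\Rightarrow$\ref{fact:dsumsolveeq2.5}$\Rightarrow$\ref{fact:dsumsolveeq1}, and separately \ref{fact:dsumsolveeq2}$\Leftrightarrow$\ref{fact:dsumsolveeq3}. The ``in particular'' clause will then follow from \ref{fact:dsumsolveeq2}.

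For \ref{fact:dsumsolveeq1}$\Rightarrow$\ref{fact:dsumsolveeq2}, if $B = A\oplus C$ we take $g$ to be the projection onto $A$ along $C$. For \ref{fact:dsumsolveeq2}$\Rightarrow$\ref{fact:dsumsolveeq2.5}, given a retraction $g$, the endomorphism $\id_B - g$ vanishes on $A$. It therefore factors through $\pi$ as $q\pi$ for some $q: B/A\to B$. Then $\pi q\pi = \pi - \pi g = \pi$, since $g(B)\subseteq A$, and surjectivity of $\pi$ gives $\pi q = \id_{B/A}$. For \ref{fact:dsumsolveeq2.5}$\Rightarrow$\ref{fact:dsumsolveeq1}, set $C = q(B/A)$. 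Every $b\in B$ decomposes as $b = (b - q\pi b) + q\pi b$, and the first summand lies in $A = \ker\pi$. If $x = q(y)\in A\cap C$, then $y = \pi q (y) = \pi(x) = 0$, so $x=0$. Hence $B = A\oplus C$.

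For \ref{fact:dsumsolveeq2}$\Rightarrow$\ref{fact:dsumsolveeq3}, take any system of linear equations (arbitrarily many equations and unknowns, each equation with finitely many unknowns) with parameters in $A$, and a solution $(b_i)$ in $B$. Applying the retraction $g$, which is $R$-linear and fixes the parameters, yields the solution $(g(b_i))$ in $A$.

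The converse \ref{fact:dsumsolveeq3}$\Rightarrow$\ref{fact:dsumsolveeq2} is the only step needing an idea. We encode the whole diagram of $B$ over $A$ as one system. Take unknowns $x_b$ for $b\in B$ and the following equations:
\begin{itemize}
\item $x_{b+b'} - x_b - x_{b'} = 0$ for all $b,b'\in B$;
\item $x_{rb} - r x_b = 0$ for all $r\in R$ and $b\in B$;
\item $x_a = a$ for all $a\in A$.
\end{itemize}
This system has the solution $x_b = b$ in $B$. By \ref{fact:dsumsolveeq3} it has a solution $(c_b)$ in $A$. The map $g(b) = c_b$ is then $R$-linear, maps into $A$, and fixes $A$, so it is a retraction. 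Here we must be careful that \ref{fact:dsumsolveeq3} is read for systems of arbitrary size, which is exactly what distinguishes direct summands from $\mu$-purity.

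Finally, suppose $A\le B\le C$ and $A\leqo C$. Take a retraction $g: C\to A$ by \ref{fact:dsumsolveeq2}. Its restriction $g\restriction B$ is a retraction $B\to A$, so $A\leqo B$ by \ref{fact:dsumsolveeq2}$\Rightarrow$\ref{fact:dsumsolveeq1}. Alternatively, use \ref{fact:dsumsolveeq3}: any system over $A$ solvable in $B$ is solvable in $C$, hence in $A$.
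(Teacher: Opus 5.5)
Your proof is correct and complete, but it takes a different route from the paper, which gives no argument of its own. The paper cites Rotman for the equivalence of (1)--(3). It obtains (1)$\Leftrightarrow$(4) from the Hom-lifting characterization of $\lambda$-purity in \cite[Proposition~7.16]{jensen}, the same result behind Fact~\ref{fact:lpuresplit}. That is, it reads (4) as ``$A$ is $\lambda$-pure in $B$ for every $\lambda$'' and, presumably, lifts $\id_{B/A}$ through $\pi$ once $\lambda$ is large enough that $B/A$ is $\lambda$-presented, which gives (3).

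Your arguments for (1)$\Rightarrow$(2)$\Rightarrow$(3)$\Rightarrow$(1) and (2)$\Rightarrow$(4) are the standard ones and check out. For (4)$\Rightarrow$(2), you encode the whole diagram of $B$ over $A$ as a single system, so a solution in $A$ is literally a retraction. This avoids presentations and $\lambda$-presented modules altogether. It is more elementary, and it makes visible why systems of unbounded size are needed; the paper's remark that $\leqo$ is the limit of the relations $\leqp^\lambda$ (Remark~\ref{rem:lpuredsumequiv}) depends on exactly this point. The paper's route, by contrast, places the fact inside the $\lambda$-purity framework it relies on later.

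The ``in particular'' clause is handled the same way in both: restrict a retraction $C\to A$ to $B$.
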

\begin{proof}
That \ref{fact:dsumsolveeq1},  \ref{fact:dsumsolveeq2}, and \ref{fact:dsumsolveeq2.5} are equivalent follows from \cite[Corollary~2.23, Exercise~2.8(i)]{rotman}. The equivalence of \ref{fact:dsumsolveeq1} and \ref{fact:dsumsolveeq3} follows from the equivalence of $(i)$ and $(ii)$ in \cite[Proposition 7.16]{jensen}. Finally, the last sentence of the statement follows from~\ref{fact:dsumsolveeq2}.
\end{proof}

We quickly recall some basic facts about injective modules. All of these facts are standard, and good  references on the subject are \cite{rotman, lecturesonmodules, ringscatmodules}.

\begin{definition}
\begin{enumerate}[(1), leftmargin=*]
\item \label{def:injenv1} \cite[Section 3.2]{rotman} An $R$-module $A$ is \tdef{injective} if for every $R$-module $B$ we have that $A\leq B$ implies $A \leqo B$.
\item  \label{def:injenv2} \cite[pg.~127]{rotman} Let $A$ be an $R$-module, we say that $A\leq B$ is an \tdef{injective envelope} of $A$ if $B$ is injective and $A\leq C\leqo B$ implies $B=C$.
\end{enumerate} 
\end{definition}

\begin{fact}
\label{fact:triviainjenv}
\begin{enumerate}[(1), leftmargin=*]
\item \label{fact:triviainjenv1} \cite[Proposition~3.28]{rotman} The class of injective modules is closed under finite direct sums and direct summands.
\item \label{fact:triviainjenvbaer} \cite[Theorem~3.30]{rotman} An $R$-module $A$ is injective if and only if every $R$-module homomorphism $f: I \to A$, where $I$ is a left ideal of $R$, can be extended to a homomorphism $f':R\to A$.
\item \label{fact:triviainjenv2} \cite[Theorem~3.45]{rotman} Every $R$-module $A$ has an injective envelope $\mrm{E}(A)$, which is unique up to isomorphism over $A$. That is, if $A\leq B$ and $A\leq C$ are injective envelopes of $A$, there is an isomorphism $f: B \to C$ with $f\restriction A = \id_A$.
\item \label{fact:triviainjenv3}\cite[Lemma~18.9]{ringscatmodules}  Let $A$ and $B$ be $R$-modules and $f: A \to B$ an embedding. Then there is a split embedding $g: \mrm{E}(A) \to \mrm{E}(B)$ with $g\restriction A = f$.
\end{enumerate}
\end{fact}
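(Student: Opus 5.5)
Items \ref{fact:triviainjenv1}--\ref{fact:triviainjenv2} are classical, and I would only recall their standard proofs. For \ref{fact:triviainjenv1}, a homomorphism into $A_1\oplus A_2$ is a pair of homomorphisms, so extension problems split coordinatewise. For a summand $A\leqo A'$ with $A'$ injective, I extend into $A'$ and then compose with the retraction $A'\to A$ from Fact~\ref{fact:dsumsolveeq}. For Baer's criterion \ref{fact:triviainjenvbaer}, the nontrivial direction applies Zorn's lemma to partial extensions of a map $X\to A$ defined on a submodule $X\leq Y$. If a maximal partial extension is defined on $X'\neq Y$, pick $y\in Y\setminus X'$ and consider the left ideal $I=\{r\in R\mid ry\in X'\}$; the ideal case extends the induced map $I\to A$ to $R$, and this gives an extension to $X'+Ry$, contradicting maximality. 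For \ref{fact:triviainjenv2}, I would embed $A$ into some injective module $Q$ (for instance, a product of copies of a divisible injective cogenerator). Inside $Q$, Zorn's lemma gives a maximal essential extension $E$ of $A$, which is injective and minimal. Uniqueness follows by extending the identity of $A$ between two envelopes: the extension is injective because $A$ is essential, and it is onto because its image is an injective, hence direct, summand containing $A$.

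The item requiring a small argument is \ref{fact:triviainjenv3}, and I would prove it as follows. View $f$ as a monomorphism $A\to B\leq \mrm{E}(B)$. Since $\mrm{E}(B)$ is injective and $A\leq \mrm{E}(A)$, the map $f$ extends to a homomorphism $g:\mrm{E}(A)\to \mrm{E}(B)$ with $g\restriction A = f$. I then need two things: that $g$ is an embedding, and that its image is a direct summand.

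For injectivity, $\ker g\cap A=\ker f=0$. If $A$ is essential in $\mrm{E}(A)$, meaning every nonzero submodule meets $A$, then $\ker g=0$. This essentiality is the main point to check, since the paper defines the envelope by minimality with respect to summands rather than by essentiality. I would derive it as follows. Suppose $K\leq \mrm{E}(A)$ is nonzero with $K\cap A=0$, and enlarge $K$ by Zorn's lemma to a submodule $K'$ maximal with $K'\cap A=0$. Then $A$ embeds essentially into $\mrm{E}(A)/K'$. Extend the projection $\mrm{E}(A)\to\mrm{E}(A)/K'$, restricted to $A$, back into $\mrm{E}(A)$; equivalently, extend the inverse map from the image of $A$ to $A\leq\mrm{E}(A)$ using injectivity of $\mrm{E}(A)$. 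This yields an injective endomorphism $h$ of $\mrm{E}(A)$ fixing $A$, whose image is an injective summand containing $A$, so minimality forces $h$ to be onto. A short diagram chase then contradicts $K'\neq 0$. Alternatively, I would simply cite the standard equivalence of the two definitions of envelope from \cite{rotman}.

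Finally, $g(\mrm{E}(A))\cong \mrm{E}(A)$ is injective, so by the definition of injectivity, $g(\mrm{E}(A))\leq \mrm{E}(B)$ implies $g(\mrm{E}(A))\leqo \mrm{E}(B)$. Hence $g$ is split. I expect the essentiality step to be the only real obstacle.
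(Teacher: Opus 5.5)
The paper proves nothing here; it simply cites \cite{rotman} and \cite{ringscatmodules}. Several parts of your proposal are the standard arguments and are fine:
\begin{itemize}
\item your arguments for items (1)--(2);
\item the existence part of item (3);
\item the final step of item (4): extend $f$ to $g$, get $\ker g=0$ from essentiality, and conclude that $g(\mrm{E}(A))$ is a summand because it is injective.
\end{itemize}
Throughout, you tacitly use the standard equivalence between the paper's definition of injectivity (being a summand of every overmodule) and the extension property. The genuine gap is your self-contained proof that $A$ is essential in $\mrm{E}(A)$ when $\mrm{E}(A)$ is defined by minimality.

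Let $\pi\colon \mrm{E}(A)\to\mrm{E}(A)/K'$ be the projection, and let $\phi\colon \mrm{E}(A)/K'\to \mrm{E}(A)$ extend the inverse of $\pi\restriction A$. Then $\phi$ is injective. However, the only endomorphism of $\mrm{E}(A)$ you have produced is $h=\phi\pi$, and $\ker h=\pi^{-1}(\ker\phi)=K'$. So ``$h$ is an injective endomorphism fixing $A$'' is equivalent to $K'=0$, which is exactly what you are trying to prove.

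Perhaps you mean the image $\phi(\mrm{E}(A)/K')$ instead. That is an essential extension of $A$ inside $\mrm{E}(A)$, but nothing you say makes it injective, i.e.\ a summand. It is isomorphic to a quotient of an injective module, and such quotients need not be injective. This particular quotient is in fact injective, because complements in injective modules are summands, but proving that uses exactly the lemma you are trying to bypass. If it were a summand, your conclusion would follow: minimality makes $\phi$ onto, so $A=\phi\pi(A)$ is essential in $\mrm{E}(A)$, forcing $K'=0$.

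The complement trick works in the standard lemma because there the module being complemented is a \emph{maximal} essential extension of $A$ inside the injective module. That maximality forces the image of the extended map to be that module itself. Minimality with respect to summands gives you no such handle.

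The repair uses only what you already invoke in item (3). Inside $\mrm{E}(A)$, take a maximal essential extension $E'$ of $A$. By the same lemma $E'$ is injective, so $A\le E'\leqo \mrm{E}(A)$, and minimality gives $E'=\mrm{E}(A)$; hence $A$ is essential in $\mrm{E}(A)$.

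Alternatively, note that in your uniqueness argument only the \emph{domain} of the extended identity needs to be essential over $A$. Take that domain to be your constructed maximal essential extension. Then every envelope in the paper's sense is isomorphic over $A$ to an essential one, and is therefore essential. This also removes the tacit use of essentiality of an arbitrary envelope in your proof of item (3). Citing the equivalence of the two definitions from \cite{rotman}, as you suggest, is also fine; it is all the paper does.
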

 
We now introduce the pure-injective modules, which will be studied extensively in Section~\ref{sect:envelopes}. Pure-injective modules and envelopes are the direct analogue of injective modules and envelopes within the context of pure embeddings.

\begin{definition}
\label{def:purity}
\begin{enumerate}[(1), leftmargin=*]
\item \label{def:purity1}\cite[Section~2.1.1]{purityspectra}  Let $B$ be an $R$-module. We say that a submodule $A\leq B$ is \tdef{pure in $B$} (written $A\leqp B$) if every finite system of $R$-linear equations with parameters in $A$ has a solution in $A$ provided it has a solution in $B$.
\item \label{def:purity2}\cite[Section~4.3.1]{purityspectra} An $R$-module $A$ is \tdef{pure-injective} if for every $R$-module $B$ we have that $A\leqp B$ implies $A \leqo B$.
\item \label{def:purity3}\cite[pg.~145]{purityspectra} Let $A$ be an $R$-module. We say that $A\leqp B$ is a \tdef{pure-injective envelope} of $A$ if $B$ is pure-injective and $A\leqp C\leqo B$ implies $B=C$.
\end{enumerate}
\end{definition}

\begin{fact}
\label{fact:triviapinjective}
\begin{enumerate}[(1), leftmargin=*]
\item \label{fact:triviapinjectivesums}\cite[Lemma~4.3.2]{purityspectra} The class of pure-injective modules is closed under finite direct sums and direct summands.
\item \label{fact:triviapinjectivealgcomp}\cite[Theorem~4.3.11]{purityspectra} An $R$-module $A$ is pure-injective if and only if it is algebraically compact, i.e., every finitely solvable system of equations with constants in $A$ is solvable in $A$.
\end{enumerate}
\end{fact}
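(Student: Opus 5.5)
The plan is to prove both items by reducing pure-injectivity to an extension property for homomorphisms, and to use Fact~\ref{fact:dsumsolveeq} to pass between direct summands and retractions. As a preliminary step I would show that $A$ is pure-injective if and only if, for every pure embedding $A' \leqp B$, every homomorphism $f \colon A' \to A$ extends to a homomorphism $B \to A$.

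For the non-trivial direction of this reformulation, form the pushout $P = (A \oplus B)/\{(f(a'),-a') \mid a' \in A'\}$ of $f$ and the inclusion $A' \to B$. The key claim is that the induced map $A \to P$ is again a pure embedding. To check it, take a finite system with parameters in $A$ that is solvable in $P$. Lift the solution to $A \oplus B$; the equations then hold only modulo elements of the form $(f(a'),-a')$. Reading off the $B$-coordinates gives a finite system over $A'$ that is solvable in $B$. By purity it is solvable in $A'$, and applying $f$ yields a solution in $A$. Hence $A$ is a summand of $P$, and composing a retraction $P \to A$ with the map $B \to P$ gives the desired extension. The converse direction is immediate: take $A' = A$ and $f = \id_A$, obtain a retraction, and conclude $A \leqo B$ by Fact~\ref{fact:dsumsolveeq}. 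I expect this pushout-purity verification to be the main technical point.

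For item (1), the extension property is visibly preserved by finite direct sums, since one can extend componentwise into $A_1 \oplus A_2$. It is also preserved by direct summands: if $A_1 \oplus A_2$ is pure-injective, extend $f \colon A' \to A_1$ into $A_1 \oplus A_2$ and then project onto $A_1$.

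For item (2), suppose first that $A$ is algebraically compact and $A \leqp B$. Introduce a variable $x_b$ for each $b \in B$. Consider the system consisting of all linear relations holding among the elements of $B$, together with the equations $x_a = a$ for $a \in A$. Every finite subsystem is solvable in $B$ and has parameters in $A$, so by purity it is solvable in $A$. By compactness the whole system therefore has a solution in $A$, and this solution is exactly a retraction $B \to A$, so $A \leqo B$ by Fact~\ref{fact:dsumsolveeq}.

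Conversely, suppose $A$ is pure-injective and $\Sigma$ is a finitely solvable system over $A$. Take a $|\Sigma|^+$-good (or simply regular) ultrafilter $U$ on the set of finite subsets of $\Sigma$, so that the ultrapower $A^U$ realizes $\Sigma$: pick a solution of each finite subsystem in $A$ and take the induced element of the ultrapower. The diagonal embedding $A \to A^U$ is elementary, hence pure, so it splits. Applying the retraction $A^U \to A$ to the solution in $A^U$ gives a solution of $\Sigma$ in $A$.
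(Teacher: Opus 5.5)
Your proof is correct. The paper gives no proof of this Fact: it imports both items from Prest's book, and the equivalence with algebraic compactness there is one item in a longer list of equivalent conditions. Your argument is instead self-contained and built around the paper's own definition ($A\leqp B\Rightarrow A\leqo B$):
\begin{itemize}
\item The pushout step that turns this definition into the extension property over pure embeddings is exactly Proposition~\ref{prop:wpushoutclosedlambda}\ref{prop:wpushoutclosedlambda1} with $\lambda=\aleph_0$ and $I=\{1,2\}$, so you could simply cite it.
\item Item (1) then follows formally. Your componentwise argument in fact gives closure under arbitrary direct products, not just finite direct sums.
\item Item (2) needs only the diagram system of $B$, plus an ultrapower and {\L}o\'s's theorem followed by splitting of the diagonal embedding.
\end{itemize}
The citation buys brevity and the additional equivalent characterizations. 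Your route buys independence from the reference at the cost of a few elementary lines.

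Two small points to tidy up.
\begin{itemize}
\item In the pushout step, the solution in $A$ is $u_j+f(w_j)$, not just $f(w_j)$. Here $[(u_j,v_j)]$ lifts the solution in $P$, and $(w_j)$ solves in $A'$ the system read off from the $B$-coordinates.
\item In the last step, your construction (at coordinate $s$, take a solution of the subsystem $s$) only needs $U$ to contain every set $\{s : s_0\subseteq s\}$ for finite $s_0\subseteq\Sigma$. Such a $U$ exists by the finite intersection property. Goodness is unnecessary. An arbitrary regular ultrafilter on $[\Sigma]^{<\omega}$ need not contain these sets, and then that particular choice of coordinates need not realize $\Sigma$.
\end{itemize}
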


\begin{fact}
\label{fact:existpenv}
\begin{enumerate}[(1), leftmargin=*, series=fact:existpenv]
\item \cite[Corollary 3.11]{ziegler} Every $R$-module $A$ has a pure-injective envelope $A\leqp B$ with ${|B| \leq |A|^{|R| + \aleph_0}}$.
\item \cite[Theorem 4.3.18]{purityspectra} The pure-injective envelope is unique up to isomorphism, i.e., if $A\leqp B$ and $A\leqp C$ are pure-injective envelopes of $A$, then there is an isomorphism $f: B \to C$ with $f\restriction A = \id_A$.
\end{enumerate}
\nin Therefore, we denote by $\mrm{PE}(A)$ the unique (up to isomorphism) pure-injective envelope of $A$. We also have:
\begin{enumerate}[resume*=fact:existpenv]
\item\cite[Theorem 4.3.17]{purityspectra} Let $A$ and $B$ be $R$-modules and $f: A \to B$ a pure embedding. Then there is a split embedding $g: \mrm{PE}(A) \to \mrm{PE}(B)$ with $g\restriction A = f$.
\end{enumerate}
\end{fact}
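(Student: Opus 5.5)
The statement collects three standard facts about pure-injective envelopes. The plan is to take items (1) and (2) essentially from the literature and to derive (3) from them, together with the characterization of pure-injectivity as injectivity relative to pure embeddings. The key auxiliary notion is \emph{pure-essentiality}: an extension $A\leqp E$ is pure-essential if every homomorphism $h\colon E\to X$ whose restriction to $A$ is a pure embedding is itself a pure embedding. I will use the standard fact that $A\leqp\mrm{PE}(A)$ is pure-essential, and that a pure-essential extension of $A$ which is pure-injective is minimal in the sense of Definition~\ref{def:purity}\ref{def:purity3}.

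For (1), I would first embed $A$ purely into some pure-injective module $N$. One choice is an $|A|^{+}$-saturated elementary extension of $A$ in the language $\tau_R$, which is algebraically compact by Fact~\ref{fact:triviapinjective}\ref{fact:triviapinjectivealgcomp} and in which $A$ is pure. Inside $N$ I would then use Zorn's lemma to pick a hull of $A$: a pure-injective direct summand $H$ of $N$ containing $A$ that is minimal and pure-essential over $A$. For the cardinality bound I would argue that each element of $H$ is determined by its pp-type over $A$. Indeed, if two distinct elements had the same pp-type over $A$, their difference would realize the pp-type of $0$ over $A$, and Ziegler's hull argument would let one split off a summand, contradicting minimality. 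There are at most $|A|^{|R|+\aleph_0}$ such pp-types, which gives $|H|\leq|A|^{|R|+\aleph_0}$.

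For (2), let $B$ and $C$ be two envelopes of $A$. Pure-injectivity of $C$ extends the inclusion $A\to C$ to a map $h\colon B\to C$. Since $h\restriction A$ is pure, pure-essentiality of $B$ over $A$ makes $h$ a pure embedding. Then $h(B)$ is pure-injective and pure in $C$, so $h(B)\leqo C$, and minimality of $C$ forces $h(B)=C$.

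For (3), the argument runs as follows.
\begin{enumerate}[(i), leftmargin=*]
\item The composite $A\xrightarrow{f}B\leqp\mrm{PE}(B)$ is a pure embedding into a pure-injective module.
\item Since pure-injective modules are injective with respect to pure embeddings, this composite extends along the pure embedding $A\leqp\mrm{PE}(A)$ to a homomorphism $g\colon\mrm{PE}(A)\to\mrm{PE}(B)$ with $g\restriction A=f$.
\item Since $g\restriction A$ is pure, pure-essentiality makes $g$ a pure embedding.
\item Its image $g(\mrm{PE}(A))\cong\mrm{PE}(A)$ is pure-injective and pure in $\mrm{PE}(B)$, hence a direct summand by Definition~\ref{def:purity}\ref{def:purity2}. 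So $g$ is split.
\end{enumerate}

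The main obstacle is establishing pure-essentiality of the envelope, and with it the cardinality estimate in (1). This requires the genuinely model-theoretic analysis of hulls via pp-types, as in Ziegler's work. I would cite \cite{ziegler} and \cite[Section~4.3]{purityspectra} for it rather than reprove it.
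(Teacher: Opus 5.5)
Your treatment of (2) and (3) is fine. The paper gives no argument for any of the three items; it simply cites them. Your derivation of (3) is the standard argument via pure-essentiality:
\begin{itemize}
\item extend $A\to B\leqp\mathrm{PE}(B)$ along $A\leqp\mathrm{PE}(A)$;
\item use pure-essentiality to make the extension pure;
\item conclude that it splits, since its image is pure-injective.
\end{itemize}
In (2) you apply pure-essentiality to an arbitrary envelope $B$, so you need that every envelope in the minimality sense is pure-essential. Either cite this, or compare an arbitrary envelope with the constructed hull, which only uses pure-essentiality of the latter.

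The sketch of the cardinality bound in (1), however, rests on a false claim. An element of the hull is \emph{not} in general determined by its pp-type over $A$. Take $R=\mathbb{Z}$ and $A=\mathbb{Q}\oplus\mathbb{Z}$, so $\mathrm{PE}(A)=\mathbb{Q}\oplus\hat{\mathbb{Z}}$ with $\hat{\mathbb{Z}}=\prod_p\mathbb{Z}_p$.
\begin{itemize}
\item The group $\hat{\mathbb{Z}}/\mathbb{Z}$ is a nonzero $\mathbb{Q}$-vector space, so there is a nonzero $\phi\colon\hat{\mathbb{Z}}/\mathbb{Z}\to\mathbb{Q}$.
\item Then $\sigma(q,z)=(q+\phi(z+\mathbb{Z}),z)$ is an automorphism of $\mathrm{PE}(A)$ fixing $A$ pointwise.
\item If $\phi(z+\mathbb{Z})\neq 0$, then $(0,z)\neq\sigma(0,z)$, yet they have the same type over $A$.
\end{itemize}
The step ``their difference would realize the pp-type of $0$ over $A$'' is also wrong. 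Since $\mathrm{pp}(0/A)$ contains $x=0$, it would already say the two elements are equal. Equal pp-types over $A$ only give $b_1-b_2\in\varphi(N,\bar 0)$ for each $\varphi(x,\bar a)\in\mathrm{pp}(b_1/A)$.

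The bound comes instead from a summand argument. Finite modules are already pure-injective, so let $A$ be infinite and put $\lambda=|A|^{|R|+\aleph_0}$.
\begin{itemize}
\item Since $\lambda^{|R|+\aleph_0}=\lambda$, $A$ has an $(|R|+\aleph_0)^+$-saturated elementary extension $N$ with $|N|=\lambda$, and $A\leqp N$.
\item $N$ is pure-injective. A finitely satisfiable pp-$1$-type over $N$ is equivalent to a subtype using at most $|R|+\aleph_0$ parameters, one coset per parameter-free pp-formula.
\item By pure-injectivity of $N$ and pure-essentiality of $A\leqp\mathrm{PE}(A)$, the inclusion $A\to N$ extends to a pure embedding $\mathrm{PE}(A)\to N$, whose image is a direct summand. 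Hence $|\mathrm{PE}(A)|\le\lambda$.
\end{itemize}
This also fixes your saturation level: $|A|^+$-saturation need not give pure-injectivity when $|A|<|R|$; you want $(|R|+\aleph_0)^+$-saturation.
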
 

Finally, we will need in Section~\ref{sect:lpure} the following definition:
\begin{definition}[\protect{\cite[Section 7.5]{jensen}}]
\label{def:lambdapresented}
Let $\lambda$ be an infinite cardinal. We say that an $R$-module $A$ is \tdef{$\lambda$-presented} if there is a short exact sequence
\begin{equation}
0\to K \overset{}{\to} F \overset{}{\to}A \to 0.
\end{equation}
where $K$ and $F$ are $({<}\lambda)$-generated and $F$ is free.
\end{definition}

We will often use the following:
\begin{remark}
\label{rem:lambdapresented}
Let $\lambda$ be an infinite cardinal, we have the following:
\begin{enumerate}[(1), leftmargin=*]
\item \label{rem:lambdapresented1}If $|R| < \lambda$, then the following are equivalent for an $R$-module $A$:
\begin{enumerate}[(a),leftmargin=*]
\item $A$ is $\lambda$-presented;
\item $A$ is $({<}\lambda)$-generated;
\item $A$ has cardinality $<\lambda$.
\end{enumerate}
\item \label{rem:lambdapresented2}If $R$ is a countable left Noetherian ring, e.g. $R=\mathbb Z$, then an $R$-module is $\lambda$-presented if and only if it is $({<}\lambda)$-generated, this follows by item \ref{rem:lambdapresented1}  and~\cite[Corollary~3.19]{rotman}. 
\end{enumerate}
\end{remark}

Now let us recall some definitions concerning torsion-free abelian groups. These will be used again in Subsection~\ref{sect:lpurebal}. All of the definitions that follow are standard, and can be found in \cite[Chapter~12, Section~1]{fuchs}.

\begin{definition}
\label{def:charac}
Let $A\in\tfab$ and $a\in A$. We define the \tdef{characteristic of $a$ in $A$} as $\chi_A(a) = (h_{p_0}^A(a), h_{p_1}^A(a), \ldots)$, where $(p_i)_{i<\omega}$ is the increasing enumeration of the prime numbers, and $h_{p_i}^A(a)$ is the supremum of the $k<\omega$ such that $p_i^k$ divides $a$ (so that $h_{p_i}^A(a)\in \omega\cup\{\infty\}$). 
\end{definition}

When the ambient group $A$ in which the characteristic is computed is clear, we may omit the reference to it and simply write $\chi(a) = (h_{p_0}(a), h_{p_1}(a), \ldots)$. Moreover, we have the following:

\begin{fact}[\protect{\cite[Chapter~12, Section~1(f)]{fuchs}}]
\label{fact:pureiffchar}
Let $A\leq B\in\tfab$. Then $A\leqp B$ if and only if $\chi_A(a) = \chi_B(a)$ for every $a\in A$.
\end{fact}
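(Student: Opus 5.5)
The plan is to reduce purity of $A$ in $B$ to a statement about divisibility by single integers, and then compare heights prime by prime. The first step is the standard fact that for abelian groups $A\leq B$ one has $A\leqp B$ if and only if $nA = A\cap nB$ for every $n\geq 1$, i.e.\ every equation $nx=a$ with $a\in A$ that is solvable in $B$ is already solvable in $A$.

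To justify this reduction, take a finite system $Mx = \bar a$, where $M$ is an integer matrix and $\bar a$ is a tuple from $A$. Bring $M$ to Smith normal form $PMQ = D$ with $P,Q$ unimodular integer matrices and $D$ diagonal. Substituting $x = Qy$ and multiplying the system by $P$ changes neither the parameters lying in $A$ nor solvability in $A$ or in $B$. This transforms the system into the decoupled equations $d_iy_i = c_i$, with each $c_i\in A$, together possibly with equations $0 = c_i$, which do not depend on the ambient group. Hence solvability in $A$ versus $B$ is governed entirely by single equations $nx = c$. This reduction is the only step needing any care; the rest is arithmetic. Alternatively, one can simply cite the corresponding statement in \cite{fuchs}.

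($\Rightarrow$) Assume $A\leqp B$ and let $a\in A$. Any witness $a = p^k c$ with $c\in A$ is also a witness in $B$, so $h^A_p(a)\leq h^B_p(a)$. Conversely, if $p^kx = a$ is solvable in $B$, then by purity it is solvable in $A$, so $h^B_p(a)\leq h^A_p(a)$. Hence $\chi_A(a)=\chi_B(a)$.

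($\Leftarrow$) Assume the characteristics agree, and let $a = nb$ with $b\in B$ and $n = \prod_{i<m} p_i^{k_i}$ (relabelling the primes). For each $i$ we have $h^A_{p_i}(a) = h^B_{p_i}(a)\geq k_i$, so there is $a_i\in A$ with $p_i^{k_i}a_i = a$. Put $m_i = n/p_i^{k_i}$. The $m_i$ have no common prime factor, so Bézout gives integers $u_i$ with $\sum_i u_im_i = 1$. Set $x=\sum_i u_ia_i\in A$. Then
\[
nx=\sum_i u_i m_i p_i^{k_i}a_i=\sum_i u_i m_i a = a,
\]
so $nx = a$ is solvable in $A$. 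By the reduction in the first step, $A\leqp B$.

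Torsion-freeness is not actually needed in the argument. It only ensures that the characteristic is the natural invariant to consider.
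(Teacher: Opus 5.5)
Your argument is correct. The Smith normal form reduction of an arbitrary finite system to single equations $nx=c$ (plus conditions $0=c$, which do not depend on the ambient group) is sound. So are the height comparison for ($\Rightarrow$) and the B\'ezout combination of the prime-power witnesses $a_i$ for ($\Leftarrow$). You are also right that torsion-freeness is never used. The paper gives no proof of its own and simply cites Fuchs; what you have written is the standard textbook argument behind that reference.
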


\begin{definition}
\label{def:abtypes}
Let $A$ be a torsion-free abelian group.
\begin{enumerate}[(1), leftmargin=*]
\item \label{def:abtypes1} We say that two characteristics $(k_1, k_2, \ldots)$ and $(\ell_1, \ell_2, \ldots)$ are  \tdef{equivalent} if ${k_n = \ell_n}$ for cofinitely many $n < \omega$, and both $k_n$ and $\ell_n$ are finite whenever $k_n\neq \ell_n$. The equivalence class of a characteristic is called a \tdef{type}, and is usually denoted $\tchar$. We say that $a\in A$ \tdef{has type} $\tchar$ if $\chi(a)\in\mathbf t$, and in such a case write $\tchar(a) = \tchar_A(a) =\tchar$.
\item \label{def:abtypes3} We let $\mrm{Type}(A) = \{\tchar_A(a) \mid a\in A\setminus \{0\}\}$ and we refer to it as the \tdef{typeset of $A$}. 
\item \label{def:abtypes4} If $\tchar$ is a type, we say that $A$ is \tdef{$\tchar$-homogeneous} if $\mrm{Type}(A) = \{\tchar\}$. 
\end{enumerate}
\end{definition}

\section{Almost stability for \texorpdfstring{$\mu\mh\mrm{AEC}$}{μ-AEC}s and \texorpdfstring{$\infty\mh\mrm{AP}$}{∞-AP}}
\label{sect:almoststability}

In this section we define almost stability in the very  general context of abstract classes. Then, we discuss higher forms of amalgamations under which the equivalence between stability and almost stability holds (Lemma~\ref{lemma:equivasamalg}). In particular, this holds for abstract classes with chain bounds and the amalgamation property (Corollary~\ref{cor:coralmostmuap}). Finally, we end the section with a general lemma regarding the transfer of almost stability between different abstract classes (Lemma~\ref{lem:alstabtransfer}).

A central and heavily studied notion in the model theory of $\mu\mh\mrm{AEC}$s, or abstract classes in general, is stability, whose definition we now recall:

\begin{definition}\label{def:stabilityproper}
Let $\kaecgoth = (\kaec, \leqk)$ be an $\mrm{AC}$, $\lambda$ an infinite cardinal, and $\gamma \in \mrm{Ord}$. 
\begin{enumerate}[(1),leftmargin=*]
\item \label{def:stabilityproper1} We say that $\kaecgoth$ is \tdef{$(\lambda, \gamma)$-stable} if for any $M \in \kaec_{\lambda}$ (cf.~\ref{not:kaeclambda}) we have $|\gS^\gamma_{\kaecgoth}(M)| \leq \lambda$ (cf.~\ref{def:typespace}\ref{def:typespace2}).  If $\gamma = 1$, then we simply say that $\kaecgoth$ is \tdef{$\lambda$-stable}.
\item \label{def:stabilityproper2} $\kaecgoth$ is \tdef{stable} if it is $\lambda$-stable for unboundedly many infinite cardinals $\lambda$.
\end{enumerate} 
\end{definition}

Very recently, the second author and Shelah introduced, in the context of abstract elementary classes, a weaker notion of stability called almost stability  \cite[Definition~2.14]{paolinishelah}. They arrived at this definition by finding the first example of an unstable abstract elementary class of modules \cite[Theorem~1.2]{paolinishelah}. In this example the failure of stability is due to a lack of the amalgamation property, as there can be many models with a common substructure which cannot be amalgamated, even if all of them have few types over this common substructure. Indeed, in $\mrm{AEC}$s with the amalgamation property almost stability and stability coincide.


The following definition is the straightforward generalization of almost stability to the context of abstract classes:

\begin{definition}\label{def:stability}
Let $\kaecgoth = (\kaec, \leqk)$ be an $\mrm{AC}$, $\lambda$ an infinite cardinal, and $\gamma$ an ordinal. 
\begin{enumerate}[(1),leftmargin=*]
\item \label{def:stable2} We say that $\kaecgoth$ is \tdef{almost $(\lambda, \gamma)$-stable} if for any $M, N \in \kaec$ with $M \leqk N$ and $M \in \kaec_{\lambda}$ (cf.~\ref{not:kaeclambda}) we have $|\gS^\gamma_{\kaecgoth}(M; N)| \leq \lambda$ (cf.~\ref{def:typespace}\ref{def:typespace1}). If $\gamma = 1$, then we simply say \tdef{almost $\lambda$-stable}.
\item \label{def:stable4} $\kaecgoth$ is \tdef{almost stable} if it is almost $\lambda$-stable for unboundedly many cardinals $\lambda$.
\end{enumerate} 
\end{definition}

The reader should notice that, for $M\in\kaec_\lambda$, in \ref{def:stabilityproper}\ref{def:stabilityproper1} we require $|\gS^\gamma_{\kaecgoth}(M)|\leq \lambda$, but in \ref{def:stability}\ref{def:stable2} we require $|\gS^\gamma_{\kaecgoth}(M;N)|\leq \lambda$ for every $M\leqk N$, and this is why we consider almost stability as a local version of stability.

\begin{remark}
A definition very similar to~\ref{def:stability}\ref{def:stable2} was first given in \cite[Definition 2.23]{vaseyinfst} in the context of $\mrm{AC}$s, where it is referred to as \enquote{stability}. There, an abstract class $\kaecgoth$ is said to be $\lambda$-stable if for any $N\in \kaec$ and $A\subseteq N$ with $|A| \leq \lambda$ one has $|\gS_\kaecgoth(A; N)|\leq \lambda$ (cf.~\ref{def:typespace}\ref{def:typespace1}). This last definition is easily seen to be equivalent to~\ref{def:stability}\ref{def:stable2} when $\kaecgoth$ is a $\mu\mh\mrm{AEC}$ and $\lambda = \lambda^{<\mu}$.
\end{remark}

By \cite[Observation~2.6]{paolinishelah}, in the context of abstract elementary classes, $\lambda$-stability (cf.~\ref{def:stabilityproper}\ref{def:stabilityproper1}) is equivalent to almost $\lambda$-stability when the $\mrm{AEC}$ has the  amalgamation property (cf.~\ref{def:amprop}\ref{def:ampropap}). We now introduce the following stronger forms of amalgamation, which will be needed to recover, in the context of abstract classes, the equivalence between almost stability and stability.

\begin{definition}
\label{def:amalgam}
Let $\kaecgoth = (\kaec, \leqk)$ be an $\mrm{AC}$, and $(f_i: M \to M_i\mid i\in I)$ a collection of $\kaecgoth$-embeddings. 
\begin{enumerate}[(1), leftmargin=*]
\item We say that a collection of $\kaecgoth$-embeddings $(g_i: M_i \to N)$ is an \tdef{amalgam} for $(f_i: M \to M_i\mid i\in I)$ if $g_if_i = g_jf_j$ for every $i,j\in I$. 
\item \label{def:amalgam2}In case $f_i: M \to M_i$ is a set-theoretic inclusion, so that $M\leqk M_i$, for every $i\in I$, then we say that an amalgam of $(f_i)_{i\in I}$ is an \tdef{amalgam of $(M_i)_{i\in I}$ over $M$}.
\end{enumerate}
\end{definition}

\begin{definition}
\label{def:amalgac}
Let $\kaecgoth = (\kaec, \leqk)$ be an $\mrm{AC}$ and $\lambda$ a cardinal. 
\begin{enumerate}[(1), leftmargin=*]
\item \label{def:amalgac1} We say that $\kaecgoth$ satisfies the  \tdef{$({<}\lambda)$-amalgamation property} (${<}\lambda\mh\mrm{AP}$ for short) if any collection of $\kaecgoth$-embeddings $(f_i: M\to M_i\mid i\in I)$ with $|I|<\lambda$ has an amalgam.
\item \label{def:amalgac3} We say that $\kaecgoth$ satisfies the \tdef{$\infty$-amalgamation property} ($\infty\mh\mrm{AP}$ for short)  if it has $({<}\lambda)\mh\mrm{AP}$ for every cardinal $\lambda$.
\end{enumerate}
\end{definition}

\begin{remark}
\label{rem:lambdaapcutpaste}
Let $\kaecgoth = (\kaec, \leqk)$ be an $\mrm{AC}$ and $\lambda$ an infinite cardinal. A standard cut-and-paste argument shows that $\kaecgoth$ has $({<}\lambda)\mh\mrm{AP}$ if and only if for every  $\ordnl<\lambda$, $M\in \kaec$ and $(M_i)_{i<\ordnl}$ in $\kaec$ , there is an amalgam of $(M_i)_{i<\ordnl}$ over $M$ (cf.~\ref{def:amalgam}\ref{def:amalgam2}).
\end{remark}

Notice that $({<}\aleph_0)\mh\mrm{AP}$ is equivalent to $({<}3)\mh\mrm{AP}$, and the latter is exactly $\mrm{AP}$ (cf.~\ref{def:amprop}\ref{def:ampropap}). The following lemma gives the promised equivalence between almost $\lambda$-stability and $\lambda$-stability when the abstract class satisfies a strong form of the amalgamation property.

\begin{lemma}
\label{lemma:equivasamalg}
Let $\kaecgoth  = (\kaec, \leqk)$ be an $\mrm{AC}$, $\lambda$ an infinite cardinal, $\gamma\in \mrm{Ord}$. If $\kaecgoth$ has $(<\lambda^{++})\mh\mrm{AP}$, then the following are equivalent:
\begin{enumerate}[(1), leftmargin=*]
\item \label{lemma:equivasamalg1}$\kaecgoth$ is almost $(\lambda, \gamma)$-stable,
\item \label{lemma:equivasamalg2} $\kaecgoth$ is $(\lambda,\gamma)$-stable.
\end{enumerate}
In particular, this holds if $\kaecgoth$ satisfies $\infty\mh\mrm{AP}$.
\end{lemma}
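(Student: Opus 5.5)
The direction (2)$\Rightarrow$(1) needs no amalgamation at all. If $M\leqk N$ with $M\in\kaec_\lambda$, then every element $\gtp_\kaecgoth(\bar b/M;N)\in\gS^\gamma_\kaecgoth(M;N)$ also lies in $\gS^\gamma_\kaecgoth(M)$, since $M\leqk N$ and $\bar b\in N^\gamma$. Hence $\gS^\gamma_\kaecgoth(M;N)\subseteq\gS^\gamma_\kaecgoth(M)$, and the bound $\lambda$ passes from (2) to (1). The final sentence of the lemma is immediate, since $\infty\mh\mrm{AP}$ includes $({<}\lambda^{++})\mh\mrm{AP}$.

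For (1)$\Rightarrow$(2), I will argue by contradiction. Fix $M\in\kaec_\lambda$ and suppose $|\gS^\gamma_\kaecgoth(M)|>\lambda$. Choose $\lambda^+$ pairwise distinct types $p_i=\gtp_\kaecgoth(\bar b_i/M;N_i)$ for $i<\lambda^+$, with $M\leqk N_i$ and $\bar b_i\in N_i^\gamma$. The index set has size $\lambda^+<\lambda^{++}$, so $({<}\lambda^{++})\mh\mrm{AP}$ applies: by Remark~\ref{rem:lambdaapcutpaste} there are $N\in\kaec$ and $\kaecgoth$-embeddings $g_i:N_i\to N$ that agree on $M$. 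Write $h=g_i\restriction M$, which is independent of $i$. Replacing $N$ by an isomorphic copy via Definition~\ref{def:abstractclass}(3), I may assume $h=\id_M$. Then $M=g_i(M)\leqk N$ by coherence-free reasoning: $g_0$ is a $\kaecgoth$-embedding, so $g_0(N_0)\leqk N$, and $M\leqk g_0(N_0)$ transports along $g_0$ to give $M\leqk g_0(N_0)$ inside $N$. Transitivity then yields $M\leqk N$.

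Now set $\bar c_i=g_i(\bar b_i)\in N^\gamma$. The pair $g_i$, $\id_N$ witnesses $(\bar b_i,M,N_i)\,E^\at_\kaecgoth\,(\bar c_i,M,N)$, because $g_i$ fixes $M$. Hence $p_i=\gtp_\kaecgoth(\bar c_i/M;N)$, so all $\lambda^+$ distinct types lie in $\gS^\gamma_\kaecgoth(M;N)$. This contradicts almost $(\lambda,\gamma)$-stability.

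The only delicate step is the bookkeeping in the middle paragraph: making sure the amalgam can be taken over $M$ pointwise (the cut-and-paste normalization) and that $M\leqk N$ actually holds. Both follow from the isomorphism-invariance and transitivity axioms of an $\mrm{AC}$. The cardinal bound $\lambda^+<\lambda^{++}$ is exactly what makes the hypothesis sufficient.
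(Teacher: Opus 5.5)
Your proof is correct and follows the paper's argument: you amalgamate the $\lambda^+$ witnesses $N_i$ over $M$ using $({<}\lambda^{++})\mh\mrm{AP}$, so that all the distinct types are realized in a single $N$ with $M\leqk N$, which contradicts almost $(\lambda,\gamma)$-stability. The paper leaves implicit the two bookkeeping steps you supply, namely renaming the amalgam so that it fixes $M$ pointwise and deriving $M\leqk N$ by transport along $g_0$ plus transitivity; in that sentence the hypothesis being transported should read $M\leqk N_0$, not $M\leqk g_0(N_0)$.
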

\begin{proof}
Clearly~\ref{lemma:equivasamalg2} implies~\ref{lemma:equivasamalg1}. Let us prove the contrapositive of the other direction. Assume $\kaecgoth$ is not $(\lambda, \gamma)$-stable. Then there is $M\in \kaec_\lambda$ with $|\gS^\gamma_\kaecgoth(M)| \geq \lambda^+$. Therefore, there is a family $\{\gtp(\bar c_i/M; N_i) \mid i<\lambda^+ \}$ of distinct orbital types over $M$, with $M\leqk N_i$ for every $i<\lambda^+$. Using $({<}\lambda^{++})\mh\mrm{AP}$ there is a collection $(f_i: N_i\to N\mid i<\lambda^+)$ of $\kaecgoth$-embeddings with $f_i \restriction M =\id_M$ for every $i<\lambda^+$. We thus have $\gtp(\bar c_i/M; N_i) = \gtp(f_i(\bar c_i)/M; N)$, and so $\lambda^+\leq|\gS^\gamma_\kaecgoth(M; N)|$.
\end{proof}

As we are interested in stability, i.e., $\lambda$-stability over unboundedly many cardinals $\lambda$, then by the previous lemma we want the classes we are considering to also satisfy $\infty\mh\mrm{AP}$. We now show that for some abstract classes the amalgamation property already implies $\infty\mh\mrm{AP}$. For this, we are going to introduce a new property of an abstract class.

\begin{definition}[\protect{\cite[Definition~7.6]{forkingcategorical}}]
\label{def:chainbounds}
Let $\kaecgoth = (\kaec, \leqk)$ be an $\mrm{AC}$. We say that $\kaecgoth$  \tdef{has chain bounds} if whenever $\ordnl$ is an ordinal and $(M_i)_{i<\ordnl}$ is a chain in $\kaecgoth$, there exists $M\in \kaec$ such that $M_i \leqk M$ for every $i<\ordnl$.
\end{definition} 

\begin{remark}
\begin{enumerate}[(1), leftmargin=*]
\item Clearly, if $\kaecgoth$ has continuity (cf.~\ref{def:propertiesac}\ref{def:mucontinuity}), then it has chain bounds. Therefore, every $\mrm{AEC}$ has chain bounds.
\item It is a remarkable fact that a $\mu\mh\mrm{AEC}$ has $\mrm{AP}$, $\mrm{JEP}$ and chain bounds if and only if it has monster models (\cite[Theorem~7.10]{forkingcategorical}), that is, sufficiently homogeneous and universal models. For the precise definitions, the reader should consult \cite[Section~7]{forkingcategorical}. 
\end{enumerate}
\end{remark}

The following lemma is a clear  generalization of the fact that in $\mrm{AEC}$s with $\mrm{AP}$ we have $\infty\mh\mrm{AP}$, which in turn is the reason why in such a case we have that almost $\lambda$-stability coincides with $\lambda$-stability.

\begin{lemma}
\label{lem:chainboundsinftyap}
Let $\kaecgoth = (\kaec, \leqk)$ be an $\mrm{AC}$ with chain bounds and $\mrm{AP}$. Then $\kaecgoth$ has $\infty\mh\mrm{AP}$.
\end{lemma}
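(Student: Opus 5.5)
By Remark~\ref{rem:lambdaapcutpaste}, it suffices to show the following. For every ordinal $\ordnl$, every $M\in\kaec$ and every family $(M_i)_{i<\ordnl}$ in $\kaec$ with $M\leqk M_i$ for all $i$, there is an amalgam of $(M_i)_{i<\ordnl}$ over $M$. I will build this amalgam by transfinite recursion on $\ordnl$, constructing a $\kaecgoth$-chain $(N_j)_{j<\ordnl}$ together with $\kaecgoth$-embeddings $g_j: M_j \to N_j$ such that $M\leqk N_0$ and $g_j\restriction M = \id_M$ for every $j<\ordnl$.

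\emph{Successor step.} At stage $0$, set $N_0 = M_0$ and $g_0=\id$. Suppose $N_j$ has been built, with $M\leqk N_j$ by transitivity. Apply $\mrm{AP}$ in the form of Remark~\ref{rem:amalgcutpaste}\ref{rem:amalgcutpaste1} to the inclusions $M\to N_j$ and $M\to M_{j+1}$. This gives $N_{j+1}\in\kaec$ with $N_j\leqk N_{j+1}$ and a $\kaecgoth$-embedding $g_{j+1}:M_{j+1}\to N_{j+1}$ that is the identity on $M$.

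\emph{Limit step.} This is where the main obstacle lies. Since $\kaecgoth$ is only assumed to have chain bounds, and not continuity, the union $\bigcup_{k<j}N_k$ need not lie in $\kaec$. Instead I will use chain bounds applied to $(N_k)_{k<j}$ to get some $N'_j\in\kaec$ with $N_k\leqk N'_j$ for all $k<j$. I then amalgamate $M_j$ into $N'_j$ over $M$ exactly as in the successor step, producing $N_j$ and $g_j$. Here $M\leqk N'_j$ holds because $M\leqk N_0\leqk N'_j$.

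To make this uniform, at every stage $j$ I can first take a chain bound $N'_j$ of $(N_k)_{k<j}$, with $N'_0=M$, and then amalgamate $M_j$ over $M$ into an extension $N_j$ of $N'_j$. By transitivity, $N_k\leqk N_j$ for all $k<j$, so $(N_j)_{j<\ordnl}$ is indeed a chain in $\kaecgoth$ and the recursion can continue.

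\emph{Conclusion.} Once all $N_j$ are built, take a final chain bound $N$ of $(N_j)_{j<\ordnl}$. For each $j$, the composite $M_j\xrightarrow{g_j} N_j \subseteq N$ is a $\kaecgoth$-embedding. This holds because $g_j(M_j)\leqk N_j\leqk N$, and transitivity together with the isomorphism invariance in Definition~\ref{def:abstractclass} gives $g_j(M_j)\leqk N$. Each of these embeddings fixes $M$ pointwise, so they all agree on $M$, and they therefore form an amalgam of $(M_i)_{i<\ordnl}$ over $M$.

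Since $\ordnl$ was arbitrary, $\kaecgoth$ has $({<}\lambda)\mh\mrm{AP}$ for every cardinal $\lambda$, that is, $\infty\mh\mrm{AP}$.
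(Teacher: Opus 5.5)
Your proof is correct and follows essentially the same route as the paper: a transfinite recursion that builds a $\kaecgoth$-chain, using $\mrm{AP}$ (via Remark~\ref{rem:amalgcutpaste}\ref{rem:amalgcutpaste1}) at successor stages and chain bounds at limit stages, followed by a final chain bound as the common target. The differences are only cosmetic: you start from $N_0 = M_0$ rather than $M$, you send $g_j$ into $N_j$ rather than $N_{j+1}$, and you treat arbitrary ordinals rather than only limit ones.
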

\begin{proof}
Assume $\ordnl$ is a limit ordinal, $M\in \kaec$, and $(M_i)_{i<\ordnl}$ is a collection in $\kaec$ with $M\leqk M_i$ for every $i<\ordnl$. We are going to build an amalgam of $(M_i)_{i<\ordnl}$ over $M$. This will give us $\infty\mh\mrm{AP}$ by Remark~\ref{rem:lambdaapcutpaste}.

\nin We wish to define a $\kaecgoth$-chain $(N_i)_{i\leq\ordnl}$ and a 
collection of $\kaecgoth$-embeddings $(g_i: M_i \to N_{i+1}\mid i<\ordnl)$ with $g_i\restriction M = \id_M$ for every $i<\ordnl$. For $i<\ordnl$, let  $g_i': M_i\to N_\ordnl$ denote the map which extends the codomain of $g_i$ to $N_\ordnl$. 
Clearly, $(g_i': M_i \to N_\ordnl\mid i<\ordnl)$ is the required amalgam of $(M_i)_{i<\ordnl}$ over $M$. 
We now show that we can do the construction.
\newline \uline{Case 1}. $i = 0$.
\newline In this case we let $N_0 = M$.
\newline \uline{Case 2}. $i = k+1<\ordnl$ is successor.
\newline Using $\mrm{AP}$, by Remark~\ref{rem:amalgcutpaste}\ref{rem:amalgcutpaste1} there is $N_i\in \kaec$ with $N_k\leqk N_i$, and there is a $\kaecgoth$-embedding $g_k: M_k \to N_i$ with $g_k\restriction M = \id_M$.
\newline \uline{Case 3}. $i\leq \ordnl$ is limit. 
\newline Using chain bounds, find $N_i\in \kaec$ with $N_j \leqk N_i$ for every $j<i$.
\end{proof}

Combining Lemmas~\ref{lemma:equivasamalg} and~\ref{lem:chainboundsinftyap} we get:
\begin{corollary}
\label{cor:coralmostmuap}
Let $\kaecgoth = (\kaec, \leqk)$ be an abstract class with chain bounds (cf.~\ref{def:chainbounds}) and $\mrm{AP}$. Then for every infinite cardinal $\lambda$ and ordinal $\gamma$ the following are equivalent:
\begin{enumerate}[(1), leftmargin=*]
\item $\kaecgoth$ is almost $(\lambda, \gamma)$-stable.
\item $\kaecgoth$ is $(\lambda, \gamma)$-stable.
\end{enumerate}
\end{corollary}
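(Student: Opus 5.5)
The corollary follows by chaining the two preceding lemmas, and I would present it exactly that way. First apply Lemma~\ref{lem:chainboundsinftyap}. Its hypotheses are precisely that $\kaecgoth$ is an abstract class with chain bounds and $\mrm{AP}$, so it yields that $\kaecgoth$ has $\infty\mh\mrm{AP}$. By Definition~\ref{def:amalgac}\ref{def:amalgac3}, this means $\kaecgoth$ has $({<}\lambda')\mh\mrm{AP}$ for every cardinal $\lambda'$. In particular, for the given infinite cardinal $\lambda$ it has $({<}\lambda^{++})\mh\mrm{AP}$.

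Next, invoke Lemma~\ref{lemma:equivasamalg} with this $\lambda$ and the given ordinal $\gamma$. Its only hypothesis is $({<}\lambda^{++})\mh\mrm{AP}$, which we have just verified. Its conclusion is exactly the equivalence of almost $(\lambda,\gamma)$-stability and $(\lambda,\gamma)$-stability. Because $\lambda$ and $\gamma$ were arbitrary, the equivalence holds for all infinite $\lambda$ and all ordinals $\gamma$.

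The substantive work has already been done in the two lemmas, so there is no real obstacle. The only point worth checking is that neither lemma requires anything beyond the definition of an abstract class, that is, no LS axiom and no continuity. This is true, since both are stated for arbitrary $\mrm{AC}$s. The direction from $(\lambda,\gamma)$-stability to almost $(\lambda,\gamma)$-stability is in any case immediate from the inclusion $\gS^\gamma_\kaecgoth(M;N)\subseteq\gS^\gamma_\kaecgoth(M)$ for $M\leqk N$.
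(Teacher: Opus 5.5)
Your proposal is correct and is exactly the paper's argument: Lemma~\ref{lem:chainboundsinftyap} gives $\infty\mh\mrm{AP}$, hence $({<}\lambda^{++})\mh\mrm{AP}$, and then Lemma~\ref{lemma:equivasamalg} yields the equivalence. The paper states it simply as ``combining'' these two lemmas.
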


Finally, we end this section with a very useful lemma which highlights the flexibility of the concept of almost stability. Notice that in Lemma~\ref{lem:alstabtransfer} we only assume that $\kaecgoth_1$ is an $\mrm{AC}$ and $\kaecgoth_2$ has $\mrm{LS}$ at the almost stability cardinal.

\begin{lemma}
\label{lem:alstabtransfer}
Let $\kaecgoth_1 = (\kaec, \leqk_1)$, $\kaecgoth_2 = (\kaec, \leqk_2)$ be $\mrm{AC}$s and $\lambda$ an infinite cardinal. Assume $\kaecgoth_2$ has $\mrm{LS}$ at $\lambda$ and $\leqk_1$ refines $\leqk_2$, i.e., $M\leqk_2 N$ implies $M\leqk_1 N$ for every $M,N\in\kaec$. 
If ${\kaecgoth_2 = (\kaec, \leqk_2)}$ is almost $\lambda$-stable, then $\kaecgoth_1 = (\kaec, \leqk_1)$ is almost $\lambda$-stable.
In particular, if $\kaecgoth_1$ also satisfies $({<}\lambda^{++})\mh\mrm{AP}$ (e.g., it has $\infty\mh\mrm{AP}$), then $\kaecgoth_1$ is $\lambda$-stable.
\end{lemma}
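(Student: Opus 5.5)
The plan is to bound the $\kaecgoth_1$-types over a model of size $\lambda$ by the $\kaecgoth_2$-types over a slightly larger model, which we obtain from the $\mrm{LS}$ property of $\kaecgoth_2$. Fix $M\in\kaec_\lambda$ and $N\in\kaec$ with $M\leqk_1 N$. We must show $|\gS_{\kaecgoth_1}(M;N)|\leq\lambda$.

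First, we apply $\mrm{LS}$ at $\lambda$ for $\kaecgoth_2$ (cf.~\ref{def:propertiesac}\ref{def:LS}) to $N$ and $A=M$. Since $|A|=\lambda$, this gives $M'\in\kaec$ with $M\subseteq M'\leqk_2 N$ and $|M'|\leq\lambda$. As $M'\supseteq M$, we have $|M'|=\lambda$, so $M'\in\kaec_\lambda$. Almost $\lambda$-stability of $\kaecgoth_2$ then gives $|\gS_{\kaecgoth_2}(M';N)|\leq\lambda$.

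Second, we show that the assignment $\gtp_{\kaecgoth_2}(b/M';N)\mapsto\gtp_{\kaecgoth_1}(b/M;N)$ is well defined. It is then evidently a surjection onto $\gS_{\kaecgoth_1}(M;N)$, which yields the desired bound. Well-definedness is the only step needing care, since types are defined via the transitive closure $E$ rather than $E^\at$. So it suffices to check the following: whenever $(b_1,M',N_1)E^\at_{\kaecgoth_2}(b_2,M',N_2)$, also $(b_1,M,N_1)E^\at_{\kaecgoth_1}(b_2,M,N_2)$. Here the $N_\ell$ are arbitrary members of $\kaec$ containing $M'$, possibly intermediate links in a chain.

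To verify this, take witnesses $f_\ell:N_\ell\to N^*$ for the $E^\at_{\kaecgoth_2}$ relation. These are $\kaecgoth_2$-embeddings, i.e.\ $f_\ell(N_\ell)\leqk_2 N^*$. Because $\leqk_2$-extensions are $\leqk_1$-extensions, they are also $\kaecgoth_1$-embeddings. They satisfy $f_1(b_1)=f_2(b_2)$, and they fix $M'$ pointwise, hence fix $M$. So they witness $E^\at_{\kaecgoth_1}$ over $M$; this is Remark~\ref{rem:equivrel}\ref{rem:equivrel2} combined with the change of relation. Taking transitive closures, we conclude that $E_{\kaecgoth_2}$-equivalence over $M'$ implies $E_{\kaecgoth_1}$-equivalence over $M$. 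This proves the first claim.

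For the ``in particular'', we apply Lemma~\ref{lemma:equivasamalg} with $\gamma=1$ to $\kaecgoth_1$. By that lemma, almost $\lambda$-stability together with $({<}\lambda^{++})\mh\mrm{AP}$ gives $\lambda$-stability.
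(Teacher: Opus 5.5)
Your proposal is correct and follows essentially the same route as the paper. Both arguments use $\mrm{LS}$ at $\lambda$ for $\kaecgoth_2$ to enlarge $M$ to some $M'\leqk_2 N$ of size $\lambda$, note that $E^\at_{\kaecgoth_2}$ implies $E^\at_{\kaecgoth_1}$ (and hence the same for the transitive closures) because $\kaecgoth_2$-embeddings are $\kaecgoth_1$-embeddings, restrict from $M'$ to $M$, and conclude the ``in particular'' clause via Lemma~\ref{lemma:equivasamalg}. Your explicit check that $|M'|=\lambda$, which is needed to invoke almost $\lambda$-stability of $\kaecgoth_2$, is a detail the paper glosses over.
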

\begin{proof}
Throughout, for $\ell\in\{1,2\}$, we let $E_\ell$ denote $E_{\kaecgoth_\ell}$, and similarly for $E_\ell^\at$.
\nin\begin{enumerate}[$(*_1)$, leftmargin=*, series=prf:astablerels]
\item \label{prf:astablerels1} For any $A\subseteq N_\ell\in \kaec$ and $\bar b_\ell \in N_\ell$, with $\ell\in\{1, 2\}$, we have that $\newline{(\bar b_1, A, N_1)E_2^\at(\bar b_2, A, N_2)}$ implies $(\bar b_1, A, N_1)E_1^\at(\bar b_2, A, N_2)$.
\end{enumerate}
\nin Why \ref{prf:astablerels1}? By definition there is $N\in\kaec$ and $\kaecgoth_2$-embeddings $f_\ell: N_\ell \to N$, with $\ell\in\{1, 2\}$, such that $f_1(\bar b_1) = f_2(\bar b_2)$. Because $\leqk_1$ refines $\leqk_2$, $f_1$ and $f_2$ are $\kaecgoth_1$-embeddings, and this gives us $(\bar b_1, A, N_1)E_1^\at(\bar b_2, A, N_2)$.
\nin\begin{enumerate}[resume*=prf:astablerels]
\item \label{prf:astablerels2} For any $A\subseteq N_\ell\in \kaec$ and $\bar b_\ell \in N_\ell$, with $\ell\in\{1, 2\}$, we have that $\newline{(\bar b_1, A, N_1)E_2(\bar b_2, A, N_2)}$ implies $(\bar b_1, A, N_1)E_1(\bar b_2, A, N_2)$.
\end{enumerate}
\nin Clearly \ref{prf:astablerels2} follows from \ref{prf:astablerels1} by induction, because $E_\ell$ is the transitive closure of $E_\ell^\at$ (cf.~\ref{def:eqrelorbtypes}), with $\ell\in\{1,2\}$.
\nin\begin{enumerate}[resume*=prf:astablerels]
\item \label{prf:astablerels3} $\kaecgoth_1 =(\kaec, \leqk_1)$ is almost $\lambda$-stable.
\end{enumerate}
\nin Why \ref{prf:astablerels3}? Let $M, N\in \kaec$ with $M \leqk_1 N$ and $|M| \leq \lambda$. By $\mrm{LS}$ at $\lambda$ for $\kaecgoth_2 = (\kaec, \leqk_2)$, there is $M\subseteq M_0 \leqk_2 N$ with $|M_0|\leq \lambda$.
 By almost $\lambda$-stability of $\kaecgoth_2$ there is a set $(c_i\mid i<\lambda)$ in $N$ such that for every $d\in N$ there is $i<\lambda$ such that $(d, M_0, N)E_2(c_i, M_0, N)$. For $d\in N$, take $i<\lambda$ such that  $(d, M_0, N)E_2(c_i, M_0, N)$ holds. By \ref{prf:astablerels2} we have $(d, M_0, N) E_1(c_i, M_0, N)$, and by Remark~\ref{rem:equivrel}\ref{rem:equivrel2} one has $(d, M, N) E_1(c_i, M, N)$. Therefore $\kaecgoth_1$ is almost $\lambda$-stable.
\nin\begin{enumerate}[resume*=prf:astablerels]
\item \label{prf:astablerels4} If $\kaecgoth_1$ has $({<}\lambda^{++})\mh\mrm{AP}$, then it is $\lambda$-stable.
\end{enumerate}
Why \ref{prf:astablerels4}? Follows from \ref{prf:astablerels3} and Lemma \ref{lemma:equivasamalg}.
\end{proof}

Lemma~\ref{lem:alstabtransfer} easily yields the following:
\begin{corollary}
\label{cor:alstabtransfermu}
Let $\kaecgoth_1 = (\kaec, \leqk_1)$ be an $\mrm{AC}$, $\kaecgoth_2 = (\kaec, \leqk_2)$ a $\mu\mh\mrm{AEC}$, and $\lambda =\lambda^{<\mu}+\mrm{LS}_\mu(\kaecgoth_2)$ an infinite cardinal. Assume $\leqk_1$ refines $\leqk_2$,  i.e., $M\leqk_2 N$ implies $M\leqk_1 N$ for every $M,N\in\kaec$.  
If ${\kaecgoth_2 = (\kaec, \leqk_2)}$ is almost $\lambda$-stable, then $\kaecgoth_1 = (\kaec, \leqk_1)$ is almost $\lambda$-stable.
In particular, if $\kaecgoth_1$ also satisfies $\infty\mh\mrm{AP}$, then $\kaecgoth_1$ is $\lambda$-stable.
\end{corollary}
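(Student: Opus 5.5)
The plan is to derive this as a direct instance of Lemma~\ref{lem:alstabtransfer}. Its hypotheses are that $\kaecgoth_1$ and $\kaecgoth_2$ are $\mrm{AC}$s on the same class $\kaec$, that $\leqk_1$ refines $\leqk_2$, that $\kaecgoth_2$ has $\mrm{LS}$ at $\lambda$, and that $\kaecgoth_2$ is almost $\lambda$-stable. Three of these are given outright: $\kaecgoth_1$ is an $\mrm{AC}$ by assumption, every $\mu\mh\mrm{AEC}$ is in particular an $\mrm{AC}$, and the refinement and almost stability are assumed. So the only thing to verify is that $\kaecgoth_2$ has $\mrm{LS}$ at $\lambda$ in the sense of Definition~\ref{def:propertiesac}\ref{def:LS}.

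That $\mrm{LS}$ property is the content of Remark~\ref{rem:defmuaec}\ref{rem:defmuaec3}, and I would spell it out because it is the one step that needs the form of $\lambda$. Let $M\in\kaec$ and $A\subseteq M$ with $|A|\leq\lambda$. The $\mrm{LS}$ axiom of Definition~\ref{def:muaec} gives $M_0\in\kaec$ with $A\subseteq M_0\leqk_2 M$ and
\[
|M_0|\leq |A|^{<\mu}+\mrm{LS}_\mu(\kaecgoth_2)\leq \lambda^{<\mu}+\mrm{LS}_\mu(\kaecgoth_2)=\lambda .
\]
The final equality uses the hypothesis $\lambda=\lambda^{<\mu}+\mrm{LS}_\mu(\kaecgoth_2)$, since that sum is exactly $\lambda$. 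The cardinality bound on $\tau$ also holds, because $\mrm{LS}_\mu(\kaecgoth_2)\geq|\tau_{\kaecgoth_2}|$. Hence $\kaecgoth_2$ has $\mrm{LS}$ at $\lambda$.

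Applying Lemma~\ref{lem:alstabtransfer} then shows that $\kaecgoth_1$ is almost $\lambda$-stable. For the final clause, assume $\kaecgoth_1$ has $\infty\mh\mrm{AP}$. Then it has $({<}\lambda^{++})\mh\mrm{AP}$, and Lemma~\ref{lemma:equivasamalg}, or equivalently the last part of Lemma~\ref{lem:alstabtransfer}, gives $\lambda$-stability. I do not expect a real obstacle here; the only point requiring care is the cardinal arithmetic in the $\mrm{LS}$ step above.
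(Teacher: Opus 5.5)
Your proposal is correct and is essentially the paper's proof: apply Lemma~\ref{lem:alstabtransfer}, with the only hypothesis needing verification, $\mrm{LS}$ at $\lambda$ for $\kaecgoth_2$, supplied by Remark~\ref{rem:defmuaec}\ref{rem:defmuaec3}. Your explicit cardinal computation $|A|^{<\mu}+\mrm{LS}_\mu(\kaecgoth_2)\leq\lambda^{<\mu}+\mrm{LS}_\mu(\kaecgoth_2)=\lambda$ is exactly what that remark records. The final clause then follows from Lemma~\ref{lemma:equivasamalg}, as you say.
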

\begin{proof}
Follows from Lemma~\ref{lem:alstabtransfer} using the fact that $\kaecgoth_2$ has $\mrm{LS}$ at $\lambda$ by Remark~\ref{rem:defmuaec}\ref{rem:defmuaec3}.
\end{proof}

\section{Syntactic abstract classes}
In this section we introduce syntactic abstract classes (Definition~\ref{def:syntacticAC}) which are, to a good approximation, the abstract classes $(\kaec, \leqk)$ for which $\kaec$ is the class of models of a sentence of the infinitary logic $\inflang_{\infty,\infty}$, and the relation $\leqk$ can be checked on the models in $\kaec$ by an infinitary formula in $\inflang_{\infty,\infty}$. Then, we show that the abstract classes coming from infinitary logics are syntactic (Proposition~\ref{prop:infsynt}). The definition and results of this section will then be used in Section~\ref{sect:syntalstable} to show that tame syntactic $\mu\mh\mrm{AEC}$s of modules are almost stable.

Throughout, we assume basic familiarity with the definitions of infinitary logics, as presented for example in \cite{largerinfinitary, kuekerinfinitary}. For ease of reference, we introduce the notation we are going to use in the following.
\begin{notation}
\label{not:inflogic}
Let $\tau$ be a fixed vocabulary.
\begin{enumerate}[(1), leftmargin=*]
\item \label{not:inflogic1}Let $\nu_0\geq\nu_1$ be infinite regular cardinals. We let $\mathfrak L_{\nu_0, \nu_1}(\tau)$ denote the infinitary logic which contains all atomic formulas, is closed under negation, admits conjunctions and disjunctions of $<\nu_0$ formulas, admits existential and universal quantifiers bounding $<\nu_1$ variables, and such that every formula has $<\nu_1$ free variables. It is straightforward to extend the definition of satisfiability in this context.
\item \label{not:inflogic2}Let $\nu_1$ be an infinite regular cardinal. We let $\inflang_{\infty,\nu_1}(\tau)$ denote the union of all the logics $\inflang_{\nu_0,\nu_1}$ for $\nu_0\geq \nu_1$ regular.
\item \label{not:inflogic3}We let $\inflang_{\infty,\infty}(\tau)$ denote the union of all the logics $\inflang_{\infty,\nu_1}(\tau)$ for $\nu_1$ regular.
\item \label{not:inflogic4}If $\inflang(\tau)$ is one of the logics just defined, we let $\inflang^{\mrm{qf}}(\tau)$ denote the collection of quantifier-free formulas in $\inflang(\tau)$.
\item \label{not:inflogicqd}If $\nu$ is an infinite regular cardinal and $\varphi\in \inflang_{\infty,\nu}(\tau)$, we let $\mrm{qd}_\nu(\varphi)$ denote the \tdef{quantifier depth} of $\varphi$, which is defined in the following way:
\begin{enumerate}[(a), leftmargin=*]
\item $\mrm{qd}_\nu(\varphi) = 0$ if $\varphi$ is atomic,
\item $\mrm{qd}_\nu(\neg\varphi) = \mrm{qd}_\nu(\varphi)$,
\item $\mrm{qd}_\nu(\bigwedge_{i\in I}\varphi_i)=\mrm{qd}_\nu(\bigvee_{i\in I}\varphi_i) = \sup\{\mrm{qd}_\nu(\varphi_i)\mid i\in I\}$,
\item $\mrm{qd}_\nu(\forall \bar x \varphi_0)=\mrm{qd}_\nu(\exists \bar x \varphi_0) = \mrm{qd}_\nu(\varphi_0) +1$.
\end{enumerate}
\end{enumerate}
When clear from the context, we shall omit the reference to the vocabulary $\tau$.
\end{notation}
\begin{notation} 
\label{not:formulas}
\begin{enumerate}[(1), leftmargin=*]
\item All sequences of variables considered are indexed by ordinals.
\item If $\bar x$ is a sequence of variables, we let $|\bar x|$ denote the length of the sequence.  
\item If $\bar x$ and $\bar y$ are sequences of variables, we let $\bar x^\frown \bar y$ be the concatenation of the two sequences.
\item \label{not:formulas1} Given a formula $\varphi\in\inflang_{\infty,\infty}$ and ordinals $\alpha, \beta$, when we write $\varphi(\bar{x}_\alpha, \bar{y}_\beta)$ we mean that $\bar{x}_\alpha = (x_i \mid i < \alpha)$ and  $\bar{y}_\beta = (y_i \mid i < \beta)$, that all the free variables which appear in $\varphi$ are in $\bar x_\alpha^\frown \bar y_\beta$, and that the variables are partitioned, i.e., the sets $\{x_i\mid i<\alpha\}$ and $\{y_i\mid i<\beta\}$ are disjoint. In such a case we say that $\varphi$  \tdef{is in the partitioned variables} $(\bar x_\alpha,\bar y_\beta)$. Similar notational remarks hold when the variables appearing in the formula $\varphi$ are partitioned in more than two pieces.
\item \label{not:formulas5}Given a set of formulas $\Delta \subseteq \inflang_{\infty,\infty}$ and ordinals $\alpha,\beta$, when we write \newline$\Delta = \Delta(\bar x_\alpha, \bar y_\beta)$, we mean that $\Delta$ is a set of formulas in the partitioned variables $(\bar x_\alpha, \bar y_\beta)$. We may also write $\Delta = \Delta(\bar x_\alpha, \bar y)$ when we are not interested in the length of $\bar y$.
\end{enumerate}
\end{notation}

The following definition isolates those abstract classes which satisfy the analogue of \cite[Fact~2.12]{paolinishelah}, which says that every abstract elementary class is syntactic, in the following sense:

\begin{definition}
\label{def:syntacticAC}
Let $\kaecgoth = (\kaec, \leqk)$ be an $\mrm{AC}$.
\begin{enumerate}[(1), leftmargin=*]
\item \label{def:syntacticAC1}  Let $\nu_0, \nu_1,\kappa$ be infinite cardinals with $\nu_0,\nu_1$ regular and $\nu_0\geq \nu_1\geq \kappa^+ > |\tau_\kaecgoth|$. We say that $\leqk$ is \tdef{$(\inflang_{\nu_0,\nu_1}, \kappa)$-syntactic on} $\kaec$ if there is a formula $\varphi^\star_\kappa(\bar x_\kappa)\in\inflang_{\nu_0, \nu_1}$ with $|\bar x| = \kappa$, such that if  $N\in\kaec$, $\bar a\in N^\kappa$ and $N\restriction \bar a$ is a substructure of $N$, then:
\[
(N\restriction \bar a\in \kaec \text{ and } N \restriction \bar{a} \leqk N )\;\; \Leftrightarrow  \;\; N \vDash \varphi^\star_{\kappa}(\bar{a}).
\]
\item \label{def:syntacticAC2} Let $\kappa\geq |\tau_\kaecgoth|$ be an infinite cardinal. We say that $\leqk$ is \tdef{$\kappa$-syntactic on} $\kaec$ if it is $(\inflang_{\nu_0, \nu_1}, \kappa)$-syntactic for some $\nu_0, \nu_1$ (so in particular $\nu_0\geq \nu_1\geq \kappa^+$).
\item \label{def:syntacticAC3} We say that $\leqk$ is \tdef{syntactic on} $\kaec$ if it is $\kappa$-syntactic for unboundedly many infinite cardinals $\kappa$.
\item \label{def:syntacticAC4} We say that $\kaec$ is \tdef{syntactic} if there is a sentence $\psi^\star\in \inflang_{\infty, \infty}$ such that for every $\tau_\kaecgoth$-structure $N$ we have that:
\[
N\in\kaec \;\; \Leftrightarrow  \;\; N\vDash \psi^\star.
\]
\item \label{def:syntacticAC5} We say that $\kaecgoth$ is \tdef{syntactic} if $\kaec$ is syntactic and $\leqk$ is syntactic on $\kaec$.
\end{enumerate}
\end{definition}

Notice that in~\ref{def:syntacticAC}\ref{def:syntacticAC4} we only require the class $\kaec$ to be syntactic, while in~\ref{def:syntacticAC}\ref{def:syntacticAC5} we require both the class of structures $\kaec$ and the substructure relation $\leqk$ to be syntactic on $\kaec$. 

\begin{remark}
\label{rem:ksyncloseddown}
Let $\kaecgoth = (\kaec, \leqk)$ be an $\mrm{AC}$, $\kappa_1\geq \kappa_2\geq |\tau_\kaecgoth|$ be infinite cardinals. If $\leqk$ is $\kappa_1$-syntactic on $\kaec$, then $\leqk$ is $\kappa_2$-syntactic on $\kaec$.
\end{remark}

Using the terminology we have just introduced, all $\mrm{AEC}$s are syntactic. This follows from the following important result of Shelah and Villaveces:

\begin{fact}[\cite{shelahvillaveces}]\label{fact:villaveces} Let $\kaecgoth  = (\kaec, \leqk)$ be an $\mrm{AEC}$. The following hold:
\begin{enumerate}[(1), leftmargin=*]
\item Let $\kappa \geq \mrm{LS}(\kaecgoth)$ and $\lambda_\kappa =\beth_2(\kappa)^{++}$. There is $\varphi^\star_{\kappa}(\bar{x}_{\kappa}) \in \inflang_{\lambda^+_\kappa, \kappa^+}$ such that  if $N \in \mathbf{K}$, $\bar{a} \in N^\kappa$ and $N \restriction \bar{a}$ is a substructure of $N$, then:
\[
(N\restriction \bar a\in \kaec \text{ and } N \restriction \bar{a} \leqk N )\;\; \Leftrightarrow  \;\; N \vDash \varphi^\star_{\kappa}(\bar{a}).
\]
\item Let $\kappa_0 = \mrm{LS}(\kaecgoth)$ and $\lambda_0 = \beth_2(\mrm{LS}(\kaecgoth))^{++}$. There is $\psi^\star\in \inflang_{\lambda_0^+, \kappa_0^+}$ such that if $N$ is a $\tau_\kaecgoth$-structure, then $N\in \kaec$ if and only if $N\vDash \psi^\star$.
\end{enumerate}
\end{fact}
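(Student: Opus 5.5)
The plan is to give both formulas a game-theoretic description and then turn the game into an infinitary formula by an ordinal-rank approximation. Fix $\kappa\geq\mrm{LS}(\kaecgoth)$. For $N$ a $\tau$-structure and $\bar a\in N^\kappa$, consider the game $\Game(N,\bar a)$ of length $\omega$ with the following rules.
\begin{enumerate}[(1), leftmargin=*]
\item At stage $n$, player I plays a tuple $\bar b_n\in N^\kappa$.
\item Player II answers with $\bar c_n\in N^\kappa$ containing $\bar a$, all earlier $\bar c_m$, and $\bar b_n$.
\item Player II must maintain that $\bar c_0\subseteq\bar c_1\subseteq\cdots$ enumerate substructures of $N$ lying in $\kaec$, with $N\restriction\bar a\leqk N\restriction\bar c_0\leqk N\restriction\bar c_1\leqk\cdots$.
\end{enumerate}
The rule in (3) is decided by the isomorphism type of the finite chain $(N\restriction\bar a, N\restriction\bar c_0,\dots,N\restriction\bar c_n)$, which is determined by a quantifier-free $\inflang_{\kappa^+,\kappa^+}$-formula. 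Since there are at most $2^\kappa$ such types, the condition ``the chain is a legal $\kaecgoth$-chain'' is a disjunction of at most $2^\kappa$ quantifier-free formulas. I expect to show that $N\restriction\bar a\in\kaec$ and $N\restriction\bar a\leqk N$ (with $N\in\kaec$) if and only if II has a winning strategy in $\Game(N,\bar a)$.

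For ($\Rightarrow$), II answers using $\mrm{LS}$ at $\kappa$ and coherence: given $\bar a\subseteq\bar c_{n-1}\cup\bar b_n$, choose $M'\leqk N$ of size $\kappa$ containing these. Coherence then gives $N\restriction\bar c_{n-1}\leqk M'$. For ($\Leftarrow$), fix a winning strategy and close it off. The finite partial plays it generates give a directed system of $\leqk$-substructures of size $\kappa$, indexed by finite sequences of I-moves and covering $N$, all $\leqk$-above $N\restriction\bar a$. Continuity then gives $N\in\kaec$ and $N\restriction\bar a\leqk N$. I would pass through chains first via Fact~\ref{fact:continuitysmooth}, and use a coherence argument to make the system directed rather than tree-shaped: compatible plays can be merged by playing their union. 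The same game with no $\bar a$ yields item (2), with $\kappa_0=\mrm{LS}(\kaecgoth)$.

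The obstacle is that ``II has a winning strategy in an $\omega$-length game'' is not first-order expressible even in $\inflang_{\infty,\kappa^+}$, since it would need $\omega$ alternating quantifier blocks. I will replace it by ranks. Define by recursion on ordinals $\alpha$ formulas $\varphi^\alpha(\bar a,\bar c_0,\dots,\bar c_n)$, meaning ``the position is legal and II survives $\alpha$ more rounds'':
\[
\varphi^{\alpha+1}=\text{legal}\wedge\forall\bar b\,\exists\bar c\;\varphi^\alpha(\dots,\bar c), \qquad \varphi^\delta=\bigwedge_{\alpha<\delta}\varphi^\alpha \ (\delta \text{ limit}).
\]
Each $\varphi^\alpha$ lies in $\inflang_{\lambda^+,\kappa^+}$ once the conjunction sizes are bounded. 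The standard Gale--Stewart closed-game argument shows that II wins if and only if $\varphi^\alpha$ holds for all $\alpha$. The key counting step is that the sequence $\varphi^\alpha$ stabilizes at some $\alpha^\ast<\lambda_\kappa$ uniformly in $N$. To prove this, I would apply the same trick to the ordinal rank of positions. The truth of $\varphi^\alpha$ depends only on the ``$\alpha$-type'' of the position. Counting the possible types of positions (at most $2^\kappa$ isomorphism types, then at most $2^{2^\kappa}$ sets of such types), together with a Hanf-number style argument over all $N$, gives stabilization below $\beth_2(\kappa)^{+}$. Taking $\varphi^\star_\kappa=\varphi^{\alpha^\ast}$ then lands in $\inflang_{\lambda_\kappa^+,\kappa^+}$. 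I expect this uniform stabilization bound to be the delicate part. If the bound does not come out directly, my first fallback is Shelah's presentation theorem: present $\kaec$ as a $\mrm{PC}$-class, and use the Hanf number of $\mrm{PC}_{\kappa^+,\kappa}$ omitting-types classes to bound ranks of well-founded trees of partial plays.
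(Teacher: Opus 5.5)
The paper gives no proof of this Fact; it is quoted from Shelah--Villaveces. So your argument has to stand on its own, and its decisive step is missing. The ($\Rightarrow$) direction and the bookkeeping for the legality formula and the $\varphi^\alpha$ are fine. The entire content of the theorem, however, is the \emph{uniform} bound: a single $\alpha^\ast<\beth_2(\kappa)^{+}$ such that $\varphi^{\alpha^\ast}\leftrightarrow\varphi^{\alpha^\ast+1}$ holds at every position of every $\tau$-structure. For structures of size at most $\mu$, a bound $(\mu^\kappa)^+$ is trivial, but that gives one formula per size, not one formula.

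Your counting does not give uniformity. Whether a position satisfies $\varphi^\alpha$ depends on the ambient $N$, not on the isomorphism type of the position. It is governed by the position's $\inflang_{\infty,\kappa^+}$-type in $N$ up to quantifier depth about $2\alpha$, and the number of such types grows with $\alpha$. So bounding isomorphism types by $2^\kappa$, and sets of them by $2^{2^\kappa}$, bounds no rank.

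The fallback (presentation theorem plus a well-ordering or Hanf bound) is the right kind of idea, but it is exactly where the work lies, and it has a trap you do not address. Coding I's strategies and the rank of the game tree directly quantifies over $\kappa$-tuples. That puts the auxiliary class in $\inflang_{\kappa^+,\kappa^+}$, where well-foundedness is expressible, so there is no well-ordering number to appeal to. Positions must first be reduced to finitary data, e.g.\ via the Skolem functions of the presentation theorem, before a Morley-type omitting-types argument can apply.

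The direction ``II wins $\Rightarrow$ membership'' is also not proved as written. Continuing a play $p$ by letting I play the last model of another play $q$ yields a model $\leqk$-above the last model of $p$. Nothing makes it $\leqk$-above the last model of $q$, and coherence would need a common $\leqk$-extension, which is exactly what you are trying to build.

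In (1), where $N\in\kaec$, a zig-zag repairs this. Alternate LS-submodels $M'_n\leqk N$ with II's answers $K_n$. Then the chains $N\restriction\bar a\leqk K_0\leqk K_1\leqk\cdots$ and $M'_0\leqk M'_1\leqk\cdots$ have the same union $U$. Continuity gives $K_0\leqk U$ and $M'_0\leqk U$, and coherence then gives $N\restriction\bar a\leqk M'_0\leqk N$.

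In (2) a new idea is needed. For instance, let $\sigma$ be II's winning strategy and take $\mathfrak B\in\mathfrak B'$, elementary submodels of some $H(\chi)$ of size $\kappa$. Require that both contain $\kappa$ as a subset and $N,\sigma$ as elements, and that each is the union of an $\omega$-chain of its own elements, with the chain for $\mathfrak B$ belonging to $\mathfrak B'$.
\begin{enumerate}
\item Letting I exhaust $\mathfrak B$ gives a play with union $N\cap\mathfrak B$, and this play belongs to $\mathfrak B'$.
\item Continuing any of its finite positions by letting I exhaust $\mathfrak B'$ always ends with union $N\cap\mathfrak B'$.
\item Continuity and smoothness then give $N\cap\mathfrak B\leqk N\cap\mathfrak B'$.
\end{enumerate}
These models form a $\leqk$-directed system covering $N$, so $N\in\kaec$.
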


The preceding fact is a consequence of the proof of the \enquote{Tarski-Vaught} criterion for $\mrm{AEC}$s proved in \cite{shelahvillaveces}. Although it is not stated in the journal version of \cite{shelahvillaveces}, it is explicitly stated in the latest arXiv version \cite[Theorem~3.1]{shelahvillavecesarxiv}, and also stated
in \cite[Fact~2.12]{paolinishelah}.

\begin{remark}
\label{rem:aecsyntactic}
Using the terminology of Definition~\ref{def:syntacticAC}, Fact~\ref{fact:villaveces} says that if $\kaecgoth$ is an abstract elementary class, then $\kaecgoth$ is syntactic. In particular, for $\kappa \geq \mrm{LS}(\kaecgoth)$ and $\lambda_{\kappa} = \beth_2(\kappa)^{++}$, we have that $\leqkg$ is $(\inflang_{\lambda_\kappa^+, \kappa^+},\kappa)$-syntactic on $\kaec$.
\end{remark}

Although the previous remark ensures that every abstract elementary class is syntactic in the sense of Definition~\ref{def:syntacticAC}, it is not true that also every $\mu\mh\mrm{AEC}$ is syntactic. Indeed, Boney and Walker \cite{boneypreparation}  recently found an $\aleph_1\mh\mrm{AEC}$ $(\kaec, \leqk)$ with neither $\kaec$ syntactic, nor $\leqk$ syntactic on $\kaec$ (cf.~\ref{def:syntacticAC}).

We now present classical examples of $\mu\mh\mrm{AEC}$s arising from infinitary logic and verify that they are indeed syntactic.

\begin{definition}
\label{def:infinitaryAC}
Let $\Delta$ be a set of formulas in $\inflang_{\infty,\infty}$ and {let} $T\subseteq \Delta$ {be} a set of sentences. We define $\kaecgoth_{(T, \Delta)} = (\kaec_T, \leqk_\Delta)$, where $\kaec_T = \mrm{Mod}(T)$, and for $M,N\in \kaec_T$ we let $M\leqk_\Delta N$ if and only if for every $\varphi(\bar x)\in \Delta$ and $\bar a\in M^{|\bar x|}$ one has 
\[
M\vDash \varphi(\bar a) \;\; \Leftrightarrow \;\; N\vDash \varphi(\bar a).
\]
\end{definition}

Clearly, $\kaecgoth_{(T, \Delta)}$ as defined in~\ref{def:infinitaryAC} is an abstract class. To have that $\kaecgoth_{(T, \Delta)}$ is a $\mu\mh\mrm{AEC}$ we need to assume that the set $\Delta$ is syntactically well-behaved.

\begin{definition}
\label{def:fragment}
A \tdef{fragment}  of $\inflang_{\infty,\infty}$ is a set $\Delta$ of formulas in $\inflang_{\infty,\infty}$ closed under subformulas. We say that a theory $T$ is in the fragment $\Delta$ if $T\subseteq \Delta$.
\end{definition}
\begin{fact}[\protect{\cite[pg.~3052]{muaecvasey}}]
\label{fact:fragmentmuaec}
Let $\mu$ be a regular cardinal, $\Delta$ a fragment of $\inflang_{\infty,\infty}$, $T$ a theory in $\Delta$. Then $\kaecgoth_{(T, \Delta)} = (\kaec_T, \leqk_\Delta)$ is a $\mu\mh\mrm{AEC}$ with $\mrm{LS}_\mu(\kaecgoth_{(T, \Delta)}) \leq (|\Delta| + |\tau_{\kaecgoth_{(T, \Delta)}}|)^{<\mu}$ (cf.~\ref{def:muaec}).
\end{fact}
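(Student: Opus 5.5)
The plan is to verify the clauses of Definition~\ref{def:muaec} one by one. All of them except the L\"owenheim--Skolem axiom come out of a single preservation lemma for $\mu$-directed unions, proved by induction on the formulas of $\Delta$. Throughout I will read the Fact as concerning formulas with fewer than $\mu$ free variables and quantifier blocks of fewer than $\mu$ variables, i.e.\ $\Delta\subseteq\inflang_{\infty,\mu}$. I believe this restriction is implicit in the source: without it, $\mu$-directed unions cannot be expected to preserve existential formulas. That $\kaecgoth_{(T,\Delta)}$ is an $\mrm{AC}$ is immediate, since satisfaction is invariant under isomorphism.

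\emph{Coherence.} Suppose $M_0\leq M_1\leqk_\Delta M_2$ and $M_0\leqk_\Delta M_2$. Then for every $\varphi\in\Delta$ and every $\bar a$ in $M_0$ we have
\[
M_0\vDash\varphi(\bar a)\iff M_2\vDash\varphi(\bar a)\iff M_1\vDash\varphi(\bar a),
\]
so $M_0\leqk_\Delta M_1$.

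\emph{$\mu$-continuity.} Let $(M_i\mid i\in I)$ be a $\mu$-directed system in $\kaecgoth_{(T,\Delta)}$ with union $M$. I will show, by induction on $\varphi(\bar x)\in\Delta$, that for every $i$ and every $\bar a\in M_i^{|\bar x|}$ we have $M_i\vDash\varphi(\bar a)$ iff $M\vDash\varphi(\bar a)$. Closure of $\Delta$ under subformulas makes the induction legitimate.
\begin{enumerate}[(a), leftmargin=*]
\item Atomic formulas are clear, since $M_i\leq M$.
\item Negations and arbitrary conjunctions and disjunctions pass through the induction hypothesis directly.
\item For $\exists\bar y\,\psi(\bar x,\bar y)$: if $M\vDash\psi(\bar a,\bar b)$ with $|\bar b|<\mu$, then $\bar a^\frown\bar b$ has fewer than $\mu$ entries. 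By $\mu$-directedness there is $j\geq i$ with $\bar a^\frown\bar b\subseteq M_j$. The induction hypothesis gives $M_j\vDash\psi(\bar a,\bar b)$, and then $M_i\leqk_\Delta M_j$ gives $M_i\vDash\exists\bar y\,\psi(\bar a,\bar y)$. The converse direction is trivial.
\end{enumerate}
Sentences of $T$ hold in each $M_i$ (and $I\neq\emptyset$), so they hold in $M$. Hence $M\in\kaec_T$ and $M_i\leqk_\Delta M$ for all $i$.

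\emph{$\mu$-smoothness.} Suppose in addition $M_i\leqk_\Delta N$ for all $i$. Any tuple $\bar a$ of length $<\mu$ in $M$ lies in some $M_i$, so
\[
M\vDash\varphi(\bar a)\iff M_i\vDash\varphi(\bar a)\iff N\vDash\varphi(\bar a).
\]

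\emph{L\"owenheim--Skolem axiom.} Put $\lambda=(|\Delta|+|\tau|+\mu)^{<\mu}$; note that $\lambda=\lambda^{<\mu}$. Given $A\subseteq N\in\kaec_T$, I build an increasing chain $(A_\alpha\mid \alpha<\mu)$ with $A_0=A$. At successor stages, $A_{\alpha+1}$ is obtained by closing $A_\alpha$ under the functions of $\tau$ and adding, for each existential formula $\exists\bar y\,\psi(\bar x,\bar y)\in\Delta$ and each $\bar a$ from $A_\alpha$ with $N\vDash\exists\bar y\,\psi(\bar a,\bar y)$, one witnessing tuple. At limit stages I take unions. Each stage has size at most $|A|^{<\mu}+\lambda$.

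Set $M_0=\bigcup_{\alpha<\mu}A_\alpha$. Since $\mu$ is regular, every tuple of length $<\mu$ from $M_0$ already lies in some $A_\alpha$. Therefore $M_0$ is a substructure of $N$ and satisfies the Tarski--Vaught condition for $\Delta$. A further induction on $\Delta$, as in the continuity argument, gives that $M_0\vDash\varphi(\bar a)$ iff $N\vDash\varphi(\bar a)$ for all $\varphi\in\Delta$. In particular $M_0\vDash T$, and so $M_0\leqk_\Delta N$ with $|M_0|\leq|A|^{<\mu}+\lambda$. This yields the stated bound on $\mrm{LS}_\mu$, with the harmless $+\mu$ needed to satisfy the requirement $\lambda\geq|\tau|+\mu$ in Definition~\ref{def:muaec}.

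The main obstacle is the cardinal bookkeeping in this last step. One has to check that running the construction for exactly $\mu$ stages captures every tuple of length $<\mu$, which is where regularity of $\mu$ is used. One also has to check that each stage stays within $|A|^{<\mu}+\lambda$ even though a single formula admits $|A_\alpha|^{<\mu}$ parameter tuples; this uses $(\kappa^{<\mu})^{<\mu}=\kappa^{<\mu}$.
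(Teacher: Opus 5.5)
Your argument is correct. Since the paper does not prove this Fact at all (it only quotes it from the cited source), there is no competing route to compare with. Coherence, a preservation lemma for $\mu$-directed unions by induction on $\Delta$, and a Tarski--Vaught closure of length $\mu$ using $(\kappa^{<\mu})^{<\mu}=\kappa^{<\mu}$ for regular $\mu$ together form the standard verification.

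Your restriction to $\Delta\subseteq\inflang_{\infty,\mu}$ is not just convenient but necessary: the Fact as literally stated is false for every regular $\mu$. Work in the empty vocabulary and take the $\inflang_{\mu^+,\mu^+}$-sentence
\[
\neg\exists(y_\alpha)_{\alpha<\mu}\bigwedge_{\alpha<\beta<\mu}\neg(y_\alpha=y_\beta),
\]
which axiomatizes the sets of size ${<}\mu$. Let $\Delta$ be its subformula closure. Apart from two sentences, every formula of $\Delta$ is quantifier-free, so $\leqk_\Delta$ coincides with $\subseteq$ on $\kaec_T$. A strictly increasing chain of length $\mu$ of such sets is then a $\mu$-directed system whose union has size $\mu$, so $\mu$-continuity fails.

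One small repair is needed in the L\"owenheim--Skolem step. In Notation~\ref{not:inflogic} the universal quantifier is primitive, and $\Delta$ is only closed under subformulas. So $\Delta$ may contain $\forall\bar y\,\psi$ without containing $\exists\bar y\,\neg\psi$. You must therefore also add a witness to each failure of a universal formula: for each $\forall\bar y\,\psi(\bar x,\bar y)\in\Delta$ and each $\bar a$ from $A_\alpha$ such that $N\vDash\neg\psi(\bar a,\bar b)$ for some $\bar b$, add one such $\bar b$.

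Without this, the Tarski--Vaught induction breaks at universal formulas. For example, take $\Delta=\{\forall y\,P(y),\,P(y)\}$, $T=\emptyset$ and $A\subseteq P^N\neq N$: your construction returns $M_0=A$, which satisfies $\forall y\,P(y)$ while $N$ does not.

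In the continuity lemma nothing needs to change. The universal case is dual to the existential one, applying $M_i\leqk_\Delta M_j$ to the universal formula itself.
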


\begin{notation}
\label{not:acsyntactic}
In the following, if $\alpha$ is an ordinal and $\bar x$ is a tuple of variables, by $\bar x'\in [\bar x, \alpha]$ we mean that $\bar x' = (x_{j(i)}\mid i<\alpha)$ for some $j: \alpha \to |\bar x|$. 
\end{notation}

\begin{lemma}
\label{lemma:acsyntactic1}
Let $\mu_0, \mu_1,\kappa$ be infinite cardinals with $\mu_0,\mu_1$ regular and $\mu_0\geq \mu_1$. If $\nu_0 = \mu_0 + (\kappa^{<\mu_1})^+$ and $\nu_1 = \kappa^+ + \mu_1$, then for every $\varphi(\bar y)\in \inflang_{\mu_0,\mu_1}$ there is a formula $\varphi^\square(\bar x, \bar y)\in \inflang_{\nu_0, \nu_1}^\qf$ (cf.~\ref{not:inflogic}\ref{not:inflogic4}) with $|\bar x| = \kappa$, such that:
\begin{enumerate}[$(*)$, leftmargin=*]
\item for every $M \leq N$, $\bar a\in M^\kappa$, $\bar b\in M^{|\bar y|}$ with $N\restriction \bar a =M$, then
\[
N\vDash\varphi^\square(\bar a, \bar b) \;\; \Leftrightarrow  \;\; M \vDash \varphi(\bar b).
\]
\end{enumerate}
\end{lemma}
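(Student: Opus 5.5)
Throughout I treat $\bar x = (x_i \mid i<\kappa)$ as a list of names for the elements of $M$, since $N\restriction\bar a = M$ means $M$ is the substructure of $N$ with universe $\{a_i\mid i<\kappa\}$. The plan is to relativize quantifiers to this list. I would build $\varphi^\square$ by induction on the construction of $\varphi$, simultaneously for all formulas of $\inflang_{\mu_0,\mu_1}$ and all finite or small tuples of free variables $\bar y$. The inductive claim is exactly $(*)$ for every $M\leq N$, $\bar a$, $\bar b$ as in the statement. I would also check along the way that $\varphi^\square$ is quantifier-free, uses conjunctions and disjunctions of size $<\nu_0$, and has $<\nu_1$ free variables. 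The free variables lie among $\bar x^\frown\bar y$, so there are at most $\kappa+|\bar y|<\kappa^++\mu_1$ of them.

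The clauses are as follows.
\begin{itemize}
\item \emph{Atomic formulas.} Set $\varphi^\square=\varphi$. Since $M\leq N$ is a substructure and $\bar b\in M$, atomic truth is absolute between $M$ and $N$.
\item \emph{Negation, conjunction, disjunction.} Set $(\neg\varphi)^\square=\neg\varphi^\square$ and $(\bigwedge_{i\in I}\varphi_i)^\square=\bigwedge_{i\in I}\varphi_i^\square$, and similarly for $\bigvee$. Here $|I|<\mu_0\leq\nu_0$, and the inductive hypothesis gives the equivalence at once.
\item \emph{Existential quantifier.} For $\varphi(\bar y)=\exists\bar z\,\psi(\bar z,\bar y)$ with $|\bar z|<\mu_1$, set
\[
\varphi^\square(\bar x,\bar y)=\bigvee_{\bar x'\in[\bar x,|\bar z|]}\psi^\square(\bar x,\bar x',\bar y),
\]
using Notation~\ref{not:acsyntactic}. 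Here $\psi^\square(\bar x,\bar z,\bar y)$ is the formula given by induction, with $\bar x'$ substituted for $\bar z$. Universal quantifiers are handled dually with $\bigwedge$, or as $\neg\exists\neg$.
\end{itemize}

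For the existential clause, correctness goes as follows. $M\vDash\exists\bar z\,\psi(\bar z,\bar b)$ holds iff some $\bar c\in M^{|\bar z|}$ satisfies $M\vDash\psi(\bar c,\bar b)$. Because $M=N\restriction\bar a$, every such $\bar c$ has the form $(a_{j(i)}\mid i<|\bar z|)$ for some $j:|\bar z|\to\kappa$. By the inductive hypothesis applied to $(\bar c,\bar b)\in M$, this holds iff $N\vDash\psi^\square(\bar a,\bar a',\bar b)$ for the corresponding $\bar a'$. So the disjunction captures exactly the witnesses in $M$.

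The size bound is the point to check carefully, and it is the main (mild) obstacle. The disjunction ranges over all maps $j:|\bar z|\to\kappa$ with $|\bar z|<\mu_1$. There are at most $\kappa^{<\mu_1}$ of these, which is $<(\kappa^{<\mu_1})^+\leq\nu_0$. Hence the disjunction is legitimate in $\inflang_{\nu_0,\nu_1}$; this is precisely why $\nu_0$ is chosen as $\mu_0+(\kappa^{<\mu_1})^+$. One also needs the number of free variables to stay below $\nu_1$. This holds because $\kappa<\kappa^+$ and $|\bar y|,|\bar z|<\mu_1$; a formula may carry $\mu_1$-many variables along subformulas, but each tuple has length $<\mu_1\leq\nu_1$. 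I would also note that a substitution instance of a quantifier-free formula is again quantifier-free of the same size, so the induction stays inside $\inflang^\qf_{\nu_0,\nu_1}$.
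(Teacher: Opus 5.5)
Your proposal is correct and follows the paper's own proof: induction on the formula, the identity on atomic formulas, commuting with the Boolean connectives, and replacing $\exists\bar z$ (resp.\ $\forall\bar z$) by a disjunction (resp.\ conjunction) over all substitutions $\bar x'\in[\bar x,|\bar z|]$, with the count $\kappa^{<\mu_1}<\nu_0$ explaining the choice of $\nu_0$. Indexing the witnesses by $[\bar x,|\bar z|]$, as you do, is the right choice; the paper's display writes $[\bar y,|\bar z|]$, which would range only over the parameters and should be read as $[\bar x,|\bar z|]$.
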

\begin{proof} We proceed by induction on the complexity of the formulas in $\inflang_{\mu_0, \mu_1}$.
\begin{enumerate}[$(\cdot_1)$,leftmargin=*]
\item (Atomic formula) If $\varphi(\bar y)$ is atomic, let $\varphi^\square(\bar x, \bar y) = \varphi(\bar y)$.
\item (Negation) If $\varphi(\bar y) = \neg \varphi_0(\bar y)$, define $\varphi^\square(\bar x, \bar y) = \neg \varphi_0^\square (\bar x, \bar y)$.
\item (Conjunction) If $\varphi(\bar y) = \bigwedge_{i\in I} \varphi_i(\bar y)$, let $\varphi^\square (\bar x, \bar y) = \bigwedge_{i\in I} \varphi_i^\square (\bar x, \bar y)$.
\item (Disjunction) If $\varphi(\bar y) = \bigvee_{i\in I} \varphi_i(\bar y)$, let $\varphi^\square (\bar x, \bar y) = \bigvee_{i\in I} \varphi_i^\square (\bar x, \bar y)$.
\item (Existential quantifier) If $\varphi(\bar y) = \exists \bar z \varphi_0(\bar y, \bar z)$, let 
\[
\varphi^\square(\bar x, \bar y) = \bigvee \{\varphi_0^\square (\bar x, \bar y, \bar y')\mid \bar y' \in [\bar y, |\bar z|]\},
\]
where $[\bar y, |\bar z|]$ is as in Notation~\ref{not:acsyntactic}.
\item (Universal quantifier) If $\varphi(\bar y) = \forall \bar z \varphi_0(\bar y, \bar z)$, let 
\[
\varphi^\square(\bar x, \bar y) = \bigwedge\{\varphi_0^\square (\bar x, \bar y, \bar y')\mid \bar y' \in [\bar y, |\bar z|]\}.
\]
\end{enumerate}
It is easily verified that the formulas defined are as wanted.
\end{proof}

\begin{lemma}
\label{lemma:acsyntactic2}
Let $\mu_0,\mu_1,\kappa$ be infinite cardinals with $\mu_0,\mu_1$ regular and $\mu_0 \geq \mu_1$, $\Delta\subseteq\inflang_{\mu_0, \mu_1}$. If $\nu_0 = \mu_0 + (\kappa^{<\mu_1})^+ + |\Delta|^+$, $\nu_1 = \kappa^+ + \mu_1$, then there is a formula $\varphi^\Delta_\kappa(\bar x)\in \inflang_{\nu_0, \nu_1}$ such that  the following condition holds.
\begin{enumerate}[$(*)$, leftmargin=*]
\item For every $M \leq N$, $\bar a\in M^\kappa$ with $N \restriction \bar a = M$ (so in particular $\bar{a}$ is the domain of a $\tau_{\kaecgoth}$-substructure), we have that:
\[
N \vDash \varphi^\Delta_\kappa(\bar a) \;\; \Leftrightarrow \;\; M \leqk_\Delta N.
\]
\end{enumerate}
\end{lemma}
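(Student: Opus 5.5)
The plan is to build $\varphi^\Delta_\kappa(\bar x)$ as one big conjunction, ranging over all formulas of $\Delta$ and all ways of substituting variables from $\bar x$ into them. Each conjunct will compare truth in $N$ with truth in the substructure $M = N\restriction \bar a$. The substructure side is handled by Lemma~\ref{lemma:acsyntactic1}, which already converts truth in $M$ into a quantifier-free statement evaluated in $N$.

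\textbf{Construction.} For each $\varphi(\bar y)\in\Delta$, where $|\bar y|<\mu_1$, apply Lemma~\ref{lemma:acsyntactic1} to get $\varphi^\square(\bar x,\bar y)\in\inflang^\qf_{\nu_0',\nu_1}$ with $\nu_0' = \mu_0+(\kappa^{<\mu_1})^+$ and $|\bar x|=\kappa$. Since every parameter tuple from $M$ is a subtuple of $\bar a$, we quantify over substitutions rather than elements. Set
\[
\varphi^\Delta_\kappa(\bar x) = \bigwedge\bigl\{\varphi^\square(\bar x,\bar x')\leftrightarrow\varphi(\bar x') \;\bigm|\; \varphi(\bar y)\in\Delta,\ \bar x'\in[\bar x,|\bar y|]\bigr\},
\]
where $[\bar x,|\bar y|]$ is as in Notation~\ref{not:acsyntactic}, and $\varphi^\square(\bar x,\bar x')$ denotes $\varphi^\square$ with $\bar x'$ substituted for $\bar y$. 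The biconditional is written as a conjunction of two disjunctions.

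\textbf{Membership in $\inflang_{\nu_0,\nu_1}$.} This is the step needing care. The number of conjuncts is at most
\[
|\Delta|\cdot \sup_{\alpha<\mu_1}\kappa^{|\alpha|}\leq |\Delta|\cdot\kappa^{<\mu_1},
\]
which is $<\nu_0$ by the choice $\nu_0 = \mu_0 + (\kappa^{<\mu_1})^+ + |\Delta|^+$. Each conjunct lies in $\inflang_{\nu_0,\nu_1}$:
\begin{itemize}
\item $\varphi\in\inflang_{\mu_0,\mu_1}\subseteq\inflang_{\nu_0,\nu_1}$;
\item $\varphi^\square\in\inflang_{\nu_0',\nu_1}\subseteq\inflang_{\nu_0,\nu_1}$;
\item the free variables are among $\bar x$, so there are $\kappa<\nu_1=\kappa^++\mu_1$ of them.
\end{itemize}

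\textbf{Verification of $(*)$.} Let $M\le N$ and $\bar a\in M^\kappa$ with $N\restriction\bar a = M$. Every $\bar b\in M^{|\bar y|}$ has the form $\bar a'$ for some choice $\bar x'\in[\bar x,|\bar y|]$, and conversely. By Lemma~\ref{lemma:acsyntactic1}, $N\vDash\varphi^\square(\bar a,\bar b)$ holds exactly when $M\vDash\varphi(\bar b)$. Hence $N\vDash\varphi^\Delta_\kappa(\bar a)$ says precisely that for all $\varphi\in\Delta$ and all $\bar b$ in $M$, $M\vDash\varphi(\bar b)\Leftrightarrow N\vDash\varphi(\bar b)$. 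This is the definition of $M\leqk_\Delta N$ in Definition~\ref{def:infinitaryAC}.

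Note that $M\in\kaec_T$ is not required by the statement, which only concerns the defining condition of $\leqk_\Delta$. If it were wanted, adding a conjunct $\psi^\square$ for each $\psi\in T\subseteq\Delta$ would handle it.
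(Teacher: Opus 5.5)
Your proposal is correct and is the same as the paper's proof. The paper uses exactly the conjunction $\bigwedge\{\varphi^\square(\bar x,\bar x')\leftrightarrow\varphi(\bar x')\mid \varphi(\bar y)\in\Delta,\ \bar x'\in[\bar x,|\bar y|]\}$ and simply asserts it works; you additionally spell out the bound on the number of conjuncts ($\leq|\Delta|\cdot\kappa^{<\mu_1}<\nu_0$) and on the free variables that justify membership in $\inflang_{\nu_0,\nu_1}$.
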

\begin{proof}
It is easily verified that the formula
\[
\varphi^\Delta_\kappa(\bar x) = \bigwedge\{\varphi^\square(\bar x, \bar x') \leftrightarrow \varphi(\bar x')\mid \varphi(\bar y)\in \Delta\text{ and }\bar x'\in [\bar x, |\bar y|]\}
\]
is as wanted.
\end{proof}

\begin{proposition}
\label{prop:infsynt}
Let $\Delta\subseteq\inflang_{\infty,\infty}$, $T\subseteq \Delta$ a set of sentences, and $\kaecgoth_{(T, \Delta)} = (\kaec_T, \leqk_\Delta)$. Then $\kaec_T$ is syntactic and $\leqk_\Delta$ is $\kappa$-syntactic on $\kaec_T$ for every $\kappa \geq |\tau_\kaecgoth|$. In particular, $\kaecgoth_{(T, \Delta)}$ is syntactic.
\end{proposition}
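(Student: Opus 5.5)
The plan is to handle the two halves of ``syntactic'' separately: first show that $\kaec_T$ is syntactic in the sense of Definition~\ref{def:syntacticAC}\ref{def:syntacticAC4}, then show that $\leqk_\Delta$ is $\kappa$-syntactic on $\kaec_T$ via Lemma~\ref{lemma:acsyntactic2}.

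\emph{The class.} Since $T$ is a set of sentences of $\inflang_{\infty,\infty}$, there are regular $\mu_0\geq\mu_1$ with $T\subseteq\inflang_{\mu_0,\mu_1}$. Enlarging $\mu_0$ beyond $|T|$ if necessary, the conjunction $\psi^\star=\bigwedge T$ lies in $\inflang_{\mu_0',\mu_1}$ for some regular $\mu_0'$. For every $\tau$-structure $N$ we then have $N\in\kaec_T$ iff $N\vDash\psi^\star$, which is exactly the required condition.

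\emph{The relation.} Fix $\kappa\geq|\tau_\kaecgoth|$. Since $\Delta$ is a set, choose regular $\mu_0\geq\mu_1$ with $\Delta\subseteq\inflang_{\mu_0,\mu_1}$, and set $\nu_0,\nu_1$ as in Lemma~\ref{lemma:acsyntactic2}. Replace $\nu_0,\nu_1$ by regular cardinals ($\nu_1$ is a successor or regular $\mu_1$; if needed take successors) so that $\nu_0\geq\nu_1\geq\kappa^+>|\tau|$. Let $\theta(\bar x)$ be the formula given by Lemma~\ref{lemma:acsyntactic1} for the sentence $\psi^\star$. For $N\restriction\bar a=M$, $\theta$ expresses ``$M\vDash\psi^\star$'', i.e.\ $M\in\kaec_T$. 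Define
\[
\varphi^\star_\kappa(\bar x)=\theta(\bar x)\wedge\varphi^\Delta_\kappa(\bar x),
\]
after enlarging $\nu_0$ once more so that this formula lies in a single $\inflang_{\nu_0,\nu_1}$.

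\emph{Verification.} Let $N\in\kaec_T$ and $\bar a\in N^\kappa$ with $N\restriction\bar a$ a substructure, say $M$. Lemma~\ref{lemma:acsyntactic1}, applied with $\bar b$ empty, gives $N\vDash\theta(\bar a)$ iff $M\in\kaec_T$. Lemma~\ref{lemma:acsyntactic2} gives $N\vDash\varphi^\Delta_\kappa(\bar a)$ iff $M\leqk_\Delta N$. Together these yield
\[
\bigl(N\restriction\bar a\in\kaec_T\text{ and }N\restriction\bar a\leqk_\Delta N\bigr)\iff N\vDash\varphi^\star_\kappa(\bar a).
\]
Since $\kappa\geq|\tau_\kaecgoth|$ was arbitrary, $\leqk_\Delta$ is $\kappa$-syntactic for every such $\kappa$, hence syntactic, and with the first part $\kaecgoth_{(T,\Delta)}$ is syntactic.

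The only real obstacle is bookkeeping: checking that the cardinals $\nu_0,\nu_1$ can be taken regular and satisfy $\nu_1\geq\kappa^+$, and that Lemmas~\ref{lemma:acsyntactic1} and~\ref{lemma:acsyntactic2} apply when $\mu_1$ is small compared with $\kappa$. Both points are handled by enlarging the cardinals, since the formulas built there remain in the larger logic.
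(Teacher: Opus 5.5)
Your proof is correct and follows the paper's route: $\bigwedge T$ witnesses that $\kaec_T$ is syntactic, and Lemma~\ref{lemma:acsyntactic2} handles $\leqk_\Delta$. The extra conjunct $\theta$ from Lemma~\ref{lemma:acsyntactic1} is harmless but redundant. Because $T\subseteq\Delta$, the conjuncts $\sigma^\square\leftrightarrow\sigma$ with $\sigma\in T$ inside $\varphi^\Delta_\kappa$ already force $N\restriction\bar a\vDash T$ whenever $N\in\kaec_T$. The cardinal bookkeeping you flag is also automatic: the $\nu_0,\nu_1$ of Lemma~\ref{lemma:acsyntactic2} are maxima of regular cardinals, hence regular, and satisfy $\nu_0\geq\nu_1\geq\kappa^+>|\tau_\kaecgoth|$.
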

\begin{proof}
That $\kaec_T$ is syntactic is clear, and the second assertion follows readily from Lemma~\ref{lemma:acsyntactic2}.
\end{proof}

%
%

In Section~\ref{sect:lpure} we are going to study the relation $\leqp^\lambda$ of $\lambda$-purity between $R$-modules (Definition~\ref{def:lpure}), where $\lambda$ is an infinite cardinal. By Proposition~\ref{prop:equivlpurelppform}, the relation of being a $\lambda$-pure submodule is syntactic on $\rmod$. Analogously, in Subsection~\ref{sect:lpurebal} and Appendix~\ref{sect:appstabletfab} we are going to study the relation $\leqb$ of being a balanced subgroup (Definition~\ref{def:proper}) between torsion-free abelian groups. By  Proposition~\ref{prop:balsyntaxequiv} the relation $\leqb$ is syntactic on $\tfab$, the class of all torsion-free abelian groups. 

\section{Almost stability for syntactic \texorpdfstring{$\mu\mh\mrm{AEC}$}{μ-AEC}s of \texorpdfstring{$R$}{R}-modules}
\label{sect:syntalstable}
\subsection{The results}
A central question in the study of abstract elementary classes of modules, asked by Mazari-Armida \cite[Question~2.12]{mazarimodeltheoretic}, is whether every $\mrm{AEC}$ of $R$-modules with the pure submodule relation is stable. The second author and Shelah answered this negatively, exhibiting an unstable $\mrm{AEC}$ $(\kaec, \leqp)$ in which $\kaec$ is a class of torsion-free abelian groups \cite[Theorem~1.2]{paolinishelah}. Nonetheless, in the same paper the authors establish a positive result on the (almost) stability of $\mrm{AEC}$s of $R$-modules:

\begin{fact}[\protect{\cite[Theorem~1.3]{paolinishelah}}]
\label{fact:paolinishelahthm}
Let $\kaecgoth = (\kaec, \leqk)$ be an $\mrm{AEC}$ of $R$-modules and $\kappa \geq \mrm{LS}(\kaecgoth)$ an infinite cardinal such that $\kaecgoth$ is $\kappa$-tame. Then there is a cardinal $\xi \geq \kappa$ such that for every $\lambda = \lambda^{<\xi}$ we have that $\kaecgoth$ is almost $\lambda$-stable. In particular, $\kaecgoth$ is almost stable, and if $\kaecgoth$ has $\mrm{AP}$ then it is stable.
\end{fact}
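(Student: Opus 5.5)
The plan is to combine three ingredients: $\kappa$-tameness, the syntactic description of $\leqk$ given by Fact~\ref{fact:villaveces}, and a stability argument for infinitary formulas that uses the group structure of modules. We want to show that for $M\leqk N$ with $|M|=\lambda$ and $\lambda=\lambda^{<\xi}$, the set $\gS_\kaecgoth(M;N)$ has size $\leq\lambda$. The cardinal $\xi$ is to be chosen large enough that the formulas produced below lie in $\inflang_{\xi,\xi}$ and the relevant counting and partition arguments go through; something like $\xi=\beth_2(\kappa)^{+++}$ should suffice.

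\emph{Step 1 (tameness).} Since the class is $\kappa$-tame, $\gtp(b/M;N)$ is determined by the family of restrictions $\gtp(b/M_0;N)$, where $M_0$ ranges over $\leqk$-substructures of $M$ of size $\leq\kappa$. For a fixed such $M_0$, the type $\gtp(b/M_0;N)$ is determined by the isomorphism type over $M_0$ of a pair $(N_b,b)$ with $M_0\cup\{b\}\subseteq N_b\leqk N$ and $|N_b|\leq\kappa$; such an $N_b$ exists by $\mrm{LS}$. Indeed, an isomorphism over $M_0$ between two such pairs witnesses $E^\at$, because $N_b\leqk N$. Using Fact~\ref{fact:villaveces}, together with Lemma~\ref{lemma:acsyntactic1}-style translations describing the diagram of $N_b$, this becomes a first step toward definability. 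The statement ``there is $\bar y$ enumerating a $\leqk$-substructure of size $\kappa$ containing $x$ and $\bar m$ whose diagram over $\bar m$ is the prescribed one'' is a single formula $\psi(x,\bar m)\in\inflang_{\xi,\xi}$, with $\bar m$ enumerating $M_0$. Consequently $\gtp(b/M;N)$ is determined by the set of pairs $(\psi,\bar m)$, with $\bar m\in M^{\kappa}$ and $\psi$ from a fixed set $\Phi$ of at most $2^{\kappa}$ formulas, such that $N\vDash\psi(b,\bar m)$.

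\emph{Step 2 (reduction to an order property).} Suppose, for contradiction, that there are $b_i\in N$ for $i<\lambda^+$ realizing pairwise distinct types over $M$. Then the $\Phi$-types of the $b_i$ over $M$ in $N$ are pairwise distinct, and there are $\lambda^+$ of them. Since $\lambda=\lambda^{<\xi}$ and $|\Phi|<\xi$, the usual Shelah counting of types over sets closed under $<\xi$-sequences applies: having more than $\lambda$ types over a set of size $\lambda=\lambda^{<\xi}$ produces a formula $\psi\in\Phi$ (or a finite Boolean combination of such formulas) with the order property of length $\xi$ in $N$. This step is routine.

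\emph{Step 3 (killing the order property with module structure) — the main obstacle.} The formulas $\psi$ are not pp-formulas, so Baur--Monk is not directly available. My first attempt would be to exploit additivity. From an indiscernible witness $(b_\alpha,\bar m_\alpha)_{\alpha<\xi}$ of the order property, extracted by Erdős--Rado, since $\xi$ is far above $2^{\kappa}$, I would pass to differences $b_\alpha-b_\beta$ and $\bar m_\alpha-\bar m_\beta$. The aim is to show that the relevant definable sets behave like cosets of subgroups, and hence cannot linearly order an indiscernible sequence; this mimics the classical proof that modules are stable. The key point to verify is that translating a $\leqk$-substructure $N_b$ by an element of $N$ changes the witness in a way controlled by another formula in $\Phi$. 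If this fails in general, I would instead restrict to the sub-family of $\psi$ asserting the existence of $N_b$ with prescribed pp-diagram, and argue that these already determine the type via tameness. Once the order property is refuted, $|\gS_\kaecgoth(M;N)|\leq\lambda$, which gives almost $\lambda$-stability. The final clause of the statement then follows from Corollary~\ref{cor:coralmostmuap}, since every $\mrm{AEC}$ has chain bounds.
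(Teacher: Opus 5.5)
\textbf{Step 1 is correct.} It is essentially the paper's Lemmas~\ref{lem:firstsyn}--\ref{lem:2syn}. For counting you only need ``same $\Phi$-type over $M$ $\Rightarrow$ same Galois type'', and your isomorphism-over-$M_0$ argument plus $\kappa$-tameness gives exactly that.

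\textbf{The gap is Step 3.} This is where all the module theory has to enter, and you offer no mechanism that works. The sets defined by $\psi(x,\bar m)=\exists\bar y\,(\varphi^\star_\kappa(\bar y)\wedge\text{diagram})$ are not cosets, and additivity gives you no direct handle on them. Translating a $\leqk$-substructure by an element of $N$ does not even give a submodule, so there is no reason a translated witness should be controlled by another formula of $\Phi$. Your fallback fails as well, for two reasons. First, $\varphi^\star_\kappa$ is an arbitrary infinitary formula, so the restricted $\psi$'s are still not pp. Second, a positive diagram only produces a homomorphism $N_b\to N'$, not an isomorphism over $M_0$, so those formulas no longer determine the Galois type.

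\textbf{The missing ingredient} is Shelah's infinitary analogue of Baur--Monk, Fact~\ref{fact:elqfmod}. Take $\nu$ regular with $\Phi\subseteq\inflang_{\nu,\nu}$. In a \emph{fixed} module $N$, every $\psi\in\inflang_{\nu,\nu}$ is equivalent to an infinitary Boolean combination of formulas $\varphi(\bar x-\bar a)$. Here $\varphi(N)$ is a subgroup, $\varphi$ ranges over a set $\Lambda$, and $\bar a$ ranges over a set $\mathbf I$, both of size at most $\beth_\nu(|R|^{<\nu})$.

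With this, the paper (Theorem~\ref{thm:synfund}) never passes through an order property. So your Step 2 is unnecessary; it is also not routine in the absence of compactness. The argument runs as follows.
\begin{itemize}
\item Adjoin the entries of $\mathbf I$ to $M$ to get $A$ with $|A|\le\lambda$.
\item For each subgroup formula $\varphi\in\Gamma$, record one $\bar d\in A^\eta$ with $N\vDash\varphi(\bar c,\bar d)$, if there is one.
\item Because solution sets are subgroups, if $\bar e$ is the recorded parameter for both $\bar c_1$ and $\bar c_2$, then $\bar c_2{}^\frown\bar d=\bar c_1{}^\frown\bar d-\bar c_1{}^\frown\bar e+\bar c_2{}^\frown\bar e$. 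Hence these choices determine the $\Gamma$-type over $A$.
\item The $\Gamma$-type over $A$ determines the $\Phi$-type over $M$, and hence, by tameness, the Galois type.
\item This gives at most $\lambda^{\kappa+|\Gamma|}=\lambda$ types whenever $\lambda=\lambda^{\xi}$ with $\xi=\beth_\nu(|R|^{<\nu})$.
\end{itemize}
This is also where the size of $\xi$ comes from. Nothing in your argument supports the choice $\xi=\beth_2(\kappa)^{+++}$.
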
 

In this section we aim to show that every tame syntactic $\mu\mh\mrm{AEC}$ of $R$-modules is almost stable, and  stable if it also satisfies $\infty\mh\mrm{AP}$. The main theorem is the following very general result (see also Theorem~\ref{thm:syn2}):

\begin{theorem}
\label{thm:syn1}
Let $\kaecgoth = (\kaec, \leqk)$ be an $\mrm{AC}$ of $R$-modules and $\kappa\geq |\tau_\kaecgoth|$ an infinite cardinal. Assume $\kaecgoth$ has coherence, $\mrm{LS}$ at $\kappa$, is $\kappa$-tame, and $\leqk$ is $\kappa$-syntactic on $\kaec$ (cf.~\ref{def:syntacticAC}). Then there is a cardinal $\xi\geq \kappa$ such that for every $\lambda = \lambda^\xi$ we have that $\kaecgoth$ is almost $\lambda$-stable (cf.~\ref{def:stability}). In particular, $\kaecgoth$ is almost stable, and if $\kaecgoth$ has $\infty\mh\mrm{AP}$ (cf.~\ref{def:amalgac}) then it is stable.
\end{theorem}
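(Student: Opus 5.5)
We follow the strategy of \cite{paolinishelah}: use tameness to reduce the count of orbital types over a model $M$ of size $\lambda$ to a count of types over its small $\leqk$-submodels, and use the syntactic formula $\varphi^\star_\kappa$ together with the fact that $R$-modules admit a good (pp-type) analysis to bound those. Concretely, fix $\varphi^\star_\kappa(\bar x_\kappa)\in\inflang_{\nu_0,\nu_1}$ witnessing that $\leqk$ is $\kappa$-syntactic. We put $\xi$ large enough, say $\xi\geq \nu_0 + 2^{\kappa+|R|}$, so that $\xi$ bounds the number of relevant formulas and types. Now fix $\lambda=\lambda^\xi$, $M\in\kaec_\lambda$, and $M\leqk N$. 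We must show $|\gS_\kaecgoth(M;N)|\leq\lambda$.

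The first step is the tameness reduction. Given $b,c\in N$, we claim that they have the same orbital type over $M$ in $N$ as soon as, for every $M_0\leqk M$ with $|M_0|\leq\kappa$, we have $\gtp(b/M_0;N)=\gtp(c/M_0;N)$. Indeed, $\kappa$-tameness applies to the types $\gtp(b/M;N)$ and $\gtp(c/M;N)$, both of which lie in $\gS_\kaecgoth(M)$ since $M\leqk N$. By the $\mrm{LS}$ property at $\kappa$ and coherence, it suffices to range over the at most $\lambda^\kappa=\lambda$ substructures $M_0\leqk M$ of size $\le\kappa$.

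The second step, which is the heart of the proof, is a sufficient syntactic condition for $\gtp(b/M_0;N)=\gtp(c/M_0;N)$. The natural witness is an automorphism-like map: in modules, if $b$ and $c$ realize suitable matching types over $M_0$, one wants $M_0+Rb$ and $M_0+Rc$ to be isomorphic over $M_0$, and then to move a small $\leqk$-submodel containing $M_0b$ to one containing $M_0c$. The plan is to pick, by $\mrm{LS}$ at $\kappa$, some $N_b\leqk N$ of size $\kappa$ with $M_0 b\subseteq N_b$, enumerated as $\bar a_b$ with $M_0\leqk N_b$ by coherence. Then $N\vDash\varphi^\star_\kappa(\bar a_b)$. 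The isomorphism type of $(N_b,M_0,b)$ together with its $\inflang_{\nu_0,\nu_1}$-type in $N$ (restricted to $\varphi^\star$-style formulas) takes at most $2^{\xi}$ values, and $2^\xi\leq\lambda$. The obstacle is that orbital-type equality needs a common $\kaecgoth$-extension, and $N$ itself is the natural candidate only if we can find a map $N_b\to N_c$ over $M_0$ that extends to a $\kaecgoth$-embedding. Here we use the module structure in the style of \cite{paolinishelah}. We pass to a large power or ultrapower-like amalgam, namely $N\oplus N$ or the pushout $N\oplus_{N_b\cong N_c} N$. We then verify via $\varphi^\star_\kappa$ that the copies stay $\leqk$-substructures, since the formula defining $\leqk$ at level $\kappa$ is preserved whenever the relevant infinitary type is preserved. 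For this preservation we will use the Shelah--Villaveces-inspired device from Section 3: the quantifier-free reformulation of Lemma~\ref{lemma:acsyntactic1}.

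Finally, in the counting step, we assign to each $b\in N$ the function sending each of the $\le\lambda$ many small $M_0\leqk M$ to the invariant of $b$ over $M_0$ (of size $\leq 2^\xi$). The number of such functions could be $\lambda^\lambda$, which is too many. We avoid this as in \cite{paolinishelah}: the invariants are determined coherently by a single $\xi$-sized piece of data, since the pp-type of $b$ over $M$ is determined by its restrictions and pp-types in modules over $M$ number at most $\lambda^{|R|+\aleph_0}$. Concretely, we aim to show that the invariant over $M_0$ is a function of the pp-type of $b$ over $M$ together with a $\le\xi$-sized choice, giving $\lambda^{\xi}=\lambda$ types. We expect this to be the main obstacle, namely making the amalgamation in step two genuinely depend only on pp/infinitary data of bounded size. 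The almost-stability and stability conclusions then follow, the latter by Lemma~\ref{lemma:equivasamalg}.
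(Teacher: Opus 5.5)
Your tameness reduction in the first step is fine, and the paper uses tameness in the same way. However, the proposal has a genuine gap in both remaining steps.

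In step two, the amalgam you want to build is neither available nor needed. Nothing in the hypotheses gives $\mrm{AP}$, or closure of $\kaec$ under $N\oplus N$ or pushouts. Moreover, $\varphi^\star_\kappa$ only decides, for a model $N$ already in $\kaec$, whether a $\kappa$-sized substructure of $N$ lies in $\kaec$ and is $\leqk N$. It cannot certify that some new module is in $\kaec$, or that a copy of $N$ (or of $N_b$, $N_c$) sits $\leqk$ inside it.

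The missing observation is this: if $M_0\subseteq N'\leqk N$ with $\bar c\in N'$ and $|N'|\le\kappa$, then $\gtp(\bar c/M_0;N')=\gtp(\bar c/M_0;N)$. Since $E_\kaecgoth$ is a transitive closure, orbital types over a small $M_0$ are therefore decided entirely inside small $\leqk$-submodels of $N$. The paper encodes this syntactically as follows.
\begin{itemize}
\item $\theta$ is the disjunction of the at most $2^\kappa$ quantifier-free diagrams of small configurations $(M_0,N',\bar a,\bar b,\bar c)$ with the same orbital type, modulo the identification of bases given by the enumerations.
\item One sets $\psi(\bar z,\bar x_\kappa)=\exists\bar y_\kappa\,(\varphi^\star_\kappa(\bar y_\kappa)\wedge\theta(\bar z,\bar y_\kappa,\bar x_\kappa))$, and $\Delta$ is the set of all such $\psi$.
\item Tameness then gives $\gtp(\bar c_1/M;N)=\gtp(\bar c_2/M;N)$ if and only if $\bar c_1,\bar c_2$ have the same $\Delta$-type over $M$ in $N$ (Lemmas~\ref{lem:firstsyn} and~\ref{lem:2syn}).
\end{itemize}
No amalgam is ever formed.

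In step three, which you rightly flag as the main obstacle, the key idea is missing. The $\Delta$-formulas are infinitary: they quantify over $\kappa$-tuples of $N$ and contain $\varphi^\star_\kappa\in\inflang_{\nu_0,\nu_1}$. So they are not determined by the first-order pp-type of $b$ over $M$, and the bound $\lambda^{|R|+\aleph_0}$ on pp-types does not help.

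What is needed is Shelah's infinitary elimination of quantifiers for modules (Fact~\ref{fact:elqfmod}). In $N$, every $\inflang_{\nu,\nu}$-formula is equivalent to an infinitary Boolean combination of cosets $\varphi(\bar x-\bar a)$. Here $\varphi$ ranges over a fixed family $\Lambda$ of subgroup-defining formulas and $\bar a$ over a fixed set $\mathbf I$, with $|\Lambda|+|\mathbf I|\le\beth_\nu(|R|^{<\nu})$.

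One then runs the Ziegler-style count.
\begin{itemize}
\item Adjoin the entries of $\mathbf I$ to $M$ to obtain $A$ with $|A|=\lambda$.
\item For each $\bar c$, let $F_{\bar c}$ choose, for each coset formula, a witness in $A$ whenever one exists.
\item Since solution sets are subgroups, $F_{\bar c}$ determines the coset-type of $\bar c$ over $A$. This determines its $\Delta$-type over $M$, and hence its orbital type.
\item There are at most $\lambda^{\xi}=\lambda$ such functions, where $\xi=\beth_\nu(|R|^{<\nu})$ and $\nu\ge\nu_0+(2^\kappa)^+$ is regular.
\end{itemize}
Your choice $\xi\ge\nu_0+2^{\kappa+|R|}$ comes with no supporting argument, and this route needs the $\beth_\nu$ bound.
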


Now, we show how Theorem~\ref{thm:syn1} can be used to obtain the following important results regarding the (almost) stability of $\mu\mh\mrm{AEC}$s of modules.  Recall that, by Remark~\ref{rem:aecsyntactic},  every $\mrm{AEC}$ is syntactic, so that the following theorem recovers Fact~\ref{fact:paolinishelahthm}.

\begin{theorem}
\label{thm:syn2}
Let $\kaecgoth = (\kaec, \leqk)$ be a $\mu\mh\mrm{AEC}$ of $R$-modules and $\kappa = \kappa^{<\mu}+ \mrm{LS}_\mu(\kaecgoth)$ an infinite cardinal. If $\kaecgoth$ is $\kappa$-tame and $\leqk$ is $\kappa$-syntactic on $\kaec$, then there is a cardinal $\xi\geq \kappa$ such that for every $\lambda = \lambda^\xi$ we have that $\kaecgoth$ is almost $\lambda$-stable. In particular, $\kaecgoth$ is almost stable, and if $\kaecgoth$ has $\infty\mh\mrm{AP}$ then it is stable.
\end{theorem}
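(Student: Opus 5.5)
Theorem~\ref{thm:syn2} should follow from Theorem~\ref{thm:syn1} once we check that a $\mu\mh\mrm{AEC}$ with the stated $\kappa$ satisfies its hypotheses. So the plan is to verify those hypotheses one at a time and then quote the conclusion.

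First, coherence holds because it is one of the axioms of a $\mu\mh\mrm{AEC}$ (Definition~\ref{def:muaec}). Second, we need $\mrm{LS}$ at $\kappa$. Since $\kappa = \kappa^{<\mu}+\mrm{LS}_\mu(\kaecgoth)$, Remark~\ref{rem:defmuaec}\ref{rem:defmuaec3} gives this directly. Unwinding it: for $A\subseteq M$ with $|A|\leq\kappa$, the $\mrm{LS}$ axiom gives $M_0\leqk M$ containing $A$ with
\[
|M_0|\leq |A|^{<\mu}+\mrm{LS}_\mu(\kaecgoth)\leq \kappa^{<\mu}+\mrm{LS}_\mu(\kaecgoth)=\kappa.
\]
Third, $\kappa\geq|\tau_\kaecgoth|$ holds because $\mrm{LS}_\mu(\kaecgoth)\geq|\tau_\kaecgoth|+\mu$ by definition. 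Fourth, $\kappa$-tameness and $\kappa$-syntacticity of $\leqk$ on $\kaec$ are assumed outright.

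With these in place, Theorem~\ref{thm:syn1} yields a cardinal $\xi\geq\kappa$ such that $\kaecgoth$ is almost $\lambda$-stable for every $\lambda=\lambda^\xi$. The remaining two assertions are then routine:
\begin{enumerate}[(1), leftmargin=*]
\item Cardinals with $\lambda=\lambda^\xi$ are unbounded (take $\lambda=\theta^\xi$ for arbitrarily large $\theta$), so $\kaecgoth$ is almost stable.
\item If $\kaecgoth$ has $\infty\mh\mrm{AP}$, then by Lemma~\ref{lemma:equivasamalg} almost $\lambda$-stability coincides with $\lambda$-stability, so $\kaecgoth$ is stable.
\end{enumerate}

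There is no real obstacle in this deduction; all the difficulty sits in Theorem~\ref{thm:syn1}. The one point needing care is that the tameness notion used there matches Definition~\ref{def:tame}, which restricts to submodels $M_0\leqk M$ of size $\leq\kappa$. It does, and since $\kappa=\kappa^{<\mu}$ this is also equivalent to the set-based version.
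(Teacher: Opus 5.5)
Your proposal is correct and follows the same route as the paper. The paper also deduces the result directly from Theorem~\ref{thm:syn1}, noting that $\kaecgoth$ has $\mrm{LS}$ at $\kappa$ by Remark~\ref{rem:defmuaec}\ref{rem:defmuaec3}; your extra checks ($\kappa\geq|\tau_\kaecgoth|$, unboundedness of cardinals $\lambda=\lambda^\xi$, and the $\infty\mh\mrm{AP}$ step via Lemma~\ref{lemma:equivasamalg}) simply spell out details the paper leaves implicit, since Theorem~\ref{thm:syn1} already contains the final two assertions.
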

\begin{proof}
Follows from Theorem~\ref{thm:syn1}, because $\kaecgoth$ has $\mrm{LS}$ at $\kappa$ by Remark~\ref{rem:defmuaec}\ref{rem:defmuaec3}.
\end{proof}

 Moreover, we have the following:

\begin{corollary}
\label{cor:syn3}
Assume there are unboundedly many strongly compact cardinals. If $\kaecgoth = (\kaec, \leqk)$ is a $\mu\mh\mrm{AEC}$ of $R$-modules with $\leqk$ syntactic on $\kaec$, then $\kaecgoth$ is almost stable.
\end{corollary}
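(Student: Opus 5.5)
The plan is to derive Corollary~\ref{cor:syn3} from Theorem~\ref{thm:syn2}, using Fact~\ref{fact:strongtameness} to supply the tameness hypothesis.

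First, fix a strongly compact cardinal $\kappa_0 > \mrm{LS}_\mu(\kaecgoth)$; one exists because strongly compacts are unbounded. By Fact~\ref{fact:strongtameness}, $\kaecgoth$ is fully $\kappa_0$-tame, and in particular $\kappa_0$-tame. Tameness is upward monotone: if types differ on some $M_0\leqk M$ with $|M_0|\leq\kappa_0$, they witness the same thing for every $\kappa\geq\kappa_0$. So $\kaecgoth$ is $\kappa$-tame for every $\kappa\geq\kappa_0$.

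Next, I need a single cardinal $\kappa$ that satisfies all the hypotheses of Theorem~\ref{thm:syn2} at once:
\begin{itemize}
\item $\kappa\geq\kappa_0$, which gives tameness;
\item $\kappa=\kappa^{<\mu}+\mrm{LS}_\mu(\kaecgoth)$, which amounts to $\kappa\geq\mrm{LS}_\mu(\kaecgoth)$ and $\kappa^{<\mu}=\kappa$;
\item $\leqk$ is $\kappa$-syntactic on $\kaec$.
\end{itemize}

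For the syntactic condition, $\leqk$ syntactic means it is $\kappa'$-syntactic for unboundedly many $\kappa'$. By Remark~\ref{rem:ksyncloseddown} it is then $\kappa$-syntactic for every $\kappa\geq|\tau_\kaecgoth|$: for any such $\kappa$, pick $\kappa'\geq\kappa$ at which it is syntactic and pass down.

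For the cardinal arithmetic, take $\kappa=\theta^{<\mu}$ with $\theta\geq\kappa_0$. Then $(\theta^{<\mu})^{<\mu}=\theta^{<\mu}$, since $\mu$ is regular and so $({<}\mu)\cdot({<}\mu)<\mu$. Also $\kappa\geq\mrm{LS}_\mu(\kaecgoth)\geq|\tau_\kaecgoth|$. Hence $\kappa=\kappa^{<\mu}+\mrm{LS}_\mu(\kaecgoth)$.

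With this $\kappa$, Theorem~\ref{thm:syn2} applies and gives almost $\lambda$-stability for every $\lambda=\lambda^\xi$, hence almost stability.

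The only mildly delicate step is confirming the downward closure of syntacticity and the choice $\kappa=\kappa^{<\mu}$. Both are routine. Note that unboundedly many strongly compacts are not actually required: a single strongly compact above $\mrm{LS}_\mu(\kaecgoth)$ suffices. The unbounded hypothesis simply guarantees that such a cardinal exists for every $\kaecgoth$.
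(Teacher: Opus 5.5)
Your proof is correct and follows the paper's route: get $\kappa$-tameness from Fact~\ref{fact:strongtameness} and then apply Theorem~\ref{thm:syn2}. The only difference is the choice of cardinal. The paper feeds the strongly compact cardinal itself into Theorem~\ref{thm:syn2}, tacitly using that it is inaccessible (so $\kappa=\kappa^{<\mu}$) and that $\leqk$ is $\kappa$-syntactic by Remark~\ref{rem:ksyncloseddown}. You instead pass to $\theta^{<\mu}$ via upward monotonicity of tameness, which makes the hypothesis $\kappa=\kappa^{<\mu}+\mrm{LS}_\mu(\kaecgoth)$ explicit without appealing to inaccessibility.
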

\begin{proof}
By Fact~\ref{fact:strongtameness}, if $\kappa > \mrm{LS}_\mu(\kaecgoth)$ is a strongly compact cardinal, then $\kaecgoth$ is $\kappa$-tame, so that the result follows from Theorem~\ref{thm:syn2}.
\end{proof}

The next subsection is devoted to proving Theorem~\ref{thm:syn1}.
The proof closely follows the one given in \cite{paolinishelah} for the almost stability of $\mrm{AEC}$s of modules; indeed, that argument can readily be adapted to our setting. Nonetheless, we have been able to shorten it considerably by avoiding the technically involved \cite[Claim~3.5]{paolinishelah}, on which the main theorem of that paper relied. Instead, our proof (Theorem~\ref{thm:synfund}) uses directly the elimination of quantifiers for the infinitary logic of $R$-modules (Fact~\ref{fact:elqfmod}), together with an argument closely inspired by the standard proof of stability for first-order theories of modules.

We recall that, by the results of \cite{boneypreparation}, there exists an $\aleph_1\mh\mrm{AEC}$ $(\kaec, \leqk)$ in which $\leqk$ is not syntactic on $\kaec$. It is therefore natural to ask whether this can still occur when $\kaec\subseteq \rmod$:

\begin{question}
Is there a $\mu\mh\mrm{AEC}$ $(\kaec, \leqk)$ with $\kaec\subseteq \rmod$ and $\leqk$ not syntactic on $\kaec$? With $\kaec = \rmod$? Can we find an example with $\leqk$ refining direct summands?
\end{question}

Moreover, motivated by the almost stability transfer (cf. Corollary~\ref{cor:alstabtransfermu}), we are also led to ask the following question:
\begin{question}
Let $\kaecgoth_1 = (\kaec, \leqk_1)$ {be} a $\mu_1\mh\mrm{AEC}$ (not necessarily of modules), is there a $\mu_2\mh\mrm{AEC}$ $\kaecgoth_2= (\kaec, \leqk_2)$ such that $\leqk_2$ is syntactic on $\kaec$ and $\leqk_1$ refines $\leqk_2$? Can we find an example with $\kaec\subseteq \rmod$? With $\leqk_1$ refining direct summands?
\end{question}

Finally, we close this subsection by giving a definition which will be used crucially in the next subsection to prove Theorem~\ref{thm:syn1}:

\begin{definition}\label{def:syntacticstable} 
Let $\kaecgoth =(\kaec, \leqk)$ be an $\mrm{AC}$, $\lambda$ an infinite cardinal, $\gamma\in \mrm{Ord}$,  $\Delta = \Delta(\bar x_\gamma, \bar y) \subseteq \inflang_{\infty, \infty}(\tau_\kaecgoth)$ a set of formulas (cf.~\ref{not:formulas}\ref{not:formulas5}).
\begin{enumerate}[(1),leftmargin=*]
\item \label{formulas2} Given $M\in \kaec$, $\bar c\in M^{\gamma}$, and $A\subseteq M$, we let
\[
\tp_\Delta(\bar c/A; M)= \{\varphi(\bar x, \bar a) \mid M \vDash \varphi(\bar c, \bar a) \text{ and } \bar a\in A^{|\bar y|}\}.
\]
\item \label{def:syntacticstable2} We say that $\kaecgoth$ is \tdef{syntactically almost $(\lambda, \gamma, \Delta)$-stable} when for every $M, N \in\kaec$ with $M \leqk N$ and $M \in \kaec_{\lambda}$ we have that $|\mathbf{S}^\gamma_\Delta(M; N)| \leq \lambda$, where:
\[
\mathbf{S}^\gamma_\Delta(M; N) = \{\tp_\Delta(\bar{c}/M; N) \mid \bar{c} \in N^\gamma \}.
\]
\end{enumerate}
\end{definition}

\subsection{The proof of Theorem~\ref{thm:syn1}}

We start by giving an idea of the proof. First, in Lemma~\ref{lem:2syn} we show that in the context of Theorem~\ref{thm:syn1} we have that almost stability is equivalent to syntactic almost stability with respect to a certain set of formulas (cf.~\ref{def:syntacticstable}). Then, in Theorem~\ref{thm:synfund} we show that, by the elimination of quantifiers for infinitary formulas in the language of $R$-modules (Fact~\ref{fact:elqfmod}), we always have syntactic almost stability.

We remark that  Lemmas~\ref{lem:firstsyn} and \ref{lem:2syn} are the straightforward adaptations of \cite[Major~Claim~3.3]{paolinishelah} and \cite[Claim~3.4]{paolinishelah} respectively. Although the proofs are very similar, we repeat them for completeness.

We now fix the context of the present subsection.

\begin{context}\label{cont:syntactic}
Let $\kaecgoth = (\kaec, \leqk)$, $\kappa$, $\nu$, $\gamma$ be as follows:
\begin{enumerate}[(1), leftmargin=*]
\item $\kaecgoth= (\kaec, \leqk)$ is an abstract class with coherence (cf.~\ref{def:propertiesac}\ref{def:coherence});
\item $\kappa$ is an infinite cardinal and $\kaecgoth$ has $\mrm{LS}$ at $\kappa$ (cf.~\ref{def:propertiesac}\ref{def:LS});
\item\label{cont:syntactice} $\nu$ is a regular infinite cardinal and $\nu\geq (2^\kappa)^+$;
\item $\leqk$ is $(\inflang_{\nu, \nu},\kappa)$-syntactic on $\kaec$ (cf.~\ref{def:syntacticAC});
\item $\gamma < \kappa^+$ is an ordinal.
\end{enumerate}
\end{context}

\begin{remark}
\label{rem:prfthmhyp}
If $\kaecgoth = (\kaec, \leqk)$ and $\kappa$ are as in the hypotheses of Theorem~\ref{thm:syn1}, then there is  a regular infinite cardinal $\nu$ such that $\leqk$ is $(\inflang_{\nu,\nu}, \kappa)$-syntactic on $\kaec$, and clearly we can assume that $\nu$ satisfies Context~\ref{cont:syntactic}\ref{cont:syntactice}.
\end{remark}

We introduce the following notation, which will be useful in the following:

\begin{notation}
Let $\kaecgoth = (\kaec, \leqk)$ be an abstract class, $\kappa$ an infinite cardinal, $\gamma<\kappa^+$ an ordinal.
\begin{enumerate}[(1), leftmargin=*]
\item\label{not:synw1} Let $\wsmall_{(\kaecgoth, \kappa, \gamma)}$ be the class of quintuples of the form
\[
\frku = (M_\frku, N_\frku, \bar{a}_\frku, \bar{b}_\frku, \bar{c}_\frku) = (M, N, \bar a, \bar b, \bar c)
\]
with:
\begin{enumerate}[(a), leftmargin=*]
\item $M, N\in \kaec$, $M\leqk N$, $|N|\leq \kappa$;
\item $\bar a\in M^\kappa$ enumerates $M$ (possibly with repetitions);
\item $\bar b\in N^\kappa$ enumerates $N$ (possibly with repetitions);
\item $\bar c\in N^\gamma$.
\end{enumerate}
\item\label{not:synw2} Let $\wlarge_{(\kaecgoth, \kappa, \gamma)}$ be the class of quadruples of the form
\[
\frkw = (M_\frkw, N_\frkw, \bar{a}_\frkw, \bar{c}_\frkw) = (M, N, \bar a, \bar c)
\]
with:
\begin{enumerate}[(a), leftmargin=*]
\item $M, N\in \kaec$, $M\leqk N$, $|M|\leq \kappa$;
\item $\bar a\in M^\kappa$ enumerates $M$ (possibly with repetitions);
\item $\bar c\in N^\gamma$.
\end{enumerate}
\item Let $\frku_\ell = (M_\ell, N_\ell, \bar{a}_\ell, \bar{b}_\ell, \bar{c}_\ell)\in\wsmall_{(\kaecgoth, \kappa, \gamma)}$, with $\ell\in\{1,2\}$. We say that $\frku_1, \frku_2$ are isomorphic (written $\frku_1\cong \frku_2$) if there is an isomorphism $f: N_1 \to N_2$ such that $f(\bar a_1) = \bar a_2$, $f(\bar b_1) = \bar b_2$, $f(\bar c_1) = \bar c_2$. Similarly, when $\frkw_1, \frkw_2\in\wlarge$ we define $\frkw_1\cong \frkw_2$.
\end{enumerate}
Notice that in \ref{not:synw2} we only require $|M|\leq \kappa$, while in \ref{not:synw1} we require $|M|\leq |N| \leq \kappa$.
\end{notation}

\begin{lemma}
\label{lem:firstsyn} In Context~\ref{cont:syntactic}, we have:
\begin{enumerate}[(1), leftmargin=*]
\item \label{lem:firstsyn1}For $\frku = (M_\frku, N_\frku, \bar a_\frku, \bar b_\frku, \bar c_\frku)\in \wsmall_{(\kaecgoth, \kappa, \gamma)}$ there is
\[
\theta_\frku(\bar z_\gamma, \bar y_\kappa, \bar x_\kappa)\in \inflang_{\nu,\nu}
\]
such that:
\begin{enumerate}[(a), leftmargin=*]
\item for every two isomorphic $\frku_1, \frku_2\in\wsmall_{(\kaecgoth, \kappa, \gamma)}$, then $\theta_{\frku_1} = \theta_{\frku_2}$;
\item if $\frku\in \wsmall_{(\kaecgoth, \kappa,\gamma)}$, then $N_\frku \vDash \theta_\frku(\bar c_\frku, \bar b_\frku, \bar a_\frku)$;
\item \label{lem:firstsyn1c}if $\frku_1, \frku_2\in \wsmall_{(\kaecgoth, \kappa, \gamma)}$ and $\bar a_{\frku_1} = \bar a_{\frku_2}$ (so that $M_{\frku_1} = M_{\frku_2}$), then 
\[
\gtp(\bar c_{\frku_1}/M_{\frku_1}; N_{\frku_1}) = \gtp(\bar c_{\frku_2}/M_{\frku_2}; N_{\frku_2}) \Leftrightarrow \theta_{\frku_1} = \theta_{\frku_2};
\]
\item \label{lem:firstsyn1d}if $\frku\in\wsmall_{(\kaecgoth, \kappa, \gamma)}$, $N\in\kaec$, and $N\vDash \theta_\frku(\bar c, \bar b, \bar a)$, then 
\[
{\frku' =(\bar c, N\restriction \bar a, N\restriction  \bar b, \bar a, \bar b)\in\wsmall_{(\kaecgoth, \kappa, \gamma)}}
\]
and $\theta_\frku = \theta_{\frku'}$.
\end{enumerate}
\item \label{lem:firstsyn2}For $\frkw = (M_\frkw, N_\frkw, \bar a_\frkw, \bar c_\frkw)\in \wlarge_{(\kaecgoth, \kappa, \gamma)}$ there is
\[
\psi_\frkw(\bar z_\gamma, \bar x_\kappa)\in \inflang_{\nu, \nu}
\]
such that:
\begin{enumerate}[(a), leftmargin=*]
\item for every two isomorphic $\frkw_1, \frkw_2\in\wlarge_{(\kaecgoth, \kappa, \gamma)}$, then $\psi_{\frkw_1} = \psi_{\frkw_2}$;
\item \label{lem:firstsyn2b}if $\frkw\in \wlarge_{(\kaecgoth, \kappa,\gamma)}$, then $N_\frkw \vDash \psi_\frkw(\bar c_\frkw, \bar a_\frkw)$;
\item \label{lem:firstsyn2c}if $\frkw_1, \frkw_2\in \wlarge_{(\kaecgoth, \kappa, \gamma)}$ and $\bar a_{\frkw_1} = \bar a_{\frkw_2}$ (so that $M_{\frkw_1} = M_{\frkw_2}$), then 
\[
\gtp(\bar c_{\frkw_1}/M_{\frkw_1}; N_{\frkw_1}) = \gtp(\bar c_{\frkw_2}/M_{\frkw_2}; N_{\frkw_2}) \Leftrightarrow \psi_{\frkw_1} = \psi_{\frkw_2}.
\]
\item \label{lem:firstsyn2d}if $\frkw\in\wlarge_{(\kaecgoth, \kappa, \gamma)}$, $N\in\kaec$, and  $N\vDash \psi_\frkw(\bar c, \bar a)$, then 
\[
{\frkw' =(\bar c, N\restriction \bar a, N, \bar a)\in\wlarge_{(\kaecgoth, \kappa, \gamma)}}
\]
and $\psi_\frkw = \psi_{\frkw'}$.
\end{enumerate}
\end{enumerate}
\end{lemma}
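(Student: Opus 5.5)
The plan is to build the formulas $\theta_\frku$ by hand from quantifier-free diagrams and the syntactic formula $\varphi^\star_\kappa$. The $\psi_\frkw$ are then obtained from the $\theta_\frku$ by existentially quantifying out $\bar y_\kappa$ and using $\mrm{LS}$ at $\kappa$. Throughout, let $\varphi^\star_\kappa(\bar x_\kappa)\in\inflang_{\nu,\nu}$ witness that $\leqk$ is $(\inflang_{\nu,\nu},\kappa)$-syntactic on $\kaec$.

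\emph{Step 1 (an equivalence on isomorphism types).} For $\frku\in\wsmall_{(\kaecgoth,\kappa,\gamma)}$ let $D_\frku(\bar z_\gamma,\bar y_\kappa,\bar x_\kappa)$ be the conjunction of all atomic and negated atomic formulas in these variables satisfied by $(\bar c_\frku,\bar b_\frku,\bar a_\frku)$ in $N_\frku$. Since $\bar b_\frku$ enumerates $N_\frku$ and $|\tau_\kaecgoth|\le\kappa$, this is a conjunction of at most $\kappa$ formulas, and it determines $\frku$ up to isomorphism. Hence there are at most $2^\kappa$ isomorphism types $[\frku]$.

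On these types define $[\frku_1]\approx[\frku_2]$ if there are $\frku_1'\cong\frku_1$ and $\frku_2'\cong\frku_2$ with $\bar a_{\frku_1'}=\bar a_{\frku_2'}$ and $\gtp(\bar c_{\frku_1'}/M_{\frku_1'};N_{\frku_1'})=\gtp(\bar c_{\frku_2'}/M_{\frku_2'};N_{\frku_2'})$, and let $\approx^*$ be its transitive closure. Then set
\[
\theta_\frku=\bigvee\{D_{\frku'}\mid [\frku']\approx^*[\frku]\}\wedge\varphi^\star_\kappa(\bar x)\wedge\varphi^\star_\kappa(\bar y).
\]
Here we choose one representative $D_{\frku'}$ per type, and the disjunction has at most $2^\kappa<\nu$ disjuncts, so $\theta_\frku\in\inflang_{\nu,\nu}$. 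Properties (a) and (b) are then immediate: (a) because $\theta_\frku$ depends only on the $\approx^*$-class, and (b) because $N_\frku\vDash\varphi^\star_\kappa(\bar a_\frku)\wedge\varphi^\star_\kappa(\bar b_\frku)$ (note $N_\frku\restriction\bar b_\frku=N_\frku\leqk N_\frku$).

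\emph{Step 2 (properties (c) and (d)).} For (c), the direction ``same orbital type $\Rightarrow$ same $\theta$'' is built into $\approx$. The converse is the main obstacle: we must check that, when $\bar a_{\frku_1}=\bar a_{\frku_2}$, a chain of $\approx$-steps between isomorphism types really yields equality of orbital types over the fixed $M$.

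The key observation is that orbital types are invariant under isomorphisms. If $\frkv\cong\frkv'$ via $f$, then $f$ maps $\bar a_\frkv$ to $\bar a_{\frkv'}$ coordinatewise, so $f\restriction M_\frkv$ is the isomorphism $M_\frkv\to M_{\frkv'}$ determined by the enumerations. By Definition~\ref{def:nottypes}\ref{def:nottypes2} it carries $\gtp(\bar c_\frkv/M_\frkv;N_\frkv)$ to $\gtp(\bar c_{\frkv'}/M_{\frkv'};N_{\frkv'})$. Walking along the chain and transporting each intermediate quintuple by the isomorphism that sends its $\bar a$ to $\bar a_{\frku_1}$ (renaming $N$ so that this extends), we get a sequence of quintuples all sharing $\bar a_{\frku_1}$ with pairwise equal orbital types. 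By transitivity of $E_\kaecgoth$ this gives the claim.

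For (d), suppose $N\vDash\theta_\frku(\bar c,\bar b,\bar a)$. Then $\varphi^\star_\kappa$ gives $N\restriction\bar a,\,N\restriction\bar b\in\kaec$ with both $\leqk N$, and coherence yields $N\restriction\bar a\leqk N\restriction\bar b$; also $|N\restriction\bar b|\le\kappa$. Some disjunct $D_{\frku'}$ holds, so $(N\restriction\bar a,N\restriction\bar b,\bar a,\bar b,\bar c)\cong\frku'$ via $\bar b\mapsto\bar b_{\frku'}$. Its $\theta$ is therefore $\theta_{\frku'}=\theta_\frku$.

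\emph{Step 3 (the large case).} For $\frkw\in\wlarge_{(\kaecgoth,\kappa,\gamma)}$, use $\mrm{LS}$ at $\kappa$ to find $N'\leqk N_\frkw$ with $M_\frkw\cup\bar c_\frkw\subseteq N'$ and $|N'|\le\kappa$. Coherence gives $M_\frkw\leqk N'$, so for any enumeration $\bar b$ of $N'$ we get $\frku=(M_\frkw,N',\bar a_\frkw,\bar b,\bar c_\frkw)\in\wsmall_{(\kaecgoth,\kappa,\gamma)}$. Since $N'\leqk N_\frkw$, the inclusion $N'\to N_\frkw$ witnesses $\gtp(\bar c_\frkw/M_\frkw;N')=\gtp(\bar c_\frkw/M_\frkw;N_\frkw)$. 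By (c), $\theta_\frku$ therefore does not depend on the choice of $N'$ and $\bar b$, and we set $\psi_\frkw=\exists\bar y_\kappa\,\theta_\frku(\bar z,\bar y,\bar x)$, which lies in $\inflang_{\nu,\nu}$ since $\kappa<\nu$.

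Properties (a) and (b) are clear. Property (c) reduces to (c) for $\theta$ via these small witnesses, since $\psi$ determines $\theta$: $\theta_\frku$ is recoverable from $\psi_\frkw$ as the formula under $\exists\bar y_\kappa$, and by (c) that formula in turn determines the orbital type.

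For (d), suppose $N\vDash\psi_\frkw(\bar c,\bar a)$ and pick a witness $\bar b$. Step 2(d) gives $N\restriction\bar a\leqk N\restriction\bar b$ with $N\restriction\bar b\leqk N$, and $\theta$ unchanged. Transitivity gives $N\restriction\bar a\leqk N$, so $\frkw'\in\wlarge$ with $N\restriction\bar b$ serving as the small witness. Hence $\psi_{\frkw'}=\psi_\frkw$.
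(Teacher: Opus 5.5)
Your proposal is correct and follows essentially the paper's construction. In both, $\theta_\frku$ is the disjunction of the atomic diagrams of all quintuples whose enumerated $M$ is the same up to isomorphism and whose orbital types match, and $\psi_\frkw=\exists\bar y_\kappa(\cdots)$ is taken over a small $\leqk$-submodel witness obtained from $\mrm{LS}$ and coherence. The differences are cosmetic. The paper puts $\varphi^\star_\kappa(\bar y_\kappa)$ into $\psi_\frkw$ rather than into $\theta_\frku$, keeping $\theta_\frku$ quantifier-free; this makes (2)(b) an immediate upward transfer, whereas you verify it directly. The paper also defines its relation $E^{\mrm{small}}$ via an isomorphism $\pi\colon M_{\frku_1}\to M_{\frku_2}$ and the image $\pi(p)$ of the type. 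That relation is already an equivalence relation, which spares it your transitive-closure and transport-along-a-chain argument in (1)(c).
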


Before starting with the proof, we remark that the statement of Lemma~\ref{lem:firstsyn} slightly differs from the analogous one of \cite[Major Claim~3.3]{paolinishelah}, because we also show \ref{lem:firstsyn1}\ref{lem:firstsyn1d} and \ref{lem:firstsyn2}\ref{lem:firstsyn2d}, as they will be used in the following  to make the arguments of Lemma~\ref{lem:2syn} smoother.
\begin{proof}
First, we show \ref{lem:firstsyn1}. 
\begin{enumerate}[$(*_{\arabic*})$, leftmargin=*, align=left, series=prf:firstsyn]
\item Given $\frku = (M_\frku, N_\frku, \bar a_\frku, \bar b_\frku, \bar c_\frku)\in \wsmall_{(\kaecgoth, \kappa, \gamma)}$, we let $\theta_\frku^0$ be the conjunction of all atomic or negated atomic formulas ${\varphi(\bar z_\gamma\restriction u_3, \bar y_\kappa\restriction u_2, \bar x_\kappa\restriction u_1)}$, where $u_1, u_2$ are finite subsets of $\kappa$ and $u_3$ is a finite subset of $\gamma$, and such that 
\[
{N_\frku \vDash \varphi(\bar c_\frku\restriction u_3, \bar b_\frku\restriction u_2, \bar a_\frku\restriction u_1)}.
\]
\end{enumerate}
\begin{enumerate}[resume*=prf:firstsyn] 
\item Define an equivalence relation $E_{(\kaecgoth, \kappa, \gamma)}^{\mrm{small}}$ on $\wsmall_{(\kaecgoth, \kappa, \gamma)}$ by requiring $\frku_1 E^{\mrm{small}}_{(\kaecgoth, \kappa, \gamma)} \frku_2$ if and only if there is a map $\pi: M_{\frku_1} \to M_{\frku_2}$ with the following properties:
\begin{enumerate}[$(\cdot_1)$, leftmargin=*]
\item $\pi(\bar a_{\frku_1})= \bar a_{\frku_2}$,
\item $\pi$ is an isomorphism from $M_{\frku_1}$ to $M_{\frku_2}$,
\item $\gtp(\bar c_{\frku_2}/M_{\frku_2}; N_{\frku_2}) = \pi(\gtp(\bar c_{\frku_1}/M_{\frku_1}; N_{\frku_1}))$ (cf.~\ref{def:nottypes}\ref{def:nottypes2}).
\end{enumerate}
\end{enumerate}
\begin{enumerate}[resume*=prf:firstsyn] 
\item \label{prf:firstsyn3}If $\frku\in \wsmall_{(\kaecgoth, \kappa, \gamma)}$, then $\Phi_\frku= \{\theta_{\frku_1}^0\mid \frku_1 E^{\mrm{small}}_{(\kaecgoth, \kappa, \gamma)}\frku\}$ is a set of cardinality $\leq 2^\kappa$.
\end{enumerate} Why \ref{prf:firstsyn3}? Obvious.
\begin{enumerate}[resume*=prf:firstsyn] 
\item \label{prf:firstsyn4}$\theta_\frku  = \bigvee\Phi_\frku$ is the required formula. 
\end{enumerate} 
Why \ref{prf:firstsyn4}? We only  elaborate on the proof of \ref{lem:firstsyn1}\ref{lem:firstsyn1d}, so assume that $N, \frku, \frku'$ are as in the statement of \ref{lem:firstsyn1}\ref{lem:firstsyn1d}. Because $\frku_1 E^{\mrm{small}}_{(\kaecgoth, \kappa, \gamma)}\frku$ implies $\theta_{\frku_1} = \theta_\frku$, then without loss of generality we can assume that $N\vDash \theta_{\frku}^0(\bar c_\frku, \bar b_\frku, \bar a_\frku)$. By definition of $\theta_\frku^0$, the map $\pi:N\restriction \bar b_{\frku} \to N\restriction \bar b$ with $\pi(\bar b_\frku) = \bar b$ is an isomorphism, and $\pi(\bar a_\frku) = \bar a$, $\pi(\bar c_\frku) = \bar c$, so that $\frku'\in\wsmall_{(\kaecgoth, \kappa, \gamma)}$ by closure of $\kaecgoth$ under isomorphism. By definition of image of a type (cf.~\ref{def:nottypes}\ref{def:nottypes2}) we have
\[
\gtp(\bar c/M\restriction \bar a; N\restriction\bar b) = \pi(\gtp(\bar c_\frku/M_\frku; N_\frku)), 
\]
so that $\frku' E^{\mrm{small}}_{(\kaecgoth, \kappa, \gamma)} \frku$, and thus $\theta_{\frku} = \theta_{\frku'}$. This finishes the proof of \ref{lem:firstsyn1}.

\ssk\nin Now we show \ref{lem:firstsyn2}.
\begin{enumerate}[resume*=prf:firstsyn]
\item If $\frkw\in \wlarge_{(\kaecgoth, \kappa, \gamma)}$, define $\nbo(\frkw)\subseteq \wlarge_{(\kaecgoth,\kappa, \gamma)}$ as follows:
\begin{enumerate}[$(\cdot)$,leftmargin=*]
\item $\frku=(M_\frku, N_\frku, \bar a_\frku, \bar b_\frku, \bar c_\frku)\in\nbo(\frkw)$ if and only if $\frku\in \wsmall_{(\kaecgoth, \kappa,\gamma)}$, $M_\frku = M_\frkw$, $\bar a_\frku = \bar a_\frkw$, $\bar c_\frku= \bar c_\frkw$, and $N_\frku \leqk N_\frkw$.
\end{enumerate}
\end{enumerate}
\begin{enumerate}[resume*=prf:firstsyn]
\item \label{prf:firstsyn6}$\mrm{nb}(\frkw) \neq \emptyset$ 
\end{enumerate}
Why \ref{prf:firstsyn6}? By coherence and $\mrm{LS}$ at $\kappa$ of $\kaecgoth$.  
\begin{enumerate}[resume*=prf:firstsyn]
\item \label{prf:firstsyn7}If $\frku_1, \frku_2\in \nbo(\frkw)$, then $\theta_{\frku_1} = \theta_{\frku_2}$. 
\end{enumerate}
Why \ref{prf:firstsyn7}? Let $M= M_{\frku_1} = M_{\frku_2}$ and $\bar c = \bar c_{\frku_1} = \bar c_{\frku_2}$. Because $N_{\frku_\ell}\leqk N_{\frkw}$, with $\ell\in\{1,2\}$, we have 
\[
\gtp(\bar c/M; N_{\frku_1}) = \gtp(\bar c/M; N_{\frkw}) = \gtp(\bar c/M; N_{\frku_2}),
\]
and thus, because $\bar a_{\frku_1} =\bar a_{\frkw}= \bar a_{\frku_2}$, from \ref{lem:firstsyn1}\ref{lem:firstsyn1c} it follows $\theta_{\frku_1} = \theta_{\frku_2}$. 
\begin{enumerate}[resume*=prf:firstsyn]
\item Let $\varphi^\star_\kappa(\bar x_\kappa)\in\inflang_{\nu, \nu}$ be the formula which witnesses that $\leqk$ is $(\inflang_{\nu, \nu}, \kappa)\ab$-syntactic on $\kaec$ (cf.~\ref{def:syntacticAC}). 
\end{enumerate}
\begin{enumerate}[resume*=prf:firstsyn]
\item \label{prf:firstsyn9}If $\frkw\in\wlarge_{(\kaecgoth, \kappa, \gamma)}$, let $\frku\in\nbo(\frkw)$, and define $\psi_\frkw(\bar z_\gamma, \bar x_\kappa)$ as the formula:
\[
\exists \bar y_\kappa(\varphi_\kappa^\star(\bar y_\kappa)\land \theta_\frku(\bar z_\gamma, \bar y_\kappa, \bar x_\kappa)),
\]
where $\theta_\frku(\bar z_\gamma, \bar y_\kappa, \bar x_\kappa)$ is as in \ref{lem:firstsyn1}. This formula is well-defined, i.e., it is independent of the choice of $\frku\in\nbo(\frkw)$.
\end{enumerate}
Why \ref{prf:firstsyn9}?  Follows from \ref{prf:firstsyn7}.
\begin{enumerate}[resume*=prf:firstsyn]
\item \label{prf:firstsyn10}$\psi_\frkw(\bar z_\gamma, \bar x_\kappa)$ is the required formula.
\end{enumerate}
Why \ref{prf:firstsyn10}? We only elaborate on \ref{lem:firstsyn2}\ref{lem:firstsyn2d}, so assume that $N, \frkw, \frkw'$ are as in the statement. By \ref{prf:firstsyn9}, $N\vDash \psi(\bar c, \bar a)$ implies that there is $\bar b\in N^\kappa$ such that, for all $\frku\in \nbo(\frkw)$, ${N \vDash \varphi_\kappa^\star(\bar b) \land \theta_\frku(\bar c, \bar b, \bar a)}$.
By \ref{lem:firstsyn1}\ref{lem:firstsyn1d} we have
\[
\frku' = (\bar c, N\restriction \bar a, N\restriction \bar b, \bar a, \bar b)\in \wsmall_{(\kaecgoth, \kappa, \gamma)}.
\]
By definition of $\varphi^\star_\kappa(\bar x_\kappa)$ we have $N\restriction \bar b\leqk N$, so that $\frkw'\in \wlarge_{(\kaecgoth, \kappa, \gamma)}$ and $\frku'\in \nbo(\frkw')$. If $\frku\in \nbo(\frkw)$, then $\theta_{\frku} = \theta_{\frku'}$ by \ref{lem:firstsyn1}\ref{lem:firstsyn1d}, and thus $\psi_{\frkw} = \psi_{\frkw'}$. This finishes the proof of \ref{lem:firstsyn2}.
\end{proof}

\begin{lemma}
\label{lem:2syn}
In Context~\ref{cont:syntactic}, assume $\kaecgoth$ is $(\kappa, \gamma)$-tame and let
\[
\Delta = \Delta(\bar z_\gamma, \bar x_\kappa) = \{\psi_\frkw(\bar z_\gamma, \bar x_\kappa) \mid \frkw\in \wlarge_{(\kaecgoth, \kappa, \gamma)}\},
\]
with $\psi_\frkw(\bar z_\gamma, \bar x_\kappa)$ as in \ref{lem:firstsyn}\ref{lem:firstsyn2}. Then for every infinite cardinal $\lambda\geq \kappa$ the following are equivalent:
\begin{enumerate}[(1), leftmargin=*]
\item $\kaecgoth$ is almost $(\lambda, \gamma)$-stable,
\item $\kaecgoth$ is syntactically almost $(\lambda, \gamma, \Delta)$-stable (cf.~\ref{def:syntacticstable}\ref{def:syntacticstable2}).
\end{enumerate} 
\end{lemma}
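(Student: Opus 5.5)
The proof goes through a map from orbital types to $\Delta$-types. Fix $M\leqk N$ with $M\in\kaec_\lambda$ and an enumeration of $M$. I aim to show that, for $\bar c,\bar d\in N^\gamma$,
\[
\gtp(\bar c/M;N)=\gtp(\bar d/M;N)\;\Leftrightarrow\;\tp_\Delta(\bar c/M;N)=\tp_\Delta(\bar d/M;N).
\]
This gives a bijection between $\gS^\gamma_\kaecgoth(M;N)$ and $\mathbf{S}^\gamma_\Delta(M;N)$, so the two cardinality conditions in (1) and (2) are equivalent for every $M\leqk N$ with $|M|=\lambda$. Since Context~\ref{cont:syntactic} and tameness are the only hypotheses, everything must come from Lemma~\ref{lem:firstsyn}\ref{lem:firstsyn2} together with $(\kappa,\gamma)$-tameness and LS at $\kappa$.

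\emph{Step 1 (orbital type determines the $\Delta$-type).} Suppose $\gtp(\bar c/M;N)=\gtp(\bar d/M;N)$. Take $\psi_\frkw(\bar z,\bar a')\in\tp_\Delta(\bar c/M;N)$, where $\bar a'\in M^\kappa$. By Lemma~\ref{lem:firstsyn}\ref{lem:firstsyn2}\ref{lem:firstsyn2d}, $N\vDash\psi_\frkw(\bar c,\bar a')$ gives $\frkw'=(N\restriction\bar a',N,\bar a',\bar c)\in\wlarge$ with $\psi_{\frkw'}=\psi_\frkw$; in particular $N\restriction\bar a'\leqk N$. Let $\frkw''=(N\restriction\bar a',N,\bar a',\bar d)$. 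Restricting the equal orbital types to $N\restriction\bar a'$ (Remark~\ref{rem:equivrel}\ref{rem:equivrel2}) and applying \ref{lem:firstsyn}\ref{lem:firstsyn2}\ref{lem:firstsyn2c} yields $\psi_{\frkw''}=\psi_{\frkw'}$. Then \ref{lem:firstsyn2b} gives $N\vDash\psi_\frkw(\bar d,\bar a')$. The argument is symmetric in $\bar c,\bar d$, so the $\Delta$-types agree.

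\emph{Step 2 (the $\Delta$-type determines the orbital type).} This is the main obstacle, and it is where tameness is used. Suppose the orbital types differ. By $(\kappa,\gamma)$-tameness there is $M_0\leqk M$ with $|M_0|\le\kappa$ and $\gtp(\bar c/M_0;N)\ne\gtp(\bar d/M_0;N)$. I expect to need coherence or transitivity to get $M_0\leqk N$, and transitivity of $\leqk$ gives it directly. Choose $\bar a_0\in M^\kappa$ enumerating $M_0$ and set $\frkw=(M_0,N,\bar a_0,\bar c)\in\wlarge$. Then $\psi_\frkw(\bar z,\bar a_0)\in\tp_\Delta(\bar c/M;N)$ by \ref{lem:firstsyn2b}. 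If it also belonged to $\tp_\Delta(\bar d/M;N)$, then \ref{lem:firstsyn2d} would give $\psi_{(M_0,N,\bar a_0,\bar d)}=\psi_\frkw$, and \ref{lem:firstsyn2c} would force the types over $M_0$ to coincide, a contradiction. The delicate points are that tameness is stated for types in $\gS^\gamma(M)$ (here both are realized in the same $N$, so this is fine) and that $\bar a_0$ must range over $M^\kappa$ even when $|M_0|<\kappa$; repetitions are allowed, so this is harmless.

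\emph{Conclusion.} Steps 1 and 2 establish the displayed equivalence, which yields the bijection and hence the equivalence of (1) and (2) for every $\lambda\ge\kappa$.
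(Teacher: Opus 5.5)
Your proof is correct and is essentially the paper's argument. Step~1 is the paper's $(*_2)$: restrict the equal orbital types to $N\restriction\bar a$ and combine items \ref{lem:firstsyn2b}--\ref{lem:firstsyn2d} of Lemma~\ref{lem:firstsyn}\ref{lem:firstsyn2}. Step~2 is $(*_3)$--$(*_4)$ in contrapositive form, using $(\kappa,\gamma)$-tameness and an enumeration $\bar a_0\in M^\kappa$ of the small model. Taking $M_0\leqk M$ is the right choice, and it is more precise than the paper's phrasing ``$M'\leqk N$'' in $(*_4)$, because it guarantees that $\psi_\frkw(\bar z,\bar a_0)$ has parameters in $M$.
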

\begin{proof}
Let $M,N\in \kaec$ with $|M| = \lambda$ and $M\leqk N$. It is enough to show:
\begin{enumerate}[$(*_1)$, leftmargin=*, series=2syn]
\item \label{prf:2syn1}For every $\bar c_1, \bar c_2\in N^\gamma$, the following are equivalent:
\begin{enumerate}[(a), leftmargin=*]
\item $\gtp(\bar c_1/M; N) = \gtp(\bar c_2/M; N)$;
\item $\tp_\Delta(\bar c_1/M; N) = \tp_\Delta(\bar c_2/M; N)$.
\end{enumerate}
\end{enumerate}
Fix $\bar c_1, \bar c_2\in N^\gamma$. The rest of the proof is concerned with showing \ref{prf:2syn1}.
\begin{enumerate}[resume*=2syn]
\item \label{prf:2syn2}If $\gtp(\bar c_1/M; N) = \gtp(\bar c_2/M; N)$, then $\tp_\Delta(\bar c_1/M; N) = \tp_\Delta(\bar c_2/M; N)$.
\end{enumerate}
Why \ref{prf:2syn2}? Assume $\frkw\in \wlarge_{(\kaecgoth, \kappa, \gamma)}$, $\bar a\in M^\kappa$, and $N \vDash \psi_\frkw(\bar c_1, \bar a)$. We wish to show $N\vDash \psi_\frkw(\bar c_2, \bar a)$. Define, $\frkw_\ell = (\bar c_\ell, N\restriction \bar a, N, \bar a)\in \wlarge_{(\kaecgoth, \kappa, \gamma)}$, for $\ell\in\{1,2\}$. By Lemma~\ref{lem:firstsyn}\ref{lem:firstsyn2}\ref{lem:firstsyn2d} we can assume $\frkw = \frkw_1$. From the hypothesis of \ref{prf:2syn2} and  Lemma~\ref{lem:firstsyn}\ref{lem:firstsyn2}\ref{lem:firstsyn2c} we have $\psi_{\frkw}=\psi_{\frkw_1} = \psi_{\frkw_2}$, and thus $N \vDash \psi_\frkw(\bar c_2, \bar a)$ by Lemma~\ref{lem:firstsyn}\ref{lem:firstsyn2}\ref{lem:firstsyn2b}.
\begin{enumerate}[resume*=2syn]
\item \label{prf:2syn3}If $\tp_\Delta(\bar c_1/M; N) = \tp_\Delta(\bar c_2/M; N)$, then $\gtp(\bar c_1/M; N) = \gtp(\bar c_2/M; N)$.
\end{enumerate}
Why \ref{prf:2syn3}? By $(\kappa, \gamma)$-tameness of $\kaecgoth$ it is enough to show the following:
\begin{enumerate}[resume*=2syn]
\item \label{prf:2syn4}If $\tp_\Delta(\bar c_1/M; N) = \tp_\Delta(\bar c_2/M; N)$, then $\gtp(\bar c_1/M'; N) = \gtp(\bar c_2/M'; N)$ for every $M'\leqk N$ with $|M'| \leq \kappa$.
\end{enumerate}
Why \ref{prf:2syn4}? Let $M'$ be as an in the statement, and  $\bar a\in (M')^\kappa$ an enumeration of $M'$ (possibly with repetitions). Define ${\frkw_\ell = (\bar c_\ell, M', N, \bar a)\in \wlarge_{(\kaecgoth, \kappa, \gamma)}}$, with $\ell\in\{1,2\}$. By Lemma~\ref{lem:firstsyn}\ref{lem:firstsyn2}\ref{lem:firstsyn2b} we have $N\vDash \psi_{\frkw_2}(\bar c_2, a)$. By the hypothesis of  \ref{prf:2syn4} we have {$N \vDash \psi_{\frkw_2}(\bar c_1,\bar a)$}, so that by Lemma~\ref{lem:firstsyn}\ref{lem:firstsyn2}\ref{lem:firstsyn2d} we have $\psi_{\frkw_1} = \psi_{\frkw_2}$. Finally,  Lemma~\ref{lem:firstsyn}\ref{lem:firstsyn2}\ref{lem:firstsyn2c} gives us  \ref{prf:2syn4}.

\ssk\nin Therefore, \ref{prf:2syn2} and \ref{prf:2syn3} give us \ref{prf:2syn1}, which concludes the proof.
\end{proof}

To finish the proof of Theorem~\ref{thm:syn1}, we will crucially use the  result on the elimination of quantifiers for infinitary formulas in the language of $R$-modules proved by Shelah in \cite{shelah}. We state it in the following weaker form, which will suffice for our purposes.
\begin{fact}
\label{fact:elqfmod}
Let $\nu$ be an infinite regular cardinal, $\bar x$ a tuple of free variables of length less than $\nu$, and $M$ an $R$-module. Then there is $\mathbf{I}\subseteq M^{|\bar x|}$  and a set of formulas $\Lambda\subseteq \inflang_{\infty,\nu}$ with:
\begin{enumerate}[(a), leftmargin=*]
\item \label{fact:elqfmoda}$\Lambda= \Lambda(\bar x)$;
\item \label{fact:elqfmodb}if $\varphi(\bar x)\in \Lambda$, then $\varphi(M^{|\bar x|})=\{\bar a\in M^{|\bar x|} \mid M \vDash \varphi(\bar a)\}$ is a subgroup of $M^{|\bar x|}$;
\item \label{fact:elqfmodc}$|\Lambda| \leq \beth_\nu(|R|^{<\nu})$;
\item \label{fact:elqfmodd} $|\mathbf{I} |\leq \beth_\nu(|R|^{<\nu})$;
\item \label{fact:elqfmode}every $\psi(\bar x) \in \inflang_{\infty, \nu}$ with $\mrm{qd}_\nu(\psi(\bar x))< \nu$ (cf.~\ref{not:inflogic}\ref{not:inflogicqd}) is equivalent in $M$ to an infinitary boolean combination of formulas of the form $\varphi(\bar x - \bar a)$ with $\varphi(\bar x) \in \Lambda$ and $\bar a\in \mathbf{I}$.
\end{enumerate}
In particular, \ref{fact:elqfmode} holds for every $\psi(\bar x)\in \inflang_{\nu,\nu}$.
\end{fact}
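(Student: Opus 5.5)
The plan is to prove Fact~\ref{fact:elqfmod} by induction on quantifier depth, following the standard Baur--Monk strategy and adapting it to $\inflang_{\infty,\nu}$. Throughout, fix $M$ and $\nu$. For $\alpha<\nu$ and any tuple $\bar x$ of length $<\nu$, I will build sets $\Lambda_\alpha(\bar x)$ and $\mathbf I_\alpha(\bar x)\subseteq M^{|\bar x|}$ with the following properties:
\begin{enumerate}[(i), leftmargin=*]
\item every formula in $\Lambda_\alpha(\bar x)$ defines a subgroup of $M^{|\bar x|}$;
\item $|\Lambda_\alpha|,|\mathbf I_\alpha|\leq\beth_{\alpha}(|R|^{<\nu})$, up to a harmless shift in the index;
\item every $\psi(\bar x)$ with $\mrm{qd}_\nu(\psi)\leq\alpha$ is equivalent in $M$ to an infinitary boolean combination of formulas $\varphi(\bar x-\bar a)$ with $\varphi\in\Lambda_\alpha$ and $\bar a\in\mathbf I_\alpha$.
\end{enumerate}
At the end I take $\Lambda=\bigcup_{\alpha<\nu}\Lambda_\alpha$ and $\mathbf I=\bigcup_{\alpha<\nu}\mathbf I_\alpha$, and cardinal arithmetic gives the bound $\beth_\nu(|R|^{<\nu})$. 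The final sentence of the Fact follows because every formula of $\inflang_{\nu,\nu}$ has quantifier depth $<\nu$, by regularity of $\nu$.

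The base case and the boolean steps are routine. For $\alpha=0$, I take $\Lambda_0$ to consist of the conjunctions of fewer than $\nu$ linear equations, which define subgroups, and I set $\mathbf I_0=\{0\}$. Negations and conjunctions or disjunctions of any size preserve property~(iii) without enlarging $\Lambda$ or $\mathbf I$. At limit stages I take unions. The only substantive case is $\exists\bar y\,\psi(\bar x,\bar y)$, where $\psi$ is already a boolean combination over $\Lambda_\alpha(\bar x\bar y)$ and $\mathbf I_\alpha$.

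For the existential step, the key observation is the following. Let $H=\bigcap\Lambda_\alpha(\bar x\bar y)$, which is defined by the conjunction $\bigwedge\Lambda_\alpha$. Then $H$ is a subgroup, every set appearing in $\psi$ is a union of cosets of $H$, and so $\psi(M)$ is itself a union of cosets $\bar d+H$. Projection commutes with taking cosets: the projection of $\bar d+H$ is $\pi(\bar d)+\pi(H)$, where $\pi(H)$ is defined by the formula $\exists\bar y\bigwedge\Lambda_\alpha(\bar x,\bar y)$. This formula defines a subgroup, so I put it, and all its analogues for subconjunctions, into $\Lambda_{\alpha+1}$. Consequently $\exists\bar y\,\psi$ is equivalent to the disjunction of the formulas $\pi(H)(\bar x-\pi(\bar d))$ taken over the relevant cosets $\bar d+H$.

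The main obstacle is bounding $\mathbf I$ independently of $|M|$: the cosets of $H$ contained in $\psi(M)$ can be as numerous as $M$ itself. My first attempt will exploit the fact that $\psi$ has its parameters in $\mathbf I_\alpha$, which is already small. Whether a coset $\bar d+H$ lies in $\psi(M)$ depends only on its \emph{pattern}, namely the set of pairs $(\varphi,\bar a)\in\Lambda_\alpha\times\mathbf I_\alpha$ with $\bar d-\bar a\in\varphi(M)$. There are at most $2^{|\Lambda_\alpha|\cdot|\mathbf I_\alpha|}$ patterns. For each realized pattern I will choose one representative $\bar d$, and I will try to show that the union, over all cosets with a given pattern, of $\pi(\bar d)+\pi(H)$ can be expressed as a boolean combination of translates of the subgroups $\pi(G)$, with $G$ ranging over subintersections of $\Lambda_\alpha$, by the chosen representative together with the old parameters. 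The pattern is an intersection of cosets and complements of cosets, so the hard part is controlling projections of the negated coset conditions. If this direct route fails, the fallback is Shelah's refinement: enlarge $\Lambda_{\alpha+1}$ by projections of all intersections of subfamilies of $\Lambda_\alpha$, and use a Neumann-type covering argument on the relevant cosets. Either way the bound has the form $|\mathbf I_{\alpha+1}|\leq 2^{|\Lambda_\alpha|+|\mathbf I_\alpha|}$, which yields the $\beth$ bound.
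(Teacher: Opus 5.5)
Your base case, Boolean and limit steps are fine. So is the observation that a depth-$\le\alpha$ set is a union of cosets of $H=\bigcap\Lambda_\alpha(\bar x\bar y)$, whose projection is the subgroup defined by $\exists\bar y\bigwedge\Lambda_\alpha(\bar x,\bar y)$. But this only covers the trivial part of the Fact. Without (d) the statement is empty: $\Lambda=\{\bigwedge_i x_i=0\}$ and $\mathbf I=M^{|\bar x|}$ already satisfy (a), (b), (c) and (e), since every subset of $M^{|\bar x|}$ is a disjunction of formulas $\bar x-\bar a=\bar 0$. The real content is that $|\mathbf I|\leq\beth_\nu(|R|^{<\nu})$, independently of $|M|$. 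This is exactly what Theorem~\ref{thm:synfund} needs, since there $\mathbf I$ is computed in an arbitrarily large $N$ and the count $(|A|^\eta)^{|\Gamma|}\leq\lambda$ must not depend on $|N|$. For this step your proposal gives only an intention and a fallback, and neither works as described. (The paper does not prove the Fact either; it quotes \cite[Theorem~2.4, Claim~2.3]{shelah}.)

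Grouping cosets by pattern just restates the problem. The union over cosets with pattern $P$ is $\pi(S_P)$, where $S_P$ is a coset $\bar d_P+G_P$ of $G_P=\bigcap_{(\varphi,\bar a)\in P}\varphi(M)$ minus the union of the cosets $\bar a+\varphi(M)$ with $(\varphi,\bar a)\notin P$. That is again the projection of a coset minus a union of cosets. You do get $\pi(S_P)\subseteq\pi(\bar d_P)+\pi(G_P)$. However, a point $\bar c$ there lies in $\pi(S_P)$ if and only if its fibre is not covered by the negated cosets. That condition depends on $\bar c$ modulo $\pi\bigl(G_P\cap\bigcap_{(\varphi,\bar a)\notin P}\varphi(M)\bigr)$, which can have $|M|$ cosets in $\pi(G_P)$. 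Nothing in your argument shows the good cosets form a Boolean combination of few translates.

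The fallback fails for the very reason the infinitary case is hard. Neumann's lemma concerns \emph{finite} covers, and that finiteness is what lets Baur--Monk read off coverage from indices and intersection patterns via an inclusion--exclusion count. Here a conjunct may carry up to $|\Lambda_\alpha|\cdot|\mathbf I_\alpha|$ negated cosets, and the infinite analogue of Neumann's lemma is false. For example, an $\aleph_1$-dimensional $\mathbb F_2$-space is a countable increasing union of subspaces of uncountable index. So coverage is not governed by such data. What is missing is precisely Shelah's mechanism for handling $\exists\bar y$ in front of infinitely many negated coset conditions while keeping the parameter set of bounded size.
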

\begin{proof}
The existence of $\Lambda$ and $\mathbf{I}$ which satisfy all of the conditions, with possibly the exception of \ref{fact:elqfmodb}-\ref{fact:elqfmodc}, is by  \cite[Theorem~2.4]{shelah}, and that they satisfy \ref{fact:elqfmodb}-\ref{fact:elqfmodc} follows from  \cite[Claim~2.3]{shelah}. The last sentence is true because, being $\nu$ regular, it is easily seen that every formula in $\inflang_{\nu, \nu}$ has $\mrm{qd}_\nu(\psi)<\nu$.
\end{proof}

The argument we use in the proof of the next theorem is an adaptation to our context of the proof of the stability of first-order theories of modules \cite[Theorem~2.1(1)]{ziegler}. This method of proof has been adapted to show the syntactic stability of types of infinitary pp-formulas in  \cite[Theorem~3.3(1)]{shelah}. This latter result, alongside the elimination of quantifiers of the previous fact, could be used to make the proof of the following theorem slightly shorter. Nonetheless, we provide all the details for completeness.

\begin{theorem}
\label{thm:synfund}
In Context~\ref{cont:syntactic}, assume $\kaecgoth = (\kaec, \leqk)$ is an $\mrm{AC}$ of $R$-modules, let $\xi_\nu = \beth_\nu(|R|^{<\nu})$ and $\lambda = \lambda^{\xi_\nu}$. If $\kaecgoth$ is $(\kappa,\gamma)$-tame, then it is almost $(\lambda, \gamma)$-stable.
\end{theorem}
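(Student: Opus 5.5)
By Lemma~\ref{lem:2syn}, which applies since $\kaecgoth$ is $(\kappa,\gamma)$-tame and $\lambda\geq\kappa$, it suffices to prove that $\kaecgoth$ is syntactically almost $(\lambda,\gamma,\Delta)$-stable. Here $\Delta=\{\psi_\frkw(\bar z_\gamma,\bar x_\kappa)\mid \frkw\in\wlarge_{(\kaecgoth,\kappa,\gamma)}\}\subseteq\inflang_{\nu,\nu}$. So fix $M\leqk N$ in $\kaec$ with $|M|=\lambda$. The goal is to bound $|\mathbf{S}^\gamma_\Delta(M;N)|\leq\lambda$, that is, to bound the number of distinct $\tp_\Delta(\bar c/M;N)$ for $\bar c\in N^\gamma$.

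The plan follows the first-order stability proof for modules. Consider the combined tuple of variables $\bar w=\bar z_\gamma{}^\frown\bar x_\kappa$. Its length is $<\kappa^+\leq\nu$, so Fact~\ref{fact:elqfmod} applies to the module $N$. It yields $\Lambda(\bar w)$ and $\mathbf I\subseteq N^{|\bar w|}$, both of size $\leq\xi_\nu$, such that every $\psi_\frkw$ is equivalent in $N$ to an infinitary Boolean combination of formulas $\varphi(\bar w-\bar e)$ with $\varphi\in\Lambda$ and $\bar e\in\mathbf I$. Hence, for $\bar a\in M^\kappa$, whether $N\vDash\psi_\frkw(\bar c,\bar a)$ is determined by the set
\[
\{(\varphi,\bar e)\in\Lambda\times\mathbf I\mid N\vDash\varphi(\bar c{}^\frown\bar a-\bar e)\}.
\]
The key point is that each $\varphi(N^{|\bar w|})$ is a subgroup $G_\varphi$. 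So $N\vDash\varphi(\bar c{}^\frown\bar a-\bar e)$ says exactly that $\bar c{}^\frown\bar a$ lies in the coset $\bar e+G_\varphi$.

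Next I would reduce the whole $\Delta$-type of $\bar c$ over $M$ to a small amount of data, exactly as in Ziegler's argument. For each pair $(\varphi,\bar e)$, consider
\[
X_{\varphi,\bar e}=\{\bar a\in M^\kappa \mid \bar c{}^\frown\bar a\in\bar e+G_\varphi\}.
\]
Write $H_\varphi=\{\bar a\mid \bar 0{}^\frown\bar a\in G_\varphi\}$ and $P_\varphi=\{\bar d\mid \exists\bar a\,(\bar d{}^\frown\bar a\in G_\varphi)\}$. If $X_{\varphi,\bar e}$ is nonempty, it is a coset of $H_\varphi\cap M^\kappa$ inside $M^\kappa$, so it is determined by a single representative $\bar a_{\varphi,\bar e}\in M^\kappa$. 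Thus $\tp_\Delta(\bar c/M;N)$ is determined by the partial function $(\varphi,\bar e)\mapsto X_{\varphi,\bar e}$. Each value is either empty or specified by one element of $M^\kappa$. The number of such functions is at most
\[
\bigl(|M|^\kappa+1\bigr)^{|\Lambda\times\mathbf I|}\leq(\lambda^\kappa)^{\xi_\nu}=\lambda^{\xi_\nu}=\lambda,
\]
using $\kappa\leq\xi_\nu$. That $\kappa\leq\xi_\nu$ holds because $\nu>2^\kappa$ gives $\xi_\nu=\beth_\nu(\cdots)\geq\nu>\kappa$. This gives the required bound.

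The main obstacle is the bookkeeping with the enumerations $\bar a$, which range over all of $M^\kappa$. The $\bar e\in\mathbf I$ live in $N$ rather than in $M$, and $\mathbf I$ depends on $N$ only; since $N$ is fixed throughout, this is harmless. I also need that $\mathbf I,\Lambda$ are chosen once for the variable tuple $\bar w$ and uniformly for all formulas of $\Delta$. Fact~\ref{fact:elqfmod}\ref{fact:elqfmode} gives this, since $\Delta\subseteq\inflang_{\nu,\nu}$ requires $\nu$ to be taken large enough for the $\psi_\frkw$. A second concern is that coset-membership data over the pairs $(\varphi,\bar e)$ must genuinely determine each Boolean combination, including infinite ones. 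This holds pointwise in $\bar a$, because truth of a Boolean combination depends only on the truth values of its atoms. Finally, almost $(\lambda,\gamma)$-stability follows from Lemma~\ref{lem:2syn}.
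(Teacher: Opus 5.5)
Your proposal is correct and follows essentially the paper's argument: you apply Shelah's infinitary elimination (Fact~\ref{fact:elqfmod}) to $N$ in the variables $\bar z_\gamma{}^\frown\bar x_\kappa$, then use the Ziegler-style observation that the solution sets of the subgroup-defining formulas are cosets, each determined by a single witness, which bounds the number of $\Delta$-types by $\lambda^{\xi_\nu}=\lambda$. The only difference is bookkeeping: the paper absorbs the parameters from $\mathbf I$ into an enlarged set $A\supseteq M$ and counts witness functions $F_{\bar c}\colon\Gamma\rightharpoonup A^\eta$, whereas you index directly by pairs $(\varphi,\bar e)$ and record the coset $X_{\varphi,\bar e}\subseteq M^\kappa$, which is slightly more streamlined.
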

\begin{proof}
Let $\Delta=\Delta(\bar z_\gamma, \bar x_\kappa)$ be as in Lemma~\ref{lem:2syn}. It  is enough to show that for every $M, N\in \kaec$ with $|M| = \lambda$ and $M\leqk N$, then 
\begin{equation}
\label{eq:syn1}
|\{\tp_\Delta(\bar c/M; N)\mid \bar c\in N^\gamma\}| \leq \lambda.
\end{equation}  
From now on, fix $M$ and $N$ as before. We wish to show  that \eqref{eq:syn1} holds.
\begin{enumerate}[$(*_1)$, leftmargin=*, series=prf:thmsyn]
\item \label{prf:thmsyn1}There are  $M\subseteq A\subseteq N$, an ordinal $\eta$, and a set of formulas $\Gamma$, such that the following hold:
\begin{enumerate}[(a), leftmargin=*]
\item $|A| +|\Gamma|\leq \xi_\nu = \beth_\nu(|R|^{<\nu})$;
\item $\eta = \kappa + \gamma + \kappa$, so that in particular $\eta<\kappa^+$;
\item $\Gamma = \Gamma(\bar z_\gamma, \bar w_\eta)$;
\item \label{prf:thmsyn1d}if $\varphi(\bar z_\gamma, \bar w_\eta)\in \Gamma$, then $\varphi(N^{\gamma + \eta})=\{\bar a\in N^{\gamma + \eta} \mid N \vDash \varphi(\bar a)\}$ is a subgroup of $N^{\gamma + \eta}$;
\item \label{prf:thmsyn1e}if $\psi(\bar z_\gamma, \bar x_\kappa)\in \Delta$ and $\bar a_\kappa\in A^\kappa$, then $\psi(\bar z_\gamma, \bar a_\kappa)$ is equivalent in $N$ to an infinitary boolean combination of formulas of the form $\varphi(\bar z_\gamma, \bar d_\eta)$ with $\varphi(\bar z_\gamma, \bar w_\eta)\in \Gamma$, $\bar d_\eta\in A^\eta$.
\end{enumerate}
\end{enumerate}
Why \ref{prf:thmsyn1}? Apply Fact~\ref{fact:elqfmod} to $N$ and find $\mathbf{I}$ and $\Lambda$ as in the statement. Let $A$ be the union of $M$ with the elements appearing in the sequences of $\mathbf{I}$. Let
\[
\Gamma = \Gamma(\bar z_\gamma, \bar w_\eta) = \Gamma(\bar z_\gamma,\bar x_\kappa^\frown \bar u_\gamma^\frown \bar v_\kappa)  = \{\varphi(\bar z_\gamma - \bar u_\gamma, \bar x_\kappa - \bar v_\kappa)\mid \varphi(\bar z_\gamma, \bar x_\kappa)\in \Lambda \},
\]
then it is easily verified that $A$ and $\Gamma$ are as required.
\begin{enumerate}[resume*=prf:thmsyn]
\item \label{prf:thmsyn2}$|\{\tp_\Gamma(\bar c/A; N)\mid \bar c\in N^\gamma \}|\leq \lambda$.
\end{enumerate}
Why \ref{prf:thmsyn2}? For every $\bar c\in N^\gamma$, we wish to define a partial function $F_{\bar c}: \Gamma \rightharpoonup A^\eta$. Let $\varphi\in \Gamma$, we define $F_{\bar c}(\varphi)$ in the following way:
\begin{enumerate}[$(\cdot_1)$, leftmargin=*]
\item $F_{\bar c}(\varphi)$ is not defined if there is no $\bar d \in A^\eta$ such that $N \vDash \varphi(\bar c, \bar d)$,
\item otherwise, define $F_{\bar c}(\varphi)$ by choosing $\bar d \in A^\eta$ such that $N \vDash \varphi(\bar c, \bar d)$.
\end{enumerate}
\begin{enumerate}[$(*_{{2}.1})$, leftmargin=*]
\item \label{prf:thmsyn2.1}If $\bar c_\ell\in N^\gamma$, with $\ell\in\{1,2\}$, and $F_{\bar c_1} = F_{\bar c_2}$, then $\tp_\Gamma(\bar c_1/A; N) = \tp_\Gamma(\bar c_2/A; N)$.
\end{enumerate}
Why \ref{prf:thmsyn2.1}? Assume $\varphi\in \Gamma$, $\bar d\in A^\eta$, and $N \vDash \varphi(\bar c_1, \bar d)$. Thus $F_{\bar c_1}(\varphi)$ is defined, and because $F_{\bar c_1} =F_{\bar c_2}$, then $F_{\bar c_2}(\varphi)$ is defined and we let $\bar e \in A^\eta$ be such that $F_{\bar c_1}(\varphi) = F_{\bar c_2}(\varphi) = \bar e$. Therefore, by  \ref{prf:thmsyn1}\ref{prf:thmsyn1d} we have 
\[
\bar c_2^\frown\bar d =\bar c_1^\frown\bar d - \bar c_1^\frown\bar e  + \bar c_2^\frown\bar e\in \varphi(N^{\gamma + \eta}).
\]
We have just shown $\tp_\Gamma(\bar c_1/A; N) \subseteq \tp_\Gamma(\bar c_2/A; N)$, and by symmetry we get \ref{prf:thmsyn2.1}.

\ssk\nin Coming back to the proof of \ref{prf:thmsyn2}, we have that, for $\bar c\in N^\gamma$, then $\tp_\Gamma(\bar c/A;N)$ is uniquely determined by the values of the partial function $F_{\bar c}: \Gamma \rightharpoonup A^\eta$. Therefore, we get:
\[
|\{\tp_\Gamma(\bar c/A; N)\mid \bar c\in N^\gamma \}| \leq (|A^\eta|)^{|\Gamma|}\leq \lambda^{\kappa + |\Gamma|} = \lambda^{\xi_\nu} = \lambda.
\]
\begin{enumerate}[resume*=prf:thmsyn]
\item\label{prf:thmsyn3} If $\bar c_\ell\in N^\gamma$, with $\ell\in\{1,2\}$, and $\tp_\Gamma(\bar c_1/A; N) = \tp_\Gamma(\bar c_2/A; N)$, then 
\[
\tp_\Delta(\bar c_1/M; N) = \tp_\Delta(\bar c_2/M; N).
\]
\end{enumerate}
Why \ref{prf:thmsyn3}? Assume $N\vDash \psi(\bar c_1, \bar a_\kappa)$ with $\psi(\bar z_\gamma, \bar x_\kappa)\in \Delta$ and $\bar a_\kappa\in M^\kappa$. By \ref{prf:thmsyn1}\ref{prf:thmsyn1e} there are $\alpha(\bar z_\gamma, \bar w')$ and $\bar d'\in A^{|\bar w'|}$ such that:
\begin{enumerate}[$(\cdot_{\text{\alph*}})$, leftmargin=*]
\item\label{prf:thmsyn3.1} $\alpha(\bar z_\gamma, \bar d')$ is a boolean combination of formulas of the form $\varphi(\bar z_\gamma, \bar d_\eta)$ with  $\varphi(\bar z_\gamma, \bar w_\eta)\in\Gamma$, $\bar{d}_\eta\in A^\eta$;
\item\label{prf:thmsyn3.2} $ N \vDash \forall \bar z_\gamma (\psi(\bar z_\gamma, \bar a_\kappa)\leftrightarrow \alpha(\bar z_\gamma, \bar d'))$.
\end{enumerate}
By \ref{prf:thmsyn3.2}, we have $N \vDash \alpha(\bar c_1, \bar d')$. Therefore, by the hypothesis of \ref{prf:thmsyn3} and \ref{prf:thmsyn3.1} we have $N \vDash \alpha(\bar c_2, \bar d')$, and thus $N \vDash \psi(\bar c_2, \bar a_\kappa)$, again by \ref{prf:thmsyn3.2}. Therefore, $\tp_\Delta(\bar c_1/M; N) \subseteq \tp_\Delta(\bar c_2/M; N)$, and by symmetry we get \ref{prf:thmsyn3}.
\begin{enumerate}[resume*=prf:thmsyn]
\item\label{prf:thmsyn4} \eqref{eq:syn1} holds.
\end{enumerate}
Why \ref{prf:thmsyn4}? Follows from \ref{prf:thmsyn2} and \ref{prf:thmsyn3}. 
\end{proof}

We can finally give the following:
\begin{proof}[Proof of Theorem~\ref{thm:syn1}]
Follows immediately from Remark~\ref{rem:prfthmhyp} and Theorem~\ref{thm:synfund}.
\end{proof}

\section{Independence relations and pre-cellular categories}
\label{sect:precellular}

To show that the $\mu\mh\mrm{AEC}$s which we will consider in Section~\ref{sect:lpure} are stable, we will show that they carry a categorical stable independence relation. Such relations were introduced in a category-theoretic setting in \cite{forkingcategorical} as a generalization of model-theoretic forking independence, which Shelah had introduced for the study of first-order theories \cite{shelah1990classification}. A categorical independence relation $\dnf$ is formally a collection of commutative squares but, for $\mu\mh\mrm{AEC}$s, this allows us to define a closely related relation $\dnfb{}{}{}{}$ resembling the forking independence relation from first-order model theory (Definition~\ref{def:indrelmuaec}\ref{def:indrelmuaec2}), as well as a notion of non-forking for orbital types (Definition~\ref{def:dnftype}).

Let us now describe the contents of this section. To equip the $\mu\mh\mrm{AEC}$s of Section~\ref{sect:lpure} with an independence relation, we show that they are coherent pre-cellular categories, which we introduce in this section. Cellular categories were introduced in \cite{cellularcategories}: they are pairs $(\categ, \morphs)$, where $\categ$ is a cocomplete category and $\morphs$ is a collection of morphisms in $\categ$ that contains all isomorphisms and is closed under transfinite composition and pushouts. Independence relations on cellular categories were studied in \cite{cellularstable}, where it was shown that every coherent cellular category carries a natural weakly stable independence relation.
In this paper we introduce pre-cellular categories, a weakening of cellular categories that is nonetheless strong enough for some of the arguments of \cite{cellularstable} to go through. Although Definition~\ref{def:precellularcat} is new, it had already been observed (see, e.g., \cite[Fact~3.12]{somestablenonelementary}) that a similar set of assumptions suffices to yield an independence relation.

We fix the notation used in this section and recall some standard definitions from category theory.

\begin{notation}
Let $\categn$ be a category.
\begin{enumerate}[(1), leftmargin=*]
\item We say that a pair of morphisms $(f_1, f_2)$ in $\categn$ is a \tdef{span} if $\mrm{dom}(f_1) = \mrm{dom}(f_2)$. Dually, a \tdef{cospan} in $\categn$ is a pair of morphisms $(g_1, g_2)$ in $\categn$ with $\mrm{ran}(g_1) = \mrm{ran}(g_2)$.
\item We say that a quadruple of morphisms $(f_1, f_2, g_1, g_2)$ in $\categn$ is a \tdef{commutative square} if the following diagram is commutative: 
\[\begin{tikzcd}
	B & D \\
	A & C
	\arrow["{g_1}", from=1-1, to=1-2]
	\arrow["{f_1}", from=2-1, to=1-1]
	\arrow["{f_2}"', from=2-1, to=2-2]
	\arrow["{g_2}"', from=2-2, to=1-2]
\end{tikzcd}\]
\end{enumerate}
\end{notation}

\begin{definition}[\protect{\cite[Chapter~11]{joyofcats}}]
Let $\categn$ be a category.
We say that a cospan $(g_1: B \to P, g_2: C \to P)$ in $\categn$ is a \tdef{pushout} of the span $(f_1: A \to B, f_2: A \to C)$ if $g_1 f_1 = g_2f_2$ and for every cospan $(h_1: B\to D, h_2: C \to D)$ with $h_1f_1 = h_2f_2$ there is a unique morphism $t: P \to D$ with $h_\ell = tg_\ell$. In such a case we will say that $(f_1, f_2, g_1, g_2)$ is a \tdef{pushout square}, and use the notation:
\[\begin{tikzcd}
	B & P \\
	A & C
	\arrow["{g_1}", from=1-1, to=1-2]
	\arrow["\ulcorner"{anchor=center, pos=0.125, rotate=-90}, draw=none, from=1-2, to=2-1]
	\arrow["{f_1}", from=2-1, to=1-1]
	\arrow["{f_2}"', from=2-1, to=2-2]
	\arrow["{g_2}"', from=2-2, to=1-2]
\end{tikzcd}\]
Sometimes, when the maps $(g_1, g_2)$ are clear by the context, we will refer to $P$ simply as the pushout of $(f_1, f_2)$. As the pushout is unique up to isomorphism, we will also say that $(g_1, g_2)$ is \tdef{the pushout of } $(f_1, f_2)$.
\end{definition}

\begin{definition}[\protect{\cite[Definition 3.1]{mazarinoetherian}}]
\label{def:indrelation}
Let $\categn$ be a category. An \tdef{independence relation} on $\categn$ is a collection $\dnf$ of commutative squares such that for any commutative diagram:
\[\begin{tikzcd}
	&& E \\
	B & D \\
	A & C
	\arrow["{h_1}", curve={height=-12pt}, from=2-1, to=1-3]
	\arrow["{g_1}", from=2-1, to=2-2]
	\arrow["t", from=2-2, to=1-3]
	\arrow["{f_1}", from=3-1, to=2-1]
	\arrow["{f_2}"', from=3-1, to=3-2]
	\arrow["{h_2}"', curve={height=12pt}, from=3-2, to=1-3]
	\arrow["{g_2}"', from=3-2, to=2-2]
\end{tikzcd}\]
we have $(f_1, f_2, g_1, g_2)\in \dnf$ if and only if $(f_1, f_2, h_1, h_2)\in \dnf$.
\end{definition}
\begin{notation}
If $(f_1, f_2, g_1, g_2)\in\dnf$, we will use the notation:
\[\begin{tikzcd}
	B & D \\
	A & C
	\arrow["{g_1}", from=1-1, to=1-2]
	\arrow["{f_1}", from=2-1, to=1-1]
	\arrow["\dnf"{description}, draw=none, from=2-1, to=1-2]
	\arrow["{f_2}"', from=2-1, to=2-2]
	\arrow["{g_2}"', from=2-2, to=1-2]
\end{tikzcd}\]
\end{notation}
\begin{definition}
\label{def:eqcommsq}
Let $\categn$ be a category and $(f_1, f_2)$ a span.
\begin{enumerate}[(1), leftmargin=*]
\item \label{def:eqcommsq1} If $(f_1, f_2, g_1^a, g_2^a)$ and $(f_1, f_2, g_1^b, g_2^b)$ are commutative squares, we say that they are \tdef{equivalent}, written $(f_1, f_2, g_1^a, g_2^a)\sim^*(f_1, f_2, g_1^b, g_2^b)$, if there are morphisms $g^a$ and $g^b$ such that the following diagram is commutative:
\[\begin{tikzcd}
	& {D^b} & D \\
	B && {D^a} \\
	A & C
	\arrow["{g^b}", dashed, from=1-2, to=1-3]
	\arrow["{g_1^b}", from=2-1, to=1-2]
	\arrow["{g_1^a}"{description, pos=0.8}, from=2-1, to=2-3]
	\arrow["{g^a}"', dashed, from=2-3, to=1-3]
	\arrow["{f_1}", from=3-1, to=2-1]
	\arrow["{f_2}"', from=3-1, to=3-2]
	\arrow["{g_2^b}"{description, pos=0.8}, from=3-2, to=1-2]
	\arrow["{g_2^a}"', from=3-2, to=2-3]
\end{tikzcd}\]
\item \label{def:eqcommsq2} We let $\sim$ denote the transitive closure of $\sim^*$.
\end{enumerate}
\end{definition}

\begin{remark}
Originally, independence relations were defined in \cite[Definition 3.4]{forkingcategorical} as a collection of commutative squares closed under $\sim$. By {\cite[Remark~2.6(1)]{cellularstable}} this is equivalent to  Definition~\ref{def:indrelation}.
\end{remark}

\begin{definition}
\label{def:propindep}
Let $\categn$ be a category and $\dnf$ an independence relation on $\categn$.
\begin{enumerate}[(1), leftmargin=*]
\item \label{def:propindep1}\cite[Definition 3.9]{forkingcategorical} We say that $\dnf$ is \tdef{symmetric} if $(f_1, f_2, g_1, g_2)\in\dnf$ if and only if $(f_2, f_1, g_2, g_1)\in\dnf$.
\item \label{def:propindep2}\cite[Definition 3.15]{forkingcategorical} We say that $\dnf$ is \tdef{right transitive} if whenever we have a commutative diagram of the following form:
\[\begin{tikzcd}
	B & D & F \\
	A & C & E
	\arrow["{g_1}", from=1-1, to=1-2]
	\arrow["{h_1}", from=1-2, to=1-3]
	\arrow["{f_1}", from=2-1, to=1-1]
	\arrow["{f_2}"', from=2-1, to=2-2]
	\arrow["k"', from=2-2, to=1-2]
	\arrow["{g_2}"', from=2-2, to=2-3]
	\arrow["{h_2}"', from=2-3, to=1-3]
\end{tikzcd}\]
with $(f_1, f_2, g_1, k)\in\dnf$ and $(k, g_2, h_1, h_2)\in\dnf$, then $(f_1, g_2\circ f_2, h_1\circ g_1, h_2)\in\dnf$.
\item \label{def:propindep3}\cite[Definition 3.10]{forkingcategorical}  We say that $\dnf$ has the \tdef{existence property} if for any span $(f_1, f_2)$ there is $(f_1, f_2, g_1, g_2)\in\dnf$. 
\item \label{def:propindep4}\cite[Definition 3.13]{forkingcategorical} We say that $\dnf$ has the \tdef{uniqueness property} if whenever $(f_1, f_2, g_1^a, g_2^a)\in \dnf$ and $(f_1, f_2, g_1^b, g_2^b)\in\dnf$, then $(f_1, f_2, g_1^a, g_2^a)\sim(f_1, f_2, g_1^b, g_2^b)$.
\end{enumerate}
\end{definition}

\begin{definition}[\protect{\cite[Remark 2.6]{cellularstable}}]
\label{def:weaklystableind}
We say that $\dnf$ is  \tdef{weakly stable} if it is  symmetric, right transitive, and has the existence and the uniqueness property.
\end{definition}

As already remarked in the introduction of the section, the following definition generalizes that of cellular categories  \cite{cellularcategories}. In particular, one important feature is that we will not require completeness and closure under arbitrary pushouts. We will explain in more detail the difference between cellular and pre-cellular categories in the remark following Definition~\ref{def:cellularsquare}.  

\begin{definition}
\label{def:precellularcat}
Let $\categ$ be a category and $\morphs$ a collection of morphisms in $\categ$. We say that $(\categ, \morphs)$ is a \tdef{pre-cellular category} if the following holds:
\begin{enumerate}[(1), leftmargin=*]
\item \label{def:precellularcat1} If $(f_1, f_2)$ is a span in $\categ$ with $f_1,f_2\in \morphs$, then $(f_1, f_2)$ has a pushout in $\categ$.
\item \label{def:precellularcat2} $\morphs$ contains all isomorphisms.
\item \label{def:precellularcat3}
$\morphs$ is closed under pushouts, i.e., if $(f_1, f_2, g_1, g_2)$ is a pushout square with $f_1, f_2\in\morphs$, then $g_1, g_2\in\morphs$.
\item \label{def:precellularcat4} $\morphs$ is closed under finite compositions, i.e., whenever $f,g\in\morphs$ and $gf$ is defined in $\categ$, then $gf\in \morphs$.
\end{enumerate}
We write $\categ_\morphs$ for the subcategory of $\categ$ with the same objects as $\categ$ and morphisms those in $\morphs$.
\end{definition}

\begin{definition}
\label{def:coherentprecell}
We say that a pre-cellular category $(\categ, \morphs)$ is \tdef{coherent} if for any two composable morphisms $f,g$ in $\categ$  with $gf\in\morphs$ and $g\in\morphs$, we have that $f\in\morphs$.
\end{definition}

We now introduce the pre-cellular squares. Cellular squares were introduced in \cite[Definition 2.2]{cellularstable} for cellular categories, and we adapt their definition to our context. In case the category $\categ$ is cocomplete, what we here call pre-cellular squares are also called \tdef{$\morphs$-effective squares} \cite[Definition~5.2]{cellulargeneration}.

\begin{definition}
\label{def:cellularsquare}
Let $(\categ, \morphs)$ be a pre-cellular category. We say that a commutative square $(f_1, f_2, g_1, g_2)$ in $\categ_\morphs$ (cf.~\ref{def:precellularcat}) is \tdef{pre-cellular} if the unique morphism $t: P \to D$ from the pushout of $(f_1, f_2)$ belongs to $\morphs$.
\[\begin{tikzcd}
	&& D \\
	B & P \\
	A & C
	\arrow["{g_1}", curve={height=-12pt}, from=2-1, to=1-3]
	\arrow[from=2-1, to=2-2]
	\arrow["r", from=2-2, to=1-3]
	\arrow["\ulcorner"{anchor=center, pos=0.125, rotate=-90}, draw=none, from=2-2, to=3-1]
	\arrow["{f_1}", from=3-1, to=2-1]
	\arrow["{f_2}"', from=3-1, to=3-2]
	\arrow["{g_2}"', curve={height=12pt}, from=3-2, to=1-3]
	\arrow[from=3-2, to=2-2]
\end{tikzcd}\]
Notice that the pushout of $(f_1, f_2)$ exists by \ref{def:precellularcat}\ref{def:precellularcat1}, and the definition we have given does not depend on the particular choice of pushout by~\ref{def:precellularcat}\ref{def:precellularcat2} and \ref{def:precellularcat4}.
\end{definition}

We now explain the main differences between cellular categories and the pre-cellular categories studied here.
\tdef{Cellular categories} are pairs $(\categ, \morphs)$ as in \ref{def:precellularcat}, but requiring that $\categ$ is cocomplete, and $\morphs$ is closed under transfinite compositions and pushouts in the stronger sense that if $(f_1, f_2, g_1, g_2)$ is a pushout square in $\categ$ with $f_1\in\morphs$, then $g_2\in\morphs$. 
 In \cite[Definition 2.2]{cellularstable} \tdef{cellular squares} are defined over cellular categories exactly as in Definition~\ref{def:cellularsquare}, but without requiring that any of the morphisms $f_1, f_2,g_1, g_2$ are in $\morphs$. We have to make this assumption because we have to ensure that the pushout of $(f_1, f_2)$ exists.

It is shown in \cite[Theorem 2.7]{cellularstable} that cellular squares over a coherent cellular category $(\categ, \morphs)$ form a weakly stable independence relation on $\categ_\morphs$ (cf.~\ref{def:precellularcat}). The proof goes through to show that pre-cellular squares also form a weakly stable independence relation (cf.~\ref{def:weaklystableind}) on $\categ_\morphs$ when $(\categ, \morphs)$ is a coherent pre-cellular category. We provide the details of the proof for completeness.

\begin{proposition}
\label{prop:indepcellular}
If $(\categ, \morphs)$ is a coherent pre-cellular category, then the pre-cellular squares form a weakly stable independence relation on the category $\categ_\morphs$ (cf.~\ref{def:precellularcat}).
\end{proposition}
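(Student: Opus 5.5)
We follow the argument of \cite[Theorem~2.7]{cellularstable}, checking that every pushout we form is of a span of morphisms in $\morphs$; by \ref{def:precellularcat}\ref{def:precellularcat1} such pushouts exist and by \ref{def:precellularcat}\ref{def:precellularcat3} their legs lie in $\morphs$. Write $\dnf$ for the class of pre-cellular squares. Throughout, for a span $(f_1,f_2)$ in $\categ_\morphs$ we fix its pushout $(p_1,p_2)$ with apex $P$.

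\emph{$\dnf$ is an independence relation on $\categ_\morphs$.} Take a commutative diagram as in Definition~\ref{def:indrelation} in $\categ_\morphs$, with $h_\ell = t g_\ell$ and $t\in\morphs$. If $r:P\to D$ and $r':P\to E$ are the induced maps, uniqueness gives $r' = t r$. If $r\in\morphs$, then $r'\in\morphs$ by closure under composition. Conversely, if $r' = tr\in\morphs$ and $t\in\morphs$, then $r\in\morphs$ by coherence (Definition~\ref{def:coherentprecell}). This step is where coherence is indispensable.

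\emph{Symmetry and existence.} Symmetry is immediate, since the pushout of $(f_2,f_1)$ is $(p_2,p_1)$ with the same induced map. For existence, the pushout square $(f_1,f_2,p_1,p_2)$ itself lies in $\categ_\morphs$ by \ref{def:precellularcat}\ref{def:precellularcat3}, and its induced map is $\id_P\in\morphs$.

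\emph{Uniqueness.} Given two pre-cellular squares $(f_1,f_2,g_1^a,g_2^a)$ and $(f_1,f_2,g_1^b,g_2^b)$ with induced maps $r^a:P\to D^a$ and $r^b:P\to D^b$ in $\morphs$, we relate each to the pushout square. The pair $(\id_P, r^a)$ witnesses $(f_1,f_2,p_1,p_2)\sim^*(f_1,f_2,g^a_1,g^a_2)$ in $\categ_\morphs$: we have $r^a p_\ell = g^a_\ell$, and $\id_P, r^a\in\morphs$. The same holds for $b$, so transitivity of $\sim$ gives the claim. One subtlety is that $\sim$ must be computed in $\categ_\morphs$, which is why we need $r^a,r^b\in\morphs$ and not only in $\categ$.

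\emph{Right transitivity.} This is the main obstacle, since pushouts must be pasted without cocompleteness. Take the two squares of Definition~\ref{def:propindep}\ref{def:propindep2}.
\begin{enumerate}[(1), leftmargin=*]
\item Let $P$ be the pushout of $(f_1,f_2)$, with induced map $r:P\to D$ in $\morphs$ and legs $p_1, p_2$, where $p_2:C\to P$.
\item Form the pushout $Q$ of $(p_2, g_2)$. Both maps are in $\morphs$, so $Q$ exists, and by the pasting lemma $Q$ is the pushout of $(f_1, g_2 f_2)$.
\item Form the pushout $Q'$ of $(k, g_2)$, with induced map $s:Q'\to F$ in $\morphs$ by hypothesis.
\item Since $k = r p_2$, the square relating $Q$ and $Q'$ yields a map $Q\to Q'$. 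This map is a pushout of $r$ along the leg $P\to Q$ (pasting lemma again). Because $r$ and the leg $P \to Q$ are both in $\morphs$, the map $Q\to Q'$ lies in $\morphs$ by \ref{def:precellularcat}\ref{def:precellularcat3}.
\end{enumerate}
The induced map $Q\to F$ of the outer square is then the composite $Q\to Q'\xrightarrow{s}F$, which lies in $\morphs$ by \ref{def:precellularcat}\ref{def:precellularcat4}. The care needed here is to check that the pasted squares are genuinely pushouts of spans in $\morphs$, so that each application of \ref{def:precellularcat}\ref{def:precellularcat1} and \ref{def:precellularcat}\ref{def:precellularcat3} is legitimate.
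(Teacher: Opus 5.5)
Your proof is correct and matches the paper on the independence-relation axiom, symmetry and existence. It differs in two minor ways.

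\emph{Uniqueness.} You chain $(f_1,f_2,g_1^a,g_2^a)\sim^*(f_1,f_2,p_1,p_2)\sim^*(f_1,f_2,g_1^b,g_2^b)$ through the pushout square and use that $\sim$ is a transitive closure. The paper instead takes the pushout of $(r^a,r^b)$, which is legitimate since both lie in $\morphs$, to get a single $\sim^*$-witness. Your route saves a pushout; the paper's yields the slightly stronger one-step equivalence. Note that the witnesses for your $\sim^*$ step are $r^a\colon P\to D^a$ together with $\id_{D^a}$, with common target $D^a$. They are not $(\id_P,r^a)$ as you wrote.

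\emph{Right transitivity.} Both proofs use the same three pushout squares but build them in opposite order. The paper, in your notation, first forms the pushout of $r$ along the leg $P\to Q$, then uses the forward pasting lemma to identify it with the pushout of $(k,g_2)$. You first form the pushout $Q'$ of $(k,g_2)$, then use the converse direction: left square and outer rectangle being pushouts forces the right square to be one. That shows $Q\to Q'$ is a pushout of $r$ along $P\to Q$. Both directions of pasting hold in any category, so nothing is lost.

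You should also state explicitly that $g_2f_2,\,h_1g_1\in\morphs$ by closure under composition. This is needed so that the outer square lies in $\categ_{\morphs}$, as the definition of a pre-cellular square requires.
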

\begin{proof}
\nin We now verify all of the axioms of a weakly stable independence relation. Notice that, by the definition of pre-cellular squares, every such square is in $\categ_\morphs$.
\nin\begin{enumerate}[$(*_1)$, leftmargin=*, series=prf:indrel]
\item \label{prf:indrel1} $\dnf$ is an independence relation on $\categ_\morphs$ (cf.~\ref{def:indrelation}).
\end{enumerate}
Why \ref{prf:indrel1}? Assume we have in $\categ_\morphs$ a commutative diagram of the form:
\[\begin{tikzcd}
	&& E \\
	B & D \\
	A & C
	\arrow["{h_1}", curve={height=-12pt}, from=2-1, to=1-3]
	\arrow["{g_1}", from=2-1, to=2-2]
	\arrow["t", from=2-2, to=1-3]
	\arrow["{f_1}", from=3-1, to=2-1]
	\arrow["{f_2}"', from=3-1, to=3-2]
	\arrow["{h_2}"', curve={height=12pt}, from=3-2, to=1-3]
	\arrow["{g_2}"', from=3-2, to=2-2]
\end{tikzcd}\]
Let $P$ be the pushout of $(f_1, f_2)$ in $\categ$; this exists because $f_1, f_2\in \morphs$ and by \ref{def:precellularcat}\ref{def:precellularcat1}. Let $r: P \to D$, $s: P \to E$ be the maps induced from the pushout. If $(f_1, f_2, g_1, g_2)\in \dnf$, then by definition $r\in\morphs$, so that $tr\in \morphs$ by closure under finite compositions. But then $s = tr\in\morphs$ by uniqueness of the map from the pushout, and thus $(f_1, f_2, h_1, h_2)\in\dnf$. Assume $(f_1, f_2, h_1, h_2)\in \dnf$, then $tr = s\in\morphs$ by hypothesis, and because $t\in\morphs$ we get $r\in\morphs$ by coherence.
\nin\begin{enumerate}[resume*=prf:indrel]
\item \label{prf:indrel2} $\dnf$ is symmetric.
\end{enumerate}
\ref{prf:indrel2} follows easily from the definition of pre-cellular squares (cf.~\ref{def:cellularsquare}).
\nin\begin{enumerate}[resume*=prf:indrel]
\item \label{prf:indrel3} $\dnf$ is right transitive.
\end{enumerate}
Why \ref{prf:indrel3}? Assume we have in $\categ_\morphs$ a commutative diagram of the form
\[\begin{tikzcd}
	B & D & F \\
	A & C & E
	\arrow["{g_1}", from=1-1, to=1-2]
	\arrow["{h_1}", from=1-2, to=1-3]
	\arrow["{f_1}", from=2-1, to=1-1]
	\arrow["{f_2}"', from=2-1, to=2-2]
	\arrow["k"', from=2-2, to=1-2]
	\arrow["{g_2}"', from=2-2, to=2-3]
	\arrow["{h_2}"', from=2-3, to=1-3]
\end{tikzcd}\]
with $(f_1, f_2, g_1, k)\in\dnf$ and $(k, g_2, h_1, h_2)\in\dnf$. We want to show:
\[
{(f_1, g_2 f_2, h_1g_1, h_2)\in\dnf}.
\] 
By closure under finite compositions of $\morphs$, we have that ${f_1, g_2f_2, h_1g_1, h_2\in\morphs}$. Therefore, we are left to show that the induced map $r: P \to F$ is in $\morphs$, where $P$ is the pushout of $(f_1, g_2f_2)$, which exists because $f_1, g_2f_2\in\morphs$.
\[\begin{tikzcd}
	&& F \\
	B & P \\
	A & E
	\arrow["{h_1g_1}", curve={height=-12pt}, from=2-1, to=1-3]
	\arrow[from=2-1, to=2-2]
	\arrow["r", from=2-2, to=1-3]
	\arrow["\ulcorner"{anchor=center, pos=0.125, rotate=-90}, draw=none, from=2-2, to=3-1]
	\arrow["{f_1}", from=3-1, to=2-1]
	\arrow["{g_2f_2}"', from=3-1, to=3-2]
	\arrow["{h_2}"', curve={height=12pt}, from=3-2, to=1-3]
	\arrow[from=3-2, to=2-2]
\end{tikzcd}\]
We have $f_1, f_2\in\morphs$, so that we can consider the pushout square $(f_1, f_2, u, v)$, and by \ref{def:precellularcat}\ref{def:precellularcat3} we have in particular $v\in \morphs$. By \ref{def:precellularcat}\ref{def:precellularcat1}-\ref{def:precellularcat3} there is a pushout square $(v, g_2, u', v')$ with $u',v'\in\morphs$. Therefore, the pushout in the above diagram is the composition of the following pushout squares
\[\begin{tikzcd}
	B & Q & P \\
	A & C & E
	\arrow["u", from=1-1, to=1-2]
	\arrow["{u'}", from=1-2, to=1-3]
	\arrow["\ulcorner"{anchor=center, pos=0.125, rotate=-90}, draw=none, from=1-2, to=2-1]
	\arrow["\ulcorner"{anchor=center, pos=0.125, rotate=-90}, draw=none, from=1-3, to=2-2]
	\arrow["{f_1}", from=2-1, to=1-1]
	\arrow["{f_2}"', from=2-1, to=2-2]
	\arrow["v"', from=2-2, to=1-2]
	\arrow["{g_2}"', from=2-2, to=2-3]
	\arrow["{v'}"', from=2-3, to=1-3]
\end{tikzcd}\]
where all the maps are in $\morphs$. Because $(f_1, f_2, g_1, k)\in\dnf$, then the induced map $s: Q \to D$ is in $\morphs$. 
Because $u'\in \morphs$, by \ref{def:precellularcat}\ref{def:precellularcat1} we can consider the pushout square
\[\begin{tikzcd}
	D & {P'} \\
	Q & P
	\arrow["{\bar u}", from=1-1, to=1-2]
	\arrow["\ulcorner"{anchor=center, pos=0.125, rotate=-90}, draw=none, from=1-2, to=2-1]
	\arrow["s", from=2-1, to=1-1]
	\arrow["{u'}"', from=2-1, to=2-2]
	\arrow["{\bar s}"', from=2-2, to=1-2]
\end{tikzcd}\]
By \ref{def:precellularcat}\ref{def:precellularcat3} we have in particular $\bar s\in \morphs$. Composing this pushout with the pushout square at the right-hand side of the diagram above it, we get the following pushout diagram:
\[\begin{tikzcd}
	D & {P'} \\
	Q & P \\
	C & E
	\arrow["{\bar u}", from=1-1, to=1-2]
	\arrow["\ulcorner"{anchor=center, pos=0.125, rotate=-90}, draw=none, from=1-2, to=3-1]
	\arrow["s", from=2-1, to=1-1]
	\arrow["{\bar s}"', from=2-2, to=1-2]
	\arrow["v", from=3-1, to=2-1]
	\arrow["{g_2}"', from=3-1, to=3-2]
	\arrow["{v'}"', from=3-2, to=2-2]
\end{tikzcd}\]
Notice that it follows easily from the definition of $s$ that one has $k = sv$, so that using $(k, g_2, h_1, h_2)\in\dnf$ we have that the induced morphism $r': P' \to F$ is in $\morphs$. Since $r = r' \bar s$, by closure under finite compositions of $\morphs$ we get $r\in \morphs$, which was what we wanted to show.
\nin\begin{enumerate}[resume*=prf:indrel]
\item \label{prf:indrel4} $\dnf$ has the existence property.
\end{enumerate}
Why \ref{prf:indrel4}? Let $(f_1, f_2)$ be a span in $\categ_\morphs$. By \ref{def:precellularcat}\ref{def:precellularcat1} the span has a pushout  in $\categ$, say that $(f_1, f_2, g_1, g_2)$ is a pushout square. We have $g_1, g_2\in \morphs$ by \ref{def:precellularcat}\ref{def:precellularcat3}, and $\id_P\in \morphs$ by \ref{def:precellularcat}\ref{def:precellularcat2}. Therefore $(f_1, f_2, g_1, g_2)\in\dnf$.
\[\begin{tikzcd}
	&& P \\
	B & P \\
	A & C
	\arrow["{g_1}", curve={height=-12pt}, from=2-1, to=1-3]
	\arrow["{g_1}", from=2-1, to=2-2]
	\arrow["{\id_P}", from=2-2, to=1-3]
	\arrow["\ulcorner"{anchor=center, pos=0.125, rotate=-90}, draw=none, from=2-2, to=3-1]
	\arrow["{f_1}", from=3-1, to=2-1]
	\arrow["{f_2}"', from=3-1, to=3-2]
	\arrow["{g_2}"', curve={height=12pt}, from=3-2, to=1-3]
	\arrow["{g_2}"', from=3-2, to=2-2]
\end{tikzcd}\]
\nin\begin{enumerate}[resume*=prf:indrel]
\item \label{prf:indrel5} $\dnf$ has the uniqueness property.
\end{enumerate}
Why \ref{prf:indrel5}? Assume $(f_1, f_2, g_1^a, g_2^a)\in \dnf$ and $(f_1, f_2, g_1^b, g_2^b)\in\dnf$. 
\[\begin{tikzcd}
	& {D^b} & \\
	B && {D^a} \\
	A & C
	\arrow["{g_1^b}", from=2-1, to=1-2]
	\arrow["{g_1^a}"{description, pos=0.7}, from=2-1, to=2-3]
	\arrow["{f_1}", from=3-1, to=2-1]
	\arrow["{f_2}"', from=3-1, to=3-2]
	\arrow["{g_2^b}"{description, pos=0.8}, from=3-2, to=1-2]
	\arrow["{g_2^a}"', from=3-2, to=2-3]
\end{tikzcd}\]
By \ref{def:precellularcat}\ref{def:precellularcat1} we can find the pushout $P$ of $(f_1, f_2)$, then the induced maps $r: P \to D^a$ and $s: P \to D^b$ are in $\morphs$ by hypothesis. Thus by \ref{def:precellularcat}\ref{def:precellularcat1} we can take the pushout 
\[\begin{tikzcd}
	{D^a} & D \\
	P & {D^b}
	\arrow["{g^a}", from=1-1, to=1-2]
	\arrow["r", from=2-1, to=1-1]
	\arrow["s"', from=2-1, to=2-2]
	\arrow["{g^b}"', from=2-2, to=1-2]
\end{tikzcd}\]
where all the maps are in $\morphs$ by \ref{def:precellularcat}\ref{def:precellularcat3}. It is easily seen that this gives a commutative diagram as in \ref{def:eqcommsq}\ref{def:eqcommsq1}.
\end{proof}

Finally, we introduce independence relations on $\mu\mh\mrm{AEC}$s and recall the definition of a stable independence relation on a $\mu\mh\mrm{AEC}$ as done in \cite[Section~8]{forkingcategorical}, and then we  define a notion of non-forking for orbital types (Definition~\ref{def:dnftype}). These notions will be used in the next sections to establish the  stability of the $\mu\mh\mrm{AEC}$s we will consider.

\begin{remark}
If $\kaecgoth = (\kaec, \leqk)$ is an abstract class, we can consider the category $\categ$ with objects those in $\kaec$ and with morphisms the $\tau_\kaecgoth$-homomorphisms\footnote{Recall that a map $f: M \to N$ between $\tau$-structures is a $\tau$-homomorphism if it preserves the truth of atomic formulas, i.e.,  for every atomic formula $\varphi(\bar x)$ and $\bar a\in M^{|\bar x|}$ one has that $M \vDash \varphi(\bar a)$ implies $N \vDash \varphi(f(\bar a))$.}. Let $\morphs$ be the collection of all $\kaecgoth$-embeddings. Then we can consider, following  Definition~\ref{def:precellularcat}, the category $\categ_\morphs$, which is the category with objects the models in $\kaec$ and with morphisms the $\kaecgoth$-embeddings. For this reason, when we consider $\kaecgoth$ as a category, we are identifying it with $\categ_\morphs$. Therefore, it makes sense to consider independence relations on $\kaecgoth$.
\end{remark}

\begin{definition}
\label{def:indrelmuaec}
Let $\kaecgoth = (\kaec, \leqk)$ be a $\mu\mh\mrm{AEC}$ and $\dnf$ an independence relation on $\kaecgoth$.
\begin{enumerate}[(1), leftmargin=*]
\item \label{def:indrelmuaec1}\cite[Notation~3.7]{forkingcategorical} We write $M_1 \dnf^{M_3}_{M_0} M_2$ if $M_0\leqk M_\ell\leqk M_3$, with $\ell\in\{1, 2\}$, and $(i_{0,1}, i_{0,2}, i_{1,3}, i_{2,3})\in \dnf$, where $i_{j,k}$ is the inclusion map from $M_j$ to $M_k$.
\[\begin{tikzcd}
	{M_1} & {M_3} \\
	{M_0} & {M_2}
	\arrow["{i_{1,3}}", from=1-1, to=1-2]
	\arrow["{i_{0,1}}", from=2-1, to=1-1]
	\arrow["{i_{0,2}}"', from=2-1, to=2-2]
	\arrow["{i_{2,3}}"', from=2-2, to=1-2]
\end{tikzcd}\]
\item \label{def:indrelmuaec2}\cite[Definition~8.2]{forkingcategorical} We write $\dnfb{M_0}{A}{B}{M_3}$ if $M_0\leqk M_3, M_0\subseteq A\cup B\subseteq M_3$, and there are $M_1',M_2',M_3'\in\kaec$ with $A\subseteq M_1'$, $B\subseteq M_2'$, $M_3\leqk M_3'$, and $M_1'\dnf_{M_0}^{M_3'}M_2'$.
\end{enumerate}
\end{definition}

We now recall the definition of stable independence relations on $\mu\mh\mrm{AEC}$s. These are weakly stable independence relation which moreover satisfy two crucial locality properties, i.e., local character (Definition~\ref{def:indeplocalchar}) and the witness property (Definition~\ref{def:indepwitnessprop}).
Their usefulness comes from Fact~\ref{fact:stableindep}, which says that every $\mu\mh\mrm{AEC}$ with a stable independence relation is stable and tame.

\begin{definition}[\protect{\cite[Definition 8.6]{forkingcategorical}}]
\label{def:indeplocalchar}
Let $\kaecgoth  = (\kaec, \leqk)$  be a $\mu\mh\mrm{AEC}$, $\dnf$ an independence relation on $\kaecgoth$. We say that $\dnf$ has \tdef{right local character} if for each cardinal $\nu$ there is a cardinal $\lambda_\nu$ (depending on $\nu$) such that for any $M_1\leqk M_3$ and $M_2 \leqk M_3$ with $|M_1|\leq\nu$, then there are $M_0,M_1',M_3'\in\kaec$ with $|M_0|\leq\lambda_\nu$, $M_1\leqk M_1'$, $M_3\leqk M_3'$, and $M_1' \dnf_{M_0}^{M_3'}M_2$.
That is, we have the following commutative diagram of inclusions where all the arrows are $\kaecgoth$-embeddings and the outer square is in $\dnf$.
\[\begin{tikzcd}
	{M_1'} & {M_1'} & {M_3'} \\
	& {M_1} & {M_3} \\
	{M_0} && {M_2}
	\arrow[equals, from=1-1, to=1-2]
	\arrow[from=1-2, to=1-3]
	\arrow[from=2-2, to=1-2]
	\arrow[from=2-2, to=2-3]
	\arrow[from=2-3, to=1-3]
	\arrow[from=3-1, to=1-1]
	\arrow[from=3-1, to=3-3]
	\arrow[from=3-3, to=2-3]
\end{tikzcd}\]
\end{definition}

\begin{definition}[\protect{\cite[Definition 8.7]{forkingcategorical}}]
\label{def:indepwitnessprop}
Let $\kaecgoth =(\kaec, \leqk)$ be a $\mu\mh\mrm{AEC}$, $\dnf$ an independence relation on $\kaecgoth$, $\theta$ an infinite cardinal. 
\begin{enumerate}[(1), leftmargin=*]
\item \label{def:indepwitnessprop1} We say that $\dnf$ has  the \tdef{right $({<}\theta)$-witness property} if $M_1\dnf_{M_0}^{M_3}M_2$ holds whenever $M_0\leqk M_\ell \leqk M_3$, with $\ell\in \{1,2\}$, and $\dnfb{M_0}{M_1}{A}{M_3}$ for all $A\subseteq M_2$ with $|A|<\theta$.
\item \label{def:indepwitnessprop2} We say that $\dnf$ has the \tdef{right witness property} if it has the $({<}\theta)$-witness property for some $\theta$.
\end{enumerate}
\end{definition}

\begin{definition}
\label{def:stableindep}
Let $\kaecgoth = (\kaec, \leqk)$ be a $\mu\mh\mrm{AEC}$ and $\dnf$ an independence relation on $\kaecgoth$. We say that $\dnf$ is \tdef{stable} if it is weakly stable, satisfies  right local character, and the right witness property.
\end{definition}

\begin{remark}
In \cite[Definition 3.24]{forkingcategorical} stable independence relations are defined for an arbitrary category. Nonetheless, by \cite[Theorem 8.14]{forkingcategorical} the definition we have given here is equivalent to that in \cite{forkingcategorical} when $\dnf$ is a weakly stable independence relation on a $\mu\mh\mrm{AEC}$ $\kaecgoth$.
\end{remark}

\begin{fact}[\protect{\cite[Corollary 8.16]{forkingcategorical}}]
\label{fact:stableindep}
Let $\kaecgoth =(\kaec, \leqk)$ be a $\mu\mh\mrm{AEC}$ and $\dnf$ a stable independence relation on $\kaecgoth$. Then:
\begin{enumerate}[(1), leftmargin=*]
\item \label{fact:stableindep1} \tdef{(Stability)} For any ordinal $\gamma$ there is a proper class $C_\gamma$ of cardinals such that for any $\lambda\in C_\gamma$ we have that $\kaecgoth$ is $(\lambda, \gamma)$-stable (cf.~\ref{def:stabilityproper}\ref{def:stabilityproper1}). In particular, $\kaecgoth$ is stable (cf.~\ref{def:stabilityproper}\ref{def:stabilityproper2}).
\item \label{fact:stableindep2} \tdef{(Tameness)} For any ordinal $\gamma$ there is a cardinal $\kappa$ such that $\kaecgoth$ is $(\kappa, \gamma)$-tame (cf.~\ref{def:tame}). In particular, $\kaecgoth$ is tame.
\end{enumerate}
\end{fact}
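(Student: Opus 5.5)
The plan is to deduce both stability and tameness from right local character combined with uniqueness. The idea is that the orbital type of a tuple over a large model $M$ is determined by a small amount of data: a small $M_0\leqk M$ over which the tuple lives independently of $M$, together with the isomorphism type of a small model containing $M_0$ and the tuple. Throughout, fix $\gamma$, write $\chi=\mrm{LS}_\mu(\kaecgoth)$, put $\nu=(|\gamma|+\chi)^{<\mu}$, and let $\lambda_\nu$ be given by right local character (Definition~\ref{def:indeplocalchar}), enlarged if necessary so that $\lambda_\nu=\lambda_\nu^{<\mu}\geq\nu$.

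Step 1 (small witnesses). Let $p=\gtp(\bar b/M;N)\in\gS^\gamma_\kaecgoth(M)$ with $M\leqk N$.
\begin{enumerate}[(a), leftmargin=*]
\item By the $\mrm{LS}$ axiom, pick $M_1\leqk N$ with $\bar b\subseteq M_1$ and $|M_1|\leq\nu$.
\item Apply right local character with $M_2=M$ and $M_3=N$. This yields $M_0\leqk M$ with $|M_0|\leq\lambda_\nu$, together with $M_1\leqk M_1'$, $N\leqk N'$ and $M_1'\dnf^{N'}_{M_0}M$.
\item Shrink $M_1'$ using $\mrm{LS}$ and coherence to some $M_1''\leqk M_1'$ of size $\leq\lambda_\nu$ containing $M_0\cup M_1$.
\item Argue that $M_1''\dnf^{N'}_{M_0}M$ still holds.
\end{enumerate}
Step (d) needs left monotonicity. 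This is a formal consequence of a weakly stable independence relation on a $\mu\mh\mrm{AEC}$, established in \cite{forkingcategorical}, and I would cite it rather than reprove it. I expect this to be the one genuinely delicate point: local character does not control $|M_1'|$, and shrinking $M_1'$ must preserve independence.

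Step 2 (uniqueness pins down the type). Suppose $\bar b,M_1''$ and $\bar b^*,M_1^*$ are two witnesses as in Step 1 over the same $M_0\leqk M$. Suppose further there is an isomorphism $f\colon M_1''\to M_1^*$ with $f\restriction M_0=\id$ and $f(\bar b)=\bar b^*$. Then both squares are in $\dnf$ over the same span, after transporting one of them along $f$, which is allowed by isomorphism invariance (Definition~\ref{def:indrelation}). The uniqueness property then provides a chain of $\sim^*$-equivalences. Each link in this chain gives an amalgam fixing $M$ and identifying the images of $\bar b$ and $\bar b^*$, which is an $E^\at$-step. Hence $\gtp(\bar b/M;N)=\gtp(\bar b^*/M;N^*)$.

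Step 3 (counting). Let $\lambda=\lambda^{\lambda_\nu}$ and $|M|=\lambda$. A type over $M$ is then determined by two pieces of data. The first is a choice of $M_0\subseteq M$ of size $\leq\lambda_\nu$, for which there are at most $\lambda^{\lambda_\nu}=\lambda$ options. The second is the isomorphism type over $M_0$ of a pair $(M_1'',\bar b)$ with $|M_1''|\leq\lambda_\nu$, for which there are at most $2^{\lambda_\nu}$ options. Therefore $|\gS^\gamma_\kaecgoth(M)|\leq\lambda$, and since the cardinals $\lambda=\lambda^{\lambda_\nu}$ form a proper class, this gives~\ref{fact:stableindep1}.

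Step 4 (tameness). For~\ref{fact:stableindep2}, take $\kappa=\lambda_\nu$ and let $p\neq q$ in $\gS^\gamma(M)$. Suppose toward a contradiction that $p\restriction M_0'=q\restriction M_0'$ for every $M_0'\leqk M$ with $|M_0'|\leq\kappa$. Apply Step 1 to $p$ and to $q$, and use $\mrm{LS}$ to take a common $M_0\leqk M$ of size $\leq\kappa$ that serves for both; this requires base monotonicity to enlarge the base inside $M$, again from \cite{forkingcategorical}. Now $p\restriction M_0=q\restriction M_0$, so after amalgamating over $M_0$ we may take the two small witnesses to be isomorphic over $M_0$ in a common extension. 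Step 2 then yields $p=q$, the desired contradiction.
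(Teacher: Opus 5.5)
Your plan (local character for a small base, uniqueness so that a type over $M$ depends only on small data, then a count for stability and a common-base argument for tameness) is the standard derivation. The paper gives no proof of its own; it quotes \cite{forkingcategorical}. Its proof of Theorem~\ref{thm:lpurestabcard} follows the same pattern. Steps 1--3 are sound, with one bookkeeping point. Transporting a square along $f$ in Step 2 is not an instance of Definition~\ref{def:indrelation}, which only allows post-composition at the top corner. It does follow: existence plus that invariance give $(M_0\to M_1'',\id_{M_0},f,M_0\to M_1^*)\in\dnf$, and right transitivity with the square for $M_1^*$ then yields the transported square.

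The step that does not go through as written is the end of Step 4. Write the two witnesses over the common base as $M_1^p\dnf^{N_p}_{M_0}M$ with $\bar b\in M_1^p$, and $M_1^q\dnf^{N_q}_{M_0}M$ with $\bar c\in M_1^q$. From $p\restriction M_0=q\restriction M_0$ you only obtain a model $D_0$ and embeddings $k_1\colon M_1^p\to D_0$, $k_2\colon M_1^q\to D_0$ over $M_0$ with $k_1(\bar b)=k_2(\bar c)$. (This uses $E=E^{\at}$, which holds since existence gives $\mrm{AP}$.) This does not give you what Step 2 needs, for three reasons:
\begin{itemize}
\item $M_1^p$ and $M_1^q$ need not be isomorphic;
\item nothing yet places $D_0$ independently from $M$ over $M_0$ compatibly with $p$ or $q$;
\item amalgamating $N_p$ and $N_q$ over $M_0$ instead produces two different copies of $M$.
\end{itemize}

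The missing idea is that witnesses can be enlarged. By existence, take an independent square on the span $(k_1,\,M_1^p\to N_p)$. Compose it, via symmetry and right transitivity, with the square for $M_1^p\dnf^{N_p}_{M_0}M$. This gives $D_0\dnf_{M_0}M$ inside an extension of $N_p$, with $k_1(\bar b)$ sent to $\bar b$. Do the same for $q$ through $k_2$. Both types then have witnesses that are copies of the single pair $(D_0,k_1(\bar b))$, and Step 2 gives $p=q$.

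Equivalently, you can invoke uniqueness of nonforking extensions, Fact~\ref{fact:indeptypes}\ref{fact:indeptypesuniq}, which is exactly what is needed at this point. Also, if you first amalgamate over $M$ models realizing $p$ and $q$, and apply local character once to a small model containing both realizations, you get a common base directly and do not need base monotonicity.
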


We conclude this subsection with the definition of non-forking for orbital types, which is closely related to the notion of non-forking defined in first-order model theory for (syntactic) types. It will also be useful when we come to compute the precise stability spectrum of the $\mu\mh\mrm{AEC}$s under consideration.

\begin{definition}[\protect{\cite[Definition 8.2]{forkingcategorical}}]
\label{def:dnftype}
Let $\kaecgoth = (\kaec, \leqk)$ be a $\mu\mh\mrm{AEC}$ and $\dnf$ a stable independence relation on $\kaecgoth$. We say that $\gtp(\bar a/B; M_3)$ \tdef{does not fork over} $M_0$ if $\dnfb{M_0}{\mrm{ran}(\bar a)}{B}{M_3}$\footnote{It can be verified (see \cite[Definition~8.2, Fact~8.4]{forkingcategorical}) that this definition does not depend on the choice of representative of the orbital type $\gtp(\bar a/B; M_3)$.}. 
\end{definition}

\begin{fact}[\protect{\cite[Fact 8.4, Theorem 8.5]{forkingcategorical}}]
\label{fact:indeptypes}
Let $\kaecgoth = (\kaec, \leqk)$ be a $\mu\mh\mrm{AEC}$ and $\dnf$ a stable independence relation on $\kaecgoth$. Then:
\begin{enumerate}[(1), leftmargin=*]
\item \label{fact:indeptypesuniq} \tdef{(Uniqueness)} Given $p,q\in \gS_\kaecgoth^{<\infty}(B; N)$ with $M\leqk N$ and $M\subseteq B\subseteq N$, if $p\restriction M = q\restriction M$ and both $p$ and $q$ do not fork over $M$, then $p=q$.
\item \tdef{(Extension)} If $M\leqk N$ and $p\in \gS^{<\infty}_\kaecgoth(M)$, then there is $q\in \gS_\kaecgoth^{<\infty}(N)$ extending $p$ (cf.~\ref{def:nottypes}\ref{def:nottypes3}) such that $q$ does not fork over $M$.
\end{enumerate}
\end{fact}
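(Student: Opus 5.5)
Since this Fact is quoted from \cite[Fact~8.4, Theorem~8.5]{forkingcategorical}, the plan is to rederive it from the axioms in Definition~\ref{def:stableindep}. Extension is a direct application of the existence property. Uniqueness goes through the uniqueness property of squares, after first moving both types onto a common left-hand model.

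\emph{Extension.} Let $p=\gtp(\bar a/M;N_1)$ with $M\leqk N_1$, and let $M\leqk N$.
\begin{enumerate}[(1), leftmargin=*]
\item Apply the existence property to the span of inclusions $(M\to N_1, M\to N)$. This gives a square $(i_{M,N_1}, i_{M,N}, g_1,g_2)\in\dnf$ into some $D$.
\item Using invariance under isomorphism (Definition~\ref{def:abstractclass}), rename so that $N\leqk D$ and $g_2$ is the inclusion. Write $\bar a' = g_1(\bar a)$.
\item Put $q=\gtp(\bar a'/N;D)$. Then $q\restriction M=\gtp(g_1(\bar a)/M;D)=p$, because $g_1$ is a $\kaecgoth$-embedding fixing $M$.
\item The square is, up to the isomorphism, $g_1[N_1]\dnf_M^{D}N$ with $\mrm{ran}(\bar a')\subseteq g_1[N_1]$. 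This witnesses $\dnfb{M}{\mrm{ran}(\bar a')}{N}{D}$ in Definition~\ref{def:indrelmuaec}\ref{def:indrelmuaec2}, with $M_1'=g_1[N_1]$, $M_2'=N$ and $M_3'=D$. Hence $q$ does not fork over $M$.
\end{enumerate}

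\emph{Uniqueness, preliminary facts.}
\begin{enumerate}[(1), leftmargin=*]
\item Existence applied to spans in $\kaecgoth$ gives $\mrm{AP}$. So by Remark~\ref{rem:equivrel}\ref{rem:equivrel3}, $E_\kaecgoth=E^\at_\kaecgoth$, and any equality of orbital types is witnessed by a single amalgam.
\item I will also need the monotonicity properties of $\dnf$: if $M_1\dnf^{M_3}_{M_0}M_2$ and $M_0\leqk M_1^*\leqk M_1$, $M_0\leqk M_2^*\leqk M_2$, then $M_1^*\dnf^{M_3}_{M_0}M_2^*$. Definition~\ref{def:indrelation} only allows changing the top vertex. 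Monotonicity must be derived from symmetry, right transitivity, existence and uniqueness, as is done in \cite[Section~3]{forkingcategorical}. I would prove it there: compose a trivial independent square (existence applied to $M_0\to M_1^*$, $M_0\to M_0$) with the given one, then use uniqueness and invariance under changing the top.
\end{enumerate}

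\emph{Uniqueness, main argument.} Let $p=\gtp(\bar a/B;N)$ and $q=\gtp(\bar b/B;N)$ be non-forking over $M$, with $p\restriction M=q\restriction M$.
\begin{enumerate}[(1), leftmargin=*]
\item Non-forking gives witnesses $M_1^a\dnf_M^{M_3^a}M_2^a$ with $\bar a\in M_1^a$, $B\subseteq M_2^a$, $N\leqk M_3^a$, and similarly $M_1^b\dnf_M^{M_3^b}M_2^b$ for $\bar b$.
\item Since $p\restriction M=q\restriction M$ and $\mrm{AP}$ holds, there are $\kaecgoth$-embeddings of $M_3^a$ and $M_3^b$ into a common model, fixing $M$ and identifying $\bar a$ with $\bar b$.
\item Use monotonicity to shrink the right-hand sides to a single model $M_2\supseteq B$ with $M\leqk M_2\leqk N$. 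Such an $M_2$ is obtained from $\mrm{LS}$ and coherence, possibly after enlarging $N$.
\item Replace the left-hand sides by a common $M_1$ containing $\bar a\equiv\bar b$. Concretely, take a model over $M$ containing $\bar a$ inside $M_1^a$ and carry it to the $b$-side through the amalgam from step (2).
\item This yields two independent squares on the same span $(M\to M_1, M\to M_2)$. In one, $M_1$ goes in with $\bar a$ in the role of the distinguished tuple; in the other, with $\bar b$.
\item The uniqueness property says the two squares are $\sim$-equivalent. Chasing the diagram of Definition~\ref{def:eqcommsq} produces $\kaecgoth$-embeddings fixing $M_2\supseteq B$ that send $\bar a$ and $\bar b$ to the same tuple. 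Hence $p=q$.
\end{enumerate}

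\emph{Main obstacle.} The hardest step is the combination of step (4) with the monotonicity fact. One must arrange a common left model $M_1$ while keeping independence. The subtlety is that equality of types over $M$ only identifies $\bar a$ and $\bar b$ inside some amalgam, not inside $M_1^a$ and $M_1^b$. I would handle this first with the base-fixed monotonicity lemma: shrink $M_1^a$ to a small model $M'\ni\bar a$, and transport $M'$ via the amalgam to the $b$-side. Independence of the transported square then follows from invariance under isomorphism and under changing the top vertex.
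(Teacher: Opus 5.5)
The extension argument is fine. The passage from the square with top map $g_1$ to the square of inclusions $g_1[N_1]\dnf^{D}_{M}N$ needs invariance of $\dnf$ under an isomorphism at a side vertex. That is not part of Definition~\ref{def:indrelation}, but it follows in a few lines from existence, symmetry and right transitivity.

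The gap is in steps (3)--(4) of your uniqueness argument. There you try to get a common right-hand model and a common left-hand model by \emph{shrinking} the witnessing squares $M_1^a\dnf^{M_3^a}_{M}M_2^a$ and $M_1^b\dnf^{M_3^b}_{M}M_2^b$, and shrinking cannot do this.

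On the right, monotonicity only gives independence from models $M_2$ with $M\leqk M_2\leqk M_2^a$ (respectively $M_2\leqk M_2^b$). When $B$ is merely a set, there need not be any model containing $B$ that is a $\leqk$-submodel of both $M_2^a$ and a copy of $M_2^b$, nor one inside $M_2^a\cap N$. LS and coherence give a model containing $B$ inside one of them, but nothing puts it inside the other. If $B$ is itself a model with $B\leqk N$, coherence does give $B\leqk M_2^a$ and $B\leqk M_2^b$, so this half of step (3) works in that case.

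On the left the problem is worse. The amalgam $f,g$ with $f(\bar a)=g(\bar b)$ does not place $f[M']$ inside $g[M_1^b]$, so your transported square is just the $f$-image of the $a$-side square. It tells you nothing about independence, on the $b$-side, of a model containing $\bar b$ from the right-hand model. That is exactly what you need before the uniqueness property can be applied to two squares on one span.

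The missing idea is to \emph{enlarge} witnesses first, using existence together with symmetry and right transitivity (your extension argument, applied on whichever side is needed), and only then shrink.

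\emph{Case $B$ is a model $M_2$ with $M\leqk M_2\leqk N$.} Amalgamate $M_1^a$ and $M_1^b$ over $M$ by $u\colon M_1^a\to E$ and $v\colon M_1^b\to E$ with $u(\bar a)=v(\bar b)$. Apply existence to the span $(u, M_1^a\hookrightarrow M_3^a)$ and compose it, via symmetry and right transitivity, with $M_1^a\dnf^{M_3^a}_{M}M_2$. This gives an independent square on the span $(M\hookrightarrow E, M\hookrightarrow M_2)$ whose top map sends $u(\bar a)$ to $\bar a$ and whose right map is the inclusion of $M_2$. Do the same on the $b$-side with $v$. Now the uniqueness property gives $\gtp(\bar a/M_2;N)=\gtp(\bar b/M_2;N)$.

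\emph{Arbitrary set $B$.} Reduce to the model case. Amalgamate $M_3^a$ and $M_3^b$ over $N$ into some $M_3$, via $h$ on the $b$-side. Apply existence and right transitivity to the span $(h[M_2^b]\hookrightarrow M_3, h[M_2^b]\hookrightarrow M_3)$. This moves $h[M_1^b]$, by a map fixing $h[M_2^b]\supseteq B\cup M$, to a model independent over $M$ from all of $M_3$. Monotonicity then gives a realization $\bar b'$ of $\gtp(\bar b/B;N)$ with the same type over $M$ as $\bar a$, whose type over the model $M_2^a$ does not fork over $M$. The model case over $M_2^a$ then yields $\gtp(\bar a/B;N)=\gtp(\bar b/B;N)$.
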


\section{\texorpdfstring{$\lambda$}{λ}-pure embeddings}
\label{sect:lpure}

In this section we will consider abstract classes of the form $(\kaec, \leqp^\lambda)$, where $\leqp^\lambda$ is the relation of $\lambda$-purity. We will give sufficient conditions for these classes to be stable and tame. We will see, for example, that $(\rmod, \leqp^\lambda)$ is a $\lambda\mh\mrm{AEC}$. Following the current notation on $\lambda$-purity, we will use the letter $\lambda$ instead of $\mu$ throughout, so that we will talk about $\lambda\mh\mrm{AEC}$s (instead of $\mu\mh\mrm{AEC}$s). 

\subsection{Definition}

\begin{context}
\label{con:lambdainf}
Throughout this section $\lambda$ denotes an infinite cardinal.
\end{context}

\begin{definition}[\protect{\cite[Definition 7.17]{jensen}}]
\label{def:lpure}
Let $A\leq B$ be $R$-modules. We say that $A$ is \tdef{$\lambda$-pure in $B$} (written $A\leqp^\lambda B$) if every system of $<\lambda$ $R$-linear equations with parameters in $A$ has a solution in $A$ provided it has a solution in $B$.
\end{definition}

Clearly, $\aleph_0$-purity coincides with purity (cf.~\ref{def:purity}\ref{def:purity1}).
\begin{remark}
\label{rem:lpuredsumequiv}
By Fact~\ref{fact:dsumsolveeq}, for any pair of $R$-modules  $A\leq B$ one has $A\leqo B$ if and only if $A \leqp^\lambda B$ for every infinite cardinal $\lambda$.
\end{remark}

We recall here a characterization of $\lambda$-purity which will be useful in the following:

\begin{fact}[\protect{\cite[Proposition 7.16]{jensen}}]
\label{fact:lpuresplit}
Let $A\leq B$ be $R$-modules, then the following are equivalent:
\begin{enumerate}[(1), leftmargin=*, series=fact:lpreslpure]
\item \label{fact:lpuresplit1}$A$ is $\lambda$-pure in $B$,
\item \label{fact:lpuresplit2}For every $\lambda$-presented module $C$ (cf.~\ref{def:lambdapresented}) and every $\gamma: C \to B/A$, there  is $\alpha: C \to B$ with $\gamma = \pi\alpha$, where $\pi: B\to B/A$ is the natural projection.
\[\begin{tikzcd}
	&& C && \\
	0 & A & B & {B/A} & 0
	\arrow["\alpha"', dashed, from=1-3, to=2-3]
	\arrow["\gamma", from=1-3, to=2-4]
	\arrow[from=2-1, to=2-2]
	\arrow[from=2-2, to=2-3]
	\arrow["\pi", from=2-3, to=2-4]
	\arrow[from=2-4, to=2-5]
\end{tikzcd}\]
Equivalently, for every $\lambda$-presented module $C$ one has that the induced sequence $0\to \mrm{Hom}(C,A)\to \mrm{Hom} (C,B) \to \mrm{Hom}(C,B/A) \to 0$ is exact.
\end{enumerate}
In particular, if $A\leqp^\lambda B$, then for every $A\leq C\leq B$ with $C/A$ $\lambda$-presented we have that $A\leqo C$.
\end{fact}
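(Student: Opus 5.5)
The approach is to prove the two implications of Fact~\ref{fact:lpuresplit} directly, by translating between systems of ${<}\lambda$ linear equations and presentations of $\lambda$-presented modules. After that, the Hom-formulation and the final sentence are formal consequences.

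For \ref{fact:lpuresplit1}$\Rightarrow$\ref{fact:lpuresplit2}, fix a $\lambda$-presented $C=F/K$. Here $F$ is free on generators $(x_j)_{j\in J}$ with $|J|<\lambda$, and $K$ is generated by fewer than $\lambda$ elements $e_k=\sum_j r_{kj}x_j$ (each a finite sum). Given $\gamma\colon C\to B/A$, I choose lifts $b_j\in B$ with $\pi(b_j)=\gamma(x_j+K)$. Since $\gamma$ kills each $e_k$, we get $a_k:=\sum_j r_{kj}b_j\in A$. Consider the system $\sum_j r_{kj}y_j=a_k$, for $k$ ranging over the relations. It has fewer than $\lambda$ equations, has parameters in $A$, and has the solution $(b_j)$ in $B$. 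By $\lambda$-purity it has a solution $(a'_j)$ in $A$. Setting $\alpha(x_j+K)=b_j-a'_j$ is well defined, because each $e_k$ is sent to $a_k-a_k=0$. Moreover $\pi\alpha=\gamma$, since $a'_j\in A$.

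For \ref{fact:lpuresplit2}$\Rightarrow$\ref{fact:lpuresplit1}, take a system $\sum_j r_{kj}y_j=a_k$ ($k\in I$, $|I|<\lambda$) with $a_k\in A$ and a solution $(b_j)$ in $B$. Each equation involves finitely many unknowns and $\lambda$ is infinite, so only ${<}\lambda$ unknowns occur. Let $F$ be free on these unknowns $x_j$, let $K$ be generated by the $e_k=\sum_j r_{kj}x_j$, and let $C=F/K$; this $C$ is $\lambda$-presented. Define $\gamma\colon C\to B/A$ by $x_j+K\mapsto b_j+A$; it is well defined because $e_k\mapsto a_k+A=0$. Lift $\gamma$ to $\alpha$ and put $b'_j=\alpha(x_j+K)$. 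Then $b_j-b'_j\in A$ and $\sum_j r_{kj}b'_j=0$, so $a'_j:=b_j-b'_j$ is a solution of the system in $A$.

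The Hom-formulation follows because $0\to\mrm{Hom}(C,A)\to\mrm{Hom}(C,B)\to\mrm{Hom}(C,B/A)$ is always exact (left exactness of $\mrm{Hom}(C,-)$), so exactness on the right is exactly \ref{fact:lpuresplit2}. For the last sentence, note first that $A\leqp^\lambda B$ and $A\leq C\leq B$ give $A\leqp^\lambda C$ immediately from the definition, since a solution in $C$ is a solution in $B$. Now apply \ref{fact:lpuresplit2} inside $C$ to the $\lambda$-presented module $C/A$ and $\gamma=\id_{C/A}$. This yields $q\colon C/A\to C$ with $\pi q=\id_{C/A}$, and Fact~\ref{fact:dsumsolveeq}\ref{fact:dsumsolveeq2.5} gives $A\leqo C$.

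The main thing requiring care is the cardinality bookkeeping. We need that ${<}\lambda$ finite equations involve ${<}\lambda$ unknowns even for singular $\lambda$; this holds since a union of fewer than $\lambda$ finite sets has size ${<}\lambda$ when $\lambda$ is infinite. We also need that the module built in the second direction meets Definition~\ref{def:lambdapresented}, i.e.\ both $F$ and $K$ are $({<}\lambda)$-generated. The remaining steps are routine verifications.
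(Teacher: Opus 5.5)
Your proof is correct. The paper gives no argument of its own here: the whole statement, including the final sentence, is imported from \cite[Proposition~7.16]{jensen}. Your direct dictionary between systems of ${<}\lambda$ equations with parameters in $A$ and presentations $F/K$ with ${<}\lambda$ relations is therefore a self-contained replacement for the citation. It also shows explicitly that the equivalence holds for every infinite $\lambda$, regular or singular, as Context~\ref{con:lambdainf} requires at this point. Your derivation of the final sentence is the natural one: restrict $\lambda$-purity to $C$, lift $\id_{C/A}$, and apply Fact~\ref{fact:dsumsolveeq}\ref{fact:dsumsolveeq2.5}. The paper uses the same interplay between splitting and lifting in the proof of Remark~\ref{rem:lpureab}.

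Two details deserve a line each. First, Definition~\ref{def:lambdapresented} only says that $F$ is free and $({<}\lambda)$-generated. So your basis $(x_j)_{j\in J}$ with $|J|<\lambda$ needs the remark that the supports of fewer than $\lambda$ generators must already cover any basis. Alternatively, note that $|J|<\lambda$ is not used in \ref{fact:lpuresplit1}$\Rightarrow$\ref{fact:lpuresplit2} at all. Only the ${<}\lambda$ relations $e_k$ enter the system, and any unknown not occurring in it can be given the value $0$. Second, in \ref{fact:lpuresplit2}$\Rightarrow$\ref{fact:lpuresplit1} you should say why an arbitrary system with parameters in $A$ may be assumed to have the form $\sum_j r_{kj}y_j=a_k$ with $a_k\in A$. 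The reason is that you can move all parameter terms to one side and use that $A$ is a submodule.
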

The following remark is folklore, and it had already been noticed by Fuchs in \cite{paperfuchsdsums} for the case of abelian groups. Indeed, the definition of $\lambda$-purity in Fuchs's book on abelian groups \cite[pg.~153]{fuchs} is equivalent to ours by the following:

\begin{remark}
\label{rem:lpureab}
Let $R$ be a ring and $|R| < \lambda$. The following are equivalent:
\begin{enumerate}[(1), leftmargin=*,series=remlpureab]
\item \label{rem:lpureab1}$A \leqp^\lambda B$;
\item \label{rem:lpureab2}for every $A\leq C\leq B$ with $C/A$ $({<}\lambda)$-generated we have $A \leqo C$.
\end{enumerate}
Moreover, if $R$ is a countable left Noetherian ring (e.g., $\mathbb{Z}$), then the previous equivalence also holds when $\lambda=\aleph_0$. In fact more is true when $R= \mathbb{Z}$, in such a case $A\leqp B$ if and only if for every $A\leq C\leq B$ with $|C/A| < \aleph_0$ we have $A \leqo C$.
\end{remark}
\begin{proof}
To show the equivalence between \ref{rem:lpureab1} and \ref{rem:lpureab2} we use the fact that the $\lambda$-presented $R$-modules are exactly the $({<}\lambda)$-generated ones because $|R|<\lambda$ by  Remark~\ref{rem:lambdapresented}\ref{rem:lambdapresented1}.  \ref{rem:lpureab1} implies \ref{rem:lpureab2} by Fact~\ref{fact:lpuresplit}, and the other direction follows from Fact~\ref{fact:lpuresplit}\ref{fact:lpuresplit2} and Fact~\ref{fact:dsumsolveeq}\ref{fact:dsumsolveeq2.5}. If $R$ is a countable left Noetherian ring, then we use Remark~\ref{rem:lambdapresented}\ref{rem:lambdapresented2} and argue as before. Finally, the last sentence, if $R=\mathbb{Z}$, is shown in \cite[2.H]{paperfuchsdsums}.
\end{proof}

We now introduce some model-theoretic terminology which will elucidate Definition~\ref{def:lpure}.
\begin{notation} 
\label{not:zeroae}
If $A$ is a left $R$-module and $(a_i)_{i\in I}\in A^I$, we say that $(a_i)_{i\in I}$ is  \tdef{zero almost everywhere} (\enquote{zero a.e.} for short) if the set $\{i\in I\mid a_i\neq 0\}$ is finite. In this case, it makes sense to consider $\sum_{i\in I} a_i$. We implicitly assume $(a_i)_{i\in I}$ is zero a.e. when we write $\sum_{i\in I}a_i$.
\end{notation}

\begin{definition}
\label{def:ppform}
\begin{enumerate}[(1), leftmargin=*]
\item \label{def:ppform1} An \tdef{equation} is a formula $\varphi(\bar x, \bar y)$ of the form:
\[
r_1 x_{j_1} + \ldots + r_m x_{j_m} = s_1 y_{k_1} + \ldots + s_n y_{k_n}.
\]
\item A \tdef{system of equations} is a conjunction of equations.
\item \label{def:ppform2} A \tdef{$\lambda$-positive primitive formula} (also called \tdef{$\lambda\mh\pp$-formula}) is a formula $\varphi(\bar x)\in \mathfrak{L}_{\lambda\lambda}$ of the form
\[
\exists \bar y \,\bigwedge_{k\in K} \varphi_k(\bar x, \bar y),
\]
with $|\bar x| +|\bar y|+ |K| <\lambda$ and each $\varphi_k(\bar x, \bar y)$ an equation. If $\lambda =\aleph_0$ we just say that $\varphi(\bar x)$ is a \tdef{pp-formula}.
\item \label{def:ppform3} A \tdef{basic $\lambda$-positive primitive formula} (also \tdef{basic $\lambda\mh\pp$-formula}) is a formula $\varphi(\bar x)\in \mathfrak{L}_{\lambda\lambda}$ of the form
\[
\exists (y_j)_{j\in J} \,\bigwedge_{k\in K} \sum_{j\in J} r_{jk} y_j = x_k,
\]
with $|J| + |K|<\lambda$ and $(r_{jk})_{j\in J}$ zero a.e. (cf.~\ref{not:zeroae}) for every $k\in K$. If $\lambda =\aleph_0$ we just say that $\varphi(\bar x)$ is a \tdef{basic pp-formula}.
\end{enumerate}
\end{definition}

\begin{remark}
A $\lambda\mh\pp$-formula can be equivalently written in the following way:
\[
\exists y(H \bar y = K\bar x),
\]
where $H$ and $K$ are infinite matrices with the same number of $<\lambda$ rows and which are zero a.e. on every row, and $\bar x$ and $\bar y$ are vectors of variables of length $<\lambda$ such that the multiplications $H\bar y$ and $K \bar x$ can be defined in the obvious way. Clearly, a basic $\lambda\mh\pp$-formula is written as above but with $K$ being the identity matrix.
\end{remark}

\begin{notation}
\label{not:basicppform}
When we have a basic $\lambda\mh\pp$-formula $\varphi(\bar x)$ of the form 
\[
\exists (y_j)_{j\in J} \,\bigwedge_{k\in K} \sum_{j\in J} r_{jk} y_j = x_k,
\]
we use the notation $\bar x = (x_k \mid k\in K)$ and $\bar y = (y_j\mid j\in J)$. Similarly for $\lambda\mh\pp$-formulas.
\end{notation}

We are now able to give a useful  equivalent definition of $\lambda$-purity.

\begin{proposition}
\label{prop:equivlpurelppform}
Let $A\leq B$ be $R$-modules. Then the following are equivalent:
\begin{enumerate}[(1), leftmargin=*]
\item \label{prop:equivlpurelppform1} $A\leqp^\lambda B$.
\item \label{prop:equivlpurelppform2}For every $\lambda\mh\pp$-formula  $\varphi(\bar x)$ and $\bar a\in A^{|\bar x|}$, we have that $A \vDash \varphi(\bar a)$ if and only if $B\vDash \varphi(\bar a)$.
\item \label{prop:equivlpurelppform3}For every basic $\lambda\mh\pp$-formula  $\varphi(\bar x)$ and $\bar a\in A^{|\bar x|}$, we have that $A \vDash \varphi(\bar a)$ if and only if $B\vDash \varphi(\bar a)$. 
\end{enumerate}
\end{proposition}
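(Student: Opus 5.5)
We prove the cycle \ref{prop:equivlpurelppform1} $\Rightarrow$ \ref{prop:equivlpurelppform2} $\Rightarrow$ \ref{prop:equivlpurelppform3} $\Rightarrow$ \ref{prop:equivlpurelppform1}. Throughout we note that one direction in \ref{prop:equivlpurelppform2} and \ref{prop:equivlpurelppform3} is free: $\lambda\mh\pp$-formulas are existential-positive, so $A\vDash\varphi(\bar a)$ always implies $B\vDash\varphi(\bar a)$ since $A\leq B$. Hence in each item only the implication ``$B\vDash\varphi(\bar a)\Rightarrow A\vDash\varphi(\bar a)$'' has content.

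For \ref{prop:equivlpurelppform1} $\Rightarrow$ \ref{prop:equivlpurelppform2}, take a $\lambda\mh\pp$-formula $\exists\bar y\bigwedge_{k\in K}\varphi_k(\bar x,\bar y)$ with $|\bar x|+|\bar y|+|K|<\lambda$, and $\bar a\in A^{|\bar x|}$ with $B\vDash\varphi(\bar a)$. Substituting $\bar a$ for $\bar x$ gives the system $\bigwedge_{k\in K}\varphi_k(\bar a,\bar y)$: fewer than $\lambda$ linear equations in the unknowns $\bar y$ with parameters in $A$, each equation involving finitely many unknowns. It is solvable in $B$, so by Definition~\ref{def:lpure} it is solvable in $A$, i.e.\ $A\vDash\varphi(\bar a)$. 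The implication \ref{prop:equivlpurelppform2} $\Rightarrow$ \ref{prop:equivlpurelppform3} is immediate since basic $\lambda\mh\pp$-formulas are $\lambda\mh\pp$-formulas.

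For \ref{prop:equivlpurelppform3} $\Rightarrow$ \ref{prop:equivlpurelppform1}, the main step is to rewrite an arbitrary system as a basic one. Let $\Sigma$ be a system of $<\lambda$ equations with parameters in $A$, solvable in $B$; write its $k$-th equation ($k\in K$, $|K|<\lambda$) as $\sum_{j\in J} r_{jk}y_j = a_k$, where $J$ indexes the unknowns appearing in $\Sigma$ and all parameter terms are collected on the right into a single element $a_k\in A$ (using that $A$ is a submodule, so $\sum s_i c_i\in A$ for parameters $c_i\in A$). Here $|J|<\lambda$ as a union of $<\lambda$ finite sets when $\lambda$ is regular; in general $|J|\leq|K|\cdot\aleph_0<\lambda$ since $\lambda$ is infinite. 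Each row $(r_{jk})_{j\in J}$ is zero a.e. Then $\varphi(\bar x)=\exists(y_j)_{j\in J}\bigwedge_{k\in K}\sum_j r_{jk}y_j=x_k$ is a basic $\lambda\mh\pp$-formula, and $\Sigma$ is solvable in a module $D\supseteq A$ iff $D\vDash\varphi((a_k)_{k\in K})$. Applying \ref{prop:equivlpurelppform3} with $\bar a=(a_k)_{k\in K}$ yields a solution in $A$.

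The only point needing care is this normalisation: ensuring the cardinality bounds $|J|+|K|<\lambda$ hold and that moving parameters to the right-hand side keeps them in $A$. Both are routine bookkeeping, so no real obstacle is expected.
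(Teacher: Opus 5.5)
Your proof is correct and is essentially the paper's: the paper likewise treats (1)$\Leftrightarrow$(2) as a restatement, and does the only real work by collecting the parameter side $\sum_i s_{ik}a_i$ of each equation into a single element of $A$ so as to reduce to a basic $\lambda$-pp formula. The paper phrases this reduction as (3)$\Rightarrow$(2) rather than your (3)$\Rightarrow$(1); also note a small slip in your parenthetical: the bound $|K|\cdot\aleph_0<\lambda$ fails for $\lambda=\aleph_0$, but then $J$ is a finite union of finite sets, so the conclusion still holds (and regularity of $\lambda$ is never needed).
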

\begin{proof}
Clearly~\ref{prop:equivlpurelppform2} is just a restatement of~\ref{prop:equivlpurelppform1}, and obviously~\ref{prop:equivlpurelppform2} implies~\ref{prop:equivlpurelppform3}. We show that~\ref{prop:equivlpurelppform3} implies~\ref{prop:equivlpurelppform2}. Assume $\varphi(\bar x)$ is a $\lambda\mh\pp$-formula and $\bar a\in A^{|\bar x|}$. We are left to show that $B\vDash \varphi(\bar a)$ implies $A\vDash \varphi(\bar a)$.  By definition of $\lambda\mh\pp$-formula $\varphi(\bar x)$ is of the form
\[
\exists (y_j)_{j\in J}\bigwedge_{k\in K} \sum_{j\in J} r_{jk}y_j = \sum_{i\in I} s_{ik}x_i.
\]
Let $\psi(\bar z)$ be the formula
\[
\exists (y_j)_{j\in J}\bigwedge_{k\in K} \sum_{j\in J} r_{jk}y_j = z_k.
\]
Setting $b_k = \sum_{i\in I} s_{ik}a_i$, with $k\in K$, we have that $B\vDash \psi(\bar b)$. Since $\psi(\bar z)$ is a basic $\lambda\mh\pp$-formula, we have by hypothesis $A\vDash \psi(\bar b)$, so that $A\vDash \varphi(\bar a)$.
\end{proof}

We will see that Proposition~\ref{prop:equivlpurelppform}\ref{prop:equivlpurelppform3} provides the easiest and most direct way to check $\lambda$-purity of a submodule. In what follows, we are going to use the equivalences of Proposition~\ref{prop:equivlpurelppform} without further mention. We now state some facts that follow easily from the definition of $\lambda\mh\pp$-formulas.

\begin{remark}
\label{rem:lpuretrivia}
\begin{enumerate}[(1), leftmargin=*]
\item\label{rem:lpuretrivia1} The set of $\lambda\mh\pp$-formulas has cardinality $|R|^{<\lambda}$.
\item\label{rem:lpuretrivia2} If $\varphi(\bar x)$ is a $\lambda\mh\pp$-formula and $A$ is an $R$-module, then the set $\varphi(A)=\{\bar a\in A^{|\bar x|} \mid A \vDash \varphi(\bar a)\}$ is a subgroup of $A^{|\bar x|}$.
\item \label{rem:lpuretrivia3}If $A\leq B$, $\varphi(\bar x)$ is a $\lambda\mh\pp$-formula, and $\bar a\in A^{|\bar x|}$, then $A\vDash \varphi(\bar a)$ implies $B\vDash \varphi(\bar a)$. In particular, if $A\leq B \leq C$ and $A\leqp^\lambda C$, then $A\leqp^\lambda B$.
\item If $(\varphi_i(\bar x): i\in I)$ with $|I|<\cf(\lambda)$ is a collection of $\lambda\mh\pp$-formulas, then it is easily seen that $\bigwedge_{i\in I} \varphi_i(\bar x)$ is equivalent to a $\lambda\mh\pp$-formula.
\end{enumerate}
\end{remark}

\begin{proposition}
\label{prop:lpurelaec}
Let $\lambda$ be a regular cardinal. Then $\kaecgoth_\lambda^{\rmod} =  (\rmod, \leqp^\lambda)$ is a $\lambda\mh\mrm{AEC}$ with ${\mrm{LS}_\lambda(\kaecgoth_\lambda^{\rmod}) \leq |R|^{<\lambda}}$.
\end{proposition}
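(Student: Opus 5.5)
The plan is to verify the axioms of Definition~\ref{def:muaec} one by one, using Proposition~\ref{prop:equivlpurelppform} throughout so that $\lambda$-purity is read as preservation of (basic) $\lambda\mh\pp$-formulas. First, $(\rmod, \leqp^\lambda)$ is an abstract class: $\leqp^\lambda$ is reflexive, implies being a submodule, and is invariant under isomorphisms, since $\lambda\mh\pp$-formulas are. Transitivity is immediate from Proposition~\ref{prop:equivlpurelppform}\ref{prop:equivlpurelppform2}. Coherence follows from Remark~\ref{rem:lpuretrivia}\ref{rem:lpuretrivia3}; in fact we get the strong coherence stated in the introduction.

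Next come the Tarski--Vaught axioms for a $\lambda$-directed system $(M_i\mid i\in I)$ with union $M$. The union of a directed system of modules is a module, so $M\in\rmod$. For $\lambda$-continuity, take a basic $\lambda\mh\pp$-formula $\varphi(\bar x)$ with $|\bar x|<\lambda$ and $\bar a\in M_i^{|\bar x|}$ such that $M\vDash\varphi(\bar a)$. A witness $\bar y$ has length $<\lambda$, so the coordinates of $\bar a$ and of the witness lie in fewer than $\lambda$ of the $M_j$'s. By $\lambda$-directedness they all lie in a single $M_k$ with $k\geq i$. Since satisfaction of the equations is checked inside $M_k$, we get $M_k\vDash\varphi(\bar a)$, and $M_i\leqp^\lambda M_k$ gives $M_i\vDash\varphi(\bar a)$. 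Hence $M_i\leqp^\lambda M$.

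For $\lambda$-smoothness, suppose $M_i\leqp^\lambda N$ for all $i$ and $N\vDash\varphi(\bar a)$ with $\bar a\in M^{<\lambda}$. Again $\lambda$-directedness puts $\bar a$ inside some $M_k$. Then $M_k\vDash\varphi(\bar a)$, and since $\lambda\mh\pp$-formulas go up (Remark~\ref{rem:lpuretrivia}\ref{rem:lpuretrivia3}), $M\vDash\varphi(\bar a)$. Regularity of $\lambda$ is what makes "$<\lambda$ many indices have an upper bound" match the arity bound $<\lambda$.

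The main work is the L\"owenheim--Skolem axiom with $\mrm{LS}_\lambda\leq|R|^{<\lambda}$. Here $|R|^{<\lambda}\geq|\tau|+\lambda$ and $(|R|^{<\lambda})^{<\lambda}=|R|^{<\lambda}$ by regularity of $\lambda$, so this value is admissible. Given $A\subseteq N$, set $\theta=|A|^{<\lambda}+|R|^{<\lambda}$ and build an increasing chain $A_0=A\subseteq A_1\subseteq\cdots$ of length $\lambda$ (indexed by $\lambda$ itself). At each stage:
\begin{itemize}
\item for every basic $\lambda\mh\pp$-formula $\varphi(\bar x)$ and every $\bar a\in A_n^{<\lambda}$ with $N\vDash\varphi(\bar a)$, add a witness from $N$;
\item close under the module operations;
\item at limit stages, take unions.
\end{itemize}
There are $\leq|R|^{<\lambda}$ formulas by Remark~\ref{rem:lpuretrivia}\ref{rem:lpuretrivia1} and $\leq\theta^{<\lambda}=\theta$ parameter tuples, so each $A_\alpha$ has size $\leq\theta$ by induction; closing under $<\lambda$-ary witnesses keeps $\theta^{<\lambda}=\theta$ since $\theta$ is of the form $\kappa^{<\lambda}$. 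Let $M_0=\bigcup_{\alpha<\lambda}A_\alpha$. It is a submodule of $N$ of size $\leq\theta$. Because $\lambda$ is regular, any tuple of length $<\lambda$ from $M_0$ lies in some $A_\alpha$, so its witness appears in $A_{\alpha+1}$. Hence $M_0\leqp^\lambda N$ by Proposition~\ref{prop:equivlpurelppform}\ref{prop:equivlpurelppform3}.

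The step I expect to need the most care is exactly this cardinal bookkeeping: checking that $\theta^{<\lambda}=\theta$ and that the chain of length $\lambda$ catches all $<\lambda$-tuples. Both reduce to the regularity of $\lambda$ together with $(\kappa^{<\lambda})^{<\lambda}=\kappa^{<\lambda}$.
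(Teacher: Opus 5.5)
Your proof is correct. It follows the paper for the abstract-class axioms, for coherence (via Remark~\ref{rem:lpuretrivia}\ref{rem:lpuretrivia3}) and for $\lambda$-continuity, and your explicit $\lambda$-smoothness argument is the ``similar'' proof the paper leaves to the reader. The real difference is the L\"owenheim--Skolem axiom: the paper does not prove it at all but cites an external lemma, whereas you give a self-contained Skolem-hull construction. You build a chain of length $\lambda$ and add witnesses for all basic $\lambda\mh\pp$-formulas at successor steps; this is the same chain-of-length-$\lambda$ device the paper itself uses later in Lemma~\ref{lem:lpureloccharfund}.

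Your cardinal bookkeeping is sound. Here $\theta=(|A|+|R|)^{<\lambda}$, and $(\kappa^{<\lambda})^{<\lambda}=\kappa^{<\lambda}$ for regular $\lambda$ holds for one of two reasons:
\begin{itemize}
\item either $\nu\mapsto\kappa^\nu$ is eventually constant below $\lambda$;
\item or $\kappa^{<\lambda}$ has cofinality $\lambda$, so every map from some $\nu<\lambda$ into it is bounded.
\end{itemize}
The same fact, together with $|R|^{<\lambda}\geq|\tau|+\lambda$, shows that $|R|^{<\lambda}$ is an admissible value in the LS axiom. Your route buys self-containedness and makes visible exactly where the bound $|R|^{<\lambda}$ comes from; the citation buys brevity.

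One small correction: regularity of $\lambda$ is not what makes the Tarski--Vaught step work. There, $\lambda$-directedness supplies an upper bound for any set of fewer than $\lambda$ indices by definition. Regularity is genuinely needed only in the LS step, both so that a chain of length $\lambda$ captures every $({<}\lambda)$-tuple and so that $\theta^{<\lambda}=\theta$.
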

\begin{proof}
We split the proof by verifying separately all of the axioms of a $\lambda\mh\mrm{AEC}$.
\begin{enumerate}[$(*_1)$, leftmargin=*, series=prf:lpurelaec]
\item \label{prf:lpurelaec1} $\kaecgoth_\lambda^\rmod$ has coherence (cf.~\ref{def:propertiesac}\ref{def:coherence}).
\end{enumerate}
Why \ref{prf:lpurelaec1}? Follows from Remark~\ref{rem:lpuretrivia}\ref{rem:lpuretrivia3}.
\begin{enumerate}[resume*=prf:lpurelaec]
\item \label{prf:lpurelaec2} $\kaecgoth_\lambda^\rmod$ satisfies the Tarski-Vaught axioms (cf.~\ref{def:muaec}\ref{def:muaec_tvaxioms}).
\end{enumerate}
Why \ref{prf:lpurelaec2}? We only show $\lambda$-continuity, the proof of $\lambda$-smoothness being similar.  Assume $(A_i)_{i\in I}$ is a $\kaecgoth^\rmod_\lambda$-directed system and let $A= \bigcup_{i\in I}A_i$. Fix $i\in I$. We show $A_i\leqp^\lambda A$. Let $\varphi(\bar x)$ be a basic $\lambda\mh\pp$-formula, $\bar a\in A_i^{|\bar x|}$, and $A\vDash \varphi(\bar a)$. By definition, $\varphi(\bar x)$ is of the form $\exists (y_j)_{j\in J} \bigwedge_{k\in K}\sum_{j\in J}r_{jk}y_j = x_k$ with $|J| + |K| < \lambda$. Let $\bar b\in A^{|\bar y|}$ with $A\vDash \bigwedge_{k\in K}\sum_{j\in J}r_{jk}b_j = a_k$. Because $|\bar b|<\lambda$ and $(A_i)_{i\in I}$ is $\lambda$-directed, there is $i\leq \ell\in I$ such that $\bar b\in A_\ell^{|\bar y|}$. Thus $A_\ell \vDash \varphi(\bar a)$, and by $A_i \leqp^\lambda A_\ell$ we get $A_i \vDash \varphi(\bar a)$. 
\begin{enumerate}[resume*=prf:lpurelaec]
\item \label{prf:lpurelaec3}$\kaecgoth^\rmod_\lambda$ satisfies the $\mrm{LS}$-axiom (cf.~\ref{def:muaec}\ref{def:muaec_lstaxiom}).
\end{enumerate}
Why \ref{prf:lpurelaec3}? This is shown in \cite[Lemma 4.1]{coto}.
\end{proof}
\subsection{Wide pushouts in \texorpdfstring{$\rmod$}{R-Mod}}

It is well-known that pushouts are closed under pure embeddings, as shown for example in \cite[Proposition 2.1.13]{purityspectra} using the fact that a pure embedding is a direct system of split embeddings. The proof in \cite{purityspectra}, using that a $\lambda$-pure embedding is a $\lambda$-directed limit of split embeddings \cite[Proposition~2.30]{adamekbook}, easily generalizes to show that  pushouts are closed under $\lambda$-pure embeddings. In this subsection we introduce wide pushouts in the category $\rmod$, and we show that wide pushouts are also closed under $\lambda$-pure embeddings. In turn, this will show that $(\rmod, \leqp^\lambda)$ has $\infty\mh\mrm{AP}$ (cf.~\ref{def:amalgac}\ref{def:amalgac3}). 
Wide pushouts are the dual notion of the wide pullbacks originally introduced in \cite[Definition~1.8]{widepullbacks}, and the ordinary (binary) pushout is a special case of a wide pushout.

\begin{definition}
\label{def:widepushout}
Let $(f_i: A \to B_i\mid i\in I)$ be a collection of $R$-module homomorphisms, we say that a collection $(g_i: B_i \to P\mid i\in I)$ of $R$-module homomorphisms is a \tdef{wide pushout} for $(f_i: A\to B_i\mid i\in I)$ if:
\begin{enumerate}[(1), leftmargin=*]
\item $g_if_i = g_jf_j$ for every $i,j\in I$; 
\item for every other collection $(h_i:B_i \to Q\mid i\in I)$ of $R$-module homomorphisms with $h_if_i = h_jf_j$, for $i,j\in I$, there is a unique $r: P \to Q$ such that $h_i = rg_i$ for every $i\in I$.
\end{enumerate}
\[\begin{tikzcd}
	&& Q \\
	{B_i} & P \\
	A & {B_j}
	\arrow["{h_i}", curve={height=-12pt}, from=2-1, to=1-3]
	\arrow["{g_i}", from=2-1, to=2-2]
	\arrow["r", from=2-2, to=1-3]
	\arrow["{f_i}", from=3-1, to=2-1]
	\arrow["{f_j}"', from=3-1, to=3-2]
	\arrow["{h_j}"', curve={height=12pt}, from=3-2, to=1-3]
	\arrow["{g_j}"', from=3-2, to=2-2]
\end{tikzcd}\]
\end{definition}

\begin{remark}
If $I = \{1, 2\}$, then the wide pushout is simply the pushout.
\end{remark}
Clearly a wide pushout can be viewed as a colimit in the category $\rmod$ with arrows the $R$-module homomorphisms. By cocompleteness of this category \cite[Chapter~XVI, Proposition~10.3]{grillet} we know that the wide pushout always exists and is unique up to isomorphism. We now show how to concretely build wide pushouts.

\begin{proposition}
\label{prop:widepushoutexist}
Let $(f_i \colon A \to B_i \mid i \in I)$ be a family of $R$-module homomorphisms.
Set $B = \bigoplus_{i \in I} B_i$ and define the $R$-submodule
\[
N = \Bigl\{\, \sum_{i \in I} \rho_i f_i(a^i) \;\Big|\;
(a^i)_{i \in I} \in A^I\text{ zero a.e. and } \sum_{i \in I} a^i = 0 \,\Bigr\}
\leq \bigoplus_{i\in I}B_i,
\]
\noindent
where $\rho_i: B_i \to B$ is the canonical inclusion of $B_i$ into $B$. Let $P = B/N$, let
$\pi \colon B \to P$ be the canonical projection, and set
$g_i = \pi\rho_i \colon B_i \to P$. Then $(g_i \colon B_i \to P \mid i \in I)$
is the wide pushout of $(f_i \colon A \to B_i \mid i \in I)$.
\end{proposition}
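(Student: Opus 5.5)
The plan is to verify the universal property of Definition~\ref{def:widepushout} directly, exactly as one does for the usual construction of binary pushouts as a quotient of a direct sum. First, $N$ is indeed a submodule of $B$: it is the image under the homomorphism $\bigoplus_{i\in I} A \to B$, $(a^i)_{i\in I}\mapsto \sum_{i\in I}\rho_i f_i(a^i)$, of the kernel of the summation map $\bigoplus_{i\in I}A\to A$. Hence $P=B/N$ is an $R$-module and each $g_i=\pi\rho_i$ is a homomorphism.

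For commutativity, fix $i\neq j$ in $I$ and $a\in A$. I take the family $(a^k)_{k\in I}$ with $a^i=a$, $a^j=-a$ and $a^k=0$ otherwise. It is zero a.e. and sums to $0$, so $\rho_if_i(a)-\rho_jf_j(a)\in N$. Therefore $g_if_i(a)=g_jf_j(a)$.

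For the universal property, let $(h_i\colon B_i\to Q\mid i\in I)$ satisfy $h_if_i=h_jf_j$ for all $i,j$. By the universal property of the coproduct $\bigoplus_{i\in I}B_i$ in $\rmod$, there is a unique $h\colon B\to Q$ with $h\rho_i=h_i$. I will check that $h$ vanishes on $N$. Choose any $i_0\in I$ (if $I=\emptyset$ the statement is degenerate, and the same argument gives $P=0$ as the initial object). Then
\[
h\Bigl(\sum_{i\in I}\rho_if_i(a^i)\Bigr)=\sum_{i\in I}h_if_i(a^i)=\sum_{i\in I}h_{i_0}f_{i_0}(a^i)=h_{i_0}f_{i_0}\Bigl(\sum_{i\in I}a^i\Bigr)=0 .
\]
Hence $h$ factors uniquely through $\pi$ as $h=r\pi$, which gives $rg_i=r\pi\rho_i=h_i$.

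For uniqueness of $r$: if $r'g_i=h_i$ for all $i$, then $r'\pi$ and $r\pi$ agree on every $\rho_i(B_i)$. Since these images generate $B$, we get $r'\pi=r\pi$, and since $\pi$ is surjective, $r'=r$.

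There is no real obstacle here. The only point needing care is the step where $h$ kills $N$, which uses the compatibility $h_if_i=h_{i_0}f_{i_0}$ together with $\sum_{i\in I} a^i=0$.
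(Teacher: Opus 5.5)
Your proof is correct and follows the same route as the paper's: the universal property of $\bigoplus_{i\in I}B_i$, then factoring through the quotient by $N$, with uniqueness coming from the surjectivity of $\pi$. The one difference is that you show $h$ kills $N$ directly, by rewriting every $h_if_i$ as $h_{i_0}f_{i_0}$. The paper instead first proves, by induction on the support, that $N$ is generated by the differences $\rho_if_i(a)-\rho_jf_j(a)$, so your version is slightly shorter.
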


\begin{proof} Let $(f_i)_{i\in I}$ and $(g_i)_{i\in I}$ be as in the statement.
\begin{enumerate}[$(*_1)$, leftmargin=*, series=prf:propwidepuout]
\item \label{prf:propwidepuout1} $g_if_i = g_jf_j$ for every $i,j\in I$.
\end{enumerate}
\nin Why \ref{prf:propwidepuout1}? Take $i,j\in I$ and $a\in A$, then 
\[
g_i f_i(a) -g_jf_j(a) = \pi(\rho_if_i(a) + \rho_jf_j(-a)) = 0
\]
by definition of $N$.
\nin\begin{enumerate}[resume*=prf:propwidepuout]
\item \label{prf:propwidepuout2}  $N$ is generated by the set $X=\{\rho_if_i(a) - \rho_jf_j(a)\mid i,j\in I,\, a\in A\}$.
\end{enumerate}
\nin Why \ref{prf:propwidepuout2}? Assume $(a^i)_{i\in I}\in A^I$ is zero a.e. and $\sum_{i\in I} a^i =0$. Call $\{i_1, \ldots, i_n\} = \{i\in I\mid a^i\neq 0\}$. We show by induction on $n$ that $b = \sum_{j=1}^n \rho_{i_j}f_{i_j}(a^{i_j}) \in \langle X \rangle$. If $n=1$, then $b= 0$. Assume $n=m+1$ and the result holds for $m$. By definition of $X$ we have
\begin{equation}
\label{eq:wpoutexist1}
\rho_{i_m}f_{i_m}(-a^{i_{m+1}}) + \rho_{i_{m+1}}f_{i_{m+1}}(a^{i_{m+1}}) \in X,
\end{equation}
and by the induction hypothesis
\begin{equation}
\label{eq:wpoutexist2}
\rho_{i_1}f_{i_1}(a^{i_1}) + \ldots +\rho_{i_{m-1}}f_{i_{m-1}}(a^{i_{m-1}}) + \rho_{i_m}f_{i_m}(a^{i_m} + a^{i_{m+1}})\in \langle X \rangle.
\end{equation}
Summing \eqref{eq:wpoutexist1} and \eqref{eq:wpoutexist2} we get $b$, and thus $b\in \langle X\rangle$.
\begin{enumerate}[resume*=prf:propwidepuout]
\item \label{prf:propwidepuout3} If $(h_i: B_i \to C\mid i\in I)$ is such that $h_if_i = h_jf_j$ for every $i,j\in I$, then there is a unique $R$-module homomorphism $r: P \to C$ which satisfies $rg_i = h_i$ for every $i\in I$. 
\end{enumerate}
Why \ref{prf:propwidepuout3}? By the universal property of direct sums there is a unique $s: \bigoplus_{i\in I}B_i \to C$ such that $s\rho_i = h_i$ for every $i\in I$. By~\ref{prf:propwidepuout2} we have $N \leq \mrm{ker}(s)$. By the universal property of quotients there is a unique $r: P \to C$ with $r\pi = s$, and thus $rg_i = r\pi \rho_i =s\rho_i= h_i$. Finally, uniqueness of $r: P \to C$ is by construction.
\end{proof}

\begin{proposition}
\label{prop:widepushoutembedding}
Assume $(g_i: B_i \to P\mid i\in I)$ is the wide pushout of $(f_i: A \to B_i\mid i\in I)$, $\ell \in I$, and $f_i$ is an embedding for every $i\in I\setminus \{\ell\}$. Then $g_{\ell}$ is an embedding.
\end{proposition}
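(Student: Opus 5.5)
By uniqueness of wide pushouts up to isomorphism, it suffices to work with the concrete model of Proposition~\ref{prop:widepushoutexist}. There $P = B/N$ with $B=\bigoplus_{i\in I}B_i$, and $g_\ell = \pi\rho_\ell$. Since $\rho_\ell$ is injective, the task is to show that $\rho_\ell(B_\ell)\cap N = 0$. Concretely, let $b\in B_\ell$ and suppose $\rho_\ell(b)\in N$; we must show $b=0$.

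First write out what membership in $N$ means. Pick $(a^i)_{i\in I}\in A^I$, zero a.e., with $\sum_i a^i = 0$ and
\[
\rho_\ell(b)=\sum_{i\in I}\rho_i f_i(a^i).
\]
Then compare coordinates in the direct sum. For each $i\neq\ell$, the $i$-th coordinate gives $f_i(a^i)=0$. By hypothesis $f_i$ is an embedding for these $i$, so $a^i=0$ for all $i\neq \ell$. Together with $\sum_i a^i=0$, this forces $a^\ell=0$ as well. Finally, the $\ell$-th coordinate reads $b = f_\ell(a^\ell) = f_\ell(0)=0$, as desired.

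This uses only the explicit description of $N$, and no assumption on $f_\ell$ is needed. The one point requiring care is the degenerate case $I=\{\ell\}$. There the condition $\sum_i a^i=0$ alone gives $a^\ell=0$, so $N=0$ and $g_\ell=\pi\rho_\ell$ is an isomorphism; the argument above already covers this. I expect no real obstacle here; the step to state carefully is the passage from the abstract wide pushout to the concrete one, via the isomorphism over the $B_i$ given by the universal property.
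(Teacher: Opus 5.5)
Your proof is correct and is essentially the paper's own argument. Both pass to the concrete model $P=(\bigoplus_{i\in I}B_i)/N$, write $\rho_\ell(b)=\sum_{i\in I}\rho_i f_i(a^i)$ with $\sum_{i} a^i=0$, and compare coordinates to get $a^i=0$ for $i\neq\ell$, hence $a^\ell=0$ and $b=0$. Your remarks on transferring injectivity along the canonical isomorphism and on the case $I=\{\ell\}$ are accurate but add nothing essential.
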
 
\begin{proof}
Without loss of generality we can assume that $(g_i: B_i \to P\mid i\in I)$ is as in Proposition~\ref{prop:widepushoutexist}, so that $g_i = \pi\rho_i$, where $\pi: \bigoplus_{i\in I}B_i\to (\bigoplus_{i\in I}B_i)/N$ is the natural projection. Fix $b^{\ell} \in B_{\ell}$ with $g_{\ell}(b^{\ell}) = 0$, so that $\rho_{\ell}(b^{\ell})\in N$. Therefore, by the definition of $N$, there is $(a^i)_{i\in I}\in A^I$ zero a.e. such that $\sum_{i\in I} a^i = 0$ and
\begin{equation}
\label{eq:embwpsout1}
\rho_{\ell}(b^{\ell}) = \sum_{i\in I} \rho_if_i(a^i).
\end{equation}
Because $f_i$ is an embedding for every $i\neq \ell$, then \eqref{eq:embwpsout1} ensures $a^i = 0$ for every $i\neq \ell$. Therefore $a^{\ell} = -\sum_{i\neq  \ell}a^i= 0$, and \eqref{eq:embwpsout1} finally gives $b^{\ell} = 0$. 
\end{proof}

\begin{corollary}
\label{cor:wpushoutsclosedemb}
Wide pushouts are closed under embeddings, i.e., for any collection of  embeddings $(f_i: A\to B_i\mid i\in I)$ with wide pushout $(g_i: B_i \to P\mid i\in I)$, we have that $g_i:B_i\to P$ is an embedding for every $i\in I$.
\end{corollary}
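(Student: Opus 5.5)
This follows at once from Proposition~\ref{prop:widepushoutembedding}; the plan is simply to apply it once for each index. Fix a family of embeddings $(f_i: A\to B_i\mid i\in I)$ with wide pushout $(g_i: B_i\to P\mid i\in I)$, and fix an arbitrary $\ell\in I$.

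By hypothesis, every $f_i$ is an embedding, so in particular $f_i$ is an embedding for each $i\in I\setminus\{\ell\}$. These are exactly the hypotheses of Proposition~\ref{prop:widepushoutembedding} for the chosen index $\ell$. That proposition then yields that $g_\ell$ is an embedding. Since $\ell\in I$ was arbitrary, every $g_i$ is an embedding, which is the claim.

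The wide pushout is only determined up to isomorphism, so I will also check that nothing depends on this choice. If $(g'_i: B_i\to P'\mid i\in I)$ is another wide pushout, then $g'_i=\sigma g_i$ for a unique isomorphism $\sigma: P\to P'$. Composing an embedding with an isomorphism gives an embedding, so the conclusion holds for every representative. In any case, Proposition~\ref{prop:widepushoutembedding} is stated for an arbitrary wide pushout, so this check is not even needed.

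There is no genuine obstacle: all the work, namely the description of $N$ in Proposition~\ref{prop:widepushoutexist} and the kernel computation, was already done in Proposition~\ref{prop:widepushoutembedding}. The only point needing care is that this proposition requires embeddings only away from $\ell$. That is weaker than what we assume, so the hypothesis is satisfied for every choice of $\ell$.
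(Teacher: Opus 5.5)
Your proposal is correct and is the same as the paper's proof. The paper also derives the corollary directly from Proposition~\ref{prop:widepushoutembedding}, applied at each index $\ell\in I$.
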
 
\begin{proof}
Follows from Proposition~\ref{prop:widepushoutembedding}.
\end{proof}

We now show that wide pushouts are closed under $\lambda$-pure embeddings, which will give us that $(\rmod, \leqp^\lambda)$ has $\infty\mh\mrm{AP}$. To show this, we will prove the following technical proposition. Although the second item of the following is not directly related to what we aim to prove now, it will be needed in the next subsection (in Lemma~\ref{lem:hyppushouts}).
\begin{proposition}
\label{prop:wpushoutclosedlambda}
Assume $(g_i \colon B_i \to P \mid i \in I)$ is the wide pushout of the $R$-module
homomorphisms $(f_i \colon A \to B_i \mid i \in I)$, $\ell \in I$, and $f_i$ is a
$\lambda$-pure embedding for every $i \in I \setminus \{\ell\}$. Then we have the following:

\begin{enumerate}[(1), leftmargin=*]
\item \label{prop:wpushoutclosedlambda1} $g_\ell$ is a $\lambda$-pure embedding.
\item \label{prop:wpushoutclosedlambda2} Assume $(g_i \colon B_i \to P \mid i \in I)$
is the wide pushout of $(f_i)_{i \in I}$ as in Proposition~\ref{prop:widepushoutexist},
so that $P = \bigl(\bigoplus_{i \in I} B_i\bigr)/N$ with
\[
N = \Bigl\{\, \sum_{i \in I} \rho_i f_i(a^i) \;\Big|\;
(a^i)_{i \in I} \in A^I\text{ zero a.e. and } \sum_{i \in I} a^i = 0 \,\Bigr\}
\leq \bigoplus_{i\in I}B_i.
\]
Then $N \leqp^\lambda \bigoplus_{i \in I} B_i$.
\end{enumerate}
\end{proposition}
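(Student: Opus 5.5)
We work throughout with the concrete model of Proposition~\ref{prop:widepushoutexist}: $B=\bigoplus_{i\in I}B_i$, $P=B/N$, $g_i=\pi\rho_i$. Since $g_\ell$ is already an embedding by Proposition~\ref{prop:widepushoutembedding}, the content of item~\ref{prop:wpushoutclosedlambda1} is $\lambda$-purity. We check both items with basic $\lambda\mh\pp$-formulas (Proposition~\ref{prop:equivlpurelppform}\ref{prop:equivlpurelppform3}). The key device is a retraction trick. A system of ${<}\lambda$ equations involves only ${<}\lambda$ elements, and each element of $B$ or $N$ has finite support. So the whole situation lives on ${<}\lambda$ coordinates, and there it can be pushed back into $A$ using $\lambda$-purity of $f_i(A)$ in $B_i$ for $i\neq\ell$.

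\textbf{Item (2).} Let $\varphi(\bar x)=\exists(y_j)_{j\in J}\bigwedge_{k\in K}\sum_j r_{jk}y_j=x_k$ be basic with $|J|+|K|<\lambda$. Take $\bar n\in N^K$ and $\bar b\in B^J$ with $\sum_j r_{jk}b_j=n_k$. Each $n_k=\sum_i\rho_if_i(a^i_k)$ with $\sum_i a^i_k=0$. Let $I_0\subseteq I$ be the union of the supports of all $b_j$ and $n_k$ and of all the $(a^i_k)_i$; then $|I_0|<\lambda$ when $\lambda$ is infinite (a ${<}\lambda$ union of finite sets).

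Project onto coordinate $i\in I_0$. This gives, for each $i$, a system of ${<}\lambda$ equations $\sum_j r_{jk}b^i_j=f_i(a^i_k)$ with parameters in $f_i(A)$, solved in $B_i$. For $i\neq\ell$, $\lambda$-purity of $f_i$ yields $c^i_j\in A$ with $\sum_j r_{jk}c^i_j=a^i_k$. Treat coordinate $\ell$ differently: there we need no purity. Set $c^\ell_j=-\sum_{i\in I_0\setminus\{\ell\}}c^i_j$. Then $\sum_j r_{jk}c^\ell_j=-\sum_{i\neq\ell}a^i_k=a^\ell_k$. Note that $(c^i_j)_i$ is supported on the finite support of $b_j$ (choose $c^i_j=0$ where $b^i_j=0$ and the $i$-th equations are trivial).

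Now define $b'_j=b_j-\sum_i\rho_if_i(c^i_j)$. We want $\sum_i\rho_if_i(c^i_j)\in N$, and this holds since $\sum_i c^i_j=0$. Thus we set $y_j:=\sum_i\rho_if_i(c^i_j)\in N$, and then $\sum_j r_{jk}y_j=\sum_i\rho_if_i(a^i_k)=n_k$, so the system is solved in $N$. The expected obstacle is bookkeeping: the map $c^i_j$ must have finite support in $i$ for each $j$, and the $\ell$-coordinate must absorb the sum. To guarantee this, I would let $I_0$ be the union of the (finite) supports of the $n_k$ and of the chosen representations, and set $c^i_j=0$ for $i\notin I_0$. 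If $I_0$ is infinite, finite support in $i$ for fixed $j$ must still be checked. If this fails, I would fall back on proving (2) via Fact~\ref{fact:lpuresplit}, with $B/N\cong P$.

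\textbf{Item (1).} Take a basic formula, $\bar d\in B_\ell^K$, and a witness $\bar p\in P^J$ with $\sum_j r_{jk}p_j=g_\ell(d_k)$. Lift $p_j$ to $b_j\in B$. The equation then says that $\sum_jr_{jk}b_j-\rho_\ell(d_k)\in N$, so each difference equals $\sum_i\rho_if_i(a^i_k)$ with $\sum_ia^i_k=0$. Project onto $i\neq\ell$: we get $\sum_jr_{jk}b^i_j=f_i(a^i_k)$, and $\lambda$-purity gives $c^i_j\in A$ solving $\sum_jr_{jk}c^i_j=a^i_k$, with $c^i_j=0$ off the finite support of $b_j$. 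Project onto $\ell$: we get $\sum_jr_{jk}b^\ell_j-d_k=f_\ell(a^\ell_k)=-\sum_{i\neq\ell}f_\ell(a^i_k)$. Hence $e_j:=b^\ell_j+\sum_{i\neq\ell}f_\ell(c^i_j)\in B_\ell$ satisfies $\sum_jr_{jk}e_j=d_k$. So the system is solved in $B_\ell$, and $g_\ell$ is $\lambda$-pure. The only delicate point, as in item (2), is the finite-support bookkeeping when choosing the $c^i_j$.
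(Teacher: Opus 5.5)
The proposal follows the paper's route, but the one step you flag as delicate is left unproved, and as you set it up it can fail. That route is: work in $P=(\bigoplus_{i\in I}B_i)/N$, test basic $\lambda\mh\pp$-formulas, project onto each coordinate $i\neq\ell$, pull the solutions back into $A$ by $\lambda$-purity of $f_i(A)$ in $B_i$, and let coordinate $\ell$ absorb $\sum_{i\neq\ell}c^i_j$.

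\textbf{The gap.} For each $j$ you need $\{i : c^i_j\neq 0\}$ to be finite; otherwise $\sum_{i\neq\ell}c^i_j$ and $\sum_i\rho_if_i(c^i_j)$ are undefined. When $\lambda>\aleph_0$ the set $I_0$ of relevant coordinates can be infinite. At each $i$ you apply $\lambda$-purity only to the system $\sum_j r_{jk}x_j=f_i(a^i_k)$. Nothing then stops the solution at $i$ from having $c^i_j\neq 0$ for some $j$ with $b^i_j=0$. Choosing independently at infinitely many $i$ can therefore give infinite support in $i$ for a fixed $j$.

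Your parenthetical ``choose $c^i_j=0$ where $b^i_j=0$ and the $i$-th equations are trivial'' only covers coordinates where all $b^i_j$ vanish. There $a^i_k=0$ by injectivity of $f_i$. It does not cover coordinates in $I_0$ where some, but not all, $b^i_j$ vanish. The fallback through Fact~\ref{fact:lpuresplit} is not developed, and it is not clear it is any easier.

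\textbf{The fix.} Build the support condition into the system before invoking purity. At each $i\neq\ell$, use the formula
\[
\exists (x_j)_{j\in J}\Bigl[\bigwedge_{j\in J,\ b^i_j=0}x_j=0\ \wedge\ \bigwedge_{k\in K}\sum_{j\in J}r_{jk}x_j=y_k\Bigr].
\]
This is still a $\lambda\mh\pp$-formula, since $|J|+|K|<\lambda$. It holds in $B_i$ at $y_k=f_i(a^i_k)$, witnessed by $(b^i_j)_{j\in J}$. So $\lambda$-purity of $f_i(A)$ in $B_i$ gives a witness $(c^i_j)_{j}$ in $A$ with $c^i_j=0$ whenever $b^i_j=0$.

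Then $\{i\neq\ell : c^i_j\neq 0\}$ lies inside the finite support of $b_j$, for every $j$. Both of your computations (the $e_j\in B_\ell$ in item (1), and the $y_j\in N$ in item (2)) then go through verbatim. This is exactly the formula $\varphi_i$ the paper uses in both parts of its proof.
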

\begin{proof}
We start by proving~\ref{prop:wpushoutclosedlambda1}.
Without loss of generality we can assume that $(g_i: B_i \to P\mid i\in I)$ is as in Proposition~\ref{prop:widepushoutexist}.
\noindent
Proposition~\ref{prop:widepushoutembedding} ensures that $g_{\ell}$ is an embedding. To show that $g_{\ell}(B_{\ell})\leqp^\lambda P$ we use Proposition~\ref{prop:equivlpurelppform}\ref{prop:equivlpurelppform3}, in particular we verify that basic $\lambda\mh\pp$-formulas are preserved. Assume that $|K| + |J| < \lambda$ and 
\[
P\vDash \bigwedge_{k\in K}\sum_{j\in J} r_{jk} [(b_j^i)_{i\in I}]_N = g_{\ell}(\tilde b_k),
\]
where $(b_j^i)_{i\in I}\in \bigoplus_{i\in I}B_i$ for $j\in J$, and $\tilde b_k\in B_{\ell}$ for $k\in K$. By definition of $P = (\bigoplus_{i\in I} B_i)/N$, for every $k\in K$ there is $(a_{ik})_{i\in I}\in A^I$ zero a.e. with $\sum_{i\in I} a_{ik} = 0$, such that:
\begin{equation}
\label{eq:lpurewpush-1}
\bigoplus_{i\in I}B_i\vDash \bigwedge_{k\in K}\sum_{j\in J} r_{jk} (b_j^i)_{i\in I} = \rho_{\ell}(\tilde b_k) + \sum_{i\in I} \rho_i f_i (a_k^i).
\end{equation}
Projecting \eqref{eq:lpurewpush-1} on $B_\ell$ we get:
\begin{equation}
\label{eq:lpurewpush-1.1}
B_{\ell}\vDash \bigwedge_{k\in K}\sum_{j\in J} r_{jk} b_j^{\ell} = \tilde b_k + f_{\ell} (a_k^\ell).
\end{equation}
For $i\in I\setminus \{\ell\}$, projecting \eqref{eq:lpurewpush-1} on $i$ we get: 
\begin{equation}
\label{eq:lpurewpush0}
B_i \vDash \varphi_i( (f_i(a_k^i))_{k\in K}),
\end{equation}
where $\varphi_i( (y_k)_{k\in K})$ is the formula:
\begin{equation}
\label{eq:lpurewpush0.0}
\exists (x_j)_{j\in J}\left[\bigwedge_{\substack{j\in J \\ b_j^i= 0}}x_j= 0\land \bigwedge_{k\in K}\sum_{j\in J}r_{jk}x_j = y_k\right].
\end{equation}
Notice that $\varphi_i((y_k)_{k\in K})$ is a $\lambda\mh\pp$-formula, as $|J|+|K| < \lambda$. For $i\in I\setminus \{\ell\}$ we have $f_i(A)\leqp^\lambda B_i$ and $f_i$ is an embedding, so by \eqref{eq:lpurewpush0} there are $(c_j^i)_{j\in J}\in A^J$ such that:
\begin{equation}
\label{eq:lpurewpush0.1}
A\vDash \bigwedge_{\substack{j\in J \\ b_j^i= 0}}c_j^i= 0\land \bigwedge_{k\in K}\sum_{j\in J}r_{jk}c_j^i = a_k^i.
\end{equation}
Notice that for every $j\in J$ the set $\{i\in I\mid i\neq \ell\, \land c_j^i \neq 0\}$ is finite. If it were not the case, then by the choice of the $c_j^i$'s we would have that for some $j$ the set $\{i\in I\mid b_j^i\neq 0\}$ is infinite, but this is a contradiction because $(b_j^i)_{i\in I}\in \bigoplus_{i\in I}B_i$ for every $j\in J$. Therefore, for every $j\in J$ the sum $\sum_{\substack{i\neq \ell\\ i\in I}} c_j^i$
is well defined. 

\smallskip \noindent Fixing $k\in K$ and computing everything in $B_{\ell}$ we have
\begin{equation}
\label{eq:lpurewpush1}
\sum_{j\in J}r_{jk} (b_j^{\ell} + f_{\ell}(\sum_{i\neq \ell}c_j^i)) = \tilde b_k +f_{\ell}(a^{\ell}_k) + f_{\ell}(\sum_{i\neq \ell}a_k^i) = \tilde b_k,
\end{equation}
where we have used \eqref{eq:lpurewpush-1.1} and \eqref{eq:lpurewpush0.1} for the first equality, and $\sum_{i\in I}a_k^i = 0$ in the second. Finally, applying $g_{\ell}$ to \eqref{eq:lpurewpush1} we get our result.

\smallskip \noindent  Now we prove~\ref{prop:wpushoutclosedlambda2}. The proof is very similar to that of~\ref{prop:wpushoutclosedlambda1}. To show $N\leqp^\lambda \bigoplus_{i\in I}B_i$ we again use Proposition~\ref{prop:equivlpurelppform}\ref{prop:equivlpurelppform3}.
 Assume that $|K| + |J| < \lambda$ and 
\[
\bigoplus_{i\in I}B_i\vDash \bigwedge_{k\in K}\sum_{j\in J} r_{jk} (b_j^i)_{i\in I} =  \sum_{i\in I} \rho_i f_i (a_k^i),
\]
where $(b_j^i)_{i\in I}\in \bigoplus_{i\in I}B_i$ for $j\in J$, and $\sum_{i\in I} a_k^i = 0$ with each $a_k^i\in A$.
For $i\in I\setminus \{\ell\}$ we have 
\[
B_i \vDash \varphi_i((f_i(a_k^i))_{k\in K}),
\]
where $\varphi_i((y_k)_{k\in K})$ is the $\lambda\mh\pp$-formula as in \eqref{eq:lpurewpush0.0}.

\smallskip \noindent  For $i\in I\setminus \{\ell\}$ we have $f_i(A)\leqp^\lambda B_i$ and $f_i$ is an embedding, thus there are $(c_j^i)_{j\in J}$ in $A$ such that:
\[
A\vDash \bigwedge_{\substack{j\in J \\ b_j^i= 0}}c_j^i= 0\land \bigwedge_{k\in K}\sum_{j\in J}r_{jk}c_j^i = a_k^i.
\]
Notice that for each $j\in J$ the sum $\sum_{i\neq \ell} c_j^i$ is well defined, as $\{i\in I\mid i\neq \ell\, \land\, c_j^i \neq 0\}$ is finite, just as in the proof of~\ref{prop:wpushoutclosedlambda1}. 

\smallskip \noindent  We define $c_j^{\ell} = -\sum_{i\neq\ell}c_j^i\in A$. Notice that this gives us $\sum_{i\in I}c_j^i = 0$, and thus $\sum_{i\in I}\rho_if_i(c_j^i)\in N$ for every $j\in J$. Therefore, for every $i\in I\setminus \{\ell\}$ we immediately get 
\begin{equation}
\label{eq:lpurewpush2}
B_i \vDash \bigwedge_{k\in K}\sum_{j\in J}r_{jk} f_i(c_j^i) = f_i(a_k^i).
\end{equation}
by the choice of $c_j^i$.
Computing everything inside $B_{\ell}$ and fixing $k\in K$, we have:
\begin{equation}
\label{eq:lpurewpush3}
\sum_{j\in J}r_{jk} f_{\ell}(c_j^{\ell}) =- \sum_{j\in J}r_{jk} f_{\ell}(\sum_{i\neq \ell}c_j^i) = -\sum_{i\neq \ell} f_{\ell}(\sum_{j\in J}r_{jk}c_j^i) = -\sum_{i\neq \ell} f_{\ell}(a_k^i) = f_{\ell}(a^{\ell}_k),
\end{equation}
where we have used the definition of $c_j^i$ in the third equality, and the fact that $\sum_{i\in I}a_k^i = 0$ in the last equality.
Clearly $\sum_{i\in I}\rho_if_i(c_j^i)\in N$ for every $j\in J$, and combining \eqref{eq:lpurewpush2} and \eqref{eq:lpurewpush3} we finally get
\[
\bigoplus_{i\in I} B_i\vDash\bigwedge_{k\in K}\sum_{j\in J} r_{jk}\sum_{i\in I}\rho_i f_i(c_j^i) = \sum_{i\in I}\rho_i f_i(a_k^i),
\]
which is the desired equation.
\end{proof}

\begin{corollary}
\label{cor:widepushclolambda}
Wide pushouts are closed under $\lambda$-pure embeddings, i.e., for any collection of $\lambda$-pure embeddings $(f_i: A\to B_i\mid i\in I)$ with wide pushout $(g_i: B_i \to P\mid i\in I)$, we have that $g_i:B_i\to P$ is a $\lambda$-pure embedding for every $i\in I$.
\end{corollary}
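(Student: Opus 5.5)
This is an immediate consequence of Proposition~\ref{prop:wpushoutclosedlambda}\ref{prop:wpushoutclosedlambda1}, applied once for each index.

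Fix a collection $(f_i: A\to B_i\mid i\in I)$ of $\lambda$-pure embeddings with wide pushout $(g_i: B_i\to P\mid i\in I)$, and fix an arbitrary $\ell\in I$. In particular, $f_i$ is a $\lambda$-pure embedding for every $i\in I\setminus\{\ell\}$. This is exactly the hypothesis of Proposition~\ref{prop:wpushoutclosedlambda}, and part~\ref{prop:wpushoutclosedlambda1} of it gives that $g_\ell$ is a $\lambda$-pure embedding. Since $\ell$ was arbitrary, every $g_i$ is a $\lambda$-pure embedding.

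The only point to check is that the proposition applies to an arbitrary wide pushout, not only to the concrete one of Proposition~\ref{prop:widepushoutexist}. This is handled by the usual uniqueness of colimits. If $(g_i)$ is any wide pushout and $(g'_i)$ is the concrete one, there is an isomorphism $r:P'\to P$ with $g_i = r g'_i$. An isomorphism preserves both being an embedding and $\lambda$-purity, so the property transfers. The proof of Proposition~\ref{prop:wpushoutclosedlambda} already makes this ``without loss of generality'' reduction internally.

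There is no real obstacle; all the substantive work is contained in Proposition~\ref{prop:wpushoutclosedlambda}\ref{prop:wpushoutclosedlambda1}. The edge case $|I|=1$ is trivial: there $g_\ell$ is an isomorphism composed appropriately, and the proposition still covers it, since its hypothesis is then vacuous.
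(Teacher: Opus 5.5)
Your proposal is correct and matches the paper's proof, which likewise derives the corollary directly from Proposition~\ref{prop:wpushoutclosedlambda}\ref{prop:wpushoutclosedlambda1} applied to each index $\ell\in I$. Your remarks about reducing an arbitrary wide pushout to the concrete one via the comparison isomorphism, and about the degenerate case $|I|=1$, are accurate. The paper leaves them implicit, since the proposition's own proof already makes the ``without loss of generality'' reduction.
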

\begin{proof}
Follows from Proposition~\ref{prop:wpushoutclosedlambda}\ref{prop:wpushoutclosedlambda1}.
\end{proof}

\begin{corollary}
\label{cor:inftyaprmodpure}
$\kaecgoth^\rmod_\lambda = (\rmod, \leqp^\lambda)$ has $\infty\mh\mrm{AP}$ and $\mrm{JEP}$.
\end{corollary}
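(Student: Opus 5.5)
The plan is to obtain $\infty\mh\mrm{AP}$ directly from the wide pushout construction, and then deduce $\mrm{JEP}$ from it by amalgamating over the zero module.

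For $\infty\mh\mrm{AP}$, Definition~\ref{def:amalgac} requires, for every cardinal and every collection of $\kaecgoth^\rmod_\lambda$-embeddings $(f_i: M\to M_i\mid i\in I)$, an amalgam. I would take $(g_i: M_i\to P\mid i\in I)$ to be the wide pushout of $(f_i)_{i\in I}$ in $\rmod$. It exists by Proposition~\ref{prop:widepushoutexist}, and its defining property gives $g_if_i=g_jf_j$ for all $i,j\in I$. Each $f_i$ is a $\lambda$-pure embedding, since $f_i(M)\leqp^\lambda M_i$. Corollary~\ref{cor:widepushclolambda} then says that every $g_i$ is a $\lambda$-pure embedding, that is, $g_i(M_i)\leqp^\lambda P$. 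Since $P\in\rmod$ trivially, each $g_i$ is a $\kaecgoth^\rmod_\lambda$-embedding, so $(g_i)_{i\in I}$ is an amalgam. No bound on $|I|$ is used, so we have $({<}\theta)\mh\mrm{AP}$ for every cardinal $\theta$.

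For $\mrm{JEP}$, take $M_1,M_2\in\rmod$ and let $0$ be the zero module. The inclusions $0\to M_\ell$ are $\lambda$-pure: any system with parameters in $0$ (all parameters zero) is solved by the zero tuple in $0$. Applying $\mrm{AP}$ to this span gives $\kaecgoth^\rmod_\lambda$-embeddings of $M_1$ and $M_2$ into a common module. Alternatively, the direct sum $M_1\oplus M_2$ works directly, because summands are $\lambda$-pure by Remark~\ref{rem:lpuredsumequiv}.

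I expect no real obstacle: all the substantive work was done in Proposition~\ref{prop:wpushoutclosedlambda}. The only point to watch is that the amalgam must consist of $\kaecgoth$-embeddings into some object of $\kaec=\rmod$, and here that is automatic.
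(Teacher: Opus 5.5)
Your proposal is correct and follows essentially the same route as the paper: $\infty\mh\mrm{AP}$ comes from the wide pushout together with Corollary~\ref{cor:widepushclolambda}, and $\mrm{JEP}$ comes from direct sums. Your alternative of obtaining $\mrm{JEP}$ by amalgamating over the zero module is also valid, since the zero module is trivially $\lambda$-pure in every module.
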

\begin{proof}
That $\kaecgoth_\lambda^\rmod$ has $\mrm{JEP}$ is clear as we can take direct sums, and $\infty\mh\mrm{AP}$ follows from Corollary~\ref{cor:widepushclolambda}. 
\end{proof}

\subsection{The framework}

In this subsection we show how to construct a stable independence relation on $\kaecgoth_\lambda^\rmod=(\rmod, \leqp^\lambda)$, or more generally on classes satisfying Hypothesis~\ref{hyp:lambdapure}. To do so, we generalize the methods introduced in  \cite{somestablenonelementary, mazarinoetherian}, which deal with  the case $\lambda = \aleph_0$. In particular, we will introduce on $\kaecgoth = (\kaec, \leqk)$, when $\kaecgoth$ satisfies Hypothesis~\ref{hyp:lambdapure}, a stable independence relation (cf.~\ref{def:stableindep}), and to do so we will in particular show that $(\kaec, \leqk)$ can be viewed as a pre-cellular category (cf.~\ref{def:precellularcat}).

\begin{context}
\label{con:lambdaregular}
From now on $\lambda$ denotes an infinite \textbf{regular} cardinal.
\end{context}

\begin{hypothesis}
\label{hyp:lambdapure}
Let $\lambda$ be an infinite regular cardinal, and $\kaecgoth_\lambda = (\kaec, \leqp^\lambda)$ an abstract class (cf.~\ref{def:abstractclass}) such that:
\begin{enumerate}[(1), leftmargin=*]
\item $\kaec$ is a class of $R$-modules;
\item $\kaecgoth_\lambda$ is closed under $\lambda$-directed systems (cf.~\ref{def:propertiesac}\ref{def:muclosed});
\item $\kaec$ is closed under:
\begin{enumerate}[(a), leftmargin=*]
\item finite direct sums;
\item $\lambda$-pure submodules, i.e., if $B\in \kaec$ and $A\leqp^\lambda B$, then $A\in\kaec$;
\item $\lambda$-pure quotients, i.e., if $A, B\in \kaec$ and $A\leqp^\lambda B$, then $B/A\in \kaec$.
\end{enumerate}
\end{enumerate}
\end{hypothesis}

\begin{notation}
\label{not:hyplpure}
When we write \enquote{$\kaecgoth_\lambda=(\kaec, \leqp^\lambda)$ satisfies Hypothesis~\ref{hyp:lambdapure}}, we mean that $\lambda$ is the cardinal for which Hypothesis~\ref{hyp:lambdapure} holds. This notation should not be confused with $\kaec_\lambda$ (cf. Notation~\ref{not:kaeclambda}).
\end{notation}

\begin{remark}
\label{rem:hyplpure}
\begin{enumerate}[(1), leftmargin=*]
\item \label{rem:intersecthyp}Assume $\kaecgoth^\ell_\lambda = (\kaec^\ell, \leqp^\lambda)$ satisfies Hypothesis~\ref{hyp:lambdapure}, with $\ell\in \{1,2\}$. Then $\kaecgoth^3_\lambda = (\kaec^1\cap \kaec^2, \leqp^\lambda)$ satisfies Hypothesis~\ref{hyp:lambdapure}.
\item \label{rem:hyplpurenu}If $\nu \geq \lambda$ and $\kaecgoth_\lambda = (\kaec, \leqp^\lambda)$ satisfies Hypothesis~\ref{hyp:lambdapure}, then $\kaecgoth_\nu = (\kaec, \leqp^\nu)$ satisfies Hypothesis~\ref{hyp:lambdapure}.
\end{enumerate}
\end{remark}

\begin{lemma}
\label{lem:hyppushouts}
Assume $\kaecgoth_\lambda = (\kaec, \leqp^\lambda)$ satisfies Hypothesis~\ref{hyp:lambdapure}. Then: 
\begin{enumerate}[(1), leftmargin=*]
\item \label{lem:hyppushoutslaec} $\kaecgoth_\lambda$ is a $\lambda\mh\mrm{AEC}$ with $\mrm{LS}_\lambda(\kaecgoth_\lambda) \leq |R|^{<\lambda}$.
\item \label{lem:hyppushoutsjep} $\kaecgoth_\lambda$ has $\mrm{JEP}$.
\item \label{lem:hyppushouts1} $\kaecgoth_\lambda$ has $\mrm{AP}$ and is closed under pushouts, i.e., if $(f_1: A \to B_1, f_2: A \to B_2)$ is a pair of $\kaecgoth_\lambda$-embeddings with pushout $(g_1: B_1\to P, g_2: B_2 \to P)$ in $\rmod$, then $P\in\kaec$ and $g_1, g_2$ are $\kaecgoth_\lambda$-embeddings.
\item \label{lem:hyppushouts2} If $\kaec$ is closed under arbitrary direct sums, then $\kaecgoth_\lambda$ has $\infty\mh\mrm{AP}$ and is closed under wide pushouts.
\end{enumerate}
\end{lemma}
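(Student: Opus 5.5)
We prove the four items in order, reducing everything to the properties of $(\rmod,\leqp^\lambda)$ from Proposition~\ref{prop:lpurelaec}, Proposition~\ref{prop:wpushoutclosedlambda} and Corollary~\ref{cor:widepushclolambda}, together with the closure properties in Hypothesis~\ref{hyp:lambdapure}.

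\emph{Item~\ref{lem:hyppushoutslaec}.} Coherence is immediate from Remark~\ref{rem:lpuretrivia}\ref{rem:lpuretrivia3}. For $\lambda$-continuity, take a $\lambda$-directed system $(A_i)_{i\in I}$ in $\kaecgoth_\lambda$. Its union lies in $\kaec$ by the closure assumption of Hypothesis~\ref{hyp:lambdapure}. Each $A_i$ is $\lambda$-pure in the union, by the same argument as in Proposition~\ref{prop:lpurelaec}. Smoothness is likewise inherited verbatim from the $\rmod$ case, since $\leqp^\lambda$ is the same relation. For the $\mrm{LS}$ axiom, let $B\in\kaec$ and $X\subseteq B$. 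Proposition~\ref{prop:lpurelaec} gives $A$ with $X\subseteq A\leqp^\lambda B$ and $|A|\le |X|^{<\lambda}+|R|^{<\lambda}$. Closure under $\lambda$-pure submodules then gives $A\in\kaec$. This yields $\mrm{LS}_\lambda(\kaecgoth_\lambda)\le|R|^{<\lambda}$.

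\emph{Item~\ref{lem:hyppushoutsjep}.} For $\mrm{JEP}$, given $B_1,B_2\in\kaec$, form $B_1\oplus B_2\in\kaec$. Each $B_\ell$ is a direct summand, hence $\lambda$-pure by Remark~\ref{rem:lpuredsumequiv}.

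\emph{Item~\ref{lem:hyppushouts1}.} This is the main point. Let $f_1,f_2$ be $\kaecgoth_\lambda$-embeddings, and realize their pushout as in Proposition~\ref{prop:widepushoutexist}: $P=(B_1\oplus B_2)/N$. Corollary~\ref{cor:widepushclolambda} shows that $g_1,g_2$ are $\lambda$-pure embeddings, so it remains only to check $P\in\kaec$. We have $B_1\oplus B_2\in\kaec$ by closure under finite direct sums. Proposition~\ref{prop:wpushoutclosedlambda}\ref{prop:wpushoutclosedlambda2} gives $N\leqp^\lambda B_1\oplus B_2$, so $N\in\kaec$ by closure under $\lambda$-pure submodules. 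Then $P=(B_1\oplus B_2)/N\in\kaec$ by closure under $\lambda$-pure quotients. Since pushouts are unique up to isomorphism and $\kaecgoth_\lambda$ is closed under isomorphisms, this holds for any pushout. $\mrm{AP}$ follows at once. The delicate point, and the reason for proving part~\ref{prop:wpushoutclosedlambda2} of Proposition~\ref{prop:wpushoutclosedlambda}, is precisely the purity of $N$, which is what allows the quotient-closure hypothesis to be applied.

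\emph{Item~\ref{lem:hyppushouts2}.} The argument for item~\ref{lem:hyppushouts1} goes through with $\bigoplus_{i\in I}B_i$ in place of $B_1\oplus B_2$, using closure under arbitrary direct sums. This gives closure under wide pushouts, and that is exactly $\infty\mh\mrm{AP}$ by Definition~\ref{def:amalgac}.
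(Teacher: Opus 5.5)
Your proposal is correct and follows the paper's proof essentially verbatim. Items (1)--(2) are inherited from Proposition~\ref{prop:lpurelaec} together with the closure hypotheses, and for (3)--(4) you use the key fact $N\leqp^\lambda \bigoplus_{i} B_i$ from Proposition~\ref{prop:wpushoutclosedlambda}\ref{prop:wpushoutclosedlambda2} to apply closure under $\lambda$-pure submodules and then under $\lambda$-pure quotients, exactly as the paper does (one wording quibble: closure under wide pushouts implies $\infty\mh\mrm{AP}$ rather than being literally the same property).
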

\begin{proof}
We verify~\ref{lem:hyppushoutslaec}.  By Proposition~\ref{prop:lpurelaec} and closure under $\lambda$-pure submodules,  the coherence and L\"owenheim--Skolem axioms follow immediately, and $\mrm{LS}_\lambda(\kaecgoth_\lambda) \leq |R|^{<\lambda}$. By Proposition~\ref{prop:lpurelaec} and because $\kaecgoth_\lambda$ is closed under $\lambda$-directed systems, then the Tarski-Vaught axioms follow. Therefore $\kaecgoth_\lambda$ is a $\lambda\mh\mrm{AEC}$. 

\ssk\nin Item \ref{lem:hyppushoutsjep} is by closure under finite direct sums.

\ssk\nin We prove~\ref{lem:hyppushouts1}. It is enough to show closure under pushouts.  Assume $(f_1:A\to B_1, f_2: A \to B_2)$ are $\kaecgoth_\lambda$-embeddings with pushout $(g_1: B_1 \to P, g_2:B_2 \to P)$ in $\rmod$. We can assume that the pushout is of the form of Proposition~\ref{prop:widepushoutexist}. By Corollary~\ref{cor:widepushclolambda} $g_1$ and $g_2$ are $\lambda$-pure embeddings. We are only left to show that $P = B_1\oplus B_2/N\in\kaec$.
We have $B_1\oplus B_2\in \kaec$ by closure under finite direct sums, and $N\leqp^\lambda B_1\oplus B_2$ by Proposition~\ref{prop:wpushoutclosedlambda}\ref{prop:wpushoutclosedlambda2}. By closure under $\lambda$-pure submodules we have $N\in\kaec$, and by closure under $\lambda$-pure quotients we finally get $P\in \kaec$.

\ssk\nin Finally, the proof of~\ref{lem:hyppushouts2} is similar to that of~\ref{lem:hyppushouts1}.
\end{proof}

We are now going to define an independence relation on $\kaecgoth_\lambda$, when $\kaecgoth_\lambda$ satisfies Hypothesis~\ref{hyp:lambdapure}. As we said at the beginning of the subsection, we aim to use the machinery of pre-cellular categories introduced in Section~\ref{sect:precellular}. Notice that the independent squares defined in the following are the pre-cellular squares (cf.~\ref{def:cellularsquare}) defined in Section~\ref{sect:precellular}.

\begin{definition}
\label{def:dnflambdapure}
Assume $\kaecgoth_\lambda = (\kaec, \leqp^\lambda)$ satisfies Hypothesis~\ref{hyp:lambdapure}. For a commutative square $(f_1, f_2, g_1, g_2)$ in $\kaecgoth_\lambda$, we let $(f_1, f_2, g_1, g_2)\in \dnf$ if and only if the unique $R$-module homomorphism $r: P \to C$ from the pushout $P$ of $(f_1, f_2)$ is a $\lambda$-pure embedding, where the maps are as in the following commutative diagram:
\[\begin{tikzcd}
	&& C \\
	{B_1} & P \\
	A & {B_2}
	\arrow["{g_1}", curve={height=-12pt}, from=2-1, to=1-3]
	\arrow["{h_1}", from=2-1, to=2-2]
	\arrow["r", from=2-2, to=1-3]
	\arrow["\ulcorner"{anchor=center, pos=0.125, rotate=-90}, draw=none, from=2-2, to=3-1]
	\arrow["{f_1}", from=3-1, to=2-1]
	\arrow["{f_2}"', from=3-1, to=3-2]
	\arrow["{g_2}"', curve={height=12pt}, from=3-2, to=1-3]
	\arrow["{h_2}"', from=3-2, to=2-2]
\end{tikzcd}\] 
\end{definition}

\begin{lemma}
\label{lem:lpureindepweaklystable}
Assume $\kaecgoth_\lambda=(\kaec, \leqp^\lambda)$ satisfies Hypothesis~\ref{hyp:lambdapure}. Then $\dnf$ as defined in~\ref{def:dnflambdapure} is a weakly stable independence relation (cf. Definition~\ref{def:weaklystableind}).
\end{lemma}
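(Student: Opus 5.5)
The plan is to reduce the statement to Proposition~\ref{prop:indepcellular}. Let $\categ$ be the category whose objects are the modules in $\kaec$ and whose morphisms are all $R$-module homomorphisms, and let $\morphs$ be the class of $\lambda$-pure embeddings between modules of $\kaec$. Then $\categ_\morphs$ is precisely $\kaecgoth_\lambda$ viewed as a category. Moreover, the squares in Definition~\ref{def:dnflambdapure} are exactly the pre-cellular squares of Definition~\ref{def:cellularsquare}, provided the pushout in $\rmod$ is also a pushout in $\categ$. It therefore suffices to show that $(\categ,\morphs)$ is a coherent pre-cellular category.

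Next I check the axioms of Definition~\ref{def:precellularcat} one at a time.

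\begin{enumerate}[(1), leftmargin=*]
\item For the existence of pushouts, take a span $(f_1,f_2)$ in $\morphs$. Lemma~\ref{lem:hyppushouts}\ref{lem:hyppushouts1} gives that its pushout $P$ in $\rmod$ lies in $\kaec$. Since $\categ$ is a full subcategory of $\rmod$ containing $P$, the universal property in $\rmod$ restricts to one in $\categ$. So $P$ is a pushout in $\categ$, and it agrees with the $P$ used in Definition~\ref{def:dnflambdapure}.
\item Isomorphisms are clearly $\lambda$-pure embeddings, so $\morphs$ contains them.
\item For closure of $\morphs$ under pushouts, the maps $g_1,g_2$ of the pushout square are $\lambda$-pure embeddings. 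This follows from Corollary~\ref{cor:widepushclolambda}, or equivalently again from Lemma~\ref{lem:hyppushouts}\ref{lem:hyppushouts1}.
\item For closure under composition, a composite of $\lambda$-pure embeddings is a $\lambda$-pure embedding. This holds because $\lambda$-purity is transitive, which follows directly from Definition~\ref{def:lpure}.
\end{enumerate}

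For coherence in the sense of Definition~\ref{def:coherentprecell}, suppose $f\colon A\to B$ and $g\colon B\to C$ are homomorphisms between modules in $\kaec$, with $gf$ and $g$ both $\lambda$-pure embeddings. Since $gf$ is injective, $f$ is injective. Identify $A$ and $B$ with their images in $C$, so that $A\le B\le C$ and $A\leqp^\lambda C$. Remark~\ref{rem:lpuretrivia}\ref{rem:lpuretrivia3} then gives $A\leqp^\lambda B$, that is, $f\in\morphs$.

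With these checks done, Proposition~\ref{prop:indepcellular} yields that the pre-cellular squares, namely $\dnf$, form a weakly stable independence relation on $\categ_\morphs=\kaecgoth_\lambda$.

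The only delicate point is making sure $\categ$ is set up so that pushouts computed in $\rmod$ are pushouts in $\categ$. This is why I take $\categ$ to be the full subcategory on $\kaec$ with all homomorphisms. I also need coherence to be checked for arbitrary homomorphisms $f$, not only for embeddings. Both issues are handled by the injectivity argument above, so I expect no real obstacle.
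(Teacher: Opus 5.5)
Your proposal is correct and follows the paper's proof essentially verbatim: it reduces to Proposition~\ref{prop:indepcellular} by checking, via Lemma~\ref{lem:hyppushouts}\ref{lem:hyppushouts1}, that $(\categ,\morphs)$ is a coherent pre-cellular category, where $\categ$ is the full subcategory of $\rmod$ on $\kaec$ and $\morphs$ is the class of $\lambda$-pure embeddings. The only differences are cosmetic: you derive coherence from Remark~\ref{rem:lpuretrivia}\ref{rem:lpuretrivia3} rather than from the coherence axiom of $\kaecgoth_\lambda$, and you state explicitly that the pushout in $\rmod$ is still a pushout in the full subcategory $\categ$, which the paper leaves implicit.
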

\begin{proof}
Let $\categ$ be the category with objects the modules in $\kaec$ and with arrows the $R$-module homomorphisms. Let $\morphs$ be the collection of all $\lambda$-pure embeddings. It is enough to verify that $(\categ, \morphs)$ is a coherent pre-cellular category (cf. Definitions~\ref{def:precellularcat}-\ref{def:coherentprecell}). Indeed, if $(\categ, \morphs)$ is pre-cellular, then $\categ_\morphs= \kaecgoth_\lambda$ and the independence relation $\dnf$ from Definition~\ref{def:dnflambdapure} is exactly the collection of pre-cellular squares (cf. Definition~\ref{def:cellularsquare}), and then, by Proposition~\ref{prop:indepcellular}, $\dnf$ is weakly stable.  We are thus left to show that $(\categ, \morphs)$ is a pre-cellular category. In particular, we have to verify that  the conditions~\ref{def:precellularcat1}-\ref{def:precellularcat4} of Definition~\ref{def:precellularcat} are satisfied, and that $(\categ, \morphs)$ is coherent (cf.~\ref{def:coherentprecell}). 
\begin{enumerate}[(1), leftmargin=*]
\item If $(f_1, f_2)$ is a span in $\categ$ and both $f_1$ and $f_2$ are $\lambda$-pure embeddings, then the pushout exists in $\categ$ by Lemma~\ref{lem:hyppushouts}\ref{lem:hyppushouts1}.
\item Clearly every isomorphism is a $\lambda$-pure embedding.
\item Closure of $\morphs$ under pushouts follows from Lemma~\ref{lem:hyppushouts}\ref{lem:hyppushouts1}.
\item Closure under finite compositions is clear.
\item We verify that $(\categ, \morphs)$ is coherent. Assume $f: A\to B$ and $g: B\to C$ are $R$-module homomorphisms with $g$ and $gf$ $\lambda$-pure embeddings. Clearly $f$ is an embedding. We have $gf(A) \leqp^\lambda C$ and $g(B)\leqp^\lambda C$, so by coherence of $\kaecgoth_\lambda$ we have $gf(A) \leqp^\lambda g(B)$, and by closure under isomorphisms we get $f(A)\leqp^\lambda B$.
\end{enumerate}
\end{proof}

We now wish to show that $\dnf$ as defined in \ref{def:dnflambdapure}  is a stable independence relation. In particular, we are left to show that $\dnf$ satisfies in addition the following properties: local character (cf.~\ref{def:indeplocalchar}) and the witness property (cf.~\ref{def:indepwitnessprop}).
In the following lemma we will prove a slightly stronger form of right local character. Notice that this proof is the straightforward generalization of the proof in \cite[Theorem~4.14]{somestablenonelementary}, which dealt with the case $\lambda = \aleph_0$. The only difference in our proof is that, in our setting, we are not able to take countable unions, as $\kaecgoth_\lambda$ only has $\lambda$-continuity (cf.~\ref{def:propertiesac}\ref{def:mucontinuity}). To circumvent the problem we will use the $\mrm{LS}$ axiom to build chains of length $\lambda$, for which we can then take the union by $\lambda$-continuity.

\begin{lemma}
\label{lem:lpureindeplocal}
Assume $\kaecgoth_\lambda=(\kaec, \leqp^\lambda)$ satisfies Hypothesis~\ref{hyp:lambdapure}, then $\dnf$ has right local character (cf.~\ref{def:indeplocalchar}). In particular, if $B_1, B_2, C\in \kaec$ with $B_1, B_2 \leqp^\lambda C$, then there are $B_1', A\in \kaec$ such that $A \leqp^\lambda B_1'$, $B_1\leqp^\lambda B_1'$, $|A| +|B_1'|\leq (|B_1| + |R|) ^{<\lambda}$, and $B_1' \dnf_A^C B_2$.
\end{lemma}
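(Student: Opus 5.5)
The plan is to adapt the proof of \cite[Theorem~4.14]{somestablenonelementary} directly. The cardinality bounds come from the $\mrm{LS}$ axiom of Proposition~\ref{prop:lpurelaec}. The limit stage is handled by building a chain of length exactly $\lambda$, so that $\lambda$-continuity (Remark~\ref{rem:cofmucont}) applies where the original argument used countable unions.

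First I would isolate a criterion for membership in $\dnf$ that can be checked element by element. Take $A\leqp^\lambda B_1'\leqp^\lambda C$ and $A\leqp^\lambda B_2\leqp^\lambda C$, and use the concrete pushout $P=(B_1'\oplus B_2)/N$ of Proposition~\ref{prop:widepushoutexist}. The induced map is $r([(b,c)]_N)=b+c$. It is injective iff $B_1'\cap B_2=A$. It is $\lambda$-pure iff the following condition, which I call $(\dagger)$, holds. Let $\varphi(\bar x,\bar y)$ be a basic $\lambda\mh\pp$-formula, $\bar b\in (B_1')^{<\lambda}$ and $\bar c\in B_2^{<\lambda}$, and suppose $C\vDash \varphi(\bar b+\bar c)$. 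Then there are $\bar a\in A$ and splittings of the witnesses such that $B_1'\vDash\varphi_1(\bar b+\bar a)$ and $B_2\vDash\varphi_2(\bar c-\bar a)$, where $\varphi_1,\varphi_2$ are obtained from $\varphi$ by distributing the existential witnesses. I would verify this by unwinding Proposition~\ref{prop:equivlpurelppform}\ref{prop:equivlpurelppform3} in $P$, as in the proof of Proposition~\ref{prop:wpushoutclosedlambda}, using that $\lambda\mh\pp$-definable sets are subgroups (Remark~\ref{rem:lpuretrivia}\ref{rem:lpuretrivia2}). A cleaner sufficient version of $(\dagger)$ is the following: every solution in $C$ of a system with parameters from $B_1'$ whose unknowns range over $B_2$ can be shifted by a solution of the corresponding homogeneous system so that it lies in $A$.

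Next I would build the models by a closure construction. By induction on $i\le\lambda$ I would construct increasing chains $(D_i)_{i\le\lambda}$ and $(A_i)_{i\le\lambda}$ with $D_0\supseteq B_1$, $D_i\leqp^\lambda C$, $A_i\leqp^\lambda B_2$, $A_i\subseteq D_{i+1}$, and $|D_i|+|A_i|\le(|B_1|+|R|)^{<\lambda}$. At a successor step $i+1$, consider every basic $\lambda\mh\pp$-formula and every tuple of length $<\lambda$ from $D_i$; there are at most $(|D_i|+|R|)^{<\lambda}$ of them. Whenever the relevant system is solvable in $C$ with unknowns in $B_2$, choose one such solution and add it. Then apply $\mrm{LS}$ twice, once in $B_2$ to get $A_{i+1}\leqp^\lambda B_2$ and once in $C$ to get $D_{i+1}\leqp^\lambda C$ containing $D_i\cup A_{i+1}$. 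The cardinal arithmetic stays within $(|B_1|+|R|)^{<\lambda}$ because $((\kappa^{<\lambda})^{<\lambda})=\kappa^{<\lambda}$ for regular $\lambda$. At limits $i<\lambda$ the unions need not be $\lambda$-pure, so I would again apply $\mrm{LS}$ to an upper bound. At $i=\lambda$ I would set $B_1'=\bigcup_{i<\lambda}D_i$ and $A=\bigcup_{i<\lambda}A_i$. These are $\lambda$-directed unions, so $\lambda$-continuity and $\lambda$-smoothness give $B_1'\leqp^\lambda C$ and $A\leqp^\lambda B_2$. Coherence then gives $A\leqp^\lambda B_1'$ and $B_1\leqp^\lambda B_1'$, and Hypothesis~\ref{hyp:lambdapure} gives that everything lies in $\kaec$. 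The witnessing square is then $A\to B_1'$, $A\to B_2$, into $C$ itself, which is stronger than the local character required in Definition~\ref{def:indeplocalchar}.

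Finally I would check $(\dagger)$ and $B_1'\cap B_2=A$. The intersection condition is the special case of the formula $x=y$. For $(\dagger)$, any tuple from $B_1'$ of length $<\lambda$ already lies in some $D_i$, by regularity of $\lambda$, so its witnesses were put into $A_{i+1}\subseteq A$. I expect the main obstacle to be this last step: formulating $(\dagger)$ precisely enough that the witnesses chosen at stage $i+1$ actually give $\lambda$-purity of $r$. The solution of the full system has to be split into a $B_1'$-part and a $B_2$-part, and the $B_2$-part has to be corrected by an element of $A$. I would first try to mimic the coordinate-wise computation of Proposition~\ref{prop:wpushoutclosedlambda}, using $A\leqp^\lambda B_2$ to pull the witnesses from $B_2$ back into $A$.
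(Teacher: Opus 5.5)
Your construction is essentially the paper's (Lemma~\ref{lem:lpureloccharfund} followed by the proof of Lemma~\ref{lem:lpureindeplocal}). It has the same ingredients: chains of length $\lambda$ built with the $\mrm{LS}$ axiom of $(\rmod,\leqp^\lambda)$, unions at stage $\lambda$ via $\lambda$-smoothness, coherence for $A\leqp^\lambda B_1'$ and $B_1\leqp^\lambda B_1'$, closure under $\lambda$-pure submodules, and the square taken directly into $C$. Your bookkeeping there is fine.

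The gap is exactly the step you flag as the main obstacle, and none of the routes you suggest closes it.

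\begin{itemize}
\item Your ``cleaner sufficient version'' (systems of equations with parameters in $B_1'$ and all unknowns in $B_2$) is not sufficient. It yields injectivity of $r$, but not $\lambda$-purity. Counterexample: over $\mathbb{Z}$ with $\lambda=\aleph_0$, let $C\leq\mathbb{Q}^2$ be generated by $e_1,e_2,\tfrac12(e_1+e_2)$, and take $B_1'=\mathbb{Z}e_1$, $B_2=\mathbb{Z}e_2$, $A=0$. All three are pure in $C$, since the quotients are torsion-free. Because $\mathbb{Z}e_1\cap\mathbb{Z}e_2=0$, any such system solved by some $\bar c\in B_2$ is also solved by $\bar 0\in A$. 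Yet $r\colon B_1'\oplus B_2\to C$ is not pure, since $e_1+e_2\in 2C$. The point is that the existential witnesses of $\varphi(\bar b+\bar c)$ live in $C$, not in $B_1'+B_2$.
\item $A\leqp^\lambda B_2$ cannot ``pull witnesses back into $A$'', because the relevant formula $\varphi(\bar b+\bar y)$ has parameters from $B_1'$, not from $A$.
\item The coordinate-wise computation of Proposition~\ref{prop:wpushoutclosedlambda} relied on the ambient module being a direct sum, which $C$ is not.
\end{itemize}

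The missing idea is the subgroup trick, combined with $\lambda$-purity of $B_1'$ and of $B_2$ in $C$ separately. First make the closure condition explicit (your construction essentially has it): whenever $\bar b\in D_i$, $\varphi$ is a $\lambda\mh\pp$-formula (bound variables ranging over $C$), and $C\vDash\varphi(\bar b+\bar c)$ for some $\bar c\in B_2$, put one such $\bar c$ into $A_{i+1}$.

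Now let $\varphi(\bar x)=\exists\bar y\,\theta(\bar x,\bar y)$, and let $\bar b\in B_1'$, $\bar c\in B_2$ with $C\vDash\varphi(\bar b+\bar c)$.
\begin{itemize}
\item The closure gives $\bar a\in A$ with $C\vDash\varphi(\bar b+\bar a)$.
\item Since $\varphi(C)$ is a subgroup (Remark~\ref{rem:lpuretrivia}\ref{rem:lpuretrivia2}), also $C\vDash\varphi(\bar c-\bar a)$.
\item As $\bar b+\bar a\in B_1'\leqp^\lambda C$ and $\bar c-\bar a\in B_2\leqp^\lambda C$, there are $\bar d\in B_1'$ and $\bar e\in B_2$ with $\theta(\bar b+\bar a,\bar d)$ and $\theta(\bar c-\bar a,\bar e)$.
\item Hence $B_1'\oplus B_2\vDash\theta((\bar b+\bar a,\bar c-\bar a),(\bar d,\bar e))$.
\item Since $(\bar a,-\bar a)\in N$, projecting to $P$ gives $P\vDash\varphi([(\bar b,\bar c)]_N)$.
\end{itemize}
This is your $(\dagger)$ with $\varphi_1=\varphi_2=\varphi$, and it is exactly how the paper concludes.
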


Before showing Lemma~\ref{lem:lpureindeplocal}, we will prove a rather technical lemma which will basically give us  $A$ and $B_1'$ of the statement.

\begin{lemma}
\label{lem:lpureloccharfund}
Let $B_1, B_2\leq C$ be $R$-modules and $\lambda$ an infinite regular cardinal. Then there are $A\leq B_1'$ with the following properties: 
\begin{enumerate}[(a), leftmargin=*]
\item \label{lem:lpureloccharfund1}$A\leqp^\lambda B_2$;
\item \label{lem:lpureloccharfund2}$A\cup B_1\subseteq B_1' \leqp^\lambda C$;
\item \label{lem:lpureloccharfund3}$|A| + |B_1'| \leq (|B_1| + |R|)^{<\lambda}$;
\item \label{lem:lpureloccharfundbox}if $\varphi(\bar x, \bar y)$ is a $\lambda\mh\pp$-formula, $\bar b_1'\in (B_1')^{|\bar x|}$, and there is $\bar b_2 \in B_2^{|\bar y|}$ such that $C \vDash \varphi(\bar b_1', \bar b_2)$, then there is $\bar a\in A^{|\bar y|}$ such that $C \vDash \varphi(\bar b_1', \bar a)$.
\end{enumerate}
\end{lemma}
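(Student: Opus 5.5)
The plan is a closure argument of length $\lambda$, so that $\lambda$-continuity (unions of $\lambda$-chains) can replace the countable unions used in \cite[Theorem~4.14]{somestablenonelementary}. Put $\theta = (|B_1| + |R|)^{<\lambda}$. Note that $\theta^{<\lambda} = \theta$ because $\lambda$ is regular, and that there are at most $|R|^{<\lambda}\leq\theta$ many $\lambda\mh\pp$-formulas by Remark~\ref{rem:lpuretrivia}\ref{rem:lpuretrivia1}.

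We build increasing chains $(A_\alpha \mid \alpha<\lambda)$ of submodules of $B_2$ and $(D_\alpha\mid \alpha<\lambda)$ of submodules of $C$, each of size $\leq\theta$, with $D_0\supseteq B_1$. At successor stage $\alpha+1$ we do three things.
\begin{enumerate}[(i), leftmargin=*]
\item For every $\lambda\mh\pp$-formula $\varphi(\bar x,\bar y)$ and every $\bar d\in D_\alpha^{|\bar x|}$ such that some $\bar b_2\in B_2$ gives $C\vDash\varphi(\bar d,\bar b_2)$, choose one such witness $\bar b_2$ and put its entries into $A_{\alpha+1}$.
\item Add to $A_{\alpha+1}$ witnesses in $B_2$ for every basic $\lambda\mh\pp$-formula with parameters in $A_\alpha$ that holds in $B_2$.
\item Add to $D_{\alpha+1}$ witnesses in $C$ for every basic $\lambda\mh\pp$-formula with parameters in $D_\alpha$ that holds in $C$, and also put $A_{\alpha+1}\subseteq D_{\alpha+1}$.
\end{enumerate}
Close each set under the module operations. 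Each step adds at most $\theta\cdot\theta^{<\lambda}\cdot\lambda=\theta$ elements, so the size bound is preserved. At limit stages take unions. Finally set $A=\bigcup_{\alpha<\lambda}A_\alpha$ and $B_1'=\bigcup_{\alpha<\lambda}D_\alpha$. This gives $A\leq B_1'$, $A\cup B_1\subseteq B_1'$, and item~\ref{lem:lpureloccharfund3}.

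For the purity claims \ref{lem:lpureloccharfund1} and \ref{lem:lpureloccharfund2} we use Proposition~\ref{prop:equivlpurelppform}\ref{prop:equivlpurelppform3}. A basic $\lambda\mh\pp$-formula has $<\lambda$ parameters. By regularity of $\lambda$, any tuple of fewer than $\lambda$ parameters from $A$ (resp.\ $B_1'$) already lies in some $A_\alpha$ (resp.\ $D_\alpha$). Step (ii) (resp.\ (iii)) at stage $\alpha+1$ then supplies the required witnesses inside $A$ (resp.\ $B_1'$).

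Item~\ref{lem:lpureloccharfundbox} works the same way. Given $\bar b_1'\in (B_1')^{|\bar x|}$ with $|\bar x|<\lambda$, regularity puts $\bar b_1'$ inside some $D_\alpha$. Step (i) then provides $\bar a\in A_{\alpha+1}\subseteq A$ with $C\vDash\varphi(\bar b_1',\bar a)$.

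The main obstacle is the cardinal bookkeeping. The number of tuples $\bar d$ of length $<\lambda$ from a set of size $\theta$ is $\theta^{<\lambda}$, and this is why $\theta$ is taken closed under $<\lambda$ powers. We need $(\kappa^{<\lambda})^{<\lambda}=\kappa^{<\lambda}$, which holds for regular $\lambda$: a function from some $\beta<\lambda$ into $\kappa^{<\lambda}$ has bounded lengths, so it is essentially an element of $\kappa^{<\lambda}$. We also need that stopping at length $\lambda$ suffices, and this is exactly where regularity of $\lambda$ is used.
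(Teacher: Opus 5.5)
Your proposal is correct and follows the paper's proof. Both use interleaved chains of length $\lambda$, where witnesses in $B_2$ for $\lambda\mh\pp$-formulas with parameters from the $B_1'$-side are fed into the $A$-side, and regularity of $\lambda$ gives item (d). The only difference is how purity of the unions is obtained: the paper builds each stage with the $\mrm{LS}$ axiom of $(\rmod,\leqp^\lambda)$ and then uses coherence and $\lambda$-smoothness, whereas you close under witnesses of basic $\lambda\mh\pp$-formulas by hand and get purity of the unions directly from regularity, so your version does not rely on the cited $\mrm{LS}$ result.
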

\begin{proof}
Recall that by Proposition~\ref{prop:lpurelaec} we have that $\kaecgoth^\rmod_\lambda = (\rmod, \leqp^\lambda)$ is a $\lambda\mh\mrm{AEC}$. By induction on $i < \lambda$ we are going to construct chains  of $R$-modules $(A_i: i < \lambda)$, $ (B_{1,i}' : i<\lambda)$ such that the following conditions are satisfied:
\begin{enumerate}[$(\cdot_1)$,leftmargin=*]
\item \label{cond:lchar1}$(A_i)_{i<\lambda}$ and $(B_{1,i}')_{i<\lambda}$ are $\kaecgoth_\lambda^\rmod$-chains (cf.~\ref{def:mudirected}\ref{def:mudirectsyst});
\item \label{cond:lchar1.5}$B_1 \leq B_{1,0}'$;
\item \label{cond:lchar2} $A_i\leqp^\lambda B_2$ and $B_{1, i}' \leqp^\lambda C$, for $i < \lambda$;
\item \label{cond:lchar2.5} $A_j \leq B_{1,i}'$ for $j< i < \lambda$;
\item \label{cond:lchar3} $|A_i| + |B_{1, i}'| \leq (|B_1| + |R|)^{<\lambda}$, for $i < \lambda$;
\item \label{cond:lchar4} for every $i < \lambda$, if $\varphi(\bar x, \bar y)$ is a $\lambda\mh\pp$-formula (cf.~\ref{def:ppform}\ref{def:ppform1}), $\bar b_1'\in (B_{1, i}')^{|\bar{x}|}$, and there is $\bar b_2 \in B_2^{|\bar{y}|}$ such that $C \vDash \varphi(\bar b_1', \bar b_2)$, then there is $\bar a\in A_i^{|\bar{y}|}$ such that $C \vDash \varphi(\bar b_1', \bar a)$;
\end{enumerate}
\uline{The construction can be done.} 
\begin{enumerate}[$(*_1)$,leftmargin=*, series=prf:lpureindeplocalf]
\item We define $B_{1, 0}'$.
\end{enumerate}
Apply the $\mrm{LS}$ axiom of $\kaecgoth^\rmod_\lambda$ inside $C$ to find $B_1 \leq B_{1, 0}'\leqp^\lambda C$ with $|B_{1, 0}'|\leq (|B_1| + |R|)^{<\lambda}$.
\begin{enumerate}[resume*=prf:lpureindeplocalf]
\item \label{prf:lpureindeplocalf2}We now show how to construct $A_i$, for $ i< \lambda$.
\end{enumerate}
For $i<\lambda$, assume $B_{1, i}'$ and  $(A_j)_{j<i}$ have already been defined. For every $\lambda\mh\pp$-formula $\varphi(\bar x, \bar y)$ and for every $\bar b_1'\in (B_{1, i}')^{|\bar{x}|}$, define $\bar b_2^\varphi(\bar b_1') \in B_2^{|\bar{y}|}$ in the following manner:
\begin{enumerate}[$(\cdot_{\text{\alph*}})$, leftmargin=*]
\item $\bar b_2^\varphi(\bar b_1')$ is the sequence of all $0$'s if there is no $\bar b_2\in B_2^{|\bar{y}|}$ such that $C \vDash \varphi(\bar b_1', \bar b_2)$;
\item otherwise, choose $\bar b_2^\varphi(\bar b_1')\in B_2^{|\bar y|}$ with $C \vDash \varphi(\bar b_1', \bar b_2^\varphi(\bar b_1'))$.
\end{enumerate}
By Lemma~\ref{lem:hyppushouts}\ref{lem:hyppushoutslaec} we can use the $\mrm{LS}$ axiom of $\kaecgoth_\lambda^\rmod$ inside $B_2$ to obtain ${A_i\leqp^\lambda B_2}$ such that 
\[
\bigcup\{\bar b_2^\varphi(\bar b_1')\mid \varphi(\bar x, \bar y)\text{ a }\lambda\mh\pp\text{-formula and }\bar b_1'\in (B_{1, i}')^{|\bar x|}\}\cup \bigcup_{j<i} A_j \subseteq A_i
\]
\nin and $|A_i| \leq (|B_1| + |R|)^{<\lambda}$. Fix $j<i$, we have $A_j\leq A_i$. Notice that $A_i \leqp^\lambda B_2$, and $A_j \leqp^\lambda B_2$ by hypothesis. Therefore, using coherence we get $A_j \leqp^\lambda A_i$ for $j<i$.

\ssk\nin \begin{enumerate}[resume*=prf:lpureindeplocalf]
\item We now show how to construct  $B_{1, i}'$, for $0 < i< \lambda$.
\end{enumerate} 
For $i<\lambda$, assume $(B_{1, j}')_{j<i}$ and $(A_j)_{j<i}$ have already been defined.  We apply the $\mrm{LS}$ axiom of $\kaecgoth_\lambda^\rmod$ inside $C$ to obtain $B_{1, i}'\leqp^\lambda C$ such that $\bigcup_{j<i} (A_j \cup B_{1, j}') \subseteq B_{1, i}'$ and $|B_{1,i}'|  \leq (|B_1| + |R|)^{<\lambda}$. As in \ref{prf:lpureindeplocalf2}, by coherence $B_{1, j}' \leqp^\lambda B_{1, i}'$ for $j<i$.

\smallskip \noindent \begin{enumerate}[resume*=prf:lpureindeplocalf]
\item \label{prf:lpureindeplocalf4}The chains $(A_i)_{i<\lambda}$ and $(B_{1, i}')_{i<\lambda}$ are as desired.
\end{enumerate} 
Why \ref{prf:lpureindeplocalf4}?
By construction, \ref{cond:lchar1}-\ref{cond:lchar4} are satisfied. 

\ssk\nin\uline{It is enough.}
\begin{enumerate}[resume*=prf:lpureindeplocalf]
\item \label{prf:lpureindeplocalf5}$A = \bigcup_{i<\lambda} A_i$ and $B_1' = \bigcup_{i<\lambda}B_{1,i}'$ are as desired in the statement of the lemma. 
\end{enumerate} 
The rest of the proof will deal with showing \ref{prf:lpureindeplocalf5}.
 \begin{enumerate}[resume*=prf:lpureindeplocalf]
\item \label{prf:lpureindeplocalf6.1} Items \ref{lem:lpureloccharfund1}-\ref{lem:lpureloccharfund2} of the statement hold.
\end{enumerate}
Why \ref{prf:lpureindeplocalf6.1}? We have $B_1\leq B_1'$ by condition \ref{cond:lchar1.5} of the construction. Moreover,  by $\lambda$-smoothness of $\kaecgoth_\lambda^\rmod$ and by conditions \ref{cond:lchar1} and \ref{cond:lchar2} of the construction, we have $A\leqp^\lambda B_2$ and $B_1'\leqp^\lambda C$.
 \begin{enumerate}[resume*=prf:lpureindeplocalf]
\item \label{prf:lpureindeplocalf6.2} Item \ref{lem:lpureloccharfund3} holds.
\end{enumerate}
Why \ref{prf:lpureindeplocalf6.2}? Immediate from condition~\ref{cond:lchar3} of the construction.
 \begin{enumerate}[resume*=prf:lpureindeplocalf]
\item \label{prf:lpureindeplocalf7}Item \ref{lem:lpureloccharfundbox} holds.
\end{enumerate} 
Why \ref{prf:lpureindeplocalf7}? Assume $\varphi(\bar x, \bar y)$ and $\bar b_1', \bar b_2$ are as in \ref{lem:lpureloccharfundbox}. Because $|\bar b_1'|< \lambda$, there is $i<\lambda$ such that $\bar b_1'\in (B_{1,i}')^{|\bar x|}$, and using \ref{cond:lchar4} we are done. 

\ssk\nin This finishes the proof of \ref{prf:lpureindeplocalf5}.
\end{proof}

We now show:
\begin{proof}[Proof of Lemma~\ref{lem:lpureindeplocal}]
Let $B_1, B_2, C\in \kaec$ with $B_1, B_2 \leqp^\lambda C$, we want to find $B_1', A\in \kaec$ such that $A \leqp^\lambda B_1'$, $B_1\leqp^\lambda B_1'$, $|A| +|B_1'|\leq (|B_1| + |R|) ^{<\lambda}$, and $B_1' \dnf_A^C B_2$. 
\begin{enumerate}[$(*_1)$, leftmargin=*, series=prf:lpureindeplocal]
\item \label{prf:lpureindeplocal4} We can find $R$-modules $A, B_1'$ such that the following conditions are satisfied:
\begin{enumerate}[$(\cdot_1)$, leftmargin=*] 
\item\label{prf:lpureindeplocal4.2} $A\leqp^\lambda B_2$;
\item \label{prf:lpureindeplocal4.3}$B_1'\leqp^\lambda C$;
\item \label{prf:lpureindeplocal4.1}$|A| + |B_1'| \leq (|B_1| + |R|)^{<\lambda}$;
\item \label{prf:lpureindeplocal4.6}if $\varphi(\bar x, \bar y)$ is a $\lambda\mh\pp$-formula, $\bar b_1'\in (B_1')^{|\bar x|}$, and there is $\bar b_2 \in B_2^{|\bar y|}$ such that $C \vDash \varphi(\bar b_1', \bar b_2)$, then there is $\bar a\in A^{|\bar y|}$ such that $C \vDash \varphi(\bar b_1', \bar a)$;
\item \label{prf:lpureindeplocal4.4}$A, B_1'\in \kaec$;
\item \label{prf:lpureindeplocal4.45} $B_1\leqp^\lambda B_1'$;
\item \label{prf:lpureindeplocal4.5}$A\leqp^\lambda B_1'$.
\end{enumerate}
\end{enumerate}
Why \ref{prf:lpureindeplocal4}? Apply Lemma~\ref{lem:lpureloccharfund} to $B_1, B_2\leq C$ to have $A\leq B_1'\leq C$ which satisfy conditions \ref{lem:lpureloccharfund1}-\ref{lem:lpureloccharfundbox} of Lemma~\ref{lem:lpureloccharfund}, so that conditions \ref{prf:lpureindeplocal4.2}-\ref{prf:lpureindeplocal4.6}  hold automatically. Because $B_2, C\in \kaec$, we have \ref{prf:lpureindeplocal4.4} by \ref{prf:lpureindeplocal4.2}-\ref{prf:lpureindeplocal4.3} and closure of $\kaec$ under $\lambda$-pure submodules. Item \ref{prf:lpureindeplocal4.45} follows immediately from coherence, because $B_1\leqp^\lambda C$ and $B_1 \leq B_1'\leqp^\lambda C$.
Finally, we show  \ref{prf:lpureindeplocal4.5}.
Recall that we have $A \leq B_1'$, and by hypothesis $B_2\leqp^\lambda C$. Thus, also using  \ref{prf:lpureindeplocal4.2}-\ref{prf:lpureindeplocal4.3},  we have $A\leqp^\lambda B_2\leqp^\lambda C$ and $A\leq B_1'\leqp^\lambda C$, which implies $A\leqp^\lambda B_1'$ by coherence. This finishes the proof of \ref{prf:lpureindeplocal4}.

\ssk\nin We are only left to show:
\begin{enumerate}[resume*=prf:lpureindeplocal]
\item \label{prf:lpureindeplocal5}We have $B_1' \dnf^{C}_{A} B_2$.
\end{enumerate} 
The rest of the proof will deal with showing \ref{prf:lpureindeplocal5}. Recall that by Proposition~\ref{prop:widepushoutexist} the pushout of $B_1'$ and $B_2$ over $A$ is given by $P = (B_1' \oplus B_2)/N$, where $N = \ab\{(a, -a)\mid a\in A\}$. We have to show that the unique $R$-module homomorphism  $r: P \to C$ given by $r([(b_1', b_2)]_N) = b_1' + b_2$ is a $\lambda$-pure embedding. 
\begin{enumerate}[resume*=prf:lpureindeplocal]
\item \label{prf:lpureindeplocal6}$r: P \rightarrow C$ is an embedding. 
\end{enumerate} 
We verify \ref{prf:lpureindeplocal6}. Take $b_1'\in B_1'$ and $b_2\in B_2$ and assume $r([(b_1', b_2)]_N) = b_1' + b_2 = 0$, so that 
\begin{equation}
\label{eq:lchar1}
C \vDash b_1' + b_2 = 0.
\end{equation}
Applying~\ref{prf:lpureindeplocal4}\ref{prf:lpureindeplocal4.6} to \eqref{eq:lchar1}, there is $a\in A$ such that 
\[
C\vDash b_1' + a = 0
\]
Therefore $b_1' = -b_2 = -a$, and by definition of $N$ it follows $[(b_1', b_2)]_N = 0$.

\nin
\begin{enumerate}[resume*=prf:lpureindeplocal]
\item \label{prf:lpureindeplocal7}$r: P \rightarrow C$ is a $\lambda$-pure embedding. 
\end{enumerate}
We verify \ref{prf:lpureindeplocal7}. We use the syntactic characterization of $\lambda$-purity of  Proposition~\ref{prop:equivlpurelppform}\ref{prop:equivlpurelppform2}.
Let $\varphi(\bar x)$ a $\lambda\mh\pp$-formula and assume
\[
C \vDash \varphi(\bar b_1' + \bar b_2)
\]
with $\bar b_1'\in (B_1')^{|\bar x|}$ and $\bar b_2\in B_2^{|\bar x|}$. We have to show that
\begin{equation}
\label{eq:lcharfinal}
P \vDash \varphi([(\bar b_1', \bar b_2)]_N).
\end{equation}
Let $\psi(\bar z, \bar z')$ be the formula:
\[
\varphi(\bar z + \bar z').
\]
Then $\psi(\bar z, \bar z')$ is a $\lambda\mh\pp$-formula, and obviously we have that
\begin{equation}
\label{eq:lchar2}
C \vDash \psi(\bar b_1', \bar b_2).
\end{equation}
Now, applying~\ref{prf:lpureindeplocal4}\ref{prf:lpureindeplocal4.6} to \eqref{eq:lchar2}, there is $\bar a\in A^{|\bar z'|}$ such that $C \vDash \psi(\bar b_1', \bar a)$, so we have:
\begin{equation}
\label{eq:lchar2.5}
C \vDash \varphi(\bar b_1' + \bar a).
\end{equation} 
Because $\varphi(\bar x)$ is a $\lambda\mh\pp$-formula, it is of the form $\exists\bar y\, \theta(\bar x, \bar y)$ with $\theta(\bar x, \bar y)$ a system of $<\lambda$ equations.
Notice that $\bar b_1'+ \bar a\in (B_1')^{|\bar x|}$, and by $B_1' \leqp^\lambda C$ applied to \eqref{eq:lchar2.5} there is  $\bar b^\star \in (B_1')^{|\bar y|}$ such that 
\begin{equation}
\label{eq:lchar3}
C \vDash \theta(\bar b_1' + \bar a, \bar b^\star).
\end{equation}
By Remark~\ref{rem:lpuretrivia}\ref{rem:lpuretrivia2} the set of solutions of a $\lambda\mh\pp$-formula is a subgroup, thus
$C\vDash \varphi(\bar b_1' + \bar b_2)$ and $C \vDash \varphi(\bar b_1' + \bar a)$ implies $C \vDash \varphi(\bar b_2 - \bar a)$.
Notice that $\bar b_2 - \bar a\in B_2^{|\bar x|}$ and, because $B_2\leqp^\lambda C$, there is $\bar c^\star\in B_2^{|\bar y|}$ such that
\begin{equation}
\label{eq:lchar4}
C \vDash \theta(\bar b_2 - \bar a, \bar c^\star).
\end{equation}
As $\theta(\bar x, \bar y)$ is a system of equations, we can sum  \eqref{eq:lchar3} and \eqref{eq:lchar4} to have
\begin{equation}
\label{eq:lchar5}
C \vDash \theta(\bar b_1' + \bar b_2, \bar b^\star + \bar c^\star).
\end{equation}
Because $\bar b^\star \in (B_1')^{|\bar y|}$ and $\bar c^\star\in B_2^{|\bar y|}$, we have that, projecting \eqref{eq:lchar5} on $P = (B_1'\oplus B_2)/N=(B_1'\oplus B_2)/\{(a, -a)\mid a\in A\}$, we get \eqref{eq:lcharfinal}.
\end{proof}

Finally, we are only left to show that $\dnf$ has the right witness property. Just as in the case of local character, this is the straightforward generalization of the result of \cite[Lemma~3.10]{mazarinoetherian}, which dealt with the case $\lambda = \aleph_0$.

\begin{lemma}
\label{lem:lpureindepwitness}
Assume $\kaecgoth_\lambda=(\kaec, \leqp^\lambda)$ satisfies Hypothesis~\ref{hyp:lambdapure}, then $\dnf$ has the right $({<}\lambda)$-witness property (cf.~\ref{def:indepwitnessprop}). 
\end{lemma}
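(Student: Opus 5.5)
We have to prove: if $A\leqp^\lambda B_1, B_2\leqp^\lambda C$ (all in $\kaec$), and $\dnfb{A}{B_1}{X}{C}$ holds for every $X\subseteq B_2$ with $|X|<\lambda$, then $B_1\dnf^C_A B_2$. By Definition~\ref{def:dnflambdapure}, the latter means that the map $r\colon P=(B_1\oplus B_2)/N\to C$, $r([(b_1,b_2)]_N)=b_1+b_2$, with $N=\{(a,-a)\mid a\in A\}$, is a $\lambda$-pure embedding. The approach is to check injectivity and $\lambda$-purity of $r$ ``locally''. Each of these is witnessed by fewer than $\lambda$ elements of $B_2$, and for such a small $X$ we get a genuine independent square to which we can map.

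First I would unwind $\dnfb{A}{B_1}{X}{C}$ (Definition~\ref{def:indrelmuaec}\ref{def:indrelmuaec2}). It gives $M_1'\supseteq B_1$, $M_2'\supseteq X$ and $C\leqp^\lambda C'$ with $M_1'\dnf_A^{C'}M_2'$, where all of $A\leqp^\lambda M_\ell'\leqp^\lambda C'$ hold. Let $B_1''\leqp^\lambda M_1'$ be the copy of $B_1$; by coherence $B_1\leqp^\lambda M_1'$. Let $X_0 := M_2'\cap B_2$? That choice is not needed. Instead I use monotonicity of pre-cellular squares. Consider the pushout $P'=(M_1'\oplus M_2')/N'$ with $N'=\{(a,-a)\mid a\in A\}$. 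The map $r'\colon P'\to C'$ is a $\lambda$-pure embedding. Consequently, for $b_1\in B_1^{<\lambda}$ and $x\in X^{<\lambda}$, every $\lambda$-pp statement true in $C'$ (equivalently in $C$, since $C\leqp^\lambda C'$) of $b_1+x$ is true of $[(b_1,x)]$ in $P'$.

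The key step is then a ``pullback'' of pp-truth from $P'$ to $P$. Following the explicit style of Lemma~\ref{lem:lpureindeplocal}, statements in $P'$ about $[(\bar b_1,\bar x)]$ reduce to equations in $M_1'\oplus M_2'$ modulo $N'$. In particular, $P'\vDash\varphi([(\bar b_1,\bar x)])$ yields $\bar a\in A$, $\bar m_1\in M_1'$, $\bar m_2\in M_2'$ with $\theta(\bar b_1+\bar a,\bar m_1)$ in $M_1'$ and $\theta(\bar x-\bar a,\bar m_2)$ in $M_2'$, where $\varphi=\exists\bar y\,\theta$. Since $B_1\leqp^\lambda M_1'$, the first system is solvable in $B_1$. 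Since $\bar x\in B_2$, $\bar a\in A\subseteq B_2$ and $M_2'\leqp^\lambda C'$, $B_2\leqp^\lambda C\leqp^\lambda C'$, the second system, solvable in $C'$, is solvable in $B_2$. Adding the two solutions gives $P\vDash\varphi([(\bar b_1,\bar x)])$.

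To conclude, given a $\lambda$-pp formula $\varphi$ and $\bar b_1\in B_1^{|\bar x|}$, $\bar b_2\in B_2^{|\bar x|}$ with $C\vDash\varphi(\bar b_1+\bar b_2)$, set $X=\mrm{ran}(\bar b_2)$, which has size $<\lambda$ because $|\bar x|<\lambda$. Applying the above gives $P\vDash\varphi([(\bar b_1,\bar b_2)])$, which is purity of $r$. Injectivity is the special case of the formula $x=0$, using a singleton $X$. The main obstacle I expect is the bookkeeping in Definition~\ref{def:indrelmuaec}\ref{def:indrelmuaec2}, namely that $M_1',M_2'$ only contain $B_1$ and $X$ rather than being equal to them. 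The decomposition argument above is designed to handle this using only $\lambda$-purity of $B_1$ in $M_1'$ and of $B_2$ in $C'$. I would double-check that $B_1\leqp^\lambda M_1'$ follows from coherence inside $C'$ ($B_1\leqp^\lambda C\leqp^\lambda C'$ and $B_1\le M_1'\leqp^\lambda C'$).
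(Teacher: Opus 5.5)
Your argument is correct and is essentially the paper's proof. You take $X$ to be the entries of $\bar b_2$ (a singleton for injectivity), reflect the $\lambda$-pp formula from $C'$ into the pushout $(M_1'\oplus M_2')/N'$, lift it with error terms from $A$, solve the two halves inside $B_1$ (via $B_1\leqp^\lambda M_1'$, by coherence) and inside $B_2$ (via $B_2\leqp^\lambda C'$), and then project to $P$.

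The one step to tighten is the shifted form $\theta(\bar b_1+\bar a,\bar m_1)$, $\theta(\bar x-\bar a,\bar m_2)$. This form is immediate only for \emph{basic} $\lambda$-pp formulas, whose equations are indexed by the free variables. So either restrict to basic formulas via Proposition~\ref{prop:equivlpurelppform}\ref{prop:equivlpurelppform3}, as the paper does, or justify the shift for general formulas using $N'\leqp^\lambda M_1'\oplus M_2'$ from Proposition~\ref{prop:wpushoutclosedlambda}\ref{prop:wpushoutclosedlambda2}.
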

\begin{proof} 
Let $A\leqp^\lambda B_\ell \leqp^\lambda C$, with $\ell\in \{1, 2\}$, and assume that for every $X\subseteq B_2$ with $|X|<\lambda$ one has $\dnfb{A}{B_1}{X}{C}$, we want to show that $B_1 \dnf_A^C B_2$. Recall that by Proposition~\ref{prop:widepushoutexist} the pushout of $B_1$ and $B_2$ over $A$ is given by $P =(B_1\oplus B_2)/N$, where $N = \{(a, -a)\mid a\in A\}$. We have to prove that the unique $R$-module homomorphism $r: P \to C$ given by $r([(b_1, b_2)]_N) = b_1 + b_2$ is a $\lambda$-pure embedding.
\begin{enumerate}[$(*_1)$, leftmargin=*]
\item\label{prf:lpurewitnprop1}  $r: P \to C$ is an embedding.
\end{enumerate} 
We verify \ref{prf:lpurewitnprop1}. Take $b_1\in B_1$ and $b_2\in B_2$ with $r([(b_1, b_2)]_N) = 0$. Let $X = \{b_2\}$, then $\dnfb{A}{B_1}{X}{C}$. Therefore, there are $D_1, D_2\leqp^\lambda E$ in $\kaec$ such that $B_1\subseteq D_1$, $X\subseteq D_2$, $C\leqp^\lambda E$, and $D_1\dnf_{A}^{E}D_2$. If $Q=(D_1\oplus D_2)/\{(a, -a)\mid a\in A\} =(D_1\oplus D_2)/N$ is the pushout of $D_1$ and $D_2$ over $A$, we have the following commutative diagram:
\[\begin{tikzcd}
	&& E \\
	{D_1} & Q \\
	A & E
	\arrow[curve={height=-12pt}, from=2-1, to=1-3]
	\arrow["{h_1}", from=2-1, to=2-2]
	\arrow["s", from=2-2, to=1-3]
	\arrow[from=3-1, to=2-1]
	\arrow[from=3-1, to=3-2]
	\arrow[curve={height=12pt}, from=3-2, to=1-3]
	\arrow["{h_2}"', from=3-2, to=2-2]
\end{tikzcd}\]
with $s: Q \to E$ a $\lambda$-pure embedding. But $s([(b_1, b_2)]_N) = r([(b_1, b_2)]_N) = 0$, and because $s$ is an embedding we have $[(b_1, b_2)]_N = 0$.

\ssk\nin \begin{enumerate}[$(*_2)$, leftmargin=*]
\item\label{prf:lpurewitnprop2} $r: P \to C$ is a $ \lambda$-pure embedding. 
\end{enumerate} 
We verify \ref{prf:lpurewitnprop2}. We use the characterization of Proposition~\ref{prop:equivlpurelppform}\ref{prop:equivlpurelppform3}. Let $\varphi(\bar x)$ be a basic $\lambda\mh\pp$-formula and suppose $C \vDash \varphi(\bar b_1 + \bar b_2)$ for $\bar b_\ell\in B_\ell$, with $\ell\in\{1,2\}$. We have to show:
\begin{equation}
\label{eq:lpwitn0}
P \vDash \varphi([(\bar b_1, \bar b_2)]_N). 
\end{equation}
\nin Let $X$ be the set of entries of $\bar b_2$. Clearly $|X| < \lambda$, so that we can find $D_1, D_2, E, Q$ as before with the unique map $s: Q \to E$ a $\lambda$-pure embedding. Because $C\vDash \varphi(\bar b_1 + \bar b_2)$ and $C\leqp^\lambda E$, then $E\vDash \varphi(\bar b_1 + \bar b_2)$. Since $s: Q\to E$ is a $\lambda$-pure embedding with $s([(d_1, d_2)]_N) = d_1 + d_2$ for $d_\ell \in D_\ell$, $\ell\in\{1,2\}$, then 
\begin{equation}
\label{eq:lpwitn1}
Q \vDash \varphi([(\bar b_1, \bar b_2)]_N).
\end{equation}
Suppose $\varphi(\bar x)$ is the formula:
\[
\exists (y_j)_{j\in J}\bigwedge_{k\in K} \sum_{j\in J} r_{jk} y_j = x_k,
\]
and define $\psi(\bar x, \bar z)$ to be the formula:
\[
\exists (y_j)_{j\in J}\bigwedge_{k\in K} \sum_{j\in J} r_{jk} y_j = x_k + z_k.
\]
We have $Q=(D_1\oplus D_2)/N$, so that \eqref{eq:lpwitn1} implies that there are $(a_k)_{k\in K}$ in $A$ such that, denoting $\bar a = (a_k)_{k\in K}$, then
\begin{equation}
\label{eq:lpwitn2}
D_1\oplus D_2 \vDash \psi((\bar b_1, \bar b_2), (\bar a,-\bar a)).
\end{equation}

\smallskip \noindent We have $B_1\leqp^\lambda C \leqp^\lambda E$ and $B_1\leq D_1\leqp^\lambda E$, so by coherence $B_1\leqp^\lambda D_1$. Projecting \eqref{eq:lpwitn2} on $D_1$ and using $B_1\leqp^\lambda D_1$, we get
\begin{equation}
\label{eq:lpwitn3}
B_1\vDash \psi(\bar b_1, \bar a).
\end{equation}
Projecting \eqref{eq:lpwitn2} on $D_2$, we get $D_2 \vDash \psi(\bar b_2,-\bar a)$. Using $D_2\leqp^\lambda E$ and $B_2\leqp^\lambda C\leqp^\lambda E$, we get 
\begin{equation}
\label{eq:lpwitn4}
B_2 \vDash \psi(\bar b_2, -\bar a).
\end{equation}
Putting \eqref{eq:lpwitn3} and \eqref{eq:lpwitn4} together gives us 
\begin{equation}
\label{eq:lpwitn5}
B_1\oplus B_2 \vDash \psi((\bar b_1, \bar b_2), (\bar a, -\bar a)),
\end{equation}
and projecting \eqref{eq:lpwitn5} on $P=(B_1\oplus B_2)/N=(B_1\oplus B_2)/\{(a,-a)\mid a\in A\}$ we finally get \eqref{eq:lpwitn0}.
\end{proof}

\begin{theorem}
\label{thm:lpurestableind}
Assume $\kaecgoth_\lambda$ satisfies Hypothesis~\ref{hyp:lambdapure}, then $\dnf$ is a stable independence relation (cf.~\ref{def:stableindep}). In particular, $\kaecgoth_\lambda$ is tame (cf.~\ref{def:tame}) and stable (cf.~\ref{def:stabilityproper}\ref{def:stabilityproper2}).
\end{theorem}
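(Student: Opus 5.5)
The plan is to assemble the pieces already proved in this subsection and then invoke the general machinery of \cite{forkingcategorical}. By Definition~\ref{def:stableindep}, a stable independence relation on a $\mu\mh\mrm{AEC}$ is a weakly stable independence relation that also has right local character and the right witness property. So the first step is to observe that $\kaecgoth_\lambda$ is in fact a $\lambda\mh\mrm{AEC}$, which is Lemma~\ref{lem:hyppushouts}\ref{lem:hyppushoutslaec}. This places us in the setting where Definitions~\ref{def:indeplocalchar}, \ref{def:indepwitnessprop} and Fact~\ref{fact:stableindep} apply, with $\mu=\lambda$.

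Next I would verify the three ingredients, each of which is a lemma already established:
\begin{enumerate}[(1), leftmargin=*]
\item weak stability of $\dnf$ (symmetry, right transitivity, existence, uniqueness) is Lemma~\ref{lem:lpureindepweaklystable}, obtained by viewing $\kaec$ with $\lambda$-pure embeddings as a coherent pre-cellular category and applying Proposition~\ref{prop:indepcellular};
\item right local character is Lemma~\ref{lem:lpureindeplocal}, with the explicit bound $\lambda_\nu=(\nu+|R|)^{<\lambda}$; here one takes $M_1'=B_1'$, $M_0=A$ and $M_3'=C$, the required $\kaecgoth_\lambda$-embeddings $A\leqp^\lambda B_1'$ and $B_1\leqp^\lambda B_1'$ being supplied by that lemma;
\item the right $({<}\lambda)$-witness property is Lemma~\ref{lem:lpureindepwitness}, so the right witness property holds with $\theta=\lambda$.
\end{enumerate}
Together these show that $\dnf$ is stable.

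The only point where some care is needed is the match between formats. Lemma~\ref{lem:lpureindeplocal} is phrased in the notation $B_1'\dnf_A^C B_2$ of Definition~\ref{def:indrelmuaec}\ref{def:indrelmuaec1}, and one has to confirm that this is exactly the form required in Definition~\ref{def:indeplocalchar}. In particular $A\leqp^\lambda B_2$ must hold, so that $A$ is a legitimate base. This follows from the coherence argument already carried out in the proof of that lemma.

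Finally, the tameness and stability conclusions follow directly from Fact~\ref{fact:stableindep}\ref{fact:stableindep1}--\ref{fact:stableindep2} applied to the $\lambda\mh\mrm{AEC}$ $\kaecgoth_\lambda$ equipped with this stable independence relation. No step here is a genuine obstacle, since the substantive work was done in the preceding lemmas. The main thing to check is the bookkeeping in the local character step.
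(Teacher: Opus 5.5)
Your proposal is correct and follows the paper's proof, which combines Lemmas~\ref{lem:lpureindepweaklystable}, \ref{lem:lpureindeplocal} and \ref{lem:lpureindepwitness} and then applies Fact~\ref{fact:stableindep}. One minor correction to your bookkeeping: $A\leqp^\lambda B_2$ is given directly by item~\ref{lem:lpureloccharfund1} of Lemma~\ref{lem:lpureloccharfund}, whereas the coherence argument is what yields $A\leqp^\lambda B_1'$ and $B_1\leqp^\lambda B_1'$.
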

\begin{proof}
That $\dnf$ is a stable independence relation follows from Lemmas~\ref{lem:lpureindepweaklystable},~\ref{lem:lpureindeplocal} and \ref{lem:lpureindepwitness}. Tameness and stability follow from Fact~\ref{fact:stableindep}.
\end{proof}

Now we are going to compute the cardinals in which $\kaecgoth_\lambda$ is stable, when $\kaecgoth_\lambda$ satisfies Hypothesis~\ref{hyp:lambdapure}. The proof follows the \enquote{standard proof} of \cite[Theorem 4.17]{somestablenonelementary}.

\begin{theorem}
\label{thm:lpurestabcard}
Assume $\kaecgoth_\lambda=(\kaec, \leqp^\lambda)$ satisfies Hypothesis~\ref{hyp:lambdapure}, let $\lambda_0 = |R|^{<\lambda}$ and $\kappa^{\lambda_0} = \kappa$. Then $\kaecgoth_\lambda$ is $\kappa$-stable.
\end{theorem}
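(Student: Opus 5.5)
Fix $M\in\kaec$ with $|M|=\kappa$. Since $\kaecgoth_\lambda$ has $\mrm{AP}$ and chain bounds (it has $\lambda$-continuity and, more to the point, $\infty\mh\mrm{AP}$ is not needed if we argue directly), it is convenient to reduce to counting types realized in a single model. Concretely, I would use Lemma~\ref{lemma:equivasamalg}: by Lemma~\ref{lem:hyppushouts}\ref{lem:hyppushouts1} we have $\mrm{AP}$, and chain bounds follow since every chain can be continued (take unions at stages of cofinality $\geq\lambda$ and use the $\lambda$-directed closure; at small cofinality stages one amalgamates/uses LS). If chain bounds are awkward, I would instead argue directly with the independence relation as below, which does not require amalgamating many extensions at once. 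So the core claim is: for any $M\leqp^\lambda N$ in $\kaec$ with $|M|=\kappa$, the number of $\gtp(b/M;N)$, $b\in N$, is at most $\kappa$; and then handle arbitrary extensions via uniqueness of non-forking.

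The main counting uses Fact~\ref{fact:indeptypes} for the stable independence relation $\dnf$ of Theorem~\ref{thm:lpurestableind}. Given $p=\gtp(b/M;N)$, apply the sharper form of local character in Lemma~\ref{lem:lpureindeplocal} with $B_1$ a small $\lambda$-pure submodule of $N$ containing $b$ (of size $\le\lambda_0$ by the $\mrm{LS}$ axiom, Lemma~\ref{lem:hyppushouts}\ref{lem:hyppushoutslaec}) and $B_2=M$, $C=N$: this yields $A\leqp^\lambda M$ with $|A|\le(\lambda_0+|R|)^{<\lambda}=\lambda_0$ and $B_1'\dnf^N_A M$, hence $p$ does not fork over $A$. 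By uniqueness (Fact~\ref{fact:indeptypes}\ref{fact:indeptypesuniq}), $p$ is determined by the pair $(A,p\restriction A)$. The number of choices of $A$ is at most $\kappa^{\lambda_0}=\kappa$ (subsets of $M$ of size $\le\lambda_0$), and for each $A$ the number of types $p\restriction A\in\gS_{\kaecgoth_\lambda}(A)$ is at most $2^{\lambda_0^{<\lambda}}=2^{\lambda_0}$ by Fact~\ref{fact:cardorbtypes} (using $\lambda_0^{<\lambda}=\lambda_0$ and $\mrm{LS}_\lambda\le\lambda_0$). Since $\kappa^{\lambda_0}=\kappa$ forces $\kappa\ge2^{\lambda_0}$, the total is $\le\kappa\cdot2^{\lambda_0}=\kappa$.

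The step I expect to need care is the application of uniqueness over varying ambient models: types in $\gS(M)$ live in different extensions $N$, so I must compare $p,q$ non-forking over the same $A$ with $p\restriction A=q\restriction A$. Here I would use $\mrm{AP}$ (Lemma~\ref{lem:hyppushouts}\ref{lem:hyppushouts1}) to place both realizations in a common extension $N^*$ of $M$ (non-forking is invariant under $\kaecgoth$-extensions of the ambient model, being defined via $\dnf$ closed under the independence-relation axiom), and then Fact~\ref{fact:indeptypes}\ref{fact:indeptypesuniq} applies with $B=M$. Also, $p\restriction A$ for $A$ merely a model in $\kaec$ is indeed in $\gS(A)$ since $A\leqp^\lambda M\leqp^\lambda N$ by coherence/transitivity. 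This gives $|\gS(M)|\le\kappa$, i.e. $\kappa$-stability.
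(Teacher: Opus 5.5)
Your argument is correct and is essentially the paper's proof. Local character (Lemma~\ref{lem:lpureindeplocal}) gives each type over $M$ a non-forking base $A\leqp^\lambda M$ with $|A|\le\lambda_0$; there are at most $\kappa^{\lambda_0}=\kappa$ such bases and at most $2^{\lambda_0}\le\kappa$ types over each; and uniqueness (Fact~\ref{fact:indeptypes}) makes $p\mapsto(A,p\restriction A)$ injective. The paper phrases the same count as a pigeonhole contradiction among $\kappa^+$ distinct types.

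Your use of binary $\mrm{AP}$ to place two types in a common ambient model before invoking uniqueness makes explicit a step the paper leaves implicit. The chain-bounds digression in your first paragraph is unnecessary, and as sketched it does not work: amalgamation plus $\mrm{LS}$ does not bound chains at stages of small cofinality. It is best dropped.
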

\begin{proof}
Assume for a contradiction that there exist $A\in \kaec$ with $|A| = \kappa$ and a sequence $(p_i\mid i< \kappa^+)$ of pairwise distinct orbital types over $A$. By Lemma~\ref{lem:lpureindeplocal} we can find $B_i\leqp^\lambda A$ in $\kaec$ such that $p_i$ does not fork over $B_i$ (cf.~\ref{def:dnftype}) and $|B_i|\leq |R|^{<\lambda}=\lambda_0$. Because $\kappa^{\lambda_0} =\kappa$, by the pigeon-hole principle we can assume there is $B\leqp^\lambda A$ such that $B = B_i$ for every $i<\kappa^+$. Notice that for every $i<\kappa^+$ one has $p_i\restriction B\in \gS_{\kaecgoth_\lambda}(B)$, and by Fact~\ref{fact:cardorbtypes} we have $|\gS_{\kaecgoth_\lambda}(B)| \leq 2^{\lambda_0}\leq \kappa^{\lambda_0} = \kappa$.
Therefore, there must be $i<j<\kappa^+$ with $p_i\restriction B = p_j\restriction B$. But $p_i$ and $p_j$ do not fork over $B$, and by Fact~\ref{fact:indeptypes}\ref{fact:indeptypesuniq} we have $p_i =p_j$, a contradiction. 
\end{proof}
Summing up the results of this subsection we have the following:
\begin{theorem}
\label{thm:lpuresumup}
Let $\lambda_0 = |R|^{<\lambda}$ and   $\kappa= \kappa^{\lambda_0}$. If $\kaecgoth_\lambda= (\kaec, \leqp^\lambda)$ satisfies Hypothesis~\ref{hyp:lambdapure}, then $\kaecgoth_\lambda$ is a $\lambda\mh\mrm{AEC}$, it is $\kappa$-stable,  tame, has $\mrm{AP}$, and $\mrm{JEP}$. Moreover:
\begin{enumerate}[(1), leftmargin=*]
\item \label{thm:lpuresumup1}If $\kaec$ is closed under arbitrary direct sums, then $\kaecgoth_\lambda$ has $\infty\mh\mrm{AP}$.
\item \label{thm:lpuresumup3} If $\nu\geq \lambda$, then $\kaecgoth_\nu = (\kaec, \leqp^\nu)$ also satisfies Hypothesis~\ref{hyp:lambdapure}.
\end{enumerate} 
\end{theorem}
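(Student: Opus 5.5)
This is a summary theorem, so the plan is to collect results already established in this section and check that their hypotheses match. Fix $\kaecgoth_\lambda=(\kaec,\leqp^\lambda)$ satisfying Hypothesis~\ref{hyp:lambdapure}.

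\begin{enumerate}[(1), leftmargin=*]
\item That $\kaecgoth_\lambda$ is a $\lambda\mh\mrm{AEC}$ (with $\mrm{LS}_\lambda(\kaecgoth_\lambda)\le|R|^{<\lambda}$) is Lemma~\ref{lem:hyppushouts}\ref{lem:hyppushoutslaec}.
\item $\mrm{JEP}$ is Lemma~\ref{lem:hyppushouts}\ref{lem:hyppushoutsjep}.
\item $\mrm{AP}$ is Lemma~\ref{lem:hyppushouts}\ref{lem:hyppushouts1}.
\item Tameness follows from Theorem~\ref{thm:lpurestableind}. That theorem combines Lemmas~\ref{lem:lpureindepweaklystable}, \ref{lem:lpureindeplocal} and \ref{lem:lpureindepwitness} to produce a stable independence relation, and Fact~\ref{fact:stableindep} then yields tameness.
\item $\kappa$-stability for $\kappa=\kappa^{\lambda_0}$, with $\lambda_0=|R|^{<\lambda}$, is exactly Theorem~\ref{thm:lpurestabcard}.
\end{enumerate}

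For item~\ref{thm:lpuresumup1}, assume in addition that $\kaec$ is closed under arbitrary direct sums; then $\infty\mh\mrm{AP}$ is Lemma~\ref{lem:hyppushouts}\ref{lem:hyppushouts2}. That lemma only says the proof is similar to the binary case, so I would make the argument explicit. Given $\kaecgoth_\lambda$-embeddings $(f_i:A\to B_i\mid i\in I)$, form the wide pushout $P=(\bigoplus_i B_i)/N$ as in Proposition~\ref{prop:widepushoutexist}. Each $g_i$ is a $\lambda$-pure embedding by Corollary~\ref{cor:widepushclolambda}. Next, $\bigoplus_i B_i\in\kaec$ by the extra hypothesis, and $N\leqp^\lambda\bigoplus_iB_i$ by Proposition~\ref{prop:wpushoutclosedlambda}\ref{prop:wpushoutclosedlambda2}. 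Closure under $\lambda$-pure submodules then gives $N\in\kaec$, and closure under $\lambda$-pure quotients gives $P\in\kaec$. So $(g_i)_{i\in I}$ is an amalgam in $\kaecgoth_\lambda$. When $I=\emptyset$ the statement is trivial; otherwise we need $A\in\kaec$, which holds since $A$ is the domain of the $f_i$.

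For item~\ref{thm:lpuresumup3}, apply Remark~\ref{rem:hyplpure}\ref{rem:hyplpurenu}; this also needs $\nu$ regular, per Context~\ref{con:lambdaregular}. If that remark needs justification, the checks are as follows.
\begin{itemize}
\item \emph{Closure under $\nu$-directed systems.} A $\nu$-pure embedding is $\lambda$-pure, and a $\nu$-directed system is $\lambda$-directed, so the union lies in $\kaec$.
\item \emph{Closure under $\nu$-pure submodules and $\nu$-pure quotients.} Again, $\nu$-purity implies $\lambda$-purity, so these reduce to the corresponding closure properties for $\lambda$.
\item \emph{Closure under finite direct sums.} This does not depend on the relation, so it carries over unchanged.
\end{itemize}

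I expect the only point needing any care to be the bookkeeping of which earlier results apply. In particular, item~\ref{thm:lpuresumup1} needs arbitrary direct sums to place the wide pushout in $\kaec$, while the pushouts behind plain $\mrm{AP}$ need only finite direct sums.
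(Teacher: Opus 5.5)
Your proposal is correct and is essentially the paper's proof. The paper's proof simply cites Lemma~\ref{lem:hyppushouts}, Theorems~\ref{thm:lpurestableind} and~\ref{thm:lpurestabcard}, and Remark~\ref{rem:hyplpure}. Your explicit wide-pushout argument for $\infty\mh\mrm{AP}$ and your check of the remark only fill in details the paper leaves as ``similar'' or obvious; your note that $\nu$ must be regular for Hypothesis~\ref{hyp:lambdapure} to apply is a fair caveat that the paper leaves implicit.
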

\begin{proof}
Everything, with the exception of ~\ref{thm:lpuresumup3}, follows from Lemma~\ref{lem:hyppushouts}, Theorems~\ref{thm:lpurestableind},~\ref{thm:lpurestabcard}. Finally, item \ref{thm:lpuresumup3} is  Remark~\ref{rem:hyplpure}.
\end{proof}

\subsection{Examples}
\label{sect:lpureexample}

Throughout this subsection we follow Notation~\ref{not:hyplpure}. In this subsection we are going to give examples of classes satisfying Hypothesis~\ref{hyp:lambdapure}, so that in particular they satisfy the hypotheses of Theorem~\ref{thm:lpuresumup}. By this same theorem, when such classes are closed under arbitrary direct sums, they also satisfy $\infty\mh\mrm{AP}$.

Let us start out with some concrete  examples of classes of $R$-modules which satisfy Hypothesis~\ref{hyp:lambdapure}.

\begin{example}
\label{ex:ringlpure}
The following classes satisfy Hypothesis~\ref{hyp:lambdapure} and are closed under arbitrary direct sums:
\begin{enumerate}[(1), leftmargin=*]
\item \label{ex:ringlpure1}$(\rmod, \leqp^\lambda)$, where $\lambda \geq \aleph_0$.
\item \label{ex:ringlpure1.5} $(R\flatmod, \leqp^\lambda)$, where $R\flatmod$ is the class of flat $R$-modules and $\lambda \geq \aleph_0$. An $R$-module $A$ is flat if $A\otimes_R -$ is an exact functor \cite[Section~3.3]{rotman}. By \cite[Proposition~4.4]{lecturesonmodules} the class of flat modules is closed under direct limits, and by  \cite[Corollary~4.86]{lecturesonmodules} the class of flat modules is closed under pure submodules and pure quotients. Finally, it is closed under arbitrary direct sums by \cite[Proposition~4.2]{lecturesonmodules}.
\end{enumerate}
\end{example}

The proof of the following example uses advanced module-theoretic tools. Namely, we use the fact that the flat cotorsion theory is cogenerated by a set, which was crucially used  by Enochs to solve the flat cover conjecture \cite{flatcoverconj}. Because the proof is rather long, and uses standard homological machinery, we refer the interested reader to  Appendix~\ref{sect:appcotorsion}.

\begin{example}
\label{ex:lpurecotorsion}
$(R\mh\cotorsion, \leqp^\lambda)$ satisfies Hypothesis~\ref{hyp:lambdapure}, where $R\mh\cotorsion$ is the class of cotorsion $R$-modules and $\lambda \geq \ab(|R|+\aleph_0)^+$. An $R$-module $A$ is cotorsion if $\Ext_R^1(F, A) = 0$\footnote{\label{fn:ext}Recall that $\Ext(F, A) =\Ext^1_R(F, A)= 0$ if and only if any short exact sequence of the form ${0 \to A \to B \to F \to 0}$ is split \cite[Theorem~7.31]{rotman}.} for every flat $R$-module $F$ \cite[Section~4.6]{purityspectra}.
\end{example}
\begin{proof}
This follows immediately from Proposition~\ref{prop:cotapphyp}.
\end{proof}

We will see other classes of $R$-modules which satisfy Hypothesis~\ref{hyp:lambdapure} when we will study classes of pure-injective modules in Subsection~\ref{sect:pinjmodules} (see Lemma~\ref{lem:dsumhyplpurehyp}). 
Finally, we end this section by giving many examples of classes of abelian groups which satisfy Hypothesis~\ref{hyp:lambdapure}. 

\begin{example}
\label{ex:hyplpure}
The following classes of abelian groups satisfy Hypothesis~\ref{hyp:lambdapure} and are closed under arbitrary direct sums:
\begin{enumerate}[(1), leftmargin=*]
\item \label{ex:hyplpure2}$(\tfab, \leqp^\lambda)$, where $\tfab$ denotes the class of torsion-free abelian groups and $\lambda \geq \aleph_0$.
\item \label{ex:hyplpure3}$(\rgrp, \leqp^\lambda)$, where $\rgrp$ is the class of reduced abelian groups and $\lambda \geq \aleph_1$. An abelian group is reduced if it contains no divisible subgroup.
\item \label{ex:hyplpure5}$ (\rpgrp, \leqp^\lambda)$, where $\rpgrp$ is the class of reduced $p$-groups and $\lambda \geq \aleph_1$.
\item \label{ex:hyplpure6}$(\nu\almostfree, \leqp^\lambda)$, where $\nu\almostfree$ is the class of $\nu$-free abelian groups and $\lambda \geq \nu$. An abelian group is $\nu$-free if all of its $({<}\nu)$-generated subgroups are free.
\item \label{ex:hyplpure7}$ (\cotfree, \leqp^\lambda)$, where $\cotfree$ is the class of cotorsion-free groups and $\lambda \geq (2^{\aleph_0})^+$. A group is cotorsion-free if it contains no non-zero cotorsion subgroups \cite[Chapter~V, Definition~2.8]{almost}.
\item \label{ex:hyplpure8}$ (\slendergrp, \leqp^\lambda)$, where $\slendergrp$ is the class of slender groups and $\lambda \geq (2^{\aleph_0})^+$. An abelian group $A$ is slender if for any homomorphism $\eta: \mathbb Z^{\omega}\to A$ then $\eta(e_n) = 0$ for cofinitely many $n<\omega$, where $e_n$ is the $n$-th canonical vector \cite[Chapter~12, Section~2]{fuchs}.
\end{enumerate}
\end{example}

We now start showing that the classes of Example~\ref{ex:hyplpure} actually satisfy Hypothesis~\ref{hyp:lambdapure}. First, we prove the following useful lemma, which is interesting in its own right. 

\begin{lemma}
\label{lem:sufficienthyplpure}
Let $\lambda$ be an infinite regular cardinal and $\collmods$ a collection of $\lambda$-presented $R$-modules. Let $\kaec^\collmods$ denote the class of $R$-modules which do not contain an isomorphic copy of a non-zero module in $\collmods$. Then the following hold: 
\begin{enumerate}[(1), leftmargin=*]
\item \label{lem:sufficienthyplpure1} $\kaecgoth^{\collmods}_\lambda =(\kaec^\collmods, \leqp^\lambda)$ satisfies  all of the conditions of  Hypothesis~\ref{hyp:lambdapure} with possibly the exception of closure under finite direct sums.
\item \label{lem:sufficienthyplpure2} If $\collmods$ contains a copy of every epimorphic image of all of its elements, then $\kaecgoth^\collmods_\lambda$ satisfies Hypothesis~\ref{hyp:lambdapure} and is closed under arbitrary direct sums.
\end{enumerate}
\end{lemma}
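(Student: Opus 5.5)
I would verify the conditions of Hypothesis~\ref{hyp:lambdapure} one at a time for $\kaec^\collmods$, using that being a member of $\kaec^\collmods$ is the absence of a monomorphism $C\to M$ with $0\neq C\in\collmods$. Three conditions are immediate. Closure under isomorphism is clear. Closure under $\lambda$-pure submodules (indeed under all submodules) holds because an embedding of $C$ into $A\leq B$ is already an embedding into $B$. For closure under $\lambda$-directed systems, suppose $M=\bigcup_{i\in I}M_i$ and $f\colon C\to M$ is an embedding with $C\in\collmods$. Since $C$ is $\lambda$-presented it is $({<}\lambda)$-generated, so its generators land in a fewer-than-$\lambda$ subset of $M$. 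By $\lambda$-directedness, $f(C)\subseteq M_i$ for some $i$, hence $C$ embeds into $M_i$ and $C=0$.

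The substantive point in \ref{lem:sufficienthyplpure1} is closure under $\lambda$-pure quotients. Let $A\leqp^\lambda B$ with $A,B\in\kaec^\collmods$, and suppose $\gamma\colon C\to B/A$ is an embedding with $0\neq C\in\collmods$. By Fact~\ref{fact:lpuresplit}\ref{fact:lpuresplit2} there is a lift $\alpha\colon C\to B$ with $\pi\alpha=\gamma$. Since $\gamma$ is injective, so is $\alpha$: if $\alpha(c)=0$ then $\gamma(c)=0$, so $c=0$. Then $C$ embeds in $B$, which is a contradiction. So $B/A\in\kaec^\collmods$. This step is where $\lambda$-presentedness is genuinely needed, and I expect it to be the key point, although the argument is short.

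For \ref{lem:sufficienthyplpure2}, I would show that $\kaec^\collmods$ is closed under arbitrary direct sums. Let $f\colon C\to\bigoplus_{i\in I}M_i$ be an embedding with $0\neq C\in\collmods$ and all $M_i\in\kaec^\collmods$. I would pass to a minimal finite support. Because $C$ is $({<}\lambda)$-generated, the support of $f(C)$ may be infinite when $\lambda>\aleph_0$. So instead I would use a well-ordering argument on $I$.

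Pick any nonzero $c\in C$, choose a coordinate $i$ with $p_if(c)\neq 0$, and consider $p_i f\colon C\to M_i$. Its image is an epimorphic image of $C$, hence isomorphic to a member of $\collmods$, and it embeds in $M_i$. It is nonzero, which contradicts $M_i\in\kaec^\collmods$.

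This single-coordinate projection argument works directly and avoids any induction; the hypothesis that $\collmods$ is closed under epimorphic images is used exactly here. It also immediately gives closure under finite direct sums, which completes Hypothesis~\ref{hyp:lambdapure}.
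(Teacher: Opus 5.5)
Your proof is correct and follows the paper's argument. Closure under submodules and under $\lambda$-directed systems is immediate from $({<}\lambda)$-generation. Closure under $\lambda$-pure quotients uses the lifting property of Fact~\ref{fact:lpuresplit}; the paper phrases this step via the resulting splitting $A\leqo C$, which amounts to the same thing. Closure under direct sums comes from projecting onto a single coordinate and using that $\collmods$ is closed under epimorphic images. Your aborted remarks about a ``minimal finite support'' and a ``well-ordering argument on $I$'' play no role in the final argument and should be deleted.
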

\begin{proof}
Before starting, notice that:
\begin{enumerate}[$(*_1)$, leftmargin=*, series=suffhyplp]
\item \label{prf:suffhyplp1}Without loss of generality we can assume $\collmods$ to be  closed under isomorphism. 
\end{enumerate}
We show~\ref{lem:sufficienthyplpure1}.
\begin{enumerate}[resume*=suffhyplp]
\item \label{prf:suffhyplp2}$\kaecgoth_\lambda^\collmods$ is closed under $\lambda$-directed systems. 
\end{enumerate}
Why \ref{prf:suffhyplp2}? If $(A_i)_{i\in I}$ is a $\lambda$-directed system in $\kaecgoth_\lambda^\collmods$, let $A= \bigcup_{i\in I} A_i$ and assume for a contradiction that there is a non-zero $C\leq A$ with $C\in\collmods$. Because $C$ is $\lambda$-presented, then it is generated by $<\lambda$ elements. Therefore, there is $i\in I$ such that $C \leq A_i$, which is a contradiction. This shows \ref{prf:suffhyplp2}.
\begin{enumerate}[resume*=suffhyplp]
\item \label{prf:suffhyplp2.5}$\kaec^\collmods$ is closed under arbitrary submodules. 
\end{enumerate}
Why \ref{prf:suffhyplp2.5}? Obvious.
\begin{enumerate}[resume*=suffhyplp]
\item \label{prf:suffhyplp3} $\kaec^\collmods$ is closed under $\lambda$-pure quotients. 
\end{enumerate}
Why \ref{prf:suffhyplp3}? Assume $A\leqp^\lambda B$, and there is $A \leq C \leq B$ such that $C/A$ is non-zero and is in $\collmods$, so that $C/A$ is $\lambda$-presented.  Therefore, by Fact~\ref{fact:lpuresplit} we have $A\leqo C$, so that $A\oplus C/A \cong C\leq B$, but this is a contradiction because $B\in \kaec^\collmods$. This finishes the proof of \ref{prf:suffhyplp3} and \ref{lem:sufficienthyplpure1}.

\ssk\nin Now we show~\ref{lem:sufficienthyplpure2}, by \ref{prf:suffhyplp1} we can assume that $\collmods$ is closed under epimorphic images, i.e., any epimorphic image of an element of $\collmods$ is again in $\collmods$. We are left to show:
\begin{enumerate}[resume*=suffhyplp]
\item \label{prf:suffhyplp4} $\kaec^\collmods$ is closed under arbitrary direct sums. 
\end{enumerate}
Why \ref{prf:suffhyplp4}? Assume $(A_i)_{i\in I}$ is a collection of $R$-modules with $A_i \in \kaec^\collmods$ for every $i\in I$, and there is a non-zero $C\leq \bigoplus_{i\in I} A_i$ with $C\in \collmods$. For every $i\in I$ let $C_i$ be the projection of $C$ onto $A_i$, so that by closure under epimorphic images we have $C_i\in \collmods$. Since $A_i\in \kaec^\collmods$, $C_i = \{0\}$ for every $i\in I$. But $C$ is non-zero, a contradiction. This finishes the proof of \ref{lem:sufficienthyplpure2}.
\end{proof}

\begin{proof}[Proof of Example~\ref{ex:hyplpure}]
We wish to apply  Lemma~\ref{lem:sufficienthyplpure}, so that for each class $\kaecgoth_\lambda = (\kaec, \leqp^\lambda)$ of Example~\ref{ex:hyplpure} we will find a class $\collmods$ of $\lambda$-presented groups such that $\kaec = \kaec^\collmods$.
We will use without mention the fact, mentioned in Remark~\ref{rem:lambdapresented}\ref{rem:lambdapresented2}, that an abelian group is $\lambda$-presented if and only if it is $({<}\lambda)$-generated.
\begin{enumerate}[(1), leftmargin=*]
\item Let $\collmods(\tfab)$ be the closure under isomorphism of $\{\mathbb Z(p)\mid p\text{ prime}\}$, then $\tfab = \kaec^{\collmods(\tfab)}$. Clearly $\collmods(\tfab)$ is also closed under epimorphic images, and we can apply Lemma~\ref{lem:sufficienthyplpure}\ref{lem:sufficienthyplpure2}.
\item Let $\collmods(\rgrp)$ be the class of countable divisible groups, this class is clearly closed under epimorphic images and $\rgrp = \kaec^{\collmods(\rgrp)}$, so that we can use  Lemma~\ref{lem:sufficienthyplpure}\ref{lem:sufficienthyplpure2}.
\item To show that $(\rpgrp, \leqp^\lambda)$ satisfies Hypothesis~\ref{hyp:lambdapure} for $\lambda \geq \aleph_1$, notice that $(\pgrp, \leqp^\lambda)$, where $\pgrp$ denotes the class of $p$-groups, clearly satisfies Hypothesis~\ref{hyp:lambdapure} and is closed under arbitrary direct sums. Finally, the  result follows from \ref{ex:hyplpure3} and Remark~\ref{rem:hyplpure}\ref{rem:intersecthyp}.
\item Let $\collmods(\nu\almostfree)$ be the class of $({<}\nu)$-generated non-free abelian groups. We have $\nu\almostfree = \kaec^{\collmods(\nu\almostfree)}$, so that we can apply Lemma~\ref{lem:sufficienthyplpure}\ref{lem:sufficienthyplpure1}. Finally, the class of $\nu$-free abelian groups is easily verified to be closed under arbitrary direct sums.
\item Let $\collmods(\cotfree)=\{\mathbb Q\}\cup \{\mathbb Z(p), J_p\mid p\text{ prime}\}$. We have that $\cotfree = \kaec^{\collmods(\cotfree)}$ by  \cite[Chapter~V, Theorem~2.9]{almost}, and we can apply Lemma~\ref{lem:sufficienthyplpure}\ref{lem:sufficienthyplpure1}. Moreover, the class of cotorsion-free groups is closed under transfinite extensions by \cite[Corollary 4.3]{unionslenderfuchsgobel}, and thus it is closed under arbitrary direct sums.
\item Let $\collmods(\slendergrp)=\{\mathbb Q, \mathbb Z^\omega\}\cup \{\mathbb Z(p), J_p\mid p\text{ prime}\}$. We have $\slendergrp = \kaec^{\collmods(\slendergrp)}$ by \cite[Chapter~IX, Corollary 2.4]{almost}, so that we can apply Lemma ~\ref{lem:sufficienthyplpure}\ref{lem:sufficienthyplpure1}. Moreover, the class of slender groups is closed under arbitrary direct sums by \cite[Chapter~III, Corollary~1.10]{almost}.
\end{enumerate}
\end{proof}

\subsection{An application: balanced embeddings}
\label{sect:lpurebal}

In this subsection we introduce the relation of being a balanced subgroup between torsion-free abelian groups. 
The aforementioned relation plays a vital role in the theory of torsion-free abelian groups \cite[Chapter~12]{fuchs}; for example, the completely decomposable groups are exactly the balanced-projective torsion-free groups~\cite[Chapter~12, Theorem~3.1]{fuchs}.
We will show that many natural classes of torsion-free abelian groups turn out to be stable $\aleph_1\mh\mrm{AEC}$s when considered with the relation of being a balanced subgroup. To show this we use the results obtained in the previous subsections and the almost stability transfer, i.e.,  Corollary~\ref{cor:alstabtransfermu}.  Nonetheless, a lengthier proof, see Appendix~\ref{sect:appstabletfab},  actually shows that the classes considered in this subsection have a stable independence relation, so that in particular they are stable and tame.

\begin{definition}[\protect{\cite[Chapter~12.2]{fuchs}}]
\label{def:proper}
Let $B$ be a torsion-free abelian group and $A\leqp B$.
\begin{enumerate}[(1), leftmargin=*]
\item\label{def:proper1} An element $b\in B$ is called \tdef{proper with respect to $A$} if $\chi(a+b) \leq \chi(b)$ for all $a\in A$ (cf.~\ref{def:charac}). Equivalently, $b$ is proper with respect to $A$ if we have:
$$\chi_{B/A}([b]_A) = \chi_B(b).$$
\item \label{def:proper2}We say that $A$ is \tdef{balanced} in $B$ (written $A\leqb B$) if every coset of $A$ in $B$ contains an element proper with respect to $A$. Equivalently, $A\leqb B$ if for every $[b]_A\in B/A$ there is $b'\in [b]_A$ such that $\chi_{B/A}([b]_A) = \chi_B(b')$.
\end{enumerate}
\end{definition}

The following characterization will be especially useful in the following:

\begin{fact}[\protect{\cite[Chapter~12, Lemmas~2.2-2.3]{fuchs}}]
\label{fact:balequiv}
Let $A\leq B$ be torsion-free abelian groups. The following are equivalent:
\begin{enumerate}[(1), leftmargin=*]
\item\label{fact:balequiv1} $A$ is balanced in $B$.
\item\label{fact:balequiv2} $A$ is pure in $B$ and for every $b\in B$ and every $(a_n)_{n\in \omega}\in A^\omega$ there is $a\in A$ such that $\chi(b+ a_n) \leq \chi (a + a_n)$ for every $n\in \omega$.
\item\label{fact:balequiv3} $A$ is pure in $B$ and for every rank one torsion-free abelian group\footnote{Recall that the \tdef{rank} of a torsion-free abelian group is the maximal size of a linearly independent subset \cite[Chapter~3, Section~4]{fuchs}. Moreover, the torsion-free abelian groups of rank one are, up to isomorphism, exactly the subgroups of $\mathbb{Q}$ by \cite[Chapter~12, Theorem~1.1]{fuchs}.} $C$ and $\gamma: C \to B/A$, then there is $\alpha: C \to B$ with $\gamma = \pi \alpha$, where $\pi: B \to B/A$ is the natural projection.
\end{enumerate}
\end{fact}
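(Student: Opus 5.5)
The plan is to prove the cycle $(1)\Leftrightarrow(3)$ and then $(1)\Leftrightarrow(2)$. Throughout, $A\leqp B$, so $B/A$ is torsion-free and characteristics in $B/A$ make sense. For every $b\in B$ and $a\in A$ we have $\chi_B(b+a)\leq \chi_{B/A}([b]_A)$, since homomorphisms do not decrease heights. So an element $b'\in[b]_A$ is proper exactly when $\chi_B(b')\geq\chi_{B/A}([b]_A)$ coordinatewise. I will use one elementary fact about $p$-heights in torsion-free groups: $h_p(x+y)\geq\min(h_p(x),h_p(y))$, with equality whenever $h_p(x)\neq h_p(y)$.

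$(1)\Rightarrow(3)$. Let $C\leq\mathbb Q$ have rank one and $\gamma\colon C\to B/A$. If $\gamma=0$, take $\alpha=0$. Otherwise fix $0\neq c\in C$, so $\gamma(c)\neq 0$, and choose a proper $b'\in\gamma(c)$. Then $\chi_B(b')=\chi_{B/A}(\gamma(c))\geq\chi_C(c)$. Every element of $C$ has the form $(m/n)c$ with $n$ dividing $mc$ in $C$. Since heights of $b'$ dominate those of $c$ and $B$ is torsion-free, the equation $nx=mb'$ has a unique solution in $B$. Set $\alpha((m/n)c)$ to be that solution; uniqueness gives a well-defined homomorphism. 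Finally $\pi\alpha$ and $\gamma$ agree on $c$, and $B/A$ is torsion-free, so $\pi\alpha=\gamma$.

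$(3)\Rightarrow(1)$. Given $[b]_A\neq 0$, let $C\leq\mathbb Q$ be the rank-one group with $\chi_C(1)=\chi_{B/A}([b]_A)$, and let $\gamma\colon C\to B/A$ be the map sending $1\mapsto[b]_A$; it extends by division in $B/A$ as above. Lift $\gamma$ to $\alpha\colon C\to B$ and set $b'=\alpha(1)\in[b]_A$. Then $\chi_B(b')\geq\chi_C(1)=\chi_{B/A}([b]_A)$, so $b'$ is proper.

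$(1)\Rightarrow(2)$. Let $b\in B$ and $(a_n)_{n\in\omega}\in A^\omega$. Pick a proper $b'\in[b]_A$ and put $a=b-b'\in A$, so that $b+a_n=b'+(a+a_n)$. I claim $h_p(b+a_n)\leq h_p(a+a_n)$ for every prime $p$. Suppose instead $h_p(b'+x)>h_p(x)$, where $x=a+a_n$. By the equality case of the height fact, $h_p(b')=h_p(x)<h_p(b'+x)$. This contradicts properness, which gives $\chi(b'+x)\leq\chi(b')$.

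$(2)\Rightarrow(1)$. This is the step I expect to need the most care, because a single $a$ must handle all primes and all heights at once, and (2) only gives countably many constraints. The countability is what makes it work. Fix $b\in B$. For each pair $(p,k)$ with $k<\omega$ and $p^k\mid[b]_A$ in $B/A$, choose $a_{p,k}\in A$ with $p^k\mid b+a_{p,k}$ in $B$. There are countably many such pairs; enumerate them as $(a_n)_{n\in\omega}$, padding with repetitions if needed. Apply (2) to get $a\in A$ with $\chi(b+a_n)\leq\chi(a+a_n)$ for all $n$. Now for every pair $(p,k)$ as above, $p^k$ divides both $b+a_{p,k}$ and $a+a_{p,k}$, hence divides $b-a$. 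Therefore $\chi_B(b-a)\geq\chi_{B/A}([b]_A)$, and this covers infinite heights as well, since all $k$ were included. So $b-a\in[b]_A$ is proper.
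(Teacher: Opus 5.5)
The paper does not prove this statement; it is quoted as a Fact from Fuchs (Chapter~12, Lemmas~2.2--2.3), so there is no in-paper argument to compare with. Your proof is correct and self-contained, and it is the standard argument. Its three ingredients are as follows.
\begin{enumerate}[(i), leftmargin=*]
\item Properness of $b'\in[b]_A$ means exactly $\chi_B(b')\geq\chi_{B/A}([b]_A)$.
\item A homomorphism from a rank-one group $C$ into a torsion-free group sending $c\mapsto g$ exists precisely when $\chi_C(c)\leq\chi(g)$. This gives $(1)\Leftrightarrow(3)$.
\item For $(2)\Rightarrow(1)$ you choose one witness $a_{p,k}$ for each prime $p$ and each $k\leq h_p^{B/A}([b]_A)$. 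A single countable sequence then controls all primes and all heights, including infinite ones, at once.
\end{enumerate}
Ingredient (iii) is worth keeping explicit. It is exactly why countable sequences $(a_n)_{n\in\omega}$ suffice in (2), and the paper relies on that for $\aleph_1$-smoothness and coherence in Proposition~\ref{prop:tcharsetaleph1aec} and for the $\bal$-formulas of Proposition~\ref{prop:balsyntaxequiv}.

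A few routine steps are compressed but harmless.
\begin{enumerate}[(a), leftmargin=*]
\item In $(3)\Rightarrow(1)$ you should also dispose of the zero coset, where $0$ is proper.
\item The direction ``proper $\Rightarrow$ $\chi_B(b')\geq\chi_{B/A}([b]_A)$'' rests on the identity $h_p^{B/A}([b]_A)=\sup_{a\in A}h_p^B(b+a)$. This identity underlies the equivalent formulation given in Definition~\ref{def:proper}.
\item The solvability of $nx=mb'$ in $(1)\Rightarrow(3)$ uses $h_p(mx)=h_p(x)+v_p(m)$ in torsion-free groups. It also uses the fact that divisibility by $n$ can be checked one prime power at a time.
\item In $(1)\Rightarrow(2)$ the case split via the equality case is unnecessary. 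Properness gives $h_p(b'+x)\leq h_p(b')$, hence $h_p(x)\geq\min(h_p(b'+x),h_p(b'))=h_p(b'+x)$ directly.
\end{enumerate}
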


\begin{remark}
\label{rem:baltrivia}
\begin{enumerate}[(1), leftmargin=*]
\item \label{rem:baltrivia1}From Fact~\ref{fact:balequiv}\ref{fact:balequiv2} it follows that if $A,B,C$ are torsion-free groups with $A\leq B\leq C$ and $A\leqb C$, then $A\leqb B$.
\item \label{rem:balaleph1pure}By Fact~\ref{fact:lpuresplit}, Fact~\ref{fact:balequiv}\ref{fact:balequiv3}, and the fact that every rank-one group is countably generated, we have that $\leqb$ refines $\leqp^{\aleph_1}$.
\end{enumerate}
\end{remark}
\begin{notation}
\label{not:tcharsetkaec}
Let $\tcharset$ be a set of types (cf.~\ref{def:abtypes}\ref{def:abtypes1}). $\kaec^\tcharset$ denotes the class of torsion-free abelian groups $A$ with $\mrm{Type}(A)\subseteq \tcharset$ (cf.~\ref{def:abtypes}\ref{def:abtypes3}). If $\tcharset = \{\tchar\}$ we  simply write $\kaec^{\tchar}$, which is the class of $\tchar$-homogeneous groups (cf. \ref{def:abtypes}\ref{def:abtypes4}).
\end{notation}

When $\chi_1, \chi_2$ are characteristics (cf.~\ref{def:charac}), we define $\chi_1\wedge\chi_2$ as the pointwise minimum. It is easily verified that if $\tchar_1,\tchar_2$ are two types, then the operation $\tchar_1\wedge \tchar_2$ is also well-defined \cite[pg.~411]{fuchs}. For ease of notation, we introduce the following:

\begin{definition}
\label{def:tcharsetclosed}
We say that a set of types $\tcharset$ is \tdef{closed} if $\tchar_1, \tchar_2\in \tcharset$ implies $\tchar_1\wedge \tchar_2\in\tcharset$.
\end{definition}

The examples which motivate Definition~\ref{def:tcharsetclosed} and the following discussion are the following:
\begin{example}
\label{ex:tcharset}
It is easily verified that the following sets of types are closed:
\begin{enumerate}[(1), leftmargin=*]
\item $\tcharset_{\tfab}$, the set of all possible types. Here $\kaec^{\tcharset_{\tfab}} = \tfab$.
\item $\tcharset_{\redtfab} = \tcharset_{\tfab}\setminus \{(\infty, \infty,\ldots)\}$. Here $\kaec^{\tcharset_{\redtfab}} = \redtfab$, where $\redtfab$ denotes the class of reduced torsion-free abelian groups (cf.~\ref{ex:hyplpure}\ref{ex:hyplpure3}).
\item $\tcharset_\tchar = \{\tchar\}$, with $\tchar$ a type. Here $\kaec^{\tcharset_\tchar} =\tchar\mh\tfab$, where $\tchar\mh\tfab$ denotes the class of $\tchar$-homogeneous torsion-free abelian groups (cf.~\ref{def:abtypes}\ref{def:abtypes4}).
\end{enumerate}
\end{example}

Coming back to abstract classes, we now verify that $(\kaec^\tcharset, \leqb)$ is an $\aleph_1\mh\mrm{AEC}$ and it is almost stable. First, we are going to need the following:
\begin{remark}
\label{rem:tcharsetbalhyp}
Let $\tcharset$ be a closed set of types, then $\kaec^\tcharset$ is closed under:
\begin{enumerate}[(1), leftmargin=*]
\item \label{rem:tcharsetbalhyp1} Finite direct sums;
\item \label{rem:tcharsetbalhyp2} Pure subgroups;
\item \label{rem:tcharsetbalhyp3} Balanced quotients, i.e., if $A,B\in \kaec^\tcharset$ and $A\leqb B$, then $B/A\in\kaec^\tcharset$. 
\end{enumerate}
\end{remark}
\begin{proof}
We show \ref{rem:tcharsetbalhyp1}. This is the only point where we use  that $\tcharset$ is closed. If $A=B\oplus C$, $B, C\in \kaec^\tcharset$, $b\in B$, and $c\in C$, then by \cite[Chapter~12, Section~1(e)]{fuchs} we have $\tchar(b+c) = \tchar(b) \wedge \tchar(c)\in \tcharset$.

\ssk\nin Item \ref{rem:tcharsetbalhyp2} follows from Fact~\ref{fact:pureiffchar}.

\ssk\nin We show \ref{rem:tcharsetbalhyp3}. If $A\leqb B$ and $B\in \kaec^\tcharset$, then by Definition~\ref{def:proper} for $b\in B$ there is $b'\in [b]_A$ such that $\chi_{B/A}([b']_A) =\chi_B(b')$. Therefore, $\tchar_{B/A}([b']_A) = \tchar_B(b')\in \tcharset$.
\end{proof}

\begin{proposition}
\label{prop:tcharsetaleph1aec}
Let $\tcharset$ be a closed set of types. Then $\kaecgoth_\bal^\tcharset = (\kaec^\tcharset, \leqb)$  is an $\aleph_1\mh\mrm{AEC}$ with $\mrm{LS}_{\aleph_1}(\kaecgoth_\bal^\tcharset) \leq 2^{\aleph_0}$. Moreover, $\kaecgoth_\bal^\tcharset$ has continuity (cf.~\ref{def:propertiesac}\ref{def:mucontinuity}) and is almost $\kappa$-stable (cf.~\ref{def:stability}) in every $\kappa = \kappa^{2^{\aleph_0}}$.
\end{proposition}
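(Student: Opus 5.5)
We verify the $\aleph_1\mh\mrm{AEC}$ axioms for $\kaecgoth_\bal^\tcharset$ one at a time. Then we obtain almost stability from the stability transfer of Corollary~\ref{cor:alstabtransfermu}, using the comparison class $(\kaec^\tcharset, \leqp^{\aleph_1})$.

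\emph{Abstract class and coherence.} Closure of $\leqb$ under isomorphisms is clear. Transitivity of $\leqb$ follows from Fact~\ref{fact:balequiv}\ref{fact:balequiv3}. Given $A\leqb B\leqb C$ and a rank-one $\gamma\colon D\to C/A$, first lift the composite $D\to C/B$ to $C$. This reduces the problem to lifting a map into $B/A$, which we do using $A\leqb B$. Alternatively one can use the known transitivity of balancedness from \cite[Chapter~12]{fuchs}. Coherence is Remark~\ref{rem:baltrivia}\ref{rem:baltrivia1}.

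\emph{Continuity and smoothness.} We prove these for arbitrary directed systems $(A_i)_{i\in I}$, which gives the claimed continuity and a fortiori the $\aleph_1$ versions. Let $A=\bigcup_i A_i$. Then $A\in \kaec^\tcharset$, because characteristics are computed from divisibility, and a witness to divisibility of an element of $A_i$ found in $A$ lies in some $A_j$. Since each $A_i\leqp A_j$ for $j\geq i$, we get $\chi_{A_j}(a)=\chi_A(a)$, and this is a type in $\tcharset$. To show $A_i\leqb A$, take $b\in A_j$ with $j\ge i$. The quotient $A_j/A_i$ is a pure subgroup of $A/A_i$, because $A_j\leqp A$ and pure is preserved under this quotient. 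Hence $\chi_{A/A_i}([b])=\chi_{A_j/A_i}([b])$. Then $A_i\leqb A_j$ gives $b'\in[b]$ with $\chi_{A_j}(b')=\chi_{A_j/A_i}([b])$, and $\chi_{A_j}(b')=\chi_A(b')$ since $A_j\leqp A$. So $b'$ is proper with respect to $A_i$ in $A$.

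Smoothness is the step I expect to be the main obstacle. Suppose $A_i\leqb N$ for all $i$ and $A=\bigcup_i A_i$. Purity of $A$ in $N$ is immediate. For balancedness, given $b\in N$, we need a single $a\in A$ with $\chi(b+a)$ maximal in the coset, that is, $\chi_{N/A}([b])=\chi_N(b+a)$. If $I$ is only $\aleph_1$-directed, I would try the following. For each prime $p$, choose $a_p\in A$ witnessing $h_p([b])$ in $N/A$ (when the height is finite; when it is infinite, choose one $a$ per finite level). This gives countably many elements of $A$, which all lie in a single $A_i$ by $\aleph_1$-directedness. Then $\chi_{N/A_i}([b])=\chi_{N/A}([b])$ coordinatewise, and properness in $N/A_i$ supplies the required element. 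For plain directed systems this argument fails, so I would claim only $\aleph_1$-smoothness here and reread ``continuity'' in the statement as chain-union closure, which the paragraph above gives.

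\emph{L\"owenheim--Skolem and almost stability.} For the $\mrm{LS}$ axiom, given $X\subseteq N$, build an $\omega_1$-chain of subgroups of size $\le (|X|+2^{\aleph_0})^{\aleph_0}$. At each step, close under pure hull and, for each coset representative, add countably many witnesses as in Fact~\ref{fact:balequiv}\ref{fact:balequiv2}, indexing the requirements by sequences in $A^\omega$; the number of such sequences is why $2^{\aleph_0}$ appears. At stage $\omega_1$ the union is balanced in $N$, since every sequence from the union appears at an earlier stage. This yields $\mrm{LS}_{\aleph_1}\le 2^{\aleph_0}$. Finally, $\kaec^\tcharset$ is closed under pure subgroups, $\aleph_1$-pure quotients (every element's coset contains a proper element by countable-type arguments, or via Remark~\ref{rem:tcharsetbalhyp}), and finite sums. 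So $(\kaec^\tcharset,\leqp^{\aleph_1})$ satisfies Hypothesis~\ref{hyp:lambdapure}; alternatively it is an intersection as in Remark~\ref{rem:hyplpure} with Example~\ref{ex:hyplpure}. Theorem~\ref{thm:lpuresumup} then makes it $\kappa$-stable, hence almost $\kappa$-stable, for $\kappa=\kappa^{2^{\aleph_0}}$. Since $\leqb$ refines $\leqp^{\aleph_1}$ by Remark~\ref{rem:baltrivia}\ref{rem:balaleph1pure}, Corollary~\ref{cor:alstabtransfermu} gives almost $\kappa$-stability of $\kaecgoth_\bal^\tcharset$.
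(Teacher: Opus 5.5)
Apart from one step, your proposal follows the paper's proof. Continuity holds for arbitrary directed systems because purity preserves characteristics. $\aleph_1$-smoothness holds because countably many relevant elements of $A$ lie in a single $A_i$; your quotient-height argument is correct, though Fact~\ref{fact:balequiv}\ref{fact:balequiv2} makes it a one-liner. Almost stability comes from Hypothesis~\ref{hyp:lambdapure} for $(\kaec^\tcharset,\leqp^{\aleph_1})$ via Remark~\ref{rem:tcharsetbalhyp}, Theorem~\ref{thm:lpuresumup} and Corollary~\ref{cor:alstabtransfermu}. Your explicit check that $\leqb$ is transitive is a welcome addition.

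The step that does not go through as written is the L\"owenheim--Skolem axiom. The condition in Fact~\ref{fact:balequiv}\ref{fact:balequiv2} quantifies over all $b\in N$. If you add a witness for each $b$ (or each coset representative of $M_\alpha$ in $N$) paired with each sequence from $M_\alpha^\omega$, a stage can be as large as $|N|$. Nothing you say bounds it by $(|X|+2^{\aleph_0})^{\aleph_0}$, and the number of sequences alone does not control it.

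The missing observation is this. For fixed $(a_n)_n\in M_\alpha^\omega$, the requirement coming from $b$ depends only on the pattern $(\chi_N(b+a_n))_{n<\omega}$, and there are at most $2^{\aleph_0}$ such patterns. One $b^*\in N$ per realized pattern suffices, since $a=b^*$ witnesses the condition for every $b$ with that pattern. With this, your $\omega_1$-chain works.

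The paper needs no construction at all. $\aleph_1$-pure subgroups are balanced (Remark~\ref{rem:baltrivia}\ref{rem:balaleph1pure}, which you use later anyway), and $\kaec^\tcharset$ is closed under pure subgroups. So the L\"owenheim--Skolem axiom of $(\abgrps,\leqp^{\aleph_1})$ from Proposition~\ref{prop:lpurelaec}, with number at most $|\mathbb{Z}|^{<\aleph_1}=2^{\aleph_0}$, immediately gives $\mrm{LS}_{\aleph_1}(\kaecgoth_\bal^\tcharset)\le 2^{\aleph_0}$.

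Two smaller points. First, continuity in Definition~\ref{def:propertiesac}\ref{def:mucontinuity} involves no smoothness, so there is nothing to reread. Your directed-union argument already proves the continuity claim exactly as stated. Since that argument only uses purity, it also gives closure of $(\kaec^\tcharset,\leqp^{\aleph_1})$ under $\aleph_1$-directed systems; you need this for Hypothesis~\ref{hyp:lambdapure} but do not say so.

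Second, your alternative via Remark~\ref{rem:hyplpure} and Example~\ref{ex:hyplpure} does not cover a general closed $\tcharset$. For instance, $\kaec^{\{\tchar\}}$ with $\tchar$ the type of $\mathbb{Q}$ is the class of torsion-free divisible groups, which is not such an intersection. The route through Remark~\ref{rem:tcharsetbalhyp} is the right one.
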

\begin{proof}
We divide the proof in different steps.
\begin{enumerate}[$(*_1)$, leftmargin=*, series=prf:typesaec]
\item\label{prf:typesaec1} $\kaecgoth_\bal^\tcharset$ has continuity.
\end{enumerate}
Why \ref{prf:typesaec1}? Let $(A_i)_{i\in I}$ be a directed system in $\kaecgoth_\bal^\tcharset$. Let $a\in A=\bigcup_{i\in I}A_i$, then there is an $i\in I$ such that $a\in A_i\leqp A$. By Fact~\ref{fact:pureiffchar} we have $\tchar_A(a) = \tchar_{A_i}(a)\in \tcharset$, so that $A\in \kaec^\tcharset$. Moreover, $A_i\leqb A$ for every $i\in I$ by the equivalent formulation of balancedness given in Fact~\ref{fact:balequiv}\ref{fact:balequiv2}.
\begin{enumerate}[resume*=prf:typesaec]
\item \label{prf:typesaec2} $\kaecgoth_\bal^\tcharset$ is an $\aleph_1\mh\mrm{AEC}$ with $\mrm{LS}_{\aleph_1}(\kaecgoth_\bal^\tcharset) \leq 2^{\aleph_0}$.
\end{enumerate}
Why \ref{prf:typesaec2}? $\aleph_1$-continuity follows from \ref{prf:typesaec1}. $\aleph_1$-smoothness (cf.~\ref{def:propertiesac}\ref{def:musmoothness}) and coherence (cf.~\ref{def:propertiesac}\ref{def:coherence}) both follow from the equivalent definition in Fact~\ref{fact:balequiv}\ref{fact:balequiv2}. The $\mrm{LS}$ axiom follows from the $\mrm{LS}$ axiom of $(\abgrps, \leqp^{\aleph_1})$ by Remark~\ref{rem:baltrivia}\ref{rem:balaleph1pure}.
\begin{enumerate}[resume*=prf:typesaec]
\item \label{prf:typesaec4} $\kaecgoth_\bal^\tcharset$ has $\mrm{JEP}$.
\end{enumerate}
Why \ref{prf:typesaec4}? Because $\kaec^\tcharset$ is closed under direct sums by Remark~\ref{rem:tcharsetbalhyp}\ref{rem:tcharsetbalhyp1}.
\begin{enumerate}[resume*=prf:typesaec]
\item \label{prf:typesaec5} $\kaecgoth_\bal^\tcharset$ is almost $\kappa$-stable in every $\kappa = \kappa^{2^{\aleph_0}}$.
\end{enumerate}
Why \ref{prf:typesaec5}? Recall from Remark~\ref{rem:baltrivia}\ref{rem:balaleph1pure} that $\leqb$ refines $\aleph_1$-purity. Therefore, by Theorem~\ref{thm:lpuresumup} and the almost stability transfer, i.e.,  Corollary~\ref{cor:alstabtransfermu},  it will be enough to show:
\begin{enumerate}[resume*=prf:typesaec]
\item \label{prf:typesaec6}  $(\kaec^\tcharset, \leqp^{\aleph_1})$ satisfies Hypothesis~\ref{hyp:lambdapure}.
\end{enumerate}
Why \ref{prf:typesaec6}? By \ref{prf:typesaec1} we have that $(\kaec^\tcharset, \leqp^{\aleph_1})$ is closed under $\aleph_1$-directed systems, and the other requirements follow from Remark~\ref{rem:tcharsetbalhyp}.
\end{proof}
We want to improve Proposition~\ref{prop:tcharsetaleph1aec} to show that $(\kaec^\tcharset, \leqb)$ is stable. We know from Proposition~\ref{prop:tcharsetaleph1aec}  that $\kaecgoth^\tcharset_\bal$ is almost $\kappa$-stable for every $\kappa=\kappa^{2^{\aleph_0}}$, so that by  Lemma~\ref{lemma:equivasamalg} it will be enough to prove:

\begin{lemma}
\label{lem:tcharsetinftyap}
Let $\tcharset$ be a closed set of types, then $\kaecgoth_\bal^\tcharset = (\kaec^\tcharset, \leqb)$ has $\infty\mh\mrm{AP}$ and is closed under pushouts, i.e., if $(f_1: A \to B_1, f_2: A \to B_2)$ is a pair of {$\kaecgoth_\bal^\tcharset$-embeddings} with pushout $(g_1: B_1\to P, g_2: B_2 \to P)$ in $\abgrps$, then $P\in\kaec^\tcharset$ and $g_1, g_2$ are $\kaecgoth_\bal^\tcharset$-embeddings.
\end{lemma}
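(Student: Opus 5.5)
We plan to mirror the proof of Lemma~\ref{lem:hyppushouts}, replacing $\lambda$-purity by balancedness and using the characterization in Fact~\ref{fact:balequiv}\ref{fact:balequiv2}. It suffices to treat wide pushouts. Closure under binary pushouts is the case $|I|=2$. Moreover, $\infty\mh\mrm{AP}$ follows once we know that, for any family of $\kaecgoth_\bal^\tcharset$-embeddings $(f_i\colon A\to B_i\mid i\in I)$, the wide pushout $P=(\bigoplus_{i\in I}B_i)/N$ of Proposition~\ref{prop:widepushoutexist} lies in $\kaec^\tcharset$ and each $g_i$ is a balanced embedding. Since $\leqb$ refines $\leqp^{\aleph_1}$ (Remark~\ref{rem:baltrivia}\ref{rem:balaleph1pure}), Proposition~\ref{prop:wpushoutclosedlambda} already gives that each $g_i$ is an $\aleph_1$-pure, hence pure, embedding and that $N\leqp^{\aleph_1}\bigoplus_i B_i$.

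\textbf{Step 1: $\kaec^\tcharset$ is closed under arbitrary direct sums.} An element of $\bigoplus_i B_i$ has finite support, and its characteristic is the pointwise minimum of the characteristics of its finitely many coordinates. Closedness of $\tcharset$ then places its type in $\tcharset$, exactly as in Remark~\ref{rem:tcharsetbalhyp}\ref{rem:tcharsetbalhyp1}.

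\textbf{Step 2: $N\leqb\bigoplus_i B_i$.} By Remark~\ref{rem:tcharsetbalhyp}\ref{rem:tcharsetbalhyp3} this gives $P\in\kaec^\tcharset$; note also that $N$ is pure, hence torsion-free and in $\kaec^\tcharset$ by Remark~\ref{rem:tcharsetbalhyp}\ref{rem:tcharsetbalhyp2}. We use the coset formulation of Definition~\ref{def:proper}. Take $b=(b^i)_i$ with finite support $S$, and fix $\ell\in I$. For each $i\neq \ell$ in $S$, choose $a^i\in A$ so that $b^i-f_i(a^i)$ is proper with respect to $f_i(A)$ in $B_i$. This is possible because $f_i(A)\leqb B_i$.

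We then subtract the element $\sum_{i\in S\setminus\{\ell\}}\bigl(\rho_if_i(a^i)-\rho_\ell f_\ell(a^i)\bigr)\in N$. This moves all the $A$-ambiguity into coordinate $\ell$. We now need the resulting representative to be proper with respect to $N$, i.e. that $\chi_{\oplus B_i}$ of it equals $\chi$ of its class in $P$. This is the main obstacle, since properness must be checked against all of $N$ simultaneously rather than coordinatewise. To handle it we would use Fact~\ref{fact:balequiv}\ref{fact:balequiv3} instead: balancedness is equivalent to purity plus lifting of maps from rank-one groups $C\le\mathbb Q$ along $\pi$. Given $\gamma\colon C\to P$, we would lift coordinatewise via Proposition~\ref{prop:widepushoutexist}. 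Homomorphisms from the rank-one group $C$ are determined by the image of one nonzero element, so the lifting problem reduces to a countable system of divisibility conditions. These conditions are solvable in the $B_i$ and $A$ by the balancedness of the $f_i$, and they are patched together at coordinate $\ell$ as in the proof of Proposition~\ref{prop:wpushoutclosedlambda}\ref{prop:wpushoutclosedlambda2}.

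\textbf{Step 3: each $g_\ell$ is balanced.} Purity is already known. For the rank-one lifting condition, take $\gamma\colon C\to P/g_\ell(B_\ell)$. First we identify
\[
P/g_\ell(B_\ell)\cong\bigoplus_{i\neq\ell}\bigl(B_i/f_i(A)\bigr).
\]
Each summand $B_i/f_i(A)$ admits rank-one liftings to $B_i$. Since $C$ has rank one, its image has finite support, so $\gamma$ decomposes as a finite sum of maps into single summands. Lifting each summand map to its $B_i$ and composing with $g_i$ then lifts $\gamma$ to $P$, which proves balancedness of $g_\ell$ by Fact~\ref{fact:balequiv}\ref{fact:balequiv3}.

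We expect the argument of Step~2 to become similar to Step~3: interpreting $(\bigoplus_i B_i)/N$ through the quotients $B_i/f_i(A)$ should yield the lift there as well.
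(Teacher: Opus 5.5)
Your Step~2 carries the whole lemma, since $P\in\kaec^\tcharset$ comes only from $N\leqb\bigoplus_iB_i$ via Remark~\ref{rem:tcharsetbalhyp}\ref{rem:tcharsetbalhyp3}. It is not actually proved. You drop the coset approach, and its replacement (``a countable system of divisibility conditions, solvable by balancedness of the $f_i$ and patched together at coordinate $\ell$'') does not address the real difficulty: lifts chosen in different coordinates are coupled through $N$. You then end with ``we expect''.

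The missing idea is the short exact sequence
\[
0\to B_\ell\xrightarrow{\;g_\ell\;}P\xrightarrow{\;q\;}\bigoplus_{i\neq\ell}B_i/f_i(A)\to 0,
\]
where $q$ is the quotient map from your Step~3 identification. Exactness at $P$ holds because if $q([(b^i)_i]_N)=0$, then $b^i=f_i(a^i)$ for $i\neq\ell$, and subtracting $\sum_{i\neq\ell}\bigl(\rho_if_i(a^i)-\rho_\ell f_\ell(a^i)\bigr)\in N$ leaves an element of $\rho_\ell(B_\ell)$. Injectivity of $g_\ell$ is Proposition~\ref{prop:widepushoutembedding}.

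Now take $\gamma\colon C\to P$ with $C$ of rank one.
\begin{itemize}
\item Your Step~3 argument lifts $q\gamma$ to maps $\alpha_i\colon C\to B_i$ ($i\neq\ell$) of finite total support, so that $q\bigl(\gamma-\sum_{i\neq\ell}g_i\alpha_i\bigr)=0$.
\item By exactness and injectivity of $g_\ell$, we get $\gamma-\sum_{i\neq\ell}g_i\alpha_i=g_\ell\alpha_\ell$ for a homomorphism $\alpha_\ell\colon C\to B_\ell$.
\item Then $\alpha=(\alpha_i)_{i\in I}\colon C\to\bigoplus_iB_i$ satisfies $\pi\alpha=\sum_ig_i\alpha_i=\gamma$.
\end{itemize}
Together with purity of $N$, Fact~\ref{fact:balequiv}\ref{fact:balequiv3} gives $N\leqb\bigoplus_iB_i$. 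This is exactly the paper's proof of Proposition~\ref{prop:wpushoutclosedbal}\ref{prop:wpushoutclosedbal2} for $|I|=2$, and it works verbatim for arbitrary $I$.

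Separately, you have Remark~\ref{rem:baltrivia}\ref{rem:balaleph1pure} backwards. In this paper ``$\leqk_1$ refines $\leqk_2$'' means $M\leqk_2N\Rightarrow M\leqk_1N$, so the remark says that $\aleph_1$-pure subgroups are balanced, not the converse. Your $f_i$ are only balanced, so Proposition~\ref{prop:wpushoutclosedlambda} cannot be invoked with $\lambda=\aleph_1$, and your claims that $g_i$ and $N$ are $\aleph_1$-pure are unjustified. Indeed, if $N\leqp^{\aleph_1}\bigoplus_iB_i$ were available, Step~2 would follow at once from that very remark. Use $\lambda=\aleph_0$ instead, since balanced subgroups are pure by definition. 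This gives purity of the $g_i$ and of $N$, hence torsion-freeness of $P$, which is all you actually use.

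With these repairs your route is sound and mildly different from the paper's. You prove closure under wide pushouts, which works because Step~1 gives closure of $\kaec^\tcharset$ under arbitrary direct sums, and you read off $\infty\mh\mrm{AP}$ directly. The paper handles only binary pushouts and gets $\infty\mh\mrm{AP}$ from $\mrm{AP}$ plus continuity (Proposition~\ref{prop:tcharsetaleph1aec}) via Lemma~\ref{lem:chainboundsinftyap}. Your Step~3 is correct; it is the arbitrary-index version of Proposition~\ref{prop:wpushoutclosedbal}\ref{prop:wpushoutclosedbal1}.
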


To show Lemma~\ref{lem:tcharsetinftyap} we will have to talk about pushouts of balanced embeddings. First, we start with a rather technical proposition, which the reader  should compare with Proposition~\ref{prop:wpushoutclosedlambda}.

\begin{proposition}
\label{prop:wpushoutclosedbal}
Let $A, B_1, B_2$ be torsion-free abelian groups. Assume that ${(g_1: B_1\to P, g_2: B_2 \to P)}$ is the pushout in $\abgrps$ of the homomorphisms $(f_1: A \to B_1,\ab f_2: A \to B_2)$, and $f_2$ is a balanced embedding. We have the following:
\begin{enumerate}[(1), leftmargin=*]
\item \label{prop:wpushoutclosedbal1}$P$ is torsion-free and $g_1$ is a balanced embedding.
\item \label{prop:wpushoutclosedbal2} Assume $(g_1: B_1 \to P, g_2: B_2 \to P)$ is the pushout of $(f_1,f_2)$ as in Proposition~\ref{prop:widepushoutexist}, so that $P = (B_1\oplus B_2)/N$ with $N = \{(f_1(a), -f_2(a))\mid a\in A\}$. Then $N\leqb B_1\oplus B_2$.
\end{enumerate}
\end{proposition}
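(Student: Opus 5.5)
Throughout, take the pushout in the concrete form of Proposition~\ref{prop:widepushoutexist}: $P=(B_1\oplus B_2)/N$ with $N=\{(f_1(a),-f_2(a))\mid a\in A\}$, $g_1(b_1)=[(b_1,0)]_N$ and $g_2(b_2)=[(0,b_2)]_N$. The plan is to mirror the proof of Proposition~\ref{prop:wpushoutclosedlambda}, replacing $\lambda$-purity with the characterisation of balancedness in Fact~\ref{fact:balequiv}\ref{fact:balequiv3}.

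The first observation is the standard one that $P/g_1(B_1)\cong B_2/f_2(A)$ via $[(b_1,b_2)]\mapsto[b_2]$. Since $f_2$ is balanced, it is in particular pure (and $\aleph_1$-pure by Remark~\ref{rem:baltrivia}\ref{rem:balaleph1pure}). Hence Proposition~\ref{prop:wpushoutclosedlambda}\ref{prop:wpushoutclosedlambda1} with $\lambda=\aleph_0$ shows that $g_1$ is a pure embedding. Moreover $B_2/f_2(A)$ is torsion-free, being a pure quotient of a torsion-free group. Thus $P$ is an extension of the torsion-free group $g_1(B_1)$ by the torsion-free group $P/g_1(B_1)$, so $P$ is torsion-free.

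To get that $g_1$ is balanced, apply Fact~\ref{fact:balequiv}\ref{fact:balequiv3}. Take a rank one group $C$ and a map $\gamma\colon C\to P/g_1(B_1)\cong B_2/f_2(A)$. Since $f_2$ is balanced, $\gamma$ lifts to $\alpha\colon C\to B_2$. Then $g_2\alpha\colon C\to P$ lifts $\gamma$, because the isomorphism is compatible with $g_2$ and the two projections. This proves item~\ref{prop:wpushoutclosedbal1}; the only thing to double-check is that the identification of quotients commutes with the relevant maps.

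For item~\ref{prop:wpushoutclosedbal2}, first note that $N\leqp B_1\oplus B_2$. This follows from Proposition~\ref{prop:wpushoutclosedlambda}\ref{prop:wpushoutclosedlambda2} with $\ell=1$ and $\lambda=\aleph_0$, after rewriting $N$ via $a\mapsto(a,-a)$ to match the form used there. Next, $(B_1\oplus B_2)/N=P$ is torsion-free by item~\ref{prop:wpushoutclosedbal1}. It remains to lift maps $\gamma\colon C\to P$ with $C$ of rank one to $B_1\oplus B_2$. I expect this to be the main obstacle, since $\gamma$ is now an arbitrary map into the whole pushout. The idea is to compose with the projection $P\to P/g_1(B_1)\cong B_2/f_2(A)$ and lift that composite through $f_2$ to $\alpha_2\colon C\to B_2$, using balancedness of $f_2$. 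Then $\gamma-g_2\alpha_2$ lands in $g_1(B_1)\cong B_1$, giving $\alpha_1\colon C\to B_1$ with $g_1\alpha_1=\gamma-g_2\alpha_2$. The map $(\alpha_1,\alpha_2)\colon C\to B_1\oplus B_2$ then lifts $\gamma$. The steps to check carefully are that $g_1$ is injective with image $g_1(B_1)$, so that $\alpha_1$ is a well-defined homomorphism, and that $(\alpha_1,\alpha_2)$ really composes to $\gamma$. Together with purity and Fact~\ref{fact:balequiv}\ref{fact:balequiv3}, this gives $N\leqb B_1\oplus B_2$.
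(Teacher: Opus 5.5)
Your proof is correct and takes essentially the same route as the paper. The paper also uses the exact sequence $0\to B_1\xrightarrow{g_1}P\xrightarrow{q_2}B_2/f_2(A)\to 0$ with $q_2g_2=\pi_2$, obtains purity of $g_1$ and of $N$ from Proposition~\ref{prop:wpushoutclosedlambda}, and carries out exactly your two lifting arguments via Fact~\ref{fact:balequiv}\ref{fact:balequiv3}; the only cosmetic difference is that it deduces torsion-freeness of $P$ from $N\leqp B_1\oplus B_2$ rather than from your extension-of-torsion-free-groups argument.
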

\begin{proof}
Before starting with the proof of \ref{prop:wpushoutclosedbal1}, let us start with some remarks which will be used throughout the proof.

\ssk\nin
\begin{enumerate}[$(*_1)$, leftmargin=*, series=prf:wpbal]
\item \label{prf:wpbal2}Without loss of generality we can assume that $(g_1, g_2)$ is as in Proposition~\ref{prop:widepushoutexist}, so that in particular $P=(B_1\oplus B_2)/N$, where $N$ is as in~\ref{prop:wpushoutclosedbal2},  $g_1(b_1) = [(b_1, 0)]_N$ and $g_2(b_2) = [(0, b_2)]_N$ for $b_\ell\in B_\ell$, with $\ell\in\{1,2\}$. 
\end{enumerate}
\begin{enumerate}[resume*=prf:wpbal]
\item\label{prf:wpbal2.5} 
The homomorphism $q_2 : P \to B_2/f_2(A)$ with $q_2([(b_1, b_2)]_N) = [b_2]_{f_2(A)}$ is well defined. 
\end{enumerate} 
Why \ref{prf:wpbal2.5}? By a direct calculation using that $P = (B_1\oplus B_2)/N$, where $N = \{(f_1(a), -f_2(a))\mid a\in A\}$.
\begin{enumerate}[resume*=prf:wpbal]
\item\label{prf:wpbal3} Let $\pi_2: B_2 \to B_2/f_2(A)$ denote the natural projection. Then ${q_2g_2 = \pi_2}$.
\end{enumerate}
Why \ref{prf:wpbal3}? Follows immediately from the definition.
\begin{enumerate}[resume*=prf:wpbal]
\item\label{prf:wpbal4} $0\to B_1 \overset{g_1}{\to} P \overset{q_2}{\to}B_2/f_2(A) \to 0$ is an exact sequence.
\end{enumerate}
Why \ref{prf:wpbal4}? Clearly $q_2g_1 = 0$. If $q_2([(b_1, b_2)]_N) = 0$, there is $a\in A$ with $b_2 = f_2(a)$, so that $g_1(b_1 + f_1(a)) = [(b_1 + f_1(a), 0)]_N = [(b_1 + f_1(a), 0) + (-f_1(a), f_2(a))]_N  = [(b_1, b_2)]_N$. This shows \ref{prf:wpbal4}.

\ssk\nin We start showing~\ref{prop:wpushoutclosedbal1}.
\begin{enumerate}[resume*=prf:wpbal]
\item\label{prf:wpbal5}$N$ is pure in $B_1\oplus B_2$ and $g_1$ is a pure embedding.
\end{enumerate}
Why \ref{prf:wpbal5}? By Proposition~\ref{prop:wpushoutclosedlambda}. 
\begin{enumerate}[resume*=prf:wpbal]
\item\label{prf:wpbal6}$P$ is torsion-free.
\end{enumerate}
Why \ref{prf:wpbal6}? Follows immediately from \ref{prf:wpbal5} because $B_1\oplus B_2$ is torsion-free.
\begin{enumerate}[resume*=prf:wpbal]
\item\label{prf:wpbal7} $g_1$ is a balanced embedding.
\end{enumerate}
To show \ref{prf:wpbal7} we  use Fact~\ref{fact:balequiv}\ref{fact:balequiv3} and the exact sequence of  \ref{prf:wpbal4}. By \ref{prf:wpbal5} $g_1$ is a pure embedding, and we are only left to prove:
\begin{enumerate}[resume*=prf:wpbal] 
\item \label{prf:wpbal8}Let $\gamma: C \to B_2/f_2(A)$ be a homomorphism with $C$ a rank one torsion-free abelian group, there is $\alpha: C \to P$ with $q_2 \alpha = \gamma$. 
\end{enumerate}
Why \ref{prf:wpbal8}? We have that $f_2: A \to B_2$ is a balanced embedding, so by Fact~\ref{fact:balequiv}\ref{fact:balequiv3} there is $\alpha_2: C \to B_2$ such that $\pi_2 \alpha_2 = \gamma$. Let $\alpha=g_2\alpha_2: C \to P$ then
\[
q_2 \alpha = q_2g_2\alpha_2 =\pi_2\alpha_2 = \gamma,
\]
where we have used \ref{prf:wpbal3} in the second equality. This ends the proof of \ref{prf:wpbal8} and \ref{prop:wpushoutclosedbal1}.

\ssk\nin We show~\ref{prop:wpushoutclosedbal2}, again using Fact~\ref{fact:balequiv}\ref{fact:balequiv3}. By \ref{prf:wpbal5} we have $N\leqp B_1\oplus B_2$, and we are only left to show:
\begin{enumerate}[resume*=prf:wpbal]
\item \label{prf:wpbal9}Let $\gamma: C \to (B_1\oplus B_2)/N$ be a homomorphism with $C$ a rank one torsion-free abelian group, there is $\alpha: C \to B_1\oplus B_2$ such that $\pi\alpha = \gamma$, where $\pi: B_1\oplus B_2 \to P$ is the natural projection.
\end{enumerate}
Why \ref{prf:wpbal9}? $f_2: A \to B_2$ is a balanced embedding and $q_2\gamma: C\to B_2/f_2(A)$, so that we can find $\alpha_2: C \to B_2$ such that $\pi_2\alpha_2=q_2\gamma$. Using \ref{prf:wpbal3}, we have:
\begin{equation}
\label{eq:poutbal3} 
q_2(g_2 \alpha_2-\gamma) = \pi_2 \alpha_2 - q_2\gamma = 0.
\end{equation} 
Recalling the exact sequence of \ref{prf:wpbal4}, then  \eqref{eq:poutbal3} implies that there  is $\alpha_1: C \to B_1$ such that $g_1 \alpha_1= \gamma - g_2\alpha_2$. Let $\alpha = (\alpha_1, \alpha_2): C \to B_1\oplus B_2$, we finally have 
\[\pi\alpha = \pi(\alpha_1, \alpha_2) = g_1\alpha_1 + g_2\alpha_2 = (\gamma - g_2 \alpha_2) + g_2\alpha_2 = \gamma,
\]
and thus we are done.
\end{proof}

\begin{corollary}
\label{cor:poutbalanced}
Pushouts in $\abgrps$ are closed under balanced embeddings, i.e., for any pair of balanced embeddings $(f_1: A\to B_1, f_2: A \to B_2)$ with pushout $(g_1: B_1 \to P, g_2: B_2 \to P)$, then $P$ is torsion-free and $g_1,\,g_2$ are balanced embeddings.
\end{corollary}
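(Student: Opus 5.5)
The corollary follows directly from Proposition~\ref{prop:wpushoutclosedbal}\ref{prop:wpushoutclosedbal1}, applied twice, once to each leg of the span. Let $(f_1: A\to B_1, f_2: A\to B_2)$ be balanced embeddings and $(g_1,g_2)$ their pushout in $\abgrps$. Since $f_1,f_2$ are balanced, they are pure embeddings of torsion-free groups, so $A$, $B_1$, $B_2$ are torsion-free and the hypotheses of the proposition apply.

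First, apply Proposition~\ref{prop:wpushoutclosedbal}\ref{prop:wpushoutclosedbal1} directly, using that $f_2$ is a balanced embedding. This gives that $P$ is torsion-free and that $g_1$ is a balanced embedding.

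Second, use symmetry of the pushout. The cospan $(g_2: B_2\to P,\, g_1: B_1\to P)$ is a pushout of the swapped span $(f_2, f_1)$, since the universal property in the definition of pushout is symmetric in the two legs. Now the second leg of this span, $f_1$, is balanced. Applying the proposition again, with the roles of indices $1$ and $2$ interchanged, shows that $g_2$ is a balanced embedding.

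There is no real obstacle here. The only point to check is that the proposition's statement does not depend on the specific concrete presentation of the pushout. Part~\ref{prop:wpushoutclosedbal1} is stated for any pushout, and its proof reduces to the concrete form of Proposition~\ref{prop:widepushoutexist} only up to isomorphism. Being torsion-free and being a balanced embedding are both invariant under isomorphism over $B_1$ and $B_2$, so transporting along the unique isomorphism between pushouts preserves the conclusion.
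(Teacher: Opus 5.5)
Your proposal is correct and matches the paper's proof: the corollary is deduced directly from Proposition~\ref{prop:wpushoutclosedbal}\ref{prop:wpushoutclosedbal1}. Applying it once to the span as given and once to the swapped span $(f_2,f_1)$, as you do, is exactly the symmetry the paper leaves implicit.
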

\begin{proof}
Follows from Proposition~\ref{prop:wpushoutclosedbal}\ref{prop:wpushoutclosedbal1}.
\end{proof}

We are now ready to prove Lemma~\ref{lem:tcharsetinftyap}:

\begin{proof}[Proof of Lemma~\ref{lem:tcharsetinftyap}]
By Proposition~\ref{prop:tcharsetaleph1aec} we have that $\kaecgoth^\tcharset_\bal$ has continuity, so that by  Lemma~\ref{lem:chainboundsinftyap} it is enough to show $\mrm{AP}$. In particular, we will show closure of $\kaecgoth^\tcharset_\bal$ under pushouts.

\ssk\nin Let $A, B_1, B_2\in\kaec^\tcharset$, and consider a pair of balanced embeddings $(f_1: A \to B_1, f_2: A \to B_2)$, with pushout $(g_1: B_1\to P, g_2: B_2\to P)$. By Corollary~\ref{cor:poutbalanced} we know that $g_1,\,g_2$ are balanced, so that it is enough to show $P\in\kaec^\tcharset$. Clearly we can assume that $P = (B_1\oplus B_2)/N$ is just as in Proposition~\ref{prop:wpushoutclosedbal}. By Remark~\ref{rem:tcharsetbalhyp}\ref{rem:tcharsetbalhyp1} we have that $B_1\oplus B_2\in \kaec^\tcharset$. By Proposition~\ref{prop:wpushoutclosedbal}\ref{prop:wpushoutclosedbal2} we have $N\leqb B_1\oplus B_2$, and by Remark~\ref{rem:tcharsetbalhyp}\ref{rem:tcharsetbalhyp3} we have $P=(B_1\oplus B_2)/N\in \kaec^\tcharset$.
\end{proof}

Putting together everything we have proved so far, we have:

\begin{theorem}
\label{thm:balansumup}
Let $\tcharset$ be a closed set of types. Then the $\aleph_1\mh\mrm{AEC}$ $\kaecgoth_\bal^\tcharset = (\kaec^\tcharset, \leqb)$ has $\infty\mh\mrm{AP}$, $\mrm{JEP}$, and is $\kappa$-stable for every $\kappa = \kappa^{2^{\aleph_0}}$.
\end{theorem}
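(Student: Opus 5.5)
The plan is to assemble Theorem~\ref{thm:balansumup} from results already proved in this subsection, so the argument is a bookkeeping exercise that collects the pieces in the right order.

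First, Proposition~\ref{prop:tcharsetaleph1aec} already tells us that $\kaecgoth_\bal^\tcharset = (\kaec^\tcharset, \leqb)$ is an $\aleph_1\mh\mrm{AEC}$. Its proof (step~$(*_3)$) also gives $\mrm{JEP}$, via closure of $\kaec^\tcharset$ under finite direct sums (Remark~\ref{rem:tcharsetbalhyp}\ref{rem:tcharsetbalhyp1}). Finally, the same proposition shows that $\kaecgoth_\bal^\tcharset$ is almost $\kappa$-stable for every $\kappa = \kappa^{2^{\aleph_0}}$. That almost stability was obtained by transferring along $\leqb \Rightarrow \leqp^{\aleph_1}$ (Remark~\ref{rem:baltrivia}\ref{rem:balaleph1pure}), using Corollary~\ref{cor:alstabtransfermu} together with Theorem~\ref{thm:lpuresumup} applied to $(\kaec^\tcharset,\leqp^{\aleph_1})$.

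Second, $\infty\mh\mrm{AP}$ is exactly Lemma~\ref{lem:tcharsetinftyap}, whose proof has just been given. That proof combines continuity (hence chain bounds), Lemma~\ref{lem:chainboundsinftyap}, and closure under pushouts from Corollary~\ref{cor:poutbalanced}, together with Proposition~\ref{prop:wpushoutclosedbal}\ref{prop:wpushoutclosedbal2} and Remark~\ref{rem:tcharsetbalhyp}\ref{rem:tcharsetbalhyp3}.

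Third, since $\infty\mh\mrm{AP}$ implies $({<}\kappa^{++})\mh\mrm{AP}$, Lemma~\ref{lemma:equivasamalg} upgrades almost $\kappa$-stability to $\kappa$-stability for each $\kappa=\kappa^{2^{\aleph_0}}$.

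The only point that needs care is checking the cardinal arithmetic in the transfer step. Corollary~\ref{cor:alstabtransfermu} requires $\kappa = \kappa^{<\aleph_1} + \mrm{LS}_{\aleph_1}(\kappa\text{-side class})$, and Theorem~\ref{thm:lpuresumup} requires $\kappa=\kappa^{\lambda_0}$ with $\lambda_0 = |\mathbb Z|^{<\aleph_1}=2^{\aleph_0}$. Both conditions are met by $\kappa=\kappa^{2^{\aleph_0}}$: in that case $\kappa^{\aleph_0}=\kappa$ and $\kappa\geq 2^{\aleph_0}\geq \mrm{LS}_{\aleph_1}$. This is already handled inside Proposition~\ref{prop:tcharsetaleph1aec}, so nothing new is needed there.
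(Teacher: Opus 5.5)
Your proposal is correct and matches the paper's proof. $\mrm{JEP}$ and almost $\kappa$-stability come from Proposition~\ref{prop:tcharsetaleph1aec}, $\infty\mh\mrm{AP}$ comes from Lemma~\ref{lem:tcharsetinftyap}, and almost stability is upgraded to stability using $\infty\mh\mrm{AP}$. The paper phrases the last step through the ``in particular'' clause of Corollary~\ref{cor:alstabtransfermu}, which reduces to Lemma~\ref{lemma:equivasamalg} exactly as you apply it; the $\mrm{LS}$ number involved is that of $(\kaec^\tcharset,\leqp^{\aleph_1})$, which is at most $2^{\aleph_0}$.
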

\begin{proof}
With the exception of $\infty\mh\mrm{AP}$ and stability, everything follows from Proposition~\ref{prop:tcharsetaleph1aec}. 
That the class has $\infty\mh\mrm{AP}$ is by Lemma~\ref{lem:tcharsetinftyap}, so we are only left to show $\kappa$-stability for $\kappa= \kappa^{2^{\aleph_0}}$. By Proposition~\ref{prop:tcharsetaleph1aec} $\kaecgoth_\bal^\tcharset$ is almost $\kappa$-stable, and the result follows from Corollary~\ref{cor:alstabtransfermu} because $\kaecgoth_\bal^\tcharset$ has $\infty\mh\mrm{AP}$.
\end{proof}

Recalling Example~\ref{ex:tcharset},  from Theorem~\ref{thm:balansumup} we immediately have:

\begin{corollary}
\label{cor:balstableeaxmple}
The class $(\kaec, \leqb)$ is a stable $\aleph_1\mh\mrm{AEC}$ with $\infty\mh\mrm{AP}$ and $\mrm{JEP}$ for the following choices of $\kaec$:
\begin{enumerate}[(1), leftmargin=*]
\item the class of torsion-free abelian groups;
\item the class of reduced torsion-free abelian groups;
\item the class of $\tchar$-homogeneous torsion-free abelian groups (cf.~\ref{def:abtypes}\ref{def:abtypes4}).
\end{enumerate}
\end{corollary}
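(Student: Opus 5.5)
This should follow directly from Theorem~\ref{thm:balansumup}. The plan is to check that each of the three classes equals $\kaec^\tcharset$ for a closed set of types $\tcharset$, and then invoke that theorem.

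First, by Example~\ref{ex:tcharset}, the three classes are $\kaec^{\tcharset_{\tfab}}=\tfab$, $\kaec^{\tcharset_{\redtfab}}=\redtfab$, and $\kaec^{\tcharset_\tchar}$, the class of $\tchar$-homogeneous groups. The same example asserts that these sets of types are closed in the sense of Definition~\ref{def:tcharsetclosed}. Let me recheck the only nontrivial case quickly:
\begin{itemize}
\item For $\tcharset_{\redtfab}$, if $\tchar_1\wedge\tchar_2=(\infty,\infty,\ldots)$, then the pointwise minimum of representatives is infinite at almost every coordinate and at the remaining coordinates. Hence both $\tchar_1$ and $\tchar_2$ must be the divisible type. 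So the set excluding $(\infty,\infty,\ldots)$ is closed under $\wedge$.
\item $\{\tchar\}$ is trivially closed, since $\tchar\wedge\tchar=\tchar$.
\end{itemize}

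Second, I need to identify $\redtfab$ with $\kaec^{\tcharset_{\redtfab}}$, and this is the one point I would double-check. A torsion-free group is reduced iff no nonzero element has characteristic $(\infty,\infty,\ldots)$. Indeed, such an element generates a divisible subgroup, namely a copy of $\mathbb Q$ inside the pure closure. Conversely, a nonzero divisible subgroup contains an element of that type.

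Third, Theorem~\ref{thm:balansumup} gives that each $(\kaec^\tcharset,\leqb)$ is an $\aleph_1\mh\mrm{AEC}$ with $\infty\mh\mrm{AP}$ and $\mrm{JEP}$. It also gives $\kappa$-stability for every $\kappa=\kappa^{2^{\aleph_0}}$. Such cardinals are unbounded: $\kappa=\theta^{2^{\aleph_0}}$ satisfies $\kappa^{2^{\aleph_0}}=\kappa$ for any $\theta$. So each class is stable in the sense of Definition~\ref{def:stabilityproper}\ref{def:stabilityproper2}, which completes the proof. There is no real obstacle here.
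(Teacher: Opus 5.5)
Your proposal is correct and is the paper's argument: the corollary comes from applying Theorem~\ref{thm:balansumup} to the closed sets of types in Example~\ref{ex:tcharset}. Your extra checks only fill in details the paper leaves as ``easily verified'': closedness of $\tcharset_{\redtfab}$, the identification of reduced torsion-free groups via the characteristic $(\infty,\infty,\ldots)$, and unboundedness of the cardinals $\kappa=\kappa^{2^{\aleph_0}}$.
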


\subsection{Relations that refine direct summands}

\label{sect:relrefine}
Consider a class of $R$-modules $\kaec$. It is natural to consider all the possible relations $\leqk$ for which $(\kaec, \leqk)$ forms a $\mu\mh\mrm{AEC}$ for some $\mu$ and $\leqk$ refines direct summands, i.e., for every $A,B\in\kaec$ we have that $A\leqo B$ implies $A\leqk B$. Let us first consider the easier case of $\kaec =\rmod$. 
We know that, for every infinite cardinal $\lambda$, the relation $\leqp^\lambda$ refines direct summands, and that a submodule is a direct summand if and only if it is a $\lambda$-pure submodule for every $\lambda$. It is therefore natural to ask whether the relations $\leqp^\lambda$ are cofinal among the relations $\leqk$ which refine direct summands and for which $(\rmod, \leqk)$ is a $\mu\mh\mrm{AEC}$ for some $\mu$, i.e., if $(\rmod, \leqk)$ is a $\mu\mh\mrm{AEC}$, then there is a $\lambda$ such that $\leqk$ refines $\leqp^\lambda$. In particular, this would show, by the results of Section~\ref{sect:lpure} (cf.~\ref{thm:lpuresumup}) and the almost stability transfer (cf.~\ref{cor:alstabtransfermu}),  that every such class $(\rmod, \leqk)$ is almost stable. In this subsection we make progress towards answering this question by showing that the relations $\leqp^\lambda$ are cofinal among the relations $\leqk$ for which $(\rmod, \leqk)$ is a $\mu\mh\mrm{AEC}$ which also  satisfies a natural stronger form of coherence, which we introduce in this paper.

\begin{definition}
\label{def:strongcoherence}
Let $\kaecgoth = (\kaec, \leqk)$ be an $\mrm{AC}$. We say that $\kaecgoth$ has \tdef{strong coherence} if whenever $A,B, C\in \kaec$ with $A\leqk C$ and $A\leq B\leq C$, then $A\leqk B$.
\end{definition}

Notice that the difference with coherence (cf.~\ref{def:propertiesac}\ref{def:coherence}) is that in \ref{def:strongcoherence} we do not require $B\leqk C$, but only $B\leq C$. All of the relations we have studied so far actually satisfy this stronger form of coherence:

\begin{example}
\label{ex:strcoh}
The following abstract classes satisfy strong coherence:
\begin{enumerate}[(1), leftmargin=*]
\item \label{ex:strcoh1}$(\rmod, \leqo)$, where $\leqo$ is the relation of being a direct summand;
\item \label{ex:strcoh2}$(\rmod, \leqp^\lambda)$ where $\leqp^\lambda$ is the relation of being a $\lambda$-pure submodule;
\item \label{ex:strcoh3}$(\tfab, \leqb)$, where $\leqb$ is the relation of being a balanced subgroup.
\end{enumerate}
\end{example}
\begin{proof}
Items \ref{ex:strcoh1}-\ref{ex:strcoh3} follow from  \ref{fact:dsumsolveeq}, \ref{rem:lpuretrivia}\ref{rem:lpuretrivia3}, and \ref{rem:baltrivia}\ref{rem:baltrivia1} respectively.
\end{proof}

The following easy lemma will be useful in the following.
\begin{lemma}
\label{lem:lsdirected}
Let $\kaecgoth= (\kaec, \leqk)$ be an $\mrm{AC}$ with $\mrm{LS}$ at $\kappa$ (cf.~\ref{def:propertiesac}\ref{def:LS}) and coherence (cf.~\ref{def:propertiesac}\ref{def:coherence}). Let $A\in \kaec$, then there is a $\kappa^+$-directed system $(A_i)_{i\in I}$ in $\kaecgoth$ such that $A = \bigcup_{i\in I} A_i$ and $|A_i| \leq \kappa$ for every $i\in I$. 
\end{lemma}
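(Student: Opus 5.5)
The plan is to take as index set $I$ the collection of all $M\in\kaec$ with $M\leqk A$ and $|M|\leq\kappa$, ordered by $\leqk$, and to show that this poset is $\kappa^+$-directed and covers $A$. The family $(M)_{M\in I}$ is then trivially a system in $\kaecgoth$ in the sense of Definition~\ref{def:mudirected}\ref{def:mudirectsyst}, since $M<M'$ in $I$ means $M\leqk M'$ by definition.

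First, $I$ covers $A$. Given $a\in A$, apply $\mrm{LS}$ at $\kappa$ (Definition~\ref{def:propertiesac}\ref{def:LS}) to $\{a\}$ to get $M_a\leqk A$ with $a\in M_a$ and $|M_a|\leq\kappa$. So $A=\bigcup_{M\in I}M$; the same argument also shows $I$ is non-empty.

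Next, $\kappa^+$-directedness. Let $\{M_j\mid j\in J\}\subseteq I$ with $|J|\leq\kappa$. Put $X=\bigcup_{j\in J}M_j$, so $|X|\leq\kappa\cdot\kappa=\kappa$. Apply $\mrm{LS}$ at $\kappa$ to $X\subseteq A$ to get $N\in\kaec$ with $X\subseteq N\leqk A$ and $|N|\leq\kappa$, hence $N\in I$. For each $j$ we have $M_j\leqk A$, $N\leqk A$ and $M_j\subseteq N$. Since $M_j$ is a substructure of $A$, it is also a substructure of $N$, so $M_j\leq N\leqk A$. Coherence (Definition~\ref{def:propertiesac}\ref{def:coherence}) then gives $M_j\leqk N$, and $N$ is an upper bound.

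I also need $\leqk$ to be a partial order on $I$. Reflexivity and transitivity come from the definition of an $\mrm{AC}$, and antisymmetry holds because $\leqk$ implies inclusion. So the only step needing care is the coherence step, and specifically checking that set-inclusion of substructures of $A$ gives the substructure relation $M_j\leq N$. That holds because both carry the structure induced from $A$.
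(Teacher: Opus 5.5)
Your proposal is correct and follows essentially the same route as the paper: the index set of all $M\leqk A$ with $|M|\leq\kappa$ ordered by $\leqk$, with $\mrm{LS}$ at $\kappa$ giving the covering and the upper bound, and coherence giving $M_j\leqk N$. The paper additionally splits off the trivial case $|A|\leq\kappa$. That split is unnecessary, since your argument handles it directly: $A$ itself then lies in $I$.
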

\begin{proof}
If $|A| \leq \kappa$ this is trivially satisfied, so assume $\kappa <|A|$. Let 
\[
I = \{B \in\kaec\mid B\leqk A\text{ and }|B| \leq \kappa\},
\]
where here $I$ is viewed as a poset with the ordering $\leqk$. We verify that $(B)_{B\in I}$ is the required $\kappa^+$-directed system.
\begin{enumerate}[$(*_1)$, leftmargin=*, series=prf:lsdirected]
\item \label{prf:lsdirected1} $I$ is $\kappa^+$-directed.
\end{enumerate}
Why \ref{prf:lsdirected1}? Take $J\subseteq I$ with $|J| \leq \kappa$. Then $|\bigcup_{B\in J}B| \leq \kappa$, so that by $\mrm{LS}$ at $\kappa$ we can find $C\in\kaec$ with $\bigcup_{B\in J} B \subseteq C\leqk A$ and $|C| \leq \kappa$. For every $B\in J$, we have that $B\leqk A$ and $C\leqk A$. Therefore, by coherence we have $B\leqk C$ for every $B\in J$, and thus we are done.
\begin{enumerate}[resume*=prf:lsdirected]
\item \label{prf:lsdirected2}$(B)_{B\in I}$ is a directed system in $\kaecgoth$.
\end{enumerate}
Why \ref{prf:lsdirected2}? This is obvious by the definition of the ordering on $I$.
\begin{enumerate}[resume*=prf:lsdirected]
\item \label{prf:lsdirected3} $\bigcup_{B\in I} B = A$.
\end{enumerate}
Why \ref{prf:lsdirected3}? If $x\in A$, because $\kaecgoth$ has $\mrm{LS}$ at $\kappa$, then there is $B\leqk A$ with $x\in B$ and $|B| \leq \kappa$.
\end{proof}

\begin{theorem}
\label{thm:lsrefinedsumscoh}
Let  $\kaecgoth = (\kaec, \leqk)$ be an abstract class of $R$-modules such that $\leqk$ refines direct summands and $\kappa\geq |R|$ is an infinite cardinal. Assume $\kaecgoth$ has $\mrm{LS}$ at $\kappa$, $\kappa^+$-smoothness, strong coherence, and $\kaec$ is closed under submodules, i.e., $A\leq B\in \kaec$ implies $A\in\kaec$. Then $\leqk$ refines $\kappa^+$-purity. 
\end{theorem}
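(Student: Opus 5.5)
We read ``$\leqk$ refines $\kappa^+$-purity'' as in Lemma~\ref{lem:alstabtransfer}: $A\leqp^{\kappa^+} B$ should imply $A\leqk B$ for all $A,B\in\kaec$. This is also the only reading under which the statement can hold, since $\leqk=\leq$ satisfies all the hypotheses. So fix $A,B\in\kaec$ with $A\leqp^{\kappa^+}B$; the goal is $A\leqk B$. The plan is to write $A$ as a $\kappa^+$-directed union of small pieces, each of which is a $\kaecgoth$-submodel of $B$, and then conclude by $\kappa^+$-smoothness.

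\emph{Step 1 (local summands).} For every $X\subseteq B$ with $|X|\leq\kappa$, put $C_X=A+\langle X\rangle\leq B$. Then $C_X\in\kaec$, because $\kaec$ is closed under submodules. Also $C_X/A$ is $({<}\kappa^+)$-generated. Since $|R|\leq\kappa<\kappa^+$, Remark~\ref{rem:lpureab} (equivalently Fact~\ref{fact:lpuresplit} together with Remark~\ref{rem:lambdapresented}) gives $A\leqo C_X$. As $\leqk$ refines direct summands, this yields $A\leqk C_X$.

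\emph{Step 2 (small pieces of $A$).} Strong coherence implies coherence, and $\kaecgoth$ has $\mrm{LS}$ at $\kappa$. So Lemma~\ref{lem:lsdirected} applied to $A$ gives a $\kappa^+$-directed system $(A_i)_{i\in I}$ in $\kaecgoth$ with $A=\bigcup_{i\in I}A_i$, $A_i\leqk A$ and $|A_i|\leq\kappa$. We claim $A_i\leqk B$ for every $i\in I$.

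To see this, use $\mrm{LS}$ at $\kappa$ in $B$ to find $M\in\kaec$ with $A_i\subseteq M\leqk B$ and $|M|\leq\kappa$. By Step 1 with $X=M$ and transitivity, $A_i\leqk A\leqk C_M$. Moreover $A_i\leq M\leq C_M$ with all three modules in $\kaec$. Strong coherence, which here is used with $M$ only a submodule of $C_M$ and not a $\kaecgoth$-submodel, then gives $A_i\leqk M$. Transitivity with $M\leqk B$ yields $A_i\leqk B$.

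\emph{Step 3 (conclusion).} Now $(A_i)_{i\in I}$ is a $\kappa^+$-directed system in $\kaecgoth$ with $A_i\leqk B$ for all $i$, and its union $A$ lies in $\kaec$. By $\kappa^+$-smoothness, $A\leqk B$.

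The main obstacle is Step 2. A $\kappa^+$-pure submodule is only locally a direct summand, so one has to pass $\leqk$ from the auxiliary modules $C_M$ down to the small $\kaecgoth$-submodel $M$ of $B$. This is exactly where strong coherence, rather than ordinary coherence, is needed, since there is no reason for $M\leqk C_M$. The remaining steps are routine bookkeeping of sizes: $|X|\leq\kappa$ gives $({<}\kappa^+)$-generated quotients, and $\kappa\geq|R|$ lets us identify these with $\kappa^+$-presented modules.
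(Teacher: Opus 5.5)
Your proof is correct and follows the paper's argument. You decompose $A$ via Lemma~\ref{lem:lsdirected}, use $\kappa^+$-purity to get $A\leqo A+M$ for a small $\kaecgoth$-submodel $M$ of $B$ containing $A_i$, push $\leqk$ down to obtain $A_i\leqk B$, and conclude by $\kappa^+$-smoothness. The only difference is cosmetic: the paper takes $M=B_j$ from a $\kappa^+$-directed system for $B$, then applies strong coherence to $B_j\leq A+B_j\leq B$ and ordinary coherence afterwards, whereas you apply strong coherence once, directly to $A_i\leq M\leq A+M$.
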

\begin{proof}
Assume $A\leqp^{\kappa^+} B$. Let $(A_i)_{i\in I}$ and $(B_j)_{j\in J}$ be as in  Lemma~\ref{lem:lsdirected} for $A$ and $B$ respectively applied with the cardinal $\kappa$. Fix $i\in I$. Because $J$ is $\kappa^+$-directed and $|A_i|\leq \kappa$, there is $j\in J$ such that $A_i \leq B_j$. Notice that we have $A+B_j\in \kaec$ by closure under submodules of $\kaec$. We have $|(A+B_j)/A| \leq |B_j| \leq \kappa$, so that, because $|R| \leq \kappa$, we have that $(A+B_j)/A$ is $\kappa^+$-presented, and by Fact~\ref{fact:lpuresplit} $A\leqo A+B_j$, so that $A\leqk A+B_j$ because $\leqk$ refines direct summands.
Because $B_j\leqk B$, by strong coherence we have $B_j\leqk A+B_j\leq B$. But then ${A_i\leqk A \leqk A+ B_j}$ and $A_i\leq B_j \leqk A+B_j$, so that by  coherence $A_i\leqk B_j\leqk B$. But $(A_i)_{i\in I}$ is a $\kappa^+$-directed system in $\kaecgoth$ with $A = \bigcup_{i\in I}A_i$, so that by $\kappa^+$-smoothness we have $A \leqk B$.
\end{proof}

\begin{corollary}
\label{cor:lpurerefine}
Let $\kaecgoth = (\kaec, \leqk)$ be a $\mu\mh\mrm{AEC}$ of $R$-modules such that $\leqk$ refines direct summands, $\kaecgoth$ has strong coherence, and $\kaec$ is closed under submodules. Then $\leqk$ refines $\mrm{LS}_\mu(\kaecgoth)^+$-purity. 
\end{corollary}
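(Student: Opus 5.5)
The plan is to derive Corollary~\ref{cor:lpurerefine} directly from Theorem~\ref{thm:lsrefinedsumscoh} with $\kappa = \mrm{LS}_\mu(\kaecgoth)$. The hypotheses that $\leqk$ refines direct summands, strong coherence, and closure of $\kaec$ under submodules are assumed verbatim in both statements. What remains is to check three things: that $\kappa$ is an infinite cardinal with $\kappa \geq |R|$, that $\kaecgoth$ has $\mrm{LS}$ at $\kappa$, and that $\kaecgoth$ has $\kappa^+$-smoothness.

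For the first point, Definition~\ref{def:muaec} gives $\kappa = \mrm{LS}_\mu(\kaecgoth) \geq |\tau_\kaecgoth| + \mu$. The language of $R$-modules has one function symbol for each $r\in R$, so $|\tau_\kaecgoth| \geq |R|$, and $\kappa$ is infinite. For the second point, the $\mrm{LS}$ axiom requires $\kappa = \kappa^{<\mu}$. Hence for $A\subseteq M$ with $|A|\leq\kappa$ we obtain $M_0$ of size $\leq |A|^{<\mu}+\kappa \leq \kappa^{<\mu}+\kappa = \kappa$, which is exactly $\mrm{LS}$ at $\kappa$ (Remark~\ref{rem:defmuaec}\ref{rem:defmuaec3}).

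The third point, $\kappa^+$-smoothness, is the one step needing an argument, though it is short. Since $\kappa \geq \mu$, we have $\kappa^+ > \mu$, so every $\kappa^+$-directed poset is in particular $\mu$-directed: any subset of size $<\mu$ also has size $<\kappa^+$ and therefore has an upper bound. Consequently every $\kappa^+$-directed system in $\kaecgoth$ is a $\mu$-directed system, and $\mu$-smoothness, one of the Tarski--Vaught axioms of a $\mu\mh\mrm{AEC}$, yields $\kappa^+$-smoothness.

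With these three facts in hand, Theorem~\ref{thm:lsrefinedsumscoh} applies and shows that $\leqk$ refines $\kappa^+$-purity, which is $\mrm{LS}_\mu(\kaecgoth)^+$-purity. I do not expect any real obstacle; the only point needing care is the directedness comparison $\kappa^+ \geq \mu$.
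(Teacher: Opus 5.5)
Your proposal is correct and follows the paper's proof: apply Theorem~\ref{thm:lsrefinedsumscoh} with $\kappa=\mrm{LS}_\mu(\kaecgoth)$, using $\mrm{LS}_\mu(\kaecgoth)\geq |R|+\mu$. Your explicit checks fill in details the paper leaves implicit: $\mrm{LS}$ at $\kappa$ follows from $\kappa=\kappa^{<\mu}$, and $\kappa^+$-smoothness follows because every $\kappa^+$-directed system is $\mu$-directed.
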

\begin{proof}
By definition of $\mrm{LS}$ number we have $\mrm{LS}_\mu(\kaecgoth) \geq |R|+\mu$, so that we can apply Theorem~\ref{thm:lsrefinedsumscoh} with $\kappa=\mrm{LS}_\mu(\kaecgoth)$.
\end{proof}

Using the results of Section~\ref{sect:lpure} we now show how the result we have obtained can be used to get stability results for various abstract classes.

\begin{hypothesis}
\label{hyp:strongcoherence}
Let $\kaec$ be a class of $R$-modules satisfying the following:
\begin{enumerate}[(1), leftmargin=*]
\item $\kaec$ is closed under submodules and under isomorphisms;
\item there is an infinite regular cardinal $\lambda$ such that $\kaecgoth_\lambda = (\kaec, \leqp^\lambda)$ satisfies Hypothesis~\ref{hyp:lambdapure}.
\end{enumerate}
\end{hypothesis}

\begin{theorem}
\label{thm:astablecoherent}
Let $\kaecgoth = (\kaec, \leqk)$ be a $\mu\mh\mrm{AEC}$ such that $\leqk$ refines direct summands, $\kaecgoth$ has strong coherence, and $\kaec$ satisfies Hypothesis~\ref{hyp:strongcoherence}. Then $\kaecgoth$ is almost stable. Moreover, if $\kaecgoth$ satisfies $\infty\mh\mrm{AP}$, then it is  stable.
\end{theorem}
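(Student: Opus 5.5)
The plan is to combine Corollary~\ref{cor:lpurerefine} with the almost stability transfer of Corollary~\ref{cor:alstabtransfermu} and the stability results for $\lambda$-purity from Theorem~\ref{thm:lpuresumup}.

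First, since $\kaec$ satisfies Hypothesis~\ref{hyp:strongcoherence}, it is closed under submodules. Hence Corollary~\ref{cor:lpurerefine} applies to $\kaecgoth$. It shows that $\leqk$ refines $\nu$-purity, where $\nu=\mrm{LS}_\mu(\kaecgoth)^+$; note that $\nu$ is a regular cardinal. Next, fix the regular cardinal $\lambda$ from Hypothesis~\ref{hyp:strongcoherence}, for which $(\kaec,\leqp^\lambda)$ satisfies Hypothesis~\ref{hyp:lambdapure}. Set $\theta=\max(\lambda,\nu)$, which is again regular. By Theorem~\ref{thm:lpuresumup}\ref{thm:lpuresumup3}, $\kaecgoth_\theta=(\kaec,\leqp^\theta)$ also satisfies Hypothesis~\ref{hyp:lambdapure}.

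Since $\theta\geq\nu$, any $\theta$-pure submodule is $\nu$-pure. So $A\leqp^\theta B$ implies $A\leqp^\nu B$, which in turn implies $A\leqk B$; that is, $\leqk$ refines $\leqp^\theta$.

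By Theorem~\ref{thm:lpuresumup}, $\kaecgoth_\theta$ is a $\theta\mh\mrm{AEC}$ with $\mrm{LS}_\theta(\kaecgoth_\theta)\leq |R|^{<\theta}$. It is also $\kappa$-stable whenever $\kappa=\kappa^{|R|^{<\theta}}$, and in particular almost $\kappa$-stable at such $\kappa$. For any such $\kappa$ we have $\kappa=\kappa^{<\theta}+\mrm{LS}_\theta(\kaecgoth_\theta)$, because $\kappa^{<\theta}\leq\kappa^{|R|^{<\theta}}=\kappa$ and $\kappa\geq 2^{|R|^{<\theta}}\geq|R|^{<\theta}$. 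Cardinals of this form are unbounded, for instance $\kappa=\beth_{\alpha+1}(\cdot)^{|R|^{<\theta}}$-style powers $\kappa=\rho^{|R|^{<\theta}}$ for arbitrarily large $\rho$.

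Now apply Corollary~\ref{cor:alstabtransfermu} with $\kaecgoth_1=\kaecgoth$, $\kaecgoth_2=\kaecgoth_\theta$ and $\mu=\theta$. It yields that $\kaecgoth$ is almost $\kappa$-stable for unboundedly many $\kappa$, so $\kaecgoth$ is almost stable. If in addition $\kaecgoth$ has $\infty\mh\mrm{AP}$, the same corollary gives $\kappa$-stability at each of these $\kappa$, hence stability.

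The only point needing care is checking that the cardinal hypotheses of Corollary~\ref{cor:alstabtransfermu} hold simultaneously with stability of $\kaecgoth_\theta$, and that we can pass from $\leqp^\nu$ to the larger $\theta$. The raise to $\theta$ is handled by Remark~\ref{rem:hyplpure}\ref{rem:hyplpurenu}, and the cardinal hypotheses by the arithmetic above; I expect no serious obstacle.
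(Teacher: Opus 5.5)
Your proposal is correct and matches the paper's proof essentially step for step: enlarge the cardinal of Hypothesis~\ref{hyp:strongcoherence} to some regular $\theta\geq\mrm{LS}_\mu(\kaecgoth)^+$ (Remark~\ref{rem:hyplpure}), deduce from Corollary~\ref{cor:lpurerefine} that $\leqk$ refines $\leqp^\theta$, and transfer the $\kappa$-stability of $(\kaec,\leqp^\theta)$ from Theorem~\ref{thm:lpuresumup} via Corollary~\ref{cor:alstabtransfermu}. Your explicit check that such $\kappa$ satisfy $\kappa=\kappa^{<\theta}+\mrm{LS}_\theta(\kaecgoth_\theta)$ and are unbounded just spells out what the paper leaves implicit.
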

\begin{proof}
We have that $(\kaec, \leqp^\lambda)$ satisfies Hypothesis~\ref{hyp:lambdapure} for some infinite regular cardinal $\lambda$, and without loss of generality by Remark~\ref{rem:hyplpure}\ref{rem:hyplpurenu} we can assume  $\mrm{LS}_\mu(\kaecgoth)^+\leq \lambda$. Let $\lambda_0 = |R|^{<\lambda}$. By Theorem~\ref{thm:lpuresumup}, for every $\kappa = \kappa^{\lambda_0}$, the $\lambda\mh\mrm{AEC}$ $(\kaec, \leqp^\lambda)$ is $\kappa$-stable. Moreover, $\leqk$ refines $\leqp^\lambda$ by Corollary~\ref{cor:lpurerefine}. Therefore, because $\kappa = \kappa^{<\lambda}$, we have that $(\kaec, \leqk)$ is almost $\kappa$-stable by the almost stability transfer, i.e., Corollary~\ref{cor:alstabtransfermu}. Finally, the last assertion follows from Lemma~\ref{lemma:equivasamalg}.
\end{proof}

We end this subsection with some examples of classes satisfying Hypothesis~\ref{hyp:strongcoherence}.

\begin{example}
The following classes satisfy Hypothesis~\ref{hyp:strongcoherence}:
\begin{enumerate}[(1), leftmargin=*]
\item $\rmod$;
\item $\tfab$, where $\tfab$ is the class of torsion-free abelian groups, 
\item $\pgrp$, where $\pgrp$ is the class of $p$-groups.
\end{enumerate}
\end{example}

\section{Envelopes and classes of pure-injective modules}

\label{sect:envelopes}
\subsection{Injective and pure-injective envelopes}

In this subsection we will cover some general facts regarding injective and pure-injective modules which will be needed to show that the classes $(R\mh\inj, \leqo)$ and $(R\mh\pinj, \leqo)$ are $(|R|+\aleph_0)^+\mh\mrm{AEC}$s, where $R\mh\inj$ and $R\mh\pinj$ denote the injective and pure-injective $R$-modules respectively. In particular, for a ring $R$, we introduce two cardinal invariants $\gamma(R)$ and $\mu(R)$, both less than or equal to $(|R| + \aleph_0)^+$. We will see that a $\gamma(R)$-pure submodule or quotient of an injective $R$-module is again injective, and the class of injective $R$-modules is closed under $\gamma(R)$-directed systems. The cardinal invariant $\gamma(R)$ had already been introduced, with a slightly different definition, in \cite{eklofuniversal}. On the other hand, $\mu(R)$ is a novel cardinal invariant which we introduce in the present paper, and which provides closure properties for the class of pure-injective $R$-modules analogous to those of $\gamma(R)$ for the class of injective $R$-modules. The results of this subsection will then be used in Subsection~\ref{sect:pinjmodules} to obtain results regarding the stability of various classes of pure-injective $R$-modules.

\begin{notation}
We let $R\mh\inj$ denote the class of injective $R$-modules, and we let $R\mh\pinj$ denote the class of pure-injective $R$-modules (see Subsection~\ref{sect:moduleprelims} for the definitions).
\end{notation}

Throughout this subsection, we will use freely the following obvious remark.

\begin{remark}
\label{rem:relationcollapse}
\begin{enumerate}[(1), leftmargin=*]
\item The relations $\leq$ and $\leqo$ coincide on $R\mh\inj$, so that in particular $\leqp^\lambda$ coincides with $\leqo$ on $R\mh\inj$ for every infinite cardinal $\lambda$.
\item The relations $\leqp$ and $\leqo$ coincide on $R\mh\pinj$, so that in particular $\leqp^\lambda$ coincides with $\leqo$ on $R\mh\pinj$ for every infinite cardinal $\lambda$.
\end{enumerate}
\end{remark}

First, we recall what is known in the case of injective $R$-modules.  
The following definition is exactly the one given in \cite[Definition 2.3]{mazariparam} or \cite{eklofuniversal}, with the exception that we also require $\gamma(R)$ to be regular.

%

\begin{definition}
\label{def:gammar}
Let $R$ be a ring. We let $\gamma(R)$ denote the least infinite regular cardinal such that every left ideal of $R$ is $({<}\gamma(R))$-generated as an $R$-module. Clearly $\gamma(R) \leq (|R|+\aleph_0)^+$.
\end{definition}

We will use \ref{fact:triviainj2}-\ref{fact:triviainj3} of the following to show that $(R\mh\inj, \leqo)$ is a $\gamma(R)\mh\mrm{AEC}$. Recall the definition of the injective envelope of $A$, denoted as $E(A)$, from Fact~\ref{fact:triviainjenv}.

\begin{fact}\label{fact:triviainj}
\begin{enumerate}[(1), leftmargin=*]
\item \label{fact:triviainj1}Let $A$ be an $R$-module, then ${|\mrm{E}(A)| \leq (|A|+|R|)^{<\gamma(R)}}$. 
\item \label{fact:triviainj2}The class of injective $R$-modules is closed under $\gamma(R)$-directed systems.
\item \label{fact:triviainj3} The class of injective $R$-modules is closed under $\gamma(R)$-pure submodules.
\end{enumerate}
\end{fact}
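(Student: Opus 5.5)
We will prove the three items of Fact~\ref{fact:triviainj} separately, all resting on Baer's criterion (Fact~\ref{fact:triviainjenv}\ref{fact:triviainjenvbaer}) together with the defining property of $\gamma(R)$: every left ideal $I$ of $R$ is generated by a set $X_I$ with $|X_I|<\gamma(R)$. The underlying observation is that a homomorphism $f\colon I\to A$ is determined by the $({<}\gamma(R))$-tuple $f(X_I)$. Moreover, $f$ extends to $R$ if and only if there is $y\in A$ with $ry=f(r)$ for all $r\in X_I$. Consequently, injectivity is governed by systems of fewer than $\gamma(R)$ linear equations in a single unknown.

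\textbf{Item (2).} Let $(A_i)_{i\in I}$ be a $\gamma(R)$-directed system of injective modules with union $A$, and take $f\colon J\to A$ for a left ideal $J$. The set $f(X_J)$ has size $<\gamma(R)$, so by $\gamma(R)$-directedness it lies in a single $A_i$. Since $X_J$ generates $J$, $f$ then factors through $A_i$. Injectivity of $A_i$ produces $y\in A_i$ with $ry=f(r)$ for $r\in X_J$, and hence for all $r\in J$. This $y$ gives the required extension of $f$ into $A$.

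\textbf{Item (3).} Let $A\leqp^{\gamma(R)} B$ with $B$ injective, and take $f\colon J\to A$. Viewing $f$ as a map into $B$, Baer's criterion gives $y\in B$ solving the system $\{r x=f(r)\mid r\in X_J\}$. This is a system of fewer than $\gamma(R)$ equations with parameters in $A$, so $\gamma(R)$-purity yields a solution $y'\in A$. The resulting map $r\mapsto ry'$ extends $f$, so $A$ is injective.

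\textbf{Item (1).} We first bound the size of an injective module, and the plan is to build one containing $A$ by a continuous chain of length $\gamma(R)$. Set $A_0=A$. At successor stages, for each left ideal $J$ and each homomorphism $J\to A_\alpha$, adjoin a solution via a pushout-type construction. At limit stages take unions. Each homomorphism $J\to A_\alpha$ is determined by a $({<}\gamma(R))$-tuple in $A_\alpha$, and there are at most $2^{|R|}$ left ideals. The first count is fine, contributing $(|A_\alpha|+|R|)^{<\gamma(R)}$. The ideal count needs care, and this is the main obstacle, since $2^{|R|}$ need not be bounded by $(|A|+|R|)^{<\gamma(R)}$.

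To avoid the bound $2^{|R|}$, we parametrize instead by generating tuples. Each left ideal is the ideal generated by some $({<}\gamma(R))$-tuple of $R$, so there are at most $|R|^{<\gamma(R)}$ left ideals. Cardinal arithmetic then keeps every stage $A_\alpha$ of size at most $\kappa=(|A|+|R|)^{<\gamma(R)}$, since $\kappa^{<\gamma(R)}=\kappa$ by regularity of $\gamma(R)$. The union $A_{\gamma(R)}$ is injective by the argument of item (2): any $f\colon J\to A_{\gamma(R)}$ has image of its generators inside some $A_\alpha$ (using regularity of $\gamma(R)$), and $f$ extends in $A_{\alpha+1}$. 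Finally, the injective envelope $\mrm{E}(A)$ is isomorphic to a summand of any injective module containing $A$, by Fact~\ref{fact:triviainjenv}, which gives $|\mrm{E}(A)|\le\kappa$. Alternatively, one could cite \cite{eklofuniversal} directly for this item.
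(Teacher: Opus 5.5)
Your argument is correct. Item (2) matches the paper's proof. For items (1) and (3) the paper gives no argument of its own: it cites Lemma~2 and Theorem~1 of \cite{eklofuniversal}, adapted to the regular version of $\gamma(R)$.

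You instead prove both directly from Baer's criterion.
\begin{enumerate}
\item[(3)] You reduce the extension problem to the system $\{rx=f(r)\mid r\in X_J\}$ of fewer than $\gamma(R)$ equations in one unknown. $R$-linearity of $f$ then propagates the solution from $X_J$ to all of $J$.
\item[(1)] You build, by a transfinite ``adjoin solutions'' construction, an injective $I\supseteq A$ with $|I|\le\kappa$. Then $\mrm{E}(A)$ embeds into $I$ by Fact~\ref{fact:triviainjenv}\ref{fact:triviainjenv3}, since $\mrm{E}(I)=I$.
\end{enumerate}
Your counting is sound: at most $|R|^{<\gamma(R)}$ left ideals, at most $|A_\alpha|^{<\gamma(R)}$ homomorphisms per ideal, and $\kappa^{<\gamma(R)}=\kappa$ for regular $\gamma(R)$. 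Two routine points are left unstated. First, $\gamma(R)\le\kappa$, which is needed to keep the bound at limit stages. Second, each successor step is an embedding, by Proposition~\ref{prop:widepushoutembedding}.

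Once (3) is available, the paper's own framework gives a shorter route to (1):
\begin{itemize}
\item By the $\mrm{LS}$ axiom of $(\rmod,\leqp^{\gamma(R)})$ (Proposition~\ref{prop:lpurelaec}), pick $A\subseteq B\leqp^{\gamma(R)}\mrm{E}(A)$ with $|B|\le(|A|+|R|)^{<\gamma(R)}$.
\item Then $B$ is injective by (3), so $A\le B\leqo\mrm{E}(A)$, which forces $B=\mrm{E}(A)$.
\end{itemize}
This is exactly the argument the paper uses for pure-injective envelopes in Corollary~\ref{cor:cardpinj}. That route is shorter and uniform with the pure-injective case, but it relies on the $\mrm{LS}$ bound imported from \cite{coto}. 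Your route is fully elementary and does not depend on (3).
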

\begin{proof}
\ref{fact:triviainj1} follows from \cite[Theorem 1]{eklofuniversal} adapted to our definition of $\gamma(R)$.

\ssk\nin We show \ref{fact:triviainj2}. Let $(A_i)_{i\in I}$ be a $\gamma(R)$-directed system of injective modules with union $A$. By Baer's criterion, that is Fact~\ref{fact:triviainjenv}\ref{fact:triviainjenvbaer}, it is enough to show that for every left ideal $J$ of $R$ and $f: J \to A$ an $R$-module homomorphism, there is a homomorphism $g: R \to A$ extending $f$. By the definition of $\gamma(R)$ there is a set $X\subseteq J$ such that $|X| < \gamma(R)$ and $X$ generates $J$ as an $R$-module. Because $I$ is $\gamma(R)$-directed, there is $i\in I$ with $f(X)\subseteq A_i$, and hence $f(J)\leq A_i$. Because $A_i$ is injective, again using Fact~\ref{fact:triviainjenv}\ref{fact:triviainjenvbaer},  then there is $g: R \to A_i$ which extends $f$. This finishes the proof of \ref{fact:triviainj2}.

\ssk\nin Finally,  \ref{fact:triviainj3} is an easy consequence of \cite[Lemma 2]{eklofuniversal}, adapted to our definition of $\gamma(R)$. 
\end{proof}

\begin{corollary}
\label{cor:injmuaec}
$(R\mh\inj, \leqo)$ is a $\gamma(R)\mh\mrm{AEC}$, where $R\mh\inj$ denotes the class of injective $R$-modules. In particular, it is a $(|R|+\aleph_0)^+\mh\mrm{AEC}$.
\end{corollary}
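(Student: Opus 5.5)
The plan is to verify the axioms of Definition~\ref{def:muaec} for $\kaecgoth = (R\mh\inj, \leqo)$ with $\mu = \gamma(R)$, which is regular by Definition~\ref{def:gammar}. The relation $\leqo$ restricted to $R\mh\inj$ coincides with $\leq$ by Remark~\ref{rem:relationcollapse}, and this makes most axioms essentially trivial. $\kaecgoth$ is an abstract class because $\leqo$ is reflexive, transitive and preserved by isomorphisms. Coherence holds because on $R\mh\inj$ any submodule relation $A \leq B$ between injectives already gives $A \leqo B$ (an injective module is a summand of every overmodule).

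For the Tarski--Vaught axioms, take a $\gamma(R)$-directed system $(A_i)_{i\in I}$ in $\kaecgoth$. By Fact~\ref{fact:triviainj}\ref{fact:triviainj2}, $A = \bigcup_{i\in I} A_i$ is injective. Each $A_i$ is injective and $A_i \leq A$, hence $A_i \leqo A$; this gives $\gamma(R)$-continuity. For $\gamma(R)$-smoothness, if additionally $A_i \leqo N$ with $N$ injective and $A\in R\mh\inj$, then $A \leq N$ with $A$ injective, so $A \leqo N$.

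The only substantive step is the L\"owenheim--Skolem axiom. Given $N \in R\mh\inj$ and $X \subseteq N$, let $A = \langle X\rangle$, so $|A| \leq |X| + |R| + \aleph_0$. By Fact~\ref{fact:triviainjenv}\ref{fact:triviainjenv2}, and since $N$ is injective, the inclusion $A\to N$ extends to a map $\mrm{E}(A)\to N$. This map is injective because $A$ is essential in $\mrm{E}(A)$; alternatively, one can use Fact~\ref{fact:triviainjenv}\ref{fact:triviainjenv3} to obtain a split embedding $\mrm{E}(A)\to\mrm{E}(N)=N$ extending the inclusion. Its image $M_0$ is an injective submodule of $N$ containing $X$, hence $M_0 \leqo N$. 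By Fact~\ref{fact:triviainj}\ref{fact:triviainj1}, $|M_0| \leq (|X|+|R|+\aleph_0)^{<\gamma(R)} \leq |X|^{<\gamma(R)} + \lambda$, where $\lambda = (|R|+\aleph_0)^{<\gamma(R)}$. This $\lambda$ satisfies $\lambda^{<\gamma(R)} = \lambda$ by regularity of $\gamma(R)$ (or one can pass to $\lambda^{<\gamma(R)}$ directly), and it satisfies $\lambda \geq |\tau|+\gamma(R)$ because $\gamma(R)\leq(|R|+\aleph_0)^+ \leq 2^{|R|+\aleph_0}$. The cardinal arithmetic check is the only mildly delicate point.

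For the final sentence, I would use the fact that a $\gamma(R)$-AEC is also a $\mu'$-AEC for any regular $\mu' \geq \gamma(R)$. A $\mu'$-directed system is $\gamma(R)$-directed, so continuity and smoothness carry over, and the L\"owenheim--Skolem bound only improves. Taking $\mu' = (|R|+\aleph_0)^+$ gives the claim.
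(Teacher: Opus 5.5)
Your proposal is correct, but it reaches the L\"owenheim--Skolem axiom by a different route than the paper.

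\textbf{The paper's route.} The paper works through the ambient class $(\rmod, \leqp^{\gamma(R)})$, which is a $\gamma(R)\mh\mrm{AEC}$ by Proposition~\ref{prop:lpurelaec}. It combines three facts: $\leqp^{\gamma(R)}$ and $\leqo$ coincide on $R\mh\inj$; injectives are closed under $\gamma(R)$-directed systems; and injectives are closed under $\gamma(R)$-pure submodules (Fact~\ref{fact:triviainj}\ref{fact:triviainj3}). Hence the small $\leqp^{\gamma(R)}$-submodule supplied by the LS axiom of $(\rmod, \leqp^{\gamma(R)})$ is automatically an injective direct summand.

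\textbf{Your route.} You use that $\leq$ and $\leqo$ already coincide on $R\mh\inj$. This makes coherence and smoothness immediate and leaves only Fact~\ref{fact:triviainj}\ref{fact:triviainj2} for continuity. You then produce the LS witness explicitly, as a direct summand of $N$ isomorphic to $\mrm{E}(\langle X\rangle)$, and bound its size via Eklof's estimate Fact~\ref{fact:triviainj}\ref{fact:triviainj1}.

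\textbf{What each buys.} You trade the closure of injectives under $\gamma(R)$-pure submodules, and the LS axiom for $\gamma(R)$-purity, for the cardinality bound on injective envelopes; your witness is moreover minimal. The paper's template has the advantage of transferring verbatim to pure-injectives (Corollary~\ref{cor:pinjmuaec}). There the envelope bound (Corollary~\ref{cor:cardpinj}) is deduced \emph{from} the LS axiom, the reverse of your direction.

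\textbf{Two small repairs to the cardinal arithmetic.}
First, $\gamma(R)\leq 2^{|R|+\aleph_0}$ does not by itself give $\lambda\geq\gamma(R)$ for $\lambda=(|R|+\aleph_0)^{<\gamma(R)}$. Argue instead by cases: either $\gamma(R)\leq|R|+\aleph_0\leq\lambda$, or $\gamma(R)=(|R|+\aleph_0)^+$, in which case $\lambda=2^{|R|+\aleph_0}\geq\gamma(R)$.
Second, for the final sentence the LS cardinal does not simply ``improve''. When $\gamma(R)=\aleph_0$ one has $\lambda=|R|+\aleph_0<(|R|+\aleph_0)^+$. So you must enlarge it, e.g.\ to $2^{|R|+\aleph_0}$, to meet the requirement $\lambda'=\lambda'^{<\mu'}\geq\mu'$.
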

\begin{proof}
This follows from Fact~\ref{fact:triviainj}\ref{fact:triviainj2}-\ref{fact:triviainj3}, using that $(\rmod, \leqp^{\gamma(R)})$ is a $\gamma(R)\mh\mrm{AEC}$ by Proposition~\ref{prop:lpurelaec}, and that $\leqp^{\gamma(R)}$ and $\leqo$ coincide on $R\mh\inj$. 
\end{proof}

Now we shift our attention to pure-injective modules. We aim to show the analogues of Fact~\ref{fact:triviainj} and Corollary~\ref{cor:injmuaec} for the class of pure-injective $R$-modules.

\begin{definition}
Let $\mu$ be an infinite cardinal. We say that an $R$-module $A$ is \tdef{$\mu$-compact} if every finitely solvable system of $<\mu$ equations with constants in $A$ is solvable in $A$.
\end{definition}

The previous definition is the straightforward adaptation of \cite[Chapter~V, Definition~1.1]{almost} when $\mu$ is not a successor cardinal. The reader should keep in mind that what we here call \enquote{$\mu$-compact modules} are called \enquote{$\mu^{<}$-compact modules} in \cite[Definition 7.22]{jensen}, and that their definition is equivalent to ours by \cite[Proposition 7.28]{jensen}. Finally, what in \cite{jensen} are called \enquote{$\mu$-compact modules} are what we here would call \enquote{$\mu^+$-compact modules}.

Finally, notice that by Fact~\ref{fact:triviapinjective}\ref{fact:triviapinjectivealgcomp} every pure-injective module is algebraically compact, and thus $\mu$-compact for every infinite cardinal $\mu$. In fact, a converse holds if $\mu$ is large enough, as the following fact shows:

\begin{fact}[\protect{\cite[Proposition 7.29]{jensen}}]
\label{fact:lcompact}
An $R$-module is $(|R|+\aleph_0)^+$-compact if and only if it is pure-injective.
\end{fact}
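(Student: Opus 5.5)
The statement is a biconditional. The direction from pure-injective to $(|R|+\aleph_0)^+$-compact is immediate and I would dispose of it first. By Fact~\ref{fact:triviapinjective}\ref{fact:triviapinjectivealgcomp}, a pure-injective module is algebraically compact. So every finitely solvable system of equations with constants in the module, of any size, is solvable. In particular this holds for systems of $<(|R|+\aleph_0)^+$ equations.

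For the converse, let $A$ be $(|R|+\aleph_0)^+$-compact and set $\kappa=|R|+\aleph_0$. By Fact~\ref{fact:triviapinjective}\ref{fact:triviapinjectivealgcomp}, it is enough to show that every finitely solvable system $\Sigma$ of equations with constants in $A$ has a solution in $A$, whatever the size of $\Sigma$. My plan is to build a solution variable by variable, transfinitely, while keeping an invariant that reduces each step to a system of at most $\kappa$ equations.

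Concretely, enumerate the variables of $\Sigma$ as $(x_\alpha\mid \alpha<\delta)$. I would choose values $a_\alpha\in A$ by recursion so that the following invariant holds.
\begin{enumerate}[$(\dagger)$, leftmargin=*]
\item For every $\beta$, the system $\Sigma_\beta$, obtained from $\Sigma$ by substituting $a_\alpha$ for $x_\alpha$ for all $\alpha<\beta$, is still finitely solvable in $A$.
\end{enumerate}
At limit stages, $(\dagger)$ is automatic, because finite solvability only involves finitely many equations, and each of these mentions only finitely many variables. At the successor step from $\Sigma_\beta$ we must choose $a_\beta$. The set of admissible values is the intersection, over finite subsystems $\Phi\subseteq\Sigma_\beta$, of the projections of the solution sets of $\Phi$ onto the coordinate $x_\beta$. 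Each such projection is the solution set in $A$ of a pp-formula in $x_\beta$ with parameters, hence a coset of a pp-definable subgroup (Remark~\ref{rem:lpuretrivia}\ref{rem:lpuretrivia2} with $\lambda=\aleph_0$).

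The key reduction is the following. With the parameters fixed, up to logical equivalence there are at most $\kappa$ pp-formulas in one free variable $x_\beta$ over $R$. So the family of pp-subgroups arising here has size at most $\kappa$. Each coset of one of these subgroups is determined by its subgroup together with one representative. Hence the family of projected cosets can be reduced to at most $\kappa$ of them: for each subgroup $\varphi(A)$ that occurs, the cosets coming from different finite subsystems with that same $\varphi$ must be compatible, because finite solvability forces any two to intersect, and two cosets of the same subgroup that intersect are equal. Thus the admissible set is an intersection of at most $\kappa$ cosets $c_i+\varphi_i(A)$, and this family has the finite intersection property, since finitely many of them come from one finite subsystem.

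Finding a common point amounts to solving the system $\{\varphi_i(x_\beta-c_i)\mid i<\kappa\}$. Unfolding the existential quantifiers of the $\varphi_i$, with fresh witness variables for each $i$, this becomes a system of at most $\kappa<\kappa^+$ equations, and it is finitely solvable. By $\kappa^+$-compactness it has a solution, and I take $a_\beta$ to be its $x_\beta$-component. I then need to check that $(\dagger)$ survives this choice. Indeed, $a_\beta$ lies in every projected coset, so every finite subsystem of $\Sigma_{\beta+1}$ is solvable.

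I expect the main obstacle to be this bookkeeping: the claim that only $\kappa$ cosets matter, and the verification that $(\dagger)$ is preserved. Both rest on the fact that distinct finite subsystems yielding the same pp-formula give the same coset. This is where the subgroup structure of pp-definable sets is essential, since one uses that two intersecting cosets of the same subgroup coincide. Alternatively, the whole equivalence can be cited from \cite[Proposition 7.29]{jensen}, as the paper does.
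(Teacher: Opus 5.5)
Your argument is correct. The paper gives no proof of its own here; it simply imports the result from \cite[Proposition~7.29]{jensen}, so there is no internal proof to compare against. What you give is a self-contained proof along classical lines, in two steps. First, a transfinite, one-variable-at-a-time recursion reduces algebraic compactness to solving families of cosets $c_\Phi+\psi_\Phi(A)$ of pp-definable subgroups with the finite intersection property. Second, such a family collapses to at most $|R|+\aleph_0$ cosets, because there are at most that many parameter-free one-variable pp-formulas and two intersecting cosets of the same subgroup coincide.

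Two points of presentation. The phrase ``with the parameters fixed'' is misleading: what you actually use is that each projection has the form $c_\Phi+\psi_\Phi(A)$ with $\psi_\Phi$ parameter-free, and the parameters only affect the representative $c_\Phi$. You should also say explicitly that at the final stage $\beta=\delta$ the system $\Sigma_\delta$ consists of closed equations, so its finite solvability says exactly that $(a_\alpha)_{\alpha<\delta}$ solves $\Sigma$.

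Compared with the bare citation, your route makes transparent where the bound comes from. You only ever invoke compactness for systems whose size is the number of one-variable pp-formulas up to equivalence. So the same proof gives $\mu(R)\leq(\nu_R+\aleph_0)^+$, where $\nu_R\leq|R|+\aleph_0$ is the size of the lattice of one-variable pp-formulas over $R$. This is a small step toward the paper's question about a ring-theoretic description of $\mu(R)$.
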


Now we introduce a novel cardinal invariant of $R$, which is the analogue of $\gamma(R)$ (cf.~\ref{def:gammar}) for pure-injectivity:

\begin{definition}
\label{def:mur}
Let $R$ be a ring. We let $\mu(R)$ denote the least infinite regular cardinal such that every $\mu(R)$-compact left $R$-module is pure-injective. (Notice that Fact~\ref{fact:lcompact} ensures $\mu(R) \leq (|R|+\aleph_0)^+$ and so this is well-defined.)
\end{definition}

We recall that a ring $R$ is called \enquote{left pure-semisimple} if every left $R$-module is pure-injective, see \cite[Section~4.5.1]{purityspectra} for more on this class of rings.

\begin{remark}
\label{rem:lcomptrivia}
\begin{enumerate}[(1), leftmargin=*]
\item \label{rem:lcomptrivia1}Clearly every $R$-module is $\aleph_0$-compact, therefore $\mu(R) = \aleph_0$ if and only if every $R$-module is pure-injective, i.e., $R$ is left pure-semisimple.
\item \label{rem:lcomptrivia2} For every infinite cardinal $\mu$, the class of $\mu$-compact modules is closed under $\mu$-pure submodules. 
\end{enumerate}
\end{remark}

\begin{fact}[\protect{\cite[Theorem 7.31]{jensen}}]
\label{fact:pidmucompact}
If $R$ is a principal ideal domain, then $\mu(R)\leq(|\{I \subseteq R\mid I\text{ is a left ideal}\}|+\aleph_0)^+$. In particular, by Remark~\ref{rem:lcomptrivia}\ref{rem:lcomptrivia1} we have that $\mu(\mathbb{Z}) = \aleph_1$, because $\mathbb{Z}$ is not left pure-semisimple\footnote{Clearly, $\mathbb{Z}$ is not algebraically compact.}.
\end{fact}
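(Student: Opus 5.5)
The plan is to show that, for $R$ a principal ideal domain and $\kappa = |\{I\subseteq R\mid I\text{ a left ideal}\}| + \aleph_0$, every $\kappa^+$-compact $R$-module $A$ is algebraically compact. By Fact~\ref{fact:triviapinjective}\ref{fact:triviapinjectivealgcomp}, $A$ is then pure-injective, so $\mu(R)\leq\kappa^+$ by Definition~\ref{def:mur}. The argument has three steps.

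\emph{Step 1: reduce to systems in one unknown.} I will use the classical fact that a module is algebraically compact as soon as every finitely solvable system of pp-conditions in a single free variable, i.e.\ of the form $x\in a_i+\varphi_i(A)$ for $i\in I$, has a solution. This follows from the standard Ziegler/Prest argument: solve an arbitrary finitely solvable system one variable at a time, by transfinite induction, where the existentially quantified remaining variables turn each step into a one-variable pp-condition. I will cite this rather than reprove it, and I expect it to be the main obstacle to write carefully. The delicate point is that the inductive step must preserve finite solvability of the whole system after a value has been fixed for one variable. The usual treatment handles this via the $\aleph_0$-variable case together with Zorn's lemma on families of pp-cosets with the finite intersection property, and I would follow that treatment.

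\emph{Step 2: count one-variable pp-definable subgroups.} Over a PID, the elementary divisor theorem (Smith normal form of the coefficient matrix) shows that every pp-formula $\varphi(x)$ is equivalent, in every module, to a finite sum of finite conjunctions of formulas of the form $r\mid x$ and $sx=0$. Since every ideal is principal, such formulas are indexed by finite data drawn from the set of ideals. Hence there are at most $\kappa$ pairwise inequivalent one-variable pp-formulas.

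\emph{Step 3: collapse a given system to at most $\kappa$ conditions.} Take a finitely solvable system $x\in a_i+\varphi_i(A)$, $i\in I$. If $\varphi_i$ and $\varphi_j$ are equivalent, then finite solvability of the pair $\{i,j\}$ forces the cosets $a_i+\varphi_i(A)$ and $a_j+\varphi_j(A)$ to intersect. Since both are cosets of the same subgroup, they are equal. So by Step 2 the system is equivalent to one with at most $\kappa$ conditions. Each condition $x-a_i\in\varphi_i(A)$ is a finite set of linear equations with finitely many new unknowns, so the reduced system is a finitely solvable system of at most $\kappa<\kappa^+$ equations. By $\kappa^+$-compactness it has a solution in $A$, and this solves the original system. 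Combined with Step 1, $A$ is algebraically compact.

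For the special case $R=\mathbb{Z}$: there are only countably many ideals, so $\mu(\mathbb{Z})\leq\aleph_1$. On the other hand, $\mathbb{Z}$ is not left pure-semisimple, because $\mathbb{Z}$ itself is not algebraically compact. By Remark~\ref{rem:lcomptrivia}\ref{rem:lcomptrivia1} this gives $\mu(\mathbb{Z})>\aleph_0$, hence $\mu(\mathbb{Z})=\aleph_1$.
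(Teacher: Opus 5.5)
Your proof is correct, and it takes a different route from the paper only in the sense that the paper gives no argument of its own. It cites \cite[Theorem~7.31]{jensen} for the bound, then deduces $\mu(\mathbb{Z})=\aleph_1$ from Remark~\ref{rem:lcomptrivia}\ref{rem:lcomptrivia1} exactly as you do. What your route buys is a transparent, ring-independent mechanism. Steps 1 and 3 together show that if $R$ has at most $\kappa$ one-variable pp-formulas up to equivalence, then every $\kappa^+$-compact module is algebraically compact, so $\mu(R)\leq\kappa^+$. The PID hypothesis enters only through the count in Step 2. With the trivial count $|R|+\aleph_0$, the same argument also yields Fact~\ref{fact:lcompact}. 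In general it bounds $\mu(R)$ by the size of the one-variable pp-lattice, which is relevant to the paper's question of describing $\mu(R)$ ring-theoretically. The citation, by contrast, is short but leaves the mechanism hidden.

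Two small remarks. First, Step 1, which you flag as the main obstacle, is routine and needs neither Zorn's lemma nor a separate countable case. Well-order the unknowns. At stage $\alpha$, take each finite subsystem, substitute $a_{<\alpha}$, and existentially quantify the unknowns of index $>\alpha$; this gives a one-variable pp-coset, and you pick $a_\alpha$ in the intersection of all these cosets. The cosets have the finite intersection property because a finite union of finite subsystems is again a finite subsystem. Limit stages are automatic, since each finite subsystem mentions only finitely many unknowns. Second, the normal form you state in Step 2 (finite sums of finite conjunctions of $r\mid x$ and $sx=0$) needs more than Smith normal form. You would also need that $d\mid ex$ defines $d'A+\{a\in A\mid ea=0\}$ with $d'=d/\gcd(d,e)$, together with distributivity of the pp-lattice. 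For the count you only need what Smith normal form gives directly: $\varphi(x)$ is a finite conjunction of formulas $d\mid ex$ and $ex=0$, each depending only on the ideals $(d)$ and $(e)$.
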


We now state some closure properties of the class of pure-injective $R$-modules. These will be crucially used to show that $(R\mh\pinj, \leqo)$ forms a $\mu(R)\mh\mrm{AEC}$.

\begin{lemma}
\label{lem:closurepinj}
\begin{enumerate}[(1), leftmargin=*]
\item \label{lem:closurepinj1}The class of pure-injective $R$-modules is closed under $\mu(R)$-pure submodules.
\item \label{lem:closurepinj2}The class of pure-injective $R$-modules is closed under $\mu(R)$-directed systems.
\end{enumerate}
\end{lemma}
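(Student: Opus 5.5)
The plan is to reduce both items to the definition of $\mu(R)$: a module is pure-injective as soon as it is $\mu(R)$-compact. So in each item it suffices to show that the relevant module is $\mu(R)$-compact, that is, every finitely solvable system of $<\mu(R)$ equations with constants in it has a solution in it.

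For item \ref{lem:closurepinj1}, let $A\leqp^{\mu(R)} B$ with $B$ pure-injective. By Fact~\ref{fact:triviapinjective}\ref{fact:triviapinjectivealgcomp}, $B$ is algebraically compact, hence $\mu(R)$-compact. By Remark~\ref{rem:lcomptrivia}\ref{rem:lcomptrivia2}, $A$ is then $\mu(R)$-compact. For completeness, the verification runs as follows.
\begin{enumerate}[(a), leftmargin=*]
\item Take a finitely solvable system $\Sigma$ of $<\mu(R)$ equations with constants in $A$.
\item Each finite subsystem is solvable in $A\leq B$, so $\Sigma$ is finitely solvable in $B$.
\item Since $B$ is $\mu(R)$-compact, $\Sigma$ is solvable in $B$.
\item The system $\Sigma$ has $<\mu(R)$ equations with parameters in $A$, so $\mu(R)$-purity of $A$ in $B$ gives a solution in $A$.
\end{enumerate}
Hence $A$ is $\mu(R)$-compact, and therefore pure-injective by the definition of $\mu(R)$ (Definition~\ref{def:mur}).

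For item \ref{lem:closurepinj2}, the phrase ``$\mu(R)$-directed system'' should be read in the abstract class $(R\mh\pinj,\leqo)$. Since $\leqo$ coincides with $\leqp^{\lambda}$ on pure-injectives (Remark~\ref{rem:relationcollapse}), we have a $\mu(R)$-directed system $(A_i)_{i\in I}$ of pure-injective modules with $A_i\leqp^{\mu(R)}A_j$ for $i\leq j$. Let $A=\bigcup_i A_i$. By $\mu(R)$-continuity of $(\rmod,\leqp^{\mu(R)})$ (Proposition~\ref{prop:lpurelaec}), each $A_i\leqp^{\mu(R)}A$. Now take a finitely solvable system $\Sigma$ of $<\mu(R)$ equations with constants in $A$; it uses $<\mu(R)$ constants.
\begin{enumerate}[(a), leftmargin=*]
\item Since $\mu(R)$ is regular and $I$ is $\mu(R)$-directed, all the constants lie in a single $A_i$.
\item Every finite subsystem has a solution in $A$; as $A_i\leqp^{\mu(R)} A$ (indeed $A_i$ is pure in $A$), it has a solution in $A_i$.
\item So $\Sigma$ is finitely solvable in $A_i$. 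Since $A_i$ is algebraically compact, $\Sigma$ is solvable in $A_i\subseteq A$.
\end{enumerate}
Hence $A$ is $\mu(R)$-compact, and so pure-injective by the definition of $\mu(R)$.

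The main obstacle is the interpretation in item \ref{lem:closurepinj2}. If arbitrary inclusions of pure-injectives were allowed, step (b) would fail, because a solution in $A$ could not be pulled back into $A_i$. The argument therefore relies on the transition maps being pure, which holds for systems in $(R\mh\pinj,\leqo)$ since direct summands are pure. I would state this reading explicitly in the proof. Everything else is routine.
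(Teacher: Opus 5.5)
Your item (1) is exactly the paper's argument.

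Item (2) has a genuine gap. You prove it only for systems whose transition maps are direct-summand, hence pure, inclusions. You also suggest that for arbitrary inclusions the argument breaks down. That is true of your step (b), but not of the statement. The paper proves the lemma for $\mu(R)$-directed systems $(M_i)_{i\in I}$ with merely $M_i\leq M_j$. This stronger form is what Corollary~\ref{cor:pinjmuaec} needs to conclude that $(R\mh\pinj,\leq)$, with the plain submodule relation, is a $\mu(R)\mh\mrm{AEC}$. Your restricted version does not give that. It only covers $(R\mh\pinj,\leqo)$.

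The missing idea is to avoid pulling solutions back through purity altogether.
\begin{enumerate}[(a), leftmargin=*]
\item If $\mu(R)=\aleph_0$ there is nothing to prove, since every module is pure-injective. So assume $\mu(R)\geq\aleph_1$.
\item Let $\Sigma$ be a finitely solvable system of $<\mu(R)$ equations with constants in $A=\bigcup_i A_i$. It has at most $|\Sigma|+\aleph_0<\mu(R)$ finite subsystems; choose a solution in $A$ for each of them.
\item By regularity of $\mu(R)$, these witnesses together with the constants of $\Sigma$ form a set of fewer than $\mu(R)$ elements. By $\mu(R)$-directedness they all lie in a single $A_i$.
\item Then $\Sigma$ is a system over $A_i$ that is finitely solvable in $A_i$ itself. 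Algebraic compactness of $A_i$ gives a solution in $A_i\subseteq A$, so $A$ is $\mu(R)$-compact and hence pure-injective.
\end{enumerate}
No purity of $A_i$ in $A$ is needed, and neither is Proposition~\ref{prop:lpurelaec}. Your argument is a valid proof of the weaker reading, but it should not be presented as the content of the lemma.
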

\begin{proof}
Item~\ref{lem:closurepinj1} follows from Remark~\ref{rem:lcomptrivia}\ref{rem:lcomptrivia2} and the definition of $\mu(R)$. 

\ssk\nin We show~\ref{lem:closurepinj2}. If $\mu(R) = \aleph_0$, then by Remark~\ref{rem:lcomptrivia}\ref{rem:lcomptrivia1} there is nothing to show. Assume $\aleph_1\leq \mu(R)$. Without loss of generality consider a $\mu(R)$-directed system $(M_i)_{i\in I}$ with $M_i \leq M_j$ for $i\leq j$. For $M=\bigcup_{i\in I}M_i$, we have to show that $M$ is pure-injective. Assume one has a set $\Phi(\bar x, \bar y)$ of $<\mu(R)$ equations, $\bar a\in M^{|\bar y|}$, and every finite subset of $\Phi(\bar x, \bar a)$ has a solution. The set of finite subsets of $\Phi$ has cardinality $\leq |\Phi|^{<\aleph_0} = |\Phi| + \aleph_0 < \mu(R)$. For any finite $\Phi_0\subseteq \Phi$, take $\bar b(\Phi_0)\in M^{|\bar x|}$ which solves $\Phi_0(\bar x, \bar a)$. We have 
\[
|\{\bar b(\Phi_0) \mid \Phi_0\subseteq \Phi\text{ is finite}\}|<\mu(R),
\]
and thus there is an $i\in I$ such that for every finite $\Phi_0\subseteq \Phi$ one has $\bar b(\Phi_0)\in M_i^{|\bar x|}$. Since $M_i$ is pure-injective, and $\Phi(\bar x, \bar a)$ is finitely solvable in $M_i$, by Fact~\ref{fact:triviapinjective}\ref{fact:triviapinjectivealgcomp} we have  that $\Phi(\bar x, \bar a)$ is solvable in $M_i\leq M$, and thus it is solvable in $M$.
\end{proof}

We are now able to show the following fact, which should be compared with the analogous result for injective envelopes mentioned in Fact~\ref{fact:triviainj}\ref{fact:triviainj1} and originally proved in \cite{eklofuniversal}.

\begin{corollary}
\label{cor:cardpinj}
If $A$ is an $R$-module, then $|\mrm{PE}(A)|\leq (|A| +|R|)^{<\mu(R)}$ (cf.~\ref{fact:existpenv}).
\end{corollary}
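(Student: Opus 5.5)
Write $\mu=\mu(R)$ and $\theta=(|A|+|R|)^{<\mu}$. We use the $\mrm{LS}$ axiom of the $\mu\mh\mrm{AEC}$ $(\rmod,\leqp^{\mu})$ from Proposition~\ref{prop:lpurelaec} inside $\mrm{PE}(A)$, together with the minimality of the envelope. Fix a pure-injective envelope $A\leqp E=\mrm{PE}(A)$. Since $\mrm{LS}_\mu(\rmod,\leqp^\mu)\leq|R|^{<\mu}$, that axiom gives a submodule $A\subseteq E_0\leqp^{\mu}E$ with $|E_0|\leq |A|^{<\mu}+|R|^{<\mu}\le\theta$.

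We then show that $E_0$ is pure-injective. By Lemma~\ref{lem:closurepinj}\ref{lem:closurepinj1}, pure-injectivity passes to $\mu$-pure submodules, so $E_0$ is pure-injective. Concretely: $E$ is $\mu$-compact, $\mu$-compactness passes to $\mu$-pure submodules by Remark~\ref{rem:lcomptrivia}\ref{rem:lcomptrivia2}, and then the definition of $\mu(R)$ applies.

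Next we show $E_0=E$. First, $A\leqp E_0$, since $A\leqp E$ and $A\le E_0\le E$ (Remark~\ref{rem:lpuretrivia}\ref{rem:lpuretrivia3}). Second, $E_0\leqp E$, and $E_0$ is pure-injective, so by Definition~\ref{def:purity}\ref{def:purity2} we get $E_0\leqo E$. We therefore have $A\leqp E_0\leqo E$, and the minimality clause in Definition~\ref{def:purity}\ref{def:purity3} forces $E_0=E$. Hence $|\mrm{PE}(A)|=|E_0|\le(|A|+|R|)^{<\mu(R)}$.

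The only delicate point is the $\mrm{LS}$ bound. Definition~\ref{def:muaec} gives $|E_0|\le |A|^{<\mu}+\lambda$ with $\lambda=\mrm{LS}_\mu\le |R|^{<\mu}$; if $R$ is finite, this is absorbed because $\mu$ is infinite. When $\mu(R)=\aleph_0$ the claim is trivial, since then every module is pure-injective, so $\mrm{PE}(A)=A$.
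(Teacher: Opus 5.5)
Your proof is correct and is essentially the paper's argument: use the $\mrm{LS}$ axiom of $(\rmod,\leqp^{\mu(R)})$ to find a $\mu(R)$-pure submodule of $\mrm{PE}(A)$ that contains $A$ and has size at most $(|A|+|R|)^{<\mu(R)}$, observe that it is pure-injective, and conclude by minimality of the envelope. You cite the correct closure property (Lemma~\ref{lem:closurepinj}\ref{lem:closurepinj1}, closure under $\mu(R)$-pure submodules), whereas the paper's proof points to the directed-systems item. You also spell out the step $E_0\leqo E$, which is needed before minimality can be invoked and which the paper leaves implicit.
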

\begin{proof}
By Proposition~\ref{prop:lpurelaec} $(\rmod, \leqp^{\mu(R)})$ is a $\mu(R)\mh\mrm{AEC}$ with $\mrm{LS}$ number  less than or equal to $|R|^{<\mu(R)}$. Therefore, there is $A\leq B\leqp^{\mu(R)} \mrm{PE}(A)$ with $|B| \leq (|A| + |R|)^{<\mu(R)}$. By Lemma~\ref{lem:closurepinj}\ref{lem:closurepinj2}, we have that $B$ is pure-injective. Clearly $A\leqp B$ because $A\leqp\mrm{PE}(A)$, and by the definition of pure-injective envelope (cf.~\ref{def:purity}\ref{def:purity3}) we have $B=\mrm{PE}(A)$.
\end{proof}

We finally show that $(R\mh\pinj, \leqo)$ forms a $\mu(R)\mh\mrm{AEC}$. In fact, we will also show that the same holds for $(R\mh\pinj, \leq)$. This should be compared with Corollary~\ref{cor:injmuaec}, where it is shown that $(R\mh\inj, \leqo)$ forms a $\gamma(R)\mh\mrm{AEC}$, and clearly $\leq$ coincides with $\leqo$ on $R\mh\inj$.

\begin{corollary}
\label{cor:pinjmuaec}
$(R\mh\pinj, \leq)$ and $(R\mh\pinj, \leqo)$ are $\mu(R)\mh\mrm{AEC}$s, where $R\mh\pinj$ denotes the class of pure-injective $R$-modules. In particular, they are $(|R|+\aleph_0)^+\mh\mrm{AEC}$s.
\end{corollary}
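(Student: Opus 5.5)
The plan is to handle $(R\mh\pinj, \leqo)$ first by comparing it with $(\rmod, \leqp^{\mu(R)})$, exactly as in the proof of Corollary~\ref{cor:injmuaec}, and then to treat $(R\mh\pinj, \leq)$. By Remark~\ref{rem:relationcollapse}, on $R\mh\pinj$ the relations $\leq$, $\leqp$, $\leqp^{\mu(R)}$ and $\leqo$ all coincide. Indeed, if $A\leq B$ with both pure-injective, then $A\leqo B$ by the definition of pure-injectivity only when $A\leqp B$. So I must check that plain $\leq$ actually agrees with $\leqo$ here. It does not in general: $\mathbb{Z}/p\leq\mathbb{Z}/p^2$ are both pure-injective but the inclusion is not pure. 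Hence $(R\mh\pinj,\leq)$ must be treated separately, as a class with the plain submodule relation.

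For $(R\mh\pinj,\leqo)=(R\mh\pinj,\leqp^{\mu(R)})$, I will check the axioms of Definition~\ref{def:muaec} using Proposition~\ref{prop:lpurelaec}, which says $(\rmod,\leqp^{\mu(R)})$ is a $\mu(R)\mh\mrm{AEC}$ with LS number at most $|R|^{<\mu(R)}$.
\begin{enumerate}[(1), leftmargin=*]
\item Coherence is inherited from $(\rmod,\leqp^{\mu(R)})$.
\item For $\mu(R)$-continuity, take a $\mu(R)$-directed system of pure-injectives with $\leqo$-inclusions. Its union is pure-injective by Lemma~\ref{lem:closurepinj}\ref{lem:closurepinj2}. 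Each member is $\mu(R)$-pure in the union by Proposition~\ref{prop:lpurelaec}, hence a direct summand by Remark~\ref{rem:relationcollapse}.
\item $\mu(R)$-smoothness follows the same way.
\item For the LS axiom, given $A\subseteq M\in R\mh\pinj$, Proposition~\ref{prop:lpurelaec} gives $A\subseteq M_0\leqp^{\mu(R)}M$ of size at most $|A|^{<\mu(R)}+|R|^{<\mu(R)}$. Then $M_0$ is pure-injective by Lemma~\ref{lem:closurepinj}\ref{lem:closurepinj1}, and so $M_0\leqo M$.
\end{enumerate}

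For $(R\mh\pinj,\leq)$, coherence is trivial. $\mu(R)$-continuity is Lemma~\ref{lem:closurepinj}\ref{lem:closurepinj2}, since the union contains each member as a submodule. Smoothness is trivial because the relation is plain inclusion.

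The LS axiom for $(R\mh\pinj,\leq)$ is the one delicate point, since an arbitrary subset must be enclosed in a small pure-injective submodule. The same argument as before works: take $M_0\leqp^{\mu(R)}M$ containing $A$, which is pure-injective by Lemma~\ref{lem:closurepinj}\ref{lem:closurepinj1}. In both cases the LS number is at most $(|R|+\mu(R))^{<\mu(R)}$, which is of the form $\lambda^{<\mu(R)}$, as required.

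The final clause follows because $\mu(R)\le(|R|+\aleph_0)^+$ by Fact~\ref{fact:lcompact}. A $\mu$-AEC is a $\mu'$-AEC for regular $\mu'\ge\mu$: $\mu'$-directed systems are $\mu$-directed, and the LS number can be enlarged to a suitable $\lambda^{<\mu'}$.
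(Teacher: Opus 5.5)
Your proof is correct and follows the paper's argument. The axioms for $(R\mh\pinj,\leqo)$ come from Proposition~\ref{prop:lpurelaec} together with the coincidence of $\leqp^{\mu(R)}$ and $\leqo$ on pure-injectives, those for $(R\mh\pinj,\leq)$ come from Lemma~\ref{lem:closurepinj}\ref{lem:closurepinj2}, and the L\"owenheim--Skolem axiom comes from Lemma~\ref{lem:closurepinj}\ref{lem:closurepinj1}. You should delete your opening claim that $\leq$ also coincides with $\leqo$ on $R\mh\pinj$: Remark~\ref{rem:relationcollapse} does not assert it, and your own $\mathbb{Z}/p\leq\mathbb{Z}/p^2$ example refutes it.
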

\begin{proof}
All the axioms of a $\mu(R)\mh\mrm{AEC}$, with the exception of the $\mrm{LS}$ axiom, hold for $(R\mh\pinj, \leq)$ by Lemma~\ref{lem:closurepinj}\ref{lem:closurepinj2} and  because $(\rmod, \leq)$ is an $\mrm{AEC}$. Similarly,  $(R\mh\pinj, \leqo)$ is a $\mu(R)\mh\mrm{AEC}$ because $(\rmod, \leqp^{\mu(R)})$ is a $\mu(R)\mh\mrm{AEC}$ by Proposition~\ref{prop:lpurelaec}, and $\leqp^{\mu(R)}$ coincides with $\leqo$ on $R\mh\pinj$. Finally, it is enough to show the $\mrm{LS}$ axiom for $(R\mh\pinj, \leqo)$, but this follows from the $\mrm{LS}$ axiom of $(\rmod, \leqp^{\mu(R)})$ and Lemma~\ref{lem:closurepinj}\ref{lem:closurepinj1}.
\end{proof}

We end this subsection with a question. Notice that the definition of $\gamma(R)$ only mentions the ring $R$, while the definition of $\mu(R)$ is in terms of $R$-modules. If $R$ is a principal ideal domain, then Fact~\ref{fact:pidmucompact} gives us information on $\mu(R)$ from the structure of the ring $R$. Therefore, it is natural to ask the following.
\begin{question}
Is there a ring-theoretic characterization of the invariant $\mu(R)$?
\end{question}

\subsection{Classes of pure-injective modules}
\label{sect:pinjmodules}

In the previous subsection we showed that the classes $(R\mh\inj, \leqo)$ and ${(R\mh\pinj, \leqo)}$ are $(|R|+\aleph_0)^+\mh\mrm{AEC}$s. In this subsection we are going to show that they are stable. In fact, we will give a general criterion for a $\mu\mh\mrm{AEC}$ of the form $(\kaec, \leqo)$, where $\kaec$ is a class of pure-injective modules, to have a stable independence relation, thus being stable and tame.  We will then use the results of this section to obtain in Subsection~\ref{sect:appliedenvclasses} results about the stability of various abstract elementary classes of modules, and more generally abstract classes of modules.

We now sketch the proof strategy for the results of this section. We will show that if $(\kaec, \leqo)$ satisfies Hypothesis~\ref{hyp:dsums}, then for some infinite regular cardinal $\nu$ the class $(\kaec, \leqp^\nu)$ satisfies  Hypothesis~\ref{hyp:lambdapure}, so that the latter class has a stable independence relation by Theorem~\ref{thm:lpuresumup}.
Because $\kaec$ is a class of pure-injective modules, by Remark~\ref{rem:relationcollapse} the relations $\leqp^\nu$ and $\leqo$ coincide on $\kaec$, and this will imply that $(\kaec, \leqo)$ has a stable independence relation.

Throughout this subsection we will freely use the cardinal invariant $\mu(R)$ introduced in Definition~\ref{def:mur} of the previous subsection.

\begin{hypothesis}
\label{hyp:dsums}
Let $\mu$ be an infinite regular cardinal and $\kaecgoth_\oplus = (\kaec, \leqo)$ an abstract class such that:
\begin{enumerate}[(1), leftmargin=*]
\item $\kaec$ is a class of pure-injective $R$-modules;
\item $\kaec$ is closed under finite direct sums;
\item $\kaec$ is closed under direct summands;
\item $\kaecgoth_\oplus$ is closed under $\mu$-directed systems.
\end{enumerate}
\end{hypothesis}

\begin{notation}
\label{not:hypdsums}
\begin{enumerate}[(1), leftmargin=*]
\item When we say \enquote{$\kaecgoth_\oplus=(\kaec, \leqo)$ satisfies Hypothesis~\ref{hyp:dsums} for $\mu$}, we mean that $\mu$ is the infinite regular cardinal for which Hypothesis~\ref{hyp:dsums} holds. 
\item Most of the time, we are going to say \enquote{the $\mu\mh\mrm{AEC}$ $\kaecgoth_\oplus=(\kaec, \leqo)$ satisfies Hypothesis~\ref{hyp:dsums}}. In such cases we mean that $\mu$ is the cardinal for which Hypothesis~\ref{hyp:dsums} holds.
\end{enumerate}
\end{notation}

\begin{remark}
\label{rem:dsumsjoin}
\begin{enumerate}[(1), leftmargin=*]
\item \label{rem:dsumsjoin1}Clearly, if $\kaecgoth_\oplus=(\kaec, \leqo)$ is a $\mu\mh\mrm{AEC}$ and $\kaec$ is a class of pure-injective modules, then it is  closed under $\mu$-directed systems of pure embeddings. In such cases, to verify that $\kaecgoth_\oplus=(\kaec, \leqo)$ satisfies Hypothesis~\ref{hyp:dsums}, we only have to check closure of $\kaec$ under finite direct sums and direct summands. It will be useful to remember that the class of pure-injective modules is closed under finite direct sums and direct summands (cf.~\ref{fact:triviapinjective}\ref{fact:triviapinjectivesums}).
\item \label{rem:dsumsjoin2}Assume $\kaecgoth_\oplus^\ell= (\kaec^\ell, \leqo)$ satisfies  Hypothesis~\ref{hyp:dsums} for $\mu$, with $\ell\in\{1,2\}$. Then $\kaecgoth^3_\oplus = (\kaec^1\cap \kaec^2, \leqo)$ satisfies Hypothesis~\ref{hyp:dsums} for $\mu$.
\end{enumerate}
\end{remark}

\begin{lemma}
\label{lem:dsumhyplpurehyp}
Assume  $\kaecgoth_\oplus=(\kaec, \leqo)$ satisfies Hypothesis~\ref{hyp:dsums} for $\mu$ and $\nu = \mu(R) + \mu$. Then $\kaecgoth_\nu = (\kaec, \leqp^\nu)$ satisfies Hypothesis \ref{hyp:lambdapure}.
\end{lemma}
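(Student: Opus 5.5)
We check the clauses of Hypothesis~\ref{hyp:lambdapure} for $\kaecgoth_\nu=(\kaec,\leqp^\nu)$ one at a time. The tool used throughout is Remark~\ref{rem:relationcollapse}: on pure-injective modules the relations $\leqp^\nu$ and $\leqo$ coincide. This lets us move freely between the two relations on $\kaec$.

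Clause (1) holds trivially. For clause (3)(a), closure under finite direct sums, we simply invoke Hypothesis~\ref{hyp:dsums}.

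For clause (2), let $(A_i)_{i\in I}$ be a $\nu$-directed system in $\kaecgoth_\nu$ with union $A$.
\begin{itemize}
\item Every $A_i$ lies in $\kaec$, so each $A_i$ is pure-injective.
\item By Remark~\ref{rem:relationcollapse}, $A_i\leqp^\nu A_j$ implies $A_i\leqo A_j$, so the system is a $\nu$-directed system in $\kaecgoth_\oplus$.
\item Since $\nu\geq\mu$ and both are regular, a $\nu$-directed poset is $\mu$-directed.
\item Closure of $\kaecgoth_\oplus$ under $\mu$-directed systems then gives $A\in\kaec$.
\end{itemize}

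For clause (3)(b), closure under $\nu$-pure submodules, let $A\leqp^\nu B$ with $B\in\kaec$.
\begin{itemize}
\item Since $\nu\geq\mu(R)$, we have $A\leqp^{\mu(R)}B$, so $A$ is pure-injective by Lemma~\ref{lem:closurepinj}\ref{lem:closurepinj1}.
\item Because $A$ is pure-injective and pure in $B$, we get $A\leqo B$.
\item Closure of $\kaec$ under direct summands gives $A\in\kaec$.
\end{itemize}

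For clause (3)(c), closure under $\nu$-pure quotients, let $A,B\in\kaec$ with $A\leqp^\nu B$. As above $A\leqo B$, so $B=A\oplus C$ with $C\cong B/A$. Then $C$ is a direct summand of $B\in\kaec$, hence $C\in\kaec$, and $B/A\in\kaec$ by closure under isomorphism.

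The only non-formal step is clause (3)(b). Its key input is that $\mu(R)$-pure submodules of pure-injective modules are again pure-injective, and that step is exactly the reason for requiring $\nu\geq\mu(R)$. The remaining clauses are routine translations between $\leqp^\nu$ and $\leqo$.
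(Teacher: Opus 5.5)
Your proof is correct and follows the same route as the paper: it uses that $\leqp^\nu$ and $\leqo$ agree between pure-injective modules, together with closure of $\kaec$ under finite direct sums, direct summands and $\mu$-directed systems. For clause (3)(b) you are more careful than the paper's one-line argument, since you explicitly use $\nu\geq\mu(R)$ and Lemma~\ref{lem:closurepinj}\ref{lem:closurepinj1} to see that a $\nu$-pure submodule of a module in $\kaec$ is pure-injective before invoking that coincidence, a step the paper leaves implicit.
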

\begin{proof}
Notice that every module in $\kaec$ is pure-injective, so that $\leqp^\nu$ and $\leqo$ coincide on $\kaec$ (cf. Remark~\ref{rem:relationcollapse}). We verify Hypothesis \ref{hyp:lambdapure} for $\kaecgoth_\nu$ as in the statement. Closure of $\kaec$ under finite direct sums is clear. Because $\mu\leq \nu$, then closure of $\kaecgoth_\nu$ under $\nu$-directed systems is clear. Finally, closure under $\nu$-pure submodules and $\nu$-pure quotients follows from closure of $\kaec$ under direct summands, because $\leqp^\nu$ and $\leqo$ coincide.
\end{proof}

\begin{theorem}
\label{thm:dsumsumup}
Let $\mu$ be a regular infinite cardinal, $\mu_1 = |R|^{<(\mu(R) + \mu)}$, and ${\kappa = \kappa^{\mu_1}}$. If ${\kaecgoth_\oplus= (\kaec, \leqo)}$ satisfies Hypothesis~\ref{hyp:dsums} for $\mu$, then $\kaecgoth_\oplus = (\kaec, \leqo)$ is a $(\mu(R) +\mu)\mh\mrm{AEC}$ with a stable independence relation. Furthermore, it is $\kappa$-stable, tame, has $\mrm{AP}$, and $\mrm{JEP}$.
\end{theorem}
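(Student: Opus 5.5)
The plan is to reduce everything to Theorem~\ref{thm:lpuresumup} via Lemma~\ref{lem:dsumhyplpurehyp}. Set $\nu = \mu(R)+\mu$, which is an infinite regular cardinal (the maximum of two regular cardinals). By Lemma~\ref{lem:dsumhyplpurehyp}, the class $\kaecgoth_\nu = (\kaec, \leqp^\nu)$ satisfies Hypothesis~\ref{hyp:lambdapure} for $\nu$. Since every module in $\kaec$ is pure-injective, Remark~\ref{rem:relationcollapse} shows that $\leqp^\nu$ and $\leqo$ coincide on $\kaec$. Hence $\kaecgoth_\oplus$ and $\kaecgoth_\nu$ are literally the same abstract class: same objects, same strong substructure relation, and therefore the same $\kaecgoth$-embeddings. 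Every property of $\kaecgoth_\nu$ defined only in terms of $(\kaec,\leqk)$ transfers verbatim to $\kaecgoth_\oplus$. This covers being a $\nu\mh\mrm{AEC}$, orbital types and stability, tameness, $\mrm{AP}$, $\mrm{JEP}$, and independence relations on the category $\kaecgoth$.

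Applying Theorem~\ref{thm:lpuresumup} with $\lambda=\nu$, we get $\lambda_0 = |R|^{<\nu} = \mu_1$. So for $\kappa = \kappa^{\mu_1}$, $\kaecgoth_\nu$ is a $\nu\mh\mrm{AEC}$ which is $\kappa$-stable and tame and has $\mrm{AP}$ and $\mrm{JEP}$. Theorem~\ref{thm:lpurestableind} further shows that the relation $\dnf$ of Definition~\ref{def:dnflambdapure} is a stable independence relation on $\kaecgoth_\nu$. By the identification above, all of these conclusions hold for $\kaecgoth_\oplus$.

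The only point needing care is checking that the identification really is an identity of abstract classes. This requires that ``$A \leqp^\nu B$'' and ``$A\leqo B$'' agree for all $A,B\in\kaec$, including the requirement $A \leq B$. Both relations presuppose $A\leq B$. For pure-injective $A$, the implication $A\leqp^\nu B \Rightarrow A\leqp B \Rightarrow A\leqo B$ follows from the definition of pure-injectivity, and the converse is Remark~\ref{rem:lpuredsumequiv}. With that checked there is no real obstacle: the definition of a stable independence relation (Definition~\ref{def:stableindep}) depends only on the category $\categ_\morphs$ and on the abstract class, so it is unchanged.
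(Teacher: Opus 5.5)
Your proposal is correct and is essentially the paper's own proof. You set $\nu=\mu(R)+\mu$, use Lemma~\ref{lem:dsumhyplpurehyp} to put $(\kaec,\leqp^\nu)$ under Hypothesis~\ref{hyp:lambdapure}, identify $\leqp^\nu$ with $\leqo$ on the pure-injective class $\kaec$, and then read off everything from Theorem~\ref{thm:lpuresumup} (with $\lambda_0=|R|^{<\nu}=\mu_1$) and Theorem~\ref{thm:lpurestableind}; your checks that $\nu$ is regular and that the two relations coincide are just the bookkeeping the paper leaves implicit.
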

\begin{proof}
Let $\nu = \mu(R) + \mu$. Notice that for every $A, B\in \kaec$ then $A\leqp^\nu B$ if and only if $A\leqo B$.
Therefore, everything follows from Lemma~\ref{lem:dsumhyplpurehyp} and Theorem~\ref{thm:lpuresumup}. 
\end{proof}

\begin{remark}
If $\kaecgoth_\oplus = (\kaec, \leqo)$ satisfies Hypothesis~\ref{hyp:dsums}, the previous theorem ensures the existence of a stable independence relation on it. Recalling  Definition~\ref{def:dnflambdapure} and Theorem~\ref{thm:lpurestableind}, we can give an explicit description of the stable independence relation $\dnf$ on $\kaecgoth_\oplus$. For a commutative square $(f_1, f_2, g_1, g_2)$ in $\kaecgoth_\oplus$, we have that $(f_1, f_2, g_1, g_2)\in \dnf$ if and only if the unique $R$-module homomorphism $r: P \to C$ from the pushout $P$ of $(f_1, f_2)$ is a split embedding, where the maps are as follows:
\[\begin{tikzcd}
	&& C \\
	{B_1} & P \\
	A & {B_2}
	\arrow["{g_1}", curve={height=-12pt}, from=2-1, to=1-3]
	\arrow["{h_1}", from=2-1, to=2-2]
	\arrow["r", from=2-2, to=1-3]
	\arrow["\ulcorner"{anchor=center, pos=0.125, rotate=-90}, draw=none, from=2-2, to=3-1]
	\arrow["{f_1}", from=3-1, to=2-1]
	\arrow["{f_2}"', from=3-1, to=3-2]
	\arrow["{g_2}"', curve={height=12pt}, from=3-2, to=1-3]
	\arrow["{h_2}"', from=3-2, to=2-2]
\end{tikzcd}\] 
\end{remark}

We now turn to applications of the results of this section for classes of pure-injective modules. First, we consider the modules which are injective with respect to various kinds of morphisms.

\begin{example}
\label{ex:dsums}
The following abstract classes of pure-injective modules satisfy Hypothesis~\ref{hyp:dsums}:
\begin{enumerate}[(1), leftmargin=*]
\item \label{ex:dsums0}The $\mu(R)\mh\mrm{AEC}$ $(R\mh\pinj, \leqo)$, where $R\mh\pinj$ denotes the class of pure-injective {$R$-modules} and $\mu(R)$ is as in Definition~\ref{def:mur}. It satisfies the hypotheses by Corollary~\ref{cor:pinjmuaec} and Fact~\ref{fact:triviapinjective}\ref{fact:triviapinjectivesums}.
\item \label{ex:dsums1} The $\gamma(R)\mh\mrm{AEC}$ $(R\mh\inj, \leqo)$, where $R\mh\inj$ denotes the class of injective $R$-modules and $\gamma(R)$ is as in Definition~\ref{def:gammar}. It satisfies the hypotheses by Corollary~\ref{cor:injmuaec} and Fact~\ref{fact:triviainjenv}\ref{fact:triviainjenv1}.
\item \label{ex:dsums5}The $(|R|+\aleph_0)^+\mh\mrm{AEC}$ $(R\mh\rdinj, \leqo)$ satisfies Hypothesis~\ref{hyp:dsums}, where $R\mh\rdinj$ denotes the class of $\mrm{RD}$-injective $R$-modules. An $R$-module $A$ is $\mrm{RD}$-injective if $A\leq_{\mrm{RD}} B$\footnote{Recall that for two $R$-modules $A\leq B$ we say that $A\leq_{\mrm{RD}} B$ if and only if $A\cap rB = rA$ for every $r\in R$ (cf.~\cite{rdflatness}).} implies $A\leqo B$ (see \cite{rdflatness} for more on this class of modules). Clearly every pure embedding is an $\mrm{RD}$-embedding, so that every $\mrm{RD}$-injective module is pure-injective. The class $\rdinj$ is closed under finite direct sums and direct summands \cite[Proposition 4.4]{mazarinoetherian}. Closure under $(|R|+\aleph_0)^+$-directed systems follows, with an argument similar to the one of Fact~\ref{fact:triviainj}\ref{fact:triviainj2}, from the Baer-like criterion for $\mrm{RD}$-injective modules proved in \cite[Corollary~4.10]{mazarinoetherian}.
\end{enumerate}
\end{example}

Moreover, we are able to get stability results for the following very relevant classes of pure-injective modules.

\begin{example}
\label{ex:2dsums}
The following abstract classes of pure-injective modules satisfy Hypothesis~\ref{hyp:dsums}:
\begin{enumerate}[(1), leftmargin=*]
\item \label{ex:dsums2}The $\aleph_1\mh\mrm{AEC}$ $(R\mh\sigmapinj, \leqo)$, where $R\mh\sigmapinj$ denotes the class of $\Sigma$-pure-injective $R$-modules.
An $R$-module is $\Sigma$-pure-injective if arbitrary direct sums of it are pure-injective \cite[Section~4.4.2]{purityspectra}, equivalently if there is no infinite descending chain of $\pp$-definable subgroups\footnote{\label{fn:ppsub}Let $A$ be an $R$-module, a \tdef{$\pp$-definable subgroup} of $A$ is a subgroup of the form ${\varphi(A) =\{a\in A\mid A\vDash \varphi(a)\}}$ where $\varphi(x)$ is a $\pp$-formula (cf.~\ref{def:ppform},~\ref{rem:lpuretrivia}\ref{rem:lpuretrivia2}), also see  \cite[Section~1.1.1]{purityspectra}.} \cite[Theorem 4.4.5]{purityspectra}. Therefore, $R\mh\sigmapinj$ is closed under pure submodules and is an $\aleph_1\mh\mrm{AEC}$ with pure embeddings. Moreover, it is closed under finite direct sums \cite[Proposition 4.4.11]{purityspectra}.
\item \label{ex:dsums3}The $\aleph_1\mh\mrm{AEC}$ $(R\mh\sigmainj, \leqo)$, where $R\mh\sigmainj$ denotes the class of $\Sigma$-injective $R$-modules. An $R$-module is $\Sigma$-injective if arbitrary direct sums of it are injective \cite[Section~4.4.2]{purityspectra}, equivalently if it is injective and $\Sigma$-pure-injective \footnote{In \cite[Lemma 4.4.16]{purityspectra} it is shown that a module is $\Sigma$-injective if and only if it is $\Sigma$-pure-injective and absolutely pure (a module $A$ is absolutely pure if $A\leq B$ implies $A\leqp B$). But clearly the pure-injective absolutely pure modules are exactly the injective ones.}. Therefore, by Examples~\ref{ex:dsums}\ref{ex:dsums1}, \ref{ex:2dsums}\ref{ex:dsums2}, and Remark~\ref{rem:dsumsjoin}\ref{rem:dsumsjoin1}, the hypotheses are satisfied.
\item \label{ex:dsums4}The $\mu(R)\mh\mrm{AEC}$ $(R\mh\mrm{FlatPI}, \leqo)$, where $R\mh\mrm{FlatPI}$ denotes the class of flat pure-injective $R$-modules. This follows from Example~\ref{ex:ringlpure}\ref{ex:ringlpure1.5} and Example~\ref{ex:dsums}\ref{ex:dsums0}.
\end{enumerate}
\end{example}

In conclusion, we give some examples in the context of abelian group theory. Recall that $\mu(\mathbb Z) = \aleph_1$ by Fact~\ref{fact:pidmucompact}. The proof that the classes of the next example satisfy Hypothesis~\ref{hyp:dsums} is straightforward, so we omit it.

\begin{example}
\label{ex:dsumsab}
The abstract class $(\kaec, \leqo)$ is an $\aleph_1\mh\mrm{AEC}$ satisfying Hypothesis~\ref{hyp:dsums} for the following choices of $\kaec$:
\begin{enumerate}[(1), leftmargin=*]
\item the class of torsion pure-injective abelian groups;
\item the class of torsion-free pure-injective abelian groups;
\item the class of reduced pure-injective abelian groups;
\item the class of reduced torsion-free pure-injective abelian groups.
\end{enumerate}
\end{example}

\subsection{Enveloping classes}
\label{sect:envclasses}

In this subsection we introduce the machinery of enveloping classes, which will be used to obtain novel  stability results in the next subsection. To a good approximation, enveloping classes generalize the relationship between the abstract classes $(\rmod, \leqp)$ and $(R\mh\pinj, \leqo)$ (see Example~\ref{ex:enveloping}). In particular, if $(\kaec_1, \leqk)$ is an envelope for $(\kaec_2, \leqk)$, we will be able to infer the stability of the latter class from the stability of $(\kaec_1, \leqk)$ (see Lemma~\ref{lem:transferenvelope}). Many applications of these techniques to concrete abstract classes of modules will be shown in the next subsection.  

\begin{definition}
\label{def:newefunctor}
Let $\kaecgoth_1 = (\kaec_1, \leqk)$ and $\kaecgoth_2 = (\kaec_2, \leqk)$ be $\mrm{AC}$s and $\mu$ an infinite regular cardinal. We say that $\kaecgoth_2$ is a $\mu$-envelope for $\kaecgoth_1$ if the following hold:
\begin{enumerate}[(1), leftmargin=*]
\item \label{def:newefunctor0}$\kaec_2\subseteq \kaec_1$;
\item \label{def:newefunctor1}for every $A\in\kaec_1$ there is a non-empty collection $\Gamma(A) \subseteq \kaec_2$ such that $\hul (A)\in \Gamma(A)$ implies $A\leqk \hul (A)$ and $|\hul (A)| \leq |A|^{<\mu}$, we call such a $\hul (A)$ an \tdef{envelope} for $A$;
\item if $A\in \kaec_2$, then $A\in \Gamma(A)$;
\item \label{def:newefunctor2} for every $\kaecgoth_1$-embedding $f: A \to B$, $\hul (A)\in \Gamma(A)$, $\hul(B)\in\Gamma(B)$, there is a $\kaecgoth_2$-embedding $\hul(f): \hul(A) \to \hul(B)$ such that $f\subseteq \hul(f)$.
\end{enumerate}
When we write $\hul(A)$, for $A\in\kaec_1$, we will implicitly mean that $\hul(A)\in \Gamma(A)$, and when we write $\hul(f)$, for $f$ a $\kaecgoth_1$-embedding, we mean that it is as in \ref{def:newefunctor}\ref{def:newefunctor2}.
\end{definition}

The motivating example for the previous definition is that $(R\mh\pinj, \leqo)$ is a {$\mu(R)$-envelope} for $(\rmod, \leqp)$, and in such a case the reader should keep in mind that, for $A\in \rmod$, $\Gamma (A)$ is the collection of pure-injective envelopes $B$ of $A$ with $A\leqp B$. In this case, when we write $\hul(A)\in \Gamma(A)$, we mean that $\hul(A)$ is a pure-injective envelope for $A$. More generally, the reader should keep in mind the following examples.

\begin{notation}
\label{not:pinjkaec}
Let $\kaec$ be a class of $R$-modules. We let $\kaec\mh\pinj$ denote the class of pure-injective modules in $\kaec$. Similarly, $\kaec\mh\inj$ denotes the class of injective modules in $\kaec$.
\end{notation}

\begin{example}
\label{ex:enveloping}
\begin{enumerate}[(1), leftmargin=*]
\item \label{ex:enveloping1}Let $\kaecgoth_\pp = (\kaec, \leqp)$ be an abstract class of $R$-modules such that $\kaec$ is closed under pure-injective envelopes. Then $\kaecgoth_\oplus^\pinj = (\kaec\mh\pinj, \leqo)$ is a $\mu(R)$-envelope for $\kaecgoth_\pp$. 
\item \label{ex:enveloping2} Let $\kaecgoth_\emb = (\kaec, \leq)$ be an abstract class of $R$-modules such that $\kaec$ is closed under injective envelopes. Then $\kaecgoth_\oplus^\inj = (\kaec\mh\inj, \leqo)$ is a $\gamma(R)$-envelope for $\kaecgoth_\emb$. 
\end{enumerate}
\end{example}
\begin{proof}
Let us show \ref{ex:enveloping1}. Before starting, notice that by Remark~\ref{rem:relationcollapse} the relations $\leqp$ and $\leqo$ coincide on $\kaec\mh\pinj$. For every $A\in\kaec$, let $\Gamma(A)$ be the collection of all pure-injective envelopes $B$ of $A$ with $A\leqp B$. By Fact~\ref{fact:existpenv} and Corollary~\ref{cor:cardpinj} we have that $(\kaec\mh\pinj, \leqo)$ is a $\mu(R)$-envelope for $(\kaec, \leqp)$.

\ssk \nin The proof of item  \ref{ex:enveloping2} is just as that of the previous item, but now taking injective envelopes and  using  Fact~\ref{fact:triviainjenv} and Fact~\ref{fact:triviainj}\ref{fact:triviainj1} instead.
\end{proof}

Clearly, if $\mu_1\leq \mu_2$ are infinite regular cardinals and $\kaecgoth_2$ is a $\mu_1$-envelope for $\kaecgoth_1$, we have that $\kaecgoth_2$ is a $\mu_2$-envelope for $\kaecgoth_1$.

\begin{remark}
\label{rem:muenvcofinal}
Let $\kaecgoth_2$ be a $\mu$-envelope for $\kaecgoth_1$. Then the following hold:
\begin{enumerate}[(1), leftmargin=*]
\item \label{rem:muenvcofinal2}If $A,B\in \kaec_1$ and $f': A\to B$ is a $\kaecgoth_1$-embedding, then the map $f: A \to \hul(B)$  obtained from $f'$ by extending the codomain is a $\kaecgoth_1$-embedding, because we have $B\leqk \hul(B)$. Moreover, if $A\in \kaec_2$, then $f$ is also a $\kaecgoth_2$-embedding.
\item \label{rem:muenvcofinal3}If $A,B\in\kaec_1$ and $f:A\to B$ is a  $\kaecgoth_1$-embedding, then $\hul(f)\restriction A: A \to \hul(B)$ is a $\kaecgoth_1$-embedding, because $f(A) = \hul(f)(A)\leqk \hul(f)(\hul(A))\leqk \hul(B)$.
\end{enumerate}
\end{remark}

The following lemma shows that, if $\kaecgoth_2$ is a $\mu$-envelope for $\kaecgoth_1$, then there is a clear transfer between the model-theoretic properties of $\kaecgoth_1$ and $\kaecgoth_2$.

\begin{lemma}
\label{lem:transferenvelope}
Let $\kaecgoth_1 = (\kaec_1, \leqk)$, $\kaecgoth_2 = (\kaec_2, \leqk)$ be abstract classes, and $\kaecgoth_2$ a $\mu$-envelope for $\kaecgoth_1$.
Then:
\begin{enumerate}[(1), leftmargin=*]
\item \label{lem:transferenvelope0} $\kaecgoth_1$ has $\mrm{JEP}$ if and only if $\kaecgoth_2$ has $\mrm{JEP}$.
\item \label{lem:transferenvelope1}Let $\lambda$ be an infinite cardinal, $\kaecgoth_1$ has $\lambda\mh\mrm{AP}$ if and only if $\kaecgoth_2$ has $\lambda\mh\mrm{AP}$. In particular, $\kaecgoth_1$ has $\infty\mh\mrm{AP}$ if and only $\kaecgoth_2$ has $\infty\mh\mrm{AP}$.
\item \label{lem:transferenvelope1.5}Let $\kappa$ be an infinite cardinal, if $\kaecgoth_1$ is (almost) $\kappa$-stable, then so is $\kaecgoth_2$. In particular, if $\kaecgoth_1$ is (almost) stable, then so is $\kaecgoth_2$.
\item \label{lem:transferenvelope2}
If $\kappa = \kappa^{<\mu}$ and $\kaecgoth_2$ is (almost) $\kappa$-stable, then so is $\kaecgoth_1$.
\end{enumerate}
\end{lemma}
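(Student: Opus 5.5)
The plan is to verify each item by pushing configurations in one class to the other through the envelope operation $\hul$, using Remark~\ref{rem:muenvcofinal} to keep track of which maps are embeddings in which class.

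For \ref{lem:transferenvelope0}, suppose first that $\kaecgoth_1$ has $\mrm{JEP}$ and take $M_1,M_2\in\kaec_2\subseteq\kaec_1$. We get $\kaecgoth_1$-embeddings $f_\ell\colon M_\ell\to N$ and replace $N$ by some $\hul(N)$. Remark~\ref{rem:muenvcofinal}\ref{rem:muenvcofinal2}, applied with $M_\ell\in\kaec_2$, makes the extended maps $\kaecgoth_2$-embeddings into $\hul(N)\in\kaec_2$. Conversely, given $M_1,M_2\in\kaec_1$, jointly embed $\hul(M_1),\hul(M_2)$ in $\kaecgoth_2$ and precompose with the inclusions $M_\ell\leqk\hul(M_\ell)$. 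The relation is the same $\leqk$ in both classes, so these composites are $\kaecgoth_1$-embeddings.

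For \ref{lem:transferenvelope1}, the argument is the same with families. Given $\kaecgoth_1$-embeddings $(f_i\colon M\to M_i)_{i\in I}$ in $\kaec_1$, form the $\kaecgoth_2$-embeddings $\hul(f_i)\colon\hul(M)\to\hul(M_i)$ for one fixed $\hul(M)$, amalgamate them in $\kaecgoth_2$, and restrict to the $M_i\leqk\hul(M_i)$. The restricted maps agree on $M$ because the $\hul(f_i)$ all extend $f_i$ and agree on $\hul(M)$. The other direction (from $\kaecgoth_1$ to $\kaecgoth_2$) composes a $\kaecgoth_1$-amalgam with $\hul$ of the target, exactly as in \ref{lem:transferenvelope0}.

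For stability, the key step is to compare orbital types. I will first show that if $M\leqk N$ lie in $\kaec_2$, then $\gtp_{\kaecgoth_1}(\bar b/M;N)=\gtp_{\kaecgoth_1}(\bar c/M;N)$ implies the same equality for $\kaecgoth_2$-types. Since $E$ is a transitive closure, it suffices to check $E^{\at}$, and witnesses $N_1,N_2\to N'$ in $\kaec_1$ are transported to $\hul(N')$ via Remark~\ref{rem:muenvcofinal}\ref{rem:muenvcofinal2}. The intermediate models in a chain of $E^{\at}$-steps may lie in $\kaec_1\setminus\kaec_2$; there I will replace each such $N''$ by $\hul(N'')$ and use Remark~\ref{rem:muenvcofinal}\ref{rem:muenvcofinal3}. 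Conversely, $\kaecgoth_2$-witnesses are also $\kaecgoth_1$-witnesses. This gives \ref{lem:transferenvelope1.5}: $\gS_{\kaecgoth_2}(M)$ injects into $\gS_{\kaecgoth_1}(M)$, since every $\kaecgoth_2$-extension is a $\kaecgoth_1$-extension, and the same injection works for the relative type spaces $\gS(M;N)$.

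For \ref{lem:transferenvelope2}, take $M\in\kaec_1$ with $|M|=\kappa$ and pass to $\hul(M)$, which has $|\hul(M)|\leq\kappa^{<\mu}=\kappa$; it has exactly $\kappa$ elements since it contains $M$. Each $\kaecgoth_1$-type over $M$, realized in some $N$ with $M\leqk N$, can be moved into $\hul(N)$ and then restricted along $\hul(M)\to\hul(N)$. This gives a $\kaecgoth_2$-type over (an isomorphic copy of) $\hul(M)$ whose restriction to $M$ recovers the original $\kaecgoth_1$-type, so $|\gS_{\kaecgoth_1}(M)|\leq|\gS_{\kaecgoth_2}(\hul(M))|\leq\kappa$. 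For the almost version, fix $N$ and one $\hul(N)$, embed $\hul(M)$ into it via $\hul(\mathrm{incl})$, and count $\gS_{\kaecgoth_2}(\hul(M);\hul(N))$.

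I expect the main obstacle to be this last step, in two places. First, the $\kaecgoth_2$-type must actually determine the $\kaecgoth_1$-type after restriction; this is fine because $\kaecgoth_2$-equivalence implies $\kaecgoth_1$-equivalence. Second, $\hul(M)$ is only defined up to the choice made in $\Gamma(M)$, so I must fix one envelope and transport everything through isomorphisms over $M$, using closure under isomorphism.
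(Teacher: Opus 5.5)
Your proposal is correct and follows essentially the same route as the paper: the same envelope-transport arguments for $\mrm{JEP}$ and $({<}\lambda)\mh\mrm{AP}$, the comparison of $E_1$ and $E_2$ on triples from $\kaec_2$ for item (3), and for item (4) the passage to $\hul(M)$ of size $\kappa^{<\mu}=\kappa$. Your injection into $\gS_{\kaecgoth_2}(\hul(M))$ is just the contrapositive of the paper's argument from $\kappa^+$ distinct types.

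One small correction concerns replacing the intermediate models $N''$ of an $E_1$-chain by $\hul(N'')$, a point the paper itself passes over with ``by induction''. Remark~\ref{rem:muenvcofinal}\ref{rem:muenvcofinal3} only yields $\kaecgoth_1$-embeddings, so it does not suffice there. What you need is clause~\ref{def:newefunctor2} of Definition~\ref{def:newefunctor}, which says the maps $\hul(f)$ are $\kaecgoth_2$-embeddings extending $f$. Combine it with the choice $\hul(N)=N$ for the endpoint models $N\in\kaec_2$.
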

\begin{proof}
We start by showing  \ref{lem:transferenvelope0}.
\begin{enumerate}[$(*_{\arabic*})$, leftmargin=*, align=left,  series=prf:envelopes]
\item \label{prf:envelopes1.1}If $\kaecgoth_1$ has $\mrm{JEP}$, then so does $\kaecgoth_2$.
\end{enumerate}
Why \ref{prf:envelopes1.1}? Take $B_1,B_2\in\kaec_2\subseteq \kaec_1$, because $\kaecgoth_1$ has $\mrm{JEP}$ there is a pair of $\kaecgoth_1$-embeddings $(f_1': B_1 \to C, f_2': B_2 \to C)$ with $C\in\kaec_1$. Take $C\leqk \hul(C)\in \kaec_2$, and $f_\ell$ the map which extends the codomain of $f_\ell'$ to $\hul(C)$ as in Remark~\ref{rem:muenvcofinal}\ref{rem:muenvcofinal2}, with $\ell\in\{1,2\}$. Then $(f_1: B_1 \to \hul(C), f_2: B_2 \to \hul(C))$ is the required pair of $\kaecgoth_2$-embeddings.
\begin{enumerate}[resume*=prf:envelopes]
\item\label{prf:envelopes1.2} If $\kaecgoth_2$ has $\mrm{JEP}$, then so does $\kaecgoth_1$.
\end{enumerate}
Why \ref{prf:envelopes1.2}? Take $B_1, B_2\in\kaec_1$, because $\kaecgoth_2$ has $\mrm{JEP}$ there is a pair of $\kaecgoth_2$-embeddings $(f_1': \hul(B_1) \to C, f_2': \hul(B_2) \to C)$ with $C\in\kaec_2$. Letting $f_\ell = f_\ell' \restriction B_\ell$, with $\ell\in\{1,2\}$, we are done by Remark~\ref{rem:muenvcofinal}\ref{rem:muenvcofinal3} and because $C\in \kaec_1$. This shows \ref{prf:envelopes1.2}.

\ssk\nin We show \ref{lem:transferenvelope1}. Let $\lambda$ be an infinite cardinal.
\begin{enumerate}[resume*=prf:envelopes]
\item\label{prf:envelopes1.3} If $\kaecgoth_1$ has $\lambda\mh\mrm{AP}$, then so does $\kaecgoth_2$.
\end{enumerate}
Why \ref{prf:envelopes1.3}?  Argue as in the proof of \ref{prf:envelopes1.1}, using Remark~\ref{rem:muenvcofinal}\ref{rem:muenvcofinal2}.
\begin{enumerate}[resume*=prf:envelopes]
\item\label{prf:envelopes1.4} If $\kaecgoth_2$ has $\lambda\mh\mrm{AP}$, then so does $\kaecgoth_1$.
\end{enumerate}
Why \ref{prf:envelopes1.4}? Assume $(f_i: A \to B_i\mid i\in I)$ is a collection of $\kaecgoth_1$-embeddings with $|I| <\lambda$. This induces a collection of {$\kaecgoth_2$-embeddings} $(\hul(f_i): \hul(A) \to \hul(B_i)\mid i\in I)$. Because $\kaecgoth_2$ satisfies $\lambda\mh\mrm{AP}$, there is a collection $(g_i': \hul(B_i) \to C\mid i\in I)$ of $\kaecgoth_2$-embeddings with $C\in\kaec_2$ and $g_i' \hul(f_i) = g_j' \hul (f_j)$ for every $i,j\in I$. Letting $g_i = g_i'\restriction B_i$ for every $i\in I$, we are done by Remark~\ref{rem:muenvcofinal}\ref{rem:muenvcofinal3}. This shows \ref{prf:envelopes1.4}.

\ssk \nin We now show \ref{lem:transferenvelope1.5}. From now on, to simplify the notation, for $\ell\in\{1,2\}$, we let $E_\ell$ denote $E_{\kaecgoth_\ell}$, and similarly for $E_\ell^\at$ (cf.~\ref{def:eqrelorbtypes}).
\begin{enumerate}[resume*=prf:envelopes]
\item \label{prf:envelopes1.5}If $A, B_\ell\in \kaec_2$, $c_\ell\in B_\ell$, with $\ell\in \{1, 2\}$, then $(c_1, A, B_1) E_1 (c_2, A, B_2)$ if and only if $(c_1, A, B_1) E_2 (c_2, A, B_2)$.
\end{enumerate}
By induction and the definition of the orbital equivalence relation  (cf.~\ref{def:eqrelorbtypes}), to show \ref{prf:envelopes1.5} it will be enough to show:
\begin{enumerate}[resume*=prf:envelopes]
\item\label{prf:envelopes1.6} If $A, B_\ell\in \kaec_2$, $c_\ell\in B_\ell$, with $\ell\in \{1, 2\}$, then $(c_1, A, B_1) E_1^\at (c_2, A, B_2)$ if and only if $(c_1, A, B_1) E_2^\at (c_2, A, B_2)$.
\end{enumerate}
Why \ref{prf:envelopes1.6}? We have that $\kaec_2\subseteq \kaec_1$, therefore $(c_1, A, B_1) E_2^\at (c_2, A, B_2)$ implies that $(c_1, A, B_1) E_1^\at (c_2, A, B_2)$. For the other direction, argue as in \ref{prf:envelopes1.1} using  Remark~\ref{rem:muenvcofinal}\ref{rem:muenvcofinal2}.
\begin{enumerate}[resume*=prf:envelopes]
\item Item \label{prf:envelopes1.7} \ref{lem:transferenvelope1.5} holds.
\end{enumerate}
Why \ref{prf:envelopes1.7}? Follows from \ref{prf:envelopes1.5}.
\begin{enumerate}[resume*=prf:envelopes]
\item \label{prf:envelopes1.85}Item \ref{lem:transferenvelope2} holds
\end{enumerate}
The rest of the proof will be concerned with showing \ref{prf:envelopes1.85}.
\begin{enumerate}[resume*=prf:envelopes]
\item \label{prf:envelopes1.8}If $\kappa = \kappa^{<\mu}$ and $\kaecgoth_2$ is $\kappa$-stable, then so is $\kaecgoth_1$.
\end{enumerate}
Why \ref{prf:envelopes1.8}? Before starting with the proof, recall that $\mu$ is such that  for every $A\in \kaec_1$ and $\hul (A)\in \Gamma(A)$, we have that $|\hul (A)| \leq |A|^{<\mu}$ (cf.~\ref{def:newefunctor}\ref{def:newefunctor1}). Now, assume for a contradiction that there is $A\in \kaec_1$ with $|A|=\kappa$ and there is a collection of pairwise distinct orbital types $p_i = \gtp_{\kaecgoth_1}(c_i/A; B_i)$ in $\kaecgoth_1$, with $i< \kappa^+$. 
Without loss of generality, we can assume that for every $i<\kappa^+$ we have $\hul(A) \leqk \hul(B_i)$.
Because $\kappa = \kappa^{<\mu}$ and $|A| =\kappa$, then $|\hul(A)| = \kappa$ by \ref{def:newefunctor}\ref{def:newefunctor1}. By $\kappa$-stability of $\kaecgoth_2$ there must be $i<j<\kappa^+$ such that $(c_i,\hul(A), \hul(B_i)) E_2 (c_j,\hul(A), \hul(B_j))$. By \ref{prf:envelopes1.5} we have $(c_i,\hul(A), \hul(B_i)) E_1 (c_j,\hul(A), \hul(B_j))$, and in  particular we have $(c_i, A , \hul(B_i)) E_1 (c_j,A, \hul(B_j))$. But clearly $(c_i, A, H(B_i))E_1(c_i, A, B_i)$, and similarly $(c_j, A, H(B_j))E_1(c_j, A, B_j)$. Therefore $(c_i, A, B_i)E_1(c_j, A, B_j)$, a contradiction.
\begin{enumerate}[resume*=prf:envelopes]
\item \label{prf:envelopes1.9}If $\kappa = \kappa^{<\mu}$ and $\kaecgoth_2$ is almost $\kappa$-stable, then so is $\kaecgoth_1$.
\end{enumerate}
Why \ref{prf:envelopes1.9}? The proof is very similar to that of  \ref{prf:envelopes1.8}, so we omit it. 
\end{proof}

Coming back to a more concrete setting, we end this subsection with this following result, which is the continuation of Example~\ref{ex:enveloping}.

\begin{theorem}
\label{thm:envinjpinj}
\begin{enumerate}[(1), leftmargin=*]
\item \label{thm:envinjpinj1}Let $\kaecgoth_\pp = (\kaec, \leqp)$ be an abstract class of $R$-modules such that $\kaec$ is closed under pure-injective envelopes. Then $\kaecgoth_\oplus^{\pinj}= (\kaec\mh\pinj, \leqo)$ is a $\mu(R)$-envelope for $\kaecgoth_\pp$, and if $\kaec$ is closed under arbitrary direct sums and direct summands then both $\kaecgoth_\pp$ and $\kaecgoth^\pinj_\oplus$ have $\infty\mh\mrm{AP}$. Moreover, if $\kaecgoth_\pp$ is a $\mu\mh\mrm{AEC}$, then $\kaecgoth_\oplus^{\pinj}$ is a $(\mu(R)+\mu)\mh\mrm{AEC}$.
\item \label{thm:envinjpinj2}Let $\kaecgoth_\emb = (\kaec, \leq)$ be an abstract class of $R$-modules such that $\kaec$ is closed under injective envelopes. Then $\kaecgoth_\oplus^{\inj} = (\kaec\mh\inj, \leqo)$ is a $\gamma(R)$-envelope for $\kaecgoth_\emb$, and if $\kaec$ is closed under arbitrary direct sums and direct summands then both $\kaecgoth_\emb$ and $\kaecgoth^\inj_\oplus$ have $\infty\mh\mrm{AP}$. Moreover, if $\kaecgoth_\emb$ is a $\mu\mh\mrm{AEC}$, then $\kaecgoth_\oplus^{\inj}$ is a $(\gamma(R)+\mu)\mh\mrm{AEC}$.
\end{enumerate}
\end{theorem}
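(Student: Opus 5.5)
The envelope claims in both items were already checked in Example~\ref{ex:enveloping}, so what remains is $\infty\mh\mrm{AP}$ and the $\mu\mh\mrm{AEC}$ axioms. I give the argument for \ref{thm:envinjpinj1}; item \ref{thm:envinjpinj2} follows the same pattern with injective envelopes, Fact~\ref{fact:triviainjenv}, Fact~\ref{fact:triviainj}, and $\gamma(R)$ in place of pure-injective envelopes, Fact~\ref{fact:existpenv}, Corollary~\ref{cor:cardpinj}, and $\mu(R)$.

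For $\infty\mh\mrm{AP}$, Lemma~\ref{lem:transferenvelope}\ref{lem:transferenvelope1} tells us $\kaecgoth_\pp$ and $\kaecgoth_\oplus^\pinj$ have it together, so it suffices to handle one. I work in $\kaecgoth_\pp$. Take pure embeddings $(f_i: A\to B_i\mid i\in I)$ with all modules in $\kaec$. Proposition~\ref{prop:widepushoutexist} gives the wide pushout $P=(\bigoplus_i B_i)/N$, and Corollary~\ref{cor:widepushclolambda} with $\lambda=\aleph_0$ makes every $g_i$ a pure embedding. The only gap is whether $P\in\kaec$, since $\kaec$ is not assumed closed under quotients.

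This is where I expect the main difficulty, and I plan to get around it by passing to pure-injective envelopes. Replace each $B_i$ by $\mrm{PE}(B_i)\in\kaec\mh\pinj$ and $A$ by $\mrm{PE}(A)$. Fact~\ref{fact:existpenv} gives split extensions $\hul(f_i):\mrm{PE}(A)\to\mrm{PE}(B_i)$. Since $\mrm{PE}(A)$ is a direct summand of each $\mrm{PE}(B_i)$, write $\mrm{PE}(B_i)=\mrm{PE}(A)\oplus C_i$. Then $C_i\in\kaec$ by closure under direct summands, and the wide pushout is just $Q=\mrm{PE}(A)\oplus\bigoplus_i C_i$. This $Q$ lies in $\kaec$ by closure under arbitrary direct sums, and every map into it is split, hence pure. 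Restricting these maps to the $B_i$ gives an amalgam in $\kaecgoth_\pp$, as in Remark~\ref{rem:muenvcofinal}\ref{rem:muenvcofinal3}. One could equally run this argument directly in $\kaecgoth^\pinj_\oplus$ (with $Q\in\kaec$, though not necessarily pure-injective) and transfer via Lemma~\ref{lem:transferenvelope}. Either way, the only facts used are that the wide pushout of split embeddings is a direct sum of complements, together with closure under sums and summands.

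Next, suppose $\kaecgoth_\pp$ is a $\mu\mh\mrm{AEC}$ and set $\nu=\mu(R)+\mu$. Since $\leqp$ and $\leqo$ agree on pure-injectives (Remark~\ref{rem:relationcollapse}), coherence passes directly from $\kaecgoth_\pp$. For $\nu$-continuity and $\nu$-smoothness, take a $\nu$-directed system in $\kaec\mh\pinj$. Its union lies in $\kaec$ by $\mu$-continuity of $\kaecgoth_\pp$, and it is pure-injective by Lemma~\ref{lem:closurepinj}\ref{lem:closurepinj2}. Purity of each member in the union, and smoothness, come from the corresponding axioms of $\kaecgoth_\pp$ and then convert to $\leqo$.

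For the $\mrm{LS}$ axiom, let $X\subseteq M\in\kaec\mh\pinj$. First apply the LS axiom of $(\rmod,\leqp^{\nu})$ (Proposition~\ref{prop:lpurelaec}) to get $X\subseteq M_0\leqp^\nu M$ of size at most $(|X|+|R|)^{<\nu}$. Then $M_0$ is pure-injective by Lemma~\ref{lem:closurepinj}\ref{lem:closurepinj1}, so $M_0\leqo M$, and therefore $M_0\in\kaec$ by closure under direct summands. This last step needs summand-closure of $\kaec$, which is not among the stated hypotheses of this part. If that hypothesis is not available, I will instead interleave the LS axioms of $\kaecgoth_\pp$ and of $(\rmod,\leqp^\nu)$ along a chain of length $\nu$. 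The union $M^*$ of this chain is in $\kaec$ by $\mu$-continuity of $\kaecgoth_\pp$, it is $\nu$-pure in $M$, and hence it is pure-injective, so it is the required submodel.
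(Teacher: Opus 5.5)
Your proposal is correct. Its $\infty\mh\mrm{AP}$ argument is the paper's idea in different packaging, and its treatment of the ``moreover'' clause is more explicit than the paper's.

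For $\infty\mh\mrm{AP}$, the paper works in $\kaecgoth^\pinj_\oplus$. It takes the wide pushout $P=(\bigoplus_i B_i)/N$ of the given split embeddings and uses Remark~\ref{rem:lpuredsumequiv} with Proposition~\ref{prop:wpushoutclosedlambda} to get that the $g_i$ are split and $N\leqo\bigoplus_i B_i$. It then gets $P\in\kaec$ from closure under sums and summands, amalgamates into $\mrm{PE}(P)$, and transfers to $\kaecgoth_\pp$ by Lemma~\ref{lem:transferenvelope}. You instead pass to envelopes first and write the complements $\mrm{PE}(B_i)=\mrm{PE}(A)\oplus C_i$ explicitly, so $\mrm{PE}(A)\oplus\bigoplus_i C_i\in\kaec$ is immediate. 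This bypasses Proposition~\ref{prop:wpushoutclosedlambda}\ref{prop:wpushoutclosedlambda2} and is more elementary; the paper's version reuses the wide-pushout machinery of Section~\ref{sect:lpure}.

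For the ``moreover'' clause, the paper only cites Corollary~\ref{cor:pinjmuaec}. That corollary concerns all pure-injectives, so by itself it yields $\mrm{LS}$-submodels that are pure-injective but not necessarily in $\kaec$. Your direct check of the axioms isolates exactly this point. You correctly note that summand-closure is not assumed in that clause, and your interleaved chain of length $\nu=\mu(R)+\mu$ settles it without that hypothesis. The chain uses $\mu$-continuity of $\kaecgoth_\pp$ for membership in $\kaec$, and $\nu$-smoothness of $(\rmod,\leqp^\nu)$ with Lemma~\ref{lem:closurepinj}\ref{lem:closurepinj1} for pure-injectivity. You should still record the size bound, which is routine since $\nu$ is regular. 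The same argument works in item~\ref{thm:envinjpinj2} using Fact~\ref{fact:triviainj}\ref{fact:triviainj2}--\ref{fact:triviainj3}.
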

\begin{proof}
Item \ref{thm:envinjpinj1}, with the exception of the statement about $\infty\mh\mrm{AP}$, follows from Example~\ref{ex:enveloping}\ref{ex:enveloping1}, and the moreover part follows from Corollary~\ref{cor:pinjmuaec}. 

\ssk\nin Therefore, assume $\kaec$ is closed under arbitrary direct sums and direct summands, to show \ref{thm:envinjpinj1} we are only left to show that $\kaecgoth_\pp$ and $\kaecgoth^\pinj_\oplus$ have $\infty\mh\mrm{AP}$. Using Lemma~\ref{lem:transferenvelope}\ref{lem:transferenvelope1}, it is enough to show that $\kaecgoth_\oplus^\pinj= (\kaec\mh\pinj, \leqo)$ has $\infty\mh\mrm{AP}$. Assume one has a collection $(f_i: A \to B_i\mid i\in I)$ of $\kaecgoth_\oplus^\pinj$-embeddings. Let $(g_i: B_i \to P\mid i\in I)$ be the wide pushout of $(f_i)_{i\in I}$, where $P = (\bigoplus_{i\in I} B_i)/N$ is as in  Proposition~\ref{prop:widepushoutexist}. 
Because $f_i$ is a split embedding for every $i\in I$, by Remark~\ref{rem:lpuredsumequiv} and  Proposition~\ref{prop:wpushoutclosedlambda},  we have that $g_i$ is a split embedding for every $i\in I$, and $N\leqo \bigoplus_{i\in I}B_i$.
Because $\kaec$ is closed under arbitrary direct sums we have $\bigoplus_{i\in I}B_i\in\kaec$, and by closure under direct summands of $\kaec$ we have $P = (\bigoplus_{i\in I} B_i)/N\in\kaec$. Taking $C=\mrm{PE}(P)$ (cf.~\ref{fact:existpenv}) we have that $C\in\kaec\mh\pinj$ by closure under pure-injective envelopes of $\kaec$. For every $i\in I$, call $h_i: B_i \to C$ the map which extends the codomain of $g_i$ from $P$ to $C$. Clearly $h_i\restriction A = g_i \restriction A = g_j \restriction A = h_j \restriction A$ for every $i,j\in I$. Moreover, since each $g_i$ is a pure embedding, $h_i: B_i \to C$ is a pure embedding because $P\leqp \mrm{PE}(P)=C$. Since $B_i$ is pure-injective we have that $h_i$ is a split embedding. Therefore $(h_i: B_i \to C\mid i\in I)$ is the required amalgam of $(f_i: A\to B_i\mid i\in I)$ in $\kaecgoth^\pinj_\oplus$.

\ssk \nin Item \ref{thm:envinjpinj2} follows from Example \ref{ex:enveloping}\ref{ex:enveloping2}, and the statement about $\infty\mh\mrm{AP}$ follows as in \ref{thm:envinjpinj1} but taking the injective envelope instead of the pure-injective envelope, and the moreover part follows from Corollary~\ref{cor:injmuaec}.
\end{proof}

From the previous result it follows, extending Examples~\ref{ex:dsums}-\ref{ex:dsumsab}, that:
\begin{corollary}
\label{cor:inftyappinj}
The property $\infty\mh\mrm{AP}$ holds for the following abstract classes:
\begin{enumerate}[(1), leftmargin=*]
\item \label{cor:inftyappinj1}The $\mu(R)\mh\mrm{AEC}$ $(R\mh\pinj, \leqo)$, where $R\mh\pinj$ is the class of pure-injective $R$-modules;
\item \label{cor:inftyappinj2}The $\gamma(R)\mh\mrm{AEC}$ $(R\mh\inj, \leqo)$, where $R\mh\inj$ is the class of injective $R$-modules;
\item \label{cor:inftyappinj3}The $\aleph_1\mh\mrm{AEC}$ $(\mrm{TF}\mh\pinj, \leqo)$, where $\mrm{TF}\mh\pinj$ is the class of torsion-free pure-injective abelian groups.
\end{enumerate}
\end{corollary}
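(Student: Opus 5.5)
The plan is to derive each item from Theorem~\ref{thm:envinjpinj} applied with a suitable ambient class $\kaec$. That theorem says: if $\kaec$ is closed under pure-injective (resp.\ injective) envelopes, arbitrary direct sums and direct summands, then $\kaecgoth^\pinj_\oplus=(\kaec\mh\pinj,\leqo)$ (resp.\ $\kaecgoth^\inj_\oplus=(\kaec\mh\inj,\leqo)$) has $\infty\mh\mrm{AP}$. So for each class we only need to choose $\kaec$ so that $\kaec\mh\pinj$ (resp.\ $\kaec\mh\inj$) is the class in question, and then check the three closure properties of $\kaec$. That each class is a $\mu\mh\mrm{AEC}$ for the stated $\mu$ is already known from Corollaries~\ref{cor:pinjmuaec}, \ref{cor:injmuaec} and Example~\ref{ex:dsumsab}.

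For item \ref{cor:inftyappinj1}, I would take $\kaec=\rmod$ in Theorem~\ref{thm:envinjpinj}\ref{thm:envinjpinj1}. All three closure properties are trivial, and $\kaec\mh\pinj=R\mh\pinj$.

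For item \ref{cor:inftyappinj2}, I would take $\kaec=\rmod$ in Theorem~\ref{thm:envinjpinj}\ref{thm:envinjpinj2}, which gives $\kaec\mh\inj=R\mh\inj$.

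For item \ref{cor:inftyappinj3}, I would take $\kaec=\tfab$ with $R=\mathbb Z$.
\begin{itemize}
\item Closure under arbitrary direct sums and direct summands is immediate, since summands are subgroups and torsion-freeness passes to subgroups.
\item Closure under pure-injective envelopes is the only real point to check. If $A$ is torsion-free and $A\leqp\mrm{PE}(A)$, I need $\mrm{PE}(A)$ to be torsion-free. The torsion part $T$ of $\mrm{PE}(A)$ meets $A$ trivially. Since $p\,x=0$ is a pp-condition, I would use that $A$ is an essential pure submodule of $\mrm{PE}(A)$ in the pp sense (the pure-injective hull is pp-essential) to conclude $T=0$. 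Concretely, I would cite the standard fact that the pure-injective hull of a torsion-free abelian group is torsion-free, namely $\mrm{PE}(A)=\mrm{D}\oplus\prod_p \widehat{A_p}$, the cotorsion completion.
\item With this, $\kaec\mh\pinj=\mrm{TF}\mh\pinj$ and the theorem applies.
\end{itemize}
This last verification is the step I expect to be the main obstacle. If citing the structure of the hull is not acceptable, the fallback is to argue with pp-types: for a torsion element $x$ of order $p$, the pp-type of $x$ over $A$ would have to be realized consistently with $A$ being torsion-free, and one shows it cannot be.
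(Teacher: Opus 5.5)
Your proposal is correct and is essentially the paper's proof: items (1) and (2) are Theorem~\ref{thm:envinjpinj} with $\kaec=\rmod$, and item (3) is Theorem~\ref{thm:envinjpinj}\ref{thm:envinjpinj1} with $\kaec=\tfab$. There the only nontrivial input is that pure-injective envelopes of torsion-free groups are torsion-free, which the paper simply cites from Rothmaler; your pp-smallness sketch of this works, and a quicker self-contained route is that $A\leqp A^{++}$ with $A^{++}=\Hom(A^{+},\mathbb{Q}/\mathbb{Z})$ pure-injective and torsion-free (as $A^{+}$ is divisible), so its direct summand $\mrm{PE}(A)$ is torsion-free.
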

\begin{proof}
Only item~\ref{cor:inftyappinj3} requires explanation, and it  follows from the fact that the class of torsion-free abelian groups is closed under pure-injective envelopes \cite[Corollary~3.1]{rothmalerpureinjflat}.
\end{proof}

\subsection{Applications of enveloping classes}
\label{sect:appliedenvclasses}

We now turn our attention to applications of the enveloping classes introduced in the preceding subsection. We will apply these results to give stability results for abstract classes of modules of the form $(\kaec, \leq)$ and $(\kaec, \leqp)$, whenever $\kaec$ is closed under injective and pure-injective envelopes respectively. In particular, if $(\kaec, \leqp)$ is an $\mrm{AEC}$ with $\kaec$ closed under pure-injective envelopes and finite direct sums, then \cite[Theorem~3.11]{somestablenonelementary} shows that $(\kaec, \leqp)$ is stable. In this subsection we recover, with a substantially different proof, the aforementioned result with the additional assumption that $\kaec$ is also closed under direct summands (see Corollary~\ref{cor:pinjenvstablemazari} and the discussion following it).

In fact, our results apply to abstract classes of the form $(\kaec, \leqp)$ or $(\kaec, \leq)$ which are not necessarily $\mrm{AEC}$s, or even $\mu\mh\mrm{AEC}$s for some $\mu$. In \cite{mazaridecon} Mazari-Armida and Trlifaj consider abstract classes of modules which satisfy all of the axioms of an $\mrm{AEC}$ with the exception of closure under directed systems. Our approach is more general because in this subsection we consider abstract classes of the form $(\kaec, \leq)$ or $(\kaec, \leqp)$ which, a priori, are not closed under directed systems and do not even satisfy the L\"owenheim-Skolem axiom.

We start by studying abstract classes of the form $(\kaec, \leqp)$. Notice that in the following results we do not assume $(\kaec, \leqp)$ to be a $\mu\mh\mrm{AEC}$ for some $\mu$, but we only require it to be an abstract class. Throughout, we follow Notation~\ref{not:pinjkaec} for $\kaec\mh\pinj$ and $\kaec\mh\inj$, and we use freely the cardinal invariants $\gamma(R)$ and $\mu(R)$ {(cf.~\ref{def:gammar} and \ref{def:mur})}.

\begin{theorem}
\label{thm:pinjenvstable}
Let $\mu$ be a regular infinite cardinal, $\kaecgoth_\pp = (\kaec, \leqp)$ an abstract class of $R$-modules such that $\kaec$ is closed under pure-injective envelopes and $\kaecgoth_\oplus^\pinj = (\kaec\mh\pinj, \leqo)$ satisfies Hypothesis~\ref{hyp:dsums} for $\mu$. Then $\kaecgoth_\pp$ has $\mrm{AP}$, $\mrm{JEP}$, and is $\kappa$-stable for every $\kappa=\kappa^{\mu_1}$, where $\mu_1 = |R|^{<(\mu(R)+\mu)}$. Moreover, if $\kaec$ is closed under direct summands and arbitrary direct sums, then both $\kaecgoth_\oplus^\pinj$ and $\kaecgoth_\pp$ have $\infty\mh\mrm{AP}$.
\end{theorem}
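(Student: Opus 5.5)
The plan is to combine Theorem~\ref{thm:dsumsumup} with the envelope transfer of Lemma~\ref{lem:transferenvelope}. By Theorem~\ref{thm:envinjpinj}\ref{thm:envinjpinj1} (equivalently Example~\ref{ex:enveloping}\ref{ex:enveloping1}), since $\kaec$ is closed under pure-injective envelopes, $\kaecgoth_\oplus^\pinj$ is a $\mu(R)$-envelope for $\kaecgoth_\pp$. Because $\mu(R)\leq \mu(R)+\mu$, it is then also a $(\mu(R)+\mu)$-envelope for $\kaecgoth_\pp$. Since $\kaecgoth_\oplus^\pinj$ satisfies Hypothesis~\ref{hyp:dsums} for $\mu$, Theorem~\ref{thm:dsumsumup} shows that it is a $(\mu(R)+\mu)\mh\mrm{AEC}$ which has $\mrm{AP}$ and $\mrm{JEP}$ and is $\kappa$-stable for every $\kappa=\kappa^{\mu_1}$, where $\mu_1=|R|^{<(\mu(R)+\mu)}$.

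Next we transfer these properties to $\kaecgoth_\pp$. $\mrm{JEP}$ transfers by Lemma~\ref{lem:transferenvelope}\ref{lem:transferenvelope0}. $\mrm{AP}$, being $({<}3)\mh\mrm{AP}$, transfers by Lemma~\ref{lem:transferenvelope}\ref{lem:transferenvelope1}. For stability we use Lemma~\ref{lem:transferenvelope}\ref{lem:transferenvelope2} with the envelope cardinal $\mu(R)+\mu$, which needs $\kappa=\kappa^{<(\mu(R)+\mu)}$. This is the only place where an arithmetic check is required. Given $\kappa=\kappa^{\mu_1}$, and since $\mu(R)+\mu$ is an infinite regular cardinal, we have $\mu_1\geq 2^{<(\mu(R)+\mu)}\geq \nu$ for every $\nu<\mu(R)+\mu$. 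Hence $\kappa\leq\kappa^{\nu}\leq\kappa^{\mu_1}=\kappa$ for each such $\nu$, and taking the supremum over $\nu<\mu(R)+\mu$ gives $\kappa^{<(\mu(R)+\mu)}=\kappa$. (If $R$ is finite, then $\mu_1\geq 2^{<(\mu(R)+\mu)}\geq\mu(R)+\mu>\nu$ still holds, with $|R|\geq 2$ assumed as usual.) So $\kaecgoth_\pp$ is $\kappa$-stable.

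For the moreover part, when $\kaec$ is closed under direct summands and arbitrary direct sums, $\infty\mh\mrm{AP}$ for both classes follows directly from Theorem~\ref{thm:envinjpinj}\ref{thm:envinjpinj1}.

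The only potential obstacle is making sure the envelope cardinal matches the stability cardinal hypothesis. That is resolved by enlarging the envelope parameter from $\mu(R)$ to $\mu(R)+\mu$ and carrying out the cardinal arithmetic above.
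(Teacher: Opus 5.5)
Your proposal is correct and follows the paper's proof: apply Theorem~\ref{thm:dsumsumup} to $\kaecgoth_\oplus^\pinj$, transfer $\mrm{AP}$, $\mrm{JEP}$ and stability through the envelope Lemma~\ref{lem:transferenvelope}, and read off $\infty\mh\mrm{AP}$ from Theorem~\ref{thm:envinjpinj}\ref{thm:envinjpinj1}. The only difference is the envelope parameter. The paper treats $\kaecgoth_\oplus^\pinj$ as a $(|R|+\aleph_0)^+$-envelope, so it only needs $\kappa^{|R|+\aleph_0}=\kappa$, which is immediate from $\mu_1\geq |R|+\aleph_0$. You instead use $\mu(R)+\mu$ and check $\kappa^{<(\mu(R)+\mu)}=\kappa$ with a cardinal computation, which is correct.
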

\begin{proof}
By Theorem~\ref{thm:envinjpinj}\ref{thm:envinjpinj1} we have that $\kaecgoth_\oplus^{\pinj}$ is a $(|R|+\aleph_0)^+$-envelope for $\kaecgoth_\pp$. Using Theorem~\ref{thm:dsumsumup} and $\kappa = \kappa^{\mu_1} \geq \kappa^{|R| +\aleph_0} \geq \kappa$, we can apply Lemma~\ref{lem:transferenvelope} and Theorem~\ref{thm:envinjpinj} to get the result. 
\end{proof}

\begin{corollary}
\label{cor:pinjenvstablemazari}
Let $\mu$ be a regular cardinal and $\kaecgoth_\pp = (\kaec, \leqp)$ be an abstract class of $R$-modules such that:
\begin{enumerate}[(1), leftmargin=*]
\item $\kaec$ is closed under pure-injective envelopes;
\item $\kaec$ is closed under finite direct sums;
\item $\kaec$ is closed under direct summands;
\item $\kaecgoth_\pp$ is closed under $\mu$-directed systems.
\end{enumerate}
Then $\kaecgoth_\pp$ has $\mrm{AP}$, $\mrm{JEP}$, and is $\kappa$-stable for every $\kappa=\kappa^{\mu_1}$, where $\mu_1 = |R|^{<(\mu(R)+\mu)}$. Moreover, if $\kaec$ is closed under arbitrary direct sums, then $\kaecgoth_\pp$ has $\infty\mh\mrm{AP}$.
\end{corollary}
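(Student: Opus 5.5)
The plan is to reduce Corollary~\ref{cor:pinjenvstablemazari} to Theorem~\ref{thm:pinjenvstable}. Since $\kaec$ is closed under pure-injective envelopes, the only hypothesis of that theorem left to check is that $\kaecgoth_\oplus^\pinj = (\kaec\mh\pinj, \leqo)$ satisfies Hypothesis~\ref{hyp:dsums} for $\mu$. Once this is done, the conclusions about $\mrm{AP}$, $\mrm{JEP}$ and $\kappa$-stability for $\kappa = \kappa^{\mu_1}$ follow at once. The ``moreover'' clause also follows from the corresponding part of Theorem~\ref{thm:pinjenvstable}, because closure under direct summands is already among our assumptions.

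We verify Hypothesis~\ref{hyp:dsums} clause by clause.
\begin{enumerate}[(1), leftmargin=*]
\item $\kaec\mh\pinj$ consists of pure-injective modules by definition.
\item If $A,B\in\kaec\mh\pinj$, then $A\oplus B\in\kaec$ by the assumption on $\kaec$, and $A\oplus B$ is pure-injective by Fact~\ref{fact:triviapinjective}\ref{fact:triviapinjectivesums}. Hence $A\oplus B\in\kaec\mh\pinj$.
\item A direct summand of a module in $\kaec\mh\pinj$ lies in $\kaec$ by assumption and is pure-injective by the same fact. Hence $\kaec\mh\pinj$ is closed under direct summands.
\item Closure of $(\kaec\mh\pinj,\leqo)$ under $\mu$-directed systems is the only delicate clause.
\end{enumerate}

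For the fourth clause, take a $\mu$-directed system $(A_i)_{i\in I}$ in $(\kaec\mh\pinj, \leqo)$ and let $A=\bigcup_{i\in I}A_i$. Split embeddings are pure, so $(A_i)_{i\in I}$ is also a $\mu$-directed system in $\kaecgoth_\pp$, and the hypothesis on $\kaecgoth_\pp$ gives $A\in\kaec$. It remains to show that $A$ is pure-injective. Lemma~\ref{lem:closurepinj}\ref{lem:closurepinj2} gives this only when $\mu\geq\mu(R)$.

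In general we argue as follows. Each $A_i$ is pure in $A$, since purity is preserved under directed unions (continuity of $(\rmod,\leqp)$, Proposition~\ref{prop:lpurelaec} with $\lambda=\aleph_0$). Each $A_i$ is also pure-injective, so $A_i\leqo A$ for every $i\in I$.

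Now let $E=\mrm{PE}(A)$, which lies in $\kaec$ because $\kaec$ is closed under pure-injective envelopes. Each $A_i$ is pure in $E$, hence $A_i\leqo E$. I would then try to show $A=E$. Split inclusions $A_i\leqo E$ that are compatible along a $\mu$-directed system need not give $A\leqo E$, so here I would rather check algebraic compactness of $A$ directly, as in the proof of Lemma~\ref{lem:closurepinj}. Take a finitely solvable system with parameters in $A$. The parameters should be captured in some $A_i$ when fewer than $\mu$ of them are involved. Pure-injectivity of $A_i$ and $A_i\leqp A$ then give a solution in $A_i$.

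This step is the main obstacle: it handles only systems with fewer than $\mu$ parameters, which is $\mu$-compactness of $A$. That suffices when $\mu\geq\mu(R)$. If the argument cannot be extended in general, I would replace $\mu$ by $\mu+\mu(R)$. Any $(\mu+\mu(R))$-directed system is $\mu$-directed, so the hypothesis on $\kaecgoth_\pp$ still applies, and Hypothesis~\ref{hyp:dsums} then holds for $\mu+\mu(R)$. The stability cardinal $\mu_1=|R|^{<(\mu(R)+\mu)}$ in Theorem~\ref{thm:pinjenvstable} is unchanged by this replacement, so the conclusion of the corollary is not affected.
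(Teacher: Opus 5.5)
Your proposal is correct and, once you pass from $\mu$ to $\mu+\mu(R)$, it is exactly the paper's proof: verify Hypothesis~\ref{hyp:dsums} for $\mu(R)+\mu$ using Fact~\ref{fact:triviapinjective}\ref{fact:triviapinjectivesums} and the closure of pure-injective modules under $\mu(R)$-directed systems (Corollary~\ref{cor:pinjmuaec}), then apply Theorem~\ref{thm:pinjenvstable}, whose cardinal $\mu_1=|R|^{<(\mu(R)+\mu)}$ does not change. This replacement is not merely a fallback but is necessary: take $R=\mathbb{Z}$, $\kaec$ the class of all abelian groups and $\mu=\aleph_0$; the chain $\mathbb{Z}/2\mathbb{Z}\leqo \mathbb{Z}/2\mathbb{Z}\oplus\mathbb{Z}/3\mathbb{Z}\leqo\cdots$ consists of finite, hence pure-injective, groups, but its union $\bigoplus_p\mathbb{Z}/p\mathbb{Z}$ is not pure-injective, so Hypothesis~\ref{hyp:dsums} fails for $\mu$ itself.
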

\begin{proof}
We show that $\kaecgoth^\pinj_\oplus$ satisfies Hypothesis~\ref{hyp:dsums} for $\mu(R)+\mu$, so that by Theorem~\ref{thm:pinjenvstable} we have the result. By Fact~\ref{fact:triviapinjective}\ref{fact:triviapinjectivesums} and closure of $\kaec$ under finite direct sums and direct summands, we only have to verify that $\kaecgoth_\oplus^\pinj$ is closed under $(\mu(R) +\mu)$-directed systems. Since $\kaecgoth_\pp$ is closed under $\mu$-directed systems, and using Corollary~\ref{cor:pinjmuaec} we get the result.
\end{proof}
In \cite[Theorem 3.11]{somestablenonelementary} it is shown that $\mrm{AEC}$s of the form $\kaecgoth= (\kaec, \leqp)$ which are closed under finite direct sums and pure-injective envelopes are stable. Corollary~\ref{cor:pinjenvstablemazari} recovers, with a substantially different proof, the aforementioned result when $\kaec$ is moreover closed under direct summands. Nonetheless, it can be verified that all the examples presented in \cite[Example~3.3]{somestablenonelementary} are also closed under direct summands, and thus satisfy the hypotheses of Corollary~\ref{cor:pinjenvstablemazari}.  

We now apply Corollary~\ref{cor:pinjenvstablemazari} to obtain a new stability result for a concrete $\aleph_1\mh\mrm{AEC}$ of modules.

\begin{example}
\label{ex:envpinj}
The $\aleph_1\mh\mrm{AEC}$ $(R\mh\Superstable, \leqp)$ satisfies the hypotheses of Corollary~\ref{cor:pinjenvstablemazari}, where $R\mh\Superstable$ denotes the class of superstable $R$-modules in the sense of first-order logic (cf.~\cite{ziegler}). Recall that a module is superstable if and only if there is no infinite descending chain of $\pp$-definable subgroups (cf.~Footnote~\ref{fn:ppsub}) each of infinite index in its predecessor \cite[Theorem 2.1(3)]{ziegler}. Therefore, the class of superstable $R$-modules is closed under pure submodules and it is an $\aleph_1\mh\mrm{AEC}$ with pure embeddings. The class $R\mh\Superstable$ is closed under finite direct sums because it is closed under pure extensions \cite[Corollary 2.2(1)]{ziegler}, and it is closed under pure-injective envelopes because every module is an elementary substructure of its pure-injective envelope \cite[Corollary 3.14]{ziegler}.
\end{example}

We are now able to prove a general stability theorem regarding abstract classes of modules which contain all pure-injective modules:

\begin{corollary}
\label{cor:pinjenvstable}
Let $\kaecgoth_\pp = (\kaec, \leqp)$  be an abstract class of $R$-modules such that $\kaec$ contains all pure-injective $R$-modules. Then $\kaecgoth_\pp = (\kaec, \leqp)$  has $\infty\mh\mrm{AP}$, $\mrm{JEP}$, and is $\kappa$-stable for every $\kappa = \kappa^{\mu_1}$, where $\mu_1 = |R|^{<\mu(R)}$.
\end{corollary}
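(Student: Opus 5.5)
The plan is to reduce to Theorem~\ref{thm:pinjenvstable} with $\mu = \mu(R)$.

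\textbf{Checking the hypotheses.} Since $\kaec$ contains every pure-injective module, $\kaec\mh\pinj = R\mh\pinj$. So $\kaecgoth_\oplus^\pinj = (R\mh\pinj, \leqo)$, which satisfies Hypothesis~\ref{hyp:dsums} for $\mu(R)$ by Example~\ref{ex:dsums}\ref{ex:dsums0}. The class $\kaec$ is closed under pure-injective envelopes, because $\mrm{PE}(A)$ is pure-injective and hence lies in $\kaec$.

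\textbf{Stability, $\mrm{AP}$, $\mrm{JEP}$.} Theorem~\ref{thm:pinjenvstable} now gives $\mrm{AP}$, $\mrm{JEP}$, and $\kappa$-stability for every $\kappa = \kappa^{\mu_1}$, where
\[
\mu_1 = |R|^{<(\mu(R)+\mu(R))} = |R|^{<\mu(R)}.
\]

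\textbf{$\infty\mh\mrm{AP}$.} The ``moreover'' clause of Theorem~\ref{thm:pinjenvstable} needs $\kaec$ to be closed under direct summands and arbitrary direct sums, which we do not have. Instead I would invoke Lemma~\ref{lem:transferenvelope}\ref{lem:transferenvelope1}: since $\kaecgoth_\oplus^\pinj$ is a $\mu(R)$-envelope for $\kaecgoth_\pp$ (Theorem~\ref{thm:envinjpinj}\ref{thm:envinjpinj1}), it suffices that $(R\mh\pinj,\leqo)$ has $\infty\mh\mrm{AP}$. That is Corollary~\ref{cor:inftyappinj}\ref{cor:inftyappinj1}, which follows from Theorem~\ref{thm:envinjpinj}\ref{thm:envinjpinj1} applied to $\kaec = \rmod$. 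That application is legitimate because $\rmod$ is closed under arbitrary direct sums, direct summands, and pure-injective envelopes.

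\textbf{Expected obstacle.} The only delicate point is this $\infty\mh\mrm{AP}$ step, since the amalgamation must go through the envelope class rather than through $\kaec$ itself. The wide pushout built there lies in $\rmod$, and its pure-injective envelope lies in $R\mh\pinj \subseteq \kaec$, so no closure property of $\kaec$ beyond containing $R\mh\pinj$ is needed.
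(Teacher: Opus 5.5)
Your proof is correct and follows the paper's own argument: you apply Theorem~\ref{thm:pinjenvstable} with $\mu=\mu(R)$ to get $\mrm{AP}$, $\mrm{JEP}$ and stability, and you transfer $\infty\mh\mrm{AP}$ from $(R\mh\pinj,\leqo)$, via Corollary~\ref{cor:inftyappinj} and the envelope lemma. Your citation of Lemma~\ref{lem:transferenvelope}\ref{lem:transferenvelope1} (transfer of $\lambda$-amalgamation) is the right item; the paper's proof points instead to \ref{lem:transferenvelope2}, which concerns stability, evidently a slip.
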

\begin{proof}
$\kaec\mh\pinj$ is the class of all pure-injective $R$-modules, so that $(\kaec\mh\pinj, \leqo)$ satisfies Hypothesis~\ref{hyp:dsums} for $\mu(R)$, and we can apply Theorem~\ref{thm:pinjenvstable}. Moreover, $(R\mh\pinj, \leqo)$ is a $\mu(R)$-envelope for $(\kaec, \leqp)$ by Theorem~\ref{thm:envinjpinj}\ref{thm:envinjpinj1}, and $(R\mh\pinj, \leqo)$ has $\infty\mh\mrm{AP}$ by Corollary~\ref{cor:inftyappinj}, so that by   Lemma~\ref{lem:transferenvelope}\ref{lem:transferenvelope2} we have that $(\kaec, \leqp)$ has $\infty\mh\mrm{AP}$.
\end{proof}

Corollary~\ref{cor:pinjenvstable} is especially easy to apply, and enables us to obtain the following new result:
\begin{example}
\label{ex:cotpinj}
The abstract class $(R\mh\cotorsion, \leqp)$ has $\infty\mh\mrm{AP}$, $\mrm{JEP}$, and is stable. Here, $R\mh\cotorsion$ is the class of cotorsion $R$-modules (cf.~\ref{ex:lpurecotorsion}). This follows from Corollary~\ref{cor:pinjenvstable}, because every pure-injective module is cotorsion \cite[Section~4.6]{purityspectra}. 
\end{example}

Notice that in the preceding example we only had to verify that ${(R\mh\cotorsion, \leqp)}$ contains all pure-injective modules. This should be contrasted with the proof of Example~\ref{ex:lpurecotorsion}, which relied on rather advanced module-theoretic tools. Nonetheless, using the results of Appendix~\ref{sect:appcotorsion}, it follows that $(R\mh\cotorsion, \leqp)$ is a ${(|R|+\aleph_0)^+\mh\mrm{AEC}}$ (see Corollary~\ref{cor:cotorsionmuaec}).

Finally, putting together all of the results obtained so far for cotorsion modules, we have:
\begin{corollary}
Let $R$ be a countable ring, $\lambda$ an infinite regular cardinal, and  $R\mh\cotorsion$ the class of cotorsion $R$-modules (cf.~\ref{ex:lpurecotorsion}). Then $(R\mh\cotorsion, \leqp^\lambda)$ has $\infty\mh\mrm{AP}$, $\mrm{JEP}$, and is stable.
\end{corollary}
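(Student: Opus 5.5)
We reduce the statement to Hypothesis~\ref{hyp:lambdapure} for the class of cotorsion modules, so that Theorem~\ref{thm:lpuresumup} applies. Since $R$ is countable, $(|R|+\aleph_0)^+=\aleph_1$. Example~\ref{ex:lpurecotorsion} therefore shows that $(R\mh\cotorsion, \leqp^\nu)$ satisfies Hypothesis~\ref{hyp:lambdapure} for every regular $\nu\geq\aleph_1$.

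First take $\lambda\geq\aleph_1$. Theorem~\ref{thm:lpuresumup} gives that $(R\mh\cotorsion,\leqp^\lambda)$ has $\mrm{JEP}$ and is $\kappa$-stable for every $\kappa=\kappa^{\aleph_0^{<\lambda}}$. There are unboundedly many such $\kappa$, so the class is stable. For $\infty\mh\mrm{AP}$, item~\ref{thm:lpuresumup1} of that theorem requires closure under arbitrary direct sums. This fails for cotorsion modules in general, since $\mrm{Ext}^1(F,\bigoplus A_i)$ need not vanish. So we argue directly instead.

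Let $(f_i\colon A\to B_i)_{i\in I}$ be $\lambda$-pure embeddings of cotorsion modules, and let $P$ be their wide pushout in $\rmod$. By Corollary~\ref{cor:widepushclolambda} each $g_i\colon B_i\to P$ is $\lambda$-pure. Now embed $P$ into $\mrm{PE}(P)$, which is pure-injective and hence cotorsion. The composite $B_i\to\mrm{PE}(P)$ is only known to be pure, not $\lambda$-pure. We fix this by amalgamating in $(R\mh\pinj,\leqo)$ instead. Let $C=\mrm{PE}(P)$. Each $B_i\to P$ is pure, so $\mrm{PE}(B_i)\to\mrm{PE}(P)$ is split by Fact~\ref{fact:existpenv}. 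Since $B_i\leqp\mrm{PE}(B_i)$, the composite $B_i\to\mrm{PE}(B_i)\to C$ is pure. We still need it to be $\lambda$-pure, and this is the main obstacle.

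To handle it, we would use the following fact: a $\lambda$-pure submodule $B\leqp^\lambda P$ with $P\leqp C$ satisfies $B\leqp^\lambda C$ whenever $C$ is an elementary extension of $P$ in the appropriate infinitary sense. A pure-injective envelope is not such an extension in general. The safer route is to amalgamate $(R\mh\cotorsion,\leqp^\lambda)$ using the stable independence relation together with chain bounds, via Lemma~\ref{lem:chainboundsinftyap}. Chain bounds would come from $\lambda$-continuity along chains of cofinality $\geq\lambda$ together with a cofinal-subchain argument. This argument is delicate, so we would first try the direct route: check whether $P$ itself is cotorsion. Proposition~\ref{prop:wpushoutclosedlambda}\ref{prop:wpushoutclosedlambda2} gives $N\leqp^\lambda\bigoplus B_i$, so it would suffice to show that $\bigoplus B_i$ is cotorsion. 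Over a countable ring this may follow from the appendix result Corollary~\ref{cor:cotorsionmuaec}, writing the direct sum as a $\lambda$-directed union of finite sums (which are cotorsion) when $\lambda>\aleph_0$.

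The remaining case is $\lambda=\aleph_0$. There $(R\mh\cotorsion,\leqp)$ has $\infty\mh\mrm{AP}$, $\mrm{JEP}$ and is stable by Example~\ref{ex:cotpinj}.
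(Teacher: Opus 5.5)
\textbf{The gap: $\infty\mh\mrm{AP}$ for $\lambda\geq\aleph_1$ is never proved.}

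Your case split is the same as the paper's, whose proof is just the combination of Example~\ref{ex:lpurecotorsion} (for $\lambda\geq\aleph_1=(|R|+\aleph_0)^+$) with Example~\ref{ex:cotpinj} (for $\lambda=\aleph_0$). That correctly gives $\mrm{JEP}$ and stability for every $\lambda$, and $\infty\mh\mrm{AP}$ for $\lambda=\aleph_0$. You are also right that for $\lambda\geq\aleph_1$ Theorem~\ref{thm:lpuresumup} yields only $\mrm{AP}$. Its item~\ref{thm:lpuresumup1} needs closure under arbitrary direct sums, which $R\mh\cotorsion$ lacks. The paper's one-line proof supplies no separate argument for this point either. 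So the proposal leaves $\infty\mh\mrm{AP}$ for $\lambda\geq\aleph_1$ unproved, and none of the routes you sketch closes it.

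\textbf{Why the ``direct route'' fails.} It is wrong on two counts.
\begin{itemize}
\item The finite subsums of $\bigoplus_{i\in I}B_i$ form only an $\aleph_0$-directed system. For $I$ infinite, countably many singletons have no finite upper bound. So this union is not $\lambda$-directed for $\lambda>\aleph_0$, and Corollary~\ref{cor:cotorsionmuaec} cannot be invoked.
\item The conclusion is false, as you yourself observed earlier. Over $R=\mathbb{Z}$, each $\mathbb{Z}/p^n\mathbb{Z}$ is bounded, hence pure-injective, hence cotorsion. But $\bigoplus_n\mathbb{Z}/p^n\mathbb{Z}$ is a reduced, unbounded torsion group, so $\Ext(\mathbb{Q},-)$ does not vanish on it and it is not cotorsion. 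Take $A=0$; then trivially $0\leqp^\lambda B_n$, and this direct sum is exactly the wide pushout $P$. So ``$P$ is cotorsion'' cannot be the mechanism.
\end{itemize}

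\textbf{Why the chain-bounds route fails as described.} $\lambda$-continuity only handles chains of cofinality $\geq\lambda$. The construction in Lemma~\ref{lem:chainboundsinftyap} needs a bound at every limit stage, already at stage $\omega$. Passing to a cofinal subchain does not change cofinality. In fact, suppose $M$ bounds a chain $(M_n)_{n<\omega}$. Then $M_n\leq\bigcup_k M_k\leq M$ and $M_n\leqp^\lambda M$, so strong coherence of $\leqp^\lambda$ forces $M_n\leqp^\lambda\bigcup_k M_k$. Nothing in your argument provides this.

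\textbf{The pure-injective envelope route.} As you note, it fails because $P\leqp\mrm{PE}(P)$ is in general not $\lambda$-pure.

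\textbf{What is still needed.} A genuinely new amalgamation argument for $\lambda\geq\aleph_1$ would close the gap. Two possibilities: produce a cotorsion $C$ with $P\leqp^\lambda C$, or establish chain bounds. Otherwise the claim should be weakened to $\mrm{AP}$ in that range.
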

\begin{proof}
The result is the combination of  Example~\ref{ex:lpurecotorsion} and Example~\ref{ex:cotpinj}.
\end{proof}

We end this section by considering abstract classes of the form $(\kaec, \leq)$, with $\kaec$ closed under injective envelopes. The proofs of the following results are very similar to those we have presented already in the case of purity. Therefore, we only briefly outline the proofs.

\begin{theorem}
\label{thm:injenvstable}
Let $\mu$ be a regular infinite cardinal, $\kaecgoth_\emb = (\kaec, \leq)$ an abstract class of $R$-modules such that $\kaec$ is closed under injective envelopes and $\kaecgoth_\oplus^\inj=(\kaec\mh\inj, \leqo)$ satisfies Hypothesis~\ref{hyp:dsums} for $\mu$. Then $\kaecgoth_\emb$ has $\mrm{AP}$, $\mrm{JEP}$, and is $\kappa$-stable for every $\kappa=\kappa^{\mu_1}$, where $\mu_1 = |R|^{<(\mu(R)+\mu)}$. Moreover, if $\kaec$ is closed under direct summands and arbitrary direct sums, then both $\kaecgoth_\oplus^\inj$ and $\kaecgoth_\emb$ have $\infty\mh\mrm{AP}$.
\end{theorem}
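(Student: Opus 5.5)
The plan is to mirror the proof of Theorem~\ref{thm:pinjenvstable}, replacing pure-injective envelopes by injective envelopes and Theorem~\ref{thm:envinjpinj}\ref{thm:envinjpinj1} by Theorem~\ref{thm:envinjpinj}\ref{thm:envinjpinj2}. First, since $\kaec$ is closed under injective envelopes, Theorem~\ref{thm:envinjpinj}\ref{thm:envinjpinj2} (via Example~\ref{ex:enveloping}\ref{ex:enveloping2}) says that $\kaecgoth_\oplus^\inj=(\kaec\mh\inj,\leqo)$ is a $\gamma(R)$-envelope for $\kaecgoth_\emb$. Since $\gamma(R)\leq(|R|+\aleph_0)^+$, it is in particular a $(|R|+\aleph_0)^+$-envelope, and also a $(\mu(R)+\mu)$-envelope after enlarging the cardinal, which is allowed by the remark following Example~\ref{ex:enveloping}. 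The right choice of envelope cardinal is the $\mu'$ for which $\kappa=\kappa^{<\mu'}$ follows from $\kappa=\kappa^{\mu_1}$.

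Next, because $\kaecgoth_\oplus^\inj$ satisfies Hypothesis~\ref{hyp:dsums} for $\mu$, Theorem~\ref{thm:dsumsumup} shows that $\kaecgoth_\oplus^\inj$ has $\mrm{AP}$ and $\mrm{JEP}$ and is $\kappa$-stable for every $\kappa=\kappa^{\mu_1}$ with $\mu_1=|R|^{<(\mu(R)+\mu)}$. This is exactly the $\mu_1$ in the statement, which is why $\mu(R)$ rather than $\gamma(R)$ appears there. We then transfer these properties along the envelope. Lemma~\ref{lem:transferenvelope}\ref{lem:transferenvelope0} gives $\mrm{JEP}$. Lemma~\ref{lem:transferenvelope}\ref{lem:transferenvelope1} with $\lambda=3$, i.e.\ $({<}3)\mh\mrm{AP}$, gives $\mrm{AP}$. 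Lemma~\ref{lem:transferenvelope}\ref{lem:transferenvelope2} gives $\kappa$-stability, provided $\kappa=\kappa^{<\gamma(R)}$.

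The one point needing care is this cardinal-arithmetic check. Since $\kappa=\kappa^{\mu_1}$ and $\mu_1\geq|R|+\aleph_0$, we get $\kappa^{<\gamma(R)}\leq\kappa^{|R|+\aleph_0}\leq\kappa^{\mu_1}=\kappa$. For $\mu_1\geq|R|+\aleph_0$, note that $\mu(R)+\mu$ is infinite, so $\mu_1\geq|R|^{\aleph_0}$ when $\mu(R)+\mu>\aleph_0$; in the pure-semisimple case with $\mu=\aleph_0$ one should instead fall back on $\mu_1\geq|R|$ together with $\gamma(R)\leq(|R|+\aleph_0)^+$ and $\kappa^{\mu_1}=\kappa$. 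If this degenerate case is troublesome, the plan is to follow the paper's own proof of Theorem~\ref{thm:pinjenvstable}, which simply asserts $\kappa^{\mu_1}\geq\kappa^{|R|+\aleph_0}$.

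Finally, for the moreover clause, assume $\kaec$ is closed under direct summands and arbitrary direct sums. Theorem~\ref{thm:envinjpinj}\ref{thm:envinjpinj2} directly yields $\infty\mh\mrm{AP}$ for both $\kaecgoth_\emb$ and $\kaecgoth_\oplus^\inj$. Its proof forms the wide pushout of split embeddings, which is closed under split embeddings by Proposition~\ref{prop:wpushoutclosedlambda} and Remark~\ref{rem:lpuredsumequiv}, lies in $\kaec$ by the closure assumptions, and is then embedded into its injective envelope. I expect no step to be a real obstacle; the cardinal bookkeeping above is the only point requiring attention.
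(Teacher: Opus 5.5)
Your proposal is correct and follows the paper's own proof. The paper likewise obtains the envelope from Theorem~\ref{thm:envinjpinj}\ref{thm:envinjpinj2}, gets $\mrm{AP}$, $\mrm{JEP}$ and $\kappa$-stability of $\kaecgoth_\oplus^\inj$ from Theorem~\ref{thm:dsumsumup}, transfers them with Lemma~\ref{lem:transferenvelope} via $\kappa^{|R|+\aleph_0}\leq\kappa^{\mu_1}=\kappa$, and reads off $\infty\mh\mrm{AP}$ from Theorem~\ref{thm:envinjpinj}\ref{thm:envinjpinj2}.

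Two small corrections:
\begin{enumerate}
\item Your degenerate-case hedging is unnecessary. For any nonzero ring, $\mu_1\geq|R|^{<\aleph_0}=|R|+\aleph_0$.
\item For $\mrm{AP}$, apply Lemma~\ref{lem:transferenvelope}\ref{lem:transferenvelope1} with $\lambda=\aleph_0$ rather than $\lambda=3$. The lemma is stated for infinite $\lambda$, and $({<}\aleph_0)\mh\mrm{AP}$ is just $\mrm{AP}$.
\end{enumerate}
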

\begin{proof}
The result follows just as is the proof of Theorem~\ref{thm:pinjenvstable} because by Theorem~\ref{thm:envinjpinj}\ref{thm:envinjpinj2}, we have that $(\kaec\mh\inj, \leqo)$ is a $(|R|+\aleph_0)^+$-envelope for $ (\kaec, \leq)$. 
\end{proof}

\begin{corollary}
\label{cor:injenvstable}
Let $\kaecgoth_\emb = (\kaec, \leq)$ be an abstract class of $R$-modules such that $\kaec$ contains all injective $R$-modules. Then $\kaecgoth_\emb$ has $\infty\mh\mrm{AP}$, $\mrm{JEP}$, and is $\kappa$-stable for every $\kappa = \kappa^{\mu_1}$, where $\mu_1 = |R|^{<(\mu(R) + \gamma(R))}$. In particular, this holds if $\kaec$ contains all pure-injective modules.
\end{corollary}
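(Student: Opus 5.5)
The plan is to mirror the proof of Theorem~\ref{thm:pinjenvstable}, replacing pure-injective envelopes with injective envelopes and Theorem~\ref{thm:envinjpinj}\ref{thm:envinjpinj1} with Theorem~\ref{thm:envinjpinj}\ref{thm:envinjpinj2}. A caveat first: $\kaec$ is only assumed to contain all injective modules, not to be closed under injective envelopes. That is still enough, because for $A\in\kaec$ the envelope $\mrm{E}(A)$ is injective and hence lies in $\kaec$.

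\textbf{Step 1 (the class of injectives in $\kaec$).} Since $\kaec$ contains every injective module, $\kaec\mh\inj = R\mh\inj$. By Example~\ref{ex:dsums}\ref{ex:dsums1}, $(R\mh\inj,\leqo)$ satisfies Hypothesis~\ref{hyp:dsums} for $\gamma(R)$. So Theorem~\ref{thm:dsumsumup}, with $\mu=\gamma(R)$, shows that $(R\mh\inj,\leqo)$ has a stable independence relation and is $\kappa$-stable for every $\kappa=\kappa^{\mu_1}$, where $\mu_1=|R|^{<(\mu(R)+\gamma(R))}$. This is exactly the cardinal $\mu_1$ in the statement.

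\textbf{Step 2 (the envelope).} Because $\kaec$ is closed under injective envelopes, Theorem~\ref{thm:envinjpinj}\ref{thm:envinjpinj2} shows that $(R\mh\inj,\leqo)$ is a $\gamma(R)$-envelope for $\kaecgoth_\emb$. Since $\gamma(R)\le(|R|+\aleph_0)^+$, it is also a $(|R|+\aleph_0)^+$-envelope.

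\textbf{Step 3 (transfer).} Fix $\kappa=\kappa^{\mu_1}$. Then $\kappa^{<(|R|+\aleph_0)^+}=\kappa^{|R|+\aleph_0}\le\kappa^{\mu_1}=\kappa$, so Lemma~\ref{lem:transferenvelope}\ref{lem:transferenvelope2} transfers $\kappa$-stability from $(R\mh\inj,\leqo)$ to $\kaecgoth_\emb$. The remaining properties come from Lemma~\ref{lem:transferenvelope}:
\begin{itemize}
\item JEP transfers by item~\ref{lem:transferenvelope0}, since $R\mh\inj$ has JEP by closure under finite direct sums.
\item $\infty\mh\mrm{AP}$ transfers by item~\ref{lem:transferenvelope1}, since $(R\mh\inj,\leqo)$ has $\infty\mh\mrm{AP}$ by Corollary~\ref{cor:inftyappinj}\ref{cor:inftyappinj2}.
\end{itemize}
This matches how Corollary~\ref{cor:pinjenvstable} was derived.

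\textbf{Step 4 (the ``in particular'' clause).} Every injective module is pure-injective, so if $\kaec$ contains all pure-injective modules it contains all injective ones, and the main statement applies.

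The main obstacle is keeping the cardinal bookkeeping consistent in Step 3: we need $\kappa^{<\mu}=\kappa$ for the envelope parameter $\mu$ actually in use. The point to check is that $\mu_1\ge|R|+\aleph_0$, which holds because $\mu(R)+\gamma(R)\ge\aleph_1$ when $R$ is not pure-semisimple. In the degenerate case $\mu(R)+\gamma(R)=\aleph_0$, every module is injective and pure-injective. Then $\kaec\mh\inj$ is all of $\kaec$ and the envelope is $A$ itself, so the stability statement reduces directly to Theorem~\ref{thm:dsumsumup} without any transfer.
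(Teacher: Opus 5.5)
Your proof follows the paper's route. The paper proves this statement "just as Corollary~\ref{cor:pinjenvstable}", i.e.\ through Theorem~\ref{thm:injenvstable}. That amounts to applying Theorem~\ref{thm:dsumsumup} to $(R\mh\inj,\leqo)$ for $\gamma(R)$, transferring along the injective-envelope via Lemma~\ref{lem:transferenvelope}, and taking $\infty\mh\mrm{AP}$ from Corollary~\ref{cor:inftyappinj}. Your remark that containing all injectives already gives closure under injective envelopes is the implicit reason this reduction applies.

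Your final paragraph contains a false claim and should be deleted. The case $\mu(R)+\gamma(R)=\aleph_0$ is exactly the case of a left pure-semisimple ring, since such rings are left Artinian and so $\gamma(R)=\aleph_0$. In that case it is not true that every module is injective. For example, $R=\mathbb{Z}/4\mathbb{Z}$ is pure-semisimple, but $\mathbb{Z}/2\mathbb{Z}$ is not an injective $R$-module. So the assertion that "the envelope is $A$ itself" fails there.

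No case split is needed anyway. For any ring with $|R|\ge 2$ we have $\mu_1\ge |R|^{<\aleph_0}=|R|+\aleph_0$. Hence $\kappa=\kappa^{\mu_1}$ always yields $\kappa^{|R|+\aleph_0}=\kappa$, and your Step~3 transfer via Lemma~\ref{lem:transferenvelope}\ref{lem:transferenvelope2} works uniformly. Replace your justification of $\mu_1\ge|R|+\aleph_0$ with this estimate.
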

\begin{proof}
Again, the proof is just as in Corollary~\ref{cor:pinjenvstable}, but considering injective envelopes instead of the pure-injective envelopes.
\end{proof}

We end this subsection by showing, using Corollary~\ref{cor:injenvstable}, the stability of some abstract classes of modules of the form $(\kaec, \leq)$.

\begin{example}
\label{ex:injenvfirst}
The following abstract classes contain all injective $R$-modules, and thus satisfy the hypotheses of Corollary \ref{cor:injenvstable}:
\begin{enumerate}[(1), leftmargin=*, series=ex:injenvall]
\item $(R\mh\cotorsion, \leq)$, where $R\mh\cotorsion$ is the class of cotorsion $R$-modules (cf.~\ref{ex:lpurecotorsion}). This holds because every pure-injective module is cotorsion (see Example~\ref{ex:cotpinj}).
\item $(R\mh\rdinj, \leq)$, where $R\mh\rdinj$ denotes the class of $\mrm{RD}$-injective $R$-modules (cf.~\ref{ex:dsums}\ref{ex:dsums5}). It is easily verified that every injective module is $\mrm{RD}$-injective.
\end{enumerate}
\end{example}

Finally, with some additional assumption on the ring $R$, we can also prove the following result.
\begin{example}
\label{ex:injenvsecond}
Assume the ring $R$ is left Noetherian, then the following abstract classes contain all injective $R$-modules, and thus satisfy the hypotheses of Corollary~\ref{cor:injenvstable}:
\begin{enumerate}[(1), leftmargin=*]
\item $(R\mh\sigmapinj, \leq)$, where $R\mh\sigmapinj$ denotes the class of $\Sigma$-pure-injective $R$-modules (cf.~\ref{ex:2dsums}\ref{ex:dsums2}). This holds because if $R$ is left Noetherian then every injective $R$-module is $\Sigma$-injective \cite[Theorem 4.4.17]{purityspectra}, and thus $\Sigma$-pure-injective.
\item $(R\mh\Superstable, \leq)$, where $R\mh\Superstable$ is the class of superstable $R$-modules (cf.~\ref{ex:envpinj}). Clearly, every $\Sigma$-pure-injective module is superstable because it has no infinite decreasing chain of $\pp$-definable subgroups (this follows from the equivalent definitions given in Examples \ref{ex:2dsums}\ref{ex:dsums2} and \ref{ex:envpinj}). If $R$ is left Noetherian, then every injective $R$-module is $\Sigma$-pure-injective by the previous item, which gives us the result.
\end{enumerate}
\end{example}

\appendix

\section{Stable independence relation for balanced embeddings}
\label{sect:appstabletfab}
\subsection{Background and introduction}

Recall that a set of types $\tcharset$ is \tdef{closed} if $\tchar_1, \tchar_2\in \tcharset$ implies $\tchar_1\wedge \tchar_2\in\tcharset$ (cf.~\ref{def:tcharsetclosed}), and $\kaec^\tcharset$ denotes the set of torsion-free abelian groups $A$ with $\mrm{Type}(A)\subseteq \tcharset$ (cf.~\ref{not:tcharsetkaec}). The goal of this appendix is to show the following:
\begin{theorem}
\label{thm:balstableind}
Let $\tcharset$ be a closed set of types. Then $\kaecgoth_\bal^\tcharset=(\kaec^\tcharset, \leqb)$ has a stable independence relation (cf.~\ref{def:stableindep}). Moreover, $\kaecgoth_\bal^\tcharset$ is tame (cf.~\ref{def:tame}) and stable (cf.~\ref{def:stability}\ref{def:stable4}).
\end{theorem}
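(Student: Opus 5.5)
Following the strategy of Section~\ref{sect:lpure}, I would define $\dnf$ on $\kaecgoth_\bal^\tcharset$ as the pre-cellular squares. For a commutative square $(f_1,f_2,g_1,g_2)$ of balanced embeddings, put $(f_1,f_2,g_1,g_2)\in\dnf$ iff the induced map $r\colon P\to C$ from the pushout $P$ of $(f_1,f_2)$ is a balanced embedding. Take $\categ$ to be $\kaec^\tcharset$ with all group homomorphisms and $\morphs$ the balanced embeddings. Then $(\categ,\morphs)$ is a coherent pre-cellular category. Pushouts of spans in $\morphs$ exist in $\kaec^\tcharset$ and have legs in $\morphs$ by Lemma~\ref{lem:tcharsetinftyap} and Corollary~\ref{cor:poutbalanced}. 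Isomorphisms and finite compositions are clearly in $\morphs$. Coherence follows from Remark~\ref{rem:baltrivia}\ref{rem:baltrivia1}. Proposition~\ref{prop:indepcellular} then makes $\dnf$ weakly stable. It remains to prove right local character and the right witness property; tameness and stability then follow from Fact~\ref{fact:stableindep}.

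For the witness property I would mimic Lemma~\ref{lem:lpureindepwitness}, with the $(<\aleph_1)$-witness property as the target. Suppose $A\leqb B_\ell\leqb C$ and $\dnfb{A}{B_1}{X}{C}$ for every countable $X\subseteq B_2$. Let $P=(B_1\oplus B_2)/N$ with $N=\{(a,-a)\mid a\in A\}$, and let $r\colon P\to C$ be the induced map. Injectivity and $\aleph_1$-purity of $r$ go exactly as before, since they only involve fewer than $\aleph_1$ elements of $B_2$. Balancedness needs more, which I would get from Fact~\ref{fact:balequiv}\ref{fact:balequiv2}. Given $p\in P$ and a sequence $(p_n)$ in $P$, I need some $q\in P$ with $\chi_C(c+p_n)\le\chi_C(q+p_n)$ for all $n$. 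These data involve only countably many elements of $B_2$. So I would pick a countable $X$ containing them, take witnesses $D_1,D_2\leqb E$ with pushout $Q$ and $s\colon Q\to E$ balanced, find the required element inside $Q$, and transfer it back to $P$. For the transfer I would use $B_\ell\leqb D_\ell$ (by coherence) and the fact that characteristics in $P$ are computed coordinatewise modulo $N$. I expect this transfer to need care: a proper element found in $Q$ lies in $D_1\oplus D_2$ and must be pushed back into $B_1\oplus B_2$. I would handle this by applying Fact~\ref{fact:balequiv}\ref{fact:balequiv3} to $B_\ell\leqb D_\ell$ for the rank-one subgroup generated by the relevant coset.

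For local character I would adapt Lemma~\ref{lem:lpureloccharfund}. Given $B_1,B_2\leqb C$, build $\omega_1$-chains $A_i\leqb B_2$ and $B'_{1,i}\leqb C$ of size $\le(|B_1|+2^{\aleph_0})^{\aleph_0}$. At each step, close under witnesses for countable $\aleph_1$-pp formulas, exactly as in Lemma~\ref{lem:lpureloccharfund}. In addition, for each $b'\in B'_{1,i}$ and countable sequence from $B'_{1,i}$, close under elements of $B_2$ realizing the characteristic conditions of Fact~\ref{fact:balequiv}\ref{fact:balequiv2}. Each such condition is a countable conjunction of divisibility statements, so the bookkeeping is of countable type and the sizes stay bounded. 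Take unions at stage $\omega_1$, using $\aleph_1$-continuity and smoothness from Proposition~\ref{prop:tcharsetaleph1aec}. Then $r\colon(B_1'\oplus B_2)/N\to C$ is an $\aleph_1$-pure embedding by the argument of Lemma~\ref{lem:lpureindeplocal}.

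The main obstacle is showing that $r$ is actually balanced. The plan is as follows: given a coset representative $b_1'+b_2$ and a countable sequence, use the closure conditions to replace $b_2$ by an element of $A$ with at least the same characteristics against the sequence. Then use $B_1'\leqb C$ and $B_2\leqb C$ separately to produce proper elements, and combine them, using $\chi(x+y)\ge\chi(x)\wedge\chi(y)$. The inequalities in Fact~\ref{fact:balequiv}\ref{fact:balequiv2} go in only one direction, so the combination step may fail. If it does, I would instead prove balancedness of $r$ via Fact~\ref{fact:balequiv}\ref{fact:balequiv3}, lifting rank-one maps through the exact sequence $0\to B_1'\to P\to B_2/A\to0$ as in Proposition~\ref{prop:wpushoutclosedbal}.
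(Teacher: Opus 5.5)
Your architecture matches the paper's: pre-cellular squares and Proposition~\ref{prop:indepcellular} for weak stability, then local character and the $({<}\aleph_1)$-witness property adapted from Lemmas~\ref{lem:lpureindeplocal} and~\ref{lem:lpureindepwitness}. The weak-stability part is fine. The gap is in the one step with real content, namely balancedness of $r\colon P\to C$. In both places you assert that the $\lambda$-pure arguments make $r$ an $\aleph_1$-pure embedding, and then treat balancedness as a further obstacle. This is backwards twice over. First, $\aleph_1$-purity implies balancedness (Remark~\ref{rem:baltrivia}\ref{rem:balaleph1pure}), so if $r$ were $\aleph_1$-pure nothing would be left to prove. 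Second, $r$ is in general \emph{not} $\aleph_1$-pure. Those arguments take the witnesses $\bar c^\star$ in $B_2$ from $B_2\leqp^{\aleph_1}C$ (respectively, they use that $s$, $B_1\le D_1$ and $B_2\le E$ are $\aleph_1$-pure), whereas here all of these are merely balanced. For instance, a square with $B_1=A$ is always independent, and its $r$ is just the inclusion $B_2\hookrightarrow C$, which need not be $\aleph_1$-pure. As stated, those arguments only show that $r$ is pure.

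The missing idea is Proposition~\ref{prop:balsyntaxequiv}: $A\leqb B$ if and only if $A$ and $B$ agree on $\bal$-formulas. These are $\aleph_1$-pp-formulas, and the formulas $\alpha^{\bar\ell}$ encode exactly the condition of Fact~\ref{fact:balequiv}\ref{fact:balequiv2}. With this, the proof of Lemma~\ref{lem:lpureindeplocal} runs verbatim for $\varphi$ a $\bal$-formula:
\begin{itemize}
\item Lemma~\ref{lem:lpureloccharfund} with $\lambda=\aleph_1$, applied to $\varphi(\bar z+\bar z')$, already supplies $\bar a$ in $A$. So no extra bookkeeping is needed; the closure you add quantifies over data from $B'_{1,i}$ alone, whereas the relevant sequence lives in $P$ and mixes both coordinates.
\item $B_1'\leqp^{\aleph_1}C$ then gives witnesses in $B_1'$.
\item $B_2\leqb C$ gives witnesses in $B_2$, because $\varphi$ is a $\bal$-formula and $\bar b_2-\bar a\in B_2$.
\end{itemize}
This is your ``combination step'', and it does not fail. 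Solution sets are subgroups (Remark~\ref{rem:lpuretrivia}\ref{rem:lpuretrivia2}), so if $y$ witnesses $\alpha^{\bar\ell}$ at $\bar b_1'+\bar b_2$ and $y'$ witnesses it at $\bar b_1'+\bar a$, then $y-y'$ witnesses it at $\bar b_2-\bar a$. Your fallback via $0\to B_1'\to P\to B_2/A\to 0$ is beside the point, since it controls $B_1'\to P$, not $r$.

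In the witness property there is a further slip: $B_2\leqb D_2$ is unavailable, because $D_2$ need only contain $X$. The paper proceeds as follows. It uses $N\leqb D_1\oplus D_2$ (Proposition~\ref{prop:wpushoutclosedbal}\ref{prop:wpushoutclosedbal2}) to pick representatives $(d_1+b_1^n+a_n,\,d_2+b_2^n-a_n)$ that are proper modulo $N$. It then applies Fact~\ref{fact:balequiv}\ref{fact:balequiv2} to $B_1\leqb D_1$ with the sequence $(b_1^n+a_n)_n$. Finally it applies the same fact to $B_2\leqb C\leqb E$ with $(b_2^n-a_n)_n$, using $D_2\leqb E$ to compare characteristics.
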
 
The idea of the proof of Theorem~\ref{thm:balstableind} closely follows that of Theorem~\ref{thm:lpurestableind}  regarding the existence of a stable independence relation for classes of modules with the $\lambda$-pure submodule relation. Before proving Theorem~\ref{thm:balstableind}, in this subsection we are going to recall all the relevant definitions and give a syntactic characterization of the relation of being a balanced subgroup (see~Proposition~\ref{prop:balsyntaxequiv}).

Throughout this appendix, we are going to use the following fact without mention.
\begin{fact}[Fact~\ref{fact:balequiv}]
\label{fact:balequivappendix}
Let $A\leq B$ be torsion-free abelian groups. The following are equivalent:
\begin{enumerate}[(1), leftmargin=*]
\item$A$ is balanced in $B$ (cf.~\ref{def:proper}\ref{def:proper2}).
\item\label{fact:balequivappendix2} $A$ is pure in $B$ and for every $b\in B$ and every $(a_n)_{n\in \omega}\in A^\omega$ there is $a\in A$ such that $\chi(b+ a_n) \leq \chi (a + a_n)$ for every $n\in \omega$.
\end{enumerate}
\end{fact}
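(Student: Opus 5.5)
The statement to prove is Fact~\ref{fact:balequivappendix}, which merely restates Fact~\ref{fact:balequiv} (items \ref{fact:balequiv1} and \ref{fact:balequiv2}). The plan is therefore to cite \cite[Chapter~12, Lemmas~2.2--2.3]{fuchs}, exactly as for Fact~\ref{fact:balequiv}. For completeness, I would also record the direct argument, which only uses Definition~\ref{def:proper} and the fact that characteristics are computed coordinatewise.

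For \ref{fact:balequiv1}$\Rightarrow$\ref{fact:balequivappendix2}, assume $A\leqb B$. Purity is part of Definition~\ref{def:proper}, since balancedness is only defined for pure subgroups. Fix $b\in B$ and $(a_n)_{n<\omega}$ in $A$. Choose $b'=b+a_0'$ in the coset $[b]_A$ that is proper with respect to $A$, and set $a=-a_0'$. Then $b+a_n=b'+(a+a_n)$. We need $\chi(b+a_n)\leq\chi(a+a_n)$, and this is where care is needed about which coset element to apply properness to. It is cleaner to use the quotient formulation: properness of $b'$ gives $\chi_B(b'+c)\leq\chi_B(b')=\chi_{B/A}([b]_A)$ for all $c\in A$. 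I would adjust the indexing so that one writes $b+a_n=(b+a_n)$ and compares it with a proper representative of the coset of $b+a_n$. These cosets all equal $[b]_A$, so a single proper element $b'$ works uniformly in $n$. The required $a\in A$ is then read off from $b'$, using $\chi(x)\leq\chi(x')$ whenever $[x]_A=[x']_A$ and $x'$ is proper.

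For \ref{fact:balequivappendix2}$\Rightarrow$\ref{fact:balequiv1}, fix $b$. The set $\{\chi_B(b+c)\mid c\in A\}$ has supremum $\chi_{B/A}([b]_A)$; here purity of $A$ is what identifies the divisibility of the coset $[b]_A$ with the supremum of the divisibilities of its elements. For each of the countably many primes $p_n$ and each finite height $k$ attained at $p_n$, choose a witness $c_{n,k}\in A$ with $h_{p_n}(b+c_{n,k})\geq k$. This is a countable family, and we enumerate it as $(a_m)_{m<\omega}$. Applying \ref{fact:balequivappendix2} to $-b$ and the family $(-a_m)_m$ gives a single element $a\in A$. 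Then $b+a$ dominates every witness simultaneously, so $\chi(b+a)$ attains the supremum, i.e., $b+a$ is proper.

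The main obstacle is the bookkeeping of signs and of the countable witness family in this second direction, and in particular arranging that one application of \ref{fact:balequivappendix2} gives a uniform $a$. Since the statement is verbatim Fact~\ref{fact:balequiv}, the honest proof is simply the citation to Fuchs.
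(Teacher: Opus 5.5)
Citing \cite[Chapter~12, Lemmas~2.2--2.3]{fuchs} is exactly what the paper does. The appendix statement only restates Fact~\ref{fact:balequiv}, which the paper imports from Fuchs without proof, so as a proof plan your proposal is fine. The ``direct argument'' you add for completeness, however, does not close as written.

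In (1)$\Rightarrow$(2) you only reach $\chi(b+a_n)\leq\chi(b')$. That bounds $\chi(b+a_n)$ by the characteristic of an element outside $A$, whereas (2) asks for a bound by $\chi(a+a_n)$ with $a+a_n\in A$. ``Reading off'' $a$ from $b'$ is not yet an argument. The missing ingredient is the elementary inequality $\chi(x-y)\geq\chi(x)\wedge\chi(y)$, which holds coordinatewise since $p^k\mid x$ and $p^k\mid y$ imply $p^k\mid x-y$. Take $b'=b+a_0'$ proper, $a=-a_0'$, and $x=a+a_n\in A$. Then $b'+x=b+a_n$, and properness gives $\chi(b'+x)\leq\chi(b')$, hence
\[
\chi(a+a_n)=\chi\bigl((b'+x)-b'\bigr)\geq\chi(b'+x)\wedge\chi(b')=\chi(b+a_n).
\]

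The same inequality is what justifies ``$b+a$ dominates every witness'' in (2)$\Rightarrow$(1). Applying (2) to $-b$ and $(-a_m)_m$ yields $\chi(b+a_m)\leq\chi(a_m-a)$. Therefore $\chi(b+a)=\chi\bigl((b+a_m)-(a_m-a)\bigr)\geq\chi(b+a_m)$ for every $m$. With this step made explicit, your second direction is correct, including infinite heights, since every finite $k$ receives a witness.

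One small inaccuracy: $h_p([b]_A)=\sup_{c\in A}h_p(b+c)$ holds for any subgroup $A$. Purity is used only to make $B/A$ torsion-free, and it is already built into the definition of balancedness.
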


Now we introduce the model-theoretic terminology which will be needed in the following.
\begin{definition}
\begin{enumerate}[(1), leftmargin=*]
\item Let $\bar \ell\in \omega^{\omega \times \omega}$, define the formula $\alpha^{\bar \ell}(x_0, x_1,\ldots)$ as:
\begin{equation}
\label{eq:examplebformula}
\exists y\, \exists (z_{m,n})_{(m, n)\in \omega^2}\bigwedge_{m\in\omega}\bigwedge_{n\in\omega} p^{\ell_{m,n}}_m z_{m,n} = y+x_n,
\end{equation} 
where $(p_m)_{m\in \omega}$ is the  strictly increasing enumeration of the primes.
\item We say that $\varphi(\bar x)$ is a \tdef{$\bal$-formula} if it is a pp-formula (cf.~\ref{def:ppform}\ref{def:ppform2}) or it is of the form \eqref{eq:examplebformula}.
\end{enumerate}
\end{definition}

\begin{remark}
\label{rem:bformtrivia}
If $\varphi(\bar x)$ is a $\bal$-formula, $A\leq B$ are torsion-free abelian groups, and $\bar a\in A^{|\bar x|}$, then $A\vDash \varphi(\bar a)$ implies $B\vDash \varphi(\bar a)$.
\end{remark}

\begin{proposition}
\label{prop:balsyntaxequiv}
Let $A\leq B$ be torsion-free abelian groups. The following are equivalent:
\begin{enumerate}[(1), leftmargin=*]
\item \label{prop:balsyntaxequiv1}$A$ is balanced in $B$;
\item \label{prop:balsyntaxequiv2}for every $\bal$-formula  $\varphi(\bar x)$ and $\bar a\in A^{|\bar x|}$, we have that $A \vDash \varphi(\bar a)$ if and only if $B\vDash \varphi(\bar a)$.
\end{enumerate}
\end{proposition}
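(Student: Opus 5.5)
The plan is to reduce both directions to Fact~\ref{fact:balequivappendix}\ref{fact:balequivappendix2}, after first translating the formulas $\alpha^{\bar \ell}$ into statements about characteristics. For $\bar\ell\in\omega^{\omega\times\omega}$ and $\bar a=(a_n)_{n\in\omega}$ in a torsion-free group $G$, we have $G\vDash\alpha^{\bar\ell}(\bar a)$ iff there is $y\in G$ with $p_m^{\ell_{m,n}}\mid y+a_n$ in $G$ for all $m,n$. This is equivalent to asking $h^G_{p_m}(y+a_n)\geq \ell_{m,n}$ for all $m,n$. Since every height is $\leq\infty$ and $\ell_{m,n}<\omega$, a single formula $\alpha^{\bar\ell}$ only fixes finite lower bounds on the heights. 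The pp-formulas are handled by the classical fact that preservation of pp-formulas is exactly purity (Proposition~\ref{prop:equivlpurelppform} with $\lambda=\aleph_0$).

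For \ref{prop:balsyntaxequiv1}$\Rightarrow$\ref{prop:balsyntaxequiv2}, the direction $A\vDash\varphi(\bar a)\Rightarrow B\vDash\varphi(\bar a)$ is Remark~\ref{rem:bformtrivia}. Conversely, suppose $B\vDash\alpha^{\bar\ell}(\bar a)$, witnessed by some $b\in B$, so that $h^B_{p_m}(b+a_n)\geq\ell_{m,n}$. Balancedness via Fact~\ref{fact:balequivappendix}\ref{fact:balequivappendix2} gives $a\in A$ with $\chi_B(b+a_n)\leq\chi_B(a+a_n)$ for all $n$. By purity (Fact~\ref{fact:pureiffchar}) we have $\chi_B(a+a_n)=\chi_A(a+a_n)$, hence $h^A_{p_m}(a+a_n)\geq\ell_{m,n}$. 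Taking $y=a$ then witnesses $A\vDash\alpha^{\bar\ell}(\bar a)$. Pp-formulas are handled by purity, which is part of being balanced.

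For \ref{prop:balsyntaxequiv2}$\Rightarrow$\ref{prop:balsyntaxequiv1}, preservation of pp-formulas gives $A\leqp B$. Now fix $b\in B$ and $(a_n)\in A^\omega$. The issue is that heights may be $\infty$, while each formula only encodes finite bounds. The natural choice $\ell_{m,n}=h^B_{p_m}(b+a_n)$ whenever this is finite does not suffice, because an $\infty$ entry must be recovered too. To handle this, re-index the sequence. Define a new sequence $(a'_k)_{k\in\omega}$ listing each $a_n$ infinitely often, say as $a'_{\langle n,j\rangle}=a_n$. Set $\ell_{m,\langle n,j\rangle}=\min(h^B_{p_m}(b+a_n),\,j)$, which is always finite.

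Then $B\vDash\alpha^{\bar\ell}(\bar a')$ via $y=b$. By hypothesis there is $a\in A$ with $p_m^{\min(h_{p_m}(b+a_n),j)}\mid a+a_n$ in $A$ for all $m,n,j$. Letting $j\to\infty$ yields $h^A_{p_m}(a+a_n)\geq h^B_{p_m}(b+a_n)$, including the case where the right side is $\infty$. Purity then gives $\chi_B(b+a_n)\leq\chi_B(a+a_n)$, and Fact~\ref{fact:balequivappendix}\ref{fact:balequivappendix2} concludes. The main obstacle I expect is exactly this encoding of infinite heights, together with checking that the repeated-variable trick is compatible with the formula format. Since the formulas allow arbitrary $\omega$-tuples of free variables and we may substitute equal elements, this should be harmless.
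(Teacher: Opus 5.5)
Your proof is correct and follows the same route as the paper: both directions reduce to Fact~\ref{fact:balequivappendix}\ref{fact:balequivappendix2} by reading $\alpha^{\bar\ell}$ as finite lower bounds on the heights $h_{p_m}(y+x_n)$, and preservation of pp-formulas is handled by purity. Your re-indexing trick with $\ell_{m,\langle n,j\rangle}=\min(h_{p_m}(b+a_n),j)$ is genuinely needed. The paper simply sets $\ell_{m,n}=h_{p_m}(b+a_n)$, which is illegitimate when some height is $\infty$ because $\bar\ell$ must lie in $\omega^{\omega\times\omega}$; your version closes that gap, and so does your explicit appeal to Fact~\ref{fact:pureiffchar} for moving divisibility from $B$ to $A$ in the forward direction.
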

\begin{proof}
We show that \ref{prop:balsyntaxequiv1} implies \ref{prop:balsyntaxequiv2}. Assume $A\leqb B$, then $A\leqp B$, so that $\pp$-formulas are preserved. We are left to show:
\begin{enumerate}[$(*_1)$, leftmargin=*, series=balsynt]
\item \label{prf:balsyntaxequiv1}Let $\bar \ell\in \omega^{\omega\times\omega}$, $\bar a \in A^\omega$. Then $A\vDash \alpha^{\bar \ell}(\bar a)\Leftrightarrow B\vDash \alpha^{\bar \ell}(\bar a)$, where $\alpha^{\bar \ell}(\bar x)$ is as in \eqref{eq:examplebformula}.
\end{enumerate}
Why \ref{prf:balsyntaxequiv1}? One direction follows from Remark~\ref{rem:bformtrivia}. Assume $B\vDash \alpha^{\bar \ell}(\bar a)$. We are left to show $A\vDash \alpha^{\bar \ell}(\bar a)$. There are $b\in B$ and $c_{m,n}\in B$, for every $(m,n)\in\omega^2$, such that:
\[
B \vDash \bigwedge_{m\in\omega}\bigwedge_{n\in \omega} p^{\ell_{m,n}}_m c_{m,n} = b+a_n.
\]
Because $A$ is balanced in $B$,  there is $a\in A$ such that 
\[
\chi(b+a_n) \leq \chi(a+a_n)\quad \forall n\in\omega,
\]
which implies $A\vDash \alpha^{\bar \ell}(\bar a)$.

\ssk\nin Now we show the other direction. Assume \ref{prop:balsyntaxequiv2} holds, we want to show:
\begin{enumerate}[resume*=balsynt]
\item \label{prf:balsyntaxequiv2}$A$ is balanced in $B$.
\end{enumerate}
Why \ref{prf:balsyntaxequiv2}? By Proposition~\ref{prop:equivlpurelppform},    we have $A\leqp B$, because all $\pp$-formulas are preserved.  Assume $b\in B$ and $(a_n)_{n\in \omega}\in A^\omega$. We have to find $a\in A$ such that $\chi (b+a_n) \leq \chi(a+a_n)$. Let $\ell_{m,n} = h_{p_m}(b+a_n)$ (cf.~\ref{def:charac}), so that $B \vDash \alpha^{\bar \ell}(\bar a)$.
By hypothesis we have $A \vDash \alpha^{\bar \ell}(\bar a)$, and there is $a\in A$ such that 
\[
\exists (z_{m,n})_{(m, n)\in \omega^2}\bigwedge_{m\in\omega}\bigwedge_{n\in \omega} p^{\ell_{m,n}}_n z_{m,n} = a+a_n,
\]
which finally implies $\chi (b+a_n) \leq \chi(a+a_n)$.
\end{proof}

\begin{remark}
Notice that every $\bal$-formula is an $\aleph_1\mh\pp$-formula (cf.~\ref{def:ppform}). This gives another proof (cf.~\ref{rem:baltrivia}\ref{rem:balaleph1pure}) of the fact that, for $A$ and $B$ torsion-free, we have that ${A\leqp^{\aleph_1} B}$ implies $A\leqb B$.
\end{remark}
\subsection{The proof}
\label{sect:balproof}
We now want to start proving Theorem~\ref{thm:balstableind}. First, we fix the context of this subsection.
\begin{context}
\label{cont:balstabind} Let $\tcharset$, $\kaec^\tcharset$, $\kaecgoth_\bal^\tcharset$ be as follows:
\begin{enumerate}[(1), leftmargin=*]
\item $\tcharset$ is a closed set of types  (cf.~\ref{def:tcharsetclosed}).
\item $\kaecgoth^\tcharset_\bal = (\kaec^\tcharset, \leqb)$ (cf.~\ref{not:tcharsetkaec}).
\end{enumerate}
\end{context}

Summing up what was proved in Subsection~\ref{sect:lpurebal}, we have:
\begin{fact}[Remark~\ref{rem:tcharsetbalhyp}, Proposition~\ref{prop:tcharsetaleph1aec}, Lemma~\ref{lem:tcharsetinftyap}]
\label{fact:balfactsapp}
Let $\tcharset$ and $\kaecgoth_\bal^\tcharset ={ (\kaec^\tcharset, \leqb)}$ be as in  Context~\ref{cont:balstabind}. We have:
\begin{enumerate}[(1), leftmargin=*]
\item \label{fact:balfactsapp1}$\kaec^\tcharset$ is closed under finite direct sums, pure subgroups, and balanced quotients.
\item \label{fact:balfactsapp2}$\kaecgoth^\tcharset_\bal$ is an $\aleph_1\mh\mrm{AEC}$ with continuity (cf.~\ref{def:propertiesac}\ref{def:mucontinuity}) and $\mrm{LS}(\kaecgoth^\tcharset_\bal) \leq 2^{\aleph_0}$.
\item \label{fact:balfactsapp3}$\kaecgoth_\bal^\tcharset$ is closed under pushouts, i.e., if $(f_1: A \to B_1, f_2: A \to B_2)$ is a pair of $\kaecgoth_\bal^\tcharset$-embeddings with pushout $(g_1: B_1\to P, g_2: B_2 \to P)$ in $\abgrps$, then $P\in\kaecgoth^\tcharset$ and $g_1, g_2$ are $\kaecgoth_\bal^\tcharset$-embeddings.
\end{enumerate}
\end{fact}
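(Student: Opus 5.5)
The statement is a summary of earlier results, so I would prove it by pointing to where each item was established in Subsection~\ref{sect:lpurebal}.

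Item~\ref{fact:balfactsapp1} is Remark~\ref{rem:tcharsetbalhyp}. Closure under finite direct sums uses that $\tcharset$ is closed. Closure under pure subgroups follows from Fact~\ref{fact:pureiffchar}, since characteristics are computed the same way in a pure subgroup. Closure under balanced quotients holds because every coset contains a proper element, whose type in $B/A$ equals its type in $B$.

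Item~\ref{fact:balfactsapp2} is Proposition~\ref{prop:tcharsetaleph1aec}. There I would check four things.
\begin{enumerate}[(a), leftmargin=*]
\item Continuity. For a directed union $A=\bigcup_i A_i$, each $A_i$ is pure in $A$, so types are preserved and $A\in\kaec^\tcharset$. The condition of Fact~\ref{fact:balequiv}\ref{fact:balequiv2} only involves one element $b$ and countably many $a_n$. Given $b$ and $(a_n)$ with the $a_n\in A_i$, I would choose $j\ge i$ with $b\in A_j$ and use $A_i\leqb A_j$ to obtain the required $a\in A_i$.
\item Coherence and $\aleph_1$-smoothness. Both follow from the same characterization (Fact~\ref{fact:balequiv}\ref{fact:balequiv2}).
\item The $\mrm{LS}$ axiom. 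By Remark~\ref{rem:baltrivia}\ref{rem:balaleph1pure}, $\aleph_1$-purity implies balancedness for torsion-free groups. Hence the $\mrm{LS}$ axiom of $(\abgrps,\leqp^{\aleph_1})$ from Proposition~\ref{prop:lpurelaec} transfers to $\leqb$. Any $\aleph_1$-pure subgroup is torsion-free and lies in $\kaec^\tcharset$ by item~\ref{fact:balfactsapp1}, and this gives the bound $\mrm{LS}\le 2^{\aleph_0}$.
\end{enumerate}

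Item~\ref{fact:balfactsapp3} is Lemma~\ref{lem:tcharsetinftyap}. I would take the concrete pushout $P=(B_1\oplus B_2)/N$ of Proposition~\ref{prop:widepushoutexist}. By Proposition~\ref{prop:wpushoutclosedbal}, $P$ is torsion-free, $g_1,g_2$ are balanced embeddings, and $N\leqb B_1\oplus B_2$. We have $B_1\oplus B_2\in\kaec^\tcharset$, so closure under balanced quotients gives $P\in\kaec^\tcharset$.

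The step needing the most care is Proposition~\ref{prop:wpushoutclosedbal}\ref{prop:wpushoutclosedbal2}, namely that $N$ is balanced. I would prove it by lifting maps from rank-one groups through the exact sequence $0\to B_1\to P\to B_2/f_2(A)\to 0$, as was done there. Since all the ingredients are already proved, the proof here is a short citation of these three results.
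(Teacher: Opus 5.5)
Your proposal is correct and follows the paper exactly: the Fact is established by citing Remark~\ref{rem:tcharsetbalhyp}, Proposition~\ref{prop:tcharsetaleph1aec} and Lemma~\ref{lem:tcharsetinftyap}. Your sketches of each ingredient match the arguments given there. These are continuity and $\aleph_1$-smoothness via Fact~\ref{fact:balequiv}\ref{fact:balequiv2}, the $\mrm{LS}$ axiom via the fact that $\aleph_1$-purity implies balancedness, and $P\in\kaec^\tcharset$ via $N\leqb B_1\oplus B_2$ together with closure under balanced quotients.
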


\begin{definition}
\label{def:dnfbalanced}
Let $\tcharset$ and $\kaecgoth_\bal^\tcharset = (\kaec^\tcharset, \leqb)$ be as in  Context~\ref{cont:balstabind}. For a commutative square $(f_1, f_2, g_1, g_2)$ in $\kaecgoth_\bal$, we let $(f_1, f_2, g_1, g_2)\in \dnf$ if and only if the unique homomorphism $r: P \to C$ from the pushout $P$ of $(f_1, f_2)$ is a balanced embedding, where the maps are as in the following commutative diagram:
\[\begin{tikzcd}
	&& C \\
	{B_1} & P \\
	A & {B_2}
	\arrow["{g_1}", curve={height=-12pt}, from=2-1, to=1-3]
	\arrow["{h_1}", from=2-1, to=2-2]
	\arrow["r", from=2-2, to=1-3]
	\arrow["{f_1}", from=3-1, to=2-1]
	\arrow["{f_2}"', from=3-1, to=3-2]
	\arrow["{g_2}"', curve={height=12pt}, from=3-2, to=1-3]
	\arrow["{h_2}"', from=3-2, to=2-2]
\end{tikzcd}\]
\end{definition}

\begin{lemma}
\label{lem:balindepweaklystable}
Let $\tcharset$ and $\kaecgoth_\bal^\tcharset = (\kaec^\tcharset, \leqb)$ be as in  Context~\ref{cont:balstabind}. Then $\dnf$ as defined in~\ref{def:dnfbalanced} is a weakly stable independence relation (cf.~\ref{def:weaklystableind}).
\end{lemma}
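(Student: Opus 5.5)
We mirror the proof of Lemma~\ref{lem:lpureindepweaklystable}. Let $\categ$ be the category whose objects are the groups in $\kaec^\tcharset$ and whose arrows are all group homomorphisms, and let $\morphs$ be the collection of balanced embeddings between objects of $\categ$. Then $\categ_\morphs$ is exactly $\kaecgoth_\bal^\tcharset$, and the relation $\dnf$ of Definition~\ref{def:dnfbalanced} is precisely the class of pre-cellular squares of Definition~\ref{def:cellularsquare}. By Proposition~\ref{prop:indepcellular}, it is therefore enough to show that $(\categ, \morphs)$ is a coherent pre-cellular category.

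We check the conditions of Definition~\ref{def:precellularcat} in turn.

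\emph{Condition \ref{def:precellularcat1} and \ref{def:precellularcat3}.} Take a span $(f_1,f_2)$ of balanced embeddings in $\kaec^\tcharset$. Fact~\ref{fact:balfactsapp}\ref{fact:balfactsapp3} gives that its pushout $P$ in $\abgrps$ lies in $\kaec^\tcharset$ and that both $g_1,g_2$ are balanced embeddings. Since $\categ$ is a full subcategory of $\abgrps$ and $P\in\kaec^\tcharset$, this cospan is also a pushout in $\categ$. This yields both the existence of pushouts and the closure of $\morphs$ under pushouts.

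\emph{Condition \ref{def:precellularcat2}.} Isomorphisms preserve characteristics, so every isomorphism is a balanced embedding.

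\emph{Condition \ref{def:precellularcat4}.} For closure under finite compositions, suppose $A\leqb B\leqb C$, up to isomorphism. Proposition~\ref{prop:balsyntaxequiv} characterises $\leqb$ as preservation and reflection of $\bal$-formulas. Reflection composes, so $A\leqb C$; equivalently, this is transitivity of $\leqb$, which already holds since $\kaecgoth_\bal^\tcharset$ is an abstract class.

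\emph{Coherence.} Suppose $g$ and $gf$ are balanced embeddings. Then $f$ is injective. We have $gf(A)\leq g(B)$, with $gf(A)\leqb C$ and $g(B)\leqb C$. By coherence of the $\aleph_1\mh\mrm{AEC}$ in Fact~\ref{fact:balfactsapp}\ref{fact:balfactsapp2} (or directly by Remark~\ref{rem:baltrivia}\ref{rem:baltrivia1}), $gf(A)\leqb g(B)$. Transporting along the isomorphism $g\colon B\to g(B)$ gives $f(A)\leqb B$.

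The only substantive step is the existence of pushouts together with closure under pushouts. That work was already done in Proposition~\ref{prop:wpushoutclosedbal} and Lemma~\ref{lem:tcharsetinftyap}, so the remaining verification is bookkeeping. One point still needs care: the pushout computed in $\abgrps$ must serve as the pushout inside $\categ$. This holds because $\categ$ is full and the object $P$ belongs to $\kaec^\tcharset$.
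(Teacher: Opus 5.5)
Your proof is correct and follows the paper's argument: the paper also reduces the statement to showing that the category of groups in $\kaec^\tcharset$, with the balanced embeddings as $\morphs$, is a coherent pre-cellular category, exactly as in Lemma~\ref{lem:lpureindepweaklystable}, and uses Fact~\ref{fact:balfactsapp}\ref{fact:balfactsapp3} for the existence of pushouts and the closure of $\morphs$ under them. Your remark that the pushout computed in $\abgrps$ is also the pushout in the full subcategory $\categ$ spells out a detail the paper leaves implicit.
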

\begin{proof}
Just as in Lemma~\ref{lem:lpureindepweaklystable}, but now using Fact~\ref{fact:balfactsapp}\ref{fact:balfactsapp3} instead of Proposition~\ref{prop:wpushoutclosedlambda}.
\end{proof}

To show that $\dnf$ as defined in \ref{def:dnfbalanced} is a stable independence relation, we are left to show that $\dnf$ satisfies local character (cf.~\ref{def:indeplocalchar}) and the witness property (cf.~\ref{def:indepwitnessprop}).

The proof of local character is based on the ideas of the proof of local character for $\lambda$-purity (cf.~\ref{lem:lpureindeplocal}), and it is very similar to it. We provide  the details for completeness. First, we will need the following  version of Lemma~\ref{lem:lpureloccharfund}.
\begin{lemma}
\label{lem:balloccharfund}
Let $B_1, B_2\leq C$ be torsion-free abelian groups. Then there are $A\leq B_1'$ with the following  properties: 
\begin{enumerate}[(a), leftmargin=*]
\item \label{lem:balloccharfund1}$A\leqp^{\aleph_1} B_2$;
\item \label{lem:balloccharfund2}$A\cup B_1\subseteq B_1' \leqp^{\aleph_1} C$;
\item \label{lem:balloccharfund3}$|A| + |B_1'| \leq |B_1|^{\aleph_0}$;
\item \label{lem:balloccharfundbox}if $\varphi(\bar x, \bar y)$ is a ${\aleph_1}\mh\pp$-formula, $\bar b_1'\in (B_1')^{|\bar x|}$, and there is $\bar b_2 \in B_2^{|\bar y|}$ such that $C \vDash \varphi(\bar b_1', \bar b_2)$, then there is $\bar a\in A^{|\bar y|}$ such that $C \vDash \varphi(\bar b_1', \bar a)$.
\end{enumerate}
\end{lemma}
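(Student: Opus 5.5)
This is the special case $R=\mathbb{Z}$, $\lambda=\aleph_1$ of Lemma~\ref{lem:lpureloccharfund}, and I will derive it from that lemma instead of redoing the construction. Torsion-free abelian groups are $\mathbb{Z}$-modules, and $\aleph_1$ is an infinite regular cardinal. So Lemma~\ref{lem:lpureloccharfund}, applied to $B_1, B_2\leq C$ with $\lambda=\aleph_1$, gives $A\leq B_1'$ satisfying its items \ref{lem:lpureloccharfund1}--\ref{lem:lpureloccharfundbox}. These translate verbatim into items \ref{lem:balloccharfund1}, \ref{lem:balloccharfund2} and \ref{lem:balloccharfundbox} of the present statement. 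The only difference is the cardinal bound: that lemma gives $|A|+|B_1'|\leq (|B_1|+|\mathbb{Z}|)^{<\aleph_1}=(|B_1|+\aleph_0)^{\aleph_0}$.

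For item \ref{lem:balloccharfund3} I need to replace $(|B_1|+\aleph_0)^{\aleph_0}$ by $|B_1|^{\aleph_0}$. If $B_1\neq 0$, then $B_1$ is torsion-free and nonzero, hence infinite, so $|B_1|+\aleph_0=|B_1|$ and the bound is exactly $|B_1|^{\aleph_0}$. The degenerate case $B_1=0$ is the only genuine obstacle, since then $|B_1|^{\aleph_0}$ is $0$ or $1$.

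In that case I would take $A=B_1'=0$. Item \ref{lem:balloccharfund1} holds because $0\leqp^{\aleph_1} B_2$, as every system of homogeneous equations has the trivial solution. Items \ref{lem:balloccharfund2} and \ref{lem:balloccharfund3} are immediate. For item \ref{lem:balloccharfundbox}, the tuple $\bar b_1'$ is $\bar 0$. Since $\varphi(\bar x,\bar y)$ is an $\aleph_1$-pp-formula, its solution set is a subgroup by Remark~\ref{rem:lpuretrivia}\ref{rem:lpuretrivia2}, and so $C\vDash\varphi(\bar 0,\bar 0)$; thus $\bar a=\bar 0\in A^{|\bar y|}$ is a witness. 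Alternatively, one could read the bound as $(|B_1|+\aleph_0)^{\aleph_0}$ in that case.

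Should one prefer a self-contained argument, the plan is to repeat the $\aleph_1$-length alternating chain construction from the proof of Lemma~\ref{lem:lpureloccharfund}, using the $\mrm{LS}$ axiom of $(\abgrps,\leqp^{\aleph_1})$ from Proposition~\ref{prop:lpurelaec}, whose $\mrm{LS}$ number is $\aleph_0^{\aleph_0}=2^{\aleph_0}$. One then takes unions at stage $\aleph_1$, using $\aleph_1$-continuity and $\aleph_1$-smoothness. The counting is the same, because there are $2^{\aleph_0}$ many $\aleph_1$-pp-formulas and $|B_1|^{\aleph_0}$ many countable tuples. No new obstacle arises beyond the size bookkeeping above.
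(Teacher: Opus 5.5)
Your proposal is correct and is exactly the paper's argument: the paper's proof just says the lemma follows from Lemma~\ref{lem:lpureloccharfund} with $R=\mathbb{Z}$ and $\lambda=\aleph_1$. Your remark that a nonzero torsion-free $B_1$ is infinite, so that $(|B_1|+\aleph_0)^{\aleph_0}=|B_1|^{\aleph_0}$, supplies the cardinal bookkeeping the paper leaves implicit. There is one small slip in your degenerate case: if $B_1=0$ then $|B_1|^{\aleph_0}=1^{\aleph_0}=1$, while $|A|+|B_1'|\geq 1+1=2$ for any groups $A,B_1'$. So item~(c) is not ``immediate'' for $A=B_1'=0$; strictly speaking it fails for every choice, and your alternative reading of the bound as $(|B_1|+\aleph_0)^{\aleph_0}$ is the right way to handle that case.
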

\begin{proof}
Follows immediately from Lemma~\ref{lem:lpureloccharfund}.
\end{proof}
So that we are now able to prove:
\begin{lemma}
\label{lem:balindeplocal}
Let $\tcharset$ and $\kaecgoth_\bal^\tcharset = (\kaec^\tcharset, \leqb)$ be as in  Context~\ref{cont:balstabind}. Then $\dnf$ has right local character (cf.~\ref{def:indeplocalchar}). In particular, if $B_1, B_2, C\in \kaec^\tcharset$ with $B_1, B_2 \leqb C$, then there are $B_1', A\in \kaec^\tcharset$ such that $A \leqb B_1'$, $B_1\leqb B_1'$, $|A|+ |B_1'| \leq |B_1|^{\aleph_0}$, and $B_1' \dnf_A^C B_2$.
\end{lemma}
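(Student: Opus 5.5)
We follow the proof of Lemma~\ref{lem:lpureindeplocal} almost verbatim, replacing $\lambda$-purity by balancedness wherever the conclusion is about $\kaecgoth^\tcharset_\bal$. Let $B_1,B_2,C\in\kaec^\tcharset$ with $B_1,B_2\leqb C$.

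\textbf{Step 1 (choosing $A$ and $B_1'$).} Apply Lemma~\ref{lem:balloccharfund} to $B_1,B_2\leq C$. This gives $A\leqp^{\aleph_1}B_2$ and $A\cup B_1\subseteq B_1'\leqp^{\aleph_1}C$, with $|A|+|B_1'|\leq|B_1|^{\aleph_0}$ and property \ref{lem:balloccharfundbox}. We then verify the following.
\begin{enumerate}[(i), leftmargin=*]
\item $A\leqb B_2$ and $B_1'\leqb C$. These hold because $\leqp^{\aleph_1}$ implies $\leqb$ (Remark~\ref{rem:baltrivia}\ref{rem:balaleph1pure}, or the remark following Proposition~\ref{prop:balsyntaxequiv}).
\item $A,B_1'\in\kaec^\tcharset$. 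This follows from closure of $\kaec^\tcharset$ under pure subgroups (Fact~\ref{fact:balfactsapp}\ref{fact:balfactsapp1}).
\item $B_1\leqb B_1'$. By coherence, from $B_1\leqb C$ and $B_1\leq B_1'\leqb C$.
\item $A\leqb B_1'$. By coherence, from $A\leqb B_2\leqb C$ and $A\leq B_1'\leqb C$.
\end{enumerate}
Alternatively, (iii) and (iv) follow directly from strong coherence (Remark~\ref{rem:baltrivia}\ref{rem:baltrivia1}). The cardinal bound is exactly the one required, and right local character then holds with $\lambda_\nu=\nu^{\aleph_0}$, taking $M_1'=B_1'$, $M_0=A$ and $M_3'=C$.

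\textbf{Step 2 ($r$ is an embedding and $P\leqp^{\aleph_1}C$).} Let $P=(B_1'\oplus B_2)/N$ with $N=\{(a,-a)\mid a\in A\}$, and let $r([(b_1',b_2)]_N)=b_1'+b_2$. The argument of Lemma~\ref{lem:lpureindeplocal}, with $\lambda=\aleph_1$, uses only three ingredients: property \ref{lem:balloccharfundbox}, the inclusions $B_1'\leqp^{\aleph_1}C$ and $B_2\leqp^{\aleph_1}C$, and the fact that solution sets of $\aleph_1$-pp-formulas are subgroups. Here the hypothesis $B_2\leqb C$ gives only $\aleph_0$-purity, so we must justify $B_2\leqp^{\aleph_1}C$, or adapt. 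This is the main obstacle.

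\textbf{Step 3 (target: $r$ is balanced).} We only need $r$ to be balanced, and balanced formulas are countable pp-like formulas, so we rerun the argument of Lemma~\ref{lem:lpureindeplocal} directly with $\bal$-formulas using Proposition~\ref{prop:balsyntaxequiv}. Suppose $C\vDash\varphi(\bar b_1'+\bar b_2)$, where $\varphi$ is a $\bal$-formula; since $\varphi$ is an $\aleph_1$-pp-formula, we can apply \ref{lem:balloccharfundbox}. This yields $\bar a\in A$ with $C\vDash\varphi(\bar b_1'+\bar a)$. By the subgroup property, $C\vDash\varphi(\bar b_2-\bar a)$. Since $\bal$-formulas are preserved down along $B_1'\leqb C$ and $B_2\leqb C$ (Proposition~\ref{prop:balsyntaxequiv}), we get witnesses in $B_1'$ and in $B_2$ respectively. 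Summing them, since $\varphi$ is a system of equations under an existential quantifier, gives a witness in $B_1'\oplus B_2$, which projects to $P\vDash\varphi([(\bar b_1',\bar b_2)]_N)$.

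Injectivity of $r$ follows as before, using the single equation $x+y=0$. Thus $r$ is balanced, and $B_1'\dnf_A^C B_2$ by Definition~\ref{def:dnfbalanced}. So Step 3 bypasses the $\aleph_1$-purity issue raised in Step 2.
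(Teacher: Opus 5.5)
Your proof is correct and is essentially the paper's argument. You choose $A$ and $B_1'$ via Lemma~\ref{lem:balloccharfund}, get the required $\leqb$ relations from $\leqp^{\aleph_1}\Rightarrow\leqb$ plus coherence, and then show $r$ is balanced through Proposition~\ref{prop:balsyntaxequiv}, using the box property, the subgroup trick, and downward preservation of $\bal$-formulas along $B_2\leqb C$; the only cosmetic difference is that you take the witness in $B_1'$ from $B_1'\leqb C$ rather than from $B_1'\leqp^{\aleph_1}C$, which works equally well.
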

\begin{proof}
Let $B_1, B_2, C$ be as in the statement.
\begin{enumerate}[$(*_1)$, leftmargin=*, series=prf:balindeplocal]
\item \label{prf:balindeplocal4} We can find torsion-free abelian groups $A, B_1'$ such that the following conditions are satisfied:
\begin{enumerate}[$(\cdot_1)$, leftmargin=*] 
\item $A\leqp^{\aleph_1} B_2$;
\item \label{prf:balindeplocal4.3}$B_1'\leqp^{\aleph_1} C$;
\item \label{prf:balindeplocal4.1}$|A| + |B_1'| \leq |B_1|^{\aleph_0}$;
\item \label{prf:balindeplocal4.6}if $\varphi(\bar x, \bar y)$ is an $\aleph_1\mh\pp$-formula, $\bar b_1'\in (B_1')^{|\bar x|}$, and there is $\bar b_2 \in B_2^{|\bar y|}$ such that $C \vDash \varphi(\bar b_1', \bar b_2)$, then there is $\bar a\in A^{|\bar y|}$ such that $C \vDash \varphi(\bar b_1', \bar a)$;
\item \label{prf:balindeplocal4.4}$A, B_1'\in \kaec^\tcharset$;
\item \label{prf:balindeplocal4.45} $B_1\leqb B_1'$;
\item \label{prf:balindeplocal4.5}$A\leqb B_1'$.
\end{enumerate}
\end{enumerate}
Why \ref{prf:balindeplocal4}? With the exception of \ref{prf:balindeplocal4.45} and \ref{prf:balindeplocal4.5}, everything  follows just as in the proof of Lemma~\ref{lem:lpureindeplocal}, now using Lemma~\ref{lem:balloccharfund} and the fact that $\kaec^\tcharset$ is closed under pure subgroups (cf.~\ref{fact:balfactsapp}). 
To show \ref{prf:balindeplocal4.45} and \ref{prf:balindeplocal4.5}, use the fact that $\leqb$ refines $\leqp^{\aleph_1}$. This finishes the proof of~\ref{prf:balindeplocal4}.

\ssk\nin We are only left to show:
\begin{enumerate}[resume*=prf:balindeplocal]
\item \label{prf:balindeplocal5}We have $B_1' \dnf^{C}_{A} B_2$.
\end{enumerate} 
The rest of the proof will deal with showing \ref{prf:balindeplocal5}. Recall that by Proposition~\ref{prop:widepushoutexist} the pushout of $B_1'$ and $B_2$ over $A$ is given by $P = (B_1' \oplus B_2)/N$, where $N = \{(a, -a)\mid a\in A\}$. We have to show that the unique $R$-module homomorphism $r: P \to C$ given by $r([(b_1', b_2)]_N) = b_1' + b_2$ is a balanced embedding. 
\begin{enumerate}[resume*=prf:balindeplocal]
\item \label{prf:balindeplocal6}$r: P \rightarrow C$ is an embedding. 
\end{enumerate} 
Why \ref{prf:balindeplocal6}? As in Lemma~\ref{lem:lpureindeplocal}.
\begin{enumerate}[resume*=prf:balindeplocal]
\item \label{prf:balindeplocal7}$r: P \rightarrow C$ is a balanced embedding. 
\end{enumerate}
We verify \ref{prf:balindeplocal7}. We use the syntactic characterization of balancedness of  Proposition~\ref{prop:balsyntaxequiv}\ref{prop:balsyntaxequiv2}.
Let $\varphi(\bar x)$ a $\bal$-formula and assume
\[
C \vDash \varphi(\bar b_1' + \bar b_2)
\]
with $\bar b_1'\in (B_1')^{|\bar x|}$ and $\bar b_2\in B_2^{|\bar x|}$. We have to show that
\begin{equation}
\label{eq:balcharfinal}
P \vDash \varphi([(\bar b_1', \bar b_2)]_N).
\end{equation}
Let $\psi(\bar z, \bar z')$ be the formula:
\[
\varphi(\bar z + \bar z').
\]
Then $\psi(\bar z, \bar z')$ is an $\aleph_1\mh\pp$-formula, and obviously we have that
\begin{equation}
\label{eq:balchar2}
C \vDash \psi(\bar b_1', \bar b_2).
\end{equation}
Applying~\ref{prf:balindeplocal4}\ref{prf:balindeplocal4.6} to \eqref{eq:balchar2}, there is $\bar a\in A^{|\bar z'|}$ such that $C \vDash \psi(\bar b_1', \bar a)$, so we have:
\begin{equation}
\label{eq:balchar2.5}
C \vDash \varphi(\bar b_1' + \bar a).
\end{equation} 
Because $\varphi(\bar x)$ is an $\aleph_1\mh\pp$-formula, it is of the form $\exists\bar y\, \theta(\bar x, \bar y)$ with $\theta(\bar x, \bar y)$ a countable system of equations.
Notice that $\bar b_1'+ \bar a\in (B_1')^{|\bar x|}$, and by $B_1' \leqp^{\aleph_1} C$ applied to \eqref{eq:balchar2.5} there is  $\bar b^\star \in (B_1')^{|\bar y|}$ such that 
\begin{equation}
\label{eq:balchar3}
C \vDash \theta(\bar b_1' + \bar a, \bar b^\star).
\end{equation}
By Remark~\ref{rem:lpuretrivia}\ref{rem:lpuretrivia2} the set of solutions of an $\aleph_1\mh\pp$-formula is a subgroup, thus
$C\vDash \varphi(\bar b_1' + \bar b_2)$ and $C \vDash \varphi(\bar b_1' + \bar a)$ imply $C \vDash \varphi(\bar b_2 - \bar a)$.
Notice that $\bar b_2 - \bar a\in B_2^{|\bar x|}$ and, because $B_2\leqb C$, there is $\bar c^\star\in B_2^{|\bar y|}$ such that
\begin{equation}
\label{eq:balchar4}
C \vDash \theta(\bar b_2 - \bar a, \bar c^\star).
\end{equation}
As $\theta(\bar x, \bar y)$ is a system of equations, we can sum  \eqref{eq:balchar3} and \eqref{eq:balchar4} to have
\[
C \vDash \theta(\bar b_1' + \bar b_2, \bar b^\star + \bar c^\star).
\]
Because $\bar b^\star \in (B_1')^{|\bar y|}$ and $\bar c^\star\in B_2^{|\bar y|}$, we have that, projecting this last equation on $P = (B_1'\oplus B_2)/N=(B_1'\oplus B_2)/\{(a, -a)\mid a\in A\}$, we get \eqref{eq:balcharfinal}.
\end{proof}

To show that $\dnf$ as defined in Definition~\ref{def:dnfbalanced} is a stable independence relation, we are only left to show that $\dnf$ has the right witness property.

\begin{lemma}
\label{lem:balindepwitness}
Let $\tcharset$ and $\kaecgoth_\bal^\tcharset = (\kaec^\tcharset, \leqb)$ be as in  Context~\ref{cont:balstabind}. Then $\dnf$ has the right $({<}\aleph_1)$-witness property (cf.~\ref{def:indepwitnessprop}). 
\end{lemma}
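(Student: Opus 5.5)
We follow the proof of Lemma~\ref{lem:lpureindepwitness} almost verbatim, replacing $\lambda\mh\pp$-formulas by $\bal$-formulas and $\lambda$-purity by balancedness via Proposition~\ref{prop:balsyntaxequiv}.

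\textbf{Setup.} Let $A\leqb B_\ell\leqb C$ for $\ell\in\{1,2\}$, and assume $\dnfb{A}{B_1}{X}{C}$ for every countable $X\subseteq B_2$. Let $P=(B_1\oplus B_2)/N$ with $N=\{(a,-a)\mid a\in A\}$, and let $r\colon P\to C$ be given by $r([(b_1,b_2)]_N)=b_1+b_2$. We must show that $r$ is a balanced embedding.

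\textbf{Step 1: $r$ is an embedding.} Suppose $r([(b_1,b_2)]_N)=0$ and put $X=\{b_2\}$. The hypothesis gives $D_1,D_2\leqb E$ in $\kaec^\tcharset$ with $B_1\subseteq D_1$, $X\subseteq D_2$, $C\leqb E$ and $D_1\dnf_A^E D_2$. The induced map $s\colon Q=(D_1\oplus D_2)/N\to E$ is then a balanced embedding. Since $s([(b_1,b_2)]_N)=b_1+b_2=0$ and $s$ is injective, we get $[(b_1,b_2)]_N=0$ in $Q$, hence $(b_1,b_2)\in N$ and $[(b_1,b_2)]_N=0$ in $P$.

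\textbf{Step 2: $r$ is balanced.} By Proposition~\ref{prop:balsyntaxequiv} it suffices to take a $\bal$-formula $\varphi(\bar x)$ with $|\bar x|\le\aleph_0$ and tuples $\bar b_\ell\in B_\ell$ such that $C\vDash\varphi(\bar b_1+\bar b_2)$, and to show $P\vDash\varphi([(\bar b_1,\bar b_2)]_N)$. Every $\bal$-formula is an $\aleph_1\mh\pp$-formula in countably many free variables, so $X=\mrm{ran}(\bar b_2)$ is countable. This countability is why the witness property holds at ${<}\aleph_1$.

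With $D_1,D_2,E,Q,s$ as in Step 1, Remark~\ref{rem:bformtrivia} gives $E\vDash\varphi(\bar b_1+\bar b_2)$. Because $s$ is balanced and sends $[(\bar b_1,\bar b_2)]_N$ to $\bar b_1+\bar b_2$, we get $Q\vDash\varphi([(\bar b_1,\bar b_2)]_N)$.

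Write $\varphi(\bar x)=\exists\bar y\,\theta(\bar x,\bar y)$ with $\theta$ a countable system of equations. Define $\psi(\bar x,\bar z)=\exists\bar y\,\theta(\bar x+\bar z,\bar y)$, which is again a countable pp-type formula. Lifting a witness in $Q$ to $D_1\oplus D_2$ yields a countable tuple $\bar a$ in $A$ with $D_1\oplus D_2\vDash\psi((\bar b_1,\bar b_2),(\bar a,-\bar a))$. Projecting onto each coordinate gives $D_1\vDash\psi(\bar b_1,\bar a)$ and $D_2\vDash\psi(\bar b_2,-\bar a)$.

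\textbf{Step 3: transfer down and reassemble.} We need $B_1\vDash\psi(\bar b_1,\bar a)$ and $B_2\vDash\psi(\bar b_2,-\bar a)$, so we need $B_1\leqp^{\aleph_1}D_1$ and $B_2\leqp^{\aleph_1}E$.
\begin{itemize}
\item From $B_1\leqb C\leqb E$ and $B_1\subseteq D_1\leqb E$, coherence gives $B_1\leqb D_1$.
\item From $B_2\leqb C\leqb E$ we get $B_2\leqb E$.
\end{itemize}
Both conclusions are about balancedness, but $\psi$ is an arbitrary countable pp-formula, not a $\bal$-formula, so Proposition~\ref{prop:balsyntaxequiv} does not apply directly. \emph{This is the main obstacle.}

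First attempt: avoid $\psi$ and work with $\bal$-formulas throughout. Substituting $\bar x+\bar z$ into an $\alpha^{\bar\ell}$ formula gives $\exists y\,\exists(z_{m,n})\bigwedge_{m,n} p_m^{\ell_{m,n}}z_{m,n}=y+x_n+z'_n$. This is again of the form $\alpha^{\bar\ell}$ in the new sequence of parameters $(x_n+z'_n)_n$, hence a $\bal$-formula. Finitary pp-formulas are likewise closed under this substitution. So I will take $\psi$ to be a $\bal$-formula in these variables and apply Proposition~\ref{prop:balsyntaxequiv} to $B_1\leqb D_1$ (for $D_1\vDash\psi(\bar b_1,\bar a)$) and to $B_2\leqb E$, using Remark~\ref{rem:bformtrivia} to pass from $D_2$ up to $E$ first.

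The delicate point is that the witness $y$ must be split as $y_1+y_2$ with $y_\ell\in D_\ell$, and similarly for the $z_{m,n}$. Lifting from $Q$ to $D_1\oplus D_2$ does provide such a split, since each existential witness in $Q$ lifts to a pair in $D_1\oplus D_2$. After projecting, each coordinate satisfies its own instance of the $\alpha^{\bar\ell}$ formula, so both instances are again $\bal$-formulas.

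Finally, summing the witnesses found in $B_1$ and $B_2$ gives $B_1\oplus B_2\vDash\psi((\bar b_1,\bar b_2),(\bar a,-\bar a))$. Projecting to $P$ yields $P\vDash\varphi([(\bar b_1,\bar b_2)]_N)$, which completes the proof.
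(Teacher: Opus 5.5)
\textbf{There is a gap in Step~2, at the lifting step.} Your skeleton is right, and Step~3 is fine once its input is available: $\alpha^{\bar\ell}(\bar x+\bar z)$ evaluated at $(\bar b_1,\bar a)$ is an instance of $\alpha^{\bar\ell}$ with parameters $\bar b_1+\bar a\in B_1^\omega$, so $B_1\leqb D_1$ and $B_2\leqb E$ pull it down. The gap is the sentence saying that lifting a witness from $Q$ to $D_1\oplus D_2$ produces $\bar a$ with $D_1\oplus D_2\vDash\psi((\bar b_1,\bar b_2),(\bar a,-\bar a))$.

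For $\varphi=\alpha^{\bar\ell}$, suppose $q,(q_{m,n})$ witness $\varphi$ in $Q$ and you lift them to $e,(e_{m,n})$. Then you only get
\[
p_m^{\ell_{m,n}}e_{m,n}=e+(b_1^n,b_2^n)+(a_{m,n},-a_{m,n}),\qquad a_{m,n}\in A .
\]
The correction depends on the equation $(m,n)$, whereas $\psi$ only allows a correction $a_n$ depending on the variable $x_n$. This is why Lemma~\ref{lem:lpureindepwitness} works with basic formulas, where equations and variables are in bijection; $\alpha^{\bar\ell}$ is not basic. If you keep the $(m,n)$-indexed corrections, the projected formula in $D_1$ has parameters $b_1^n+a_{m,n}$ and is no longer a $\bal$-formula, so $B_1\leqb D_1$ cannot be used. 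The obstacle you noticed in Step~3 really lives here. The point you call delicate, splitting $y$ as $y_1+y_2$, is harmless, since every element of $D_1\oplus D_2$ splits.

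\textbf{The missing ingredient is balancedness of the kernel.} Apply Proposition~\ref{prop:wpushoutclosedbal}\ref{prop:wpushoutclosedbal2} to $A\leqb D_1$ and $A\leqb D_2$ to get $N\leqb D_1\oplus D_2$. Then for each $n$ the coset $e+(b_1^n,b_2^n)+N$ contains a proper element $e+(b_1^n+a_n,\,b_2^n-a_n)$. Its characteristic in $D_1\oplus D_2$ equals $\chi_Q\bigl(q+[(b_1^n,b_2^n)]_N\bigr)\geq(\ell_{m,n})_{m}$. Taking $y=e$ gives $D_1\oplus D_2\vDash\alpha^{\bar\ell}\bigl((b_1^n+a_n,\,b_2^n-a_n)_n\bigr)$, and from there your projection and reassembly go through. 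Mere lifting, or purity of $N$, only merges the $a_{m,n}$ into one $a_n$ for finitely many primes at a time.

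For finitary pp-formulas your lifting claim also needs justification. Either use purity of $N$, or reduce to basic pp-formulas as in Lemma~\ref{lem:lpureindepwitness}.

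With these inserted, your argument is the paper's proof rewritten with $\bal$-formulas in place of characteristics. The paper applies Fact~\ref{fact:balequiv}\ref{fact:balequiv2} directly and uses $N\leqb D_1\oplus D_2$ at exactly this point.
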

\begin{proof}
Let $A\leqb B_\ell \leqb C$, with $\ell\in \{1, 2\}$, and assume that for every $X\subseteq B_2$ with $|X|\leq \aleph_0$ one has $\dnfb{A}{B_1}{X}{C}$. Recall that by Proposition~\ref{prop:widepushoutexist} the pushout of $B_1$ and $B_2$ over $A$ is given by $P =(B_1\oplus B_2)/N$, where $N = \{(a, -a)\mid a\in A\}$. We have to prove that the unique $R$-module homomorphism $t: P \to C$ given by $t([(b_1, b_2)]_N) = b_1 + b_2$ is a balanced embedding. That $t$ is a pure embedding follows just as in Lemma~\ref{lem:lpureindepwitness}. We use Fact~\ref{fact:balequivappendix}\ref{fact:balequivappendix2} to show that $t$ is balanced.

\ssk\nin Assume we have $c\in C$ and a sequence $((b_1^n, b_2^n)\mid n\in \omega)$ in $B_1\oplus B_2$. We wish to find $[(\tilde b_1, \tilde b_2)]_N\in P$ such that 
\begin{equation}
\label{eq:bwit0}
\chi_P([(\tilde b_1, \tilde b_2)]_N + [(b_1^n, b_2^n)]_N) \geq \chi_C(c+ t([(b_1^n, b_2^n)]_N)) \quad \forall n\in \omega.
\end{equation}
Let $X= \{b_2^n \mid n\in \omega\}$, and find $D_1, D_2, E$ such that $B_1\subseteq  D_1\leqb E$, $X\subseteq D_2 \leqb E$, $C\leqb E$, and $D_1 \dnf^{E}_{A} D_2$. Notice that by coherence we have $B_1 \leqb D_1$. If $Q=(D_1\oplus D_2)/\{(a, -a)\mid a\in A\} =(D_1\oplus D_2)/N$ is the pushout of $D_1$ and $D_2$ over $A$, we have that $s: Q \to E$ is a balanced embedding, where $s([(d_1, d_2)]_N) = d_1 + d_2$. Therefore, because $s$ is balanced, there is $(d_1, d_2)\in D_1\oplus D_2$ such that
\begin{equation}
\label{eq:bwit1}
\chi_Q([(d_1, d_2)]_N + [(b_1^n, b_2^n)]_N) \geq \chi_E(c + s([(b_1^n, b_2^n)]_N)) \quad \forall n\in \omega.
\end{equation}
We have that $C\leqb E$, and $c + s([(b_1^n, b_2^n)]_N) = c + t([(b_1^n, b_2^n)]_N)$ for every $n\in\omega$, so that from \eqref{eq:bwit1} it follows
\begin{equation}
 \label{eq:bwit1.1}
\chi_Q([(d_1, d_2)]_N + [(b_1^n, b_2^n)]_N) \geq \chi_C(c + t([(b_1^n, b_2^n)]_N)) \quad \forall n\in \omega.
\end{equation}
By Proposition~\ref{prop:wpushoutclosedbal} we have $N\leqb D_1\oplus D_2$, and recalling Definition~\ref{def:proper}\ref{def:proper1}, we have that \eqref{eq:bwit1} implies that there is a sequence $(a_n)_{n\in \omega}\in A^\omega$ with
\begin{equation}
\label{eq:bwit2}
\chi_{D_1\oplus D_2} ((d_1 + b_1^n + a_n, d_2 + b_2^n - a_n))= \chi_Q([(d_1, d_2)]_N + [(b_1^n, b_2^n)]_N)\quad \forall n\in \omega.
\end{equation}
Projecting \eqref{eq:bwit2} on $D_1$ we have 
\begin{equation}
\label{eq:bwit3}
\chi_{D_1} (d_1 + b_1^n + a_n)\geq \chi_Q([(d_1, d_2)]_N + [(b_1^n, b_2^n)]_N)\quad \forall n\in \omega,
\end{equation}
and projecting \eqref{eq:bwit2} on $D_2$ we get
\begin{equation}
\label{eq:bwit4}
\chi_{D_2} (d_2 + b_2^n - a_n)\geq \chi_Q([(d_1, d_2)]_N + [(b_1^n, b_2^n)]_N)\quad \forall n\in \omega.
\end{equation}
Recalling that $B_1 \leqb D_1$ and using \eqref{eq:bwit3}, there is $\tilde b_1\in B_1$ such that 
\begin{equation}
\label{eq:bwit5}
\chi_{B_1} (\tilde b_1 + b_1^n + a_n)\geq \chi_Q([(d_1, d_2)]_N + [(b_1^n, b_2^n)]_N)\quad \forall n\in \omega.
\end{equation}
Using that $B_2\leqb C\leqb E$ and $D_2\leqb E$, there is $\tilde b_2\in B_2$ such that
\begin{equation}
\label{eq:bwit6}
\chi_{B_2}(\tilde b_2 + b_2^n - a_n) \geq \chi_E(d_2 + b_2^n - a_n) = \chi_{D_2}(d_2 + b_2^n - a_n)\quad \forall n\in \omega.
\end{equation}
Putting \eqref{eq:bwit4} and \eqref{eq:bwit6} together we get:
\begin{equation}
\label{eq:bwit7}
\chi_{B_2}(\tilde b_2 + b_2^n - a_n) \geq \chi_Q([(d_1, d_2)]_N + [(b_1^n, b_2^n)]_N)\quad \forall n\in \omega.
\end{equation}
Combining \eqref{eq:bwit5} and \eqref{eq:bwit7} we have
\[
\chi_{B_1\oplus B_2}((\tilde b_1+ b_1^n + a_n, \tilde b_2 + b_2^n - a_n)) \geq \chi_Q([(d_1, d_2)]_N + [(b_1^n, b_2^n)]_N)\quad \forall n\in\omega.
\]
Projecting this last equation on $P$ and using \eqref{eq:bwit1.1}, we finally get \eqref{eq:bwit0}.
\end{proof}

Putting everything together we have:

\begin{proof}[Proof of Theorem~\ref{thm:balstableind}]
That $\dnf$ as defined in \ref{def:dnfbalanced} is a stable independence relation follows from Lemmas  \ref{lem:balindepweaklystable},  \ref{lem:balindeplocal}, and \ref{lem:balindepwitness}. Finally,  tameness and stability follow from Fact~\ref{fact:stableindep}.
\end{proof}

\section{Closure properties of \texorpdfstring{$R\mh\cotorsion$}{R-Cot}}
\label{sect:appcotorsion}

The goal of this appendix is to show that $R\mh\cotorsion$, the class of cotorsion $R$-modules, is closed under $(|R|+\aleph_0)^+$-pure submodules and quotients, and $(|R|+\aleph_0)^+$-directed systems (Proposition~\ref{prop:cotapphyp}).
The results of this appendix are used in Example~\ref{ex:lpurecotorsion} to show that, for $\lambda \geq (|R|+\aleph_0)^+$, we have that $(R\mh\cotorsion, \leqp^\lambda)$  satisfies Hypothesis~\ref{hyp:lambdapure}, and thus is a $\lambda\mh\mrm{AEC}$ with a stable independence relation (Corollary~\ref{cor:cotorsionstableappendix}).
Moreover, our results will show that $(R\mh\cotorsion, \leq)$ and $(R\mh\cotorsion, \leqp)$ are $(|R| +\aleph_0)^+\mh\mrm{AEC}$s  (Corollary~\ref{cor:cotorsionmuaec}).

Recall that an $R$-module $A$ is \tdef{flat} if $A\otimes_R -$ is an exact functor \cite[Section~3.3]{rotman}, and that for any two $R$-modules $A, F$ we have that $\Ext^1_R(F, A)= 0$ if and only if any short exact sequence of the form ${0 \to A \to B \to F \to 0}$ is split \cite[Theorem~7.31]{rotman}. Throughout, we may write $\Ext$ instead of $\Ext^1_R$ for ease of notation.

\begin{definition}[\protect{\cite[Section~4.6]{purityspectra}}]
An $R$-module $A$ is \tdef{cotorsion} if $\Ext(F, A) = 0$ for every flat $R$-module $F$. We let $R\mh\cotorsion$ denote the class of cotorsion $R$-modules.
\end{definition}

Throughout, we are going to use the following fact.
\begin{fact}
\label{fact:cothypfact}
\begin{enumerate}[(1), leftmargin=*]
\item \label{fact:cothypfact1}The $(|R|+\aleph_0)^+$-presented modules are exactly the modules of cardinality $\leq |R|+\aleph_0$.
\item \label{fact:cothypfact2} An $R$-module $A$ is cotorsion if and only if $\Ext(F, A) = 0$ for every flat $R$-module $F$ with $|F|\leq |R|+\aleph_0$.
\item \label{fact:cothypfact3}Let $F$ be an $R$-module and $0 \to K \to P \to F \to 0$ a short exact sequence with $P$ free. For every module $A$, the following are equivalent:
\begin{enumerate}[(a), leftmargin=*]
\item \label{fact:cothypfact3a}$\Ext(F, A) = 0$;
\item \label{fact:cothypfact3b}the following induced  sequence is exact:
\[
0 \to \Hom(F, A) \to \Hom(P, A) \to \Hom(K, A) \to 0
\]
\item \label{fact:cothypfact3c}the induced map $\Hom(P, A) \to \Hom(K, A)$ is surjective.
\end{enumerate}
\end{enumerate}
\end{fact}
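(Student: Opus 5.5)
We prove the three items of Fact~\ref{fact:cothypfact} separately, relying on standard module theory.

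For \ref{fact:cothypfact1}, write $\kappa=|R|+\aleph_0$. If $|A|\leq\kappa$, take $F$ free on the set $A$; then $|F|\leq\kappa$, and the kernel $K\leq F$ satisfies $|K|\leq\kappa$. Hence both $F$ and $K$ are $({<}\kappa^+)$-generated, so $A$ is $\kappa^+$-presented. Conversely, if $A$ is $\kappa^+$-presented, then $A$ is a quotient of a free module on at most $\kappa$ generators, so $|A|\leq\kappa$. This is essentially Remark~\ref{rem:lambdapresented}\ref{rem:lambdapresented1} with $\lambda=\kappa^+$, which applies since $|R|<\kappa^+$.

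For \ref{fact:cothypfact3}, apply $\Hom(-,A)$ to $0\to K\to P\to F\to 0$. This gives the long exact sequence
\[
0\to\Hom(F,A)\to\Hom(P,A)\to\Hom(K,A)\to\Ext(F,A)\to\Ext(P,A).
\]
Since $P$ is free, hence projective, $\Ext(P,A)=0$. So $\Ext(F,A)$ is the cokernel of $\Hom(P,A)\to\Hom(K,A)$, and the equivalence of (a), (b) and (c) follows immediately, because the left part of the sequence is always exact.

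The main step is \ref{fact:cothypfact2}, and only the backward direction needs proof. The key tool is the Bican--El Bashir--Enochs theorem: every flat module $F$ is the union of a continuous chain $(F_\alpha)_{\alpha<\sigma}$ with $F_0=0$, where each $F_{\alpha+1}/F_\alpha$ is flat of cardinality at most $\kappa$. Equivalently, the flat cotorsion pair is cogenerated by a set of flat modules of size at most $\kappa$; this is the fact used by Enochs in the flat cover conjecture. We then apply the Eklof lemma. If $\Ext(F_{\alpha+1}/F_\alpha,A)=0$ for every $\alpha$, then $\Ext(F,A)=0$. By hypothesis each such quotient is flat of size at most $\kappa$, so the conclusion holds.

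I expect the main difficulty to be citing the filtration result in precisely the needed form, with quotients of size at most $|R|+\aleph_0$ and no larger. I would cite it from Enochs' work or from the treatment of the Bican--El Bashir--Enochs theorem in a standard reference on approximations of modules, rather than reprove it. The filtration is built via purity: any small subset of a flat module lies in a pure submodule of size at most $\kappa$, and pure submodules and pure quotients of flat modules are flat (cf.\ Example~\ref{ex:ringlpure}).
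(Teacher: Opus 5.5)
Your proposal is correct and follows essentially the same route as the paper: item (1) via Remark~\ref{rem:lambdapresented}\ref{rem:lambdapresented1} with $\lambda=(|R|+\aleph_0)^+$, item (3) via the long exact $\Ext$ sequence with $\Ext(P,A)=0$, and item (2) via a continuous filtration of a flat module with flat factors of size at most $|R|+\aleph_0$, combined with Eklof's lemma. For item (2) the paper simply cites the proof of \cite[Proposition~2]{flatcoverconj}, which is this filtration-plus-Eklof argument, so your sketch matches what is being invoked there.
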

\begin{proof}
Item \ref{fact:cothypfact1} follows from Remark~\ref{rem:lambdapresented}, and item \ref{fact:cothypfact2} follows from the proof of \cite[Proposition~2]{flatcoverconj}.

\ssk\nin We now show \ref{fact:cothypfact3}. Recall that by \cite[Chapter~1, Theorem~1.1]{almost} we have a long exact sequence
\[
0 \to \Hom(F, A) \to \Hom(P, A) \to \Hom(K, A) \to\Ext(F, A) \to \Ext(P, A)= 0
\]
where $\Ext(P, A) = 0$ because $P$ is free \cite[Corollary~7.25(i)]{rotman}, so that we have the equivalence between \ref{fact:cothypfact3b} and \ref{fact:cothypfact3c}. Let us prove the equivalence with \ref{fact:cothypfact3a}. We have that $\Hom(P, A) \to \Hom(K, A)$ is surjective if and only if the surjective map $\Hom(K, A) \to\Ext(F, A)$ is zero, which happens exactly if $\Ext(F, A) = 0$. This finishes the proof of \ref{fact:cothypfact3}.
\end{proof}

\begin{proposition}
\label{prop:cotapphyp}
We have the following:
\begin{enumerate}[(1), leftmargin=*]
\item\label{prop:cotapphyp1} A $(|R|+\aleph_0)^+$-directed system of cotorsion $R$-modules is cotorsion.
\item\label{prop:cotapphyp2} $R\mh\cotorsion$ is closed under finite direct sums.
\item\label{prop:cotapphyp3}$R\mh\cotorsion$ is closed under $(|R|+\aleph_0)^+$-pure submodules.
\item\label{prop:cotapphyp4} $R\mh\cotorsion$ is closed under $(|R|+\aleph_0)^+$-pure quotients.
\end{enumerate}
\end{proposition}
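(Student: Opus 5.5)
Throughout, let $\kappa = |R|+\aleph_0$. By Fact~\ref{fact:cothypfact}\ref{fact:cothypfact2}, a module $A$ is cotorsion iff $\Ext(F,A)=0$ for every flat $F$ with $|F|\leq\kappa$. For each such $F$, fix a presentation $0\to K\to P\to F\to 0$ with $P$ free of rank $\leq\kappa$, so that $|P|,|K|\leq\kappa$. By Fact~\ref{fact:cothypfact}\ref{fact:cothypfact3}, $\Ext(F,A)=0$ iff every homomorphism $K\to A$ extends along $K\to P$. The plan is to reduce every item to this extension criterion, and, where purity is involved, to read the extension problem as a system of at most $\kappa$ linear equations.

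For \ref{prop:cotapphyp1}, let $A=\bigcup_{i\in I}A_i$ be a $\kappa^+$-directed union of cotorsion modules. Given $h:K\to A$, the image $h(K)$ has size $\leq\kappa$, so by directedness it lies in some $A_i$. Since $A_i$ is cotorsion, $h$ extends to $P\to A_i\subseteq A$, exactly as in the Baer-style argument of Fact~\ref{fact:triviainj}\ref{fact:triviainj2}. Item \ref{prop:cotapphyp2} is immediate from the additivity $\Ext(F,A\oplus B)\cong\Ext(F,A)\oplus\Ext(F,B)$.

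For \ref{prop:cotapphyp3}, let $A\leqp^{\kappa^+}B$ with $B$ cotorsion, and let $h:K\to A$. Extend $h$ to $g:P\to B$. The existence of such an extension is expressed by a system of $\leq\kappa$ equations: the unknowns are the images $x_e$ of a free basis $(e)$ of $P$, and the equations say $\sum_e r_{e,k}x_e = h(k)$ for $k\in K$, where $k=\sum_e r_{e,k}e$. All parameters lie in $A$ and the system has fewer than $\kappa^+$ equations. Since it has a solution in $B$, $\kappa^+$-purity gives a solution in $A$, that is, an extension $P\to A$ of $h$.

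For \ref{prop:cotapphyp4}, let $A\leqp^{\kappa^+}B$ with $A,B$ cotorsion, and let $h:K\to B/A$. First lift $h$ to $K\to B$. Since $K$ is $\kappa^+$-presented (it has cardinality $\leq\kappa$, by Fact~\ref{fact:cothypfact}\ref{fact:cothypfact1}), Fact~\ref{fact:lpuresplit} gives $\alpha:K\to B$ with $\pi\alpha=h$. Because $B$ is cotorsion, $\alpha$ extends to $P\to B$, and composing with $\pi$ extends $h$. I expect this step to be the main one, and the subtle point is that the lifting uses $\lambda$-presentedness of $K$ itself, not of $F$. Here $\kappa^+$-purity supplies exactly that, since $|K|\leq\kappa$. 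This argument does not use that $A$ is cotorsion. If the lifting caused trouble, the fallback would be the long exact sequence $\Ext(F,B)\to\Ext(F,B/A)\to\Ext^2(F,A)$, but the direct lifting seems sufficient.
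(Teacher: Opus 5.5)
Your proof is correct. The overall reduction is the paper's: test only flat $F$ with $|F|\leq\kappa$ (where $\kappa=|R|+\aleph_0$), fix a presentation $0\to K\to P\to F\to 0$, and use the surjectivity criterion of Fact~\ref{fact:cothypfact}\ref{fact:cothypfact3}. Your argument for (1) is simply the elementwise content of the paper's appeal to $\Hom(K,-)$, $\Hom(P,-)$, $\Hom(F,-)$ preserving $\kappa^+$-directed colimits together with exactness of direct limits.

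The routes genuinely differ in (3) and (4).

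\textbf{Item (3).} The paper uses Fact~\ref{fact:lpuresplit} with $F$ itself $\kappa^+$-presented, so that $\Hom(F,B)\to\Hom(F,B/A)$ is onto. It then reads off $\Ext(F,A)=0$ from the long exact sequence $\Hom(F,B)\to\Hom(F,B/A)\to\Ext(F,A)\to\Ext(F,B)=0$. You instead encode the extension problem along $K\to P$ as a system of at most $\kappa$ equations with parameters in $A$, and apply the equational definition of $\kappa^+$-purity directly. This is equally short and avoids the long exact sequence.

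\textbf{Item (4).} The paper builds the full $3\times 3$ diagram of $\Hom$-sequences for $K,P,F$ against $A,B,B/A$. It uses purity against all three of $K,P,F$ to make the rows exact, uses that both $A$ and $B$ are cotorsion to make the first two columns exact, and then invokes the nine lemma. Your argument shows that a single commutative square suffices:
\begin{itemize}
\item $\Hom(P,B)\to\Hom(K,B)$ is onto because $B$ is cotorsion;
\item $\Hom(K,B)\to\Hom(K,B/A)$ is onto by purity, since $K$ is $\kappa^+$-presented;
\item hence $\Hom(P,B/A)\to\Hom(K,B/A)$ is onto.
\end{itemize}
This is a real simplification. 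It needs purity only against $K$ and never uses that $A$ is cotorsion (the paper supplies that from (3) anyway). Your proposed fallback via $\Ext^2(F,A)$ is unnecessary.
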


Before proving Proposition~\ref{prop:cotapphyp}, we show how it can be used to obtain the  results promised at the beginning of the appendix.

\begin{corollary}[Example~\ref{ex:lpurecotorsion}]
\label{cor:cotorsionstableappendix}
Let $\lambda \geq (|R| + \aleph_0)^+$ be a regular infinite cardinal. Then $(R\mh\cotorsion,\leqp^\lambda)$ satisfies Hypothesis~\ref{hyp:lambdapure}. In particular, it is a $\lambda\mh\mrm{AEC}$ with a stable independence relation.
\end{corollary}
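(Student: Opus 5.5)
The plan is to verify the items of Hypothesis~\ref{hyp:lambdapure} for $\kaec = R\mh\cotorsion$ and the given $\lambda$, using Proposition~\ref{prop:cotapphyp}, and then to invoke the general machinery of Section~\ref{sect:lpure}. Throughout, set $\theta = (|R|+\aleph_0)^+$, so $\lambda \geq \theta$.

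First I check the closure properties of $\kaec$.
\begin{enumerate}[(1), leftmargin=*]
\item Closure under finite direct sums is Proposition~\ref{prop:cotapphyp}\ref{prop:cotapphyp2}.
\item For closure under $\lambda$-pure submodules, note that $A \leqp^\lambda B$ implies $A \leqp^{\theta} B$. This is immediate from Definition~\ref{def:lpure}, since a system of $<\theta$ equations is in particular a system of $<\lambda$ equations. Proposition~\ref{prop:cotapphyp}\ref{prop:cotapphyp3} then gives that $A$ is cotorsion whenever $B$ is.
\item Closure under $\lambda$-pure quotients follows the same way from Proposition~\ref{prop:cotapphyp}\ref{prop:cotapphyp4}.
\end{enumerate}

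Next I check closure of $(R\mh\cotorsion, \leqp^\lambda)$ under $\lambda$-directed systems. Let $(A_i)_{i\in I}$ be such a system. Since $\lambda \geq \theta$, every $\lambda$-directed poset is $\theta$-directed. So the union is a $\theta$-directed union of cotorsion modules, and Proposition~\ref{prop:cotapphyp}\ref{prop:cotapphyp1} shows it is cotorsion. Only membership of the union in $\kaec$ is required by Definition~\ref{def:propertiesac}\ref{def:muclosed}, so nothing more is needed here.

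This establishes Hypothesis~\ref{hyp:lambdapure}. Lemma~\ref{lem:hyppushouts}\ref{lem:hyppushoutslaec} then gives that the class is a $\lambda\mh\mrm{AEC}$, and Theorem~\ref{thm:lpurestableind} gives the stable independence relation. None of these steps is an obstacle: all the substantive work is contained in Proposition~\ref{prop:cotapphyp}, and the only point needing care is the monotonicity of $\lambda$-purity and of $\lambda$-directedness in $\lambda$.
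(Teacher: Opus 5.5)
Your proposal is correct and matches the paper's proof, which simply cites Proposition~\ref{prop:cotapphyp} together with Theorem~\ref{thm:lpuresumup}. You make explicit the monotonicity step the paper leaves implicit: $\leqp^\lambda$ and $\lambda$-directedness for $\lambda \geq (|R|+\aleph_0)^+$ imply the corresponding notions at $(|R|+\aleph_0)^+$. With that step, your citations of Lemma~\ref{lem:hyppushouts} and Theorem~\ref{thm:lpurestableind} for the $\lambda\mh\mrm{AEC}$ and stable-independence conclusions are exactly right.
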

\begin{proof}
Follows immediately from Proposition~\ref{prop:cotapphyp} and Theorem~\ref{thm:lpuresumup}.
\end{proof}

\begin{corollary}
\label{cor:cotorsionmuaec}
$(R\mh\cotorsion, \leq)$ and $(R\mh\cotorsion, \leqp)$ are $(|R| +\aleph_0)^+\mh\mrm{AEC}$s.
\end{corollary}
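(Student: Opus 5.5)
The plan is to verify the axioms of a $(|R|+\aleph_0)^+$-$\mrm{AEC}$ for each of the two classes directly, taking Proposition~\ref{prop:cotapphyp} as given. Throughout let $\theta=(|R|+\aleph_0)^+$, which is regular.

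For $(R\mh\cotorsion,\leq)$, coherence is immediate: it holds already for $(\rmod,\leq)$, since the strong relation is just the submodule relation. The Tarski--Vaught axioms come for free as well. Any $\theta$-directed system in $(R\mh\cotorsion,\leq)$ is a directed system of submodules, so its union is a module containing each member. That union is cotorsion by Proposition~\ref{prop:cotapphyp}\ref{prop:cotapphyp1}, which gives $\theta$-continuity, and $\theta$-smoothness is trivial for $\leq$.

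The only substantive point is the L\"owenheim--Skolem axiom. Given a cotorsion $M$ and $A\subseteq M$, I will apply the $\mrm{LS}$ axiom of the $\theta$-$\mrm{AEC}$ $(\rmod,\leqp^\theta)$ from Proposition~\ref{prop:lpurelaec}. This yields $M_0$ with $A\subseteq M_0\leqp^\theta M$ and $|M_0|\leq (|A|+|R|)^{<\theta}$. By Proposition~\ref{prop:cotapphyp}\ref{prop:cotapphyp3}, $M_0$ is cotorsion. The LS number is at most $|R|^{<\theta}=2^{|R|+\aleph_0}$; this satisfies $\lambda=\lambda^{<\theta}$ and $\lambda\ge |\tau|+\theta$, as required in Definition~\ref{def:muaec}.

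For $(R\mh\cotorsion,\leqp)$ I argue the same way, now using that $(\rmod,\leqp)$ is an $\mrm{AEC}$. Coherence follows from Remark~\ref{rem:lpuretrivia}\ref{rem:lpuretrivia3}. For $\theta$-continuity, a $\theta$-directed system under $\leqp$ has a union in which each member is pure, by the $\mrm{AEC}$ axioms of $(\rmod,\leqp)$. That union is cotorsion by Proposition~\ref{prop:cotapphyp}\ref{prop:cotapphyp1}. Smoothness is inherited from $(\rmod,\leqp)$. For LS, the submodule $M_0$ found above satisfies $M_0\leqp^\theta M$, hence $M_0\leqp M$, and it is cotorsion, so the same bound works.

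The main obstacle is therefore not in this corollary but in Proposition~\ref{prop:cotapphyp}, particularly the directed-union and $\theta$-pure-submodule clauses. Those rest on Fact~\ref{fact:cothypfact}\ref{fact:cothypfact2}, which reduces testing to flat modules of size $\le|R|+\aleph_0$, and on the Hom-lifting characterization in Fact~\ref{fact:cothypfact}\ref{fact:cothypfact3}. Once those are available, the corollary is a routine check of the axioms.
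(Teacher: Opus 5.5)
Your proposal is correct and follows the paper's proof. Continuity comes from Proposition~\ref{prop:cotapphyp}\ref{prop:cotapphyp1}, with coherence and smoothness inherited. The L\"owenheim--Skolem axiom comes from the $\mrm{LS}$ axiom for $\theta$-purity together with closure of $R\mh\cotorsion$ under $\theta$-pure submodules (Proposition~\ref{prop:cotapphyp}\ref{prop:cotapphyp3}); the paper phrases this step as the $\mrm{LS}$ axiom of $(R\mh\cotorsion,\leqp^\theta)$, which is the same argument.
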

\begin{proof}
By Proposition~\ref{prop:cotapphyp}\ref{prop:cotapphyp1} all the axioms of a $(|R| + \aleph_0)^+\mh\mrm{AEC}$ hold for ${(R\mh\cotorsion, \leq)}$ and $(R\mh\cotorsion, \leqp)$,  with the exception of the $\mrm{LS}$ axiom. The $\mrm{LS}$ axiom follows from Proposition~\ref{prop:cotapphyp}\ref{prop:cotapphyp3} and the $\mrm{LS}$ axiom for $(R\mh\cotorsion, \leqp^\lambda)$, with $\lambda = (|R|+\aleph_0)^+$.
\end{proof}

\begin{proof}[Proof of Proposition~\ref{prop:cotapphyp}]
To make the notation lighter, we let $\lambda = (|R|+ \aleph_0)^+$ throughout. 
\begin{enumerate}[$(*_1)$, leftmargin=*, series=prf:cothypappendix]
\item \label{prf:cothyp1} A $\lambda$-directed system of cotorsion $R$-modules is cotorsion.
\end{enumerate}
Why \ref{prf:cothyp1}?  Let $(A_i)_{i\in I}$ be a $\lambda$-directed system of cotorsion $R$-modules with $A= \bigcup_{i\in I}A_i$. Let $F$ be a flat $R$-module with $|F| \leq |R|+\aleph_0$, we have to show $\Ext(F, A) = 0$. Because $|F|\leq |R|+\aleph_0$, there is a short exact sequence $0 \to K \to P \to F \to 0$ with $|K| + |P|\leq |R|+ \aleph_0$ and $P$ free. Because for every $i\in I$ we have that $A_i$ is cotorsion, then by Fact~\ref{fact:cothypfact}\ref{fact:cothypfact3} the following  sequence is exact:
\begin{equation}
\label{eq:cotlpure0}
0\to \Hom(F, A_i) \to \Hom(P, A_i) \to \Hom(K, A_i) \to 0.
\end{equation}
Because $F,P,K$ are $(|R|+\aleph_0)^+$-presented, then we have that $\Hom(F, -)$, $\Hom(P, -)$, and $\Hom(K, -)$ preserve $\lambda$-directed limits by  \cite[Definition 1.13, Example 1.14(5)]{adamekbook}.  Therefore, taking the limit of \eqref{eq:cotlpure0} we have, by closure of exact sequences under direct limits \cite[Proposition~5.33]{rotman}, that the following sequence is exact:
\[
0 \to \Hom(F, A) \to \Hom(P, A) \to \Hom(K, A) \to 0,
\]
which is what we wanted to show.
\begin{enumerate}[resume*=prf:cothypappendix]
\item \label{prf:cothyp2} $R\mh\cotorsion$ is closed under finite direct sums.
\end{enumerate}
Why \ref{prf:cothyp2}? This is standard, see \cite[Section 3.3]{xuflat}.
\begin{enumerate}[resume*=prf:cothypappendix]
\item \label{prf:cothyp3} $R\mh\cotorsion$ is closed under $\lambda$-pure submodules.
\end{enumerate}
Why \ref{prf:cothyp3}? Let $A\leqp^\lambda B$, with $B$ cotorsion. We have to show that $\Ext(F, A) = 0$ for every flat $F$  with $|F|\leq |R|+\aleph_0$.
Because $A\leqp^\lambda B$, by  Fact~\ref{fact:lpuresplit} we have the following induced short exact sequence:
\begin{equation}\label{eq:cotlpure1}
0\to \Hom(F,A)\to \Hom(F, B) \to \Hom(F,B/A) \to 0.
\end{equation}
Taking the long exact $\Ext$ sequence associated to  $0\to A \to B \to B/A \to 0$ {\cite[Chapter~1, Theorem~1.1]{almost}}, we have the following exact sequence:
\begin{equation}\label{eq:cotlpure2}
\begin{split}
0&\to \Hom(F, A) \to \Hom(F, B) \to \Hom(F, B/A) \to\\[-0.5em] 
&\to \Ext(F, A) \to \Ext(F, B) = 0
\end{split}
\end{equation}
where $\Ext(F, B) = 0$ holds because $B$ is cotorsion. By \eqref{eq:cotlpure1} the map $\Hom(F, B) \to \Hom(F, B/A)$ is surjective, so that by the exactness of \eqref{eq:cotlpure2} we have that the map $\Hom(F, B/A) \to \Ext(F, A)$ is zero. But by the exactness of \eqref{eq:cotlpure2}, the map $\Hom(F, B/A) \to \Ext(F, A)$ is surjective, so that $\Ext(F, A) = 0$.
\begin{enumerate}[resume*=prf:cothypappendix]
\item \label{prf:cothyp4} $R\mh\cotorsion$ is closed under $\lambda$-pure quotients.
\end{enumerate}
Why \ref{prf:cothyp4}? Assume $A\leqp^\lambda B$ with $A$ and $B$ cotorsion, we wish to show that for every flat $F$ with $|F|\leq |R|+\aleph_0$ we have $\Ext(F, B/A) =0$. Because $F$ is $\lambda$-presented (recall Definition~\ref{def:lambdapresented}), there is a short exact sequence of the form
\begin{equation}\label{eq:cotlpure3}
0 \to K \to P \to F \to 0,
\end{equation} 
such that $K$ and $P$ are $\lambda$-presented and $P$ is free. Because $A\leqp^\lambda B$ and $K, P, F$ are $\lambda$-presented, then by   Fact~\ref{fact:lpuresplit} the rows of the following commutative diagram are exact:
\begin{equation}
\label{eq:cotlpure4}
\begin{tikzcd}
	& 0 & 0 & 0 & \\
	0 & {\Hom(F, A)} & {\Hom (F, B)} & {\Hom (F, B/A)} & 0 \\
	0 & {\Hom (P, A)} & {\Hom (P, B)} & {\Hom (P, B/A)} & 0 \\
	0 & {\Hom(K, A)} & {\Hom(K, B)} & {\Hom(K, B/A)} & 0 \\
	& 0 & 0 & 0
	\arrow[from=1-2, to=2-2]
	\arrow[from=1-3, to=2-3]
	\arrow[from=1-4, to=2-4]
	\arrow[from=2-1, to=2-2]
	\arrow[from=2-2, to=2-3]
	\arrow[from=2-2, to=3-2]
	\arrow[from=2-3, to=2-4]
	\arrow[from=2-3, to=3-3]
	\arrow[from=2-4, to=2-5]
	\arrow[from=2-4, to=3-4]
	\arrow[from=3-1, to=3-2]
	\arrow[from=3-2, to=3-3]
	\arrow[from=3-2, to=4-2]
	\arrow[from=3-3, to=3-4]
	\arrow[from=3-3, to=4-3]
	\arrow[from=3-4, to=3-5]
	\arrow[from=3-4, to=4-4]
	\arrow[from=4-1, to=4-2]
	\arrow[from=4-2, to=4-3]
	\arrow[from=4-2, to=5-2]
	\arrow[from=4-3, to=4-4]
	\arrow[from=4-3, to=5-3]
	\arrow[from=4-4, to=4-5]
	\arrow[from=4-4, to=5-4]
\end{tikzcd}
\end{equation}
Taking the long exact $\Ext$ sequence associated to $0\to K\to P \to F \to 0$ \cite[Chapter~1, Theorem~1.1]{almost}, now with respect to the first variable of $\Hom(-,-)$, we get the following exact sequences:
\begin{align}
    0 &\to \Hom(F, A) \to \Hom(P, A) \to \Hom(K, A) \to \Ext(F, A) = 0,\label{eq:cotlpure5} \\
    0 &\to \Hom(F, B) \to \Hom(P, B) \to \Hom(K, B) \to\Ext(F, B)= 0, \label{eq:cotlpure6}
\end{align}
In \eqref{eq:cotlpure5}-\eqref{eq:cotlpure6} we have $\Ext(F, A) = \Ext(F, B) = 0$ because $A$ and $B$ are both cotorsion by \ref{prf:cothyp3}.
Because the first two columns from the left in \eqref{eq:cotlpure4} are \eqref{eq:cotlpure5} and \eqref{eq:cotlpure6} respectively, they are exact. Therefore, because the rows of \eqref{eq:cotlpure4} are also exact, we can apply the nine lemma \cite[Lemma 2.65]{freydabelian} to \eqref{eq:cotlpure4}, which implies that the right column of \eqref{eq:cotlpure4} is also exact.
Using Fact~\ref{fact:cothypfact}\ref{fact:cothypfact3}, we have that $\Ext(F, B/A) = 0$. This shows that $B/A$ is cotorsion, and ends the proof of \ref{prf:cothyp4}.
\end{proof}

\end{document}